  \tikzstyle{block} = [draw=black, ultra thin, text width=2cm, minimum height=1.3cm, font = {\footnotesize\itshape},align=center]  
  \tikzstyle{arrow} = [thick,->,>=stealth]
\newcommand{\ind}[2][]{%
  \mathrel{
    \mathop{
      \vcenter{
        \hbox{\oalign{\noalign{\kern-.3ex}\hfil$\vert$\hfil\cr
              \noalign{\kern-.7ex}
              $\smile$\cr\noalign{\kern-.3ex}}}
      }
    }^{#2}\displaylimits_{#1}
  }
}
\newcommand{\wh}{\widehat}
\renewcommand{\deg}{\mathrm{degeneracy}}
\newcommand{\sz}[1]{\todo[color=pink!40]{sz: #1}}
\newcommand{\szfuture}[1]{\todo[color=green!40]{future: #1}}
\newlength{\leftbarwidth}
\newlength{\leftbarsep}
\DeclareMathOperator{\rk}{rk}
\DeclareMathOperator{\VCdim}{VCdim}
\DeclareMathOperator{\TVCdim}{2VCdim}
\DeclareMathOperator{\fw}{fw}
\DeclareMathOperator{\cw}{cliquewidth}
\DeclareMathOperator{\rw}{rankwidth}
\DeclareMathOperator{\bfw}{bfw}
\newcommand{\fwd}{\dfw}
\DeclareMathOperator{\iw}{iw}
\DeclareMathOperator{\copw}{copwidth}
\DeclareMathOperator{\dfw}{dfw}
\DeclareMathOperator{\adm}{adm}
\DeclareMathOperator{\tww}{tww}
\DeclareMathOperator{\wcol}{wcol}
\DeclareMathOperator{\scol}{scol}
\newcommand\myitem[1]{\hyperref[item:#1]{\emph{\ref*{item:#1}.}{}}\xspace}
\newcommand{\appmark}{$\ast$}
\newtheorem{theorem}{Theorem}[section]
\newtheorem*{theorem*}{Theorem}
\newtheorem{conjecture}[theorem]{Conjecture}
\newtheorem{question}[theorem]{Question}
\newtheorem{corollary}[theorem]{Corollary}
\newtheorem*{corollary*}{Corollary}
\newtheorem{lemma}[theorem]{Lemma}
\newtheorem*{lemma*}{Lemma}
\newtheorem{fact}[theorem]{Fact}
\newtheorem{proposition}[theorem]{Proposition}
\newtheorem*{proposition*}{Proposition}
\newtheorem{definition}[theorem]{Definition}
\newtheorem{claim}{Claim}[section]
\newtheorem{goal}[theorem]{Goal}
\theoremstyle{remark}
\newtheorem{remark}[theorem]{Remark}
\newtheorem{example}[theorem]{Example}
\def\Nesetril{Ne\v{s}et\v{r}il\xspace}
\def\Dvorak{Dvo\v{r}\'{a}k\xspace}
\def\Kral{Kr\'{a}l\xspace}
\def\Gajarsky{Gajarsk\'{y}\xspace}
\newcommand{\cmso}{\mathrm{CMSO}}
\newcommand{\from}{\colon}
\newcommand{\set}[1]{\{#1\}}
\newcommand{\setof}[2]{\set{#1\mid#2}}
\def\phi{\varphi}
\def\cal{\mathcal}
\def\N{\mathbb N}
\def\R{\mathbb R}
\def\epsilon{\varepsilon}
\def\eps{\varepsilon}
\renewcommand{\subset}{\subseteq}
\renewcommand{\setminus}{-}
\renewcommand{\le}{\leqslant}
\renewcommand{\ge}{\geqslant}
\newcommand{\dist}{\mathrm{dist}}
\newcommand\tw{{\rm treewidth}}
\newcommand{\BB}{\cal B}
\newcommand{\CC}{\cal C}
\newcommand{\DD}{\cal D}
\newcommand{\merge}[2]{\stackrel{u\mapsto v}{\longrightarrow}}
\newcommand{\leaves}{\mathrm{Leaves}}
\newcommand{\Oof}{O}
\newcommand{\tup}{\bar}
\newcommand{\tp}{\textnormal{tp}}
\newcommand{\ltp}{\textnormal{ltp}}
\newcommand{\atp}{\textnormal{atp}}
\newcommand{\anonym}[2][]{#2}
\newcommand{\focs}[2][]{#2}
\newcommand{\ERCagreement}{This paper is part of a project that has received funding from the European Research Council (ERC) (grant agreement No 948057 -- {\sc BOBR}).}
\begin{document}
\title{Flip-width: Cops and Robber on dense graphs}
\author{\anonym[Anonymous]
{Szymon Toru\'nczyk\thanks{University of Warsaw, Poland. \ERCagreement}}}
\date{\today}
\maketitle
\anonym{
\begin{picture}(0,0)
  \put(412,-430)
  {\hbox{\includegraphics[width=40px]{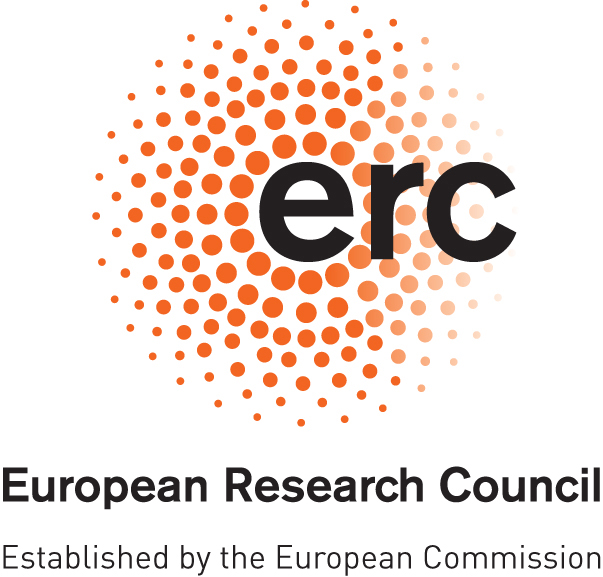}}}
  \put(402,-490)
  {\hbox{\includegraphics[width=60px]{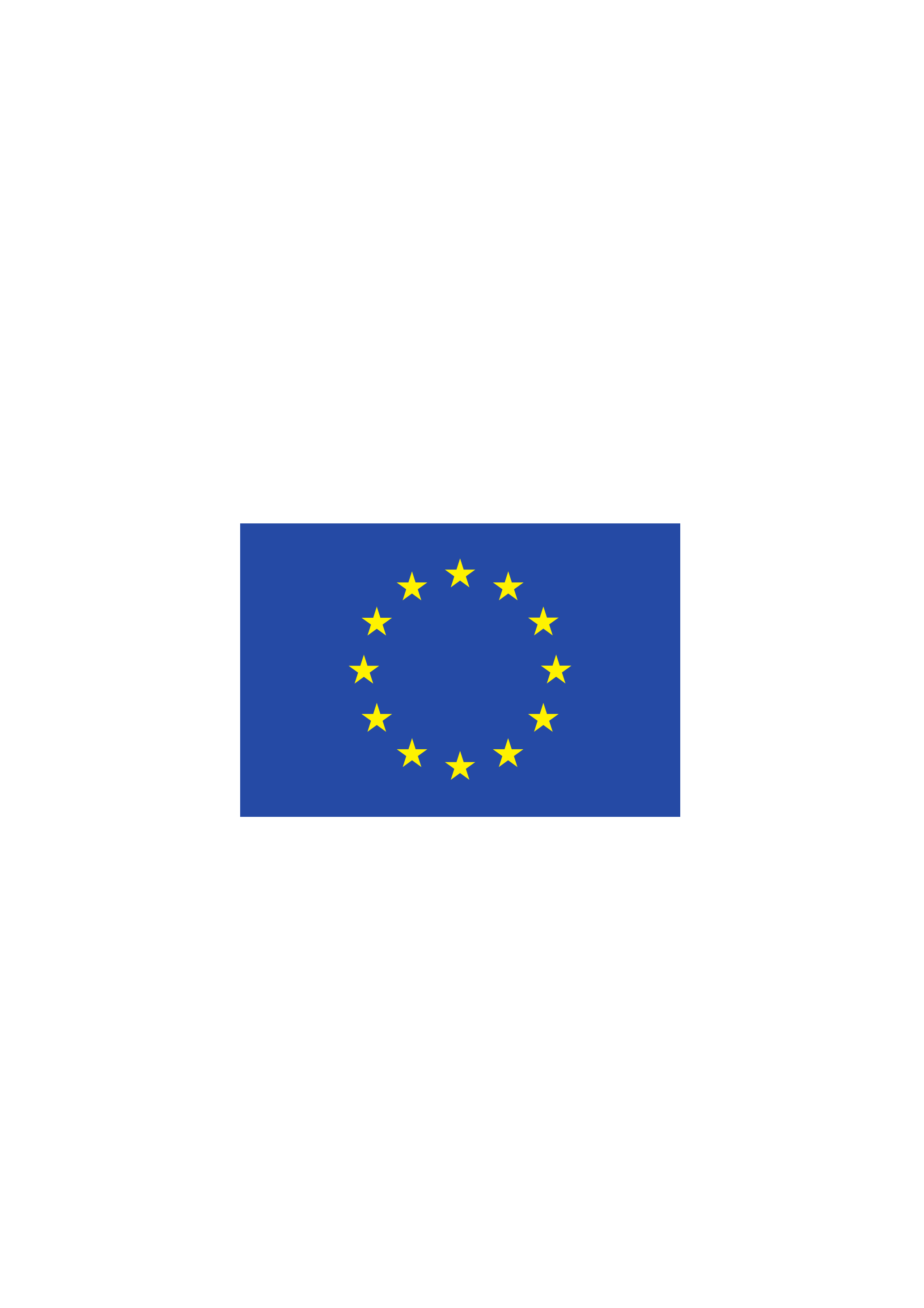}}}
  \end{picture}%
}
\abstract{We define new graph parameters, called \emph{flip-width}, that generalize 
treewidth, degeneracy, and 
generalized coloring numbers for sparse graphs, 
and clique-width  and twin-width for dense graphs. 
The flip-width parameters are defined using 
variants of the Cops and Robber game, in which the robber has speed bounded by a fixed constant $r\in\N\cup\set{\infty}$, and the cops perform flips (or perturbations) of the considered graph.
We then propose a new notion of tameness of a graph class, called \emph{bounded flip-width}, which is a dense counterpart of classes of bounded expansion of \Nesetril and Ossona de~Mendez, and includes classes of bounded twin-width of Bonnet, Kim, Thomass{\'e}, and Watrigant. This unifies Sparsity Theory and Twin-width Theory, for the first time providing a common language for studying the central notions of the two theories, such as weak coloring numbers and twin-width -- corresponding to winning strategies of one player --
or dense shallow minors, rich divisions, or well-linked sets,
corresponding to winning strategies of the other player.
To demonstrate the robustness of the introduced notions,
we prove that boundedness of flip-width is preserved by first-order interpretations, or transductions, generalizing previous results concerning classes of bounded expansion and bounded twin-width. We also show that the considered notions are amenable to algorithms, by providing an algorithm approximating the flip-width of a given graph, which runs in slicewise polynomial time (XP) in the size of the graph.
Finally, we propose a more general notion of tameness, called \emph{almost bounded flip-width}, which is a dense counterpart of nowhere dense classes. We conjecture, and provide evidence, that classes with almost bounded flip-width coincide with monadically dependent (or monadically NIP) classes,
introduced by Shelah in model theory. We also provide evidence that classes of almost bounded flip-width characterise the hereditary graph classes for which the model-checking problem is fixed-parameter tractable,
which is of central importance in structural and algorithmic graph theory.
}

  \thispagestyle{empty}

\newpage

\setcounter{page}{1}


\section{Introduction}\label{sec:intro}
A recent focus of algorithmic and structural 
graph theory, and of finite model theory, 
is to find graph parameters that extend 
the parameters used in the context of sparse graphs,
to the dense setting.
More generally, the goal is to extend the Sparsity theory of Ne\v set\v ril and Ossona de Mendez \cite{sparsity-book} to dense graph classes.
Two central parameters 
used in the sparse setting are treewidth and degeneracy;
 both have found numerous applications in algorithms and combinatorics.
Whereas the dense analogue of treewidth -- clique-width, or rank-width -- is well-understood, there is not even a clear candidate for the dense analogue of degeneracy\footnote{We are only aware of the recent notion 
of graph functionality \cite{functionality} as a possible candidate; see Sec. \ref{sec:fw1-functionality}.}. Recall that a graph 
has degeneracy  at most $d$ if its vertices can be totally ordered
so that every vertex has at most $d$ neighbors before it in the order. 
Generalized coloring numbers are related parameters,
specified by a radius~$r$,
which impose restrictions on neighborhoods of radius $r$, degeneracy being the case of radius $r=1$.
Sparsity theory is a very successful theory
studying classes of sparse graphs 
in which degeneracy and  generalized coloring numbers play a central role. 
The fundamental notions of this theory are two tameness conditions  
for graph classes: \emph{bounded expansion} and \emph{nowhere denseness}.
A class of graphs has 
bounded expansion if and only if 
each generalized coloring number is bounded by a constant (depending on the radius), on all graphs in the class.
Examples include every class with bounded maximum degree,
the class of planar graphs, classes of bounded treewidth, and every class that excludes some graph as a minor or as a topological minor.
The more general nowhere dense classes are characterized  analogously, 
with the constant bound on the generalized coloring numbers of $n$-vertex graphs replaced with $O(n^{\eps})$, for any fixed $\eps>0$.
In particular, nowhere dense classes are sparse -- every $n$-vertex graph in such a class has $O(n^{1+\eps})$ edges for any fixed $\eps>0$, and every nowhere dense class excludes some biclique $K_{t,t}$ as a subgraph.
Classes with bounded expansion and nowhere dense classes can be characterized in many other ways, in terms of their remarkable combinatorial, algorithmic, and logical properties, yielding multiple applications in those areas. 

One of the driving questions in this line of work  \cite{logic-graphs-algorithms}, on the algorithmic side, is to characterize those graph classes for which the model-checking problem for first-order logic is {fixed-parameter tractable}: there is an algorithm
that determines whether a given graph $G$ from the class satisfies a given first-order sentence $\phi$ in time $f(\phi)\cdot |G|^c$, for some constant $c$ and function $f$ that depend only on the class.
Such a characterization is known in the special case of \emph{monotone} graph classes,
that is, graph classes that are closed under removing vertices and edges. In a landmark result, Grohe, Kreutzer and Siebertz~\cite{GroheKS17} proved that a monotone graph class has 
fixed-parameter tractable model-checking if and only if it is nowhere dense
(under a complexity-theoretic assumption, FPT$\neq$AW[$*$]).


There is an ongoing effort to lift Sparsity theory to \emph{hereditary} graph classes, that is, classes that are closed under removing vertices.
For example, the class of cliques is hereditary and combinatorially and logically very simple,
 but lies outside of the realm of Sparsity theory, which is only suited to the study of monotone graph classes. Indeed, all notions studied in Sparsity theory -- generalized coloring numbers, bounded expansion, nowhere denseness, etc. -- are monotone under edge removals.
There are many other known hereditary graph classes that are well-behaved in a similar way to classes with bounded expansion and nowhere dense classes, but are not monotone,
and are not sparse (e.g. contain arbitrarily large cliques).
Those include for instance classes of bounded clique-width, the class of unit interval graphs,
or proper hereditary classes of permutation graphs -- which are all subsumed by the recently introduced classes of bounded twin-width \cite{tww1} (see below) --
as well as
 \emph{structurally nowhere dense}  classes \cite{dreierMS} --  classes of graphs that can be obtained 
from a nowhere dense graph class by redefining the edges using a fixed first-order formula $\phi(x,y)$ -- 
for instance, the edge-complements of graphs from a nowhere dense class, 
or the squares of graphs from a nowhere dense class.

The developments in Structural graph theory -- where many results concerning graph classes of bounded treewidth 
 are extended to the setting of classes of bounded clique-width --  serves as an inspiration in attempts of lifting the results of Sparsity theory from the sparse (monotone) setting to the dense (hereditary) setting.
 It is expected that the fundamental notions of Sparsity theory -- bounded expansion and nowhere denseness -- should extend to more general tameness conditions for graph classes that are possibly dense,
 similarly as treewidth extends to clique-width.
However, currently, even the most fundamental questions
remain unanswered: What is the dense analogue of degeneracy?
 Of generalized coloring numbers?
The pursuit after such notions has been a driving factor, and a major open problem in the area (see \emph{Related Work} below).
To date, no such combinatorial notions, with compelling evidence of their utility,
 have been proposed.

The recent and already very successful Twin-width theory,
developed by Bonnet, Thomass{\'e}, and coauthors
~\cite{tww1,tww2,tww4,tww8},
provides a robust tameness condition for graph classes that are not necessarily sparse,
 classes of \emph{bounded twin-width}.
Those include many studied sparse graph classes, such as classes that exclude a fixed minor, as well as dense  graph classes, such as unit interval graphs, proper hereditary classes of permutation graphs, or posets of bounded width. However, some very simple classes of bounded expansion, such as the class of subcubic graphs (with maximum degree three), have unbounded twin-width. Thus, the scopes of Twin-width theory and Sparsity theory are incomparable. This motivates the quest of finding a unified theory that encompasses both Sparsity theory and Twin-width theory, and provides a common framework for studying the fundamental notions of the two theories.

Both Sparsity theory and Twin-width theory 
have found multiple combinatorial and algorithmic applications\sz{cite}, and the same is expected of a theory unifying the two.
As a concrete application, and a motivation for our pursuit, the sought theory is expected ultimately to resolve
one of the central open problems in structural and algorithmic graph theory --
 of characterizing those hereditary graph classes
for which the model-checking problem is fixed-parameter tractable 
\cite[Sec. 8.2]{logic-graphs-algorithms}
\cite[Sec. 9]{GroheKS17}.

It is conjectured\footnote{The conjecture has been circulating in the community since around 2016.
As far as we know, it has first been stated explicitly during the open problem session of \cite{warwick-problems}. 
There, dependent (or NIP) classes were considered instead of monadically dependent classes, but those two notions coincide for hereditary classes, by a result of Braunfeld and Laskowski~\cite{braunfeld2022existential}.} (see \cite{warwick-problems,new-perspective-journal}) that first-order model-checking
is fixed-parameter tractable on a hereditary graph class $\CC$ 
if and only if $\CC$ is \emph{monadically dependent}
(also called \emph{monadically NIP}).
This notion, formulated in logical terms,
 originates in model theory, and was introduced by Shelah~\cite{Shelah1986} (see also Braunfeld and Laskowski~\cite{Braunfeld2021CharacterizationsOM}) in his momentous classification program of logical theories. Intuitively, a hereditary class $\CC$ 
is monadically dependent if for any fixed first-order formula $\phi(x,y)$,
there is some graph $H$ that cannot be represented in any graph $G\in\CC$,
 using the formula $\phi(x,y)$ to define the edges of $H$ in $G$.

\begin{conjecture}\label{conj:mnip-mc}
  Let $\CC$ be a hereditary class of graphs.
  Then the model-checking problem for first-order logic is fixed-parameter tractable on $\CC$ if and only if~$\CC$ is monadically dependent.
\end{conjecture}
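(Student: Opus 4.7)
The plan is to split Conjecture~6.1 into its hardness and tractability directions, and to route both through the flip-width framework introduced in this paper.

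\textbf{Hardness direction.} If $\CC$ is hereditary and not monadically dependent, then by definition there is a first-order formula $\varphi(x,y)$ (possibly using monadic parameters) such that every finite graph $H$ arises as $\{(u,v) : G \models \varphi(u,v)\}$ from some suitably coloured $G \in \CC$. This yields an FPT-reduction from FO model-checking on arbitrary graphs -- known to be $\mathrm{AW}[*]$-hard -- to FO model-checking on $\CC$: given $(H,\psi)$, build a witness $G\in\CC$, colour it appropriately, and translate $\psi$ to a sentence $\tilde\psi$ that relativises quantifiers to the interpreted copy of $H$. Modulo the standard assumption $\mathrm{FPT} \neq \mathrm{AW}[*]$, this rules out fixed-parameter tractability on $\CC$. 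The only nontrivial subpoint is producing the witness graph $G$ in polynomial time; this is standard for total transductions, and the Braunfeld--Laskowski collapse cited in the excerpt lets us ignore the passage between monadic dependence and dependence for hereditary classes.

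\textbf{Tractability direction.} The harder direction proceeds by two intermediate statements: (a) every hereditary monadically dependent class has almost bounded flip-width; and (b) every class of almost bounded flip-width admits fixed-parameter tractable FO model-checking. For (b), the plan is to adapt the Grohe--Kreutzer--Siebertz framework used for nowhere dense classes. Their algorithm exploits bounded weak colouring numbers to produce, at each radius $r$, a shallow decomposition on which a Gaifman-local sentence is evaluated in FPT. In our setting, almost bounded flip-width provides, for each radius $r$, a winning cop strategy in the radius-$r$ flip-width game using $|V(G)|^{o(1)}$ cops; the resulting flips should play the structural role of the low-treedepth colouring, each partitioning the relevant radius-$r$ neighbourhoods into boundedly many ``flip-homogeneous'' pieces on which a Gaifman-local sentence can be evaluated recursively. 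Combining this with the subpolynomial cop budget yields fixed-parameter tractability.

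\textbf{Main obstacle.} The crux is step (a): translating the purely logical tameness hypothesis of monadic dependence into a combinatorial winning strategy for the cop. The natural route is via the contrapositive. Given a class $\CC$ without almost bounded flip-width, a winning robber strategy in the radius-$r$ flip-width game, witnessed uniformly across graphs in $\CC$, should allow one to extract, via Ramsey-type compactness arguments, a rich indiscernible configuration of vertex pairs robustly distinguished by flips. Stability- and dependence-theoretic techniques in the spirit of Shelah should then promote such a configuration into a formula $\varphi(x,y)$ transducing arbitrary graphs, contradicting monadic dependence. Making this extraction precise -- in particular ensuring that the formulae produced lie genuinely within first-order logic enriched only by monadic predicates, and that the witnessing robber strategies can be chosen uniformly across the class -- is the principal open problem and appears to require new tools beyond those available in Sparsity theory or Twin-width theory alone.
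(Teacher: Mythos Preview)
The statement you are attempting to prove is a \emph{conjecture}, not a theorem. The paper does not prove it; on the contrary, in the discussion section the paper explicitly states that ``Both implications are open.'' So there is no proof in the paper to compare your proposal against.

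Your proposal is a proof \emph{plan}, not a proof, and you essentially acknowledge this in your ``Main obstacle'' paragraph. Both steps of your tractability direction are themselves open problems in the paper:
\begin{itemize}
\item Step (a), that every hereditary monadically dependent class has almost bounded flip-width, is one direction of Conjecture~\ref{conj:fw-mnip}, about which the paper says ``Currently, we are able to prove neither of the two implications in this conjecture.''
\item Step (b), that almost bounded flip-width implies fixed-parameter tractable model-checking, is also not established. The paper poses this as an open goal (Goal~\ref{goal:mc}) and notes that even for classes of \emph{bounded} flip-width---a strictly stronger hypothesis---FPT model-checking is not known.
\end{itemize}
Your sketch of how (b) might go, by analogy with the Grohe--Kreutzer--Siebertz argument, is purely heuristic: flips do not obviously play the same structural role as low-treedepth colourings, and no such adaptation is known.

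For the hardness direction, your claim that ``producing the witness graph $G$ in polynomial time \ldots\ is standard for total transductions'' is incorrect. The definition of a transduction guarantees, for each target graph $H$, the \emph{existence} of a coloured preimage $G\in\CC$, but provides no algorithm for constructing $G$ from $H$, let alone a polynomial-time one. Establishing efficient invertibility here is a genuine obstacle, not a routine step, and is part of why this direction too is listed as open.

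In summary: your roadmap is aligned with the paper's own conjectural programme (indeed, steps (a) and (b) are essentially the paper's Conjecture~\ref{conj:fw-mnip} plus Goal~\ref{goal:mc}), but it does not constitute a proof, and the paper does not claim one either.
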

Monadically dependent graph classes include all
the graph classes mentioned above, and 
 are considered (see \cite{AdlerA14,Braunfeld2021CharacterizationsOM,rankwidth-meets-stability,stable-tww-lics}) 
to be the dense counterpart of nowhere dense classes, as expressed e.g. by Conjecture~\ref{conj:mnip-mc}.
For instance, nowhere dense classes are exactly those 
monadically dependent classes that exclude some biclique as a subgraph \cite{rankwidth-meets-stability}.
However, to date, no \emph{combinatorial} characterization of monadically dependent graph classes -- akin to the multiple characterizations of nowhere dense classes -- is known. As a consequence, in general, monadically dependent classes
are currently beyond the reach of algorithmic methods.

\paragraph{Contribution}

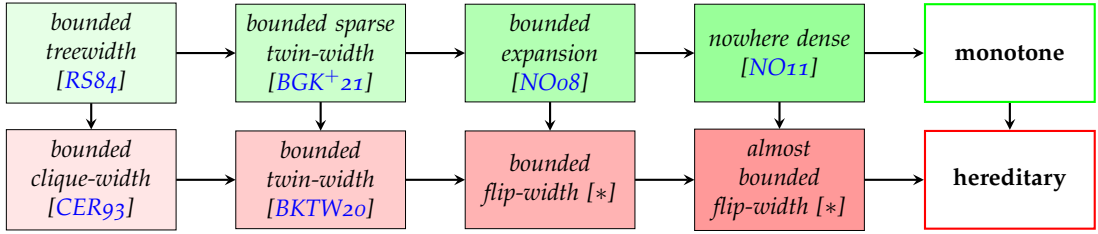
\begin{figure*}
  \begin{centering}
  \begin{tikzpicture}   
    \node[block, fill=green!10] (a) {bounded \mbox{treewidth} \cite{robertson-seymour-tw}};
    \node[block, below=of a, yshift=0.65cm, fill=red!10] (a') {bounded \mbox{clique-width} \cite{clique-width}};
    \node[block, right=of a,  xshift=-0.21cm,fill=green!20] (b) {bounded sparse twin-width \cite{tww2}};
    \node[block, right=of a', xshift=-0.21cm,fill=red!20] (b') {bounded \mbox{twin-width} \cite{tww1}};
    \node[block, right=of b,  xshift=-0.21cm,fill=green!30] (c) {bounded \mbox{expansion} \cite{grad-and-bounded-expansion-Nesetril}};
    \node[block, right=of b',  xshift=-0.21cm,fill=red!30] (c') {bounded \mbox{flip-width} [$*$]};
    \node[block, right=of c,  xshift=-0.21cm,fill=green!40] (d) {\mbox{nowhere dense} \cite{NesetrilM11a}};
    \node[block, right=of c',  xshift=-0.21cm,fill=red!40] (d') {almost bounded flip-width [$*$]%
    };
    \node[block,text width=2cm,right=of d,xshift=-0.21cm,font = {\footnotesize\itshape},draw=green, thick] (e) {\upshape\textbf{monotone}};
    \node[block,text width=2cm,right=of d',xshift=-0.21cm,font = {\footnotesize\itshape},draw=red, thick] (e') {\upshape\textbf{hereditary}};

    \draw [arrow] (a) -- (b);
    \draw [arrow] (b) -- (c);
    \draw [arrow] (c) -- (d);
    \draw [arrow] (d) -- (e);
    \draw [arrow] (a') -- (b');
    \draw [arrow] (b') -- (c');
    \draw [arrow] (c') -- (d');
    \draw [arrow] (d') -- (e');
    \draw [arrow] (a) -- (a');
    \draw [arrow] (b) -- (b');
    \draw [arrow] (c) -- (c');
    \draw [arrow] (d) -- (d');
    \draw [arrow] (e) -- (e');
  \end{tikzpicture}
  \caption{Properties of graph classes, and implications among them. 
  Each property in the lower row, restricted 
  to monotone graph classes, yields the property above it. The properties marked [$*$] are introduced in this paper.
  In this figure, \emph{almost bounded flip-width} could be replaced by \emph{monadically dependent} \cite{Shelah1986}; we conjecture that those properties are equivalent. 
  }\label{fig:diagram}
  \end{centering}
  \end{figure*}
We propose a new family of graph parameters, 
called \emph{flip-width} of radius $r$, for $r\in\N\cup\set{\infty}$, that are based on 
new \emph{flipper games}. Those games are similar to the Cops and Robber game considered by Seymour and Thomas to characterize 
 treewidth, in their classic paper~\cite{seymour-thomas-cops}.
Variants of our game can be used to characterize 
all the aforementioned parameters: treewidth, degeneracy, and generalized coloring numbers used in the context of sparse classes, as well as clique-width and  twin-width in the context of dense graph classes. More importantly, they provide generalizations of degeneracy, and of generalized coloring numbers, to the setting of graphs that are not necessarily sparse, and offer a compelling dense counterpart of classes of bounded expansion -- called classes of bounded \emph{flip-width} -- and of nowhere dense classes -- called classes of \emph{almost bounded flip-width}. 
Classes of bounded flip-width include classes of bounded expansion as well as classes of bounded twin-width, and provide a unified framework for understanding those fundamental notions (see Figure~\ref{fig:diagram}.)
We conjecture, and confirm in many special cases, that classes of \emph{almost} bounded flip-width coincide with monadically dependent classes. If true, this would give a combinatorial, quantitative characterization of monadic dependence,
analogous to the characterization of nowhere dense classes in terms of generalized coloring numbers. Moreover, we verify that classes of almost bounded flip-width include most known examples of hereditary graph classes that are known to have a fixed-parameter tractable model-checking problem
(we could not confirm this only for classes with structurally bounded local cliquewidth).

On a high level, the main contribution of this paper is to lay new foundations of a theory of structurally tractable graph classes, extending 
Sparsity theory to the dense setting, and unifying it with Twin-width theory. 
In this paper, we define the fundamental notions of our theory, and provide overwhelming evidence that they are the sought generalizations of the fundamental notions of Sparsity theory to the dense setting.
This evidence is provided by multiple results, demonstrating that various central concepts of structural graph theory can be  uniformly explained in terms of our notions. 
In the next section, we present an overview of our  
main results. 

\section{Overview}
In this section, we give a high-level overview of the main results of this paper. 
These results are discussed systematically in greater detail, with many illuminating examples and proof sketches, in the following sections. 
Many of the statements recounted in this section 
are simplified versions of the more precise statements given in the next sections. The full statement of each result is referenced in parentheses.
All relevant notions are defined in Section~\ref{sec:prelims}. 

\paragraph{Cop-width}
Our starting point is the -- apparently new -- observation 
that classes with bounded expansion can be characterized in terms of  
a variant of the Cops and Robber game considered 
by Seymour and Thomas~\cite{seymour-thomas-cops}. Recall that in this game,  $k$ cops and a robber are occupying the vertices 
of a graph. In each round, some of the cops move to new positions in helicopters -- that is, not necessarily along edges in the graph -- whereas the robber may traverse 
any path in the graph which avoids the vertices occupied by the cops that remain on ground. The minimal number $k$ of cops needed to capture the robber is equal to one plus the treewidth of the graph.

In Section~\ref{sec:copwidth}
we consider a variant of the Cops and Robber game,
in which the robber runs at speed $r$, for some fixed $r\in\N\cup\set{\infty}$: they may traverse a path of length at most $r$ that does not run through a cop. We call the parameter $r$ the \emph{radius} of the game,
while the number of cops is called the \emph{width} of the game.
The \emph{radius-$r$ cop-width} of $G$, denoted $\copw_r(G)$, is the least number $k$  such that $k$ cops win the  game with radius $r$. 

Thus, we obtain a family of graph parameters, one for each $r\in\N\cup\set{\infty}$.
As we observe, variants of the above game 
characterize the graph parameters mentioned earlier:
treewidth (for $r=\infty$), degeneracy (for $r=1$), and generalized coloring numbers (for  $1\le r<\infty$). Namely, the result of Seymour and Thomas can be phrased as follows.

\begin{theorem}[\cite{seymour-thomas-cops}]
  For every graph $G$,
  \[\copw_\infty(G)=\tw(G)+1.\]
\end{theorem}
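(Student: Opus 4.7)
The plan is to establish the two inequalities separately, mirroring the classical Seymour--Thomas argument via a haven/bramble duality.

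\textbf{Upper bound} ($\copw_\infty(G) \le \tw(G)+1$): I would fix a tree decomposition $(T,\{B_t\}_t)$ of $G$ of width $\tw(G)$, rooted arbitrarily, and exhibit a winning strategy for $\tw(G)+1$ cops. The cops maintain the invariant that they currently occupy all vertices of some bag $B_t$, and the robber is confined to a component $C$ of $G-B_t$ whose vertices all lie in bags rooted below a single child $s$ of $t$. Because the robber has infinite radius, after each round her location identifies a unique such component. The cops then lift to $B_s$; by the defining property of tree decompositions, any path from $C$ to outside the subtree below $s$ must pass through $B_s$, so the robber is trapped in a strictly smaller subtree. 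Since $T$ is finite, this terminates with a capture.

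\textbf{Lower bound} ($\copw_\infty(G) \ge \tw(G)+1$): Here I would dualize via \emph{havens}. A haven of order $k+1$ is a function $\beta$ assigning to every set $X\subseteq V(G)$ with $|X|\le k$ a connected component $\beta(X)$ of $G-X$, satisfying the monotonicity $\beta(Y)\subseteq\beta(X)$ whenever $X\subseteq Y$. The argument has two parts. First, if $G$ admits a haven of order $k+1$, then the robber defeats $k$ cops: after the cops commit to $X$, she teleports to any vertex of $\beta(X)$; the monotonicity condition, combined with infinite robber radius, ensures she can always reach her new hideout by a path avoiding the stationary cops. Second, I would show that $\tw(G)<k$ forbids any haven of order $k+1$: taking a smallest bag of a width-$<k$ decomposition as $X$, one tracks $\beta(X)$ into a subtree and iterates, obtaining a strictly decreasing sequence of subtrees that must eventually bottom out in a leaf bag, contradicting the existence of $\beta$.

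\textbf{Main obstacle.} The delicate step is the second part above -- the duality showing that absence of high-order havens gives a narrow tree decomposition -- which, to be self-contained, requires either a careful induction using submodularity of separators (essentially reconstructing the proof of treewidth--bramble duality from scratch) or appealing to Robertson--Seymour-type bramble theory as a black box. An alternative route avoiding havens altogether would be to build a tree decomposition directly out of a winning cop strategy, with the game-tree branching giving the tree $T$ and the cop positions giving the bags; but then one must show that ``monotone'' strategies (where the robber's reachable territory never grows) suffice, and this monotonicity result is itself the nontrivial content of \cite{seymour-thomas-cops}. I would most likely structure the write-up around havens and simply cite the Seymour--Thomas duality for the hardest implication.
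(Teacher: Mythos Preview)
The paper does not give its own proof of this theorem: it is stated with attribution to \cite{seymour-thomas-cops} and used as a black box, the paper explicitly describing it as a rephrasing of ``the result of Seymour and Thomas.'' Your proposal is a faithful outline of the classical Seymour--Thomas argument (tree decomposition gives a cop strategy; a haven gives a robber strategy; the hard direction is the haven/treewidth duality), and you correctly identify that the nontrivial content---monotonicity of cop strategies, or equivalently the haven--treewidth duality---would have to be cited from \cite{seymour-thomas-cops} anyway, which is precisely what the paper does.
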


On the other extreme, for radius $r=1$, we have the following:
\begin{theorem}[Thm. \ref{thm:degeneracy}]\label{intro:degeneracy}
  For every graph $G$,
  \[\copw_1(G)=\deg(G)+1.\]
\end{theorem}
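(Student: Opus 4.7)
The plan is to prove the two inequalities $\copw_1(G) \le \deg(G) + 1$ and $\copw_1(G) \ge \deg(G) + 1$ separately, each by exhibiting an explicit strategy for the appropriate player, exploiting two standard equivalent characterizations of degeneracy.

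For the upper bound, I would fix a degeneracy ordering $v_1 < v_2 < \cdots < v_n$ of $V(G)$, so that every vertex $v_i$ has at most $d = \deg(G)$ back-neighbors; write $B(v_i)$ for this back-neighborhood. The strategy with $d+1$ cops is: whenever the robber sits at $v_i$ at the start of a round, the cops fly so that the new cop set is $B(v_i) \cup \{v_i\}$, a set of size at most $d+1$. The robber may then traverse a path of length at most $1$ avoiding the cops that remain on the ground, and must end on a vertex outside $B(v_i) \cup \{v_i\}$; the only such vertices reachable in one step from $v_i$ are the forward neighbors of $v_i$ in the ordering. Hence the index of the robber's position strictly increases each round, and the game terminates after at most $n$ rounds with the robber caught.

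For the lower bound, I would invoke the equivalent characterization $\deg(G) = \max_H \delta(H)$, where the maximum ranges over induced subgraphs $H$ of $G$. Let $H$ realize this maximum, so $\delta(H) = d$; in particular $|V(H)| \ge d+1$. The robber's evasion strategy against only $d$ cops is to stay within $V(H)$. Initially the robber picks any vertex of $V(H)$ uncovered by the $d$ initial cop positions, which exists since $|V(H)| > d$. In each round, when the cops fly to a set $C'$ of at most $d$ positions, the robber at $v \in V(H)$ has $|N_H[v]| \ge d+1 > |C'|$ options inside $V(H)$ and may select any $u \in N_H[v] \setminus C'$; since $v$ was uncaught and $u \notin C'$, the single-edge (or null) step from $v$ to $u$ avoids the cops on the ground, so the move is legal. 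Thus the robber survives indefinitely against $d$ cops, giving $\copw_1(G) > d$.

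The main technical care, in both directions, is to match the helicopter semantics of the radius-$1$ cop-width game: the cops on the ground during a transition are precisely those appearing in both the old and the new cop set, and the robber's path must avoid exactly these vertices. Since in both strategies the chosen target vertex lies outside the new cop configuration, and the robber's current vertex lies outside the old one (otherwise it would already have been caught), the path automatically avoids this intersection. I do not anticipate any further obstacle: the combinatorial content of each direction is essentially a direct reformulation of the corresponding characterization of degeneracy.
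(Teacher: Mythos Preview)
Your proposal is correct and follows essentially the same approach as the paper: the degeneracy ordering gives the cops' strategy (place on $v$ and its back-neighbours, forcing the robber forward), and a subgraph of minimum degree $d$ gives the robber's haven (the closed neighbourhood in $H$ always outnumbers the $d$ cops). Your explicit verification of the helicopter semantics for radius-$1$ paths is a useful addition but not a departure from the paper's argument.
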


For higher radii, we prove a correspondence of the cop-width parameters with generalized coloring numbers. Recall that a graph class has bounded expansion if and only if each generalized coloring number is bounded by a constant on all graphs from the class.

\begin{theorem}[Cor.~\ref{cor:copw}]\label{intro:BE}
  A graph class $\CC$ has bounded expansion if and only if
  $\copw_r(\CC)<\infty$ for every fixed $r\in\N$.
\end{theorem}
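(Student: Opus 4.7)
The plan is to leverage the classical characterization of bounded expansion in terms of generalized coloring numbers, recalled in the introduction: a class $\CC$ has bounded expansion if and only if $\wcol_r(\CC) < \infty$ for every fixed radius $r \in \N$, and equivalently if and only if $\adm_r(\CC) < \infty$ for every such $r$. It therefore suffices to sandwich $\copw_r(G)$ between two such parameters (at possibly different radii), so that uniform boundedness of $\copw_r$ across $\CC$ coincides with uniform boundedness of the coloring numbers.

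For the upper direction (bounded expansion implies bounded $\copw_r$), I would prove $\copw_r(G) \le h(\wcol_r(G))$ for some function $h$. Given an ordering $\pi$ witnessing $\wcol_r(G) \le k$, the cop strategy maintains the invariant that the robber is trapped in a tail $X \subset V(G)$ of $\pi$, initially $X = V(G)$. At each round, the cops relocate to $\wreach_r(v_\star)$, where $v_\star = \min_\pi X$; since every path of length at most $r$ starting at $v_\star$ and lying in $X$ passes through a vertex of $\wreach_r(v_\star)$, the robber -- whose speed matches the reach radius -- is pushed into the strictly smaller tail $X \setminus \{v_\star\}$. Handling the cops released during the transition between successive configurations may require staging the strategy over several subrounds, or using $O(r \cdot k)$ cops rather than $k$; any polynomial dependence is harmless for the class-level statement.

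For the lower direction (bounded $\copw_r$ implies bounded expansion), the key step is to extract from a winning $k$-cop strategy a witness of sparsity -- for instance, a vertex ordering with bounded $\adm_r$, or a bound on the density of $r$-shallow topological minors of $G$. Contrapositively, a rich $r$-shallow subdivision of a dense graph, or a vertex with many internally disjoint short paths witnessing large $\adm_r$, furnishes the robber with an escape network: whenever the cops threaten one short path, the robber pivots along another. This is the main obstacle, since turning a combinatorial sparsity witness into a robust adaptive robber strategy requires carefully coordinating the pivots with the depth of the shallow minor and ensuring that no single cop configuration simultaneously blocks too many escape routes. The resulting bounds are typically polynomial in the relevant parameters, which is all that the qualitative Theorem~\ref{intro:BE} requires.
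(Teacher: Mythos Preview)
Your lower-bound sketch is essentially correct and matches the paper: a set $X$ witnessing $\adm_r(G) \ge d$ (each vertex of $X$ has $d$ internally vertex-disjoint paths of length at most $r$ to other vertices of $X$) is a haven for the robber, since $d$ cops cannot hit all escape paths from his current vertex in $X$. This gives $\copw_r(G) \ge \adm_r(G) + 1$.

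The upper direction, however, has a genuine gap that staging or extra cops will not close. Your cops guard $\wreach_r(v_\star)$ for $v_\star = \min_\pi X$, ignoring where the robber actually is. Once several vertices $v_\star^{(1)} < \cdots < v_\star^{(t)}$ have been peeled off, a robber sitting at some $u$ in the tail may be within distance $r$ of an early $v_\star^{(j)}$ with $j \ll t$, and nothing in $\wreach_r(v_\star^{(t)})$ --- nor in $\wreach_r(v_\star^{(t-1)})$ if you double the cops --- blocks that move. Already on a perfect matching with vertices $1,\ldots,2n$, edges $\{i,i{+}n\}$, and the natural order (so $\wcol_1 = 1$), your strategy places a single cop at vertex $t$ in round $t$; a robber parked at $n{+}1$ walks back to vertex $1$ as soon as the cop guarding it departs, escaping the tail. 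Your stated justification, about short paths \emph{starting at $v_\star$}, is beside the point since the robber is not at $v_\star$. The paper's strategy differs in two essential ways: the cops occupy the robber's \emph{current position} $v$ together with all $w < v$ weakly $2r$-reachable from $v$, and the reachability radius is $2r$, not $r$. The argument is then that if $m_i$ is the $\pi$-minimum vertex on the robber's $i$-th move, the sequence $m_1, m_2, \ldots$ is strictly increasing: were $m_{i+1} \le m_i$, the concatenation of two consecutive robber paths (total length $\le 2r$) would witness that $m_{i+1}$ is $2r$-weakly reachable from $v_i$, hence occupied by a cop when the robber tries to traverse it. Both ingredients --- tracking the robber and doubling the radius --- are missing from your proposal, and both are needed.
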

Here and later on, for a graph parameter $f$  and graph class $\CC$, we denote the supremum of $f(G)$, for $G\in\CC$, by $f(\CC)$.
In Theorem~\ref{intro:degeneracy}, we show that a witness to having degeneracy $d$ -- 
an ordering of the vertices such that every vertex has at most $d$ vertices before it -- yields a winning strategy for the Cops to win the game of width $d+1$ and radius $1$, in which the Cops move to the current position of the robber and all neighbors before it, thus forcing the robber to move upwards in the order.
Conversely, a witness to having degeneracy larger than $d$ --
a set $U$ of vertices such that every vertex in $U$ has more than $d$ neighbors in $U$ -- yields a winning strategy for the robber, allowing him to always remain in the set $U$.
Similarly, to prove Theorem~\ref{intro:BE}, we consider 
the Cops and Robber game for higher radii $1\le r<\infty$, 
and rely on central duality theorems of Sparsity theory,
which describe combinatorial obstructions 
to having a small \emph{weak coloring number}, in terms of a \emph{dense shallow minor} \cite{grad-and-bounded-expansion-Nesetril}.
We show that a witness to having a small weak coloring number yields a winning strategy for the Cops, while a dense shallow minor\sz{address} yields a winning strategy for the Robber in the Cops and Robber game of a fixed, finite radius.
Those observations are analogues to the central duality result of Seymour and Thomas \cite{seymour-thomas-cops}, which 
shows that a witness to having small treewidth yields a winning strategy for the Cops,
while the dual obstruction to small treewidth,  
called a \emph{haven} or \emph{bramble}, yields a winning strategy for the Robber
in the Cops and Robber game of infinite radius.

Parallel to Theorem~\ref{intro:BE},
we obtain a characterization of nowhere dense classes:
\begin{theorem}[Cor. \ref{cor:copw-nd}]A hereditary graph class $\CC$ is nowhere dense if and only if for every fixed radius $r\in \N$ and $\eps>0$, and for all $n$-vertex graphs $G\in\CC$, we have that  $\copw_r(G)\le O(n^{\eps})$.
\end{theorem}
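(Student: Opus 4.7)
The plan is to combine the quantitative content of Theorem~\ref{intro:BE} (Cor.~\ref{cor:copw}) with the standard characterization of nowhere dense classes via weak coloring numbers, due to Zhu and building on work of \Nesetril and Ossona de~Mendez: a class $\CC$ is nowhere dense if and only if for every $r \in \N$ and every $\eps > 0$, one has $\wcol_r(G) \le n^{\eps}$ for every $n$-vertex $G \in \CC$ with $n$ sufficiently large. Once this characterization is in hand, the task reduces to showing that the qualitative equivalence of Theorem~\ref{intro:BE} can be upgraded to a polynomial correspondence between $\copw_r$ and $\wcol_{r'}$ for related radii $r, r'$.

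For the forward direction, fix $r$ and $\eps > 0$. The cops-from-weak-coloring argument underlying Theorem~\ref{intro:BE} actually yields a quantitative inequality $\copw_r(G) \le \wcol_{f(r)}(G) + 1$, for some function $f$ depending only on $r$: an ordering witnessing a small weak coloring number at radius $f(r)$ is converted into a strategy for the cops in the radius-$r$ game, spending one cop per vertex of the appropriate weakly-reachable set. Applying nowhere denseness at radius $f(r)$ with the desired exponent $\eps$ then gives $\copw_r(G) \le n^\eps + 1 = O(n^\eps)$.

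For the backward direction, assume the cop-width bound, and fix $r$ and $\eps > 0$; we will show $\wcol_r(G) \le O(n^\eps)$. The dual, robber-from-dense-shallow-minor direction of Theorem~\ref{intro:BE} yields a polynomial inequality of the form $\wcol_r(G) \le (\copw_{g(r)}(G))^{c}$, for appropriate $g$ and $c$ depending only on $r$: a graph with large $\wcol_r$ contains a dense shallow minor at bounded depth $g(r)$, from which the robber's winning strategy in the cop-width game is extracted. Applying the hypothesis at radius $g(r)$ with exponent $\eps/c$ yields $\copw_{g(r)}(G) \le n^{\eps/c}$, hence $\wcol_r(G) \le n^{\eps}$, and $\CC$ is nowhere dense.

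The main obstacle is the quantitative accounting in the backward direction: the qualitative form of Theorem~\ref{intro:BE} only ensures that $\wcol$ is bounded iff $\copw$ is bounded, whereas here we need each translation to preserve sub-polynomial growth. This requires tracing through the \Nesetril--Ossona de~Mendez and Kreutzer--Pilipczuk--Rabinovich--Siebertz duality theorems relating weak coloring numbers to dense shallow minors, and through the explicit robber strategies constructed from such minors, to confirm that each reduction introduces only polynomial blow-ups; granted this, the $O(n^\eps)$ bounds propagate as above. The hereditary hypothesis on $\CC$ is needed only to invoke the $\wcol$-characterization of nowhere denseness in its standard form.
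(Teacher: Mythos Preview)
Your proposal is correct and follows essentially the same route as the paper. The paper derives the corollary in one line from Fact~\ref{fact:nd-wcol} together with the explicit two-sided inequality of Theorem~\ref{thm:adm-wcol}, namely $\adm_r(G)+1\le \copw_r(G)\le \wcol_{2r}(G)+1$, and the polynomial relation $\wcol_r(G)\le O(\adm_r(G))^r$ of Fact~\ref{fact:wcol-adm}; this makes your hedged ``granted this'' step concrete with $f(r)=2r$, $g(r)=r$, and $c=r$. The only cosmetic difference is that you phrase the robber's obstruction as a dense shallow minor, whereas the paper's actual lower bound in Theorem~\ref{thm:adm-wcol} uses the admissibility obstruction directly --- but these are polynomially equivalent via Fact~\ref{fact:adm-nabla}, so nothing is lost.
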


As is made apparent by the results above
(and discussed in Section~\ref{sec:copwidth}) the cop-width parameters capture a substantial part of the fundamental notions of Sparsity theory. However, being 
monotone under edge removals, they are only suited to the study of sparse graphs.

\paragraph{Flip-width}
In  Section~\ref{sec:flipwidth},
we introduce a variant of the Cops and Robber game that is suited for dense graphs, dubbed the \emph{flipper game}.
This is similar to the recent development~\cite{shrubdepth-mfcs, flippers},
where in some contexts, it is shown that vertex removals in sparse graphs correspond to \emph{flips} in dense graphs,
and a different variant of the flipper game is considered
(see \emph{Related work} below).

 \emph{Flipping} a pair of sets $X,Y\subset V(G)$
in a graph $G$ results in the graph $G'$
obtained from $G$ by inverting the adjacency of every pair $(x,y)$ of vertices with $x\in X$ and $y\in Y$:
 every such pair that is adjacent in $G$ becomes non-adjacent in $G'$, and vice-versa.
A \emph{$k$-flip} of a graph $G$ is obtained by partitioning $V(G)$ 
into $k$ parts, and then performing flips between some pairs $X,Y$ of parts of the partition (possibly with $X=Y$).\sz{pic}

To motivate the flipper game, 
observe that in the Cops and Robber game of width $k$, we may think 
of the Cops as \emph{isolating} at most $k$ vertices,
instead of placing the Cops on those vertices,
where isolating a vertex amounts to (temporarily) removing all edges that are incident to it.
Isolating a single vertex $v$ in a graph $G$
can be achieved by performing a $3$-flip of $G$:
partition the vertices into $\set{v}$, the set of neighbors of $v$, and the remaining vertices, and flip the first two sets.
Similarly, a graph $G$ obtained by isolating $k$ vertices $v_1,\ldots,v_k$ is a $(k+2^k)$-flip of $G$, by taking the common refinement of the partitions used for isolating each vertex $v_i$ separately. In the flipper game defined below, we will 
allow one player to perform flips, significantly extending their power comparing to the cops in the Cops and Robber game.

The \emph{flipper game} of radius $r$ and width $k$ on a graph $G$ is a game played by two players, \emph{flipper} and \emph{runner}, proceeds as follows.
Initially, the runner picks any vertex $v_0$ of $G$.
In the $i$th round, the flipper announces a $k$-flip $G_i$ of the original graph $G$.
The runner can move from their previous position $v_{i-1}$ to a new position $v_i$,
by traversing a path of length at most $r$ in the \emph{previous} $k$-flip $G_{i-1}$ of $G$ (where $G_0=G$). 
The game is won by the flipper if the runner's new position $v_i$ is isolated in the announced $k$-flip $G_i$.

The \emph{flip-width} of radius $r$ of a graph $G$, denoted $\fw_r(G)$, is the smallest number $k$ for which the flipper has a winning  strategy in the game described above. 
We say that a graph class $\CC$ has \emph{bounded flip-width} if $\fw_r(\CC)<\infty$ for all $r\in \N$.

Note that if $H$ is an induced subgraph of $G$ then 
$\fw_r(H)\le \fw_r(G)$, for all $r$. In particular, a graph class $\CC$ 
has bounded flip-width if and only if its \emph{hereditary closure} -- consisting of all induced subgraphs of graphs in $\CC$ -- has bounded flip-width.

As we argue below,\sz{move earlier?} the flip-width parameters are the sought dense analogues of the generalized coloring numbers,
while classes of bounded flip-width are the dense analogues of classes of bounded expansion. 
Additionally, we show that variants of the flipper game can be used to uniformly characterize several among the most important graph parameters studied in structural graph theory:
degeneracy, treewidth, and generalized coloring numbers, 
as well as clique-width and twin-width. For the first time, this provides a common perspective on parameters such as degeneracy and twin-width, or generalized coloring numbers and clique-width -- all of which can be seen as representations of winning strategies in the flipper game.
This common perspective also allows to view in a unified way various key combinatorial notions representing obstructions -- namely \emph{havens}  or \emph{brambles}  studied in the context of treewidth \cite{seymour-thomas-cops}, \emph{well-linked sets}  studied in the context of clique-width \cite{OumSeymour-approximating}, \emph{dense shallow minors} studied in the context of bounded expansion classes \cite{grad-and-bounded-expansion-Nesetril}, or \emph{rich divisions} studied in the context of twin-width \cite{tww4}. 
Those notions lie at the core of duality theorems for the related parameters, 
and also have important algorithmic applications. 
As we will see, those notions can be seen as representations of winning strategies for the runner in the flipper game.

We now give a summary of our results concerning flip-width.



\paragraph{Relating flip-width to other notions}
We start with comparing  classes of bounded flip-width with the fundamental notions of Sparsity theory and Twin-width theory.

First, regarding the sparse graph parameters -- degeneracy, treewidth, and generalized coloring numbers -- 
we show that those correspond to 
the parameters $\fw_1$, $\fw_\infty$ and $\fw_r$, for $1\le r<\infty$, 
when considering \emph{weakly sparse} graph classes. A graph class is {weakly sparse} if it excludes some fixed biclique as a subgraph. 

\begin{theorem}\label{intro:wsparse}
  Let $\CC$ be a graph class.
  Then: 
  \begin{enumerate}
    \item (Thm. \ref{thm:fw-deg}) $\CC$ has bounded degeneracy if and only if $\CC$ is weakly sparse and $\fw_1(\CC)<\infty$,
     \item (Cor. \ref{col:wsparse-treewidth}) $\CC$ has bounded treewidth if and only if $\CC$ is weakly sparse and $\fw_\infty(\CC)<\infty$,
    \item (Thm. \ref{thm:wsparse}) $\CC$ has bounded expansion if and only if $\CC$ is weakly sparse and $\CC$ has bounded flip-width.
  \end{enumerate}  
\end{theorem}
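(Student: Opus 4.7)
The plan is to combine the cop-width characterizations of Theorems~\ref{intro:degeneracy} and~\ref{intro:BE} (together with Seymour--Thomas \cite{seymour-thomas-cops}) with two inequalities relating flip-width and cop-width: (i) $\fw_r(G)\le g(\copw_r(G))$ on all graphs, for some function $g$, and (ii) $\copw_r(G)\le h(\fw_r(G),t)$ on $K_{t,t}$-free graphs, for some function $h$. The three items of the theorem then follow by specializing to $r=1$, $r=\infty$, and all $r\in\N$, respectively. Weak sparsity of each left-hand side is for free: $K_{t,t}$ has degeneracy $t$, treewidth $t$, and unbounded expansion, so bounded degeneracy, treewidth, or expansion each already exclude some fixed $K_{t,t}$ as a subgraph.

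\textbf{Forward directions (inequality~(i)).} For the flip-width bound we use the \emph{isolation trick} from Section~\ref{sec:flipwidth}: any set $C$ of at most $k$ vertices of $G$ can be simultaneously isolated by a $(k+2^k)$-flip, obtained as the common refinement of the $3$-partitions $(\{v\},N_G(v),V(G)\setminus N_G[v])$ for $v\in C$, with the pair $\{v\}$--$N_G(v)$ flipped. Replacing each cop placement in a winning cop strategy of width $k$ and radius $r$ by the associated isolating flip produces a winning flipper strategy of width $k+2^k$ and the same radius, establishing (i) with $g(k)=k+2^k$.

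\textbf{Backward directions (inequality~(ii)).} The engine is a \emph{simulation lemma}: for every $k$-flip $G'$ of a $K_{t,t}$-free graph $G$, with partition $V_1,\ldots,V_k$ and flipped pairs $F$, there exists $S\subseteq V(G)$ of size at most $O(k^2t)$ such that every $v\in V(G)\setminus S$ that is isolated in $G'$ satisfies $N_G(v)\subseteq S$. The proof uses only the elementary fact that a $K_{t,t}$-free graph contains at most $t-1$ vertices universal to any prescribed set of at least $t$ vertices. Indeed, for $v\in V_i$ to be isolated in $G'$, $v$ must be $G$-adjacent to every vertex of $V_j$ for each $j$ with $(V_i,V_j)\in F$; if any such $V_j$ has $|V_j|\ge t$, at most $t-1$ vertices of $V_i$ qualify, so the ``large flipped'' contribution is at most $k(t-1)$ per $V_i$ and is absorbed into $S$. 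If on the other hand all flipped parts adjacent to $V_i$ are small ($|V_j|<t$), their union has size less than $k t$, every isolated $v\in V_i$ then has $N_G(v)$ contained in the union of small flipped parts over all $i$ (total size at most $k^2 t$), and we absorb this union into $S$ as well.

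\textbf{From simulation to cop strategy, and the main obstacle.} The simulating cop strategy places cops on the set $S_i$ associated with the flipper's $k$-flip $G_i$ in round $i$. Whenever the flipper wins by isolating the runner's position $v_i$ in $G_i$, either $v_i\in S_i$ and the runner is caught by a cop, or $N_G(v_i)\subseteq S_i$ and the runner is cornered in $G$ by the cops on $S_i$. The delicate step is the \emph{path translation}: the runner moves along paths of length at most $r$ in the \emph{previous} flip $G_{i-1}$, not in $G$, and the cops must intercept such moves with cops on $S_{i-1}\cup S_i$. The key point is that any runner vertex $v\notin S_{i-1}$ is not universal to a large flipped part, so each flip-edge of $G_{i-1}$ incident to $v$ either coincides with a $G$-edge at $v$ or routes through $S_{i-1}$. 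Making this ``modulo-$S$'' correspondence precise, and ensuring that the cop budget stays $O(k^2 t)$ uniformly across rounds, is the main technical obstacle; once in place, inequality~(ii) is established, and combining it with $\copw_1=\deg+1$ (Theorem~\ref{intro:degeneracy}) gives~(1), with $\copw_\infty=\tw+1$ (\cite{seymour-thomas-cops}) gives~(2), and with Theorem~\ref{intro:BE} gives~(3).
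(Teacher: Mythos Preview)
Your forward direction~(i) is correct and is exactly what the paper does (Theorem~\ref{thm:copw-fw}). The backward direction, however, does not go through, and the paper takes a completely different route for each item.

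The gap is in the ``path translation''. To show that the cop strategy $S_0,S_1,\ldots$ (derived from the flipper's flips $G_0,G_1,\ldots$) is winning, you must show that every valid \emph{robber} move in $G$---a path of length at most $r$ avoiding $S_{i-1}\cap S_i$---is also a valid \emph{runner} move in $G_{i-1}$; only then does the flipper's win against all runners translate into a cop win against this robber. But a $G$-edge $uw$ with $u\in V_i$, $w\in V_j$ and $(V_i,V_j)$ flipped is \emph{absent} in $G_{i-1}$, and when both $V_i$ and $V_j$ are large (say $\ge t$) there is no set of size $O_k(t)$ that meets every such edge. Your argument actually runs the translation the wrong way (``each flip-edge of $G_{i-1}$ incident to $v$ either coincides with a $G$-edge or routes through $S_{i-1}$''), but even that direction fails for the same reason: a flip between two large parts creates edges in $G_{i-1}\setminus G$ whose endpoints need not be connected in $G$ at all, let alone through $S_{i-1}$. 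In short, no bounded set $S$ controls the symmetric difference $E(G)\triangle E(G')$ when two large parts are flipped, so direct game simulation cannot yield $\copw_r(G)\le h(\fw_r(G),t)$.

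The paper therefore avoids simulation entirely. For~(1), it proves a structural lemma (Lemma~\ref{lem:near-twins}): any graph with $\fw_1\le k$ and more than $bk$ vertices contains $b{+}1$ mutual $2bk$-near-twins; taking $b=t{-}1$ and using $K_{t,t}$-freeness forces a vertex of degree ${<}2kt^2$, hence bounded degeneracy. For~(2), it combines Theorem~\ref{thm:cw} ($\fw_\infty$ is functionally equivalent to rank-width/clique-width) with the Gurski--Wanke theorem that clique-width and treewidth are equivalent on $K_{t,t}$-free classes. For~(3), it shows directly that the principal vertices of an exact $(r{-}1)$-subdivision of a graph of minimum degree $\ge 2rk$ form a $(r,k,k)$-hideout (Proposition~\ref{prop:subdivisions}), giving a lower bound on $\fw_r$ for such induced subgraphs; combined with Dvo\v{r}\'ak's lemma relating $\tilde\nabla_r$ to $\tilde\nabla_r^e$ and with item~(1) for the degeneracy bound, this yields Theorem~\ref{thm:adm-fw} and hence bounded expansion. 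All three backward arguments are obstruction-based rather than strategy-transfer.
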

This shows that the flipper game of radius $r$ is a sensible generalization of the Cops and Robber game of radius $r$ discussed earlier, since we already  know that the above notions (degeneracy, treewidth, bounded expansion) can be characterized in terms of the latter game.
This fact only gives the rightwards implications in the above statements. To prove the converse implications, we rely on characterizations of $K_{t,t}$-free graphs with small degeneracy/treewidth/expansion in terms of forbidden induced subgraphs.
Such induced subgraphs can be exploited by the runner to win the flipper game.

We then move to studying the flip-width parameters 
in graph classes that are not necessarily sparse.
As a first case study, we verify that 
the flipper game with radius $\infty$ 
corresponds to clique-width, similarly
as the Cops and Robber game corresponds to treewidth.
Namely, we prove:
\begin{theorem}[Thm. \ref{thm:cw}]\label{intro:cw}
  A class $\CC$ of graphs has bounded clique-width 
  if and only if $\fw_\infty(\CC)<\infty$.
\end{theorem}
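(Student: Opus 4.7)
The plan is to establish the theorem as the $r=\infty$ analogue of the Seymour--Thomas characterization of treewidth, with rank-decompositions playing the role of tree-decompositions, and flips replacing cop placements. Since bounded clique-width and bounded rank-width are functionally equivalent, I work with rank-width throughout, aiming to establish quantitative bounds $\rw(G) \le g(\fw_\infty(G)) \le g'(\rw(G))$ for some functions $g,g'$.

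For the forward direction (bounded clique-width implies bounded $\fw_\infty$), I would start from a rank-decomposition of $G$ of width $k$ and design a flipper strategy by recursive bisection. The key lemma is that a cut $(A,B)$ of cut-rank at most $k$ in $G$ can be neutralized by a flip of width at most $2^{k+1}$: the vertices of $A$ fall into at most $2^k$ equivalence classes according to their neighborhood profile across the cut (and symmetrically for $B$), and flipping the appropriate pairs of these classes yields a flip-graph with no edges between $A$ and $B$. The flipper, at each round, announces such a flip corresponding to a balanced edge of the subtree of the rank-decomposition currently containing the runner. Since the runner has infinite radius, after one round they are confined to the connected component of their previous position in the flip-graph, i.e.\ to one side of the cut. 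Iterating, the feasible subtree shrinks, and in $O(\log |V(G)|)$ rounds the runner is confined to a single vertex which the flipper isolates. The width of every flip is $2^{O(k)}$, independently of $|V(G)|$, yielding $\fw_\infty(G) \le 2^{O(k)}$.

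For the backward direction (bounded $\fw_\infty$ implies bounded clique-width), I would proceed contrapositively using the known combinatorial obstructions to small rank-width: if $\rw(G)$ is large, then $G$ contains a large \emph{well-linked set} $W$ with respect to the cut-rank function, i.e.\ a vertex set such that every roughly balanced bipartition of $V(G)$ with both parts meeting $W$ substantially has large cut-rank. I would then show that $W$ yields a winning runner strategy against any flipper of width $k$: the runner maintains the invariant of occupying a vertex of $W$, and after the flipper announces a $k$-flip $G'$, well-linkedness of $W$ (which persists in $G'$, since the cut-rank of any bipartition changes by at most $O(k)$ under a $k$-flip) guarantees a non-isolated vertex $w \in W$ reachable from the runner's previous position via a path in the previous flip-graph. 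Hence the runner never gets isolated, contradicting $\fw_\infty(G) \le k$.

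The main obstacle will be making the backward direction quantitatively precise — specifically, proving the technical lemma that bounds by $O(k)$ the change in cut-rank of any bipartition under a $k$-flip. This ensures that well-linked sets of sufficient size in $G$ remain well-linked (with weaker parameters) in every $k$-flip, enabling the runner to escape indefinitely. Once this cut-rank perturbation lemma is established, the rest of the argument should follow along the lines of the Oum--Seymour style duality between rank-decompositions and well-linked obstructions, adapted to account for flips rather than cuts.
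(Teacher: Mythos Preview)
Your proposal is correct and follows essentially the same route as the paper. Both directions match: for bounded rank-width $\Rightarrow$ bounded $\fw_\infty$, the paper also neutralizes low-rank cuts via $O(2^k)$-flips and drives the runner down a rank-decomposition tree (the paper roots the tree and proceeds level by level rather than picking balanced edges, but this is cosmetic); for the converse, the paper also uses Oum--Seymour well-linked sets together with exactly the rank-perturbation lemma you identify as the main obstacle---namely that a $k$-flip changes the cut-rank of any bipartition by at most $k$ (since the symmetric difference $G\triangle G'$ has rank at most $k$ across every cut), so a bipartition with $\rk_G(A,B)>k$ must still have an edge across it in every $k$-flip $G'$.

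One organizational difference worth noting: the paper packages the runner's escape argument through an intermediate abstraction called a \emph{hideout} (a set $U$ such that in every $k$-flip, at most $d$ vertices of $U$ have at most $d$ vertices of $U$ in their connected component), and proves separately that (i) any hideout yields a winning runner strategy and (ii) a well-linked set of size ${>}3k$ is an $(\infty,k,k)$-hideout. This cleanly separates the two-round bookkeeping (the runner moves in the \emph{previous} flip to a vertex that is good for the \emph{next} flip) from the combinatorics of well-linkedness. Your sketch slightly blurs this---you write that well-linkedness ``persists in $G'$'' and guarantees a reachable vertex ``in the previous flip-graph'', which mixes the roles of $G_{i-1}$ and $G_i$---but the hideout invariant is precisely what makes this work, and your identification of the cut-rank perturbation lemma as the crux is exactly right.
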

This gives the first, to our knowledge, game characterization of classes of bounded clique-width,
analogous to the classic characterization of treewidth by Seymour and Thomas.
To prove the rightwards implication, we view 
a clique-width expression (used to construct a graph of bounded clique-width) as a description of a winning strategy for the flipper in the radius-$\infty$ flipper game.
For the converse implication, again we show that obstructions for bounded clique-width -- called \emph{well-linked sets} -- yield winning strategies for the runner.

As clique-width is the dense analogue of treewidth,
Theorem~\ref{intro:cw} is another indication that our flipper game 
is an adequate generalization of the Cops and Robber game for the study of dense graph classes. In Section~\ref{sec:tww}
we provide further evidence of this,
by demonstrating that another crucial graph parameter 
studied in the context of dense graphs, twin-width,
can be characterized in terms of flip-width.
First, we show that classes of bounded flip-width include all classes of bounded twin-width. Those include, 
for example, the class of unit interval graphs,
or every proper hereditary class of permutation graphs\sz{more}.
\begin{theorem}[Thm. \ref{thm:btww}]\label{intro:btww}
  Every class of bounded twin-width has bounded flip-width.
\end{theorem}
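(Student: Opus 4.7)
The plan is to build a winning flipper strategy directly from a contraction sequence witnessing bounded twin-width, showing $\fw_r(G) \le f(d,r)$ for some function $f$ depending only on $d$ and $r$. Let $G$ have $\tww(G)\le d$, and fix a $d$-sequence $\mathcal{P}_n \succ \mathcal{P}_{n-1} \succ \cdots \succ \mathcal{P}_1$: every quotient trigraph has red degree at most $d$, where a pair of parts is declared red when it is neither complete nor empty in $G$. Let $T$ denote the associated binary contraction tree with leaves $V(G)$; for a node $t \in T$, write $V_t \subseteq V(G)$ for the set of leaves below $t$.

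The flipper's strategy maintains, after round $i$, the invariant that the runner's position $v_i$ lies in $V_{t_i}$ for some node $t_i \in T$ of strictly increasing depth, so that after at most the height of $T$ many rounds, $V_{t_i}$ is a single vertex. In round $i+1$, given $v_i$ and $t_i$, the flipper locates the level of $T$ containing $t_i$ (i.e.\ the partition $\mathcal{P}_j$ whose parts are the sets $V_s$ for $s$ at the same depth as $t_i$), and announces the $k$-flip $G_{i+1}$ whose partition consists of the two children of $t_i$, the $r$-fold iterated red-neighborhood of $V_{t_i}$ at this level (a union of at most $1+d+d^2+\cdots+d^r = O(d^r)$ parts), and one background part collecting everything else; between each pair of parts that is fully adjacent in the quotient trigraph, the flipper performs a flip. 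In the resulting $G_{i+1}$, edges incident to $V_{t_i}$ survive only across red pairs at the chosen level, so a radius-$r$ walk in $G_i$ starting from $v_i$ cannot escape the iterated red-neighborhood, and inside $V_{t_i}$ the runner is forced into a specific child, which becomes $t_{i+1}$. Once $t_i$ reaches a leaf, the final announced flip isolates that leaf, winning the game.

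The main obstacle, and the source of all the work, is the interaction between the radius $r$ and the red-degree bound: because the runner's move is made along a path of length $\le r$ in the \emph{previous} flip $G_i$, the flipper in round $i$ must already anticipate $r$ rounds of red-hopping, forcing the partition to include the $r$-iterated red-neighborhood and inflating the part budget to $d^{O(r)}$. Verifying the invariant round-to-round — in particular, that the $r$-ball of $v_i$ in $G_i$ is covered by the iterated red-neighborhood of $V_{t_i}$ at the chosen level — requires a careful induction using only the red-degree bound along the contraction sequence; this is the technical crux. No conceptual ingredient beyond the contraction sequence itself is needed, and the yielded bound $f(d,r) = d^{O(r)}$ is finite for every fixed $r$, establishing bounded flip-width of the class.
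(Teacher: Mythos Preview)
Your high-level plan---use the contraction sequence and constrain the runner to a shrinking red neighborhood---is exactly the paper's approach. However, the construction you describe has a genuine gap at the step where you lump the complement of the red-ball into ``one background part collecting everything else''.

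The assertion that ``edges incident to $V_{t_i}$ survive only across red pairs at the chosen level'' is false for this partition. The background $B$ is a union of parts of $\mathcal{P}_j$, each of which is individually homogeneous towards $V_{t_i}$; but some of those parts are complete to $V_{t_i}$ and others are anti-complete. Consequently $V_{t_i}$ (and each of its children $C_1,C_2$) is \emph{not} homogeneous towards $B$ as a single set, and flipping or not flipping the pair $(C_1,B)$ cannot remove all edges between them. The runner can therefore step from $V_{t_i}$ directly into $B$ in one move, and from there the $r$-ball is uncontrolled. The paper repairs this by refining the background according to neighborhood type in the red-ball: two vertices of $R$ are placed in the same part iff they have the same neighborhood in $\bigcup B_{2r-1}^i(\mathcal{F})$. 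Each refined part \emph{is} homogeneous towards every part in the red-ball, so flipping the complete pairs does kill all crossing edges. This is precisely where the shatter function $\pi_G$ enters, and why the bound obtained is $\fw_r(G)\le\pi_G(d^{O(r)})\le 2^d\cdot d^{O(r)}$ rather than your claimed $d^{O(r)}$; the extra factor counts neighborhood types and is not removable by your construction.

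A secondary issue: the invariant ``$v_i\in V_{t_i}$ with strictly increasing depth'' is too strong. Even with a correct flip, the runner's $r$-ball in $G_i$ lands in a red-$r$-ball of parts of $\mathcal{P}_i$, not in a single part. The paper's invariant is accordingly weaker---$A_i\subset\bigcup B_r^i(c_i)$---and progress comes from the uncontraction sequence $\mathcal{P}_1,\ldots,\mathcal{P}_n$ refining down to singletons, not from descending a fixed contraction tree one node per round.
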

The proof of this result views \emph{contraction sequences}, which are recursive decompositions of graphs of bounded twin-width, as strategies for the flipper in the flipper game.

By the results above,
classes of bounded flip-width 
include all classes with bounded expansion as well as classes of bounded twin-width. As those two notions are incomparable, classes of bounded flip-width strictly extend each of them.
In particular, the converse to Theorem~\ref{intro:btww} fails.

To characterize twin-width using flip-width, 
we naturally extend the flip-width parameters to 
arbitrary structures equipped with one or more binary relation, such as
\emph{ordered graphs} -- graphs equipped with a total order.
We prove that for classes of ordered graphs, bounded flip-width and bounded twin-width coincide:
\begin{theorem}[Thm. \ref{thm:tww1}]\label{intro:tww}
  A class of ordered graphs has bounded twin-width 
  if and only if it has bounded flip-width.
\end{theorem}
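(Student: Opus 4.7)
The proof has two directions. The forward direction, bounded twin-width implies bounded flip-width for ordered graphs, extends the argument sketched for Theorem~\ref{intro:btww} to the binary-structure setting. A contraction sequence for an ordered graph $G$ yields a hierarchy of partitions with bounded ``error'' (governed by the twin-width), and each such partition can serve as the basis of a $k$-flip by the flipper exactly as in the unordered case; the order relation contributes nothing beyond being recorded alongside the edge relation inside each error class. I would verify that the definition of flip-width extends uniformly to arbitrary binary structures and that the strategy-building of Theorem~\ref{intro:btww} transfers without change.

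The converse is the substantive content. My plan is to argue contrapositively: if $\CC$ is a class of ordered graphs of unbounded twin-width, I want to exhibit some $r\in\N$ and arbitrarily large $k$ for which the runner wins the flipper game of width $k$ and radius $r$ on some member of $\CC$. The essential input is the combinatorial obstruction theorem of Twin-width~IV~\cite{tww4}: an ordered graph of large twin-width contains a \emph{rich division} of large order, i.e.\ a partition of its vertex set into consecutive intervals $V_1<\cdots<V_N$ whose bi-adjacency matrix has grid-like diversity, for instance witnessed by a large mixed minor. This is the ordered-specific ingredient with no direct unordered analogue, and precisely explains why the converse of Theorem~\ref{intro:btww} fails in general but holds once an order is present.

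From a rich division of order $N$ in $G\in\CC$, I would construct a runner strategy in the flipper game of width $k$ and radius $r$, valid whenever $N$ is sufficiently large relative to $k$ and $r$. The invariant is that the runner currently sits in a sub-collection of intervals whose richness has been preserved through all previously announced flips. The key preservation fact is that a $k$-flip can degrade the bi-pattern diversity between two intervals only by a bounded factor: a $k$-flip contributes at most $k^2$ monochromatic patches to each pair of intervals, so a rich division of order $N$ remains nearly rich inside each flipped graph, losing only a factor polynomial in $k$ per round. Iterated pigeonhole then shows that a division whose order is chosen doubly exponential in $k$ and in the round count retains enough richness throughout the game that the runner can always find a non-isolated vertex of the current $k$-flip inside a surviving interval near his previous position.

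The main obstacle is guaranteeing that the chosen non-isolated vertex is actually reachable along a path of length at most $r$ in the \emph{previous} flipped graph. This forces me to extract a sub-obstruction from the rich division that is not only diverse but geometrically well-behaved, so that any two surviving intervals are short-path-connected after every flip. Both the richness preservation and the connectivity control should be obtainable from the matrix-grid machinery of~\cite{tww4}, and the quantitative interplay between $N$, $k$, $r$, and the number of rounds is the technical crux; conceptually, however, the construction is a translation into flipper-game language of the standard lower-bound arguments for twin-width on ordered graphs.
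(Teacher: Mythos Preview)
Your forward direction is fine and matches the paper. The converse has a genuine gap, and your description of the obstruction is also slightly off.

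First, a rich division in \cite{tww4} is not a single interval partition but a \emph{pair} $(\mathcal L,\mathcal R)$ of interval partitions, and the definition is symmetric in the two. The paper's runner strategy exploits this by alternating: in even rounds the runner sits in a part of $\mathcal L$, in odd rounds in a part of $\mathcal R$. Each transition uses richness once, against the \emph{current} flip only. There is no iterated degradation, no doubly-exponential blowup, no ``surviving sub-collection'': a $(k+1)$-rich division defeats width $k$ outright, for every number of rounds.

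Second, and more importantly, you correctly identify reachability as the crux but do not resolve it. The paper's solution is to introduce a tailored variant of the game for ordered graphs, the \emph{ordered flipper game}, in which the flipper plays a $k$-flip of the edge relation together with a set $S$ of at most $k$ \emph{cut points}; the runner may then move for free (weight $0$) within any $<$-interval not separated by $S$, and pays weight $1$ only for genuine flipped edges. One first shows (Lemma~\ref{lem:ordered flip-width}) that this ordered flip-width is functionally equivalent to the binary-structure flip-width. In the ordered game, reachability is trivial: if the runner's current part $A\in\mathcal L$ contains no cut point of the \emph{previous} move, he can reach any vertex of $A$ for free; richness plus a single pigeonhole on the $k$ parts of the current flip then produces an edge from some $a\in A$ into a part of $\mathcal R$ avoiding the \emph{new} cut points, and the runner follows it at cost $1$. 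So radius $1$ suffices in the ordered game, and the whole argument is a few lines (Lemma~\ref{lem:rich}).

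Without this cut-flip reformulation, your plan to make ``any two surviving intervals short-path-connected after every flip'' in the raw binary-structure game is not obviously feasible: a $k$-flip of the order relation can scramble the Gaifman graph quite badly, and you give no mechanism to control paths through it.
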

The rightwards implication follows from Theorem~\ref{intro:btww}.
To prove the converse, we view the combinatorial obstructions to small twin-width, called \emph{rich divisions} \cite{tww4}, as descriptions of winning strategies for the runner in the flipper game.

It is known \cite{tww4} that a class of graphs has bounded twin-width if and only if it can be obtained from a class of ordered graphs of bounded twin-width, by forgetting the order. Thus, classes of bounded twin-width
are exactly classes of ordered graphs of bounded flip-width, with the order forgotten.


\medskip

Reassuming, variants of our flipper game capture degeneracy, treewidth, bounded expansion, clique-width, and twin-width, all of which are of central importance in structural and algorithmic graph theory. Moreover, structural results 
 can be employed to describe winning strategies for the flipper,
while the dual combinatorial obstructions can be used to obtain strategies for the runner.


\paragraph{Closure properties}
Another main contribution of this paper are results showing that classes of bounded flip-width enjoy good closure properties, in particular,
closure under first-order \emph{interpretations} (and more generally, \emph{transductions}, see Theorem~\ref{thm:interpretations}). 
More precisely, for a first-order formula $\phi(x,y)$
and graph $G$, define the graph $\phi(G)$ with vertices $V(G)$ and edges $uv$ with $u\neq w$, such that $\phi(u,v)\lor\phi(v,u)$ holds in $G$. For a graph class $\CC$, let $\phi(\CC):=\setof{\phi(G)}{G\in\CC}$.
For example, if $\phi(x,y)$ is the formula 
$\exists z.E(x,z)\land E(z,y)$ (where $E$ denotes adjacency), then $\phi(G)$ is the square of $G$. In Section~\ref{sec:transductions}, we prove the following.

\begin{theorem}[Thm. \ref{thm:interpretations}]\label{intro:trans}
  Let $\phi(x,y)$ be a first-order formula
  and $\CC$ be a graph class.
  If $\CC$  has bounded flip-width then $\phi(\CC)$ has bounded flip-width.
\end{theorem}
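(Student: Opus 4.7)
The plan is to construct a winning strategy for the flipper on $\phi(G)$ by simulating a winning strategy for the flipper on $G$ at a suitably larger radius, invoking the hypothesis that $\CC$ has bounded flip-width. The width and radius used on $G$ will depend only on $\phi$ and on the target radius $r$.

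I would first reduce to a local form for $\phi$ via Gaifman's locality theorem: $\phi(x,y)$ is equivalent to a Boolean combination of basic local sentences together with $s$-local formulas in the free variables, for some $s=s(\phi)\in\N$. The basic local sentences evaluate to finitely many truth patterns on any $G$, contributing only a bounded case split. After this reduction, whether $\phi(u,v)$ holds on $G$ is determined either by a short witness (if $u,v$ are at $G$-distance at most $2s$) or by the individual $s$-local types of $u$ and $v$ (in the ``far'' case, which decomposes as $\psi_1(u)\wedge\psi_2(v)$ for $s$-local $\psi_1,\psi_2$). In particular, each vertex has one of $T=T(\phi)$ possible $s$-local types.

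The simulation then works as follows. To play the flipper game on $\phi(G)$ at radius $r$, the flipper maintains a parallel game on $G$ at radius $r':=r\cdot(2s+1)$; this is enough to accommodate the walks in $G$ that witness length-$r$ paths in any flip of $\phi(G)$, since each $\phi(G)$-edge on such a path either reflects a short $G$-walk of length at most $2s+1$ (close case) or is type-determined (far case, handled directly by the refinement below without needing extra $G$-distance). By hypothesis, there is a winning strategy for the flipper on $G$ of width $k':=\fw_{r'}(G)<\infty$. When the simulated $G$-flipper announces a $k'$-flip with underlying partition $\pi$, the $\phi(G)$-flipper announces the partition $\pi'$ obtained by refining each part of $\pi$ according to the $s$-local type of its vertices, yielding width at most $k'\cdot T$, and selects the flip-pairs between parts of $\pi'$ so that exactly the $\phi$-adjacencies prescribed by the type behaviour between refined parts are cancelled.

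The main obstacle is that a $k$-flip of $G$ is not automatically a flip of $\phi(G)$, since the two graphs have different adjacency relations. The resolution is precisely the refinement by $s$-local types: once parts of $\pi'$ are type-homogeneous, $\phi$-adjacency between a pair of parts in the far case is uniform, and can therefore be toggled by a single flip between those parts; short-range $\phi$-adjacencies in the close case are neutralised because the $G$-strategy controls $G$-walks of length up to $r'\ge 2s+1$. Consequently, once the winning $G$-strategy isolates the runner's vertex in the announced $G$-flip, the $\phi(G)$-flip constructed above also isolates it, giving the bound $\fw_r(\phi(\CC))\le T\cdot\fw_{r'}(\CC)<\infty$, as required.
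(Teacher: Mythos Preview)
There is a genuine gap. The transfer you need (and the paper's Lemma~\ref{lem:strategy-transfer}) requires that for every $k'$-flip $G'$ of $G$ you produce a bounded-width flip $\phi(G)'$ of $\phi(G)$ such that every edge of $\phi(G)'$ has its endpoints at bounded distance \emph{in $G'$}. Your construction refines the partition $\pi$ of $G'$ by the $s$-local types computed in the \emph{original} graph $G$, and flips according to the far-case behaviour of $\phi$ in $G$. This indeed kills all $\phi$-edges between vertices that are far in $G$; the surviving edges of $\phi(G)'$ therefore have endpoints at $G$-distance $\le 2s$. But that says nothing about their $G'$-distance: vertices can be at distance $1$ in $G$ and at distance $\infty$ in $G'$ (take $G$ a clique and $G'$ its edgeless complement). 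So the runner's $r$-step walk in $\phi(G)'$ need not correspond to a bounded-length walk in $G'$, and the simulated $G$-strategy gives you no control. The sentence ``short-range $\phi$-adjacencies in the close case are neutralised because the $G$-strategy controls $G$-walks'' conflates $G$ with $G'$: the strategy controls walks in the \emph{flip} $G'$, not in $G$.

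The missing idea, which the paper supplies, is to relativise $\phi$ to the flip: view $G'$ as a $k'$-coloured graph (colours marking the parts of $\pi$), and replace each edge atom $E(z,t)$ in $\phi$ by a quantifier-free formula $\varepsilon(z,t)$ that recovers $G$-adjacency from $G'$-adjacency and the colours. The resulting formula $\phi'$ has the same quantifier rank as $\phi$ and satisfies $G'\models\phi'(u,v)\iff G\models\phi(u,v)$. Now apply locality to $\phi'$ \emph{in $G'$}: for $u,v$ far in $G'$, the truth of $\phi'(u,v)$ depends only on the $G'$-local types of $u$ and $v$. Flipping $\phi(G)$ according to these types yields a flip whose surviving edges have bounded $G'$-distance, as required. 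Note the number of local types is now $T_q(k'\cdot c)$ rather than a constant $T(\phi)$, because the colouring (hence the number of types) depends on $k'$; this is why the paper's width bound is a tower in $q$ rather than the linear bound $k'\cdot T$ you claimed.
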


This generalizes a prior analogous result for classes of bounded twin-width~\cite{tww1} and a result concerning classes of bounded expansion.
Also, this result provides further examples of classes of bounded flip-width, for instance, classes of \emph{structurally bounded expansion} \cite{lsd-journal}, that is, classes of the form $\phi(\CC)$, where $\CC$ has bounded expansion and $\phi(x,y)$ is a first-order formula. Moreover, Theorem~\ref{intro:trans} implies that classes of bounded flip-width are monadically dependent,
a notion that conjecturally delimits the
 tractability frontier for the model checking problem (see Conjecture~\ref{conj:mnip-mc}).\sz{delineated by tww}
 
 The proof of Theorem~\ref{intro:trans} uses a tool from finite model theory -- namely,  locality of first-order logic -- that has so far been used mostly in the context of sparse graph classes, but  more recently has also been successfully applied in the context of dense graph classes \cite{tww1,boundedLocalCliquewidth,dreierMS}.

\paragraph{Approximation}
Determining the exact flip-width of radius $r$ of a given graph $G$ 
seems computationally difficult (the naive approach is exponential in the size of $G$). As our next contribution, in Section~\ref{sec:definable},
we obtain a slicewise polynomial (XP) algorithm that approximates the flip-width of a given graph $G$, which runs in time polynomial in the size of $G$, when the flip-width is considered fixed.

\begin{theorem}[Thm.~\ref{thm:apx}]\label{intro:apx}
  There is a constant $C>0$ and an algorithm that, given as inputs an $n$-vertex graph~$G$ and numbers $r,k\in\N$,
  runs in time $n^{O(k)}\cdot 2^{O(2^k)}$,
  and either concludes that $\fw_r(G)\le 2^k$, 
  or concludes that ${\fw_{5r}(G)\ge C \cdot k^{1/3}}$.
\end{theorem}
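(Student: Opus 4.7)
The plan is to build the algorithm around a search over a restricted but expressive family of positional strategies for the flipper, and to complement it with a quantitative duality that extracts a witness for $\fw_{5r}(G) \ge C k^{1/3}$ whenever the search fails.

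First, I would recast the flipper game so that it suffices for the algorithm to certify \emph{positional} strategies, in which the flipper's move at round $i$ is determined by the runner's current position $v_{i-1}$. The key is to argue that a $2^k$-flip winning strategy can be canonically decomposed into $k$ iterated binary refinements of the vertex set: at each of $k$ levels, every current part is split in two using a constant number of ``anchor'' vertices together with a flip template. This factoring makes the width $2^k$ while shrinking the description to size polynomial in $k$ per level; it is lossless for the YES-output, since one only needs to certify the existence of \emph{some} winning strategy.

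Second, the algorithm enumerates all such factored strategies. A candidate is a rooted binary tree of depth $k$, whose internal nodes store $O(1)$ anchors drawn from $V(G)$ and whose $2^k$ leaves store flip patterns on the induced parts. The number of trees is $n^{O(k)} \cdot 2^{O(2^k)}$, matching the claimed runtime. For a fixed tree, checking whether it yields a winning flipper strategy at radius $r$ reduces to a polynomial-time BFS reachability computation in each of the $2^k$ flipped graphs: verify that no path of length $\le r$ lets the runner escape from the target part while avoiding the isolated vertex prescribed by the leaf.

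Third, and this will be the main obstacle, I must prove the quantitative converse: if no depth-$k$ factored strategy succeeds, then $\fw_{5r}(G) \ge C k^{1/3}$. The plan is to extract, from the exhaustive failure of the search, a subset $X \subseteq V(G)$ that plays the role of a \emph{rich division} (as in the twin-width obstruction of~\cite{tww4}) or a \emph{well-linked set} (as in the clique-width obstruction of~\cite{OumSeymour-approximating}), adapted to the flipper setting. Concretely, failure at every depth-$k$ refinement means that for every partition of $V(G)$ into at most $2^k$ parts, a sizable subfamily of vertices exhibits pairwise distinct $r$-local types modulo every candidate flip pattern; a Sauer--Shelah-style counting then extracts a shattered substructure of VC-dimension $\Theta(k^{1/3})$, from which the runner builds an explicit evasion strategy against $C k^{1/3}$ flips at radius $5r$. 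The radius blow-up by a factor of $5$ arises from chaining several BFS shells to connect the runner's successive positions across one refinement plus the flipper's disturbance, while the exponent $1/3$ reflects the Sauer--Shelah trade-off between the three parameters controlling the obstruction: the number of parts ($2^k$), the number of flip patterns on those parts ($2^{O(2^k)}$), and the dimension of the resulting shattered family. Carrying out this extraction uniformly in $n$, so that the runner's resulting strategy survives arbitrary $C k^{1/3}$-flips rather than only those seen by the algorithm, is the technical heart of the argument.
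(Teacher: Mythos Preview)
Your plan diverges substantially from the paper's argument, and in its current form it has a genuine gap on both sides of the dichotomy.

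On the search side, the paper does \emph{not} enumerate tree-shaped refinements. Instead it introduces a restricted game parameter, the \emph{definable flip-width} $\dfw_r(G)$, in which every flip the flipper plays must be an $S$-definable flip for some $S\subseteq V(G)$ with $|S|\le k$ (the partition is by $N(\cdot)\cap S$). A configuration is then a triple $(S,\text{flip pattern on $S$-types},\text{runner position})$, of which there are $n^{O(k)}\cdot 2^{O(2^k)}$, so one solves the game exactly by fixpoint computation. If $\dfw_r(G)\le k$ one immediately gets $\fw_r(G)\le 2^k$ since every $k$-definable flip is a $2^k$-flip. Your claim that an arbitrary $2^k$-flip strategy ``canonically decomposes into $k$ iterated binary refinements each described by $O(1)$ anchors'' is not justified and appears false in general: a partition into $2^k$ parts need not arise from $k$ splits each determined by a bounded number of vertices, and nothing in the paper relies on such a decomposition.

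On the converse side, the paper does not extract a combinatorial obstruction (rich division, well-linked set, or a shattered family). It instead proves directly that $\dfw_r(G)\le O(\fw_{5r}(G)^3)$, via the following mechanism: if $\fw_{5r}(G)=k$, then $\VCdim(G)\le O(k)$ (Theorem~\ref{thm:vcdim}), and then Lemma~\ref{lem:incr}, a consequence of the Alon--Kleitman--Matou\v{s}ek $(p,q)$-theorem, shows that for every $k$-flip $G'$ there is an $O(dk^2)=O(k^3)$-\emph{definable} flip $H'$ of $G$ such that adjacent vertices in $H'$ are at distance $\le 5$ in $G'$. Transferring the radius-$5r$ flipper strategy on $G$ through this map (Lemma~\ref{lem:strategy-transfer}) yields a radius-$r$ \emph{definable} flipper strategy of width $O(k^3)$. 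So the factor $5$ comes from Lemma~\ref{lem:incr}, and the exponent $1/3$ is just the inverse of the cube $d\cdot k^2$ with $d=O(k)$; neither arises from a Sauer--Shelah trade-off of the kind you describe. Your third step, as written, does not supply any concrete argument for why failure of the search forces large $\fw_{5r}$, and the route you sketch (extracting a hideout-like witness from exhaustive failure over all depth-$k$ trees) would need an entirely new duality theorem that the paper explicitly leaves open (see Question~\ref{q:hideouts}).
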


Note that no XP algorithm approximating twin-width is known. This suggests that flip-width might be easier to approximate than twin-width, even if it defines a more general notion.

To prove Theorem~\ref{intro:apx},
we define a variant of the flipper game, called the \emph{definable flipper game},
in which the partitions used by the Cops
for defining flips are \emph{definable}: 
they partition the vertex set 
according to their adjacency in a set of vertices of bounded size.
The advantage of this game variant is that it has polynomially many (in terms of the graph size) possible configurations, as opposed to the flipper game, which has exponentially many configurations.
Although not every $k$-flip of a graph $G$ can be defined by using a partition definable using a vertex set of bounded size, we prove that nevertheless the graph parameters defined by the definable flipper game can  still be bounded in terms of the flip-width parameters.
The proof 
uses tools from Vapnik-Chervonenkis theory.

\paragraph{Almost bounded flip-width}
As our final contribution, in Section~\ref{sec:subpoly}
we introduce and study classes of almost bounded flip-width, as a candidate dense counterpart of  nowhere dense classes. 
A graph class $\CC$ has \emph{almost bounded flip-width}
if for every $\eps>0$ and $r\in\N$,
we have $\fw_r(G)\le O_{r,\eps}(n^{\eps})$ for every
$n$-vertex graph $G$ which is an induced subgraph of a graph in $\CC$.
Thus, every class of bounded flip-width has almost bounded flip-width, but the converse does not hold.

We provide substantial evidence that classes of almost bounded flip-width coincide with monadically dependent classes (cf. Conjecture~\ref{conj:mnip-mc}),
and also that the model checking problem is 
fixed-parameter tractable for those classes.
This is corroborated by the following results,
which examine this notion in special cases.

First, in the setting of weakly sparse classes, we prove:
\begin{theorem}[Thm.~\ref{thm:nd-are-subpoly}]
  Let $\CC$ be a weakly sparse graph class.
  The following conditions are equivalent:
  \begin{enumerate}
    \item $\CC$ has almost bounded flip-width,
    \item  $\CC$ is nowhere dense,
    \item  $\CC$ is monadically dependent.
  \end{enumerate}
\end{theorem}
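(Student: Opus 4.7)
The proof establishes (2) $\Leftrightarrow$ (1) directly, while (2) $\Leftrightarrow$ (3) is classical in the weakly sparse setting: (2) $\Rightarrow$ (3) is the theorem of Adler and Adler \cite{AdlerA14} that nowhere dense classes are monadically stable (hence monadically dependent), and (3) $\Rightarrow$ (2) for weakly sparse classes follows from Braunfeld and Laskowski \cite{Braunfeld2021CharacterizationsOM} together with the classical fact that a monotone monadically dependent class is nowhere dense.

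For (2) $\Rightarrow$ (1), I would use the cop-width characterization of nowhere denseness (Corollary~\ref{cor:copw-nd}), which gives $\copw_r(G) \le O_{r,\eps}(n^{\eps/2})$ for every $n$-vertex induced subgraph $G$ of a graph in $\CC$. A winning cop strategy of width $k$ and radius $r$ is simulated by the flipper as follows: each cop configuration $S = \{v_1, \ldots, v_k\}$ is realized by a single flip whose partition consists of the $k$ singletons $\{v_i\}$ together with one part per distinct neighborhood profile $N(v) \cap S$ among vertices $v \in V \setminus S$, with each $\{v_i\}$ flipped with the union of profile-parts contained in $N(v_i)$. A direct computation shows that this flip isolates $v_1, \ldots, v_k$ while preserving every other adjacency of $G$, so runner movement in the resulting flipped graph coincides with robber movement in $G \setminus S$, and the simulated strategy inherits the cops' winning property. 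By the subpolynomial neighborhood complexity of nowhere dense classes, the number of distinct profiles on $S$ is at most $O(|S| \cdot n^{\eps/2})$, giving a total flip width of $O(n^\eps)$, as required.

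For (1) $\Rightarrow$ (2), I argue by contrapositive. Suppose $\CC$ is $K_{s,s}$-free but not nowhere dense; by the polynomial-growth dichotomy for somewhere dense classes \cite{NesetrilM11a}, there exist $r \ge 1$ and $c > 0$ such that for infinitely many $n$, some $n$-vertex induced subgraph $H_n$ of $\CC$ contains $K_{t_n}$ as an $r$-shallow topological minor with $t_n \ge n^c$. The principal vertices $v_1, \ldots, v_{t_n}$ of the subdivision support a runner strategy witnessing $\fw_{r+1}(H_n) = n^{\Omega(1)}$: the runner always sits on a non-isolated principal vertex. In any $k$-flip, isolating a principal $v_i$ forces each non-singleton part to be contained in or disjoint from $N(v_i)$; simultaneously isolating $j$ principals therefore forces the partition to refine the common refinement of the corresponding neighborhood partitions. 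Since each $v_i$ has a private neighbor --- the first subdivision vertex on any path leaving $v_i$, which lies in exactly one principal's neighborhood --- these partitions are pairwise distinct, so the common refinement has at least $j + 1$ parts and one concludes $k \ge 2j + 1$. Hence at most $(k-1)/2$ principals are isolated in any $k$-flip, and the runner can always select a non-isolated target.

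The main obstacle is verifying the reachability step: in the previous $k$-flip of $H_n$, the subdivision paths connecting principals may be disrupted, so the runner needs to find a non-isolated principal actually lying within distance $r+1$ in the flipped graph. The plan is to exploit $K_{s,s}$-freeness to bound the combinatorial complexity of any $k$-flip --- each flipped pair of parts has bipartite complement that is near-complete or near-empty --- and thus to limit, via a counting argument, the number of subdivision paths that a single $k$-flip can destroy. Together with the $(k-1)/2$ upper bound on isolated principals, this ensures that the runner can maintain its invariant for every $k = n^{o(1)}$, yielding $\fw_{r+1}(H_n) \ge n^{\Omega(1)}$ and contradicting almost bounded flip-width.
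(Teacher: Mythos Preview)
Your handling of $(2)\Leftrightarrow(3)$ and $(2)\Rightarrow(1)$ is correct and parallel to the paper: for $(2)\Rightarrow(1)$ the paper simply combines Corollary~\ref{cor:copw-nd} with the bound $\fw_r(G)\le\copw_r(G)^t$ for $K_{t,t}$-free graphs (Theorem~\ref{thm:copw-fw}), while you invoke subpolynomial neighborhood complexity --- both routes work.

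The implication $(1)\Rightarrow(2)$ has a genuine gap, and it is precisely the one you flag as the ``main obstacle.'' Your plan to count destroyed subdivision paths via $K_{s,s}$-freeness is not a proof: the claim that flipped pairs of parts have near-complete or near-empty bipartite complement does not follow from $K_{s,s}$-freeness, the bound $k\ge 2j+1$ is unjustified (isolated principals need not lie in singleton parts of the partition), and the topological minor is only a \emph{subgraph} of $H_n$, so flips of $H_n$ interact with edges outside the subdivision in ways your sketch does not control. The paper's route is different and bypasses these issues entirely: from $\fw_1(G)\le O_\eps(n^\eps)$ and $K_{t,t}$-freeness it first bounds degeneracy (Theorem~\ref{thm:fw-deg}), then applies the quantitative inequality $\tilde\nabla_{r-1}(G)\le O(r\cdot\fw_r(G)\cdot\deg(G))^{36}$ of Theorem~\ref{thm:adm-fw} to control all shallow topological minor densities, which gives nowhere denseness via Fact~\ref{fact:nd-wcol}. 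Theorem~\ref{thm:adm-fw} is where the real work happens: Dvořák's Lemma~\ref{lem:dvorak} reduces arbitrary shallow topological minors to \emph{exact} subdivisions occurring as \emph{induced} subgraphs, and for those Proposition~\ref{prop:subdivisions} establishes --- via the hideout lemma and a careful path-survival analysis under flips (Lemmas~\ref{lem:bipartite-flips} and~\ref{lem:path-flips}) --- exactly the reachability statement your outline leaves open.
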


The equivalence of the second and third conditions was known; the new result is the equivalence with the first condition.

In the setting of classes of ordered graphs, we prove an analogous statement, with nowhere denseness replaced by bounded twin-width. Moreover, in this case, almost bounded flip-width collapses to bounded flip-width. This collapse is, ultimately, a consequence of the Marcus-Tardos theorem/Stanley-Wilf conjecture \cite{marcus-tardos} from enumerative combinatorics. 
\begin{theorem}[Thm.~\ref{thm:ordered-abfw}]
  Let $\CC$ be a class of ordered graphs.
  The following conditions are equivalent:
  \begin{enumerate}
    \item $\CC$ has almost bounded flip-width,
    \item $\CC$ has bounded flip-width,
    \item  $\CC$ has bounded twin-width,
    \item  $\CC$ is monadically dependent.
  \end{enumerate}
\end{theorem}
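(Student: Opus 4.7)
Most of the implications in the equivalence already follow from results stated earlier in the excerpt. The equivalence $(2)\iff(3)$ is Theorem~\ref{thm:tww1}; the implication $(2)\implies(1)$ is immediate, since a constant bound is of the form $O_{r,\eps}(n^{\eps})$; and $(2)\implies(4)$ follows from Theorem~\ref{thm:interpretations}, because bounded flip-width is preserved by transductions, so a class that is not monadically dependent (and hence transduces all graphs) cannot itself have bounded flip-width. Thus it remains to establish $(4)\implies(3)$ and the genuinely new collapse $(1)\implies(2)$.

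For $(4)\implies(3)$, the plan is to invoke the structural characterization of monadically dependent classes of \emph{ordered} graphs as precisely the classes of bounded twin-width, due to Bonnet, Giocanti, Ossona de~Mendez, Simon, Thomass\'e and Toru\'nczyk. Together with $(2)\iff(3)$, this closes the cycle provided $(1)\implies(2)$ is in hand.

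For $(1)\implies(2)$ I would argue by contrapositive, combining Theorem~\ref{thm:tww1} with the Marcus--Tardos / Stanley--Wilf bound. Suppose $\CC$ does not have bounded flip-width; by $(2)\iff(3)$, $\CC$ has unbounded twin-width, so for each $k$ there is some $G_k\in\CC$ admitting the combinatorial obstruction used in the proof of Theorem~\ref{thm:tww1}, namely a rich division of order $k$. That same proof delivers a quantitative lower bound $\fw_r(G_k)\ge \Omega(k^{c})$ for a fixed radius $r$ and constant $c>0$. The crucial extra step is a \emph{compression}: in the ordered setting, whenever a graph admits a rich division of order $k$, it already contains an induced subgraph on $\mathrm{poly}(k)$ vertices admitting a rich division of order $\Omega(k)$. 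The route to this compression is to reformulate rich divisions as containment of a family of ordered $0/1$ matrix patterns whose size grows with $k$, and then apply Marcus--Tardos in contrapositive form: avoidance of such a pattern on every polynomially-sized induced subgraph would force a linear bound on the number of distinguishable rows/columns, which is incompatible with sustaining a $k$-rich division in the ambient graph. Chaining the two steps yields, for arbitrarily large $n$, an induced subgraph $G$ of some member of $\CC$ on $n$ vertices with $\fw_r(G)\ge \Omega(n^{\alpha})$ for a fixed $\alpha>0$, which contradicts almost bounded flip-width upon choosing $\eps<\alpha$.

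The principal obstacle will be making the compression step precise: one must pin down the exact matrix-pattern formulation of a rich division, verify that Marcus--Tardos applies to patterns of that form, and track the polynomial exponent produced so that it combines with the $\Omega(k^c)$ bound from Theorem~\ref{thm:tww1} into a genuine polynomial-in-$n$ lower bound on $\fw_r$. The remaining ingredients -- the lower bound from rich divisions and the chaining against subpolynomial growth -- are either already available or an elementary calculation.
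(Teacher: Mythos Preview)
Your overall architecture is sound, and you correctly identify that the only genuinely new work is the collapse $(1)\Rightarrow(2)$, to be proved by contrapositive via a ``small obstruction with large flip-width'' argument. You also correctly identify Marcus--Tardos as the engine underneath. The gap is in your compression step.

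You propose to start from a $k$-rich division of some $G_k\in\CC$ and extract a $\mathrm{poly}(k)$-size induced subgraph that still admits an $\Omega(k)$-rich division. This is not obviously true, and your sketch (``reformulate rich divisions as matrix patterns, apply Marcus--Tardos in contrapositive'') does not pin down how. A rich division is a global property of two interval partitions of the \emph{whole} vertex set; passing to an induced subgraph changes the intervals, and there is no evident reason the richness survives on a polynomial-size piece. Marcus--Tardos bounds the density of $1$-entries in a pattern-avoiding $0/1$ matrix; it does not by itself localize a rich division.

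The paper avoids this problem entirely by using a sharper obstruction from \cite{tww4} than rich divisions. If $\CC$ has unbounded twin-width, then for some fixed $s\in\{=,\neq,\le_l,\ge_l,\le_r,\ge_r\}$ and every $n$, some $G\in\CC$ contains two sets $A,B$ of size $n^2$ forming an \emph{$s$-pattern of order $n^2$} (a specific, explicit $0/1$ matrix; this is where Marcus--Tardos is used, inside \cite{tww4}). The pattern is \emph{already} the compressed object: restricting to $A\cup B$ gives an induced ordered subgraph on at most $2n^2$ vertices. The paper then shows directly (Lemma~\ref{lem:pat-division}) that an $s$-pattern of order $n^2$ contains an $\tfrac{n}{2}$-rich division, whence $\fw_1^{<}\ge n/2$ by Lemma~\ref{lem:rich}. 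This yields $\fw_1^{<}(G_n)\ge \Omega(|G_n|^{1/2})$ on a hereditary sequence, contradicting almost bounded flip-width.

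So the fix is not to compress rich divisions yourself, but to invoke the $s$-pattern characterization from \cite{tww4} (stated in the paper as Fact~\ref{fact:tww-patterns}) as the ready-made compressed obstruction, and then verify that each such pattern hosts a rich division of the right order. Your instinct about Marcus--Tardos is right; it is simply already packaged inside that characterization.
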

The equivalence of  conditions 3,4 was known previously \cite{tww4},
and their equivalence with condition 2 is by
 Theorem~\ref{intro:tww}.
 The implication 2$\rightarrow$3 is immediate.
We prove the implication 1$\rightarrow$3 using another characterisation from \cite{tww4}.

\medskip
In particular, in the two settings considered in the theorems above, the model checking problem is fixed-parameter tractable, by the results of \cite{GroheKS17} and \cite{tww4}, respectively.

\medskip
We now move to a more general setting than that of weakly sparse classes, namely of \emph{stable} classes.
A class $\CC$ is \emph{edge-stable}
if there is a number $k$ 
such that no graph $G\in\CC$ 
contains vertices $a_1,\ldots,a_k,b_1,\ldots,b_k$ 
such that $E(a_i,b_j)\iff i\le j$ for $i,j\in\set{1,\ldots,k}$
(see Fig.~\ref{fig:half-graph}).
Edge-stable, monadically dependent graph classes 
coincide with \emph{monadically stable} classes \cite{BS1985monadic,rankwidth-meets-stability}, an important subfamily of monadically dependent classes.
Examples include all \emph{structurally nowhere dense classes}, that is, classes of the form $\phi(\CC)$, for some first-order formula $\phi(x,y)$ and nowhere dense class $\CC$.
It is conjectured \cite[Conjecture 6.1]{rankwidth-meets-stability} that all monadically stable graph classes are structurally nowhere dense.
By a recent result \cite{dreierMS},
the model-checking problem is fixed-parameter tractable for all structurally nowhere dense classes,
thus significantly extending the result of \cite{GroheKS17} concerning model-checking on nowhere dense classes.
We prove the following.

\begin{theorem}[Thm.~\ref{thm:snd-are-subpoly}]\label{intro:snd}
  Every structurally nowhere dense class 
  has almost bounded flip-width.
\end{theorem}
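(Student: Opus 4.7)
The plan is to derive Theorem~\ref{thm:snd-are-subpoly} by combining the weakly sparse case (Theorem~\ref{thm:nd-are-subpoly}) with a quantitative refinement of the transduction preservation theorem (Theorem~\ref{thm:interpretations}). A structurally nowhere dense class is, by definition, of the form $\phi(\CC_0)$, where $\CC_0$ is nowhere dense and $\phi(x,y)$ is a fixed first-order formula. By Theorem~\ref{thm:nd-are-subpoly}, $\CC_0$ already has almost bounded flip-width: for every $\eps>0$ and $r\in\N$, every $n$-vertex induced subgraph of a graph in $\CC_0$ has flip-width of radius $r$ at most $O_{r,\eps}(n^\eps)$. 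So the whole statement reduces to showing that almost bounded flip-width is preserved under first-order interpretations.

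For this preservation, I would revisit the proof of Theorem~\ref{thm:interpretations} and check that the simulation it performs is quantitatively tame. Recall that the argument uses Gaifman locality: for a fixed $\phi(x,y)$ there is a radius $\rho=\rho(\phi)$ such that adjacency in $\phi(G)$ is determined by $\rho$-neighborhoods in $G$. This lets one simulate the radius-$r$ flipper game on $\phi(G)$ by a radius-$(r\cdot\rho)$ flipper game on $G$, where each width-$k$ flip of $\phi(G)$ is encoded by a width-$F(k)$ flip of $G$ built from types of $\rho$-neighborhoods. What I need to extract from the proof is the explicit quantitative bound $\fw_r(\phi(G)) \leq F(\fw_{r\cdot\rho}(G))$, where $F$ is a polynomial whose degree depends only on $\phi$ and $r$ (indeed, constructing definable flips from actual flips blows up the width by a constant number of exponent-free compositions, each of which is polynomial).

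Once this polynomial bound is in hand, the conclusion is routine. Fix $\eps>0$ and $r\in\N$, and let $H$ be an $n$-vertex induced subgraph of $\phi(G)$ for some $G\in\CC_0$. The simulation yields $\fw_r(H) \leq F\bigl(\fw_{r\rho}(G')\bigr)$ for an induced subgraph $G'$ of $G$ of size at most $n$ (since $\phi$ does not add vertices). Applying almost bounded flip-width of $\CC_0$ with a sufficiently small $\delta>0$, chosen so that $F(x^\delta)\le x^\eps$ for all large enough~$x$, gives $\fw_r(H) \leq O_{r,\eps}(n^\eps)$, as required.

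The main obstacle is the quantitative tracking step: the preservation proof for bounded flip-width only needs $F$ to send finite values to finite values, whereas here the exponent of $F$ as a polynomial must be controlled by a constant depending on $\phi$ and $r$, so that choosing $\delta$ small relative to this exponent compensates the blow-up. A secondary point to verify is compatibility with the hereditary closure used in the definition of almost bounded flip-width; this is immediate since $\phi(G)$ commutes with taking induced subgraphs on a common vertex set, so any induced subgraph of a graph in $\phi(\CC_0)$ is itself $\phi$ applied to an induced subgraph of a graph in $\CC_0$.
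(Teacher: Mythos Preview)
Your proposal has a genuine gap at the crucial step. You claim that the proof of Theorem~\ref{thm:interpretations} yields a bound $\fw_r(\phi(G))\le F(\fw_{r\rho}(G))$ with $F$ a \emph{polynomial}. This is false. The bound in Theorem~\ref{thm:interpretations} is $\fw_r(\phi(G))\le T_q(\fw_{r'}(G)\cdot c)$, where $T_q$ is a tower of exponentials of height $q$ (the quantifier rank of $\phi$). The blowup does not come from ``exponent-free compositions''; it comes from the number of local types in Gaifman's theorem, which is nonelementary in $q$. The paper states this explicitly right after Corollary~\ref{cor:qf-interp-subpoly}: the bound in Theorem~\ref{thm:interpretations} is not polynomial, and whether classes of almost bounded flip-width are closed under transductions is left open. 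So your reduction to the transduction-preservation theorem does not go through.

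There is also a secondary error: you assert that $\phi$ commutes with taking induced subgraphs, so that an $n$-vertex induced subgraph $H$ of $\phi(G)$ equals $\phi(G')$ for some $n$-vertex induced subgraph $G'$ of $G$. This fails once $\phi$ has quantifiers, since witnesses for existential quantifiers may lie outside $V(H)$.

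The paper circumvents both obstacles by a completely different route. Instead of working with the original first-order formula, it invokes the quasi-bush machinery of~\cite{bushes-lics} (Theorem~\ref{thm:quasi-bushes-vc}) to represent every $G\in\CC$ as an induced subgraph of $\phi(B)$, where now $\phi$ is \emph{quantifier-free} (but uses a unary function symbol), $B$ belongs to an almost nowhere dense class of structures with $|B|\le O(|G|)$, and crucially $\VCdim(B)$ is bounded. For quantifier-free $\phi$ the tower collapses, and Lemma~\ref{lem:fw-qf-vc} gives a genuinely polynomial bound $\fw_r(\phi(B))\le O(\copw_{r'}(G_B))^p$ via Sauer--Shelah (this is where the VC-dimension bound is used). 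Plugging in the almost-nowhere-dense bound on $\copw_{r'}$ then finishes the argument.
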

In the proof, we use the recent description of 
structurally nowhere dense classes \cite{bushes-lics}, which provides certain treelike decompositions for such classes, that can be used to produce a winning strategy for the flipper. As a weak converse, we prove:

\begin{theorem}[Thm.~\ref{thm:subpoly-mstab}]
  Every edge-stable class of almost bounded flip-width is monadically stable.
\end{theorem}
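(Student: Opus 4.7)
The plan is to proceed by contraposition, combined with a reduction based on the characterization (referenced in the excerpt) that a hereditary graph class is monadically stable if and only if it is simultaneously edge-stable and monadically dependent. Since $\CC$ is already assumed edge-stable, it suffices to show that $\CC$ is monadically dependent: for every monadic (unary) expansion $\CC^+$ of $\CC$, and every formula $\phi(x,y)$ in the expanded language, $\phi$ does not have the order property on $\CC^+$.

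As a first step, I would observe that flip-width of a graph depends only on its edge set, so every monadic expansion $\CC^+$ inherits almost bounded flip-width from $\CC$; edge-stability of $\CC$ transfers to $\CC^+$ verbatim. Hence it is enough to prove the following single-expansion statement: if $\CC$ is edge-stable and has almost bounded flip-width, then every first-order formula $\phi(x,y)$ over $\CC$ is stable on $\CC$. I would prove this by contrapositive. Assume some formula $\phi(x,y)$ is unstable on $\CC$, yielding for every $k$ a graph $G_k\in\CC$ and elements $a_1,\dots,a_k,b_1,\dots,b_k\in V(G_k)$ with $\phi(a_i,b_j)$ iff $i\le j$.

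Applying Gaifman locality, $\phi(x,y)$ is equivalent to a Boolean combination of $r$-local formulas around $x,y$ plus local sentences, for some fixed radius $r$ depending only on $\phi$. Edge-stability of $\CC$ then restricts the family of $r$-local $\phi$-types realized inside any fixed-radius neighborhood of $G_k$, bounding its VC-dimension via a Ramsey-type argument: were this family arbitrarily rich, one could already extract an edge-formula half-graph inside a single neighborhood. Consequently, the $\phi$-half-graph witness cannot be concentrated in a single bounded-radius ball; the elements $a_i,b_j$ must lie pairwise far apart in the Gaifman metric of $G_k$, their $r$-neighborhoods must be essentially disjoint, and each $\phi$-edge of the half-graph must be realized by genuinely long paths in $G_k$.

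From this combinatorially spread-out half-graph I would extract a runner strategy in the flipper game of radius $O(r)$ on $G_k$. The key point is that any flipper partition of $V(G_k)$ into $k^{\eps}$ parts can separate only a small fraction of the witness pairs $(a_i,b_j)$: by the disjointness of the neighborhoods and the half-graph's combinatorial independence, a positive fraction of the witnesses remains mutually reachable in any announced flip, and the runner can always escape. This yields $\fw_{O(r)}(G_k)\ge k^{\Omega(1)}=|V(G_k)|^{\Omega(1)}$, contradicting almost bounded flip-width. The main obstacle, and the heart of the proof, is this final step -- carefully converting the localized half-graph witness into a flipper-resistant runner strategy. It adapts the locality-based machinery underlying Theorem~\ref{intro:trans}, but must now produce polynomial lower bounds on flip-width, and crucially uses edge-stability to localize $\phi$ so that the Gaifman graph -- which is precisely what flip-width controls -- becomes the relevant combinatorial object.
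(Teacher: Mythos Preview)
Your proposal has genuine gaps at the two places where you yourself flag the difficulty.

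First, the claim that edge-stability forces the $\phi$-half-graph witnesses to be ``spread out'' in the Gaifman metric is not justified, and is not true as stated. Edge-stability constrains only the \emph{edge} relation; it says nothing about what an arbitrary first-order formula $\phi$ can do inside a bounded-radius ball. Your Ramsey-type sentence (``were this family arbitrarily rich, one could already extract an edge-formula half-graph'') conflates richness of $\phi$-types with richness of edge-neighborhoods, and there is no such implication. Second, even granting a spread-out configuration, you give no actual construction of a runner strategy, and the assertion that a $k^{\eps}$-flip ``separates only a small fraction of the witness pairs'' is exactly the hard part --- a half-graph is not a hideout. There is also a size issue: nothing in your setup bounds $|V(G_k)|$ in terms of $k$ (restricting to the $2k$ witnesses may destroy the $\phi$-half-graph, since $\phi$ has quantifiers), so the final inequality $\fw_{O(r)}(G_k)\ge |V(G_k)|^{\Omega(1)}$ does not follow.

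The paper avoids all of this by invoking a black-box structural result (Fact~\ref{fact:mstab}, from~\cite{flippers}): if a hereditary edge-stable class is not monadically stable, then some \emph{quantifier-free} formula on a bounded coloring of $\CC$ produces the exact $r$-subdivision of every graph. This single citation replaces your entire locality/spreading argument. From there the proof is three lines: exact $r$-subdivisions of $K_n$ have radius-$(r{+}1)$ flip-width $\Omega_r(n)$ by Proposition~\ref{prop:subdivisions}; quantifier-free interpretations with $k$ colors increase flip-width by at most a factor $k$ (the $q=0$ case of Theorem~\ref{thm:interpretations}); and since the interpretation is domain-preserving, the preimage $G_n$ has only $O(n^2)$ vertices. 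Chaining these gives $\Omega_r(n)\le k\cdot\fw_{r+1}(G_n)=o(n)$, a contradiction. The key ingredients you are missing are precisely Fact~\ref{fact:mstab} and the linear bound for quantifier-free interpretations.
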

The proof relies on a recent characterization of obstructions to monadic stability \cite{flippers},
which can be used to produce a winning strategy for the runner.

In particular, if all monadically stable classes  are structurally nowhere dense as conjectured in \cite{rankwidth-meets-stability}, 
the two results imply that among edge-stable graph classes, 
almost bounded flip-width coincides with monadic stability (and thus with monadic dependence).
Furthermore, by the result of 
\cite{dreierMS}, 
the conjecture would imply fixed-parameter tractability for all edge-stable classes of almost bounded flip-width.

Those results provide substantial evidence 
indicating that classes of almost bounded flip-width 
form the dense counterpart of nowhere dense classes,
that they coincide with monadically dependent classes,
and that they admit fixed-parameter tractable model-checking.
In fact, almost all\footnote{One exception are 
classes with structurally bounded local clique-width \cite{boundedLocalCliquewidth},
which are tractable and monadically dependent, but we do not know whether they have almost bounded flip-width.} hereditary graph classes 
having fixed-parameter tractable model-checking that we are aware of \cite{GroheKS17,tww1,dreierMS,tww8},
 have almost bounded flip-width. 
 Based on this, we believe that classes of almost bounded flip-width will play a key role in  combinatorial and algorithmic approaches to the analysis of monadically dependent classes.

 Additionally, in Theorem~\ref{thm:abtww-to-abfw} we prove that every hereditary class of \emph{almost bounded twin-width}
 (that is, class whose $n$-vertex graphs have twin-width  $n^{o(1)}$)
 has almost bounded flip-width.

We supplement our results with a discussion, in Section~\ref{sec:discussion}, providing evidence for  various stated conjectures, 
and outline a potential approach towards an algorithmic and combinatorial understanding of the classes introduced in this paper.

\paragraph{Related work}
Our flipper game is inspired by the paper~\cite{flippers}. 
There, another game based on performing flips is introduced,
and is also called \emph{flipper game}.
We consider both games to be variants of a  
broader family of a flipper games, similarly as there are many variants of the Cops and Robber game.
As far as we know, only two variants of the flipper game have been considered so far, but we anticipate that more variants might emerge.
To distinguish the two variants, we may call the variant introduced in this paper the \emph{open flipper game}, and the variant from \cite {flippers} the \emph{confined flipper game of bounded duration}.
The latter game is an analogue of the \emph{splitter game},
 which characterizes nowhere dense classes \cite{GroheKS17}. 
Similarly, the flipper game from \cite{flippers} characterizes monadically stable graph classes, a strict subset of monadically dependent classes.
 It differs from the open flipper game in two major ways. 
 Firstly, the duration of the game is bounded, whereas in our variant, it is unbounded.
%
Furthermore, in each round of the confined flipper game, the fugitive is confined to the radius-$r$ ball around his present position in the current flip of the graph in all future rounds, not just the next round as in the open game.
 The confined flipper game of bounded duration is used in \cite{flippers} to characterize monadically stable classes, which are 
 incomparable with classes of bounded flip-width, 
 and are likely to be strictly contained in classes of almost bounded flip-width (see Section~\ref{sec:subpoly}).

There are multiple variations of the Cops and Robber game, and some of them are similar to the one considered above for sparse graphs.
In some of those variants, the robber has bounded speed, as in our games.
See \cite{https://doi.org/10.1002/jgt.20591,FOMIN20101167,https://doi.org/10.1002/jgt.21791} and \cite{FOMIN2008236} for a survey.
  The cop-width parameters that we study for sparse graphs
  are closely related to the parameters studied in~\cite{richerby-thilikos-lazy-fugitive} and \cite{otherthilikos}, which are  defined in terms of a similar game,
  as we discuss in Section~\ref{sec:copwidth} and in \focs[\cite{flip-width-arxiv}]{Appendix~\ref{sec:copwidth'}}.

  Some attempts at defining graph classes 
that are dense analogues of classes of bounded expansion were 
made in \cite{rankwidth-colorings,lsd-icalp,bounded-linear-rankwidth,rankwidth-meets-stability,stable-tww-lics,transduction-quasiorder}. In~\cite{rankwidth-colorings,bounded-linear-rankwidth,rankwidth-meets-stability}, the property of having low rank\-width covers is proposed as the dense analogue of bounded expansion.
However,  this notion does not include\footnote{An example was provided by Jakub \Gajarsky (private communication)} 
 classes of bounded twin-width, and it is not known whether this notion is closed under transductions. It has been proved recently that those classes are monadically dependent \cite{horizons}.
In \cite[Section 4]{stable-tww-lics} and \cite[Section 8]{transduction-quasiorder}, an attempt at formalizing an abstract notion of a dense analogue of classes with bounded expansion was made, but those papers do not propose any workable combinatorial definition.


\anonym{%
\paragraph{Acknowledgement}
I am very indebted to Édouard Bonnet, Jan Dreier, Jakub \Gajarsky, Rose McCarty,  Micha{\l} Pilipczuk, Wojtek Przybyszewski, 
and Marek Sokołowski,  for numerous stimulating discussions on topics closely related to this paper. Special thanks go to Pierre Ohlmann, who has read a first draft of this paper and helped improve it.
}
\paragraph{Organization}
The organization of this paper is as follows.
Results marked with (\appmark) are proved in the \focs[full 
version of the paper \cite{flip-width-arxiv}]{Appendix}.

Section~\ref{sec:prelims} introduces some standard notation, and recalls some basic notions and results, e.g. from  Sparsity theory and Vapnik-Chervonenkis theory.

In Section~\ref{sec:copwidth} we introduce the cop-width parameters, and demonstrate they characterize degeneracy, treewidth, and generalized coloring numbers. Using those parameters, we characterize classes of bounded expansion and nowhere dense classes.

In Section~\ref{sec:flipwidth} we introduce our two main notions: 
the flip-width parameters, and classes of bounded flip-width. We give many examples, 
and derive some combinatorial properties. We show that radius-$\infty$ flip-width is equivalent to clique-width.

In Section~\ref{sec:wsparse} we study the behavior of the flip-width parameters in the case of $K_{t,t}$-free graphs. We show that radius-one flip-width corresponds to degeneracy, 
and that for higher radii, the flip-width parameters correspond to the generalized coloring numbers. 
Consequently, a weakly sparse graph class has bounded flip-width if and only if it has bounded expansion.

In Section~\ref{sec:tww} we study the relationship between twin-width and flip-width. We prove that every class of bounded twin-width has bounded flip-width. We also prove that for classes of ordered graphs, bounded flip-width and bounded twin-width are equivalent. 

In Section~\ref{sec:transductions} we prove that classes of bounded flip-width are preserved by first-order transductions.

In Section~\ref{sec:definable} we introduce a definable variant of flip-width, and prove its equivalence with flip-width. As a consequence, we get a slicewise polynomial (XP) approximation algorithm for the flip-width parameters.

In Section~\ref{sec:subpoly} we introduce our third main notion, 
classes of almost bounded flip-width. We prove that they contain all structurally nowhere dense classes, and study their relationship with monadically stable and monadically dependent classes.

Finally, in Section~\ref{sec:discussion}, we discuss possible directions of further research, conjectures and questions.

\newpage
\tableofcontents
\newpage

\section{Preliminaries}\label{sec:prelims}
We introduce basic notation in Section~\ref{sec:notation}.
In Section \ref{sec:prelim-sparsity} we recount the fundamental notions of Sparsity theory.
 In Section~\ref{sec:vc} we recall 
 the notion of Vapnik-Chervonenkis dimension of a set system, of a graph, and of a binary relation.
 In Section~\ref{sec:logic}
 we recall basic notions from logic (structures, formulas), 
 and a result characterizing nowhere dense classes in terms 
 of the VC-dimension of certain set systems.

\subsection{Notation}\label{sec:notation}
 $\N=\set{0,\ldots}$ denotes the set of nonnegative integers.
 For two sets $A$ and $B$ their symmetric difference is denoted $A\triangle B:=(A-B)\cup (B-A)$.
 We write $O(n)$ (resp. $\Omega(n)$) to denote a value that is bounded from above (resp. from below) by $c\cdot n+d$,
 for some reals $c,d$ with $c>0$.
 Sometimes we write $O_p(n)$ (resp. $\Omega_p(n)$), where $p$ is a list of parameters, to indicate that the constants $c$ and $d$ above depend on the parameters $p$.
 For a function $f\from \N\to\R$, we write $o(f(n))$ to denote a function $g\from \N\to\R$ with $\lim_{n\rightarrow\infty}\frac{g(n)}{f(n)}=0$.

Graphs are finite, undirected and without self-loops.
The set of vertices of a graph $G$ is denoted $V(G)$,
and the set of edges of $G$ is denoted $E(G)$.  An edge joining $u$ and $v$ is denoted $uv$. In particular, $uv=vu$ and $u\neq v$
for all $uv\in E(G)$. We write $|G|$ for the number of vertices of $G$.
For  a vertex $v$ of a graph $G$  
the (open) \emph{neighborhood} of $v$ in $G$ is
$N_G(v):=
\setof{u}{uv\in E(G)}$, 
denoted $N(v)$ if $G$ is understood from the context.
The set of vertices at distance at most $r$
from $v$ in $G$
is denoted $B_G^r(v)$.

A graph $H$ is a \emph{subgraph} of $G$
if $H$ is obtained by removing vertices and/or edges from $G$, 
and is an \emph{induced subgraph} of $G$ if $H$ is obtained by removing 
vertices from $G$, alongside with the edges incident to them.
The subgraph of $G$ induced by a set of vertices $X\subset V(G)$ is the graph $G[X]$ with vertices $X$ 
and edges $uv\in E(G)$ with $u,v\in X$.
For $X,Y\subset V(G)$, the bipartite graph \emph{semi-induced} by $X$ and $Y$ in a graph $G$ has parts $X$ and $Y$ and edges $xy$ such that $x\in X, y\in Y$ and $xy\in E(G)$.
Note that $X$ and $Y$ need not be disjoint in $G$;
in this case, $G[X,Y]$ contains two copies of every vertex in $X\cap Y$. Two sets $X,Y$ are \emph{complete} in $G$ if $G[X,Y]$ is the complete bipartite graph, and \emph{anti-complete} in $G$ if 
$G[X,Y]$ has no edges, and \emph{homogeneous} if they are either complete or anti-complete.
$K_t$ denotes the complete graph on $t$ vertices, and 
$K_{s,t}$ denotes the complete bipartite graph with parts of sizes $s$ and $t$.

A \emph{graph class} $\CC$ is a set of graphs.
A class $\CC$ is \emph{hereditary} if it is closed under taking induced subgraphs.
The \emph{hereditary closure} of a class $\CC$ 
is the class of all induced subgraphs of graphs from $\CC$.
 $\CC$ is \emph{weakly sparse} if there is some $t\in\N$ such that every $G\in \CC$ excludes the biclique $K_{t,t}$ as a subgraph. 
 A \emph{graph parameter} is a function $f$ assigning reals 
 to graphs, which is invariant under graph isomorphism.
For a graph class $\CC$ and graph parameter $f$,
 denote $f(\CC):=\sup_{G\in\CC}f(G)$, with $f(\CC)=\infty$ if $f$ is unbounded on $\CC$.
 Say that $f$ is \emph{bounded in terms} of $g$
 if there is a function $\alpha\from\R\to\R$ such that $f(G)\le \alpha(g(G))$ for all graphs $G$. Two functions 
   $f$ and $g$ are \emph{functionally equivalent} if each of them is bounded in terms of the other.

   A \emph{$k$-colored graph} is a graph $G$ equipped with a function assigning a color from $\set{1,\ldots,k}$ to each vertex of $G$.
A class $\wh \CC$ of $k$-colored graphs is a \emph{$k$-coloring} of a graph class $\CC$, if for every colored graph in $\wh\CC$, the underlying uncolored graph belongs to $\CC$, and for every graph $G\in\CC$, some $k$-coloring of $G$ belongs to $\wh\CC$.

 \subsection{Sparsity theory}\label{sec:prelim-sparsity}
  We briefly recall the fundamental notions of Sparsity theory. See~\cite{sparsity-book} for more background.

 A graph $G$ is \emph{$d$-degenerate} if there is a total order on the vertices of $G$ such that every vertex has at most $d$ neighbors before it in the order. The \emph{degeneracy} of $G$ is the least $d$ such that $G$ is $d$-degenerate.

 An \emph{exact $r$-subdivision} of a graph $G$
 is the graph obtained by replacing every edge of $G$ 
 by a path of length $r+1$.
 If every edge is replaced by a path of length at most $r+1$,
 the resulting graph is called a \emph{${\le} r$-subdivision} of $G$.
 For a graph $G$, let $\tilde\nabla_r(G)$ denote the maximum average degree, $2|E(H)|/|V(H)|$, of all graphs $H$
 whose ${\le}r$-subdivision is a subgraph of $G$
 (called  \emph{shallow topological minors of $G$ at depth $r$}).
 Note that $\deg(G)\le \tilde\nabla_0(G)\le 2\cdot \deg(G)$.

 \begin{definition}
  A graph class $\CC$ has \emph{bounded expansion}
  if for every $r\ge 1$ we have $\tilde\nabla_r(\CC)<\infty$.
 \end{definition}
 \begin{example}\label{ex:BE-classes}
  Classes of bounded expansion
 include many well-studied sparse classes: the class of planar graphs, 
every class of bounded maximum degree, classes of bounded tree-width, 
classes that exclude a fixed (topological) minor.
\end{example}

The {weak coloring} and  {admissibility} numbers  are two families of graph parameters that generalize
 the degeneracy number to higher radii $r\ge 1$,
 and are defined as follows.
 The \emph{$r$-weak coloring number} of a graph $G$,
 denoted $\wcol_r(G)$, is the smallest number $k$ 
 with the following property:
 There is a total order $<$ on the vertices of $G$
 such that for every vertex~$v$, there are at most $k$ vertices $w$
 with $w<v$
that are \emph{weakly $r$-reachable} from~$v$:
 there is a path of length at most $r$ from $v$ to $w$, in which $w$ is the $<$-smallest vertex. 
 On the other hand, the 
  \emph{$r$-admissibility} of a graph $G$, denoted $\adm_r(G)$, is the smallest number $k$ with the following property:
 There is a 
 total order $<$ on $V(G)$ such that for every $v\in V(G)$ 
 one cannot find more than $k$ paths of length at most $r$ that start at $v$,
 end in some vertex $w<v$, and such that any two of the paths share only $v$ as a common vertex.

Both parameters are of central importance in Sparsity theory.  It is known that $\wcol_r(G)$ is bounded in terms of $\adm_r(G)$, and vice versa~\cite[Lemma 6]{dvorak-admissibility}.
More specifically, we have the following.

\begin{fact}\label{fact:wcol-adm}For all graphs $G$ and $r\ge 1$ we have
\begin{align}\label{eq:wcol-adm}
  \adm_r(G)\le \wcol_r(G)\le O(\adm_r(G))^{r}.  
\end{align}
\end{fact}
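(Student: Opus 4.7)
The plan is to prove the two inequalities separately: the left inequality is an immediate consequence of the definitions, while the right inequality is the nontrivial theorem of \Dvorak, proved by induction on $r$ using a Menger-type path-packing argument.

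For the left inequality $\adm_r(G) \le \wcol_r(G)$, I would fix an ordering $<$ of $V(G)$ witnessing $\wcol_r(G,<) = k$ and show that the same ordering witnesses $\adm_r(G,<) \le k$. Suppose, for contradiction, that some vertex $v$ admits $k+1$ paths $P_1, \ldots, P_{k+1}$, each of length at most $r$, ending in vertices smaller than $v$, and pairwise sharing only $v$. For each $P_i$, let $w_i$ denote the $<$-minimum vertex of $P_i$; since the endpoint of $P_i$ is strictly smaller than $v$, we have $w_i \ne v$. The sub-path of $P_i$ from $v$ to $w_i$ has length at most $r$ and certifies that $w_i$ is weakly $r$-reachable from $v$. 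Because the paths $P_1,\ldots,P_{k+1}$ pairwise share only $v$, the vertices $w_1, \ldots, w_{k+1}$ are pairwise distinct, contradicting the hypothesis that at most $k$ vertices are weakly $r$-reachable from~$v$.

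For the right inequality, let $a = \adm_r(G)$, fix an ordering $<$ witnessing this, and prove by induction on $r$ that under this ordering the number of vertices weakly $r$-reachable from any $v$ is at most $C\cdot a^r$ for a suitable absolute constant $C$. The base case $r=1$ is immediate since weak and strong $1$-reachability coincide with the notion of ``smaller neighbour''. For the inductive step, pick any vertex $w$ weakly $r$-reachable from $v$ together with a witness path $v = u_0, u_1, \ldots, u_\ell = w$ with $\ell \le r$ on which $w$ is the $<$-minimum. Let $i$ be the smallest index such that $u_i < v$; such an $i$ exists because $w<v$. Then $u_i$ is \emph{strongly} $i$-reachable from $v$ (the interior vertices $u_1,\ldots,u_{i-1}$ all lie above $v$), and $w$ is weakly $(r-i)$-reachable from $u_i$ along the suffix, since $w$ remains the minimum of the suffix. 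Hence, writing $W_r(v)$ for the set of vertices weakly $r$-reachable from $v$ and $S_i(v)$ for the set of vertices strongly $i$-reachable from $v$,
\[
W_r(v) \;\subseteq\; \bigcup_{i=1}^{r}\, \bigcup_{u \in S_i(v)} \bigl(W_{r-i}(u)\cup\set{u}\bigr).
\]

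The main technical obstacle is to bound $|S_i(v)| \le a$ for every $i$ and every $v$, which is where the admissibility hypothesis is leveraged through Menger's theorem. The point is that the strongly $i$-reachable vertices from $v$ are precisely those reachable from $v$ through a subgraph whose non-endpoint internal vertices lie above $v$; one then applies a path-packing argument (shortest paths, iterated rerouting, or a direct appeal to Menger) to convert any set of strongly reachable endpoints into the same number of pairwise internally disjoint paths from $v$ of length at most $r$ ending below $v$, whose cardinality is at most $a$ by the definition of $\adm_r$. Combining this with the inductive hypothesis yields $|W_r(v)| \le \sum_{i=1}^{r} a\cdot (Ca^{r-i}+1) \le O(a)^r$, as required. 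I expect the rerouting step needed to bound $|S_i(v)|$ to be the only subtle point; once it is in hand, the induction assembles cleanly.
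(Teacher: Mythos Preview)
The paper does not prove Fact~\ref{fact:wcol-adm}; it is stated with a citation to \cite[Lemma 6]{dvorak-admissibility} and no argument is given. So there is no ``paper's own proof'' to compare against, and your write-up would be supplying what the paper deliberately outsources.

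Your argument for the left inequality $\adm_r(G)\le\wcol_r(G)$ is correct and is the standard one.

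For the right inequality, your inductive decomposition
\[
W_r(v)\;\subseteq\;\bigcup_{i=1}^r\;\bigcup_{u\in S_i(v)}\bigl(W_{r-i}(u)\cup\{u\}\bigr)
\]
is valid, but the crux you identify --- bounding $|S_i(v)|\le a$ --- is not established by the Menger-style argument you sketch. Menger's theorem, applied to the subgraph on $\{v\}\cup\{u:u>v\}\cup S_i(v)$, does produce $|S_i(v)|$ internally disjoint paths from $v$ to vertices below $v$, but it gives \emph{no control on their lengths}: the rerouted paths may wander among the vertices above $v$ and exceed length $r$, so they do not witness a violation of $\adm_r(G,{<})\le a$. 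Length-bounded Menger is known to fail, and neither ``shortest paths'' nor ``iterated rerouting'' repairs this without a further idea. Indeed, there is no reason to expect $\scol_i(G,{<})\le\adm_r(G,{<})$ for an admissibility-optimal order; the known bounds on $\scol_r$ in terms of $\adm_r$ already carry an exponential blow-up in $r$.

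The actual \Dvorak argument does not bound $|S_i(v)|$ directly. Instead, one takes a \emph{maximum} family $\cal P$ of at most $a$ paths of length $\le r$ from $v$ to smaller vertices, pairwise disjoint except at $v$; by maximality, every path of length $\le r$ from $v$ to a smaller vertex meets $(\bigcup\cal P)\setminus\{v\}$, a set of size at most $a\cdot r$. One then routes the recursion through this hitting set (whose elements lie on paths of known, short length from $v$) rather than through $S_i(v)$. This is the ``subtle point'' you flag, but your proposed resolution does not go through as stated.
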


The fundamental notion of Sparsity theory  can be characterized using weak coloring numbers as follows:
\begin{fact}[\cite{zhu2009colouring}]\label{fact:be-wcol}
  A class $\CC$ of graphs has bounded expansion if and only if 
$\wcol_r(\CC)<\infty$ for every $r\in\N$.
\end{fact}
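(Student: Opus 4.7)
The plan is to prove both implications separately. Since Fact~\ref{fact:wcol-adm} bounds $\wcol_r$ and $\adm_r$ in terms of each other, it suffices to show that $\CC$ has bounded expansion if and only if $\adm_r(\CC)<\infty$ for every $r\in\N$.

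For the implication ``$\adm_r$ bounded $\Rightarrow$ bounded expansion'', I will establish the quantitative bound $\tilde\nabla_r(G)\le 2\,\adm_{r+1}(G)$ for every $r\ge 1$ (the case $r=0$ being immediate since $\deg(G)+1=\adm_1(G)$). Let $H$ be a shallow topological minor of $G$ at depth $r$, realised inside $G$ by a set of branch vertices $V(H)\subseteq V(G)$ together with internally vertex-disjoint subdivision paths of length at most $r+1$. Fix an order $<$ on $V(G)$ witnessing $\adm_{r+1}(G)\le k$, and restrict it to $V(H)$. For any $v\in V(H)$ with back-neighbors $u_1,\dots,u_d<v$ in $H$, the corresponding subdivision paths $P_1,\dots,P_d\subseteq G$ all start at $v$, have length at most $r+1$, and pairwise share only $v$. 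Truncating each $P_i$ at its first vertex strictly below $v$ (which exists because $u_i<v$) yields $d$ internally disjoint paths from $v$ of length at most $r+1$ that terminate below $v$, so $d\le \adm_{r+1}(G)$ by definition. Hence $H$ is $\adm_{r+1}(G)$-degenerate, its average degree is at most $2\,\adm_{r+1}(G)$, and the claimed bound on $\tilde\nabla_r(G)$ follows.

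The implication ``bounded expansion $\Rightarrow$ $\adm_r$ bounded'' is Zhu's theorem and is the harder direction. I would proceed by induction on $r$, constructing a suitable order on $V(G)$ by iteratively stripping the graph from the top: at each stage, use the bounds on the grads $\tilde\nabla_i(G)$ for $i\le r$ to locate a vertex $v$ that has few internally disjoint paths of length $\le r$ into the not-yet-removed part of the graph. The existence of such a vertex rests on an averaging argument reminiscent of Dvo\v{r}\'ak's proof: if every remaining vertex had many internally disjoint paths of length $\le r$ ending in the remaining set, then choosing a subfamily of them whose endpoints cover a substantial fraction of the remaining vertices would exhibit a dense shallow topological minor of $G$ at depth $\lfloor r/2\rfloor$, contradicting the bound on $\tilde\nabla_{\lfloor r/2\rfloor}(G)$. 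Placing $v$ last in the order, removing it, and iterating produces an order certifying $\adm_r(G)\le f(\tilde\nabla_0(G),\dots,\tilde\nabla_r(G))$ for some function $f$.

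The principal obstacle is this forward direction, where the delicate averaging/sparsification step is needed to convert the global shallow-minor bound into a local statement about the existence of a ``low-admissibility'' vertex at each stripping step; the reverse direction, by contrast, reduces cleanly to internal-disjointness bookkeeping.
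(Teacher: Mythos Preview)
The paper does not prove this statement; it is quoted as a known result from \cite{zhu2009colouring}. However, the paper does establish the closely related Fact~\ref{fact:adm-nabla}, whose two inequalities together with Fact~\ref{fact:wcol-adm} immediately yield both directions of Fact~\ref{fact:be-wcol}. So the relevant comparison is with the paper's treatment of Fact~\ref{fact:adm-nabla}.

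Your easy direction (bounded $\adm_r$ implies bounded expansion) is correct and is essentially the same as the paper's proof of the lower bound $\tilde\nabla_{r-1}(G)/2<\adm_r(G)$ in Fact~\ref{fact:adm-nabla}, just phrased dually: you start from an order witnessing small admissibility and show that every shallow topological minor is degenerate, whereas the paper starts from a dense shallow topological minor and exhibits a set of principal vertices witnessing large admissibility. The truncation step is fine, since the subdivision paths share only $v$, so their prefixes do too.

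For the hard direction, the paper simply cites the bound $\adm_r(G)\le 6(\tilde\nabla_{r-1}(G))^3$ from the literature (the upper bound in Fact~\ref{fact:adm-nabla}). Your sketch of an iterative stripping argument has the right shape, but it remains a sketch: the step ``if every remaining vertex had many internally disjoint short paths into the remaining set, then one can extract a dense shallow topological minor'' is precisely the nontrivial part of the proof, and carrying it out requires a careful greedy or averaging argument to control how the path families interact. As written, this direction is a plausible plan rather than a proof, which is consistent with how the paper handles it.
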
 
\noindent By~\eqref{eq:wcol-adm}, we could replace $\wcol_r(\CC)$ by $\adm_r(\CC)$ in this characterization.

We use the following inequalities. 

\begin{fact}\label{fact:adm-nabla}
  For all graphs $G$ and $r\ge 1$ we have
\begin{align}\label{eq:adm-nabla}
  \tilde\nabla_{r-1}(G)/2< \adm_r(G)\le 6\left(\tilde\nabla_{r-1}(G)\right)^3.
\end{align}
\end{fact}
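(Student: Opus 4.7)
The two inequalities are essentially independent, so I would tackle them separately. The lower bound is a short edge-orientation argument; the upper bound, which is the harder direction, requires building a dense shallow topological minor out of any witness to high admissibility.

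For the lower bound $\tilde\nabla_{r-1}(G)/2 < \adm_r(G)$, fix an arbitrary graph $H$ whose ${\le}(r-1)$-subdivision $H'$ is a subgraph of $G$, and fix an order $\prec$ on $V(G)$ realising $\adm_r(G)=a$. Orient each edge of $H$ from its $\prec$-larger principal endpoint to its $\prec$-smaller one, and write $d^+(u)$ for the out-degree of a principal vertex $u$. Each out-edge $u\to v$ corresponds to a subdivision path $P_{uv}\subseteq H'\subseteq G$ of length at most $r$ ending at $v\prec u$, and distinct subdivision paths of $H'$ are internally vertex-disjoint; hence $(P_{uv})_{v}$ is a family of $d^+(u)$ pairwise internally-disjoint $\prec$-back paths from $u$ of length at most $r$ in $G$. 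By the defining property of admissibility, $d^+(u)\le a$. Since the $\prec$-minimum principal vertex contributes $0$, summing yields $|E(H)|\le a(|V(H)|-1)$, giving $|E(H)|/|V(H)|<a$ (assuming $a\ge 1$; otherwise the inequality is vacuous). Maximising over $H$ completes the lower bound.

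For the upper bound $\adm_r(G)\le 6(\tilde\nabla_{r-1}(G))^3$, set $d:=\tilde\nabla_{r-1}(G)$ and argue by contraposition: assuming $\adm_r(G)>6d^3$, I would exhibit a graph whose ${\le}(r-1)$-subdivision embeds into $G$ and whose average degree exceeds $2d$, contradicting the definition of $d$. The construction would proceed in three stages. First, since $\adm_r(G)$ is large under every order, an iterative peeling step that removes vertices of small back-admissibility leaves a non-empty subset $U\subseteq V(G)$ in which every vertex admits $\Omega(d^3)$ internally-disjoint paths of length $\le r$ to other vertices of $U$. Second, for each $u\in U$ fix such a family of back-paths and perform a cleaning step --- greedy or probabilistic --- that resolves internal-vertex conflicts between paths originating at different base vertices; a standard counting argument shows that this costs a factor of $d$, leaving $\Omega(d^2)$ surviving paths per vertex. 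Third, a sunflower/averaging step on the endpoints guarantees sufficiently many distinct endpoint sets across different base vertices at the cost of another factor of $d$, producing a graph on $U$ which is a shallow topological minor of $G$ at depth $r-1$ whose average degree exceeds $2d$.

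The main obstacle is the combinatorial bookkeeping in the second stage: controlling internal-vertex conflicts between subdivision paths from all vertices of $U$ simultaneously, not merely at a single vertex. Pairwise disjointness at a single vertex is automatic from the definition of admissibility, but promoting this to \emph{global} disjointness across the whole minor is what drives the cubic factor --- each of the three stages above costs one power of $d$. I would follow the blueprint of the classical density lemmas of \Dvorak relating admissibility to shallow topological minor density, tuning the selection and pruning constants to match the stated bound of $6d^3$.
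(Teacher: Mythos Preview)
Your proposal is correct, and for the lower bound it takes a genuinely different route from the paper.

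\medskip
\textbf{Lower bound.} The paper does not work with an order at all. Instead it picks a graph of average degree~$s$ whose ${\le}(r-1)$-subdivision sits in~$G$, passes to a subgraph $H$ of minimum degree $>s/2$, and then invokes the dual (min--max) characterisation of admissibility: the set $U$ of principal vertices is a witness that $\adm_r(G)>s/2$, because every $u\in U$ has more than $s/2$ internally-disjoint subdivision paths of length ${\le}r$ to~$U\setminus\{u\}$. Your argument stays on the primal side: you fix an optimal admissibility order $\prec$, orient $H$ backwards along~$\prec$, and read off $d^+(u)\le a$ directly from the definition. Both proofs are short; the paper's version has the mild advantage of not needing the ``minimum vertex contributes~$0$'' refinement to get the strict inequality (it comes for free from the strict minimum-degree bound), while yours avoids the pass to a high-minimum-degree subgraph and the appeal to the dual characterisation.

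\medskip
\textbf{Upper bound.} The paper does not prove this direction at all --- it simply cites \cite[Theorem~3.1]{doi:10.1137/18M1168753}. Your three-stage outline (peel to a core set with many back-paths, clean cross-vertex internal conflicts, then average/sunflower on endpoints) is indeed the shape of the known arguments of \Dvorak and coauthors, and your identification of the global-disjointness bookkeeping as the crux is accurate. So here you are sketching strictly more than the paper does, and along the right lines; just be aware that matching the explicit constant~$6$ requires care in the pruning steps.
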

\begin{proof}
  The upper bound below is \cite[Theorem 3.1]{doi:10.1137/18M1168753}. We prove
the lower bound. 
Suppose $G$ contains 
an ${\le}(r-1)$-subdivision of a graph with average degree $s$. 
Then $G$ contains 
an ${\le}(r-1)$-subdivision of a graph $H$ with minimum degree larger than $s/2$. 
Let $U\subset V(G)$ consist of the principal vertices, corresponding to the vertices of $H$.
Then for every vertex $u\in U$ there are more than $s/2$ paths of length $r$ that start at $u$, end at vertices of $U$, and are vertex-disjoint apart from $u$. The set $U$ witnesses that $\adm_r(G)>s/2$.
\end{proof}

We now move to nowhere dense classes.
\begin{definition}
  A graph class $\CC$ is \emph{nowhere dense}
  if for every $r\ge 1$ there is some $n\ge 1$ 
  such that for all $G\in\CC$, no ${\le}r$-subdivision of $K_n$ is contained as a subgraph of $G\in\CC$.
 \end{definition}
It is immediate that every class with bounded expansion is nowhere dense,
and there exist nowhere dense classes which do not have bounded expansion.

One of the central results of Sparsity theory is the following characterization of nowhere dense classes (recall that $|G|$ is the number of vertices of $G$).

\begin{fact}
  \label{fact:nd-wcol}
  A hereditary graph class $\CC$ is nowhere dense if and only if 
  for  every $r\ge 1$ and $\eps>0$ there is a constant $n_{r,\eps}$ such that $\wcol_r(G)<|G|^\eps$ for every $G\in\CC$ with $|G|>n_{r,\eps}.$
\end{fact}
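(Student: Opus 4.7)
The plan is to combine the relationships between $\wcol_r$, $\adm_r$, and $\tilde\nabla_{r-1}$ recorded in Facts~\ref{fact:wcol-adm} and~\ref{fact:adm-nabla} with the standard reformulation of nowhere denseness as a subpolynomial bound on the density of shallow topological minors.

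For the forward implication, I would first invoke the well-known equivalent form of nowhere denseness from \Nesetril--Ossona de Mendez sparsity theory: for every $r \ge 1$ and every $\eps' > 0$ there is $n_0 = n_0(r, \eps')$ such that every $G \in \CC$ with $|G| \ge n_0$ satisfies $\tilde\nabla_{r-1}(G) \le |G|^{\eps'}$. Given $r \ge 1$ and a target $\eps > 0$, set $\eps' := \eps/(6r)$. By Fact~\ref{fact:adm-nabla} we then obtain $\adm_r(G) \le 6\,\tilde\nabla_{r-1}(G)^3 \le 6\,|G|^{3\eps'}$ for $|G|$ sufficiently large, and Fact~\ref{fact:wcol-adm} yields $\wcol_r(G) \le O(\adm_r(G))^r \le O(|G|^{3r\eps'}) < |G|^\eps$, after enlarging $n_0$ if necessary. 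Hereditariness plays no role in this direction.

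For the backward implication, I would argue the contrapositive. Assume $\CC$ is not nowhere dense and fix a radius $r \ge 1$ witnessing this: for every $n$ some graph $G_n \in \CC$ contains a ${\le}r$-subdivision $H_n$ of $K_n$ as a subgraph. By hereditariness the induced subgraph $G_n[V(H_n)]$ lies in $\CC$, has at most $n + r\binom{n}{2} = O_r(n^2)$ vertices, and still contains $H_n$ as a subgraph. Consequently $\tilde\nabla_r(G_n[V(H_n)]) \ge n - 1$, witnessed by $K_n$ itself. Applying the lower bound $\adm_{r+1}(G) > \tilde\nabla_r(G)/2$ from Fact~\ref{fact:adm-nabla} and then $\adm_{r+1} \le \wcol_{r+1}$ from Fact~\ref{fact:wcol-adm}, I conclude $\wcol_{r+1}(G_n[V(H_n)]) > (n-1)/2 = \Omega_r\bigl(\sqrt{|V(H_n)|}\bigr)$. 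For any $\eps < 1/2$ this contradicts the assumed subpolynomial bound on $\wcol_{r+1}$ as $n \to \infty$, forcing the class to be nowhere dense.

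The main obstacle is the forward direction: the literal definition of nowhere denseness given in the excerpt only forbids clique ${\le}r$-subdivisions of some single size depending on $r$, whereas the argument above needs the much stronger quantitative statement that $\tilde\nabla_{r-1}(G) = |G|^{o(1)}$ uniformly over $G \in \CC$. This is a non-trivial extremal fact --- essentially an iterated application of Kővári--Sós--Turán-style density estimates to shallow minors --- which I would cite from the sparsity literature rather than reprove. Once available, chaining it through admissibility and weak coloring numbers is a routine composition of the cited Facts.
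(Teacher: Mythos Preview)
The paper does not prove Fact~\ref{fact:nd-wcol}; it is stated as a known characterization from Sparsity theory, without proof or explicit citation. Your proposal is a correct reconstruction of the standard argument from the literature: the backward direction via the contrapositive (extracting an induced subgraph on $O_r(n^2)$ vertices containing the subdivided clique, then pushing through Facts~\ref{fact:adm-nabla} and~\ref{fact:wcol-adm}) is exactly right, and the forward direction correctly identifies that the real work lies in the subpolynomial bound on $\tilde\nabla_{r-1}$, which must be imported.

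One small imprecision: you write that ``hereditariness plays no role in this direction,'' but the cited characterization $\tilde\nabla_{r-1}(G)\le |G|^{\eps'}$ for nowhere dense classes itself requires hereditariness in its proof (the standard route goes through bounding edge densities of induced subgraphs). So hereditariness is used in the forward direction as well --- just inside the black box you are citing, not in the chaining you spell out.
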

The condition in Fact~\ref{fact:nd-wcol}
can be equivalently phrased as follows: for every $r\ge 1$ and $\eps>0$, $\wcol_r(G)\le O_{r,\eps}(|G|^\eps)$ for every $G\in\CC$.
This can be written more concisely as $\wcol_r(G)\le |G|^{o(1)}$.

This fundamental result opens the door for multiple algorithmic applications
of nowhere denseness, 
thanks to the existence of efficient algorithms approximating weak coloring numbers. In particular, the model-checking result~\cite{GroheKS17} relies on  Fact~\ref{fact:nd-wcol}.

\subsection{Vapnik-Chervonenkis dimension}\label{sec:vc}
An important parameter measuring the complexity of graphs, and more generally, of set systems, is the 
\emph{Vapnik-Chervonenkis dimension}, or
\emph{VC-dimension}.

A \emph{set system} is a pair $(X,\cal F)$ with $\cal F\subset 2^X$.
Its \emph{VC-dimension} is the maximal size of a subset $Y\subset X$ such that 
$\setof{Y\cap F}{F\in\cal F}=2^Y$.
We recall the fundamental Sauer-Shelah-Perles lemma~\cite{sauer,shelah-sauer-lemma}.

\begin{lemma}[Sauer-Shelah-Perles lemma]\label{lem:sauer-shelah-perles}
  Let $(X,\cal F)$ be a set system of VC-dimension~$d$.
  Then $|\cal F|\le O(|X|^d)$.
\end{lemma}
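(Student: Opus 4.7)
The plan is to prove the stronger quantitative bound
\[
  |\cal F| \le \sum_{i=0}^{d} \binom{|X|}{i},
\]
from which the claimed $O(|X|^d)$ estimate follows by a standard comparison $\sum_{i=0}^{d}\binom{n}{i} \le (d+1)\binom{n}{d} \le O(n^d)$ for fixed $d$. I would proceed by induction on $|X|+d$. The base cases are trivial: if $d=-1$ then $\cal F=\emptyset$, and if $|X|=0$ then $|\cal F|\le 1 = \binom{0}{0}$.

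For the inductive step, I would pick an arbitrary element $x\in X$ and split the set system into two systems over $X':=X-\{x\}$. Concretely, define
\[
  \cal F_1 \;:=\; \{\, F - \{x\} \mid F\in \cal F\,\}, \qquad
  \cal F_2 \;:=\; \{\, F - \{x\} \mid F\in \cal F \text{ and } F\cup\{x\}\in \cal F\,\}.
\]
The key counting identity is $|\cal F|=|\cal F_1|+|\cal F_2|$: each element of $\cal F_1$ accounts for at least one set in $\cal F$, while elements of $\cal F_2$ account for the ``second copy'' (the one containing $x$) whenever both $F$ and $F\cup\{x\}$ lie in $\cal F$. Next I would verify the two VC-dimension bounds: $\cal F_1$ has VC-dimension at most $d$ (any $Y\subset X'$ shattered by $\cal F_1$ is also shattered by $\cal F$), and $\cal F_2$ has VC-dimension at most $d-1$ (if $Y\subset X'$ is shattered by $\cal F_2$, then $Y\cup\{x\}$ is shattered by $\cal F$, giving a shattered set of size $|Y|+1$, so $|Y|+1\le d$).

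Applying the inductive hypothesis to both systems over the smaller ground set $X'$, and then using Pascal's rule, yields
\[
  |\cal F| \;\le\; \sum_{i=0}^{d}\binom{|X|-1}{i} + \sum_{i=0}^{d-1}\binom{|X|-1}{i} \;=\; \sum_{i=0}^{d}\binom{|X|}{i},
\]
which completes the induction and hence the proof.

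I do not expect a serious obstacle here; the only subtle point is verifying the identity $|\cal F|=|\cal F_1|+|\cal F_2|$, which requires noticing that $\cal F_2$ is counting precisely the ``merged pairs'' $\{F,F\cup\{x\}\}\subset \cal F$ that collapsed to a single element of $\cal F_1$. An alternative route, should one prefer to avoid Pascal bookkeeping, would be to invoke Pajor's lemma (that $|\cal F|$ is bounded by the number of subsets of $X$ shattered by $\cal F$), whose own proof uses essentially the same induction; since VC-dimension $d$ forbids shattered sets of size exceeding $d$, this immediately gives the same bound.
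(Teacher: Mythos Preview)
Your proof is the standard, correct argument for the Sauer--Shelah--Perles lemma. However, the paper does not actually prove this statement: it is stated as Lemma~\ref{lem:sauer-shelah-perles} with a citation to the original sources~\cite{sauer,shelah-sauer-lemma} and used as a black box throughout. So there is no ``paper's own proof'' to compare against; your inductive argument (the classical shifting/down-shift proof) is a perfectly valid way to supply one.

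One small notational wrinkle: your definition of $\cal F_2$ as $\{F-\{x\}\mid F\in\cal F\text{ and }F\cup\{x\}\in\cal F\}$ is slightly ambiguous, since when $x\in F$ the condition $F\cup\{x\}\in\cal F$ is automatic and you would be including $F-\{x\}$ regardless of whether $F-\{x\}\in\cal F$. The intended (and correct) definition is $\cal F_2:=\{G\subset X'\mid G\in\cal F\text{ and }G\cup\{x\}\in\cal F\}$; with that, the identity $|\cal F|=|\cal F_1|+|\cal F_2|$ and the VC-dimension bounds go through exactly as you describe.
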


The VC-dimension of a graph $G$, denoted $\VCdim(G)$, 
is defined as the VC-dimension of the set system $(V(G),\setof{N(v)}{v\in V(G)})$.
More explicitly, $\VCdim(G)$ is the maximal size of a subset $X\subset V(G)$
such that $\setof{N(v)\cap X}{v\in V(G)}=2^X$.

\newcommand{\cev}[1]{\reflectbox{\ensuremath{\vec{\reflectbox{\ensuremath{#1}}}}}}

For a binary relation $R\subset V\times W$, and elements $a\in V$ and $b\in W$,
denote $\vec R(a):=\setof{w\in W}{(a,w)\in R}$ 
and $\cev R(b):=\setof{v\in V}{(v,b)\in R}$.
The \emph{VC-dimension} of $R$ is the maximum 
of the VC-dimensions of the two set systems 
\[(V,\setof{\vec R(a)}{a\in V})\text{\quad and\quad}(W,\setof{\cev R(b)}{b\in W}).\]

\subsection{Logic}\label{sec:logic}In this paper, we only consider \emph{binary} signatures, that is signatures $\Sigma$
consisting of unary relation symbols, binary relation symbols, and unary function symbols.
Fix a binary signature $\Sigma$.
A $\Sigma$-structure  $B$ consists of a set of vertices $V(B)$,  and is 
equipped with interpretations for each of the symbols in $\Sigma$, as unary relations, binary relations, and unary functions on $V(B)$, respectively. The number of vertices of $B$ is denoted $|B|$.
A graph $G$ is seen as a structure over the signature $\Sigma$ consisting of one binary relation $E$, which is interpreted  as adjacency in $G$.

\emph{Terms} are defined inductively, as either a variable symbol, or a function symbol applied to a term, e.g. $x$ and $f(g(y))$ are terms if $f,g$ are function symbols and $x,y$ are a variable symbols.
A \emph{quantifier-free} formula over the signature $\Sigma$ is a boolean combination 
of \emph{atomic formulas} of the form $U(t(x))$, or $t(x)=t'(y)$, or
$R(t(x),t'(y))$, where $U\in \Sigma$ is a unary relation,  $R\in \Sigma$ is a binary relation, and $t(x)$ and $t'(y)$ are terms.

A \emph{first-order formula} is built inductively:
every  atomic formula is a first-order formula, 
and if $\phi,\psi$ are first-order formulas and $x$ is a variable, then so are $\phi\lor\psi$,  $\neg\phi$, and $\exists x.\phi$. 
The notation $\phi\land\psi$, $\phi\rightarrow \psi$, and $\forall x.\psi$ is used as syntactic sugar.
The \emph{quantifier rank} of a formula $\phi$ 
is the maximal nesting of quantifiers in it.

We may write $\phi(x_1,\ldots,x_k)$ to indicate that the free-variables of $\phi$ are contained in $\set{x_1,\ldots,x_k}$.
For a  formula $\phi(x_1,\ldots,x_k)$, structure $A$ and elements $a_1,\ldots,a_k$ of $A$, we write $A\models\phi(a_1,\ldots,a_k)$ to denote that the valuation of the variables $x_1\mapsto a_1,\ldots,x_k\mapsto a_k$ satisfies the formula $\phi$ in $A$.

Say that a $\Sigma$-formula $\phi(x,y)$ is \emph{symmetric} if $G\models \phi(u,v)\leftrightarrow\phi(v,u)$ holds for every $\Sigma$-structure $G$ and elements $u,v$ of $G$.

The \emph{Gaifman graph} of  a $\Sigma$-structure $B$ is the graph with vertices $V(B)$ and  edges $uv$ with $u\neq v$ such that $(u,v)\in R$
or $(v,u)\in R$ 
for some binary relation $R\in\Sigma$,
or $f(u)=v$ or $f(v)=u$ for some unary function $f\in\Sigma$.

\paragraph{Dependence}
Let $\phi(\tup x;\tup y)$ be a first-order formula, whose set of free variables is partitioned into two disjoint sets $\tup x$ and $\tup y$.
For a structure $H$ define the binary relation $R_H^\phi\subset 
V(H)^{\tup x}\times V(H)^{\tup y}$ as 
\[R_H^\phi=\setof{(\tup u,\tup v) \in V(H)^{\tup x}\times V(H)^{\tup y}}{H\models \phi(\tup u;\tup v)}.\]

Say that a class $\CC$ of $\Sigma$-structures is 
\emph{dependent}, or \emph{NIP} \cite{Shelah1971StabilityTF, Adler2008-ADLAIT} 
if for every first-order formula $\phi(\tup x;\tup y)$ 
  there is some $k_\phi\ge 1$  such that for every $H\in \CC$ 
  the binary relation $R_H^\phi$ has VC-dimension at most~$k_\phi$.
  This is equivalent to saying that for every first-order formula $\phi(\tup x;\tup y)$, there is some bipartite graph $G_\phi$, such that for all $H\in\CC$,  the bipartite graph with parts $V(H)^{\tup x}$, $V(H)^{\tup y}$ and edges $(\tup u,\tup v)$ such that $H\models \phi(\tup u;\tup v)$, does not contain $G_\phi$ as an induced subgraph.

The following fact is a due to Podewski and Ziegler~\cite{podewski1978stable}
(see also~\cite{AdlerA14} and \cite{number-of-types}).

\begin{fact}\label{fact:nd-nip}
  Every nowhere dense graph class $\CC$ is dependent. Conversely, 
  every monotone, dependent graph class $\CC$ is nowhere dense.
\end{fact}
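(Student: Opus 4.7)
The plan is to treat the two directions separately, using different tools.

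For the forward direction, that every nowhere dense class $\CC$ is dependent, I would proceed by showing that for every first-order formula $\phi(\bar x;\bar y)$ there is a uniform bound on $\VCdim(R_H^\phi)$ over $H\in\CC$. The main tool is Gaifman locality: $\phi$ is equivalent to a Boolean combination of basic local sentences and local formulas, each of whose truth at a tuple depends only on the isomorphism type of a bounded-radius ball around that tuple. So it suffices to bound VC-dimension for local formulas of some radius $r$. Here one uses the fact (Fact~\ref{fact:nd-wcol}) that nowhere dense classes have subpolynomial weak coloring numbers, which by standard sparsity arguments (equivalently, uniform quasi-wideness or bounded $r$-neighborhood complexity) implies that the set system of $r$-balls $\{B_H^r(v): v\in V(H)\}$ has bounded VC-dimension on each $H\in\CC$. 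One then combines this with a Sauer-Shelah-Perles style bound applied inductively to the Boolean combination, using that the number of realized $r$-local types around a fixed tuple grows only polynomially in $|H|$. The main obstacle is to handle tuples of length $>1$ in $\bar x,\bar y$ and to propagate VC-bounds through Boolean combinations of local conditions; this is the technical heart of the Adler--Adler argument, and the cleanest way is via the uniform quasi-wideness characterization together with compositionality of VC-dimension under Boolean operations.

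For the converse, assume $\CC$ is monotone and not nowhere dense; I aim to exhibit an explicit formula $\phi(x;y)$ witnessing unboundedness of $\VCdim(R_H^\phi)$. By the definition of nowhere denseness, there is a fixed $r\in\N$ such that for every $n$, some graph $G_n\in\CC$ contains an ${\le}r$-subdivision of $K_n$ as a subgraph. By monotonicity, this subdivision, call it $H_n$, itself lies in $\CC$. Consider the distance-${\le}(r+1)$ formula
\[
\phi(x;y)\;:=\;\bigvee_{1\le \ell\le r+1}\exists z_1\ldots z_{\ell-1}.\,E(x,z_1)\wedge E(z_1,z_2)\wedge\cdots\wedge E(z_{\ell-1},y),
\]
which is a first-order formula in the graph signature. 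On $H_n$, restricted to the $n$ principal vertices corresponding to the vertices of $K_n$, the relation $R_{H_n}^\phi$ realizes the complete bipartite graph $K_{n,n}$ minus the diagonal (up to handling of equality, which one eliminates by adding a dummy vertex via monotonicity). In particular, any partition of the principal vertices into ``left'' and ``right'' parts is shattered, so $\VCdim(R_{H_n}^\phi)\ge \lfloor n/2\rfloor$. Letting $n\to\infty$ shows $\CC$ is not dependent.

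Combining the two gives the statement. Among the two directions, I expect the forward one to be the genuine obstacle, since it requires importing the full locality/VC-dimension machinery, whereas the converse is a one-line reduction once the distance formula is written down.
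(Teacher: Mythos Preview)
The paper does not prove this fact; it is stated with citations to Podewski--Ziegler, Adler--Adler, and \cite{number-of-types}, so there is no in-paper argument to compare against.

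Your forward direction is a reasonable outline of the Adler--Adler approach, and the places you flag as obstacles (tuples, Boolean combinations) are indeed where the work lies.

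Your converse direction, however, has a genuine gap. You claim that on the ${\le}r$-subdivision $H_n$ of $K_n$, the distance-$(r{+}1)$ relation restricted to the $n$ principal vertices is $K_{n,n}$ minus the diagonal, and that this shatters sets of size $\lfloor n/2\rfloor$. But the complete relation minus the diagonal has VC-dimension~$1$: the induced set system on a set $X$ of principal vertices is $\{X\setminus\{v\}:v\in X\}$, and already a two-element set cannot be shattered (the empty trace is never realized). So nothing in your construction witnesses large VC-dimension.

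The standard fix is to use monotonicity more strongly. Since $\CC$ contains a ${\le}r$-subdivision of $K_n$ for all $n$, it contains (as subgraphs, hence as members) ${\le}r$-subdivisions of \emph{every} graph. A Ramsey argument on the edge-subdivision counts then yields, for some fixed $s\le r$, exact $s$-subdivisions of arbitrarily large cliques, hence of every graph. Now take the bipartite ``membership'' graph $B_n$ with parts $[n]$ and $2^{[n]}$; its exact $s$-subdivision lies in $\CC$, and on it the distance-$(s{+}1)$ formula, restricted to principal vertices, recovers $B_n$ exactly (principal vertices at original distance $\ge 2$ are at distance $\ge 2(s{+}1)>s{+}1$ in the subdivision). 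Since $B_n$ has VC-dimension $n$, letting $n\to\infty$ shows $\CC$ is not dependent.
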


Hereditary, dependent graph classes  are much more general than 
nowhere dense classes. The study of those classes is the main motivation of this paper.
A closely related notion, of a \emph{monadically dependent} class,
   is discussed in Section~\ref{sec:subpoly}.
  Monadically dependent classes are dependent, but in general, the converse implication does not hold. However, as shown by Braunfeld and Laskowski~\cite{braunfeld2022existential},
  for hereditary classes, the two notions coincide.

\section{Cop-width}\label{sec:copwidth}
We start with defining and analyzing the Cops and Robber game with finite radius,
and  the related cop-width parameters.
To define the game, we invoke the original description 
of the Cops and Robber game by 
 Seymour and Thomas~\cite{seymour-thomas-cops}:
``The robber stands on a vertex of the graph, and can at any time run at great speed to any other vertex along a path of the graph. He is not permitted to run through a cop, however. There are $k$ cops, each of whom at any time either stands on a vertex or is in a helicopter (that is, is temporarily removed from the game). The objective of the player controlling the movement of the cops is to land a cop via helicopters on the vertex occupied by the robber, and the robber's objective is to elude capture. (The point of the helicopters is that cops are not constrained to move along paths of the graph -- they move from vertex to vertex arbitrarily.)  The robber can see the helicopter approaching its landing spot and may run to a new vertex before the helicopter actually lands.'' 

Seymour and Thomas proved that the least number of cops needed to catch a robber on a graph $G$ 
is equal to one plus the treewidth of $G$. 
To this end, they proved a \emph{min-max theorem}:
either the cops have a winning strategy of a particularly simple, \emph{monotone} form, which can be described by a tree decomposition of the graph, or otherwise, the robber has a winning strategy of a particularly simple form, called a \emph{haven}.

In our variant of the game the robber runs at speed $r$, for some fixed $r\in\N\cup\set{\infty}$. That is, in each round, after the cops have taken off in their helicopters to their new positions, which are known to the robber, and before the helicopters have landed, the robber may traverse a path of length at most $r$ that does not run through a cop that remains on the ground (he may also stay put). 
We call this game the \emph{Cops and Robber game} with radius $r$ and width $k$, if there are $k$ 
cops, and the robber can run at speed~$r$.

\begin{definition}
  The \emph{radius-$r$ cop-width} of $G$, denoted $\copw_r(G)$, is the least number $k$  such that the cops win the Cops and Robber game with radius $r$ and width~$k$.  
\end{definition}

Note that there is a graph parameter called the \emph{cop-number} of a graph~\cite{cop-number},
which is equal to the number of cops needed to catch the robber in a game where the cops and the robber move at speed one in each turn. It  could seem that $\copw_1(G)$ is upper bounded by the cop-number of $G$.
 However, there is a crucial difference with our notion:
the cops do not announce their moves in advance. So for instance, 
every graph with a universal vertex (a vertex adjacent to all other vertices) has cop-number equal to one (such graphs are called \emph{cop-win graphs}). On the other hand, $\copw_1(G)$ can be arbitrarily large on such graphs. Indeed, it is easy to see that for each $r\in\N\cup\set\infty$, the parameter
$\copw_r(G)$ is monotone with respect to the subgraph relation:
if $H$ is a subgraph of $G$ then $\copw_r(H)\le\copw_r(G)$.

\begin{example}\label{ex:bd-deg}
  Let $G$ be a graph with maximum degree $d$.
  Then $\copw_r(G)< d^{r+1}$, for all $r\in\N$.
  To see this, consider the following strategy for the cops: if the robber is initially placed on a vertex $v$, direct the cops to all (less than $d^{r+1})$ vertices that are at distance at mo  st $r$ from $v$.
  Before the cops land at those locations, the robber traverses a path of length at most $v$ to one of those vertices, and is caught by the landing cop. 
\end{example}
\begin{example}\label{ex:trees}
Let $T$ be a rooted tree. Then $G$ has tree-width at most one,
so by the Seymour-Thomas result, $\copw_\infty(G)\le 2$.
The strategy with two cops is as follows.
In the first round, land one cop on the root of the tree, leaving the other cop in his helicopter.
In each subsequent round, direct the cop that is further from the robber, to the child of the other cop's position
which is closest to the robber's position. With this strategy, in round $i$ the robber will be at distance at least $i$ from the root, so they will be caught after at most as many rounds as the height of $T$.
\end{example}

We start with comparing the most fundamental parameter of Sparsity theory, degeneracy, with the cop-width parameter for radius $1$.
\begin{theorem}\label{thm:degeneracy}
  For every graph $G$,
  \[\copw_1(G)=\deg(G)+1.\]
\end{theorem}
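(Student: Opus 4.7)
The plan is to prove both inequalities of $\copw_1(G) = \deg(G) + 1$ by realising the two standard witnesses of degeneracy -- a vertex ordering and a dense subgraph -- as explicit strategies in the radius-$1$ cops-and-robber game.

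For the upper bound $\copw_1(G) \le \deg(G) + 1$, I set $d := \deg(G)$ and fix a vertex order $<$ on $V(G)$ in which every vertex has at most $d$ $<$-earlier neighbors. The proposed strategy for $d+1$ cops is: whenever the robber sits at $v$, send all cops airborne and announce landing at $T(v) := \{v\} \cup \{u \in N(v) \mid u < v\}$, a set of at most $d+1$ vertices. With no cops on the ground, the only constraint on the robber's radius-$1$ move is that its endpoint avoid $T(v)$; since $v$ itself and every $<$-earlier neighbor of $v$ are blocked, the robber must move to some strictly $<$-later neighbor. The robber's position therefore strictly ascends in $<$, so after finitely many rounds it reaches a vertex with no $<$-later neighbor, where the move set collapses to $T(v)$ and the robber is caught.

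For the lower bound $\copw_1(G) \ge \deg(G) + 1$, I again set $d := \deg(G)$ and invoke the standard characterization: since $G$ is not $(d-1)$-degenerate, there exists a nonempty $U \subseteq V(G)$ in which every vertex has at least $d$ neighbors inside $U$. Against $d$ cops, the robber's strategy is to remain in $U$ forever: starting at any $u_0 \in U$, in each round the end-of-round cop positions form a set $S$ with $|S| \le d$, while the $d+1$ radius-$1$ destinations $\{u\} \cup N_U(u)$ all lie in $U$, so at least one of them lies outside $S$. The robber moves to such a vertex along a single edge with cop-free endpoints, so no ground cop is traversed and the new position is not a landing spot.

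The only delicate point in both directions is the ``$+1$'' offset, which stems from counting the current vertex among the robber's radius-$1$ options: this is what lets the $d$ $U$-neighbors become $d+1$ potential targets in the lower bound (outnumbering the $d$ cops), and what forces the cops in the upper bound to cover $v$ together with its $\le d$ earlier neighbors, needing $d+1$ cops rather than $d$.
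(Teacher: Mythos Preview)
Your proof is correct and follows essentially the same approach as the paper's: a degeneracy order yields the cops' strategy that forces the robber strictly upward for the bound $\copw_1(G)\le\deg(G)+1$, and a subgraph of minimum degree $d$ (obtained from the failure of $(d-1)$-degeneracy) serves as the robber's haven against $d$ cops for the reverse inequality. Your treatment is in fact slightly more explicit than the paper's about the ground-cop constraint and the role of the ``$+1$'', but the underlying argument is identical.
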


The simple proof of this fact relies on the fundamental duality result 
concerning degeneracy: for every $d\in\N$, 
every graph $G$ is either $d$-degenerate, or it has a subgraph $H$ in which every vertex 
has degree larger than $d$.
This duality theorem is nothing else than a min-max theorem 
for the radius-$1$ Cops and Robber game, quite analogous to the min-max theorem in the eponymous paper of Seymour and Thomas.
Indeed, a $d$-degeneracy order can now be viewed as a compact representation of a winning strategy for the cops involving $d+1$ cops, in the Cops and Robber game with radius $1$:
when robber is on a vertex $v$, place $d+1$ cops 
on $v$ and the neighbors of $v$ before $v$. Then the robber needs to move rightwards in the order, and eventually loses. This proves $\copw_1(G)\le\deg(G)+1$.
Dually, a subgraph $H$ of $G$ whose all vertices have degree larger than $d$, can be seen as a haven for the robber: they can forever evade $d$-cops by always moving to an unoccupied vertex of $H$ (or remaining in place). This proves $\copw_1(G)\ge \deg(G)+1$, thus proving Theorem~\ref{thm:degeneracy}. 

Next we observe that for higher radii $r$, the parameter 
$\copw_r(G)$ is closely related to the generalized coloring numbers: the {weak coloring} number $\wcol_r(G)$ and the {admissibility} numbers $\adm_r(G)$
(see Section~\ref{sec:prelim-sparsity}).
Recall (see Fact~\ref{fact:wcol-adm}) that the two parameters are functionally equivalent, and that $\adm_r(G)\ge \tilde\nabla_{r-1}(G)$.

We prove the following:
\begin{theorem}\label{thm:adm-wcol}
  For $r\in\N$,
  \[\adm_r(G)+1\le \copw_r(G)\le \wcol_{2r}(G)+1.\]     
\end{theorem}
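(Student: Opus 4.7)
My plan is to prove the two inequalities separately, using different combinatorial witnesses in each direction.

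For the lower bound $\adm_r(G) + 1 \le \copw_r(G)$, I would first establish a duality: $\adm_r(G) > k$ if and only if there is a nonempty set $U \subseteq V(G)$ such that every $v \in U$ admits more than $k$ internally-disjoint paths of length at most $r$ in $G$ from $v$ to vertices of $U - \{v\}$. The forward direction follows by applying the definition of $\adm_r$ to the $<$-maximum vertex of $U$ under any order. The converse is a greedy argument: if no such obstruction $U$ exists, one can iteratively extract a vertex of the remaining set $U_i$ with at most $k$ internally-disjoint paths in $G$ to $U_i - \{v\}$ and place it next (from the top) in an order, which yields $\adm_r(G) \le k$. With $k = \adm_r(G) - 1$ and $U$ the witnessing obstruction, the robber's strategy is to stay inside $U$: whenever $v_{i-1} \in C_i$ the robber is forced to move, but $v_{i-1} \notin C_{i-1}$ (since the robber survived the previous round), so $v_{i-1}$ is one of the $k+1$ cops in $C_i$ and is not a ground cop, leaving only $k$ cops that can collectively block at most $k$ of the more-than-$k$ internally-disjoint paths (each cop occupies at most one internal vertex or endpoint, by disjointness). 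At least one path then survives to take the robber to another vertex of $U - \{v_{i-1}\}$ safely, giving $\copw_r(G) \ge \adm_r(G) + 1$.

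For the upper bound $\copw_r(G) \le \wcol_{2r}(G) + 1$, let $k = \wcol_{2r}(G)$ with witnessing order $<$, and consider the cop strategy $C_i := \wreach_{2r}(v_{i-1})$, which always contains $v_{i-1}$ and has at most $k + 1$ elements. Since $v_{i-1} \in C_i$, the robber must move along a path $\pi_i$ of length $\le r$ to $v_i \notin C_i$, avoiding the ground cops $C_{i-1} \cap C_i$. My progress measure is $\eta_i := \min_< \pi_i$. I distinguish three cases: if $\eta_i = v_i$, then $\pi_i$ witnesses $v_i \in \wreach_r(v_{i-1}) \subseteq C_i$, a contradiction; if $\eta_i = v_{i-1}$, then every vertex of $\pi_i$ is $\ge v_{i-1}$, so $v_i > v_{i-1}$; and if $\eta_i$ is internal, the prefix of $\pi_i$ from $v_{i-1}$ to $\eta_i$ witnesses $\eta_i \in \wreach_r(v_{i-1}) \subseteq C_i$, and since internal vertices of $\pi_i$ must avoid ground cops, also $\eta_i \notin C_{i-1}$.

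The heart of the argument is the internal case: I would concatenate the previous path $\pi_{i-1}$ with the prefix of $\pi_i$ ending at $\eta_i$ to obtain a walk of length $\le 2r$ from $v_{i-2}$ to $\eta_i$, whose $<$-minimum is $\min(\eta_{i-1}, \eta_i)$. If $\eta_i \le \eta_{i-1}$, then shortening the walk to a path on a subset of its vertices preserves $\eta_i$ as the minimum, so $\eta_i \in \wreach_{2r}(v_{i-2}) \subseteq C_{i-1}$, contradicting the previous line; hence $\eta_i > \eta_{i-1}$. In the case $\eta_i = v_{i-1}$, the analysis applied to round $i-1$ gives $\eta_{i-1} < v_{i-1} = \eta_i$ as well. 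Thus from round $2$ onwards $\eta_i$ is strictly increasing in $<$, and since $\eta_i \in V(G)$, the game terminates after at most $|V(G)| + 1$ rounds with a cop win. The main obstacle I anticipate is precisely the internal-case analysis: tracking ground-cop membership carefully across consecutive rounds and verifying that the walk-to-path conversion genuinely witnesses weak $2r$-reachability, which is what forces the use of $\wcol_{2r}$ rather than $\wcol_r$.
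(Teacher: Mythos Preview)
Your proposal is correct and follows essentially the same approach as the paper's proof: both directions use the same witnesses (the admissibility obstruction set for the lower bound, and the $\wcol_{2r}$ order with the progress measure $\min_< \pi_i$ for the upper bound), and the key step in the upper bound---concatenating two consecutive robber paths into a walk of length at most $2r$ to force membership in a previous weak-reachability set---is identical. Your treatment is somewhat more explicit than the paper's: you separate out the three positions of $\eta_i$ on $\pi_i$ and track ground-cop membership $C_{i-1}\cap C_i$ carefully, whereas the paper handles all cases uniformly by directly arguing that $m_{i+1}\le m_i$ would place $m_{i+1}$ in both the current and next cop sets (hence on the ground during $\pi_{i+1}$), a contradiction; but this is a presentational difference, not a substantive one.
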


In particular, by Fact~\ref{fact:be-wcol}, this gives the first, arguably very natural, characterization of classes with bounded expansion, in terms of a game:
\begin{corollary}\label{cor:copw}
A graph class $\CC$ has bounded expansion if and only if for every $r\in\N$ we have
that ${\copw_r(\CC)<\infty}$.
\end{corollary}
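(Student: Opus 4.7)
The plan is to derive this as a direct corollary of Theorem~\ref{thm:adm-wcol} combined with the classical characterization of bounded expansion via weak coloring numbers (Fact~\ref{fact:be-wcol}) and the functional equivalence of $\wcol_r$ and $\adm_r$ (Fact~\ref{fact:wcol-adm}). No substantial new combinatorics is needed; the work has already been done in Theorem~\ref{thm:adm-wcol}.

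For the forward direction, suppose $\CC$ has bounded expansion. By Fact~\ref{fact:be-wcol}, for every $s\in\N$ we have $\wcol_s(\CC)<\infty$. Applying this with $s=2r$ and invoking the upper bound from Theorem~\ref{thm:adm-wcol}, we obtain
\[
\copw_r(G)\le \wcol_{2r}(G)+1
\]
for every $G\in\CC$, hence $\copw_r(\CC)\le \wcol_{2r}(\CC)+1<\infty$.

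For the backward direction, assume $\copw_r(\CC)<\infty$ for every $r\in\N$. Fix $r$. The lower bound from Theorem~\ref{thm:adm-wcol} gives $\adm_r(G)\le \copw_r(G)-1$ for every $G\in\CC$, so $\adm_r(\CC)<\infty$. By Fact~\ref{fact:wcol-adm}, $\wcol_r$ is bounded in terms of $\adm_r$ (with a bound depending on $r$), hence $\wcol_r(\CC)<\infty$. Since this holds for every $r\in\N$, Fact~\ref{fact:be-wcol} yields that $\CC$ has bounded expansion.

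No step is a genuine obstacle: the only mildly delicate point is to be careful that the inequalities in Theorem~\ref{thm:adm-wcol} use different radii ($r$ vs.\ $2r$), so one cannot directly conclude that $\copw_r$ and $\wcol_r$ are functionally equivalent on a single graph; however, for the qualitative statement of bounded expansion, which quantifies over all radii, this mismatch is harmless.
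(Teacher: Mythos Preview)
Your proof is correct and follows exactly the route the paper intends: the corollary is stated immediately after Theorem~\ref{thm:adm-wcol} with the remark ``In particular, by Fact~\ref{fact:be-wcol}\ldots'', and the paper has already noted (right after Fact~\ref{fact:be-wcol}) that $\wcol_r$ can be replaced by $\adm_r$ via Fact~\ref{fact:wcol-adm}. Your observation about the radius mismatch ($r$ vs.\ $2r$) being harmless for the qualitative statement is precisely the point.
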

Thus, all the classes $\CC$ from Example~\ref{ex:BE-classes} satisfy $\copw_r(\CC)<\infty$, for all $r\in\N$.
Similarly, by Fact~\ref{fact:nd-wcol} we get a new characterization of nowhere dense classes.
\begin{corollary}\label{cor:copw-nd}
  A hereditary graph class $\CC$ is nowhere dense if and only if 
  for every $r\in\N$ and $\eps>0$ we have that 
  ${\copw_r(G)\le O_{r,\eps}(|G|^\eps)}$ for $G\in\CC$.
  \end{corollary}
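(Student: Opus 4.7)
The plan is to derive both directions of the equivalence by combining Theorem~\ref{thm:adm-wcol}, which sandwiches $\copw_r$ between the admissibility and weak coloring numbers, with the two characterizations of the relevant sparsity parameters: Fact~\ref{fact:nd-wcol} (which describes nowhere denseness via weak coloring numbers) for the easy direction, and Fact~\ref{fact:adm-nabla} (which controls shallow topological grads by admissibility) for the converse.

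For the forward implication, suppose $\CC$ is nowhere dense. By Fact~\ref{fact:nd-wcol}, for every $r\in\N$ and every $\eps>0$ we have $\wcol_{2r}(G)\le |G|^\eps$ for all sufficiently large $G\in\CC$. Theorem~\ref{thm:adm-wcol} then gives
\[
  \copw_r(G) \;\le\; \wcol_{2r}(G)+1 \;\le\; O_{r,\eps}(|G|^\eps),
\]
which is exactly the bound we want.

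For the converse, assume that for every $r\in\N$ and $\eps>0$ the bound $\copw_r(G)\le O_{r,\eps}(|G|^\eps)$ holds on $\CC$. By Theorem~\ref{thm:adm-wcol} we get $\adm_r(G)\le \copw_r(G)\le O_{r,\eps}(|G|^\eps)$, and feeding this into Fact~\ref{fact:adm-nabla} yields
\[
  \tilde\nabla_{r-1}(G) \;<\; 2\,\adm_r(G) \;\le\; O_{r,\eps}(|G|^\eps)
\]
for every $G\in\CC$, every $r\ge 1$, and every $\eps>0$. Now suppose towards a contradiction that $\CC$ is not nowhere dense. Then there exists $s\ge 1$ such that for arbitrarily large $n$ some graph $G_n\in\CC$ contains a $\le s$-subdivision $H_n$ of $K_n$ as a subgraph. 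Since $\CC$ is hereditary, the induced subgraph $G'_n:=G_n[V(H_n)]$ lies in $\CC$; it has $|G'_n|\le n+s\binom{n}{2}=O(sn^2)$ vertices, and $H_n$ is still a subgraph of $G'_n$, so $\tilde\nabla_s(G'_n)\ge n-1$. Applying the bound above with $r=s+1$ and some $\eps<1/2$, we obtain
\[
  n-1 \;\le\; \tilde\nabla_s(G'_n) \;\le\; O_{s,\eps}(|G'_n|^\eps) \;\le\; O_{s,\eps}(n^{2\eps}),
\]
which is impossible for large $n$.

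The main subtleties to watch out for are keeping track of the radii: the cop-width bound must be invoked at radius $r=s+1$ to land on $\tilde\nabla_s$ via Fact~\ref{fact:adm-nabla}, and one has to exploit heredity in order to replace a host graph possibly much larger than the subdivision by the induced subgraph on the subdivision's vertices, so that the polynomial $|G|^\eps$ is controlled by a polynomial in $n$. Once those indices are lined up, everything reduces to the sandwiching inequalities already proved.
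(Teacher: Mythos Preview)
Your proof is correct. The forward direction matches the paper exactly. For the converse, the paper's intended argument is slightly more direct: from $\adm_r(G)\le\copw_r(G)\le O_{r,\eps}(|G|^\eps)$ one uses Fact~\ref{fact:wcol-adm} to get $\wcol_r(G)\le O(\adm_r(G))^r\le O_{r,\eps}(|G|^{r\eps})$, rescales $\eps$, and then applies the backward direction of Fact~\ref{fact:nd-wcol}. You instead pass through $\tilde\nabla$ via Fact~\ref{fact:adm-nabla} and argue directly from the definition of nowhere denseness using heredity to shrink the host graph. Your route is a bit longer and needs the explicit contradiction with the subdivided clique, but it has the minor advantage of not invoking the harder backward direction of Fact~\ref{fact:nd-wcol}; the paper's route is shorter because that fact already packages the whole equivalence.
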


\begin{proof}[Proof of Thm.~\ref{thm:adm-wcol}]
  Fix a total order on $G$ that minimises the maximum size of a weak reachability set with radius $2r$, so that every vertex $v\in V(G)$ weakly $2r$-reaches at most $\wcol_{2r}(G)$ vertices.
The following yields a winning strategy for the cops:
if the robber is at a vertex $v$, then  the cops are placed on $v$ and the vertices $w<v$ that are $2r$-weakly reachable from $v$. 
To see that this strategy is winning for the cops, 
consider the paths $\pi_1,\pi_2,\ldots$ in $G$, where $\pi_i$ is the path of length at most $r$ along which robber traversed from his $i$th position $v_i$ to his $(i+1)$-st position $v_{i+1}$. If $m_i$ denotes the $<$-minimal vertex of the path $\pi_i$, then we have that $m_{i+1}>m_i$. Otherwise,
$m_{i+1}$ is $2r$-weakly reachable from $v_i$ (as witnessed by the path $\pi_i;\pi_{i+1}$, truncated at $m_{i+1}$), 
so at the time the robber traversed the path $\pi_{i+1}$, the vertex $m_{i+1}$ that was occupied by a cop, which is impossible. Therefore, we have $m_1<m_2<\ldots$, so the cops win after at most $|G|$ rounds. This gives the upper bound in Theorem~\ref{thm:adm-wcol}.

For the lower bound, we use the well-known (and straightforward) min-max characterization of admissibility:
a graph $G$ has $r$-admissibility number at least $d$ 
 if there is a set of vertices $X\subset V(G)$ such that 
 for every vertex $v\in X$ there is a set of $d$ paths of length at most $r$ that start at $v$ and end in some vertex of~$X\setminus{v}$,
 such that any two paths only share $v$ as a common vertex.

 A set $X\subset V(G)$ that witnesses that  $\adm_r(G)\ge d$ can be used as a haven for the robber, to elude $d$ cops forever, similarly as in the case of degeneracy.
 If the robber is occupying a vertex $v$ in $X$ and the cops are moving to a set $S$ of at most $d$ new positions, then either $v\notin S$, or there is some path from $v$ to a vertex in $X-S$ of length $\le r$, and the robber moves along this path. This proves $\copw_r(G)\ge d+1$.
\end{proof}

We believe that the proof of Theorem~\ref{thm:adm-wcol}
sheds a new light on the fundamental notions of Sparsity theory:
the total order that appears in the definition of the weak coloring number
can be viewed as a compact representation of a (very particular) winning strategy for the cops in the Cops and Robber game,
whereas an obstruction to admissibility -- as a winning strategy for the robber.
 Therefore, 
the equivalence of the weak coloring numbers and the admissibility numbers can again be seen as a min-max theorem for the Cops and Robber game with finite radius.

\medskip
Note that Theorem~\ref{thm:adm-wcol} does not give an exact min-max 
theorem, as 
there is a gap between the upper and lower bounds.
We can get a family of parameters based on another variant of the  cops and 
robber game, which does admit an exact min-max theorem.
In this game, in each round first the cops move to some $k$ vertices of the graph, and then the robber 
moves via a path of length at most $r$, that does not run through a cop, and loses if no such path exists. See Appendix \ref{sec:copwidth'}
for more details.
This family of parameters again characterizes classes with bounded 
expansion,
and does admit an exact min-max theorem that exhibits a duality 
between total orders describing winning strategies for the cops,
and havens for the robber.
Those parameters essentially appear\footnote{The paper~\cite{richerby-thilikos-lazy-fugitive} considers a variant of the game in which the robber is \emph{lazy}, that is, does not move unless
a cop is placed at his location, whereas the~\cite{otherthilikos} considers a variant where the cops occupy edges instead of vertices, and the robber never remains put.} in the work~\cite{richerby-thilikos-lazy-fugitive, otherthilikos}.
However, the paper does not relate those notions with generalized coloring numbers, and with classes of bounded expansion.
Curiously, the limit version of those parameters, for radius $r=\infty$,
does not correspond to treewidth, but to a notion called \emph{$\infty$-admissibility}~\cite{dvorakTop}. Classes with bounded $\infty$-admissibility are characterized as clique-sums of graphs of almost bounded degree~\cite[Cor. 5 and Thm. 6]{dvorakTop}.
Despite the appealing properties of the parameters based on this variant of the cops and robber game, they seem to be less suited for our purposes, of generalizing to dense graphs.

\medskip
To summarize, our new cop-width parameters exactly characterize treewidth (for $r=\infty$), degeneracy (for $r=1$), classes of bounded expansion, and nowhere dense classes. This captures an appreciable fragment of the theory of sparse graphs, while offering a new perspective on the fundamental graph parameters used for measuring sparsity, and the dualities between them.
We have 
\begin{multline*}
  \deg(G)+1=\copw_1(G)\le \copw_2(G)\\\le\ldots \le \copw_\infty(G)=\tw(G)+1,
\end{multline*}
so for a class $\CC$ of graphs, if any of those parameters is bounded by a constant, then $\CC$ has bounded degeneracy by Theorem~\ref{thm:degeneracy}. 
So those parameters are only well suited to the study of sparse graphs: every class $\CC$ for which either of those parameters is bounded, is sparse, in the sense of having a bound on the edge density
$|E(G)|/|V(G)|$ for all graphs $G$ in the class.
 Moreover,  $\copw_r$ is monotone with respect to the subgraph relation: if $H$ is a subgraph of $G$,
then $\copw_r(H)\le \copw_r(G)$.

\section{Flip-width}\label{sec:flipwidth}
To lift the Cops and Robber game to the setting of dense graphs, 
we enhance the power of the cops.
Now, instead of placing cops on at most $k$ vertices of the graph,
which can be alternatively seen as removing at most $k$ vertices, or isolating them,
one player can perform \emph{flips} on subsets of the graph $G$.
For a fixed graph $G$, applying a \emph{flip} between a pair of sets of vertices $A,B\subset V(G)$ results in the graph
obtained from $G$ by inverting the adjacency between any pair of vertices $a,b$ with $a\in A$ and $b\in B$. For example, 
applying a flip between $V(G)$ and $V(G)$ in $G$ results in the complement of $G$.
And if $v$ is a vertex of $G$, then applying a flip between $\set v$ and the neighborhood $N(v)$ of $v$ has the same effect as \emph{isolating} $v$, that is, removing all the edges adjacent to $v$ in $G$. If $G$ is a graph and $\cal P$ is a partition of its vertex set, then call a graph $G'$ a \emph{$\cal P$-flip} of $G$
if $G'$ can be obtained 
from $G$ by performing a sequence of flips between pairs of parts $A,B\in \cal P$ (possibly with $A=B$). 
Since flips are involutive and commute with each other, such a sequence of flips can be specified by a set of at most $|\cal P|+1\choose 2$ unordered pairs of elements of $\cal P$.
Finally, call $G'$ a \emph{$k$-flip} of $G$, if $G'$ is a $\cal P$-flip of $G$, for some partition $\cal P$ of $V(G)$ with $|\cal P|\le k$.

\begin{remark}
There are many other, functionally equivalent,
ways to measure the complexity of a $k$-flip (also called a \emph{perturbation}) $G'$ of $G$.
For example, say that $G'$ is a \emph{$k$-sequential-flip} of $G$ if $G'$ is obtained from $G$ 
by applying a sequence of flips between $k$ pairs of arbitrary subsets of $V(G)$.
If $G'$ is a $k$-sequential-flip of $G$ then $G'$ is a $2^{2k}$-flip of $G$, and 
conversely, if $G'$ is a $k$-flip of $G$, then $G'$ is a $k+1\choose 2$-sequential-flip of $G$.
We could also require that flips are only applied to pairs of the form $(A,A)$;
this would lead to a functionally equivalent parameter, as flipping a pair $(A,B)$ 
can be obtained by flipping three pairs: $(A\cup B,A\cup B)$, $(A,A)$ and $(B,B)$.
Other, functionally equivalent measures 
of the complexity of a flip $G'$ of a graph $G$
can be defined by considering the graph $G'\triangle G$ with vertices $V(G)$ and edges $E(G')\triangle E(G)$.
Note that $G'$ is a $k$-flip of $G$ if and only if $G'\triangle G$ is a $k$-flip of the edgeless graph on $V(G)$. This is equivalent to $G\triangle G'$ 
having \emph{neighborhood diversity}~\cite{lampis} $k$.
The rank of the adjacency matrix of $G\triangle G'$ over a fixed finite field, see~\cite{nguyenoum,dingkotlov} leads to a further, functionally equivalent complexity measure of a flip.
\end{remark}


\paragraph{Flipper game}
We now come to the central notions of this paper.
The \emph{flipper game} with radius $r\in\N\cup\set{\infty}$ 
and width $k\in\N$, $k\ge 1$, is played on a graph $G$ by two players, \emph{flipper} and \emph{runner}.
In each round $i$ of the game,  
 a $k$-flip $G_i$ of $G$ is  declared by the flipper, and the new position $v_i\in V(G)$ is selected by the runner, as follows. Initially, $G_0=G$ and $v_0$ is a vertex of $G$ chosen by the runner. In round $i>0$,
the flipper announces a new $k$-flip $G_i$ of $G$, that will be put into effect momentarily.
The runner, knowing $G_i$, moves to a new vertex $v_i$ by following a path of length at most $r$ from $v_{i-1}$ to $v_i$ in the \emph{previous} graph $G_{i-1}$. The game terminates when the runner is trapped, that is,  when $v_i$ is isolated in $G_{i}$.

\begin{definition}Fix $r\in\N\cup\set\infty$.
  The \emph{radius-$r$ flip-width} of a graph $G$,
  denoted $\fw_r(G)$,
  is the smallest number $k\in\N$ such that the flipper has a winning strategy in the flipper game of radius $r$ and width $k$ on $G$.
\end{definition}

\begin{definition}
  A class $\CC$ of graphs has \emph{bounded flip-width}
  if $\fw_r(\CC)<\infty$ for every $r\in\N$.
  More explicitly: for every radius $r\in\N$ there is some~$c_r\in\N$ such that $\fw_r(G)<c_r$ for all $G\in \CC$.     
\end{definition}

\begin{remark}
The cop-width parameters considered in Section~\ref{sec:copwidth} are functionally equivalent
(more precisely, each parameter can be bounded from above by a linear function of the other) to parameters defined 
by a variant of the flipper game, call it the \emph{isolation game},
which is played as the flipper game, but each graph $G_i$ announced by the flipper/cops is obtained from $G$ by isolating at most $k$ vertices in $G$
(see \focs[\cite{flip-width-arxiv}]{Lemma~\ref{lem:cw-iw}}).
The difference between the isolation game and the Cops and Robber game 
is that in the Cops and Robber game, the runner/robber can move through 
a vertex from which a cop has just departed by a helicopter, 
while in the isolation game, they cannot.
\end{remark}

We now argue that the flip-width parameters are bounded in terms of the corresponding cop-width parameters.
\begin{lemma}For every $r\in\N\cup\set{\infty}$ and graph $G$, we have
  \begin{align}\label{eq:fw-cw-trivial}
    \fw_r(G)\le \copw_r(G)+2^{\copw_r(G)}.  
  \end{align}
\end{lemma}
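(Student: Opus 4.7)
The plan is to simulate a winning cops strategy inside the flipper game. Let $k := \copw_r(G)$ and fix a winning strategy $\sigma$ for $k$ cops in the radius-$r$ Cops and Robber game on $G$. The key ingredient is an observation already made earlier in the paper when motivating the flipper game: isolating any set $S = \{v_1, \ldots, v_k\} \subseteq V(G)$ produces a $(k + 2^k)$-flip of $G$. Indeed, taking the common refinement of the three-part partitions $(\{v_j\}, N_G(v_j), V(G) \setminus (\{v_j\} \cup N_G(v_j)))$, one per $v_j$, yields the $k$ singletons $\{v_j\}$ together with at most $2^k$ further parts indexed by the pattern of membership in $N_G(v_1), \ldots, N_G(v_k)$; flipping $\{v_j\}$ with each part contained in $N_G(v_j)$ then produces precisely the graph with $S$ isolated.

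With this primitive in hand, the flipper's strategy will be the obvious one: maintain a virtual cop configuration $S_i$ of size at most $k$ produced by $\sigma$ in response to the runner's past positions (treated as robber moves), and in round $i$ announce the $(k + 2^k)$-flip $G_i$ that isolates all of $S_i$. I would then verify that every runner play translates to a legal robber play against $\sigma$. The runner's move in round $i$ is a path of length at most $r$ from $v_{i-1}$ to $v_i$ in $G_{i-1}$; assuming $v_{i-1} \notin S_{i-1}$ (else the runner was already trapped), this path cannot visit $S_{i-1}$ at all, since every vertex of $S_{i-1}$ is isolated in $G_{i-1}$ and hence unreachable from outside. In particular the path avoids $S_{i-1} \cap S_i$, the set of virtual cops remaining on the ground during the transition, which is the only constraint on a legal robber move. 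So the runner is at most as mobile as the robber, and $\sigma$ remains a valid, winning response.

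Since $\sigma$ catches the virtual robber in finitely many rounds, at some round $N$ we have $v_N \in S_N$, in which case $v_N$ is isolated in $G_N$ by construction and the runner is trapped. This gives $\fw_r(G) \le k + 2^k = \copw_r(G) + 2^{\copw_r(G)}$, as desired. There is no serious obstacle here: the only subtlety worth spelling out is that the runner's path genuinely avoids all of $S_{i-1}$, which is a strictly stronger constraint than the robber's constraint of avoiding only $S_{i-1} \cap S_i$, and only helps the flipper. Unlike the matching lower bounds underlying the other characterizations in the paper, this direction is one-sided and requires no combinatorial obstruction argument on the runner's side.
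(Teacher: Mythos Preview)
Your proof is correct and follows essentially the same approach as the paper: construct the $(k+2^k)$-flip that isolates the cop set $S$ (via the partition into the $k$ singletons of $S$ plus the at most $2^k$ classes of $V(G)\setminus S$ by neighborhood pattern in $S$), and transfer the cops' winning strategy to the flipper by announcing this flip in each round. The paper's proof simply says ``it is easy to verify'' that the transferred strategy wins, whereas you spell out the key point that the runner's path in $G_{i-1}$ avoids all of $S_{i-1}$ (since those vertices are isolated), which is stronger than the robber's constraint of avoiding only $S_{i-1}\cap S_i$; this extra detail is welcome but not a different argument.
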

\begin{proof}
The main observation is that 
isolating a set $S$ of at most $k$ vertices in $G$ can be achieved by performing a $(k+2^k)$-flip: consider the partition $\cal P_S$ that partitions $S$ into singletons and $V(G)-S$ according to the neighborhood in $S$, and flip $\set s$ with every class of the partition that is complete to $\set s$. Note that $|\cal P_S|\le k+2^k$.
Now, if the cops have a winning strategy in the Cops and Robber game of radius $r$ and width $k$ on a graph $G$,
we can use this strategy in the flipper game of radius $r$ and width $k+2^k$,
as follows:
whenever the cops announce a new set $S$ of positions of the cops in the Cops and Robber game, in the flipper game, the flipper announces the graph $G'$ with the vertices in $S$ 
isolated, which is a $(k+2^k)$-flip of $G$. It is easy to verify that if the cops win in the Cops and Robber game, then, playing according to the above strategy, the flipper also wins in the flipper game. Inequality~\eqref{eq:fw-cw-trivial} follows.
\end{proof}

The following gives an improved bound. Note that 
if $\copw_1(G)\le t$ then $G$ excludes $K_{t,t}$ as a subgraph, by Theorem~\ref{thm:degeneracy} and the fact that $K_{t,t}$ is not $(t-1)$-degenerate. From this, 
one can bound the size of the partition $\cal P_S$ considered above by $k^{t}$ (see \focs[\cite{flip-width-arxiv}]{Lemma~\ref{lem:Ktt-complexity}}), and obtain:
\begin{theorem}\label{thm:copw-fw}
  Fix $r\in\N\cup\set\infty$.
  Let $G$ be a graph and let $t$ be the smallest number such that $G$ excludes $K_{t,t}$ as a subgraph; in particular $t\le \copw_1(G)\le \copw_r(G)$.
  If $t\ge 3$ then \[\fw_r(G)\le\copw_r(G)^t,\]
  and if $t=2$ then 
  \[\fw_r(G)\le O(\copw_r(G)^t).\]
  In particular, every class $\CC$ with bounded expansion has bounded flip-width.
\end{theorem}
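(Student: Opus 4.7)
The plan is to lift a winning strategy for the cops in the radius-$r$ Cops and Robber game of width $k:=\copw_r(G)$ to a winning strategy for the flipper in the radius-$r$ flipper game of width $K\le k^t$ (or $O(k^t)$ for $t=2$). This refines the trivial inequality \eqref{eq:fw-cw-trivial} by replacing the exponential $2^k$ term, used there to bound the size of the isolating partition, with a polynomial bound obtained from the $K_{t,t}$-freeness of $G$.

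First, I would realize the operation ``isolate the set $S$'' by exactly the partition $\cal P_S$ used in the proof of \eqref{eq:fw-cw-trivial}: put each $s\in S$ in its own singleton class, and group the vertices of $V(G)\setminus S$ according to their neighborhood in $S$, that is, by the equivalence relation $u\sim u'\iff N_G(u)\cap S=N_G(u')\cap S$. Then the graph $G_S$ obtained from $G$ by isolating all of $S$ is a $\cal P_S$-flip of $G$: it suffices to flip each singleton $\set s$ with each class of $V(G)\setminus S$ complete to $\set s$, together with the singleton pairs $\set s,\set{s'}$ for which $ss'\in E(G)$. The cops' strategy is then translated round by round into a flipper strategy: whenever the cops announce positions $S_i$, the flipper announces $G_{S_i}$. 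Passage through an isolated vertex of $G_{S_{i-1}}$ is forbidden, so the runner's admissible moves are a subset of the robber's moves against the remaining ground cops; hence, whenever the robber is captured, i.e.\ $v_i\in S_i$, the vertex $v_i$ is isolated in $G_{S_i}$ and the flipper wins.

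The main technical obstacle, and the crux of the whole argument, is the bound on $|\cal P_S|$. I would invoke the referenced Lemma~\ref{lem:Ktt-complexity}, according to which, since $G$ excludes $K_{t,t}$ and $|S|\le k$, the number of distinct traces $\setof{N_G(v)\cap S}{v\in V(G)}$ is at most $k^t$ for $t\ge 3$, and $O(k^2)$ for $t=2$; the extra $k$ singletons from $S$ are absorbed into the bound (using $t\le k$). The case $t=2$ is essentially a Kővári--Sós--Turán-type estimate, sharp only up to a constant factor, which is exactly why the conclusion carries an $O(\cdot)$ in that case.

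For the ``in particular'' clause, I would combine this with two standard ingredients: if $\CC$ has bounded expansion then $\copw_r(\CC)<\infty$ for every $r\in\N$ by Corollary~\ref{cor:copw}, and moreover $\deg(\CC)\le \tilde\nabla_0(\CC)<\infty$, so setting $d:=\deg(\CC)$ every $G\in\CC$ excludes $K_{d+1,d+1}$, giving $t\le d+1$. Both $\copw_r(G)$ and $t$ are then bounded uniformly on $\CC$, so $\fw_r(G)\le \copw_r(G)^t$ is bounded for every finite $r$, which is the definition of bounded flip-width.
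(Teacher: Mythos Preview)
Your proposal is correct and follows essentially the same approach as the paper: realize ``isolate $S$'' as the $\cal P_S$-flip, invoke Lemma~\ref{lem:Ktt-complexity} to bound $|\cal P_S|$ by $|S|^t$ (or $O(|S|^t)$ for $t=2$), and transfer the cops' winning strategy to the flipper. Your explicit observation that the runner's admissible moves in $G_{S_{i-1}}$ are a subset of the robber's moves (since the robber only has to avoid $S_{i-1}\cap S_i\subseteq S_{i-1}$) is exactly the content of the paper's remark on the isolation game and Lemma~\ref{lem:cw-iw}; note, as a minor point, that Lemma~\ref{lem:Ktt-complexity} already counts the $|S|$ singletons inside the bound $|S|^t$, so no separate ``absorption'' step is needed.
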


Whereas the cop-width parameters are monotone with respect to the 
subgraph relation, 
the flip-width parameters are monotone with respect to the induced subgraph relation. This is expressed by the following, immediate lemma.
\begin{lemma}\label{lem:hereditary}
Fix $r\in\N\cup\set\infty$.
If $H$ is an induced subgraph of $G$, then $\fw_r(H)\le \fw_r(G)$.  
In particular, a class $\CC$ has bounded flip-width 
if and only if its hereditary closure has bounded flip-width.
\end{lemma}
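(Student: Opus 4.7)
The plan is to prove that a winning flipper strategy on $G$ can be transferred to $H$ by restricting all announced flips to the vertex set of $H$.

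Let $k = \fw_r(G)$ and fix a winning flipper strategy $\sigma$ on $G$ with radius $r$ and width $k$. I will define a flipper strategy $\sigma'$ on $H$ by simulation. Given a play on $H$ producing runner positions $v_0, v_1, \ldots \in V(H) \subseteq V(G)$, we pretend these are runner moves in $G$ and let $\sigma$ produce, round by round, a $k$-flip $G_i$ of $G$, corresponding to a partition $\mathcal{P}_i$ of $V(G)$ with $|\mathcal{P}_i| \le k$ and a set of pairs to be flipped. In the game on $H$, the flipper announces $H_i := G_i[V(H)]$.

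The key observation is that $H_i$ really is a $k$-flip of $H$: taking the partition $\mathcal{P}_i' := \{P \cap V(H) : P \in \mathcal{P}_i, P \cap V(H) \neq \emptyset\}$ of $V(H)$ (of size at most $k$) and flipping the corresponding pairs inherited from $\sigma$ produces exactly $H_i$, because flipping two sets $A, B \subseteq V(G)$ and then taking the induced subgraph on $V(H)$ yields the same graph as flipping $A \cap V(H)$ and $B \cap V(H)$ in $H$. Moreover, any runner move in the game on $H$ from $v_{i-1}$ to $v_i$ along a path of length at most $r$ in $H_{i-1}$ is, since $H_{i-1}$ is an induced subgraph of $G_{i-1}$, also a valid runner move in the simulated game on $G$.

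Hence the simulated play on $G$ is legitimate, and since $\sigma$ is winning, at some round $i$ the runner's position $v_i$ is isolated in $G_i$. But then $v_i$ has no neighbors in $G_i$ at all, so in particular no neighbors in $V(H)$, which means $v_i$ is isolated in $H_i = G_i[V(H)]$. So the flipper wins the game on $H$ using $\sigma'$, proving $\fw_r(H) \le k = \fw_r(G)$. The "in particular" part then follows immediately: the hereditary closure $\overline{\CC}$ of $\CC$ satisfies $\fw_r(\overline{\CC}) = \fw_r(\CC)$ by the inequality just proved (the inequality $\ge$ is trivial since $\CC \subseteq \overline{\CC}$). There is no real obstacle here; the only thing to verify carefully is that restriction of a $k$-flip to an induced subgraph is again a $k$-flip, which is the content of the computation above.
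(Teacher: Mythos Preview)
Your proof is correct and is exactly the natural argument; the paper itself does not spell out a proof, simply calling the lemma ``immediate.'' Your write-up makes explicit the one point worth checking---that the restriction of a $k$-flip of $G$ to $V(H)$ is a $k$-flip of $H$---and this is handled correctly.
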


\subsection{Examples}\label{sec:examples}
We start by giving some example classes of bounded flip-width.
By Theorem~\ref{thm:copw-fw}, every class of bounded expansion has bounded flip-width, 
see Example~\ref{ex:BE-classes} for some specific classes.
Unlike cop-width, flip-width is not limited to sparse graphs, and is geared towards the  
study of dense graphs. 
\begin{example}\label{ex:complement}
  If $\bar G$ is the complement of $G$ and $r\in\N\cup\set\infty$, then 
  $\fw_r(\bar G)= \fw_r(G)$, since 
  any $k$-flip of $G$ is also a $k$-flip of $\bar G$ (just complement the set of flipped pairs).    
  Therefore, if $\CC$ is a class with bounded expansion,
  then the class ${\,\bar {\!\CC}}:=\setof{\bar G}{G\in \CC}$, has bounded flip-width, and if $\CC$ has bounded treewidth, then 
  $\fw_\infty({\,\bar {\!\CC}})=\fw_\infty(\CC)<\infty$.
  In particular, as every edgeless graph $G$ has $\fw_\infty(G)=1$, it follows that 
  every clique $G$ also has $\fw_\infty(G)=1$.
\end{example}

\begin{example}\label{ex:half-graphs}
  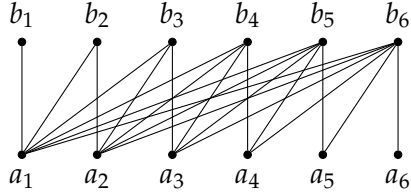
\begin{figure}[t!]
    \centering

      \begin{tikzpicture}
       
    \tikzstyle{vertex} = [draw, circle, fill, inner sep=1pt]

      \foreach \i in {1,2,3,4,5,6} {
        \node[vertex] (a\i) at (\i,0) [label=below:$a_\i$] {};
        \node[vertex] (b\i) at (\i,1.5) [label=above:$b_\i$] {};
      }
      \foreach \i/\j in {1/1,2/2,3/3,4/4,5/5,6/6,1/2,1/3,1/4,1/5,1/6,2/3,2/4,2/5,2/6,3/4,3/5,3/6,4/5,4/6,5/6} {
        \draw (a\i) -- (b\j);
      }
      \end{tikzpicture}
    \caption{A half-graph  of order $6$.}
    \label{fig:half-graph}
  \end{figure}
  Consider the \emph{half-graph} $H_n$
  of order $n$, as depicted in Figure~\ref{fig:half-graph}.  
  We show that $\fw_\infty(H_n)\le 4$.
  Observe that applying a flip between $\set{a_1,\ldots,a_{i-1}}$ and $\set{b_{i},\ldots,b_{n}}$ breaks the half-graph into two connected components (each being a half-graph).
  Additionally flipping $\set{a_{i}}$ and $\set{b_{i},\ldots,b_n}$ makes $a_{i}$ and $b_{i}$ isolated.
  The strategy of the flipper is to perform the above two flips in the $i$th round, thus pushing the runner rightwards in each round.
  To perform those flips, partition $V(H_n)$ into four parts: $\set{1,\ldots,a_{i-1}},\set {a_i},\set{b_i,\ldots,b_n}$, and the rest.
\end{example}
\begin{example}\label{ex:comp-trees}
The comparability graph $G$ of a rooted tree $T$
  is the graph with vertices $V(T)$, where two vertices are adjacent if and only if 
  one is an ancestor of the other in $T$.
  Generalizing half-graphs, those graphs also have $\fw_\infty(G)\le 4$.
The strategy of the flipper is, in round $i$, to 
consider  the node $u_i$ at depth $i$ in the tree, that is the ancestor of the current position of the runner, and to isolate $u_i$, and remove all edges between the descendants of $u_i$ and the ancestors of $u_i$.
This can be done by partitioning $V(G)$ into four parts: $\set{u_i}$,
the ancestors of $u_i$, the descendants of $u_i$, and the rest.
\end{example}

\begin{example}\label{ex:disjoint}
  Fix $r\in\N\cup\set\infty$. If $G_1,\ldots,G_m$ are graphs and $G$ is their disjoint union, 
  then the following inequality holds:
  \[\fw_r(G)\le\max_{1\le i\le m}(\fw_r(G_i))+1.\]
  The $+1$ comes from the fact that a partition $\cal P$ of $V(G_i)$ into $k$ parts induces a partition of $V(G)$ into $k+1$ parts, namely 
  the $k$ parts of $\cal P$, and $V(G)\setminus V(G_i)$.
  Thus, if the flipper is hiding in the graph $G_i$, the runner may translate a winning strategy on $G_i$ of width $k:=\fw_r(G_i)$,
  into a winning strategy on $G$ of width $k+1$.

  It follows that if $\CC$ has bounded flip-width, then the class of disjoint unions of graphs from $\CC$ has bounded flip-width.
\end{example}

\szfuture{how does $\fw_r$ behabe wrt strong/cartesian/tensor/lex products?}

 A \emph{modular partition} of a graph $G$ 
is a partition $\cal P$ of $V(G)$ such that 
any two distinct parts are homogeneous in $G$.
The quotient graph $G/\cal P$ has as vertices the parts of $\cal P$, and as edges pairs of distinct parts that are complete in $G$. 
An extension of the idea in Example~\ref{ex:disjoint} yields the following.

\begin{lemma}[\appmark]\label{lem:modular-partition}
  Let $G$ be a graph and $\cal P$ be its modular partition.
  Then 
  \[\fw_r(G)\le \max\left(\fw_r(G/\cal P),\max_{A\in\cal P}{\fw_r(G[A])+2}\right).\]
\end{lemma}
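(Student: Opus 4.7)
The plan is to build a winning flipper strategy on $G$ of width
$k := \max\bigl(\fw_r(G/\cal P),\,\max_{A\in\cal P}\fw_r(G[A])+2\bigr)$
by composing a winning strategy $\sigma_1$ on the quotient $G/\cal P$ (of width $k_1 := \fw_r(G/\cal P)$) with a winning strategy $\sigma_A$ on each $G[A]$ (of width $\le k_2 := \max_A\fw_r(G[A])$). The strategy runs in two phases separated by a single transition round.

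\textbf{Phase 1 (simulating $\sigma_1$).} At round $i$, record the virtual quotient-position $A_i := [v_i]_{\cal P}$. Let $\cal Q_i$ be the partition of $\cal P$ prescribed by $\sigma_1$ given the virtual history $A_0,\dots,A_{i-1}$, and lift it to the partition $\cal Q_i' := \{\bigcup_{A\in Q}A : Q\in\cal Q_i\}$ of $V(G)$, playing it with exactly the same pairs flipped as $\sigma_1$ uses on $\cal Q_i$. Because $\cal P$ is modular, each edge of $G_{i-1}$ joining different parts of $\cal P$ witnesses an edge of $(G/\cal P)_{i-1}$, so a runner-path of length $\le r$ in $G_{i-1}$ projects to a walk of length $\le r$ in $(G/\cal P)_{i-1}$. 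The virtual play therefore respects the rules of the game on $G/\cal P$, and $\sigma_1$ wins after some round $N$: $A_N$ is isolated in $(G/\cal P)_N$, equivalently $A_N$ is anti-complete to $V(G)\setminus A_N$ in $G_N$.

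\textbf{Transition round $N+1$.} Let $X$ (resp.\ $Y$) be the union of all parts of $\cal P$ that are complete (resp.\ anti-complete) to $A_N$ in $G$. The flipper announces the partition $\{A_N, X, Y\}$ (at most $3\le k_2+2\le k$ parts) with only the pair $(A_N,X)$ flipped, yielding $G_{N+1}$ in which $A_N$ is anti-complete to its complement while $G_{N+1}[A_N]=G[A_N]$. The runner moves inside $G_N$, which still keeps them in $A_N$, so $v_{N+1}\in A_N$; take this as the starting position for $\sigma_{A_N}$.

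\textbf{Phase 2 (simulating $\sigma_{A_N}$).} In each subsequent round, if $\cal R$ is the partition of $A_N$ prescribed by $\sigma_{A_N}$, the flipper plays $\cal R\cup\{X,Y\}$ on $V(G)$ (at most $k_2+2\le k$ parts), performing the $\sigma_{A_N}$-flips inside $\cal R$ together with the pairs $(R,X)$ for every $R\in\cal R$. This preserves $A_N$ anti-complete to $X\cup Y$ while making $G_{N+1+j}[A_N]$ equal the intended flip of $G[A_N]$ under $\sigma_{A_N}$. Consequently the runner is forever confined to $A_N$, and their successive moves exactly realize a legal play against $\sigma_{A_N}$ on $G[A_N]$, which eventually traps them; since $A_N$ is externally isolated, being isolated inside $G_{N+1+j}[A_N]$ is being isolated in $G_{N+1+j}$.

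\textbf{Main obstacle.} The delicate point is that a naive lift in Phase 1 can carry a self-flip $(Q^*,Q^*)$ that inverts internal adjacencies of $A_N$, leaving $G_N[A_N]$ equal to $\overline{G[A_N]}$ rather than $G[A_N]$ and derailing the alignment with $\sigma_{A_N}$. The transition round repairs this using only $3$ parts, which is exactly why the bound allows a $+2$ overhead: one extra class for $X$ and one for $Y$ beyond those used by $\sigma_{A_N}$. The other verifications — that modularity makes the runner-projection well-defined throughout Phase 1, and that keeping $X,Y$ as distinct classes in Phase 2 preserves external isolation without perturbing $G_i[A_N]$ — are routine.
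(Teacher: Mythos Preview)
Your proposal is correct and follows essentially the same two-phase strategy as the paper's proof: lift the quotient strategy to $G$, insert a transition round that isolates the module $A_N$ using the three-part partition $\{A_N, N(A_N)\setminus A_N, \text{rest}\}$, then simulate $\sigma_{A_N}$ while keeping those two extra external classes. Your explicit discussion of the self-flip issue is a nice clarification, but the transition round you describe (and the resulting $+2$ overhead) matches the paper's argument exactly.
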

The strategy for the flipper on $G$ first follows the strategy on $G/\cal P$, where each $k$-flip of $G/\cal P$ is lifted naturally to a $k$-flip of $G$.
Once a part $A\in \cal P$ is isolated in the game on $G/\cal P$,  the strategy on $G[A]$ is used. The $+2$ in the statement is due to the fact that every partition $\cal Q$ of $A$ into $k$ parts
 induces a partition of $G$ into $k+2$ parts:
 the $k$ parts of $\cal Q$, the (common) set of neighbors of 
 vertices in $A$ outside of $A$, and the rest. See Appendix \ref{app:modular-and-subst}
 for details.
 In particular, if $G$ is the lexicographic product of two graphs $H$ and $K$
 (obtained by blowing up each node of $H$ to a copy of $K$)
 then $\fw_r(G)\le \max(\fw_r(H),\fw_r(K)+2)$.



The \emph{substitution closure} of a class of graphs $\CC$ is the smallest 
class $\CC^*$ containing $\CC$ such that
if $G$ is a graph with a modular partition $\cal P$ into modules $A$ satisfying $G[A]\in \CC^*$, and $G/\cal P\in \CC$, then $G\in \CC^*$.
Intuitively, a graph in $\CC^*$ can be obtained from a single vertex by repeatedly blowing up vertices to graphs from $\CC$.

Using similar ideas as in Lemma~\ref{lem:modular-partition}, we prove Lemma~\ref{lem:substitution}.
\begin{lemma}[\appmark]\label{lem:substitution}
  For every $r\in\N\cup\set{\infty}$ 
  and graph class $\CC$, we have 
  \[\fw_r(\CC^*)\le \fw_r(\CC)+2.\]
  In particular, if $\CC$ has bounded flip-width, then $\CC^*$ has bounded flip-width.
\end{lemma}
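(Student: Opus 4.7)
The plan is to strengthen the inductive hypothesis so that the two ``extra'' partition classes used for handling the outside of a module are reused across all levels of the substitution hierarchy, rather than accumulating. A naive induction via Lemma~\ref{lem:modular-partition} would only give $\fw_r(G) \le \fw_r(\CC) + 4$ (since the inductive step would contribute an additional $+2$ on top of the inductive $k+2$), and this blow-up is the main obstacle the strengthening must overcome.

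Set $k = \fw_r(\CC)$. For $G \in \CC^*$ together with disjoint sets $N', C'$ of ``boundary vertices'' outside $V(G)$, let $G^+$ denote any graph on $V(G) \cup N' \cup C'$ having $G$ as an induced subgraph, with $V(G)$ complete to $N'$, $V(G)$ anti-complete to $C'$, and arbitrary edges inside $N' \cup C'$. I would prove by induction on the substitution depth of $G$ that the flipper wins the radius-$r$ game on $G^+$ (runner starting in $V(G)$) using width $k+2$, via a strategy that additionally keeps the runner confined to $V(G)$. Specializing to $N' = C' = \emptyset$ then yields the lemma.

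The base case $G \in \CC$ runs the winning $k$-part strategy on $G$ inside $V(G)$, treats $N'$ and $C'$ as two additional fixed parts, and in every round additionally flips each part of $V(G)$ with $N'$. Because $V(G)$ is complete to $N'$ and anti-complete to $C'$ in $G^+$, this renders $V(G)$ isolated from $N' \cup C'$ in the announced flip, so the runner never escapes $V(G)$ along the previous-round flip and the game reduces to the ordinary $G$-game.

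For the inductive step, write $G$ as a substitution with modular partition $\cal P$, $G/\cal P \in \CC$, and each $G[A] \in \CC^*$ of smaller depth. Phase 1 plays the base-case strategy for $G/\cal P$ with boundary $(N', C')$, lifting each $k$-flip of $G/\cal P$ to $G$ by pulling back the partition of $V(G/\cal P)$ into groups to a partition of $V(G)$ by unions of modules. Since distinct modules are homogeneous in $G$, the modular structure survives the lift, and any length-$\le r$ runner path in the lifted flip projects to a length-$\le r$ path in the flipped $G/\cal P$, so $G/\cal P$'s strategy faithfully tracks the runner's module. Eventually it isolates the runner's module $A$, rendering $A$ anti-complete to $(V(G) \setminus A) \cup N' \cup C'$ in the announced flip. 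Phase 2 then invokes the inductive hypothesis on $G[A]$ with the \emph{combined} boundary $N'' = N_A \cup N'$ and $C'' = C_A \cup C'$, where $N_A, C_A \subseteq V(G) \setminus A$ are the outside neighbors and non-neighbors of $A$ in $G$ (well-defined since $\cal P$ is modular). The crucial observation is that $V(G^+) = A \cup N'' \cup C''$, so a partition of $V((G[A])^+)$ of the form $\cal Q_2 \cup \{N'', C''\}$ with at most $k+2$ parts is already a valid $(k+2)$-partition of $V(G^+)$: the original boundary slots $N', C'$ are absorbed into $N'', C''$ rather than generating additional parts. A minor technical wrinkle is that lifting self-flips of $G/\cal P$ can introduce spurious within-module flips, but these do not affect the cross-module structure that $G/\cal P$'s strategy relies upon, and since phase 2's strategy accepts any starting position within $A$, the runner's first move under the leftover phase-1 flip causes no harm.
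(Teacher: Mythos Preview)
Your approach is correct and essentially the same as the paper's: both recognize that the two extra partition classes (the outside neighbors and non-neighbors of the current module) can be reused at every recursion level rather than accumulating, and the paper's proof is just the one-line sketch ``repeatedly applying the argument as in the proof of Lemma~\ref{lem:modular-partition}'' whereas you spell out the strengthened induction via the boundary pair $(N',C')$. One small imprecision worth fixing: your inductive hypothesis, as literally stated, fails already in round~1, since the runner moves in $G_0=G^+$ where $V(G)$ is complete to $N'$ and can therefore escape; the correct statement (which your phase-2 discussion about ``the leftover phase-1 flip'' shows you already have in mind) is that the flipper wins from any mid-game position where the runner is in $V(G)$ and the \emph{previous} announced flip already isolates $V(G)$ from $N'\cup C'$.
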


\begin{example}\label{ex:subst-cubic}
  The substitution closure $\CC^*$ of the class $\CC$ of subcubic graphs (graphs with maximum degree $3$) has bounded flip-width. This follows from Example~\ref{ex:bd-deg}, Theorem~\ref{thm:copw-fw}, and Lemma~\ref{lem:substitution}.
  This class not among the classes 
  that were previously known to be tame:
  $\CC^*$ has unbounded twin-width as already $\CC$  has unbounded twin-width \cite{tww2}, and $\CC^*$ is not edge-stable, 
  in particular it is not monadically stable nor structurally nowhere dense
  (see Section~\ref{sec:subpoly} for definitions).
\end{example}

More examples are given in the following sections:
they include classes of bounded clique-width 
(see Section~\ref{sec:cw}),
classes of bounded twin-width (see Section~\ref{sec:tww}), and 
interpretations of classes of bounded expansion (see Section~\ref{sec:transductions}).

\subsection{Hideouts}\label{sec:hideouts}
We have already seen several examples of 
classes of bounded flip-width. To give examples 
of classes of unbounded flip-width, we need a tool for proving lower bounds.
We now introduce the notion of a hideout, which is a set of vertices allowing the runner to evade the flipper indefinitely, thus allowing to prove lower bounds on $\fw_r(G)$.
In Section~\ref{sec:basic-comb} we use this notion to
 prove some combinatorial properties 
of graphs with bounded flip-width. In particular, it will follow easily that every graph of radius-one flip-width at most $k$ has a pair of vertices whose neighborhoods differ in at most $2k$ vertices.

Although we will use hideouts on several occasions to prove 
lower bounds on flip-width, we do not know whether the existence 
of hideouts is a necessary condition for having large flip-width.
This is stated as Question~\ref{q:hideouts} in Section~\ref{sec:discussion}.

 \begin{definition}\label{def:hideout}
  Fix $k,d\ge 1$ and $r\in\N\cup\set{\infty}$.
A $(r,k,d)$-\emph{hideout} in a graph $G$ 
is a set of vertices $U\subset V(G)$ with $|U|>d$, satisfying the following property. For every $k$-flip $G'$ of $G$,
\begin{align}\label{eq:hideout}
  |\left\{v\in U: |B^r_{G'}(v)\cap U|\le d\right\}|\le d,  
\end{align}
that is, there are 
at most $d$ vertices $v\in U$ such that
there are at most $d$ vertices $u\in U$ that 
 are connected with $v$ by a path of length at most $r$ in $G'$.
\end{definition}

We will show that in the flipper game with radius $r$ and width $k$, a runner can hide infinitely in a $(r,k,d)$-hideout.
Intuitively, when a $k$-flip $G'$ of $G$ is announced,
 the runner will want to avoid all vertices $v\in U$ with $|B^r_{G'}(v)\cap U|\le d$.
The condition in a hideout guarantees that there are at most $d$ such vertices.
This will allow the runner to always move  to some vertex with $|B^r_{G'}(v)\cap U|> d$.

\newcommand{\prv}{\textit{P}}
\newcommand{\nxt}{\textit{N}}

\begin{lemma}\label{lem:hideouts}
  Fix $k\ge 1$ and $r\in\N\cup\set{\infty}$.
  If a graph $G$ has a $(r,k,d)$-hideout $U$ for some $d\ge 1$, then ${\fw_r(G)>k}$.
\end{lemma}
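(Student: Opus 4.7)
The plan is to exhibit a winning strategy for the runner in the flipper game of radius $r$ and width $k$ on $G$; since the runner never loses, this gives $\fw_r(G) > k$. The strategy maintains the following invariant after round $i$: the runner's position $v_i$ lies in $U$ and satisfies $|B^r_{G_i}(v_i)\cap U| > d$, where $G_i$ is the most recently announced $k$-flip (with $G_0 := G$).

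For the base case, observe that $G$ is trivially a $k$-flip of itself (partition $V(G)$ into one part and perform no flips). Applying the hideout condition \eqref{eq:hideout} to $G_0 = G$ shows that at most $d$ vertices of $U$ fail $|B^r_G(v)\cap U| > d$. Since $|U| > d$, the runner picks some initial $v_0\in U$ satisfying the invariant. For the inductive step, suppose the invariant holds at the end of round $i-1$ and the flipper announces a $k$-flip $G_i$. By the hideout condition applied to $G_i$, the set of \emph{bad} vertices
\[
B := \{\,v\in U : |B^r_{G_i}(v)\cap U| \le d\,\}
\]
has size at most $d$. By the invariant, $|B^r_{G_{i-1}}(v_{i-1})\cap U| > d$, so this set is not contained in $B$; pick any $v_i \in (B^r_{G_{i-1}}(v_{i-1})\cap U)\setminus B$. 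The move from $v_{i-1}$ to $v_i$ is legal since $v_i$ is reachable from $v_{i-1}$ by a path of length at most $r$ in $G_{i-1}$, and the invariant is restored, because $v_i\in U$ and $|B^r_{G_i}(v_i)\cap U| > d$.

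It remains to verify that $v_i$ is not isolated in $G_i$, so that the game does not terminate in the runner's defeat. Since $d\ge 1$ and $|B^r_{G_i}(v_i)\cap U| > d$, the ball $B^r_{G_i}(v_i)$ contains some vertex $u \neq v_i$; the first edge of any shortest $v_i$-to-$u$ path in $G_i$ is incident to $v_i$, so $v_i$ has a neighbor. Hence the runner survives every round, proving $\fw_r(G) > k$. The whole argument is essentially a direct unfolding of Definition~\ref{def:hideout}, which was tailored to make this strategy work; no real obstacle arises, only the mild bookkeeping of confirming that reachability in $G_{i-1}$ (for legal moves) and in $G_i$ (for the invariant and non-isolation) are used correctly in each round.
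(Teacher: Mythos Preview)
Your proof is correct and follows essentially the same approach as the paper: maintain the invariant that the runner's position $v_i\in U$ satisfies $|B^r_{G_i}(v_i)\cap U|>d$, and use the hideout condition to show that among the more than $d$ vertices of $U$ reachable in $G_{i-1}$, at least one is not ``bad'' with respect to the newly announced $G_i$. Your explicit verification that $v_i$ is not isolated in $G_i$ is a nice addition that the paper leaves implicit.
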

\begin{proof}Let $U\subset V(G)$ be a $(r,k,d)$-hideout.
  We describe a strategy for the runner
  in the flipper game on $G$ with radius $r$ and width $k$, which allows  to elude the flipper indefinitely.
  The strategy is as follows:
  when the flipper announces a $k$-flip $G'$ of $G$,
   the runner moves to some vertex $v\in U$ such that  $|B^r_{G'}(v)\cap U|> d$.
  In the first move, pick any $v\in U$ with $|B^r_{G}(v)\cap U|> d$.
  Such a vertex exists, by \eqref{eq:hideout} applied to $G'=G$, since $|U|>d$.
  
We show it is always possible to make a move as described in the strategy.
 Suppose at some point in the game, 
the current position $v$ of the runner
is such that 
\begin{align}\label{eq:inv-ineq}
  |B^r_{\prv}(v)\cap U|&> d.
\end{align}
where $\prv$ is the previous $k$-flip of $G$ announced by the flipper (in the first round, $\prv=G$), and that the flipper now announces the next $k$-flip $\nxt$ of $G$. Since $U$ is a $(r,k,d)$-hideout, 
the set $X\subset U$ of vertices $w\in U$ 
such that $|B^r_{\nxt}(w)\cap U|\le d$
satisfies $|X|\le d$. 
By~\eqref{eq:inv-ineq},
 $B^r_{\prv}(v)$ contains at least one vertex $v'\in U-X$.
The runner moves from $v$ to $v'$ 
along a path of length at most $r$ in $\prv$.
As $v'\in U-X$, the invariant is maintained.
Therefore, playing according to this strategy, the runner can elude the flipper indefinitely, so $\fw_r(G)>k$.
\end{proof}

\subsection{Flip-width with infinite radius}\label{sec:cw}
As a simple case study, we first analyze the parameter $\fw_\infty$. Recall that its sparse analogue, 
$\copw_\infty$, corresponds to treewidth.
We show that $\fw_\infty$ is functionally equivalent to the clique-width  and rank-width parameters.
Those parameters extend treewidth to the setting of dense graphs, and are recalled later below.
To the best of our knowledge, our result is the first characterization of graph classes of bounded clique-width, in terms of a game\anonym{\footnote{The radius-$\infty$ flipper game arose in a private discussion in 2018 with Michał Pilipczuk.}}, analogous to the game characterization of treewidth.
\begin{theorem}[\appmark]\label{thm:cw}
  For every graph $G$,
  we have 
  \[\rw(G)\le 3\fw_\infty(G)+1\le O(2^{\rw(G)}).\]
  In particular,
  a graph class $\CC$ has bounded rank-width if and only if 
  ${\fw_\infty(\CC)<\infty}$.
\end{theorem}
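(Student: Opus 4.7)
For the bound $\fw_\infty(G) \le 2^{O(\rw(G))}$, my plan is to fix a rank-decomposition tree $T$ of $G$ of width $w := \rw(G)$ and design a divide-and-conquer strategy for the flipper using $k := 2^{O(w)}$ parts per flip. The flipper would maintain the invariant that, after round $i$, the connected component of $v_i$ in $G_i$ is contained in some set $V_i \subseteq V(G)$ corresponding to a subtree of $T$, with $|V_i| \le (2/3)^i \cdot |V(G)|$. In round $i+1$, the flipper picks a centroid edge of the current subtree, splitting $V_i$ into $L$ and $R$ with $|L|, |R| \le (2/3)|V_i|$, and announces a flip $G_{i+1}$ that renders the three sets $L$, $R$, $V(G) \setminus V_i$ pairwise anti-complete. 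Both cuts $(L, V(G) \setminus L)$ and $(R, V(G) \setminus R)$ have $\mathbb{F}_2$-rank at most $w$, so each side of each cut partitions into at most $2^w$ adjacency types; the common refinement of the four row/column-type partitions associated with these two cuts yields a partition of $V(G)$ of size $2^{O(w)}$ that realises the required flip. Whichever of $L$ or $R$ contains $v_{i+1}$ then plays the role of $V_{i+1}$, so the invariant propagates; after $O(\log |V(G)|)$ rounds the runner is trapped.

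For the converse bound $\rw(G) \le 3\fw_\infty(G) + 1$, I would argue by contrapositive using the Oum--Seymour duality~\cite{OumSeymour-approximating}. If $\rw(G) > 3k+1$, then $G$ admits a \emph{well-linked} set $S$: a set of vertices such that every bipartition $(A,B)$ of $V(G)$ with $|A \cap S|, |B \cap S| \ge |S|/3$ has cut-rank strictly greater than $k$ over $\mathbb{F}_2$. I claim $S$ is an $(\infty, k, \lfloor |S|/3 \rfloor)$-hideout, so Lemma~\ref{lem:hideouts} implies $\fw_\infty(G) > k$. To check the hideout condition, take any $k$-flip $G'$ of $G$ realised by a partition $\mathcal{P}$ of $V(G)$ of size at most $k$, and consider the connected components of $G'$. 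If every component of $G'$ met $S$ in at most $2|S|/3$ vertices, then the components could be grouped into a bipartition $(A,B)$ with $|A \cap S|, |B \cap S| \ge |S|/3$; since $A$ and $B$ would be unions of components of $G'$, they would be anti-complete in $G'$, so the $A$-$B$ edges of $G$ would agree with the flip pattern, which factors through $\mathcal{P}|_A$ and $\mathcal{P}|_B$ and hence describes a cut of rank at most $k$---contradicting the well-linkedness of $S$. Thus some component $C$ of $G'$ must contain more than $2|S|/3$ vertices of $S$, giving the hideout inequality at once.

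The main obstacle is the quantitative bookkeeping in the upper-bound direction: ensuring that the flip simultaneously sealing $L$, $R$, and $V(G) \setminus V_i$ uses only $2^{O(w)}$ parts (rather than $2^{\Omega(w^2)}$) requires composing the row- and column-type partitions of the two cuts just so, and one must also verify that in each round both candidate halves are sealed from everything outside $V_i$ so that the invariant propagates regardless of where the runner goes. The lower-bound direction, by contrast, is a clean duality argument---the well-linked obstructions to small rank-width, guaranteed by Oum--Seymour's approximation, serve directly as hideouts in the radius-$\infty$ flipper game, and the factor $3$ in $3\fw_\infty(G)+1$ is exactly the loss incurred by that approximation.
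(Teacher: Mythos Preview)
Your approach matches the paper's in both directions: descend through a rank-decomposition to build the flipper's strategy, and use Oum--Seymour well-linked sets as hideouts for the runner. One small correction on the upper bound: the cut $(R, V(G)\setminus R)$ is not in general a tree-edge cut, since $R=V_i\setminus L$ is bounded by \emph{two} edges of the decomposition (the centroid edge and the edge defining $V_i$); its rank is only guaranteed to be at most $2w$, not $w$. This is harmless for the $2^{O(w)}$ bound, and the paper avoids the issue by descending level by level from the root and using the three tree-edge cuts $(B_0,V\setminus B_0)$, $(B_1,V\setminus B_1)$, $(B_0\cup B_1,B_2)$ directly, each of rank at most $w$. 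Your ``one big component'' verification of the hideout condition is a clean variant of the paper's bookkeeping for Lemma~\ref{lem:well-linked-hideout}.
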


The \emph{rank-width} parameter is functionally equivalent to clique-width, and is often more convenient to work with. We will only use rank-width.  Rank-width and clique-width are functionally equivalent, as expressed below \cite[Prop. 6.3]{OumSeymour-approximating}:
\[\rw(G)\le \cw(G)< 2^{\rw(G)+1}.\]

A graph $G$ 
has rank-width at most $k$ 
if there is a tree $T$ whose leaves 
are the vertices of $G$,
and inner nodes have degree at most $3$,
such that for every edge $e$ of the tree,
the bipartition $A\uplus B$ of the leaves of $T$
into the leaves on either side of $e$,
has \emph{cut-rank} at most $k$.
The cut-rank of a bipartition $A\uplus B$
of the vertex set of a graph $G$, denoted 
$\rk_G(A,B)$, is defined as the rank, over the two-element field, of the $0,1$-matrix with rows $A$, columns $B$, where the entry at row $a\in A$ and column $b\in B$ is $1$ if $ab\in E(G)$ and $0$ otherwise.


\medskip
The upper bound in Theorem~\ref{thm:cw} is a generalization of the bound in Example~\ref{ex:comp-trees}, where it is shown that comparability graphs of trees have $\fw_\infty(G)\le 4$.
We briefly sketch the argument now.

Note that a $0,1$-matrix of rank at most $k$
over the two-element field has at most $2^k$ distinct rows and at most $2^k$ distinct columns.
It follows  that if $G$ is a graph and 
$V(G)=A\uplus B$ is a bipartition of its vertex sets 
with cut-rank $\rk_G(A,B)\le k$,
then $A$ and $B$ can be partitioned as 
$A=A_1\uplus\cdots\uplus A_p$ and $B=B_1\uplus\cdots \uplus B_q$ with $p,q\le 2^k$, so that 
 $A_i$ and $B_j$ are complete in $G$.
 This implies that there is a $2^{k+1}$-flip $G'$ of $G$ which has no edges with one endpoint in $A$ and one endpoint in $B$.

Therefore, if $G$ is a graph of rank-width $k$,
 then there is a subcubic tree $T$ with leaves $V(G)$
 such that for every edge $e$ of $T$,
if $V(G)=A\uplus B$ is the bi-partition induced by $e$
(into the leaves on either side of $e$),
then there is a $2^{k+1}$-flip $G'$ of $G$ 
which has no edges with one endpoint in $A$ and one endpoint in $B$. Moreover, 
for every inner node $v$ of $T$ (of degree at most three), there is a $O(2^k)$-flip $G'$ of $G$ 
such that for any two vertices $a,b\in V(G)$, if $a$ and $b$ are in the same connected component of $G'$, then $a$ and $b$ are connected in $T$ by a path that avoids $v$.
This flip can be used by the flipper  in their winning strategy, to restrain the runner to the leaves of smaller and smaller subtrees of $T$,
similarly as in Example~\ref{ex:comp-trees}. See Appendix \ref{app:cw} for details.

\medskip

The lower bound relies
on the following result characterizing obstructions to having small rank-width.

   A set $U$ of vertices of $G$ is \emph{well-linked} if for every bipartition $A\uplus B$ of $V(G)$,
   the cut-rank of $A\uplus B$ satisfies  $\rk_G(A,B)\ge \min(|A\cap U|,|B\cap U|)$.
   Oum and Seymour \cite[Theorem 5.2]{OumSeymour-approximating} prove the following:
  
\begin{fact}[\cite{OumSeymour-approximating}]\label{fact:well-linked}
  Every graph of rank-width greater than $k$ contains a well-linked set of size $k$.
\end{fact}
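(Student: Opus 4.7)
The plan is to prove the contrapositive: if $G$ admits no well-linked set of size $k$, then $\rw(G) < k$. I would phrase everything in terms of the cut-rank function $\mu : 2^{V(G)} \to \N$ defined by $\mu(A) = \rk_G(A, V(G) \setminus A)$, which is symmetric ($\mu(A) = \mu(V(G) \setminus A)$) and submodular ($\mu(A) + \mu(B) \ge \mu(A \cup B) + \mu(A \cap B)$); both properties follow from elementary linear algebra over $\mathrm{GF}(2)$. Rank-width is then the branch-width of this symmetric submodular function, and a well-linked set of size $k$ is just a subset $U$ with $\mu(A) \ge \min(|A \cap U|, |U \setminus A|)$ for every $A \subseteq V(G)$.

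The key intermediate step would be a \emph{balanced-separator lemma}: assuming $G$ has no well-linked set of size $k$, for every $X \subseteq V(G)$ with $|X| \ge 2$ there is a bipartition $V(G) = A \uplus B$ with $\mu(A) < k$ and $\min(|A \cap X|, |B \cap X|) \ge \lceil |X|/3 \rceil$. Once this lemma is available, a branch decomposition of cut-rank $< k$ is built top-down and recursively: start with $X_0 = V(G)$, pick a balanced separator $V(G) = A \uplus B$, declare it the top edge of the tree $T$, and recurse on $A$ and $B$ (keeping the ambient graph $G$ fixed but replacing $X$). The balance condition forces termination after $O(\log |V(G)|)$ levels, and the inductively built subcubic trees glue along the new edge into a single subcubic tree witnessing $\rw(G) < k$.

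For the balanced-separator lemma itself I would argue by contrapositive: failing to find a balanced low-rank bipartition for some $X$ must let me extract a well-linked set of size $k$ inside $X$, using the standard \emph{uncrossing} technique for submodular functions. Starting with any bipartition that separates a single vertex of $X$ from the rest, I would iteratively move vertices of $X$ across so as to improve balance while keeping rank controlled, invoking submodularity at each step to bound the rank of the modified separation in terms of the previous one. If balance is never achieved, the sequence of vertices that could not be moved across assembles into a candidate set $U$ of size $k$, and a further submodular argument shows that $U$ is well-linked: every bipartition of $V(G)$ has cut-rank at least $\min(|A \cap U|, |B \cap U|)$.

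The main obstacle will be the balanced-separator lemma itself --- this is the technical heart of the Oum--Seymour argument. The delicate point is to simultaneously maintain two invariants along the uncrossing sequence: control on the cut-rank of the current bipartition, and growth of the candidate well-linked set, so that failing to balance really does produce a certified well-linked set of size exactly $k$. This requires choosing the uncrossing order carefully and leveraging the integrality of $\mu$ together with submodularity, in the same spirit as Robertson and Seymour's tangle-width duality for treewidth.
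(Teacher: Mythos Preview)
The paper does not prove this statement; it is quoted as Theorem~5.2 of Oum--Seymour and used as a black box. So there is no in-paper proof to compare your proposal against.

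On the substance of your sketch, there is a genuine gap in the recursion. When you recurse on a part $A$ (with $\mu(A) < k$ already in hand from the previous level), your balanced-separator lemma returns a bipartition $(A', B')$ of the \emph{whole} ground set $V(G)$ with $\mu(A') < k$, balanced with respect to $A$. But the two new branch-decomposition edges you create correspond to the cuts $(A \cap A',\, V(G) \setminus (A \cap A'))$ and $(A \cap B',\, V(G) \setminus (A \cap B'))$, so what you must bound is $\mu(A \cap A')$ and $\mu(A \cap B')$ --- not $\mu(A')$. Submodularity only gives $\mu(A \cap A') + \mu(A \cup A') \le \mu(A) + \mu(A') < 2k$, which controls one of the four corner sets, not both halves of $A$ simultaneously. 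Iterated naively, the rank bound degrades additively by $k$ at every level, so after $O(\log n)$ levels you end up with width $O(k \log n)$, not width $< k$.

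The Oum--Seymour argument avoids this by routing through tangles: one proves the exact duality that the branch-width of a symmetric submodular function is at most $k$ if and only if there is no tangle of order $k+1$, and the branch decomposition is built not by naive balanced recursion but by a construction that exploits the tangle axioms (a consistent choice of ``small side'' for every low-order separation, with no three small sides covering the ground set). The uncrossing you invoke is the right tool, but it has to be applied \emph{inside} the recursion --- to push each new separation so that it lives within the current piece while keeping the rank bound --- not only in the proof of the standalone lemma. As written, all your uncrossing happens in the lemma and none in the place where it is actually needed.
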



Using this, the lower bound in Theorem~\ref{thm:cw}  follows from the next lemma, which is proved in Appendix \ref{app:cw}.
The proof is due to Rose McCarty (private communication).
\begin{lemma}[\appmark]\label{lem:well-linked-hideout}
  Fix a graph $G$ and number $k\in\N$.
  Every well-linked set $U$ with $|U|>3k$ is a $(\infty,k,k)$-hideout. 
\end{lemma}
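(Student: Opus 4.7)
Since $r=\infty$, the ball $B^\infty_{G'}(v)$ is precisely the connected component of $v$ in $G'$. Thus the goal is to show that for every $k$-flip $G'$ of $G$, the set $S$ of vertices $v\in U$ whose connected component $C_{G'}(v)$ satisfies $|C_{G'}(v)\cap U|\le k$ has $|S|\le k$. I would proceed by contradiction: assume $|S|>k$ and aim to produce a bipartition $A\uplus B$ of $V(G)$ that separates connected components of $G'$ and yet has $\min(|A\cap U|,|B\cap U|)>k$, violating well-linkedness.

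The main technical step is to relate $\rk_G(A,B)$ to $\rk_{G'}(A,B)$. Let $\cal P=\{P_1,\ldots,P_k\}$ be the partition of $V(G)$ witnessing that $G'$ is a $k$-flip, and let $M,M'$ be the $A\times B$ adjacency matrices of $G,G'$ over $\mathbb F_2$. The matrix $D:=M'-M$ is constant on each rectangle $(P_i\cap A)\times(P_j\cap B)$ (its value being $1$ iff the pair $(P_i,P_j)$ is flipped), so $D$ has at most $k$ distinct rows and therefore $\mathrm{rank}_{\mathbb F_2}(D)\le k$. Consequently
\[
|\rk_G(A,B)-\rk_{G'}(A,B)|\;\le\;k.
\]
If moreover $A$ is a union of connected components of $G'$, then $\rk_{G'}(A,B)=0$, so $\rk_G(A,B)\le k$, and well-linkedness of $U$ forces $\min(|A\cap U|,|B\cap U|)\le k$.

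It remains to exhibit the contradicting bipartition under the assumption $|S|>k$. Classify the components of $G'$ into \emph{small} ($|C\cap U|\le k$) and \emph{large}; by assumption the small components carry more than $k$ vertices of $U$. There are two cases. First, if some large component $C^{*}$ satisfies $|C^{*}\cap U|\le |U|-k-1$, then taking $A=C^{*}$ and $B=V(G)\setminus C^{*}$ yields both $|A\cap U|>k$ and $|B\cap U|>k$, a contradiction. If instead every large component $C^{*}$ has $|C^{*}\cap U|\ge |U|-k$, then all other components together carry at most $k$ vertices of $U$, contradicting the assumption that the (disjoint-from-$C^{*}$) small components carry more than $k$. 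Thus we may assume every component satisfies $|C\cap U|\le k$. Then enumerate the components $C_1,C_2,\ldots$ and consider the prefix sums $a_i=|(C_1\cup\cdots\cup C_i)\cap U|$: they start at $0$, end at $|U|>3k$, and increase by at most $k$ in each step; hence some prefix sum $a_{i^{*}}$ lies in the interval $(k,2k]$. Setting $A=C_1\cup\cdots\cup C_{i^{*}}$ and $B=V(G)\setminus A$ gives $|A\cap U|>k$ and $|B\cap U|\ge |U|-2k>k$, the desired contradiction.

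The principal obstacle is the rank comparison $|\rk_G(A,B)-\rk_{G'}(A,B)|\le k$; once this lemma is in place, the combinatorial split of $V(G)$ into two parts each containing more than $k$ vertices of $U$ is a short averaging argument, made possible precisely by the hypothesis $|U|>3k$.
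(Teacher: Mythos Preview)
Your proof is correct and takes essentially the same approach as the paper: both establish the rank inequality $\rk_G(A,B)\le \rk_{G'}(A,B)+k$ (the paper isolates this as a separate lemma) and then exhibit a bipartition along connected components of $G'$ with each side containing more than $k$ vertices of $U$. The paper's construction is marginally more direct---it accumulates the components $U(v_1),U(v_2),\ldots$ of the assumed $k+1$ bad vertices until the union first exceeds size $k$, which immediately lands in $(k,2k]$ and so avoids your case split on large versus small components.
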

This implies the lower bound in Theorem~\ref{thm:cw} as follows. Suppose $\rw(G)>3k+1$ for some $k\in\N$.
By Fact~\ref{fact:well-linked}, $G$ contains a well-linked set $U$ of size $3k+1$.
By Lemma~\ref{lem:well-linked-hideout}, $U$
is a $(\infty,k,k)$-hideout.
By Lemma~\ref{lem:hideouts},
$\fw_\infty(G)>k$. By contrapositive, this shows that if $\fw_\infty(G)=k$ then $\rw(G)\le 3k+1=3\fw_\infty(G)+1$.

\subsection{Radius-one flip-width}\label{sec:basic-comb}
We move to the study of finite radii, which are our main focus, 
starting with the first parameter, $\fw_1$. We have seen that its sparse analogue, $\copw_1$, corresponds precisely to degeneracy (plus one), which is a very well-understood parameter, with many good algorithmic and combinatorial properties.
The parameter $\fw_1$ enjoys many useful combinatorial properties, 
relating it to near-twins, and to the VC-dimension.

\paragraph{Near-twins}
We prove a first combinatorial property of graphs with small $\fw_1(G)$, namely that such graphs have near-twins. This has several consequences.
Say that two vertices $u,v$ of a graph $G$ are \emph{$\delta$-near-twins} if $|N(u)\triangle N(v)|\le \delta$, where $\triangle$ denotes the symmetric difference.
We show that every graph $G$ with $\fw_1(G)\le k$ 
has a pair of $2k$-near-twins. More generally, we prove:
\begin{lemma}\label{lem:near-twins}
  Let $b,k\in \N$ and let $G$ be a graph with $\fw_1(G)\le k$ and $|G|>bk$.
  Then $G$ contains a set of at least $b+1$ vertices which are mutual $2 b k$-near-twins.
\end{lemma}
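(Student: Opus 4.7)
The plan is to apply the hideout lemma (Lemma~\ref{lem:hideouts}) in its contrapositive form to $U := V(G)$, with threshold $d := bk$. Since $|G| > bk = d$ and $\fw_1(G) \leq k$, the set $V(G)$ cannot be a $(1,k,bk)$-hideout. Unfolding Definition~\ref{def:hideout}, this forces the existence of a $k$-flip $G'$ of $G$, arising from some partition $\mathcal{P}$ of $V(G)$ with $|\mathcal{P}| \leq k$, such that strictly more than $bk$ vertices $v$ satisfy $|B^1_{G'}(v)| \leq bk$ --- equivalently, $\deg_{G'}(v) \leq bk - 1$. A pigeonhole argument then yields a single part $P \in \mathcal{P}$ containing at least $b+1$ such low-degree vertices $v_1, \ldots, v_{b+1}$.

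The remaining step is to observe that these $b+1$ vertices must be pairwise $2bk$-near-twins in the original graph $G$. Let $R$ denote the union of all parts flipped with $P$, together with $P$ itself if $P$ is flipped with itself. By the very definition of a flip, the edges incident to any $v_i \in P$ that are toggled between $G$ and $G'$ are precisely those with the other endpoint in $R \setminus \{v_i\}$. Hence
$$N_{G'}(v_i) \;=\; N_G(v_i) \,\triangle\, (R \setminus \{v_i\}).$$
Taking symmetric differences of the corresponding identities for $v_i$ and $v_j$ gives
$$N_G(v_i) \,\triangle\, N_G(v_j) \;=\; \bigl[(R\setminus\{v_i\}) \triangle (R \setminus \{v_j\})\bigr] \,\triangle\, N_{G'}(v_i) \,\triangle\, N_{G'}(v_j).$$
The first bracket is either empty (when $v_i, v_j \notin R$) or equals $\{v_i, v_j\}$ (when both are in $R$, which happens when $P$ is self-flipped), contributing at most $2$ elements; the other two summands contribute at most $\deg_{G'}(v_i) + \deg_{G'}(v_j) \leq 2(bk-1)$. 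Altogether $|N_G(v_i) \triangle N_G(v_j)| \leq 2bk$, as required.

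The only non-routine ingredient is the first paragraph: recognising that a failed hideout is exactly a certificate witnessing "many globally low-degree vertices in some flip", and that such vertices, when crammed into a single part by pigeonhole, automatically share almost the same neighbourhood in $G$ (namely, the neighbourhood $R\setminus\{v_i\}$ dictated by the flip pattern). Once this conceptual bridge is made, the symmetric-difference calculation is straightforward and I do not anticipate any serious obstacle.
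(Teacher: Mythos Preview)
Your proof is correct and follows essentially the same approach as the paper. The paper argues by contrapositive (assuming no $b+1$ mutual $2bk$-near-twins exist and showing $V(G)$ is a $(1,k,bk)$-hideout), whereas you argue directly (since $V(G)$ is not a hideout, extract the witness flip and the near-twins); but the underlying mechanism---pigeonhole on the low-degree vertices into a single part of $\cal P$, then observe that same-part vertices have equal flip pattern so near-twinness transfers from $G'$ to $G$---is identical. Your explicit symmetric-difference computation, including the $+2$ correction for the self-flip case absorbed by the slack from $\deg_{G'}(v_i)\le bk-1$, is in fact slightly more careful than the paper's one-line ``it follows''.
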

\begin{proof}
  Assume that $G$ has no set containing $b+1$ mutual $2bk$-near-twins. We prove that $V(G)$ 
  is a $(1,k,bk)$-hideout in $G$, which implies that $\fw_1(G)>k$ by Lemma~\ref{lem:hideouts}.

  Let $G'$ be a $k$-flip of $G$ and 
  let $B$ be the set of vertices of degree 
  at most $bk$ in $G'$.
  We show that $|B|\le bk$, proving that $V(G)$ is a $(1,k,bk)$-hideout in $G$.
  
Suppose that $|B|> bk$. 
Let $\cal P$ be a partition with $|\cal P|\le k$ such that  $G'$ is a $\cal P$-flip of $G$.
As $|B|>bk$, there is a set $B_0\subset B$
with $|B_0|>b$, such that $B_0$ is contained in one part of $\cal P$. Any two vertices of $B_0$ are $2bk$-near-twins in $G'$, as they both have degree at most $bk$ in $G'$. Since $B_0$ is contained in a single part of $\cal P$, it follows that any two vertices of $B_0$ are $2bk$-near-twins in $G$, too. But $|B_0|>b$, so this contradicts the assumption. Hence, $|B|\le bk$.
\end{proof}

A bipartite variant of Lemma~\ref{lem:near-twins},
with a very similar proof, is as follows. Recall
that if  $X,Y\subset V(G)$ are two sets of vertices 
of a graph $G$ then $G[X,Y]$ denotes the bipartite graph semi-induced by $X$ and $Y$ in $G$.

\begin{lemma}[\appmark]\label{lem:near-twins-two-sets}
  Let $b,k\in\N$ and let $G$ be a graph with $\fw_1(G)\le k$. Then for every two sets $X,Y\subset V(G)$
  with $|X|,|Y|>bk$,
  there is a subset of $X$
  or a subset of $Y$ consisting of $b+1$ mutual $2bk$-near-twins in $G[X,Y]$.
\end{lemma}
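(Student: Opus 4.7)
The plan is to prove the contrapositive: assuming there is no $(b+1)$-element subset of $X$ or of $Y$ consisting of mutual $2bk$-near-twins in $G[X,Y]$, I will construct a winning strategy for the runner in the radius-$1$ flipper game of width $k$ on $G$, which contradicts $\fw_1(G)\le k$. The argument parallels the proof of Lemma~\ref{lem:near-twins}, but instead of exhibiting a hideout and invoking Lemma~\ref{lem:hideouts} directly, the runner will bounce back and forth between $X$ and $Y$.

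Assume first that $X\cap Y=\emptyset$. The key preliminary observation is that for every $k$-flip $G'$ of $G$ coming from a partition $\cal P$ with $|\cal P|\le k$, both sets
\[
  B^X(G') \;=\; \{v\in X:|N_{G'}(v)\cap Y|\le bk\},\qquad B^Y(G') \;=\; \{v\in Y:|N_{G'}(v)\cap X|\le bk\}
\]
have size at most $bk$. Indeed, if $|B^X(G')|>bk$, pigeonhole on $\cal P$ yields $b+1$ vertices $v_0,\dots,v_b\in B^X(G')$ all lying in a single part; since two vertices in the same part have their adjacencies to every other part flipped identically, for all $i,j$ one checks that
\[
(N_G(v_i)\cap Y)\triangle (N_G(v_j)\cap Y) \;=\; (N_{G'}(v_i)\cap Y)\triangle (N_{G'}(v_j)\cap Y),
\]
and the right-hand side is at most $|N_{G'}(v_i)\cap Y|+|N_{G'}(v_j)\cap Y|\le 2bk$. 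Hence these $b+1$ vertices form a set of mutual $2bk$-near-twins in $G[X,Y]$, contradicting the hypothesis; the bound on $B^Y(G')$ is symmetric.

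With this claim in hand, the runner's strategy is to maintain the invariant that the current position $v_i$ satisfies $|N_{G_i}(v_i)\cap Y|>bk$ if $v_i\in X$, and $|N_{G_i}(v_i)\cap X|>bk$ if $v_i\in Y$. Initialization picks any $v_0\in X\setminus B^X(G)$, which exists because $|X|>bk\ge |B^X(G)|$. For the inductive step, suppose without loss of generality that $v_{i-1}\in X$ and the invariant holds with respect to $G_{i-1}$. When the flipper announces $G_i$, the claim gives $|B^Y(G_i)|\le bk$, so among the more than $bk$ vertices in $N_{G_{i-1}}(v_{i-1})\cap Y$ at least one, call it $v_i$, lies outside $B^Y(G_i)$; the runner moves to $v_i$ along the corresponding edge of $G_{i-1}$. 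Then $|N_{G_i}(v_i)\cap X|>bk\ge 1$, so $v_i$ is not isolated in $G_i$ and the invariant is restored. Hence the runner evades the flipper indefinitely, contradicting $\fw_1(G)\le k$.

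The main subtlety I anticipate is the case $X\cap Y\neq\emptyset$, where a vertex in $X\cap Y$ has two incarnations in $G[X,Y]$ and the sets $B^X(G'),B^Y(G')$ are no longer disjoint. I would address this by letting the runner additionally carry a side label $s_i\in\{X,Y\}$; all degree counts in the claim and in the strategy are then read with respect to $s_i$, so a vertex $v\in X\cap Y$ currently labelled $X$ is treated purely as an $X$-vertex of $G[X,Y]$. With this bookkeeping the pigeonhole step, the flip-invariance of symmetric differences within a part, and the bouncing move all go through verbatim.
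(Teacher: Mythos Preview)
Your argument is essentially the paper's own proof. The paper factors the statement through the \emph{bipartite} flip-width: it shows $\bfw_1(G[X,Y])\le\fw_1(G)$ (by transferring flipper strategies, which is exactly what your side-label bookkeeping simulates), and then runs the hideout/near-twin argument of Lemma~\ref{lem:near-twins} inside the bipartite graph $G[X,Y]$, where the two parts are disjoint copies by construction. Your bouncing runner and pigeonhole step are the same argument, only carried out directly in $G$.

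One technical point: when $X\cap Y\neq\emptyset$, your displayed identity
\[
(N_G(v_i)\cap Y)\triangle(N_G(v_j)\cap Y)=(N_{G'}(v_i)\cap Y)\triangle(N_{G'}(v_j)\cap Y)
\]
can fail at the elements $w\in\{v_i,v_j\}\cap Y$: since graphs have no self-loops, $v_i\notin N_{G}(v_i)$ and $v_i\notin N_{G'}(v_i)$ regardless, whereas $v_i\in N_G(v_j)$ and $v_i\in N_{G'}(v_j)$ may differ when the pair $(P,P)$ is flipped. This costs you up to two elements and your side-label fix does not recover them, so your bound would be $2bk+2$ rather than $2bk$. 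The paper's route through $G[X,Y]$ with disjoint copies sidesteps this cleanly, because there the $X$-copy and the $Y$-copy of a vertex in $X\cap Y$ are genuinely different vertices and the flip acts uniformly on them.
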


Setting $b:=1$ in Lemma~\ref{lem:near-twins} we get:
\begin{corollary}\label{cor:near-twins}
  Let $G$ be a graph with $\fw_1(G)\le k$ and $|G|>k$.
  Then $G$ has a pair of $2k$-near-twins.
\end{corollary}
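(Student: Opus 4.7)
The plan is simply to invoke Lemma~\ref{lem:near-twins} with the parameter $b$ specialized to $1$. The hypothesis of that lemma requires $|G| > bk$, and for $b=1$ this becomes $|G| > k$, which is precisely the assumption of the corollary. The conclusion of the lemma then yields a set of at least $b+1 = 2$ vertices that are mutual $2bk = 2k$-near-twins; a set of two mutual $2k$-near-twins is by definition a pair of $2k$-near-twins, which is exactly the conclusion we want.

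Since this is a direct substitution into an already-proved lemma, there is no real obstacle and essentially nothing further to verify. I would write the proof as a single sentence: ``Apply Lemma~\ref{lem:near-twins} with $b=1$.''
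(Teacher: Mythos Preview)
Your proposal is correct and matches the paper's approach exactly: the paper also derives the corollary by setting $b:=1$ in Lemma~\ref{lem:near-twins}.
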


We can now verify that there exist graphs $G$ with arbitrarily large $\fw_1(G)$.
It is well known that there exist graphs 
of arbitrarily large girth and minimum degree\sz{cite}.
\begin{corollary}\label{cor:fw1-nontrivial}
  A graph $G$ with girth larger than $4$ and minimum degree larger than $k$
  has $\fw_1(G)> k$. Therefore, there  exist graphs $G$ with arbitrarily large $\fw_1(G)$.
\end{corollary}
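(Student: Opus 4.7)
The plan is to derive a contradiction from the assumption $\fw_1(G)\le k$. Since $G$ has minimum degree larger than $k$, each vertex has at least $k+1$ neighbors, so $|V(G)|\ge k+2>k$, and Corollary~\ref{cor:near-twins} applies, producing a pair $u,v$ of $2k$-near-twins, that is, $|N(u)\triangle N(v)|\le 2k$. The contradiction will come from bounding $|N(u)\triangle N(v)|$ from below using the girth assumption.

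First I would handle the case $uv\in E(G)$: triangle-freeness (girth $>3$) forces $N(u)\cap N(v)=\emptyset$, and both $u$ and $v$ themselves belong to $N(u)\triangle N(v)$, so
\[
|N(u)\triangle N(v)|=|N(u)|+|N(v)|\ge 2(k+1)=2k+2>2k,
\]
contradicting the near-twin bound. For the case $uv\notin E(G)$, the absence of $4$-cycles gives $|N(u)\cap N(v)|\le 1$, hence $|N(u)\triangle N(v)|\ge 2(k+1)-2=2k$. This inequality is tight, and the main obstacle is ruling out the extremal configuration in which $|N(u)|=|N(v)|=k+1$ and $u,v$ share a unique common neighbor. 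I would handle this either by sharpening the near-twin bound, noting that the proof of Lemma~\ref{lem:near-twins} controls the symmetric difference in $G$ in terms of the symmetric difference in the $k$-flip $G'$ only up to a $\pm O(1)$ correction coming from the $\{u,v\}$-component, which can be exploited to forbid the tight case; or by a direct hideout construction on $V(G)$, arguing that the girth and minimum-degree assumptions prevent any $k$-flip from producing two low-degree vertices sitting in the same part of the associated partition, whence Lemma~\ref{lem:hideouts} applies.

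For the final assertion that $\fw_1$ can be made arbitrarily large, I would invoke the classical theorem of Erd\H{o}s asserting that, for any integers $g$ and $d$, there exist graphs with girth larger than $g$ and minimum degree larger than $d$; choosing $g=4$ and $d=k$ for arbitrary $k$ yields graphs to which the first part of the corollary applies, producing $\fw_1(G)>k$ for arbitrary $k$.
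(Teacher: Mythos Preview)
Your approach is exactly the paper's: invoke Corollary~\ref{cor:near-twins} and argue that girth $>4$ and minimum degree $>k$ forbid $2k$-near-twins. In fact you are more careful than the paper, which simply asserts $|N(u)\triangle N(v)|>2k$ from ``at most one common neighbor'' without separating the adjacent and non-adjacent cases. You are right that in the non-adjacent case the computation only gives $|N(u)\triangle N(v)|\ge 2(k+1)-2=2k$, so the strict inequality is not immediate; this is an imprecision in the paper, not a misreading on your part.

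That said, your two proposed repairs for the tight case are not convincing as written. The ``$\pm O(1)$ correction'' you allude to is real---when $u,v$ lie in the same part $P$ of the flip partition, $N_G(u)\triangle N_G(v)$ and $N_{G'}(u)\triangle N_{G'}(v)$ agree on $V\setminus\{u,v\}$ and can differ by exactly $\{u,v\}$ depending on whether the pair $(P,P)$ was flipped---but this correction can go in either direction, so it does not by itself rule out $|N_G(u)\triangle N_G(v)|=2k$. Your second suggestion (a direct hideout argument) is just a restatement of the goal without a mechanism. The cleanest fix is far simpler than either: for the ``therefore'' clause, which is the only thing that matters downstream, take a graph of girth $>4$ and minimum degree at least $k+2$; then $|N(u)\triangle N(v)|\ge 2(k+2)-2>2k$ for every pair, Corollary~\ref{cor:near-twins} applies cleanly, and $\fw_1(G)>k$. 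Erd\H{o}s's theorem supplies such graphs for all $k$, so the existence of graphs with arbitrarily large $\fw_1$ follows.
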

\begin{proof}
Any two distinct vertices $u$ and $v$ have at most one common neighbor, so $|N(u)\triangle N(v)|> 2k$, so $G$ has no pair of $2k$-near-twins, hence $\fw_1(G)>k$ by Corollary~\ref{cor:near-twins}.
\end{proof}

\paragraph{VC dimension}

We now show that the VC-dimension of a graph $G$ is bounded 
in terms of $\fw_1(G)$.
Set systems and graphs of bounded VC-dimension have many 
useful properties, and we use one of them later for studying classes of bounded flip-width.
We also consider a related parameter, called \emph{2VC-dimension} \cite{bousquet-thomassee}, and denoted $\TVCdim(G)$.
This is the maximal size of a set $X\subset V(G)$
such that for every two distinct $a,b\in X$ there is a vertex $c\in V(G)$ with $N_G(c)\cap X=\set{a,b}$.
Clearly, $\VCdim(G)\le\TVCdim(G)$. We prove the following.

\begin{theorem}[\appmark]\label{thm:vcdim}For every graph $G$ we have 
  \begin{align}
    \VCdim(G)&\le 8\fw_1(G),\label{eq:vcdim}\\
    \TVCdim(G)&\le 8\fw_2(G)+2  \label{eq:2vcdim}
  \end{align}
\end{theorem}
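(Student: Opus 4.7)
Both inequalities will be proved by the hideout method (Lemma~\ref{lem:hideouts}): we assume for contradiction that the VC-parameter exceeds the stated bound and build a hideout from a shattered set.

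For \eqref{eq:vcdim}, suppose $\VCdim(G)\ge 8k+1$ where $k=\fw_1(G)$, and let $X\subseteq V(G)$ be a shattered set with $|X|=8k+1$. For every $S\subseteq X$ fix a witness $v_S\in V(G)$ with $N_G(v_S)\cap X=S$. The plan is to select a set $Y=\{v_{S_1},\ldots,v_{S_N}\}$ of $N=8k$ such witnesses whose indexing subsets simultaneously satisfy
\begin{enumerate}[label=(\alph*)]
\item pairwise Hamming distance greater than $2k$, i.e.\ $|S_i\triangle S_j|>2k$ for all $i\neq j$; and
\item separation of every pair $\{x,x'\}\subseteq X$ by more than $2k$ coordinates, i.e.\ $|\{i:|S_i\cap\{x,x'\}|=1\}|>2k$.
\end{enumerate}
A uniformly random choice of $S_1,\ldots,S_N\subseteq X$ fulfils both conditions with positive probability when $k$ is large enough: for each of the $\binom{N}{2}+\binom{|X|}{2}=O(k^2)$ bad pairs the expected value is $(8k+1)/2>2k$, and by a Chernoff bound the failure probability of a single pair is $\exp(-\Omega(k))$, so a union bound closes the argument (small $k$ is handled by direct inspection or constant tuning). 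Given such a $Y$, applying Lemma~\ref{lem:near-twins-two-sets} with $b=1$ to the bipartite graph $G[X,Y]$ (both of whose sides have size $>k$) would produce two mutually $2k$-near-twins within either $X$ or $Y$; the $X$-case is ruled out by (b) and the $Y$-case by (a), a contradiction. Hence $\fw_1(G)>k$.

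For \eqref{eq:2vcdim}, suppose $\TVCdim(G)\ge 8k+3$ where $k=\fw_2(G)$, and let $X$ be $2$VC-shattered with $|X|=8k+3$; for every pair $\{a,b\}\subseteq X$ fix a witness $w_{ab}$ with $N_G(w_{ab})\cap X=\{a,b\}$. The plan is to show that $X$ itself is a $(2,k,8k+2)$-hideout, whence $\fw_2(G)>k$ by Lemma~\ref{lem:hideouts}. Fix any $k$-flip $G'$ of $G$ defined by a partition $\mathcal{P}$ with $|\mathcal{P}|\le k$, and for each vertex $x$ write $P(x)$ for the part of $\mathcal{P}$ containing $x$. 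The key observation is that, for $v,u\in X$, the length-$2$ path $v-w_{vu}-u$ survives in $G'$ precisely when the two flip bits $f(P(v),P(w_{vu}))$ and $f(P(u),P(w_{vu}))$ both vanish. For a fixed $v\in X$ the $|X|-1$ witnesses $\{w_{vu}:u\neq v\}$ distribute across the $\le k$ parts of $\mathcal{P}$, so by pigeonhole some part $P_l$ contains $\ge (|X|-1)/k$ of them; if $f(P(v),P_l)=0$ then $v$ retains $G'$-edges to all those witnesses, and among the corresponding target vertices $u$ only those with $f(P(u),P_l)=1$ are cut off, which amounts to losing at most the $X$-vertices in parts that are flipped relative to $P_l$. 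A careful averaging argument over $v\in X$ (iterating over choice of "safe" part $P_l$ for $v$) will show that at most $8k+2$ vertices $v$ can fail to reach $>8k+2$ other $X$-vertices at $G'$-distance $\le 2$, establishing the hideout property.

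The routine part of the plan is the probabilistic construction of $Y$ in the first proof, which is a standard Chernoff-plus-union-bound argument. The main obstacle is the radius-$2$ counting in the second proof: the interaction of flip bits with length-$2$ paths through witnesses depends on three parts at once (those of $v$, $u$, and $w_{vu}$), so the bookkeeping is more delicate than in the radius-$1$ near-twin proofs of Lemmas~\ref{lem:near-twins} and~\ref{lem:near-twins-two-sets}. The precise value $8k+2$ (and the additive $+2$ in the stated bound) should emerge from this count as the number of "lost" targets one cannot avoid through flip choices constrained by $\mathcal{P}$.
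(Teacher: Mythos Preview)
\textbf{Inequality~\eqref{eq:vcdim}.} Your plan is essentially the same as the paper's: produce sets $X,Y$ so that $G[X,Y]$ has no $2k$-near-twins on either side, then invoke Lemma~\ref{lem:near-twins-two-sets}. The difference is in how you build $Y$. The paper does it algebraically (Lemma~\ref{lem:vector-space-twins}): if $\VCdim(G)\ge 2^m$, identify a shattered set $X$ with $\mathbb{Z}_2^m$, and for each $v\in X$ pick a witness $v^*$ with $N(v^*)\cap X=\{u: u\cdot v\neq 0\}$; then any two witnesses differ on an affine hyperplane, so $|N(u^*)\triangle N(v^*)|=2^{m-1}$ exactly. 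This gives the constant $8$ for all $k$ with no case analysis. Your Chernoff-plus-union-bound version is a legitimate alternative for large $k$, but the ``small $k$ by direct inspection or constant tuning'' escape clause is doing real work here and would need to be carried out to match the stated constant.

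\textbf{Inequality~\eqref{eq:2vcdim}.} Here your route diverges from the paper's, and the gap you flag is real. The paper does \emph{not} argue directly that $X$ is a hideout in $G$. Instead it passes to the bipartite graph $G[V,V]$ (Lemma~\ref{lem:fw1-bip}), observes that a $2$VC-shattered set $X$ of size $n$ together with its pair-witnesses induces a genuine $1$-subdivision of $K_n$ in $G[V,V]$, and then applies Proposition~\ref{prop:subdivisions} (for $r=2$) to that subdivision; finally Lemmas~\ref{lem:fw-bfw} and~\ref{lem:fw1-bip} translate the bound back to $\fw_2(G)$. The point of the bipartite double is exactly to separate the principal vertices from the subdivision vertices, so that the hideout analysis becomes the clean one in Proposition~\ref{prop:subdivisions} (via Lemma~\ref{lem:bipartite-flips}). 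Your direct argument in $G$ has to cope with the case $f(P(v),P_l)=1$, where $v$ loses all its original edges to the pigeonhole part $P_l$ but gains edges to the \emph{other} witnesses $w_{ab}\in P_l$; the ``careful averaging'' you allude to would essentially re-derive the big/small dichotomy of Lemma~\ref{lem:bipartite-flips}. That can be done, but as written your sketch does not supply it, and the specific constant $8k+2$ you announce for the hideout is not what the paper's argument produces (it gets a $(2,k,k)$-hideout in the subdivision, and the factor $8$ and the $+2$ come from the bipartite transfer, not from the hideout count).
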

\szfuture{How tight are the bounds?}
Note that $\TVCdim$ cannot be bounded in terms of $\fw_1$, only in terms of $\fw_2$,
as witnessed by $1$-subdivided cliques, which
are $2$-degenerate, and hence (by Theorem~\ref{thm:copw-fw} and Theorem~\ref{thm:degeneracy}), have bounded $\fw_1$, and clearly have unbounded $\fw_2$.
Furthermore, graphs  of girth larger than $4$ have VC-dimension at most two, but have arbitrarily large $\fw_1$ by Corollary~\ref{cor:fw1-nontrivial}. Hence, $\fw_1$ is not bounded in terms of $\VCdim(G)$.

Inequality~\eqref{eq:vcdim} in Theorem~\ref{thm:vcdim} follows from Lemma~\ref{lem:near-twins-two-sets} (for $b=1$) and the following. 
\begin{lemma}[\appmark]\label{lem:vector-space-twins}
  Let $G$ be a graph with $\VCdim(G)\ge 2^m$, for some $m$.  Then 
  there are two sets $X,Y$ such that the bipartite graph $G[X,Y]$ 
   contains no pair of $(2^{m-1}-1)$-near-twins in either of the parts $X, Y$.
\end{lemma}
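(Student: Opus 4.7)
}
The plan is to use a standard construction from Vapnik–Chervonenkis theory based on linear functionals over $\mathbb{F}_2^m$, exploiting the fact that a shattered set of size $2^m$ lets us realise any neighborhood pattern we wish.

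Since $\VCdim(G) \ge 2^m$, there is a set $Y \subseteq V(G)$ with $|Y| = 2^m$ shattered by the neighborhoods in $G$; that is, for every $S \subseteq Y$ there exists some $v \in V(G)$ with $N_G(v) \cap Y = S$. I would fix a bijection between $Y$ and the vector space $\mathbb{F}_2^m$, and for each $\alpha \in \mathbb{F}_2^m$ define the hyperplane-like set
\[ H_\alpha := \{y \in Y : \langle \alpha, y \rangle = 1\}, \]
where $\langle \cdot, \cdot \rangle$ denotes the bilinear form on $\mathbb{F}_2^m$. Using shattering, choose for each $\alpha \in \mathbb{F}_2^m$ a vertex $v_\alpha \in V(G)$ with $N_G(v_\alpha) \cap Y = H_\alpha$. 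The $2^m$ vertices $v_\alpha$ are pairwise distinct (as the sets $H_\alpha$ are pairwise distinct), so setting $X := \{v_\alpha : \alpha \in \mathbb{F}_2^m\}$ gives a set of size $2^m$.

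Now I would verify the two near-twin conditions in $G[X,Y]$ by a one-line linear-algebra calculation. For distinct $\alpha, \beta \in \mathbb{F}_2^m$,
\[ N_{G[X,Y]}(v_\alpha) \triangle N_{G[X,Y]}(v_\beta) \;=\; H_\alpha \triangle H_\beta \;=\; H_{\alpha+\beta}, \]
and since $\alpha + \beta \neq 0$, the kernel of the nonzero linear functional $\langle \alpha+\beta, \cdot \rangle$ has size $2^{m-1}$, so $|H_{\alpha+\beta}| = 2^{m-1}$. Symmetrically, for distinct $y, y' \in Y$,
\[ N_{G[X,Y]}(y) \triangle N_{G[X,Y]}(y') \;=\; \{v_\alpha : \langle \alpha, y \rangle \neq \langle \alpha, y' \rangle\} \;=\; \{v_\alpha : \langle \alpha, y - y' \rangle = 1\}, \]
and since $y - y' \neq 0$ this set also has exactly $2^{m-1}$ elements. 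In both cases the symmetric difference has size $2^{m-1} > 2^{m-1} - 1$, so no pair of vertices on either side of $G[X,Y]$ are $(2^{m-1}-1)$-near-twins, as required.

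There is no real obstacle here; the only subtlety is bookkeeping — noting that $X$ and $Y$ may overlap in $G$ (which is permitted by the paper's convention on semi-induced bipartite graphs), and observing that the $v_\alpha$ are forced to be distinct because their neighborhoods in $Y$ are distinct. The proof is essentially the self-dual Hadamard/hyperplane construction, which is the natural source of set systems whose neighborhoods pairwise differ in exactly half of the ground set.
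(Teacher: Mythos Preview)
Your proof is correct and follows essentially the same route as the paper: both identify the shattered set with $\mathbb{F}_2^m$ and pick witnesses realizing affine-hyperplane neighborhoods, then use the fact that the symmetric difference of two such hyperplanes is again a hyperplane of size $2^{m-1}$. The only cosmetic differences are that the paper swaps the roles of $X$ and $Y$ (the shattered set is called $X$ there), uses the condition $\langle\cdot,\cdot\rangle=0$ rather than $=1$, and factors the linear-algebra calculation into a separate lemma (Lemma~\ref{lem:form}) before invoking it.
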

In the proof, the sets $X$ and $Y$ are two copies of the $m$-dimensional vector space over the two-element field, with edges connecting vectors with a nonzero dot product.
See Appendix \ref{app:fw-and-VC} for details.

We prove inequality \eqref{eq:2vcdim}
in Corollary~\ref{cor:2vcdim} later.
From Lemma~\ref{lem:vector-space-twins} and Lemma~\ref{lem:near-twins-two-sets} (for $b=1$) we conclude the following.
\begin{corollary}\label{cor:large-vc-subgraph}
  Let $G$ be a graph with $\VCdim(G)\ge d$.
  Then $G$ contains an induced subgraph $H$ with $O(d)$ vertices 
  and with $\fw_1(H)\ge d/8$.  
\end{corollary}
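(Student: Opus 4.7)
The plan is to combine the two preceding lemmas: Lemma~\ref{lem:vector-space-twins} yields a small pair of sets on which no near-twins exist, and Lemma~\ref{lem:near-twins-two-sets} guarantees, conversely, that graphs of small $\fw_1$ always produce near-twins in any large bipartition. Matching the two will force $\fw_1$ of an appropriate induced subgraph to be large.

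First, let $m := \lfloor \log_2 d \rfloor$, so that $d/2 < 2^m \le d$. Since $\VCdim(G) \ge d \ge 2^m$, Lemma~\ref{lem:vector-space-twins} supplies two sets $X, Y$ of size $2^m$ (the two copies of the $m$-dimensional $\mathbb{F}_2$-vector space) such that the bipartite graph $G[X,Y]$ contains no pair of $(2^{m-1}-1)$-near-twins in either part. Next, let $H := G[X \cup Y]$, which is an induced subgraph of $G$ with at most $2\cdot 2^m \le 2d = O(d)$ vertices.

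It remains to lower bound $\fw_1(H)$. Suppose for contradiction that $\fw_1(H) \le k$ for some $k \le (2^{m-1}-1)/2$. Since $|X| = |Y| = 2^m > k$, Lemma~\ref{lem:near-twins-two-sets} applied to $H$ with $b=1$ produces a pair of vertices in $X$ or in $Y$ that are $2k$-near-twins in $H[X,Y] = G[X,Y]$. But $2k \le 2^{m-1}-1$, contradicting the property of $X,Y$ obtained from Lemma~\ref{lem:vector-space-twins}. Hence $\fw_1(H) > (2^{m-1}-1)/2$, and since $2^{m-1} \ge d/4$, we conclude $\fw_1(H) \ge d/8$ (up to a harmless rounding constant that can be absorbed).

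The main work has already been done in Lemmas~\ref{lem:vector-space-twins} and~\ref{lem:near-twins-two-sets}; the corollary is a dovetailing of their statements, the only mild obstacle being to choose $m$ so that $2^m \approx d$, which keeps $|H|$ linear in $d$ while still providing a gap of roughly $2^{m-1}$ in the near-twin threshold against the bound $2k$ coming from the other lemma.
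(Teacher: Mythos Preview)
Your proof is correct and matches the paper's approach exactly: the paper merely states that the corollary follows from Lemma~\ref{lem:vector-space-twins} and Lemma~\ref{lem:near-twins-two-sets} (with $b=1$), and you have filled in precisely those details. One small remark: the bound $|X|=|Y|=2^m$ is not part of the formal statement of Lemma~\ref{lem:vector-space-twins}, only of its proof (and the surrounding text), so strictly speaking you are invoking the construction rather than the lemma as stated; this is harmless here but worth flagging.
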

\begin{corollary}\label{cor:bounded-vc}
  If $\CC$ is a hereditary class of graphs 
  such that $\fw_1(G)\le o(|G|)$ for $G\in\CC$,
  then $\VCdim(\CC)<\infty$.
\end{corollary}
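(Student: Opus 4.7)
The plan is a straightforward contradiction argument combining Corollary~\ref{cor:large-vc-subgraph} with the hereditary closure assumption. Suppose, towards contradiction, that $\VCdim(\CC) = \infty$. Then for every integer $d \ge 1$, there exists a graph $G_d \in \CC$ with $\VCdim(G_d) \ge d$.

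By Corollary~\ref{cor:large-vc-subgraph}, each such $G_d$ contains an induced subgraph $H_d$ with $|H_d| \le C d$ (for some absolute constant $C$) and $\fw_1(H_d) \ge d/8$. Since $\CC$ is hereditary, $H_d \in \CC$. Combining the two bounds on $|H_d|$ and $\fw_1(H_d)$, and noting the trivial inequality $\fw_1(H_d) \le |H_d|$ forces $|H_d| \ge d/8 \to \infty$, we obtain a sequence $(H_d)_{d \ge 1}$ of graphs in $\CC$ with $|H_d| \to \infty$ and
\[
\frac{\fw_1(H_d)}{|H_d|} \;\ge\; \frac{d/8}{Cd} \;=\; \frac{1}{8C}.
\]
This contradicts the hypothesis that $\fw_1(G) \le o(|G|)$ for $G \in \CC$, which demands $\fw_1(H_d)/|H_d| \to 0$ along any sequence of graphs in $\CC$ with $|H_d|\to\infty$.

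The only substantive ingredient is Corollary~\ref{cor:large-vc-subgraph}, which has already been established; the rest is just unpacking the definition of $o(\cdot)$ and invoking heredity to keep the witnessing subgraphs inside $\CC$. There is no real obstacle here — the whole point is that Corollary~\ref{cor:large-vc-subgraph} already does the work of trading high VC-dimension for a small witness of large $\fw_1$, and the $o(|G|)$ assumption is precisely strong enough to rule out such witnesses existing for arbitrarily large $d$.
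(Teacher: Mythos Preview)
Your proof is correct and follows essentially the same approach as the paper's own proof: both argue by contradiction, invoke Corollary~\ref{cor:large-vc-subgraph} to extract small induced subgraphs $H_d\in\CC$ with $|H_d|=O(d)$ and $\fw_1(H_d)=\Omega(d)$, and observe that this contradicts the $o(|G|)$ hypothesis. Your version is slightly more explicit about why $|H_d|\to\infty$ (via $\fw_1(H_d)\le |H_d|$), but the argument is the same.
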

\begin{proof}
  If $\VCdim(\CC)=\infty$ then for every $d$ there is a
  graph $G\in\CC$ with $\VCdim(G)\ge d$,
  and by Corollary~\ref{cor:large-vc-subgraph}
  there is $H\in\CC$ with $O(d)$ vertices and $\fw_1(H)=\Omega(d)$.
  Since this holds for all $d\in\N$, 
  it cannot be that $\fw_1(G)\le o(|G|)$ for all $G\in\CC$.
\end{proof}

\subsection{Relationship of radius-one flip-width with other graph parameters}\label{sec:fw1-functionality}
In this section, we compare radius-one flip-width with existing graph parameters.
First, we mention that in Section~\ref{sec:fw-deg}
we show that for  graphs $G$
that exclude a fixed biclique $K_{t,t}$ as a subgraph,
we have that $\fw_1(G)$ is functionally equivalent 
to the degeneracy of $G$. However, unlike degeneracy,
$\fw_1$ is bounded for some graphs that contain arbitrarily large cliques or bicliques. 
Thus, $\fw_1(G)$ may be thought of as a dense extension of degeneracy. Two other graph parameters -- \emph{symmetric difference} and \emph{functionality} -- which could serve a similar role, are discussed below.

For a graph $G$, define the \emph{symmetric difference} of $G$, denoted
$\text{sd}(G)$, as the least number $k\in\N$,
such that every induced subgraph $H$ of $G$ 
contains a pair $u,v$ of $k$-near-twins. 
Classes $\CC$ with $\text{sd}(\CC)<\infty$ 
are called \emph{classes with bounded symmetric difference}.
For example, if $G$ has degeneracy $k$,
then $\text{sd}(G)\le 2k$. In particular, 
every graph class with bounded degeneracy has bounded symmetric difference.

Since $\fw_1$ is monotone under taking induced subgraphs, 
 Corollary~\ref{cor:near-twins} implies that 
 $\text{sd}(G)\le 2\fw_1(G)$ for every graph $G$.
 Hence, every graph class $\CC$ with $\fw_1(\CC)<\infty$ has bounded symmetric difference. We do not know whether the converse implication holds.

 A more general graph parameter, called \emph{graph functionality} and denoted $\text{fun}(G)$, is defined in \cite{functionality}.
 The following is a rephrasing of the original definition.

Say that a vertex $v\in V(G)$ is a \emph{function} 
of a set $S\subset V(G)$ of vertices, with $s\notin S$, if for every $w\in V(G)-(S\cup \set{v})$,
the adjacency of $w$ and $v$ in $G$ depends only 
on $N(w)\cap S$. For instance, 
if $v$ is a $k$-near-twin of $u$,
then $v$ is a function of 
$\set{u}\cup (N(v)\triangle N(u))-\set v$.
Define $\text{fun}(G)$ as the least number $k$ 
such that every induced subgraph $H$ of $G$ 
has a vertex $v$ that is a function of at most $k$ 
other vertices of $H$. 

Clearly, $\text{fun}(G)\le \text{sd}(G)+1$ holds for every graph $G$, so classes of bounded symmetric difference have bounded functionality.
The converse implication is false, as witnessed by the class of permutation graphs \cite[Sec. 2.4]{functionality}, which have bounded functionality and unbounded symmetric difference.

Classes of bounded functionality have bounded VC-dimension, as shown in \cite[Thm. 8]{functionality}. 
However, no effective bound is known 
(see \cite[Open problem 4]{functionality}).

It follows from \cite[Thm. 2]{implicit-representations} that every hereditary graph class of bounded functionality is 
at most \emph{factorial} -- contains 
at most $2^{O(n\log n)}$ labelled graphs on $n$ vertices. 
In fact, every
class that is at most factorial has bounded VC-dimension (see \cite[Sec. 3]{functionality}).

\medskip
Figure~\ref{fig:diagram-fw1}  summarizes the relationships among the discussed properties of graph classes.

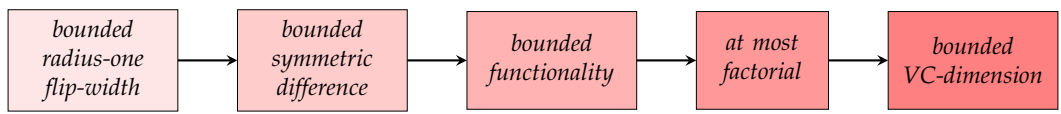
\begin{figure*}[h!]
  \begin{centering}
  \begin{tikzpicture}   
    \node[block, text width=2.cm, fill=red!10] (a') {bounded radius-one flip-width};
    \node[block,text width=2.cm, right=of a', xshift=-0.21cm,fill=red!20] (b') {bounded symmetric difference};
    \node[block, text width=2cm,right=of b',  xshift=-0.21cm,fill=red!30] (c') {bounded \mbox{functionality}};
    \node[block, text width=1.5cm,right=of c',  xshift=-0.21cm,fill=red!40] (d') {at most factorial};
    \node[block,text width=2cm,right=of d',xshift=-0.21cm,fill=red!50] (e') {bounded \mbox{VC-dimension}};

    \draw [arrow] (a') -- (b');
    \draw [arrow] (b') -- (c');
    \draw [arrow] (c') -- (d');
    \draw [arrow] (d') -- (e');
  \end{tikzpicture}
  \caption{Properties of graph classes and implications among them. 
  }\label{fig:diagram-fw1}
  \end{centering}
  \end{figure*}

\section{Flip-width in weakly sparse classes}\label{sec:wsparse}
We have seen in Theorem~\ref{thm:copw-fw}  radius-one flip-width is upper bounded in terms of degeneracy, and 
radius-$r$ flip-width is upper bounded in terms of  generalized coloring numbers.
In this section,
we provide bounds in the other direction,
in weakly sparse classes.
It follows that for weakly sparse classes,
having bounded flip-width is equivalent to having bounded expansion.

\subsection{Radius-one flip-width and degeneracy}\label{sec:fw-deg}
As $\fw_1$ is bounded in terms of $\copw_1$, which is equivalent to degeneracy by Theorem~\ref{thm:degeneracy},  it follows that every class of bounded degeneracy has bounded $\fw_1$. Clearly, every class of degeneracy bounded by $t$ is weakly sparse, as it excludes $K_{t+1,t+1}$ as a subgraph.
We show that for weakly sparse classes, bounded degeneracy is equivalent to having bounded $\fw_1$.

\begin{theorem}\label{thm:fw-deg}
If $G$ is a graph that avoids $K_{t,t}$ as a subgraph, then 
\begin{align}
  \deg(G)/(2t^2)&< \fw_1(G)\le (\deg(G)+1)^{t}.  
\end{align}
  As a consequence, if $\CC$ is a weakly sparse class of graphs 
then $\fw_1(\CC)<\infty$ if and only if $\CC$ has bounded degeneracy. 
\end{theorem}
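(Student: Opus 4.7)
I would prove the two inequalities separately. The upper bound is an immediate consequence of earlier results: Theorem~\ref{thm:copw-fw} bounds $\fw_1(G)$ above by $\copw_1(G)^t$ for $K_{t,t}$-free $G$ (with the $t=2$ case carrying an implicit constant that the stated form absorbs), and Theorem~\ref{thm:degeneracy} identifies $\copw_1(G)=\deg(G)+1$; substituting gives $\fw_1(G)\le(\deg(G)+1)^t$.

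For the lower bound, set $k:=\fw_1(G)$ and use the standard reformulation of degeneracy as $\deg(G)=\max_H\deg_{\min}(H)$, where $H$ ranges over induced subgraphs of $G$, to pick an induced subgraph $H\subseteq G$ with $\deg_{\min}(H)=\deg(G)$. By heredity of flip-width (Lemma~\ref{lem:hereditary}), $\fw_1(H)\le k$. The key step is to apply Lemma~\ref{lem:near-twins} to $H$ with $b:=t-1$: provided $|H|>(t-1)k$ (the remaining case being trivial, since then $\deg(G)\le|H|-1<(t-1)k<2t^2k$), this produces $t$ vertices $v_1,\ldots,v_t\in V(H)$ that are pairwise $2(t-1)k$-near-twins in $H$. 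A union bound over the pairwise symmetric differences gives
\[|N_H(v_1)\cap\cdots\cap N_H(v_t)|\ge\deg_{\min}(H)-(t-1)\cdot 2(t-1)k=\deg(G)-2(t-1)^2k,\]
and these common neighbors are automatically disjoint from $\set{v_1,\ldots,v_t}$ because no vertex lies in its own neighborhood. If the right-hand side were at least $t$, we would exhibit $K_{t,t}\subseteq H\subseteq G$, contradicting the hypothesis; hence $\deg(G)\le 2(t-1)^2k+t-1$, which a short calculation shows is strictly less than $2t^2k$ for all $t\ge 2$ and $k\ge 1$, yielding $\deg(G)/(2t^2)<\fw_1(G)$.

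The consequence for weakly sparse classes drops out by combining the two inequalities: a weakly sparse class admits a uniform $t$, so $\fw_1(\CC)<\infty$ and $\deg(\CC)<\infty$ become equivalent. I do not anticipate a serious obstacle; the reason the factor $2t^2$ appears is the use of the sharper $b$-wise form of Lemma~\ref{lem:near-twins} (with $b=t-1$) in place of the pairwise Corollary~\ref{cor:near-twins}, which is exactly what allows the subsequent neighborhood-intersection estimate to exploit $K_{t,t}$-freeness directly.
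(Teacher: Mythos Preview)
Your proposal is correct and follows essentially the same approach as the paper: the upper bound is cited identically, and for the lower bound both you and the paper apply Lemma~\ref{lem:near-twins} with $b=t-1$ to obtain $t$ mutual $2(t-1)k$-near-twins, then bound the size of their common neighborhood to force a $K_{t,t}$ unless the degree (equivalently, degeneracy) is small. The only cosmetic difference is that the paper phrases the degeneracy argument as ``every induced subgraph has a vertex of small degree'' while you pass explicitly to a subgraph of maximum minimum degree; the resulting numerical bounds differ by lower-order terms but both yield $\deg(G)<2t^2\fw_1(G)$.
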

\begin{proof}The second inequality is by Theorem~\ref{thm:copw-fw} and Theorem~\ref{thm:degeneracy}. To prove the first inequality, we show that $\deg(G)<2 kt^2$, where
 $k:=\fw_1(G)$.

  Recall that $G$ is $d$-degenerate if and only if every induced subgraph of $G$ contains a vertex of degree at most $d$.
Therefore, to prove $\deg(G)<2kt^2$
it is enough to show that 
$G$ contains some vertex of degree less than $2kt^2$
(since the same holds for every induced subgraph $H$ of $G$,
as $\fw_1(H)\le \fw_1(G)\le k$).

  Setting $b:=t-1$ in Lemma~\ref{lem:near-twins}
we get that $G$ contains a set $U$ of $t$ mutual $2k(t-1)$-near-twins.
Pick any $v\in U$. 
Then all $u\in U$ are $2k(t-1)$-near-twins of $v$, 
so $|N(v)\triangle N(u)|\le 2k(t-1)$ for all $u\in U$.

We show that $|N(v)|< 2k(t-1)^2+2t$. Suppose that $|N(v)|\ge 2k(t-1)^2+2t$.
Then the set $W:=\bigcap_{u\in U} N(u)$ has at least $2t$ elements, 
and so $W-U$ has at least $t$ elements.
As every $u\in U$ is adjacent to every $w\in W$ and $U$ and $W$ are disjoint, we have a copy of $K_{t,t}$ as a subgraph of $G$, a contradiction. Hence, $|N(v)|< 2k(t-1)^2+2t<2kt^2$.
\end{proof}




\subsection{Radius-$r$ flip-width and generalized coloring numbers}
\label{sec:adm}
We have seen that every weakly sparse class $\CC$ has bounded degeneracy if and only if $\fw_1(\CC)<\infty$.
It is known that a weakly sparse class 
$\CC$ has bounded clique-width  if and only if $\CC$ has bounded tree-width,
as clique-width and treewidth are functionally equivalent 
in weakly sparse classes~\cite{gurskiwanke}.
This proves the following.
\begin{corollary}\label{col:wsparse-treewidth}
A graph class $\CC$ has bounded treewidth if and only if 
$\CC$ is weakly sparse and 
 ${\fw_\infty(\CC)<\infty}$.
\end{corollary}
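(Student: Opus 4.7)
My plan is to handle the two implications separately, leveraging the characterization of $\fw_\infty$ in terms of rank-width/clique-width (Theorem~\ref{thm:cw}) together with the Gurski--Wanke equivalence of treewidth and clique-width on weakly sparse classes.

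For the forward implication, suppose $\CC$ has bounded treewidth. Then $\CC$ has bounded degeneracy (since $\deg(G) \le \tw(G)$), so in particular $\CC$ excludes some $K_{t,t}$ as a subgraph and is therefore weakly sparse. Moreover, by the Seymour--Thomas theorem stated in the introduction, $\copw_\infty(G) = \tw(G)+1$, so $\copw_\infty(\CC) < \infty$. Applying Theorem~\ref{thm:copw-fw} (or already the trivial inequality \eqref{eq:fw-cw-trivial}) with $r = \infty$, we get $\fw_\infty(\CC) < \infty$.

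For the backward implication, suppose $\CC$ is weakly sparse and $\fw_\infty(\CC) < \infty$. By Theorem~\ref{thm:cw}, bounded $\fw_\infty$ is equivalent to bounded rank-width, which in turn is functionally equivalent to bounded clique-width. Hence $\cw(\CC) < \infty$. Now invoke the Gurski--Wanke result cited just above the corollary: on any weakly sparse graph class, clique-width and treewidth are functionally equivalent. Combining bounded clique-width with weak sparseness yields bounded treewidth, as desired.

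Neither direction contains a genuine obstacle, since everything reduces to results already stated in the excerpt: Seymour--Thomas, Theorem~\ref{thm:copw-fw}, Theorem~\ref{thm:cw}, and Gurski--Wanke. The only mild care required is in the backward direction, namely to note explicitly the passage through rank-width/clique-width -- our hypothesis controls $\fw_\infty$, which via Theorem~\ref{thm:cw} controls $\rw$ (and hence $\cw$), and only then does weak sparseness plus Gurski--Wanke bring us back to treewidth.
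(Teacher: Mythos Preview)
Your proof is correct and follows essentially the same route as the paper: the backward implication is identical (Theorem~\ref{thm:cw} to get bounded clique-width, then Gurski--Wanke under weak sparseness to get bounded treewidth), and for the forward implication the paper implicitly uses the same Gurski--Wanke equivalence together with Theorem~\ref{thm:cw}, while you instead pass through $\copw_\infty$ via Seymour--Thomas and Theorem~\ref{thm:copw-fw}; both arguments are equally valid and equally short.
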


A general theme in structural graph theory is that a dense graph parameter  
is often functionally equivalent to its sparse counterpart in weakly sparse classes. 
We now show that for weakly sparse classes, 
 bounded flip-width is indeed equivalent to bounded expansion:
\begin{theorem}\label{thm:wsparse}A class $\CC$ has bounded expansion if and only if $\CC$ is weakly sparse and has bounded flip-width.
\end{theorem}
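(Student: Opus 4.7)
The forward direction is immediate from earlier results. If $\CC$ has bounded expansion, then $\tilde\nabla_0(\CC)<\infty$ forces $\CC$ to exclude some $K_{t,t}$, so $\CC$ is weakly sparse. Moreover, Corollary~\ref{cor:copw} gives $\copw_r(\CC)<\infty$ for each $r\in\N$, and combining this with Theorem~\ref{thm:copw-fw} yields $\fw_r(\CC)<\infty$.

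The substance is the backward direction. Assume $\CC$ is $K_{t,t}$-free and $\fw_r(\CC)<\infty$ for every $r$. I plan to bound $\adm_r(G)$ for every $r\in\N$ and $G\in\CC$, which by Fact~\ref{fact:wcol-adm} and Fact~\ref{fact:be-wcol} is equivalent to bounded expansion. The contrapositive I aim to prove is: in a $K_{t,t}$-free graph $G$ with $\adm_r(G)\ge d$, there is a hideout witnessing $\fw_{r'}(G)>k$ for some $r'=r'(r)$ and some $k=k(d,t)\to\infty$ as $d\to\infty$ (with $t$ fixed). This is precisely the missing converse to Theorem~\ref{thm:copw-fw}, which would fail for arbitrary graphs but is rescued by the $K_{t,t}$-free hypothesis.

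The strategy has two steps. First, \emph{induced promotion}: starting from a witness $U$ to $\adm_r(G)\ge d$ (each $v\in U$ admits $d$ internally-disjoint paths of length $\le r$ to $U\setminus \set{v}$), I extract a subset $U'\subset U$ that forms the set of principal vertices of an \emph{induced} $\le r'$-subdivision of a graph $H$ of minimum degree $\ell=d^{\Omega(1/t)}$, for some $r'=O(r)$. This is a Kühn--Osthus-style Ramsey argument in the spirit of sparsity theory: chord edges and unwanted crossings are eliminated at the cost of a polynomial loss controlled by $K_{t,t}$-freeness. Second, \emph{hideout verification}: I show that $U'$ is an $(r'+1,k,\ell/2)$-hideout in the sense of Definition~\ref{def:hideout}, provided $\ell > 2k^{O(t)}$. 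Given any $k$-flip $G'$ of $G$ with partition $\cal P$, the refinement of $\cal P|_{U'}$ according to $U'$-neighborhoods yields at most $k^{O(t)}$ classes that are pairwise homogeneous in $G$ (this is the same Sauer--Shelah-style refinement underlying Theorem~\ref{thm:copw-fw}). Within each such refined class the $k$-flip acts uniformly on adjacencies inside $U'$, so the original subdivision paths persist (modulo a uniform complementation) and witness $\ell/2$ paths of length $\le r'+1$ in $G'$ from almost every $v\in U'$ to other $U'$-vertices. Lemma~\ref{lem:hideouts} then gives $\fw_{r'+1}(G)>k$, contradicting $\fw_{r'+1}(\CC)<\infty$ once $d$, and hence $k$, is taken large enough.

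The main obstacle is the induced-promotion step: the minimum degree of the extracted induced shallow topological minor must depend polynomially on $d$, since the hideout width $k$ must grow with $d$ to reach a contradiction with $\fw_{r'+1}(\CC)<\infty$. A merely logarithmic dependence (which one would get from a naive Ramsey argument) would not suffice. The required extremal statement is in the spirit of Kühn--Osthus on topological minors in bipartite-free graphs, with the induced strengthening in the style of Dvořák and Nešetřil--Ossona de~Mendez; the depth blow-up from $r$ to $r'=O(r)$ is harmless because the admissibility/weak-coloring equivalence of Fact~\ref{fact:wcol-adm} is insensitive to constant-factor changes in the radius.
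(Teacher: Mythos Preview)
Your forward direction matches the paper. For the backward direction, the overall plan---find an induced shallow subdivision of a dense graph and use its principal vertices as a hideout---is also the paper's plan, but both of your steps have issues.

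For the \emph{induced promotion} step, what you describe is precisely the content of Dvořák's lemma (Lemma~\ref{lem:dvorak}, building on Kühn--Osthus): in a graph of bounded degeneracy, large $\tilde\nabla_r$ forces large $\tilde\nabla_t^e$ for some $t\le r$. The paper invokes this as a black box (via Corollary~\ref{cor:dvorak}) rather than re-deriving it from an admissibility witness; your proposed direct extraction with the claimed polynomial dependence on $d$ is plausible but would essentially be reproving Dvořák's result, and you have not done so.

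The real gap is in your \emph{hideout verification}. You argue that refining the flip partition $\cal P$ by $U'$-neighbourhoods makes the flip act uniformly ``inside $U'$''. But the subdivision paths you need have length $r'\ge 2$, so they pass through \emph{subdivision vertices} outside $U'$. A $k$-flip can destroy edges between consecutive subdivision vertices, and your refinement says nothing about those adjacencies. The claim that ``the original subdivision paths persist modulo a uniform complementation'' is false as stated: flipping a pair of parts that contain internal path vertices need not preserve any variant of the path.

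The paper handles this in Proposition~\ref{prop:subdivisions}. In an exact $(r{-}1)$-subdivision, the principal vertices together with their neighbours form a star forest; Lemma~\ref{lem:bipartite-flips} shows that after a $k$-flip, all but $k$ principal vertices remain adjacent to many leaves of \emph{some} star (possibly permuted). The remaining $r{-}1$ steps along the subdivided edges form a disjoint-paths structure, and Lemma~\ref{lem:path-flips} shows that a $k$-flip of $\ell$ disjoint length-$(r{-}1)$ paths still connects at least $\ell-(r{-}1)k$ targets to sources. Combining these, all but $k$ principal vertices still reach more than $k$ other principal vertices at distance $\le r$ in the flip, yielding an $(r,k,k)$-hideout. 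This matching/star-forest analysis is the missing machinery; your Sauer--Shelah-type refinement of $\cal P$ restricted to $U'$ does not substitute for it.

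Once Proposition~\ref{prop:subdivisions} is available, monotonicity of $\fw_r$ under induced subgraphs (Lemma~\ref{lem:hereditary}) finishes the argument without any use of $K_{t,t}$-freeness in the hideout step; that hypothesis enters only (via Theorem~\ref{thm:fw-deg}) to convert bounded $\fw_1$ into bounded degeneracy, which is what Dvořák's lemma needs.
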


We prove a more precise result, from which Theorem~\ref{thm:wsparse} follows.
\begin{theorem}\label{thm:adm-fw}For every $r\ge 1$ and graph $G$ we have
 \[\tilde\nabla_{r-1}(G)\le O(r\cdot \fw_r(G)\cdot \deg(G))^{36}.\]
\end{theorem}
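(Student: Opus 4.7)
The plan is to reduce to a bound on $\adm_r(G)$ and then prove that bound via a hideout construction. By Fact~\ref{fact:adm-nabla} we have $\tilde\nabla_{r-1}(G)<2\adm_r(G)$, so it suffices to show $\adm_r(G)\le O(r\cdot k\cdot D)^{c}$ with $k:=\fw_r(G)$, $D:=\deg(G)$ and some constant $c\le 35$. I would argue by contradiction using the hideout machinery: suppose $\adm_r(G)>d$ for a value $d$ just above the target polynomial in $rkD$. By the standard dual characterisation of admissibility, there is a set $U\subseteq V(G)$ such that every $u\in U$ is the start of a family $\Pi_u$ of $d$ internally vertex-disjoint paths of length $\le r$ ending in $U\setminus\{u\}$. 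The goal is then to show that $U$ (or a large subset after pruning) is an $(r,k,d')$-hideout for an appropriate $d'<|U|$; by Lemma~\ref{lem:hideouts} this forces $\fw_r(G)>k$, a contradiction.

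\textbf{Core step.} Fix an arbitrary $k$-flip $G'$ of $G$, given by a partition $\cal P$ of $V(G)$ into at most $k$ parts together with a set $F$ of flipped pairs. For each $u\in U$ I would first classify the paths in $\Pi_u$ by their \emph{colour pattern}, i.e.\ the sequence of $\cal P$-classes visited; since the paths have length $\le r$ there are at most $k^{r+1}$ patterns and pigeonhole produces a pattern $\sigma_u$ shared by at least $d/k^{r+1}$ paths. If no transition of $\sigma_u$ lies in $F$, then each of these paths is intact in $G'$ and yields that many distinct endpoints in $B^r_{G'}(u)\cap U$. Otherwise, at a flipped transition $(c_i,c_{i+1})\in F$, the original edge $v_iv_{i+1}$ disappears in $G'$ but the entire bipartite graph between $P_{c_i}$ and $P_{c_{i+1}}$ is present in $G'$ minus the $G$-edges. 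Here I would exploit degeneracy: after restricting $U$ to a subset consisting of vertices that are ``well-behaved'' with respect to a degeneracy ordering, any fixed vertex has at most $D$ $G$-neighbours inside any given part and hence is $G'$-adjacent to all but $D$ vertices on the other side of the flipped pair. Rerouting each flipped transition along the pattern $\sigma_u$ in this way shows that $|B^r_{G'}(u)\cap U|\ge d/(kD)^{O(r)}$, and an outer pigeonhole over $u\in U$ then certifies $U$ as a hideout.

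\textbf{The main obstacle.} The principal difficulty lies in performing the rerouting at the flipped transitions simultaneously, while maintaining the vertex-disjointness that underlies the counting: naively, shortcuts chosen for different source vertices $u$ or different paths in $\Pi_u$ may collide in the shared flipped biclique, collapsing the count of distinct endpoints reached in $G'$. I would address this by successive pruning, removing at each step a subset of $U$ of size at most $O(kD)$ (bounded via Lemma~\ref{lem:near-twins} or a degeneracy-ordering argument) outside of which the rerouting steps for different paths use disjoint ``repair vertices''. The resulting cumulative loss is polynomial in $rkD$, but since the process is iterated over at most $r$ transitions and involves at least four pigeonholes (over patterns, over good-versus-bad transition profiles, over $\cal P$-classes, and finally over $u\in U$), the exponent on $rkD$ grows to the value $36$ claimed in the statement, leaving ample slack for each individual bookkeeping step.
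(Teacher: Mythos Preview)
Your approach has a genuine gap that breaks the stated bound. In the ``Core step'' you pigeonhole over colour patterns of the paths in $\Pi_u$: with $k$ parts and paths of length at most $r$, there are up to $k^{r+1}$ patterns, so you only retain $d/k^{r+1}$ paths of a common pattern. For this to be nontrivial you already need $d\ge k^{r+1}$, so the best your argument could yield is a bound of the form $\adm_r(G)\le (kD)^{O(r)}$, with an exponent that grows with $r$. The theorem, however, asserts a \emph{fixed} exponent~$36$, independent of $r$; no amount of downstream bookkeeping can repair an exponential-in-$r$ loss incurred at this step. (Your final paragraph asserts the exponent ``grows to the value~$36$'', but the colour-pattern pigeonhole alone already exceeds this.)

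A second, independent issue: the rerouting at flipped transitions relies on the claim that ``any fixed vertex has at most $D$ $G$-neighbours inside any given part''. Degeneracy $D$ only gives you an ordering in which each vertex has at most $D$ earlier neighbours; it does not bound the degree of every vertex, so a single vertex on the ``wrong'' side of a flipped pair may still have many $G$-neighbours in the opposite part, killing the biclique shortcut. The vague ``successive pruning'' does not address this.

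The paper avoids both problems by not attacking the admissibility obstruction directly. It invokes \Dvorak's Lemma~\ref{lem:dvorak} (via Corollary~\ref{cor:dvorak}) to reduce bounding $\tilde\nabla_{r-1}(G)$ to bounding $\tilde\nabla^e_t(G)$ for $t<r$, i.e., the density of \emph{exact} $t$-subdivisions occurring as \emph{induced} subgraphs. For those, Proposition~\ref{prop:subdivisions} shows the principal vertices form a $(r,k,k)$-hideout whenever the base graph has minimum degree $\ge 2rk$; the exact, induced structure (all branch paths of the same length, clean star-forest at the principal vertices) is precisely what makes the hideout argument go through with only a linear factor in $rk$, rather than~$k^{r}$. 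The exponent $36$ then comes from the exponent $12$ in \Dvorak's lemma composed with the cubic in Fact~\ref{fact:adm-nabla}, not from any iteration over~$r$.
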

We first show how Theorem~\ref{thm:adm-fw} implies Theorem~\ref{thm:wsparse}.
\begin{proof}[Proof of Theorem~\ref{thm:wsparse}]
  For the forward direction, assume $\CC$ has bounded expansion. 
  Then $\CC$ has bounded flip-width by Theorem~\ref{thm:copw-fw}. Also, as remarked before Theorem~\ref{thm:copw-fw}, every class with bounded expansion is weakly sparse.

   Conversely, suppose $\CC$ is weakly sparse
   and has bounded flip-width.
   In particular, $\fw_1(\CC)<\infty$, and by Theorem~\ref{thm:fw-deg}, $\CC$ has degeneracy bounded by some constant $d$. 
   By Theorem~\ref{thm:adm-fw}, for all $r\ge 1$ we have that $\tilde\nabla_{r-1}(\CC)\le 
   O(r\cdot \fw_r(\CC)\cdot d)^{36}<\infty$.
   Hence, $\CC$ has bounded expansion.
\end{proof}

In the remainder of Section~\ref{sec:adm} we prove Theorem~\ref{thm:adm-fw}.
Our proof relies on a result of \Dvorak~\cite{dvorakInducedSubdivisions}, extending a result of K\"uhn and Osthus~\cite{kuhn-osthus},
which we now recall.

An \emph{exact $r$-subdivision} of a graph $G$
is the graph obtained by replacing every edge of $G$ 
by a path of length $r+1$.
If every edge is replaced by a path of length at most $r+1$,
the resulting graph is an \emph{${\le} r$-subdivision} of $G$.
For a graph $G$, let $\tilde\nabla^e_r(G)$ denote the maximum average degree of all graphs $H$ whose \emph{exact} $r$-subdivision is an \emph{induced} subgraph of $G$.

The following is \cite[Lemma 9]{dvorakInducedSubdivisions}.
\begin{lemma}
  \label{lem:dvorak}
  For every $r,k,d\ge 1$ there is a number $s=s(r,k,d)\le O(rdk)^{12}$
  such that for every graph $G$,
  if $\tilde\nabla_0(G)\le d$ and $\tilde\nabla_r^e(G)<k$,
  then $\tilde\nabla_r(G)<\tilde\nabla_{r-1}(G)+s$.
\end{lemma}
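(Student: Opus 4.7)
The plan is to follow \Dvorak's strategy from \cite{dvorakInducedSubdivisions}: take a shallow topological minor $H$ witnessing $\tilde\nabla_r(G)$, split its edges according to whether their subdivision paths in $G$ are ``short'' (length $\le r$) or ``long'' (length exactly $r+1$), handle the short part using $\tilde\nabla_{r-1}(G)$, and convert the long part into an \emph{induced} exact $r$-subdivision by deleting shortcuts, losing only a polynomial factor controlled by $d$ and $r$.

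More concretely, first fix a graph $H$ whose $\le r$-subdivision is a subgraph of $G$ with average degree $\tilde\nabla_r(G)$, and for each edge $e\in E(H)$ let $\ell_e\in\set{1,\ldots,r+1}$ be the length of its subdivision path. Split $H$ into $H_{\text{short}}$ and $H_{\text{long}}$ according to $\ell_e\le r$ or $\ell_e=r+1$. Since $V(H_{\text{short}})=V(H_{\text{long}})=V(H)$, the average degrees satisfy $\bar d(H)\le \bar d(H_{\text{short}})+\bar d(H_{\text{long}})$. The edges of $H_{\text{short}}$ give a $\le(r-1)$-subdivision inside $G$, so $\bar d(H_{\text{short}})\le \tilde\nabla_{r-1}(G)$, which already accounts for the $\tilde\nabla_{r-1}(G)$ term in the conclusion.

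It remains to bound $\bar d(H_{\text{long}})$ by some $s=s(r,d,k)$. The exact-$r$-subdivision $S\subset G$ of $H_{\text{long}}$ is not necessarily induced: shortcut edges of $G$ may connect two vertices lying on the $r|E(H_{\text{long}})|$ many internal subdivision vertices (or on the principal vertices). The cleanup step, which is the technical heart of the argument, proceeds by deleting an edge $e\in E(H_{\text{long}})$ whenever its subdivision path meets a shortcut. Because $\tilde\nabla_0(G)\le d$, the graph $G$ is $O(d)$-degenerate, so the number of shortcut edges among the vertices of $S$ is at most $O(d)\cdot(|V(H)|+r|E(H_{\text{long}})|)$. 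Charging each shortcut to the one or two edges of $H_{\text{long}}$ whose paths it touches, a greedy deletion, iterated over the $O(r)$ possible positions of each shortcut endpoint along its path, produces a subgraph $H''\subseteq H_{\text{long}}$ whose exact $r$-subdivision is an \emph{induced} subgraph of $G$, and whose average degree has dropped from $\bar d(H_{\text{long}})$ by at most a factor $(rd)^{O(1)}$. Since $\tilde\nabla_r^e(G)<k$, we get $\bar d(H'')<k$, hence $\bar d(H_{\text{long}})\le (rdk)^{O(1)}$.

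The hard part is the shortcut-removal step: one has to choose the right random/greedy order of deletions so that (a) the resulting graph really has an induced subdivision (every shortcut killed) and (b) the average degree survives up to the claimed polynomial factor. The exponent $12$ arises from combining the standard ``average-to-minimum degree'' reduction with several layered rounds of greedy cleanup; no one step is deep, but the bookkeeping is delicate, and this is where I would need to follow \Dvorak's argument most carefully.
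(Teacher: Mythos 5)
First, note that the paper itself does not prove this lemma: it is quoted verbatim as Lemma~9 of \cite{dvorakInducedSubdivisions}, so there is no in-paper argument to compare against. Your high-level reconstruction does match the strategy of that external proof: split the edges of a witness $H$ for $\tilde\nabla_r(G)$ according to whether their subdivision paths have length at most $r$ or exactly $r+1$, absorb the short part into $\tilde\nabla_{r-1}(G)$ (this step is correct as you state it, since average degrees over the common vertex set $V(H)$ add), and bound the long part by turning a non-induced exact $r$-subdivision into an induced one using the degeneracy bound $\tilde\nabla_0(G)\le d$ together with $\tilde\nabla_r^e(G)<k$.

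However, the cleanup step --- which you yourself flag as the technical heart --- is not actually carried out, and the mechanism you describe does not work as stated. If $S$ is the exact $r$-subdivision of $H_{\text{long}}$ with $n=|V(H)|$ and $m=|E(H_{\text{long}})|$, then degeneracy gives up to $O(d)\cdot(n+rm)$ shortcut edges, and the term $O(drm)$ already exceeds $m$ for all $d,r\ge 1$. Hence ``delete every edge of $H_{\text{long}}$ whose path meets a shortcut'' can delete \emph{all} of $H_{\text{long}}$, and no iteration over the $O(r)$ positions of shortcut endpoints rescues this. What is actually needed is a selection argument: for instance, the conflict (multi)graph on the $m$ subdivision paths, with one conflict per shortcut joining two distinct paths, has average degree $O(dr)$, so a random subsample of density $\Theta(1/(dr))$ followed by one deletion per surviving conflict retains $\Omega(m/(dr))$ pairwise shortcut-free paths; separately one must handle chords inside a single path (which should be re-routed, shortening the path and moving that edge to the short class rather than deleting it) and shortcuts incident to principal vertices, and then re-run the average-to-minimum-degree reduction so that the surviving edge set still induces a graph of large average degree on its principal vertices. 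Without these ingredients the claimed loss factor $(rdk)^{O(1)}$ is an assertion, not a proof, so the proposal has a genuine gap exactly where you said you would need to consult \cite{dvorakInducedSubdivisions}.
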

An easy induction on $r$ yields the following.
\begin{corollary}\label{cor:dvorak}
  For every $r,d\ge 1$ and $d$-degenerate graph $G$,
  \[\tilde\nabla_r(G)\le O(dr \cdot \sum_{t=1}^r \tilde\nabla_t^e(G))^{12}.\]
\end{corollary}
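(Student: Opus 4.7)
The plan is a straightforward induction on $r \ge 1$, using Lemma~\ref{lem:dvorak} as a black box at each step. I will use the preliminary observation that if $G$ is $d$-degenerate, then $\tilde\nabla_0(G) \le 2d$ (by the inequality $\deg(G) \le \tilde\nabla_0(G) \le 2\deg(G)$ noted just before Fact~\ref{fact:adm-nabla}), so Lemma~\ref{lem:dvorak} can be applied throughout the induction with $d$ replaced by $2d$; the extra factor of $2$ is absorbed by the $O(\cdot)$. Throughout, let $S_r := \sum_{t=1}^r \tilde\nabla_t^e(G)$; note that $S_r$ is nondecreasing in $r$ and each $\tilde\nabla_t^e(G) \le S_r$.

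For the base case $r=1$, apply Lemma~\ref{lem:dvorak} with $k := \tilde\nabla_1^e(G) + 1$. This gives $\tilde\nabla_1(G) < \tilde\nabla_0(G) + s(1,k,2d) \le 2d + O(d \cdot \tilde\nabla_1^e(G))^{12}$, which is bounded by $O(d \cdot S_1)^{12}$ as required (absorbing the additive $2d$ into the $O$ since the bound is trivial if $S_1 = 0$, and otherwise $S_1 \ge 1$).

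For the inductive step, assume $\tilde\nabla_{r-1}(G) \le O(d(r-1) \cdot S_{r-1})^{12}$. Apply Lemma~\ref{lem:dvorak} with $k := \tilde\nabla_r^e(G) + 1$ to obtain
\[
\tilde\nabla_r(G) \;<\; \tilde\nabla_{r-1}(G) + s(r,k,2d) \;\le\; O(d(r-1)\cdot S_{r-1})^{12} + O(rd\cdot \tilde\nabla_r^e(G))^{12}.
\]
Since $S_{r-1} \le S_r$ and $\tilde\nabla_r^e(G) \le S_r$, each of the two terms on the right is at most $O(dr \cdot S_r)^{12}$, and their sum is therefore also $O(dr \cdot S_r)^{12}$. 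This closes the induction.

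The argument is essentially mechanical; the only mild subtlety is keeping track of the discrepancy between $\tilde\nabla_0(G)$ and $\deg(G)$, and verifying that the 12th-power bound absorbs both the inductive term (with index $r-1$) and the new term contributed by Lemma~\ref{lem:dvorak}, which it does because $(a+b)^{12} \le 2^{12}(a^{12} + b^{12})$ and we are free to replace $S_{r-1}$ by $S_r$.
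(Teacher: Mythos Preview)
Your approach is exactly the paper's: the paper states only ``An easy induction on $r$ yields the following,'' and you carry out precisely that induction using Lemma~\ref{lem:dvorak} at each step.

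One small slip to fix: in your closing remark you invoke $(a+b)^{12}\le 2^{12}(a^{12}+b^{12})$, but that inequality goes the wrong way for your purpose. What you actually need is the reverse, $a^{12}+b^{12}\le (a+b)^{12}$ for $a,b\ge 0$, combined with the exact identity $S_{r-1}+\tilde\nabla_r^e(G)=S_r$ (not merely $S_{r-1}\le S_r$ and $\tilde\nabla_r^e(G)\le S_r$). With $C\ge C_0$ (the constant from Lemma~\ref{lem:dvorak}) this gives
\[
(C\,d(r-1)\,S_{r-1})^{12}+(C_0\,rd\,\tilde\nabla_r^e(G))^{12}\le (C\,dr\,S_{r-1}+C\,dr\,\tilde\nabla_r^e(G))^{12}=(C\,dr\,S_r)^{12},
\]
so the same constant $C$ propagates through the induction. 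As you wrote it (``each term is $O(dr\cdot S_r)^{12}$, hence so is their sum''), the implicit constant could a priori pick up a factor at every step and blow up with $r$.
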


It is well known that every graph of average degree at least $d$ contains a subgraph with minimum degree at least $d/2$. The following proposition yields Theorem~\ref{thm:adm-fw}, using Corollary~\ref{cor:dvorak} and Fact~\ref{fact:adm-nabla}.

\begin{proposition}[\appmark]\label{prop:subdivisions}
  Fix $r\ge 2,k\ge 1$. 
  Let $G$ be the exact $(r-1)$-subdivision of some graph $H$ 
  with minimum degree at least $2rk$.
  Then $\fw_r(G)>k$.
\end{proposition}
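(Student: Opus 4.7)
The plan is to exhibit a hideout in $G$ and invoke Lemma~\ref{lem:hideouts}. The natural candidate is $U := V(H) \subseteq V(G)$, the set of ``principal'' vertices inherited from $H$. Since every vertex of $H$ has degree at least $2rk$, we have $|V(H)| \ge 2rk+1$, so the cardinality condition $|U| > d$ is easy to satisfy for any reasonable choice of $d$; the work lies in verifying that, for every $k$-flip $G'$ of $G$, at most $d$ vertices $v \in U$ satisfy $|B^r_{G'}(v) \cap U| \le d$.

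Fix such a $k$-flip, with partition $\cal P = \{P_1, \ldots, P_k\}$ of $V(G)$ and symmetric flip set $F$; write $\sigma(v)$ for the index of the part containing $v$, and $F_v := \{b : \{\sigma(v), b\} \in F\}$. For each principal vertex $v \in U$ I would argue through the following dichotomy. In the \emph{shortcut case}, some $b \in F_v$ satisfies $|P_b \cap U| > d$: then the flip between $P_{\sigma(v)}$ and $P_b$ adds every non-$G$-edge between these parts, and since $v$'s only $G$-neighbors are the degree-two subdivision vertices incident to $v$ (none of which lies in $U$), every principal vertex of $P_b$ becomes $G'$-adjacent to $v$, giving $|B^1_{G'}(v) \cap U| \ge |P_b \cap U| > d$. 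An analogous argument handles $\{\sigma(v),\sigma(v)\} \in F$ with $|P_{\sigma(v)} \cap U| > d$. In the complementary \emph{sparse-flip case}, every part flipped with $\sigma(v)$ contains at most $d$ principal vertices; then I would argue using the preserved subdivision paths of length $r$ from $v$ to its $\ge 2rk$ $H$-neighbors. A subdivision path $v = u_0, u_1, \ldots, u_r = w$ is preserved in $G'$ exactly when none of its $r$ consecutive part-pairs $\{\sigma(u_j), \sigma(u_{j+1})\}$ belongs to $F$; a layered pigeonhole over the path ``types'' in $\set{1,\ldots,k}^{r+1}$ should extract at least $d+1$ preserved paths to distinct $H$-neighbors, exploiting that $2rk$ paths cannot all be broken by a flip whose part-pairs around $\sigma(v)$ concern only parts that are ``light'' in $U$.

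The hard part is closing the pigeonhole in the sparse-flip case, since a single flipped pair $\{a,b\}$ can destroy many subdivision paths simultaneously (any whose type contains $(a,b)$ consecutively), and one must ensure that the shortcut paths through subdivision vertices adjacent to $v$---which themselves become adjacent to many vertices in flipped parts---do not spoil the count. A workable $d$ is expected to be of order $\Theta(rk)$, so that the hypothesis $|N_H(v)| \ge 2rk$ leaves enough slack after flips have broken up to $O(|F|\cdot k^{r-1})$ path types; the sparse-flip hypothesis is then leveraged to bound the number of such types that are incident to $\sigma(v)$. Combining both cases, the set of bad principal vertices should be confined to the union of parts ``light'' in $U$ and have size at most $d$, completing the verification that $U$ is an $(r,k,d)$-hideout and hence $\fw_r(G) > k$ by Lemma~\ref{lem:hideouts}.
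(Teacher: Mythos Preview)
Your overall plan---take $U=V(H)$ and show it is an $(r,k,d)$-hideout---is exactly right, and the ``shortcut case'' is essentially sound. The genuine gap is in the ``sparse-flip case.'' Your hypothesis there only constrains parts that are flipped with $\sigma(v)$; it says nothing about flips between parts occurring in the interior of the subdivision paths. A single flipped pair $\{a,b\}$ with $a,b\neq\sigma(v)$ can still kill every subdivision path whose type contains $(a,b)$ as a consecutive pair, and there are $k^{r+1}$ path types, so no pigeonhole from the $2rk$ outgoing paths can work against exponentially many types. You flag this yourself, but the fix you suggest (using sparse-flip to bound types incident to $\sigma(v)$) only controls the first edge of each path and leaves the rest uncontrolled.

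The paper avoids this by decoupling the first edge from the rest of the path. First, it analyses the bipartite graph between principals $P$ and their $G$-neighbors $R=N_G(P)$: a structural lemma shows that after any $k$-flip, all but $k$ principals $v$ are adjacent in $G'$ to at least $rk$ vertices of $N_G(w)$ for \emph{some} principal $w$---crucially, $w$ may differ from $v$ (this is the bijection $\pi$, and it is exactly the phenomenon your shortcut case hints at but does not exploit in the complementary case). Second, it treats the remaining $r-1$ layers as a disjoint union of $rk$ paths and proves, layer by layer, that a $k$-flip of a perfect matching retains a matching missing at most $k$ vertices; composing across $r-1$ layers loses at most $(r-1)k$ endpoints, so at least $k$ of the $rk$ paths survive to distinct principals. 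This gives an $(r,k,k)$-hideout directly. The two key ideas you are missing are: allow $v$ to reach the neighbors of a \emph{different} principal after the flip, and analyse path survival one matching-layer at a time rather than by whole-path types.
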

Proposition~\ref{prop:subdivisions} is proved in Appendix~\ref{app:exact},
by showing that the vertices of $G$ that correspond to the vertices of $H$ 
(those of degree larger than $2$) form a $(r,k,k)$-hideout in $G$.
We now show how
 this proves Theorem~\ref{thm:adm-fw}.
\begin{proof}[Proof of Theorem~\ref{thm:adm-fw}]
  Denote $k=\fw_r(G)$.
  Since $\fw_r(G)$ is monotone in $r$, by Proposition~\ref{prop:subdivisions}, for all $t<r$, the graph $G$ does not contain an induced $t$-subdivision of a graph of minimum degree at least $2rk$.
  Hence, $\tilde\nabla^e_t(G)\le 4rk$ for all $t< r$.

  If $G$ is $d$-degenerate then by Corollary~\ref{cor:dvorak} we have:
  \[\tilde\nabla_{r-1}(G)\le
  O(dr\sum_{1\le t<r}\tilde\nabla_t^e(G))^{12}\le  O(dr^3k)^{12}\le O(drk)^{36},\]%
  as required.
\end{proof}
 Proposition~\ref{prop:subdivisions} easily implies the following corollary (see \focs[{\cite{flip-width-arxiv}}]{Appendix~\ref{app:2vc}} for a proof), 
yielding inequality~\eqref{eq:2vcdim} in Theorem~\ref{thm:vcdim}.
\begin{corollary}[\appmark]\label{cor:2vcdim}
  If $G$ is the exact $1$-subdivision of an $n$-clique,
  then $\fw_2(G)>(n-1)/4$. Furthermore, for every graph $G$, $\TVCdim(G)\le 8\fw_2(G)+2$.
\end{corollary}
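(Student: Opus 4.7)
The first assertion follows immediately from Proposition~\ref{prop:subdivisions}. Setting $r = 2$ and $H = K_n$, the minimum degree of $K_n$ is $n - 1$, and for $k = \lfloor (n - 1) / 4 \rfloor$ we have $n - 1 \geq 4 k = 2 r k$; the proposition therefore yields $\fw_2(G) > k$, whence $\fw_2(G) > (n - 1)/4$.

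For the second assertion, the plan is to show that a 2VC-dim witness of size $m$ is a $(2, k, k)$-hideout in $G$ for $k$ roughly $(m - 2)/8$; Lemma~\ref{lem:hideouts} then gives $\fw_2(G) > k$, which rearranges to $\TVCdim(G) \leq 8 \fw_2(G) + 2$. Fix $X \subseteq V(G)$ of size $m = \TVCdim(G)$ and, for each pair $\{a, b\} \subseteq X$, the designated vertex $c_{ab} \in V(G) \setminus X$ with $N_G(c_{ab}) \cap X = \{a, b\}$; distinctness of the $c_{ab}$ is immediate because $c_{ab}$ determines the pair $\{a, b\}$ it serves.

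To verify the hideout property, consider an arbitrary $k$-flip $G'$ of $G$ given by a partition $\mathcal{P}$ of $V(G)$ with at most $k$ parts and a symmetric flipped-pair set $F$, and write $\psi$ for the map sending each vertex to its part index and $F_i = \{j : (i, j) \in F\}$. The designated path $v - c_{vw} - w$ survives in $G'$ exactly when $\psi(c_{vw}) \notin F_{\psi(v)} \cup F_{\psi(w)}$. A vertex $v$ with $|B^2_{G'}(v) \cap X| \leq k$ must simultaneously satisfy (a) $v$ has at most $k$ direct $X$-neighbors in $G'$ and (b) at most $k$ of the designated paths $\{v - c_{vw} - w\}_{w \in X \setminus v}$ survive in $G'$. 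I would combine these constraints via a double count over the subdivision vertices $c_{ab}$, using that each $v \in X$ is endpoint of exactly $m - 1$ such vertices: (b), applied to more than $k$ vertices at once, forces a large number of ordered pairs $(v, w)$ for which $\psi(c_{vw}) \in F_{\psi(v)}$, which in turn forces the partition to concentrate $X$-mass in parts flipped with $\psi(v)$; but this concentration contradicts (a) for these same vertices.

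The main obstacle is that $G$ may carry arbitrary additional edges within $X$ or within $\{c_{ab}\}$ that have no analogue in the exact-subdivision setting of Proposition~\ref{prop:subdivisions}. These edges do not affect the survival of the designated length-two paths, but they modify the direct-neighborhood count in condition (a) and so require a more delicate case analysis, split according to whether a bad vertex's part $P_i$ has large or small flipped $X$-mass $|X \cap \bigcup_{j \in F_i} P_j|$. Executing this split uniformly across all $k$-flips is where the constant $8$ in the bound arises, in place of the factor $4$ that governs the first assertion.
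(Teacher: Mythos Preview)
Your argument for the first assertion is correct and matches the paper.

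For the second assertion, however, you have not given a proof: you correctly identify the obstacle (extra edges within $X$ and among the $c_{ab}$ destroy the exact-subdivision structure needed for Proposition~\ref{prop:subdivisions}), but the ``double count'' and the ``case analysis split according to flipped $X$-mass'' are never carried out. As stated, it is not clear that the witnessing set $X$ is a $(2,k,k)$-hideout in $G$ at all; the extra edges affect both the direct neighborhoods in condition~(a) and the $2$-balls in $G'$, and your sketch does not show how these effects cancel to give a uniform bound over all $k$-flips. The constant $8$ is asserted rather than derived.

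The paper sidesteps this difficulty entirely by passing to the bipartite graph $G[V,V]$ (two disjoint copies of $V(G)$, with edges given by $E(G)$). If $\TVCdim(G)\ge m$, take the witness $X$ on one side and the $c_{ab}$ on the other: since $G[V,V]$ is bipartite, the subgraph induced on $X\cup\{c_{ab}\}$ has \emph{no} edges within $X$ and none among the $c_{ab}$, so it is exactly the $1$-subdivision of $K_m$. The first assertion now applies directly to give $\fw_2(G[V,V])>(m-1)/4$, and the bipartite flip-width lemmas (Lemmas~\ref{lem:fw-bfw} and~\ref{lem:fw1-bip}) yield $\fw_2(G)\ge \bfw_2(G[V,V])\ge \fw_2(G[V,V])/2>(m-1)/8$, which rearranges to the claimed bound. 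The point is that the bipartite double cover cleanly removes precisely the extra edges you were struggling with, at the cost of a factor~$2$.
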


\medskip

Hence, weakly sparse classes of bounded flip-width,
 being exactly the classes with bounded expansion, 
 are by know very well-understood and characterized in multiple ways. For instance, the model checking problem for first-order logic is fixed-parameter tractable for such classes, by the result of \Dvorak, \Kral, and Thomas~\cite{DvorakKT13-journal}.

 \section{Flip-width of ordered graphs and twin-width}\label{sec:tww}
 Let us return to dense graph classes, which are our main focus.
 As follows from Theorem~\ref{thm:cw}, radius-$\infty$ flip-width is functionally equivalent to clique-width, so classes of bounded clique-width are examples of dense graph classes of bounded flip-width.
In this section we show that classes of bounded twin-width have bounded flip-width,
but the converse does not hold.
To characterize twin-width in terms of flip-width, 
we study flip-width of graphs equipped with a total order.
We start with recalling the definition of twin-width.

 An \emph{uncontraction sequence} of a graph $G$ is a sequence $\cal P_1,\ldots,\cal P_n$ of partitions of $V(G)$
  that starts with the partition $\cal P_1$ with one part, ends with the partition of $V(G)$ into singletons,
 and such that every partition $\cal P_{i+1}$, for $i<n$, is obtained from the previous partition $\cal P_i$ by splitting one of the parts into two.
 The \emph{red graph} of a partition $\cal P$ is the graph whose nodes are the parts of $\cal P$, with \emph{red edges} connecting two distinct nodes $A,B\in\cal P$ if $A$ and $B$ are not homogeneous in $G$.
 A graph has twin-width at most $d$ if it has an uncontraction sequence such that at every time $i$, the red graph of the partition $\cal P_i$ has maximum degree at most $d$.

First, we prove that $\fw_r(G)$ is bounded in terms of $r$ and the twin-width $\tww(G)$ of~$G$. 
For a graph $G$, define the \emph{shatter function} of $G$, denoted $\pi_G\from\N\to\N$, as follows:
$$\pi_G(n):=\max_{X\subset V(G), |X|\le n}|\setof{N(v)\cap X}{v\in V(G)}|.$$

\begin{theorem}[\appmark]\label{thm:btww}Fix $r\in\N$. For every graph $G$ of twin-width $d$ we have:
  \[\fw_r(G)\le \pi_G(d^{O(r)})\le 2^d\cdot d^{O(r)}.\]
   In particular, every class of bounded twin-width has bounded flip-width.
\end{theorem}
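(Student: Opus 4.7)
My plan is to translate the uncontraction sequence witnessing $\tww(G)\le d$ into a winning strategy for the flipper. The central insight is that in any partition $\cal P_t$ of the sequence, every part has at most $d$ ``complicated'' red neighbours; all other parts are homogeneous to it and hence can be trivialised by a single flip between appropriate sets of vertices.

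The first step is a key lemma. For any partition $\cal P$ with red-degree $\le d$, and any union $B$ of $m$ parts of $\cal P$, there is a flip $G'$ of $G$ of width at most $m+\pi_G(m)$ such that each $\cal P$-part inside $B$ is a union of connected components of $G'$. To construct $G'$, pick one representative $s_P\in P$ from each of the $m$ inside parts, forming a set $S$ of size $m$. Since every outside part is homogeneous to every inside part, an outside vertex $w$'s adjacency to $B$ is fully determined by its trace $N(w)\cap S$, giving at most $\pi_G(m)$ classes of outside vertices. Flipping each outside class against the inside parts to which it is complete in $G$, together with pairwise-flipping inside parts that are complete to each other, disconnects everything outside each inside part from it.

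The flipper's strategy then proceeds by ``zoom-in'' along the sequence. It maintains a current level $t$ and a part $A\in\cal P_t$ containing the runner. At each round $i$, the flipper applies the key lemma to $B$, the union of parts at red-distance $\le r$ from $A$ in $\cal P_t$. Because the red graph has max degree $\le d$, $B$ comprises at most $1+d+\cdots+d^r = d^{O(r)}$ parts, so the announced flip $G_i$ has width at most $d^{O(r)}+\pi_G(d^{O(r)})$. By induction, the previous region $B_{i-1}$ was a union of connected components of $G_{i-1}$, so the runner's new position $v_i$ lies in some $\cal P_t$-part $A'\subseteq B_{i-1}$. The flipper then advances $t$ to the smallest $t'>t$ at which $A'$ is strictly refined in $\cal P_{t'}$ and updates $A$ to the runner's strictly smaller new part.

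The potential $|A|$ strictly decreases with each level-advance, so eventually $|A|=1$. At the terminal level $t=n$ every part is a singleton and the red graph is edgeless, so the corresponding flip $G_i$ completely isolates $v_i$ and the flipper wins. The final bound $\pi_G(d^{O(r)})\le 2^d\cdot d^{O(r)}$ follows from the linear-in-$|S|$ neighbourhood-complexity bound for graphs of twin-width $d$, a known structural consequence of the contraction sequence. The main obstacle is the synchronisation across rounds: the flip $G_i$ is committed before $v_i$ is revealed and hence cannot be tailored to $v_i$. The resolution is that one needs only \emph{eventual} isolation (not isolation at every round), which the monotone shrinking of $|A|$ guarantees after at most $|V(G)|$ level-advances.
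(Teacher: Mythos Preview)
Your key lemma is false as stated. If two inside parts $P,Q\subset B$ are joined by a red edge in $\cal P$, then $P$ and $Q$ are neither complete nor anti-complete in $G$, so no flip that keeps $P$ and $Q$ as single blocks of the partition can remove all $P$--$Q$ edges. Concretely, take $G$ a path $v_1v_2v_3v_4$ with $\cal P=\{\{v_1,v_2\},\{v_3,v_4\}\}$: the two parts are red-adjacent, your construction performs no flip at all, and $\{v_1,v_2\}$ is not a union of components of $G'=G$. Your entire ``potential $|A|$ decreases'' argument rests on the runner being confined to a single $\cal P_t$-part after moving in $G_{i-1}$, which in turn rests on this lemma; once the lemma fails, the runner can drift across red edges within $B$ and the monotone shrinking is lost.

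The paper's proof starts from the same ingredients (uncontraction sequence, red-balls, $\pi_G$-bound on outside classes) but never attempts to isolate individual parts. Instead it flips every \emph{complete} pair of parts in the working partition; after that, the only edges between distinct parts are red edges. Hence a path of length $r$ in the flipped graph crosses at most $r$ red edges and therefore stays inside the red-ball of radius $r$ around its start. The invariant maintained is $A_i\subset\bigcup B_r^i(c_i)$ (a red-ball, not a single part), and one level of the sequence is consumed per round; a short claim relates $B_r^{i-1}(c_{i-1})$ to a red-ball around a set $\cal F$ of size $O(d)$ in $\cal P_i$, which is what lets the flipper commit $G_i$ before seeing $c_i$. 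This also explains why the inside region has radius $2r$ rather than $r$: the extra slack is needed for the inductive step showing $B_r^{G_i}(c_i)\subset\bigcup B_r^i(c_i)$.
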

In the proof, the flipper uses an uncontraction sequence in order to vanquish the runner by constraining them, in round $i$ to some ball of radius $r$ in the red-graph of the partition~$\cal P_{i}$. In round $i=n$, as every ball of radius $r$ in the red graph of $\cal P_n$ comprises a single vertex of $G$, the runner is trapped.
The second inequality in Theorem~\ref{thm:btww} 
follows immediately from the upper bound~\cite{tww-neighborhood-complexity} on $\pi_G(n)$ 
in a graph of twin-width $d$. See \focs[\cite{flip-width-arxiv}]{Appendix~\ref{app:tww}} for details.

\medskip

Therefore,  bounded twin-width implies bounded flip-width, but the converse does not hold:
 the class of subcubic graphs has bounded expansion, and hence bounded flip-width,
but does not have bounded twin-width~\cite{tww2}. So can twin-width be exactly characterized in terms of flip-width?

The twin-width parameter is defined not only for graphs, but also for structures equipped with one or more binary relations.
As argued in \cite{tww4},  twin-width may -- and perhaps even should -- be seen as a parameter of \emph{ordered} graphs, rather than graphs. 
An ordered graph $G=(V,E,<)$ is equipped with a (symmetric, irreflexive) edge relation $E$ and a total order relation $<$. 
Every graph can be equipped with some total order without increasing the twin-width, so we may assume the order is present (also such an order can be easily computed from an uncontraction sequence, but the problem of finding it efficiently given the graph $G$ only, remains open).
Similarly as twin-width,
the notion of flip-width extends to structures with several binary relations, as defined below.
This, in particular implies a notion of flip-width for ordered graphs,
but in the case of ordered graphs, we also given an alternative, more convenient definition.


\paragraph{Flip-width of binary structures}
Fix a signature $\Sigma$ consisting of several binary relations.
For a $\Sigma$-structure $A$
and partition $\cal P$ of $V(A)$,
a \emph{$\cal P$-flip} of $A$ is specified by a $\Sigma$-structure $F$ with domain $\cal P$. 
Applying the flip $F$ results in a $\Sigma$-structure  
$A'$ with domain $V(A)$ and relations \[R_{A'}\quad:=\quad R_A\quad\triangle \bigcup_{(P,Q)\in R_F}P\times Q,\] for each binary relation symbol $R\in\Sigma$.
A \emph{$k$-flip} of $A$ is any structure $A'$ obtained in this way, for some partition $\cal P$ with $|\cal P|\le k$.
Using this notion of a $k$-flip, the flipper game and the flip-width parameters $\fw_r(A)$ are defined just as for graphs (see \focs[\cite{flip-width-arxiv}]{Appendix~\ref{app:bin-struct}} for details),
where in each round, the runner can traverse at most $r$ edges of the Gaifman graph of the previously announced $k$-flip.
Theorem~\ref{thm:btww} holds also in the case when $G$ is a binary structure, rather than a graph, with the same proof.

\paragraph{Ordered flip-width}
In the case of ordered graphs, it is convenient 
to work with the following variant of flip-width which takes into account 
that one of the binary relations is a total order.
Fix an ordered graph $G=(V,E,<)$. A \emph{$k$-cut-flip} of $G$ is a triple $G'=(V,E',S)$, where $(V,E')$ is a (usual) graph that is a $k$-flip of the graph $(V,E)$, and $S\subset V$ is a set with $|S|\le k$.
Intuitively, the runner will be able 
to instantaneously move between any two points that are not separated 
by an element of $S$.
 Let $\sim_S$ denote the equivalence relation on $S$
such that $u\sim_S v$ if and only if $u=v$ or $u<v$ and there is no $s\in S$ with $u\le s\le v$.
The \emph{weighted graph} associated to $G'=(V,E',S)$ is the graph with vertices $V$ and edges $uv$ such that $uv\in E'$ or $u\sim_S v$,
where each edge $uv$ with $u\sim'v$ has weight $0$,
and the remaining edges have weight $1$.
Fix a radius $r\in\N\cup\set{\infty}$, a width parameter $k$,
and an ordered graph $G=(V,E,<)$.
In the \emph{ordered flipper game} with radius $r$ and width $k$ on an ordered graph $G=(V,E,<)$, 
 in round $i$, the flipper announces
 a $k$-cut-flip $G_i=(V,E',S)$ of the ordered graph $G=(V,E,<)$,
and the runner moves from his previous position $v_{i-1}$ to a new position $v_{i}$ by following a path of total weight at most $r$
in the weighted graph associated with the previous $k$-cut-flip $G_{i-1}$
(in round $i=1$, as the flipper announces $G_1$, the runner picks $v_1\in V$ arbitrarily).
The flipper wins if $v_i$ is isolated in the weighted graph associated to $G_i$, that is, there is no $w\neq v$ with $vw\in E'$ or $v\sim'w$.
The \emph{radius-$r$ ordered flip-width} of an ordered graph $G=(V,E,<)$,
denoted $\fw_r^<(G)$, is 
 the smallest number $k$ such that the flipper has a winning strategy in the  ordered flipper game with radius $r$ and width $k$ on $G$.


For an ordered graph $G=(V,E,<)$, the parameter $\fw_r^<(G)$ relates to the flip-width of $G$, treated as a binary structure, as follows (see \focs[\cite{flip-width-arxiv}]{Appendix~\ref{app:ordered flip-width}}).
\begin{lemma}[\appmark]\label{lem:ordered flip-width}
  Fix $r\in \N\cup\set{\infty}$ and an ordered graph $G=(V,E,<)$.
  Then \[\sqrt{\fw_r(G)+1}\quad\le\quad fw_{r}^<(G)+1\quad\le\quad \fw_{3r+2}(G)+1.\]
 \end{lemma}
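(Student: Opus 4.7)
The plan is to prove each inequality by a simulation argument, converting a winning strategy for the flipper in one game variant into a winning strategy in the other. The first inequality involves a quadratic blow-up in width with no loss of radius, while the second has no loss of width but requires a constant-factor blow-up in radius. Both reductions hinge on an explicit correspondence between $k$-cut-flips of the ordered graph and $k'$-flips of the binary structure $(V,E,<)$ over the signature $\{E,<\}$.

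For the first inequality, I would take a winning strategy for the flipper in the ordered game of width $k=\fw_r^<(G)$ and radius $r$. Given a $k$-cut-flip $(V,E',S)$ announced by this strategy, the plan is to design a $k'$-flip $F=(V,E',<'')$ of $(V,E,<)$, with $k'\le (k+1)^2-1$, whose Gaifman graph coincides (as a simple graph) with the edge set $E'\cup\{uv:u\sim_S v,\ u\ne v\}$ underlying the weighted graph of the cut-flip. The partition $\cal Q$ would consist of the singletons $\{s\}$ for each $s\in S$ (contributing at most $|S|\le k$ parts) together with the common refinement of $\cal P$ (the $\le k$-part partition producing $E'$) and the open interval partition of $V\setminus S$ induced by $S$ (of size at most $k+1$), giving $|\cal Q|\le k+k(k+1)=(k+1)^2-1$. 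The $E$-flip lifts naturally from $\cal P$ to $\cal Q$ and reproduces $E'$; the $<$-flip is designed so as to preserve $<''$ on each $\sim_S$-class and to cancel the surviving $<''$-direction between distinct $\sim_S$-classes, by flipping only the ``forward'' pair of each ordered pair of $\cal Q$-parts lying in distinct classes. The treatment of singletons of $S$ as their own $\cal Q$-parts is essential to avoid spurious $<''$-edges incident to elements of $S$. Any length-$\le r$ Gaifman path in $F$ is then a weight-$\le r$ path in the weighted graph of the cut-flip, so the ordered winning strategy transfers verbatim to the standard game.

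For the second inequality, I would take a winning strategy for the flipper in the standard game with width $k=\fw_{3r+2}(G)$ and radius $3r+2$. Given a $k$-flip $F=(V,E',<')$ with partition $\cal P$, the plan is to construct the cut-flip $(V,E',S)$ with a carefully chosen cut set $S\subseteq V$ of size at most $k$, such that every two vertices $u\sim_S v$ lie at Gaifman-distance at most $2$ in $F$. The structural fact to exploit is that within a single part of $\cal P$ every pair is Gaifman-adjacent (since $<'$ restricted to a single part is still a total order), so intra-$\sim_S$-class movement can be routed via a Gaifman path of length at most $2$ through a well-chosen ``hub'' vertex inside an appropriate part. A weight-$\le r$ path in the weighted graph of the cut-flip, after collapsing consecutive weight-$0$ moves into single steps, decomposes into at most $r$ weight-$1$ $E'$-edges (each contributing a single Gaifman edge in $F$) and at most $r+1$ intra-class jumps (each contributing at most $2$ Gaifman edges), for a total Gaifman length of at most $r+2(r+1)=3r+2$. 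Hence every move the ordered runner can make is also a valid standard-game move, and the standard winning strategy extends to the ordered game.

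The principal obstacle lies in the second inequality, specifically in the construction of the cut set $S$ with $|S|\le k$ such that every $\sim_S$-equivalence class has Gaifman diameter at most $2$ in the standard flip $F$; this requires a careful choice of $S$ that simultaneously exploits the part structure of $\cal P$ and the direction of the $<$-flips, and it is precisely where the factor $3$ in the radius $3r+2$ originates. The first inequality is, by comparison, a routine combinatorial construction once one carefully separates the singletons of $S$ from the surrounding open intervals in the partition $\cal Q$; the quadratic blow-up $(k+1)^2-1$ then arises naturally as the product of the $\le k$ parts of $\cal P$ and the $\le k+1$ open intervals, plus the $k$ singletons for the elements of $S$.
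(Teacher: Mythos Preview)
Your plan matches the paper's proof in both inequalities. For the first inequality, your partition (singletons of $S$ plus the common refinement of $\cal P$ with the open intervals of $V\setminus S$) and the flip design are exactly what the paper does, yielding $\fw_r(G)\le (k+1)^2-1$ for $k=\fw_r^<(G)$.

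For the second inequality, you correctly identify the path decomposition giving $r+2(r+1)=3r+2$ and the need for a set $S$ with $|S|\le k$ making each $\sim_S$-class have Gaifman-diameter at most $2$ in $F$, but you leave the choice of $S$ open and flag it as the principal obstacle. The paper's choice is simply $S=\{\max_{<} A : A\in\cal P\}$, the $<$-maximum of each part of the partition $\cal P$ underlying the flip. The verification is short: your observation that each part of $\cal P$ is a Gaifman-clique handles same-part pairs. For $a<b$ in distinct parts $A,B$ with no element of $S$ between them, if the ordered pair $(A,B)$ is not flipped in the $<$-component then $a<'b$ and they are already Gaifman-adjacent; if it is flipped, set $m:=\max A\in S$. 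Since $a\notin S$ we have $a<m$, and since $m\in S$ does not lie between $a$ and $b$ we must have $b<m$; the flip then gives $m<'b$, while $a$ and $m$ lie in the same part $A$ and are Gaifman-adjacent by your clique observation. Hence $m$ is the required hub. This explicit choice of $S$ is the one concrete step your proposal was missing.
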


\paragraph{Equivalence of flip-width and twin-width for ordered graphs}
We now prove the main result of Section~\ref{sec:tww}, which says 
that for ordered graphs $G=(V,E,<)$, 
the parameters $\tww(G)$ and $\fw^<_1(G)$ are functionally equivalent.
In other words, a class of ordered graphs has bounded twin-width if and only if it has bounded radius-one ordered flip-width.
\begin{theorem}\label{thm:tww1}
  The following conditions are equivalent for a class $\CC$ of ordered graphs:
  \begin{enumerate}
    \item $\CC$ has bounded twin-width,
    \item $\CC$ has bounded flip-width, that is, $\fw_r(\CC)<\infty$ for every $r\in \N$,
    \item $\CC$ has bounded radius-five flip-width (as a class of binary structures), that is, $\fw_5(\CC)<\infty$,
    \item $\CC$ has bounded ordered flip-width, that is, $\fw_r^<(\CC)<\infty$ for every $r\in \N$,
    \item $\CC$ has bounded radius-one ordered flip-width, that is, $\fw_1^<(\CC)<\infty$.   
  \end{enumerate}
\end{theorem}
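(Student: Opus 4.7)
The plan is to establish the cycle
\[(1) \Rightarrow (2) \Rightarrow (3) \Rightarrow (5) \Rightarrow (1),\]
together with $(2) \Rightarrow (4) \Rightarrow (5)$, which yields equivalence of all five conditions. The implication $(1) \Rightarrow (2)$ is immediate from Theorem~\ref{thm:btww} applied to $G$ regarded as a binary structure over the signature consisting of the edge relation and the order. The implications $(2) \Rightarrow (3)$, $(2) \Rightarrow (4)$, and $(4) \Rightarrow (5)$ are trivial from monotonicity of $\fw_r$ and $\fw_r^<$ in the radius $r$. The implication $(3) \Rightarrow (5)$ follows directly from the right-hand inequality of Lemma~\ref{lem:ordered flip-width} specialized to $r=1$, which gives $\fw_1^<(G)+1 \le \fw_5(G)+1$.

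The main content is therefore $(5) \Rightarrow (1)$, which I prove by contrapositive: if $\CC$ has unbounded twin-width, then $\fw_1^<(\CC) = \infty$. The key tool is the combinatorial characterization of unbounded twin-width for ordered graphs from \cite{tww4} in terms of \emph{rich divisions}: $\CC$ has unbounded twin-width if and only if, for every $N$, some $G \in \CC$ admits an $N$-rich division, that is, a partition of $V(G)$ into $N$ consecutive intervals whose associated $N \times N$ block in the adjacency matrix is ``rich enough'' in the Marcus--Tardos sense (e.g.\ every block is non-homogeneous and all small permutation patterns occur among the blocks). Given $k$, I will choose $N = N(k)$ sufficiently large so that any ordered graph $G$ admitting an $N$-rich division satisfies $\fw_1^<(G) > k$; the theorem then follows.

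The strategy for the runner is a hideout-style argument, adapted to the ordered setting. Pick a representative vertex from each interval of the rich division, producing a set $U$. I will show that $U$ supports a winning strategy for the runner in the radius-one ordered flipper game of width $k$. Given a $k$-cut-flip $(V,E',S)$ announced by the flipper, the cut set $S$ slices the order into at most $k+1$ pieces, and the partition $\cal P$ underlying the $k$-flip has at most $k$ parts. By a double application of Marcus--Tardos, for $N$ large enough there remains a large sub-collection of intervals that are (i) entirely contained in a single $\sim_S$-class and (ii) each contained in a single part of $\cal P$. Inside this sub-collection the bipartite pattern of $E$ is preserved by $E'$ (since the flip acts uniformly on pairs of parts), so the richness of the division still applies. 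From the runner's current representative the richness then guarantees a representative $v$ reachable in weight at most $1$ in the weighted graph associated to $(V,E',S)$, whose own local sub-division remains rich; the runner moves to $v$, maintaining the invariant.

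The hard part is the technical matching of parameters: choosing $N=N(k)$ large enough and verifying that the joint ``damage'' inflicted by the $k$ cuts of $S$ and the $k$ parts of $\cal P$ on the rich division is controlled, so that a sufficiently large rich sub-division persists at every round. This boils down to the appropriate version of Marcus--Tardos for the rich-division framework of \cite{tww4}, essentially the same combinatorial input used there to extract a twin-width lower bound from a rich division. Once this is established, the hideout argument of Section~\ref{sec:hideouts}, transposed to the ordered flipper game, yields $\fw_1^<(G) > k$, completing the proof.
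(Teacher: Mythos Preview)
Your skeleton of implications matches the paper exactly, and $(1)\Rightarrow(2)$, $(2)\Rightarrow(3)$, $(2)\Rightarrow(4)$, $(4)\Rightarrow(5)$, $(3)\Rightarrow(5)$ are fine. The issue is your treatment of $(5)\Rightarrow(1)$.

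You misremember what a $k$-rich division is. In \cite{tww4} (and as recalled in the paper just before Lemma~\ref{lem:rich}), a $k$-rich division is not a single interval partition with ``rich enough'' blocks, but a \emph{pair} of interval partitions $\cal L,\cal R$ of $V(G)$, with the property that for every $A\in\cal L$ and any $k$ intervals $B_1,\ldots,B_k\in\cal R$, there are at least $k$ vertices in $A$ with pairwise distinct neighborhoods in $V(G)\setminus(B_1\cup\cdots\cup B_k)$, and symmetrically with $\cal L,\cal R$ swapped. The alternation between the two partitions is the heart of the runner's strategy, and your sketch does not have it.

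Concretely, the paper's argument (Lemma~\ref{lem:rich}) is as follows. Given a $(k+1)$-rich division $\cal L,\cal R$, the runner maintains the invariant that in even rounds she sits in some $A\in\cal L$ disjoint from the current cut set $S_{i-1}$, and in odd rounds in some $B\in\cal R$ disjoint from $S_{i-1}$. When the flipper announces a new $k$-cut-flip $(V,E_i,S_i)$, at most $k$ intervals of the \emph{other} partition meet $S_i$; the richness condition gives $k+1$ vertices in $A$ with distinct neighborhoods outside those $k$ intervals, and by pigeonhole on the $k$-partition underlying $E_{i-1}$, two of them lie in the same part, hence still have distinct neighborhoods in $E_{i-1}$, so one of them has an $E_{i-1}$-neighbor $b$ outside the bad intervals. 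The runner reaches $b$ by a weight-$1$ path (weight-$0$ within $A$ since $A$ avoids $S_{i-1}$, then one $E_{i-1}$-edge). No Marcus--Tardos is invoked here; that work was already done inside \cite{tww4} to produce rich divisions from unbounded twin-width.

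Your proposed argument --- take one representative per interval of a single partition, then apply Marcus--Tardos twice to survive the damage from $S$ and $\cal P$ --- does not go through as written: without the second partition you have no mechanism guaranteeing that from your current interval there is an edge landing in a ``safe'' interval of the \emph{next} round's structure, and the ``large rich sub-division persists'' step is exactly the point where the two-partition definition does the work for free. So the gap is not just technical matching of parameters; it is the missing structural ingredient (the pair $\cal L,\cal R$ and the alternating invariant).
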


So for ordered graphs, among the parameters $\fw_r^<$, only the parameters $\fw_\infty^<$
(characterizing bounded clique-width) and $\fw_1^<$ (characterizing bounded twin-width) are relevant, since it follows that $\fw_r^<(G)$  is bounded in terms of $\fw_1^<(G)$ and $r$, for every $r\in\N$ and  ordered graph $G$. 

We obtain the following characterization of twin-width of usual, unordered graphs, in terms of flip-width. Recall that every graph $G$ can be equipped with some total order without increasing the twin-width, and conversely, forgetting a total order of an ordered graph does not increase the twin-width. 

\begin{corollary}\label{cor:tww-char}
  A class $\CC$ of graphs has bounded twin-width if and only if every graph in $\CC$ can be equipped with a total order, so that the resulting class 
  of ordered graphs has bounded flip-width.
\end{corollary}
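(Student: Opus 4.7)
The plan is to derive the corollary as an immediate consequence of Theorem~\ref{thm:tww1} combined with two standard facts about twin-width relating ordered and unordered versions of the parameter.

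For the forward implication, I would invoke the result of \cite{tww4} mentioned in the introduction: a class of graphs has bounded twin-width if and only if it can be obtained from some class of ordered graphs of bounded twin-width by forgetting the order. Thus, if $\CC$ has bounded twin-width, for each $G\in\CC$ there is a total order $<_G$ on $V(G)$ such that the class $\CC^<:=\setof{(G,<_G)}{G\in\CC}$ of ordered graphs has bounded twin-width. Applying the implication $1\Rightarrow 2$ of Theorem~\ref{thm:tww1} to $\CC^<$ then gives that $\CC^<$ has bounded flip-width, which is what is required.

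For the reverse implication, suppose each $G\in \CC$ can be equipped with a total order $<_G$ so that the ordered class $\CC^<:=\setof{(G,<_G)}{G\in\CC}$ has bounded flip-width. Applying the implication $2\Rightarrow 1$ of Theorem~\ref{thm:tww1} to $\CC^<$ yields that $\CC^<$ has bounded twin-width. Finally, forgetting the order is known not to increase twin-width (this is the easy direction of the equivalence from \cite{tww4}, noted in the paragraph preceding the corollary), so $\CC$ has bounded twin-width as well.

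Neither step is substantive — the whole point is to package Theorem~\ref{thm:tww1} into a statement about ordinary (unordered) graph classes, using only the known passage between twin-width of graphs and twin-width of ordered graphs. The only subtlety, not really an obstacle but worth flagging, is that the forward direction relies on the nontrivial result of \cite{tww4} asserting the \emph{existence} of a compatible total order witnessing bounded twin-width; this is what allows the corollary to be phrased existentially over orderings rather than requiring the order as part of the input.
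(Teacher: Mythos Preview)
Your proposal is correct and takes essentially the same approach as the paper. The paper's proof is just the sentence immediately preceding the corollary (``every graph $G$ can be equipped with some total order without increasing the twin-width, and conversely, forgetting a total order of an ordered graph does not increase the twin-width''), combined with Theorem~\ref{thm:tww1}; this is exactly what you do.
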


\begin{proof}[Proof of Theorem~\ref{thm:tww1}]
  The implication 1$\rightarrow$2 is by Theorem~\ref{thm:btww}
(stated for binary structures);
the implications 2$\rightarrow$3 and 4$\rightarrow$5 are immediate;
the implications 2$\rightarrow$4 and 
3$\rightarrow$5 
are by Lemma~\ref{lem:ordered flip-width}. 
It remains to prove the implication 5$\rightarrow$1.
This is done in 
Lemma~\ref{lem:rich} below, which proves that for ordered graphs, twin-width is bounded in terms of $\fw_1^<$.
\end{proof}

We show that ordered graphs of large twin-width have large $\fw_1^<$.
To this end, we use a core result of \cite{tww4},
which states that an ordered graph has large twin-width 
if and only if it contains a \emph{$k$-rich division},
for a large number $k$.
 A $k$-rich division of an ordered graph $G$ is a pair of partitions $\mathcal L,\mathcal R$ of $V(G)$, whose parts are intervals with respect to the order, such that for every interval $A\in \mathcal L$ 
and $k$ intervals $B_1,\ldots,B_k\in \mathcal R$, there are at least $k$ vertices in $A$ with pairwise distinct neighborhoods in $V(G)\setminus (B_1\cup\cdots\cup B_k)$, and symmetrically, for every interval $B\in \mathcal R$ 
and $k$ intervals $A_1,\ldots,A_k\in \mathcal L$, there are at least $k$ vertices in $B$ with pairwise distinct neighborhoods in $V(G)\setminus (A_1\cup\cdots\cup A_k)$.
It is shown in~\cite[Theorem 21]{tww4}
that if $G$ has no $k$-rich division, then $\tww(G)\le 2^{O(k^2)}$.
We now show that a $(k+1)$-rich division can be employed by the runner to evade the flipper in the ordered flipper game of radius $1$ and width $k$.
The following lemma immediately yields the implication 4$\rightarrow$1 
in Theorem~\ref{thm:tww1}, and finishes its proof.

\begin{lemma}\label{lem:rich}
 Let $G$ be an ordered graph with $\fw_1^<(G)\le k$.
 Then $G$ does not have a $(k+1)$-rich division $\cal L,\cal R$.
 In particular, $\tww(G)\le 2^{O(k^2)}$.
\end{lemma}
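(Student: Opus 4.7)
The plan is to prove the contrapositive: assuming $G$ admits a $(k+1)$-rich division $(\mathcal{L}, \mathcal{R})$, I will exhibit a surviving strategy for the runner in the width-$k$, radius-$1$ ordered flipper game on $G$, yielding $\fw_1^<(G) > k$. The bound $\tww(G) \leq 2^{O(k^2)}$ then follows directly from \cite[Theorem 21]{tww4}.

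The strategy maintains, after each round $i$, the invariant that there exists an interval $I_i \in \mathcal{L} \cup \mathcal{R}$ of size at least $k+1$ all of whose vertices are reachable from $v_i$ in weight $\leq 1$ in the weighted graph of $G_i$. Since $|I_i| \geq 2$, this forces $v_i$ to be non-isolated in $G_i$, and it gives her $\geq k+1$ candidate escape vertices for the next round. The base case is immediate: richness forces $|\mathcal{L}| \geq k+1$ while the at most $k$ cuts of $S_1$ touch at most $k$ intervals of $\mathcal{L}$, so some $I_1 \in \mathcal{L}$ is $S_1$-cut-free, and the runner picks any $v_1 \in I_1$.

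For the inductive step, suppose the invariant holds after round $i$ with $I_i \in \mathcal{L}$ (the $\mathcal{R}$ case is symmetric), and let $W \supseteq I_i$ denote the weight-$\leq 1$ reachable set from $v_i$ in $G_i$. When the flipper announces $G_{i+1} = (V, E'_{i+1}, S_{i+1})$ with $E'_{i+1}$ realized by a partition $\mathcal{P}_{i+1}$ of size at most $k$, I distinguish two cases. \emph{Easy case:} some $w \in W$ lies in an $S_{i+1}$-cut-free interval $J \in \mathcal{L} \cup \mathcal{R}$. Then the runner moves to $w$, and since $J$ forms a single $\sim_{S_{i+1}}$-class of size $\geq k+1$, the invariant is re-established with $I_{i+1} := J$. \emph{Hard case:} every vertex of $W$ lies in a cut-touched interval of both $\mathcal{L}$ and $\mathcal{R}$; in particular $I_i \subseteq B_1 \cup \cdots \cup B_m$ for the $m \leq k$ intervals $B_j \in \mathcal{R}$ touched by $S_{i+1}$. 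Applying $(k+1)$-richness to $A := I_i$ and to $B_1, \ldots, B_m$ padded to $k+1$ intervals yields $k+1$ vertices $w_1, \ldots, w_{k+1} \in I_i \subseteq W$ with pairwise distinct $E$-neighborhoods in $V \setminus (B_1 \cup \cdots \cup B_{k+1})$. By pigeonhole, two of them, $w_j$ and $w_{j'}$, share a part of $\mathcal{P}_{i+1}$. The key observation is that for two vertices in a common part of the flip partition, the identity $N_{E'_{i+1}}(w_j) \triangle N_{E'_{i+1}}(w_{j'}) = N_E(w_j) \triangle N_E(w_{j'})$ holds, so their $E'_{i+1}$-neighborhoods still differ in $V \setminus (B_1 \cup \cdots \cup B_{k+1})$; hence one of them, say $w^*$, has an $E'_{i+1}$-neighbor $u$ in the safe region $V \setminus (B_1 \cup \cdots \cup B_{k+1})$, and this $u$ lies in an $S_{i+1}$-cut-free interval $I_{i+1} \in \mathcal{R}$. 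The runner sets $v_{i+1} := w^*$ (reached in weight $\leq 1$ from $v_i$ inside $W$ in $G_i$); then in $G_{i+1}$ the weight-$1$ edge $w^* u$ combined with weight-$0$ movement within the single $\sim_{S_{i+1}}$-class containing $I_{i+1}$ makes every vertex of $I_{i+1}$ reachable from $v_{i+1}$, restoring the invariant.

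The main obstacle is the hard case: one must formulate the invariant in terms of a weight-$\leq 1$ reachable interval (not merely the current $\sim_S$-class), and then leverage the identity on symmetric differences of neighborhoods inside a single flip-partition part to conclude that the flip cannot simultaneously isolate all $k+1$ vertices produced by richness. With this invariant in place, the easy and base cases are routine, and the stated twin-width bound follows immediately from \cite[Theorem 21]{tww4}.
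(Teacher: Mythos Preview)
Your proof is correct and uses the same core mechanism as the paper: richness produces $k+1$ vertices in the current interval with distinct $E$-neighborhoods in the safe region, pigeonhole over the $\le k$ parts of the flip partition puts two of them in the same part, and since the symmetric difference of neighborhoods is invariant under a flip within a part, one of them has a post-flip edge into the safe region.

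The paper's version is organized a bit more cleanly. It keeps the simpler invariant that $v_i$ itself \emph{lies in} an $S_i$-cut-free interval (alternating between $\cal L$ and $\cal R$), and it applies pigeonhole to the \emph{previous} partition $\cal P_{i-1}$ rather than the new one. The discovered edge then lives in $G_{i-1}$, so the runner can traverse it as part of her move and land directly inside the new cut-free interval. This eliminates your easy/hard case split and the need to phrase the invariant in terms of weight-$\le 1$ reachability of an interval; the two arguments are otherwise the same. (A minor remark: your claim that richness forces $|\mathcal L|\ge k+1$ is not literally stated in the definition, but non-vacuous $(k{+}1)$-richness does force $|\mathcal L|,|\mathcal R|\ge k+2$, so the base case goes through; the paper glosses over this as well.)
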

\begin{proof}
  Suppose $G$ has a $(k+1)$-rich division $\cal L,\cal R$.
We show that $\fw_1^<(G)>k$, by describing a winning strategy for the runner 
in the  ordered flipper game with radius one and width $k$.

The strategy is as follows: in round $i$, when the flipper announces a new $k$-cut-flip $G_i=(V,E_i,S_i)$ of $G$, the runner always moves to any reachable vertex  in one of the parts of $\cal L$ (in even-numbered rounds) or of $\cal R$ (in odd-numbered rounds) that does not contain any element of $S_i$. We show that the runner can always reach such a vertex $v_i$ from his previous position $v_{i-1}$, by following a path of weight $1$ in the previous $k$-cut-flip $G_{i-1}$ of $G$.

By inductive assumption, suppose that $v_{i-1}$ belongs to a part $A$ of $\cal L$ or $\cal R$ that does not contain any element of $S_{i-1}$.
Suppose, by symmetry, that $A$ is a part of $\cal L$.
Since $|S_i|\le k$, there are at most $k$ parts of $\cal R$ that contain some element of $S_i$; denote those parts by $B_1,\ldots,B_k$.
By the condition of a $(k+1)$-rich division 
there are  $k+1$ vertices in $A$ 
with pairwise distinct neighborhoods in $B:=V(G)-(B_1\cup\cdots\cup B_k)$. 
Then in the previous $k$-cut-flip $G_{i-1}$ of $G$, there is an edge joining some vertex  $a\in A$ with some vertex $b\in B$.
Indeed, let $\cal P$ denote the partition of $V(G)$  
underlying the flip $G_{i-1}$ of $G$, with $|\cal P|\le k$.
By the pigeonhole principle, two vertices $a_1,a_2$ of 
$A$ with distinct neighborhoods in $B$, ie.  $N_G(a_1)\cap B\neq N_G(a_2)\cap B$
belong to the same part of $\cal P$. Therefore also $N_{G_{i-1}}(a_1)\cap B\neq N_{G_{i-1}}(a_2)\cap B$,
so one of those sets must be nonempty. 

Thus, there is $a\in A$ and $b\in B$ 
such that $ab\in E(G_{i-1})$.
The runner thus moves to $b$, by the path $v_{i-1}-a'-b$ of weight $1$ in $G_{i-1}$, maintaining the invariant. 
\end{proof}


Reassuming, the duality result of \cite{tww4} proving the equivalence of unbounded twin-width and having $k$-rich divisions for all $k$, 
can be seen as a min-max theorem for the flipper game of radius $1$, for ordered graphs.
Moreover, we now see that degeneracy and twin-width are two flip sides of the same coin:  $\fw_1(G)$ corresponds to the degeneracy of $G$ for weakly sparse graphs (see Theorem~\ref{thm:degeneracy} and Theorem~\ref{thm:wsparse}), and $\fw_1^<(G)$ corresponds to the twin-width of $G$ for ordered graphs (see Theorem~\ref{thm:tww1}). 
Similarly, classes of bounded flip-width
coincide with classes of bounded expansion in the weakly sparse case, and with classes of bounded twin-width in the ordered case.
Both of those cases is by now well-understood from an algorithmic, 
combinatorial, and logical perspective. In particular, the model checking problem for first-order logic is fixed-parameter tractable in each of the two special cases.

\section{Closure under transductions}\label{sec:transductions}
As we have seen, classes of bounded flip-width include all classes of bounded expansion and all classes of bounded twin-width, and characterize those notions 
in the weakly sparse and totally ordered settings. 
We argue that classes of bounded flip-width 
enjoy many good closure properties.
 For instance, if two classes $\CC$ and $\DD$ have bounded flip-width, then their union $\CC\cup \DD$ also has bounded flip-width. Other such properties 
include: closure under disjoint unions (see Example~\ref{ex:disjoint}),  and closure under substitution (see Lemma~\ref{lem:substitution}).

An entire family of closure properties is provided by the notion of first-order \emph{interpretations} or \emph{transductions}.
As a very special instance, we saw in Example~\ref{ex:complement} that if $\CC$ has bounded flip-width, then the class of edge-complements of graphs from~$\CC$
also has bounded flip-width. What about, say, the class of squares of graphs from~$\CC$? (The square of a graph $G$ has vertices $V(G)$ and edges $uv$ such that $u$ and $v$ have a common neighbor in $G$.) We show that this class also has bounded flip-width, and a similar  result holds for every operation that can be defined by a first-order formula, as we now describe. We phrase our result in greater generality for colored graphs. This result has multiple corollaries, and generalizes previous results.

\subsection{Preservation of flip-width under transductions}
We start with  defining interpretations and transductions, and then state the main result of this section.

\paragraph{Colored graphs}
Recall that a $c$-colored graph is a graph together with an assignment of colors from $\set{1,\ldots,c}$ to its vertices.
For a $c$-colored graph $G$, its radius-$r$ flip-width $\fw_r(G)$,
for $r\in\N\cup\set\infty$, is defined as the same parameter for the underlying uncolored graph.
Say that a class~$\CC$ of $c$-colored graphs 
has \emph{bounded flip-width} if the underlying class of uncolored graphs has bounded flip-width.
A $c$-colored graph is seen as a structure over the signature consisting of the binary relation $E(x,y)$ denoting adjacency, as well as unary predicates $C_1(x),\ldots,C_c(x)$ denoting the respective colors.

\paragraph{Interpretations} The following notion is a special case of a (simple, domain-preserving) first-order interpretation.
Let $G$ be a $c$-colored graph
and $\phi(x,y)$ be a first-order formula
in the signature of $c$-colored graphs.
Define the graph $\phi(G)$ with vertices $V(G)$ and edges 
$uv$ such that $u\neq v$ and $\phi(u,v)\lor \phi(v,u)$ holds in $G$. 
For a class $\CC$ of $c$-colored graphs, denote $\phi(\CC):=\setof{\phi(G)}{G\in\CC}$. 
The class $\phi(\CC)$ is called an \emph{interpretation} of $\CC$, via $\phi$.
For example, for the formula $\phi(x,y)=\neg E(x,y)$
and a graph $G$, the graph $\phi(G)$ is the complement $\bar G$ of $G$.
And for the formula $\phi(x,y)=\exists z.[E(x,z)\land E(z,y)]$, the graph $\phi(G)$ is the square of $G$.

\paragraph{Transductions}Say that a graph class $\CC$ \emph{transduces} a graph class $\DD$,
or that $\DD$ is a \emph{transduction} of $\CC$,
if there is some $c\ge 1$ and class $\wh\CC$ of $c$-colored graphs which is a $c$-coloring of $\CC$, and some first-order formula $\phi(x,y)$ in the signature of $c$-colored graphs,
such that every graph in $\DD$ is an induced subgraph of some graph in  $\phi(\wh\CC)$ (that is, $\DD$ is contained in the hereditary closure of $\phi(\wh\CC)$).

\begin{example}
  Let $\CC$ be the class of all half-graphs (see Figure~\ref{fig:half-graph}),
  where the half-graph $H_n$ of order $n$ 
  has vertices $a_1,\ldots,a_n$ and $b_1,\ldots,b_n$,
  and edges $a_ib_j$ for $1\le i<j\le n$.
  We show that $\CC$ transduces the class $\DD$ consisting of disjoint unions of cliques.

  We use two colors.\szfuture{pic}
  Let $\wh\CC$ be the class of all $2$-colored half-graphs.
  Consider the formula $\phi(x,y)$ expressing that there is no vertex of color $2$ which is adjacent to one of $x,y$, and not the other:
  \[\phi(x,y)\equiv \forall z.C_2(z)\rightarrow (E(x,z)\leftrightarrow E(y,z)).\]

  We argue that the  hereditary closure of $\phi(\wh C)$ 
  contains $\DD$, implying that $\DD$ transduces in $\CC$.
  
  Let $F\in \DD$ be a disjoint union of cliques. 
Let $1,\ldots,n$ be the vertices of $F$, for some $n\ge 0$, and assume that every connected component of $F$ 
consists of consecutive vertices in the usual order $1<\ldots<n$.
Consider the half-graph $H_n$ with vertices $a_1,\ldots,a_n$ and $b_1,\ldots,b_n$, and color 
a vertex $b_i$ with color $2$ if 
$i$ is the largest element of its connected component 
in $F$, and with color $1$ otherwise.
Now, for all $1\le i,j\le n$, 
$H_n\models \phi(a_i,a_j)$  if and only if $i$ and $j$ are adjacent in $F$.
Hence, $\phi(H_n)[\set{a_1,\ldots,a_n}]$ is isomorphic to $F$, and therefore $F$ is an induced subgraph of some graph in $\phi(\wh \CC)$.
\end{example}

\paragraph{Transductions and flip-width}
We prove the following theorem.
\begin{theorem}[\appmark]\label{thm:interpretations}
  Fix $q\ge 0$. There is a computable function $T_q\from\N\to\N$ with the following property.
  Fix numbers $r,c\ge 1$ and a first-order formula $\phi(x,y)$ of quantifier rank $q$ in the signature of $c$-colored graphs.
  Set $r':=2^q\cdot r$.
  Then 
for every $c$-colored graph $G$ we have 
\begin{align}\label{eq:interpretations}
  \fw_r(\phi(G))\le T_q(\fw_{r'}(G)\cdot c).  
\end{align}
  In particular, if $\CC$ has bounded flip-width, then $\phi(\CC)$ has bounded flip-width.
  Moreover, $T_0(k)=k$ for every $k$.
\end{theorem}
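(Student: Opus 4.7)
I would proceed by induction on the quantifier rank $q$ of $\phi$, constructing $T_q$ recursively along the way.

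\emph{Base case $q=0$.} A quantifier-free $\phi(x,y)$ over $c$-colored graphs is a Boolean combination of the atomic formulas $E(x,y)$, $E(y,x)$, $x{=}y$, $C_i(x)$, $C_i(y)$. Consequently, for distinct $u,v\in V(G)$, whether $uv$ is an edge of $\phi(G)$ is determined by the pair of colors of $u,v$ together with whether $E_G(u,v)$ holds. Given the flipper's winning strategy on $G$ at radius $r$ and width $k:=\fw_r(G)$, I would let the flipper on $\phi(G)$ mirror it, refining each announced partition $\cal P$ of $V(G)$ by the $c$ colors into a partition $\cal P'$ of size at most $kc$. For the $k$-flip $\hat G$ of $G$ realised by $\cal P$, the graph $\phi(\hat G)$ is constant on every product cell $P\times Q$ with $P,Q\in\cal P'$, and hence is itself a $kc$-flip of $\phi(G)$ via $\cal P'$. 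A length-$r$ path of the runner in this flip of $\phi(G)$ can be tracked step-by-step as a length-$r$ walk in the flip of $G$, since any $\phi(G)$-edge purely attributable to colors already connects vertices in a common cell and the runner need not move. So the flipper wins $\phi(G)$ at width $kc$, matching $T_0(k)=k$.

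\emph{Inductive step.} For $\phi$ of rank $q+1$ I would invoke Gaifman's locality theorem: $\phi(x,y)$ is logically equivalent to a Boolean combination of $2^q$-local formulas in $x,y$ (whose quantifiers range only over the $2^q$-ball around $\{x,y\}$) and local sentences. On a fixed $G$ the sentences are constants and can be folded into the coloring. The local part admits the following encoding: augment $G$ into a colored graph $G^+$ by adding, for each vertex $v$, a new color that records the isomorphism type of the $2^q$-ball around $v$ in $G$. This blows up the number of colors to $c'=F(q,c)$ for some computable $F$ and yields a formula $\phi^+(x,y)$ of strictly smaller quantifier rank such that $\phi(G)=\phi^+(G^+)$, because once the local types are readable from colors, the relativized quantifiers become evaluable without further nesting. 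The key auxiliary bound is
\[ \fw_{r}(G^+)\;\le\; c'\cdot \fw_{2r}(G), \]
which I would prove by showing that a strategy for the flipper on $G^+$ at radius $r$ can be simulated by the flipper on $G$ at radius $2r$: each announced partition of $G$ is refined so that its cells are monochromatic in the $2^q$-type colors (blowing the size up by at most $c'$), and a length-$r$ walk in the $G^+$-flip lifts to a walk of length at most $2r$ in the $G$-flip, since a single Gaifman edge of $G^+$ may require moving through the $2^q$-balls of its endpoints inside $G$. Plugging this into the inductive hypothesis applied to $\phi^+$ gives
\[ \fw_r(\phi(G))\;=\;\fw_r(\phi^+(G^+))\;\le\;T_q(\fw_{2^q r}(G^+)\cdot c')\;\le\;T_q(c'\cdot\fw_{2^{q+1}r}(G)\cdot c'), \]
and I would define $T_{q+1}$ so as to absorb the $c'=F(q,c)$ factors into a single function of $\fw_{2^{q+1}r}(G)\cdot c$.

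\emph{Main obstacle.} The essence of the proof is controlling the blow-up in the inductive step: both the coloring size (driven by the number of $2^q$-types, computable but potentially of tower-type growth in $q$) and the radius (doubling at each level of quantifier rank). The Gaifman-based enrichment, together with the claim that the flip-width of the local-type enrichment $G^+$ is controlled by that of $G$ at twice the radius, is the main technical content; matching the clean bound $r'=2^q r$ asserted in the theorem requires the careful radius accounting sketched above, whereas a naive use of locality would yield a significantly worse dependence.
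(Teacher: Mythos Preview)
Your base case is correct and is in fact the paper's whole argument, specialized to $q=0$. The inductive step, however, does not work.

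The central claim of your induction --- that encoding the isomorphism type of each $2^q$-ball as a color yields a formula $\phi^+$ of \emph{strictly smaller} quantifier rank with $\phi(G)=\phi^+(G^+)$ --- is false in general. For a formula with two free variables, the truth of $\phi(u,v)$ depends on the structure induced on $B^{2^q}(u)\cup B^{2^q}(v)$; when these balls overlap, the pair of individual ball types does not determine this joint structure, so no quantifier-free (or lower-rank) combination of the two colors can recover $\phi(u,v)$. Gaifman locality only tells you that the ball types suffice when $\dist(u,v)>2\cdot 2^q$, which is exactly the regime you cannot ignore. Relatedly, your auxiliary bound is confused: since $G^+$ differs from $G$ only by unary predicates, its Gaifman graph and hence its flip-width are identical to those of $G$; there is no ``Gaifman edge of $G^+$'' that is not already an edge of $G$, and no factor of $2$ in the radius arises here.

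The paper does not induct on $q$ at all. It takes your base-case idea and runs it once at full rank, but \emph{inside each flip}. Given a $k$-flip $G'$ announced by the flipper on $G$, color $G'$ with $k\cdot c$ colors (recording both the original colors and the partition class), and rewrite $\phi$ as a formula $\psi$ of the \emph{same} quantifier rank $q$ over this colored $G'$. Now apply $2^q$-locality to $\psi$ \emph{in $G'$}: there is a labelling of $V(G')$ by at most $T_q(kc)$ local types such that, for pairs at distance $>2^q$ in $G'$, the value of $\psi$ (hence of $\phi$) depends only on the pair of labels. Flipping $\phi(G)$ along this label partition leaves edges only between vertices at distance $\le 2^q$ in $G'$; the strategy-transfer lemma then converts the flipper's radius-$2^q r$ strategy on $G$ into a radius-$r$ strategy on $\phi(G)$ of width $T_q(kc)$. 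In short: replace ``color'' in your base case by ``rank-$q$ local type computed in the current flip $G'$'', and you are done in one shot.
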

Before discussing the proof of Theorem~\ref{thm:interpretations}, we discuss its consequences.

The following is an immediate consequence of Theorem~\ref{thm:interpretations}, and the fact that radius-$r$ flip-width is 
 monotone with respect to induced subgraphs.
\begin{corollary}\label{cor:transductions}
  If a class $\CC$ has bounded flip-width and transduces a class $\DD$, then $\DD$ has bounded flip-width.
  \end{corollary}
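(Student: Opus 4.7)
The plan is to simply unfold the definition of ``transduces'' and chain together the three ingredients available to us: Theorem~\ref{thm:interpretations}, Lemma~\ref{lem:hereditary}, and the fact that the flip-width of a colored graph is, by definition, the flip-width of its underlying uncolored graph. Since the statement only asks for a qualitative conclusion (bounded flip-width is inherited), no quantitative bookkeeping of the function $T_q$ is needed.

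First, I would fix a radius $r \in \N$ and aim to show $\fw_r(\DD) < \infty$. Unfolding the definition of transduction, there exist $c \ge 1$, a $c$-coloring $\wh\CC$ of $\CC$, and a first-order formula $\phi(x,y)$ in the signature of $c$-colored graphs of some quantifier rank $q$, such that every $H \in \DD$ is an induced subgraph of some graph in $\phi(\wh\CC)$. Set $r' := 2^q \cdot r$.

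Next, I would observe that since $\CC$ has bounded flip-width, $\fw_{r'}(\CC) < \infty$; and because the flip-width of a colored graph is defined to be that of the underlying uncolored graph, we also have $\fw_{r'}(\wh\CC) = \fw_{r'}(\CC) < \infty$. Applying Theorem~\ref{thm:interpretations} to each $G \in \wh\CC$ with the formula $\phi$ then yields
\[
\fw_r(\phi(G)) \;\le\; T_q\bigl(\fw_{r'}(G)\cdot c\bigr) \;\le\; T_q\bigl(\fw_{r'}(\wh\CC)\cdot c\bigr),
\]
a uniform finite bound independent of $G \in \wh\CC$. Thus $\fw_r(\phi(\wh\CC)) < \infty$.

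Finally, for an arbitrary $H \in \DD$, pick $G' \in \phi(\wh\CC)$ having $H$ as an induced subgraph; Lemma~\ref{lem:hereditary} gives $\fw_r(H) \le \fw_r(G')$, so $\fw_r(\DD) \le \fw_r(\phi(\wh\CC)) < \infty$. Since $r \in \N$ was arbitrary, $\DD$ has bounded flip-width. There is no real obstacle here: the whole difficulty has been absorbed into Theorem~\ref{thm:interpretations}, and the corollary is a one-line assembly of that theorem with hereditary monotonicity.
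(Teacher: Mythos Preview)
Your proof is correct and follows exactly the approach the paper intends: the paper states that the corollary is ``an immediate consequence of Theorem~\ref{thm:interpretations}, and the fact that radius-$r$ flip-width is monotone with respect to induced subgraphs,'' and you have simply spelled out those two steps in detail.
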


  Since the class of all graphs has unbounded flip-width by  Corollary~\ref{cor:fw1-nontrivial}, we get the following.
  
  \begin{corollary}\label{cor:mnip}
    If $\CC$ is a class of bounded flip-width, then $\CC$ does not transduce the class of all graphs.
  \end{corollary}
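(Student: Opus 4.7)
The plan is to derive this as an immediate consequence of the two preceding corollaries, namely Corollary~\ref{cor:transductions} (transductions preserve bounded flip-width) and Corollary~\ref{cor:fw1-nontrivial} (there exist graphs of arbitrarily large $\fw_1$).

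I would argue by contraposition. Suppose $\CC$ transduces the class $\mathcal{G}$ of all finite graphs. By Corollary~\ref{cor:transductions}, $\mathcal{G}$ would then have bounded flip-width; in particular, $\fw_1(\mathcal{G})<\infty$. However, Corollary~\ref{cor:fw1-nontrivial} exhibits, for every $k\in\N$, a graph $G$ (of girth greater than $4$ and minimum degree greater than $k$) with $\fw_1(G)>k$. Each such $G$ belongs to $\mathcal{G}$, so $\fw_1(\mathcal{G})=\infty$, a contradiction. Hence $\CC$ cannot transduce $\mathcal{G}$.

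There is really no obstacle here: the whole content has already been packaged into Corollary~\ref{cor:transductions} (which itself combines Theorem~\ref{thm:interpretations} with monotonicity of $\fw_r$ under induced subgraphs, Lemma~\ref{lem:hereditary}) and Corollary~\ref{cor:fw1-nontrivial} (which relies on the near-twin Lemma~\ref{lem:near-twins}). The only thing to verify is the (trivial) observation that the class of all graphs contains the witnesses from Corollary~\ref{cor:fw1-nontrivial}, so that ``unbounded $\fw_1$ on a subfamily'' translates to ``unbounded $\fw_1$ on $\mathcal{G}$'', which is enough to contradict boundedness of flip-width.
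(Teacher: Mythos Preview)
Your proof is correct and follows exactly the paper's approach: the paper simply notes that the class of all graphs has unbounded flip-width by Corollary~\ref{cor:fw1-nontrivial}, and combines this with Corollary~\ref{cor:transductions} to conclude.
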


  In the language of model theory, Corollary~\ref{cor:mnip}
  says that classes of bounded flip-width are \emph{monadically dependent},
  see Section~\ref{sec:subpoly}.
  In the phrasing of~\cite{stable-tww-lics,transduction-quasiorder}, Corollary~\ref{cor:transductions} says that 
classes of bounded flip-width form a \emph{transduction ideal}.
By Theorem~\ref{thm:wsparse}, the weakly sparse classes in this transduction ideal are exactly the classes of bounded expansion. Therefore, this transduction ideal is contained 
in the \emph{dense analogue} of bounded expansion classes \cite{stable-tww-lics}.

A graph class has \emph{structurally bounded expansion}~\cite{lsd-journal} if it is a transduction  of a class with bounded expansion. Corollary~\ref{cor:transductions} immediately implies that those classes have bounded flip-width.
\begin{corollary}
  Every class of structurally bounded expansion has bounded flip-width.
\end{corollary}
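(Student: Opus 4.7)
The plan is to observe that this corollary follows directly by chaining two previously established results: Theorem~\ref{thm:copw-fw}, which tells us that every class of bounded expansion has bounded flip-width, with Corollary~\ref{cor:transductions}, which tells us that transductions preserve bounded flip-width.

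Concretely, I would first unfold the definition: a class $\DD$ has structurally bounded expansion when there is a class $\CC$ of bounded expansion that transduces $\DD$. Then I would apply Theorem~\ref{thm:copw-fw} to $\CC$ to conclude that $\fw_r(\CC)<\infty$ for every $r\in\N$, so $\CC$ has bounded flip-width. Finally, I would invoke Corollary~\ref{cor:transductions} with $\CC$ and $\DD$ to transfer bounded flip-width from $\CC$ to $\DD$.

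There is no real obstacle here — the statement is essentially the composition of two facts already on the page. The only thing to be careful about is purely bookkeeping: one should confirm that the notion of ``structurally bounded expansion'' used in the statement matches the transduction-based definition referenced just above (from \cite{lsd-journal}), so that the hypothesis of Corollary~\ref{cor:transductions} applies verbatim. Given that the paragraph explicitly recalls this definition right before the corollary, there is nothing further to verify, and the proof is essentially a one-line deduction.
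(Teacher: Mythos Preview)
Your proposal is correct and matches the paper's own reasoning exactly: the paper states that Corollary~\ref{cor:transductions} immediately implies the result, using that classes of bounded expansion have bounded flip-width (Theorem~\ref{thm:copw-fw}) and that this property is preserved under transductions. There is nothing to add.
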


We remark that an extension of  Theorem~\ref{thm:interpretations} and of Corollary~\ref{cor:transductions}
also hold, with the same proof (but a slightly different bound in \eqref{eq:interpretations}), for input structures over a binary signature $\Sigma$, rather than graphs $G$,
and where instead of one formula $\phi(x,y)$
defining a new edge relation of the output graph,
we have a tuple  of formulas $\bar\phi=(\phi_1(x,y),\ldots,\phi_k(x,y))$ defining $k$ binary relations of the output structure $\bar \phi(G)$
with domain $V(G)$ and $i$th relation $R_i$, for $i=1,\ldots,k$, interpreted as \[R_i:=\setof{(a,b)}{a,b\in V(G), G\models \phi_i(a,b)}.\]
We then obtain that if $\CC$ is a class of $\Sigma$-structures of bounded flip-width (see Sec.~\ref{sec:tww} and \focs[\cite{flip-width-arxiv}]{Appendix~\ref{app:bin-struct}}),
then the class $\bar\phi(\CC):=\setof{\bar\phi(G)}{G\in\CC}$ also has bounded flip-width.

This implies (using Corollary~\ref{cor:tww-char}) the result of \cite{tww1},
that transductions preserve classes of bounded twin-width.
\begin{corollary}[\cite{tww1}]
  If $\CC$ has bounded twin-width and 
  $\phi(x,y)$ is a first-order formula, then $\phi(\CC)$ has bounded twin-width.
\end{corollary}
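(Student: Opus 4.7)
The plan is to reduce the corollary to the two deep tools already developed in the paper: the characterization of bounded twin-width via ordered flip-width (Corollary~\ref{cor:tww-char}), and the closure of bounded flip-width under transductions of binary structures (the extension of Theorem~\ref{thm:interpretations} stated just before the corollary). The whole argument is essentially a two-step sandwich: lift $\CC$ from unordered graphs to ordered graphs where flip-width is bounded, apply the transduction theorem in that richer signature while preserving the order, and then drop back down to unordered graphs using the characterization again.

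Concretely, suppose $\CC$ has bounded twin-width. By Corollary~\ref{cor:tww-char}, there is a class $\CC^<$ of ordered graphs obtained by equipping each $G \in \CC$ with a suitable total order, such that $\CC^<$ has bounded flip-width as a class of binary structures (over the signature $\{E,<\}$). I would then view the given formula $\phi(x,y)$ as a formula in the richer signature (it simply ignores the symbol $<$), and consider the tuple of output formulas $\bar\phi = (\phi_E(x,y),\, \phi_<(x,y))$, where $\phi_E(x,y) \equiv \phi(x,y) \lor \phi(y,x)$ defines the new edge relation and $\phi_<(x,y) \equiv x<y$ simply copies the order. Applying the binary-structure extension of Theorem~\ref{thm:interpretations} to $\CC^<$ and $\bar\phi$ yields that the class $\bar\phi(\CC^<)$ of ordered graphs, whose edge relation is $\phi(G)$ and whose order is the one inherited from $\CC^<$, again has bounded flip-width.

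Finally, invoking the easy direction of Corollary~\ref{cor:tww-char} (a class of graphs has bounded twin-width whenever it admits some ordering under which the resulting class of ordered graphs has bounded flip-width), we conclude that the underlying unordered class of $\bar\phi(\CC^<)$, which is exactly $\phi(\CC)$, has bounded twin-width.

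There is no genuine obstacle here beyond bookkeeping; the only subtlety to check is that the extension of Theorem~\ref{thm:interpretations} really handles multi-relation outputs defined by several formulas simultaneously on the same domain, and in particular that the order relation, being quantifier-free definable, can be preserved without blowing up the flip-width beyond the bound provided by the theorem. Both points are granted by the remark preceding the corollary, so the argument is essentially complete as sketched.
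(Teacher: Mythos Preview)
Your proposal is correct and follows essentially the same approach as the paper: lift to ordered graphs via Corollary~\ref{cor:tww-char}, apply the binary-structure extension of Theorem~\ref{thm:interpretations} with the pair $(\phi,\, x<y)$ to preserve both the interpreted edge relation and the order, and then drop the order via Corollary~\ref{cor:tww-char} again. The paper's argument is identical up to notation (it writes $\wh\CC$ for your $\CC^<$ and assumes $\phi$ symmetric rather than explicitly symmetrizing).
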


Indeed, let $\CC$ be a class of bounded twin-width and $\phi(x,y)$ a symmetric formula in the signature of graphs. By Corollary~\ref{cor:tww-char} there 
is a class $\wh\CC$ of ordered graphs of bounded flip-width such that $\CC$ is obtained from $\wh\CC$ by forgetting the order.
Let
 $\bar\phi$ be the pair consisting of the formula $\phi(x,y)$ and the formula $x<y$ defining the order.
Then $\bar\phi(\wh\CC)$ is a class of ordered graphs which has bounded flip-width, since $\wh\CC$ has bounded flip-width.
Moreover, the graph class $\phi(\CC)$ is obtained from the class of ordered graphs $\bar\phi(\wh\CC)$ by forgetting the order. Hence, $\phi(\CC)$ has bounded twin-width, by Corollary~\ref{cor:tww-char}.

Together with Theorem \ref{thm:wsparse}, Corollary~\ref{cor:transductions} implies the following consequence\sz{expand} of \cite{lsd-icalp}.
\begin{corollary}
  Every weakly sparse class with structurally bounded expansion has bounded expansion.  
\end{corollary}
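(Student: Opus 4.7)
The plan is to chain together the three ingredients mentioned in the corollary: the definition of structurally bounded expansion, the closure of bounded flip-width under transductions (Corollary~\ref{cor:transductions}), and the characterization of bounded expansion among weakly sparse classes (Theorem~\ref{thm:wsparse}). There is essentially no independent combinatorial content: the work has been done by the preceding theorems, and this corollary just records the syllogism they allow.

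Concretely, let $\DD$ be a weakly sparse class of structurally bounded expansion. By definition, there is a class $\CC$ of bounded expansion such that $\DD$ is a transduction of $\CC$. First, I would invoke Theorem~\ref{thm:copw-fw} (or the immediate consequence already remarked after it) to conclude that $\CC$ has bounded flip-width. Next, since bounded flip-width is preserved under transductions by Corollary~\ref{cor:transductions}, it follows that $\DD$ has bounded flip-width. Finally, $\DD$ is weakly sparse by assumption, so the characterization in Theorem~\ref{thm:wsparse} -- a class has bounded expansion if and only if it is weakly sparse and has bounded flip-width -- yields that $\DD$ has bounded expansion, as required.

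There is no real obstacle here; the only thing to double-check is that the notion of transduction used in the definition of \emph{structurally bounded expansion} from \cite{lsd-journal} matches the notion of transduction covered by Corollary~\ref{cor:transductions} (including the use of auxiliary colorings and the passage to the hereditary closure). This is indeed the case, since both are simple first-order transductions over the signature of (colored) graphs, and the hereditary closure step is harmless because flip-width is monotone under induced subgraphs by Lemma~\ref{lem:hereditary}. With that verification, the three-step chain above completes the proof.
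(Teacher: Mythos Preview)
Your proposal is correct and follows exactly the route the paper indicates: the paper simply states that the corollary follows from Corollary~\ref{cor:transductions} (yielding bounded flip-width for structurally bounded expansion classes) together with Theorem~\ref{thm:wsparse} (weakly sparse $+$ bounded flip-width $\Rightarrow$ bounded expansion), and your argument spells out precisely this chain.
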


\medskip

  The proof of Theorem~\ref{thm:interpretations}, which is sketched below, relies on locality of first-order logic, a central notion for analysing 
  first-order formulas on sparse graphs~\cite{seese_1996}, which, in some form, also plays a key role in understanding first-order formulas on classes of bounded twin-width~\cite{tww1, tww-types-icalp}.
  The function $T_q(k)$, although computable, is astronomical:
\[T_q(k):=\underbrace{2^{2^{.^{.{^{.^{2^m}}}}}}}_{{\rm height\,} q}\]
  where $m$ is the number of distinct $k$-colored graphs with vertex set $\set{1,\ldots,q+1}$. However, for $q=0$ we have $T_0(k)=k$,
  so the bound \eqref{eq:interpretations} 
  becomes $\fw_r(\phi(G))\le c\cdot \fw_r(G)$ 
  in the case when 
  $\phi(x,y)$ is a quantifier-free formula and $G$ is a $c$-colored graph with underlying graph $G_0$.

  \paragraph{$\cmso$ transductions and radius-$\infty$ flip-width}
  For the case of radius $r=\infty$,
  we prove that classes $\CC$ with $\fw_\infty(\CC)<\infty$ are preserved under transductions expressed in the more powerful logic called $\cmso$.  
  We remark that this result is known for classes of bounded clique-width,
  so our result follows from Theorem~\ref{thm:cw}, which states that $\CC$ has bounded clique-width if and only if $\fw_\infty(\CC)<\infty$.
  However, we provide a separate proof here, as it is very analogous
  to the proof of Theorem~\ref{thm:interpretations}, 
  and nicely illustrates the parallels between the bounded flip-width case, and the limit case of bounded radius-$\infty$ flip-width.
  We also show how the closure under $\cmso$ transductions 
  can be used to obtain an alternative proof of Theorem~\ref{thm:cw}.
  Let us first define the logic, and state the result.
  
  \medskip
  \emph{Counting Monadic Second Order Logic} ($\cmso$) is the extension of first-order logic, where apart from first-order quantifiers $\exists x,\forall x$, that range over vertices $x$ of a graph,
  we have second-order quantifers $\exists X,\forall X$, that range over sets $X$ of vertices of the graph. We additionally have the atomic predicate $x\in X$ that allows to check whether a given vertex belongs to a given set, and the divisibility predicates $\textrm{div}_k(X)$, for $k\ge 1$, where $\textrm{div}_k(X)$ holds for a given set of vertices $X$ if and only if $|X|$ is divisible by $k$.
  The usual constructs of first-order logic
  (boolean connectives and relation
  symbols, such as adjacency in a graph) are also included.

   $\cmso$ is able to express non-local properties. 
   For instance, the formula 
   $$
   \phi(x,y)=\neg \exists X.\left[(x\in X)\land(y\not\in Y)\focs[\\]{}\land\forall z. \forall t. \left[(z\in X)\land E(z,t)\rightarrow (t\in X)\right]\right]$$
   expresses that $x$ and $y$ lie in the same connected component.
  Still, $\cmso$ enjoys property similar to locality of first-order logic,
  called compositionality (or a Feferman-Vaught-Mostowski type result), which is an analogue of locality for the radius $r=\infty$.
  Using this instead of locality, 
  with the same proof as in Theorem~\ref{thm:interpretations}, we get the following.
  
  \begin{theorem}[\appmark]\label{thm:interpCMSO}
    Let $\CC$ be a class of $c$-colored graphs of bounded $\infty$-flip-width and let $\phi(x,y)$ be a formula of $\cmso$.
    Then $\phi(\CC)$ has bounded $\infty$-flip-width.
  \end{theorem}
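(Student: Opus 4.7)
The plan is to imitate the proof of Theorem~\ref{thm:interpretations} almost line by line, with two substitutions adapted to the radius-$\infty$ / $\cmso$ setting. First, the radius blow-up $r' = 2^q r$ used in the FO case becomes irrelevant, because at radius $\infty$ the runner can already traverse an entire connected component; we will always simulate a width-$k$ game on $G$ by a width-$k'$ game on $\phi(G)$ at the same radius $\infty$. Second, Gaifman/Hanf-style locality is replaced by the Feferman--Vaught--Mostowski compositionality theorem for $\cmso$: given finitely many structures equipped with a bounded number of unary predicates, the rank-$q$ $\cmso$-type of their disjoint union is computable from the rank-$q$ $\cmso$-types of the pieces, and this extends to structures obtained by $\cmso$-definable (in particular, quantifier-free) modifications such as flips.

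Concretely, fix $\CC$ with $\fw_\infty(\CC) \le k$ and $\phi(x,y)$ of $\cmso$ quantifier rank $q$. For $G \in \CC$, fix a winning flipper strategy $\sigma$ of width $k$ in the radius-$\infty$ game on $G$. I would build a flipper strategy $\tilde\sigma$ on $\phi(G)$ as follows. In round $i$, when $\sigma$ prescribes the $k$-flip $G_i$ of $G$ underlain by a partition $\cal P_i$ (with flipped pairs $F_i$), the simulating flipper announces the flip of $\phi(G)$ given by a refinement $\cal Q_i$ of $\cal P_i$: two vertices $u,v \in V(G)$ lie in the same $\cal Q_i$-class iff they lie in the same $\cal P_i$-class and the pointed, $\cal P_i$-enriched connected components $(C_u, \cal P_i{\restriction}C_u, u)$ and $(C_v, \cal P_i{\restriction}C_v, v)$ of $G_i$ have the same rank-$q'$ $\cmso$-type, for some $q' = q + O(1)$ chosen large enough to subsume the flip operation. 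Since the number of such types in a bounded signature is a constant $N = N(q,k,c)$, we have $|\cal Q_i| \le k \cdot N$. The flipped pairs of $\cal Q_i$ are determined by demanding that, for $u,v$ in distinct components of $G_i$, the resulting $uv$-adjacency in the announced flip of $\phi(G)$ agrees with $\phi(u,v) \lor \phi(v,u)$ evaluated in $G$; compositionality guarantees that this adjacency depends only on the pair of $\cal Q_i$-classes of $u$ and $v$, so the flip is well-defined. Within a single component one can flip inside arbitrarily, and the refinement provided by $\cal Q_i$ suffices to match $\phi$ there, since an entire component contributes a single type.

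The correctness argument then runs as in Theorem~\ref{thm:interpretations}. The crucial invariant is that every connected component of the announced flip of $\phi(G)$ is contained in a single connected component of $G_i$: this is immediate from the construction, because all potential crosscomponent edges in $\phi(G)$ are cancelled by the $\cal Q_i$-flip. Consequently, when the runner in the $\phi(G)$-game moves inside a component of the previous flip of $\phi(G)$, the same vertex is a legal move in the simulated game on $G$, and we can maintain a virtual play consistent with $\sigma$. Since $\sigma$ is winning on $G$, after finitely many rounds the runner is isolated in some $G_i$, i.e.\ their component is a singleton; by construction the announced flip of $\phi(G)$ then also isolates that vertex, so $\tilde\sigma$ wins.

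The main obstacle is verifying that the compositionality bookkeeping is sound: one has to check that the rank-$q$ $\cmso$-type of $G$ relevant to the formula $\phi(u,v)$ can indeed be read off from (i) the rank-$q'$ $\cmso$-types of the $\cal P_i$-enriched, pointed components of $G_i$ and (ii) the partition $\cal P_i$ together with the flipped pairs $F_i$, and that this is uniform across all choices of $u,v$ in a fixed pair of $\cal Q_i$-classes. This is a standard but somewhat delicate extension of the Feferman--Vaught--Mostowski theorem from disjoint unions to their $\cmso$-definable modifications (flips), requiring one to absorb the complexity of the flip operation into a modest shift $q \to q' = q + O(1)$. Once this is established, the rest of the argument is bookkeeping entirely analogous to the FO case.
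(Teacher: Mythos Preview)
Your approach is essentially the same as the paper's. The paper packages the Feferman--Vaught--Mostowski argument as a black-box ``$\infty$-locality'' statement (Fact~\ref{fact:cmso-locality}): for every colored graph there is a bounded-range labelling $\ltp$ and a relation $\Phi$ such that for $u,v$ in different connected components, $\phi(u,v)$ holds iff $(\ltp(u),\ltp(v))\in\Phi$. This is exactly your ``rank-$q'$ $\cmso$-type of the pointed, $\cal P_i$-enriched component'' (the global information from the remaining components is absorbed into the relation $\Phi$, which may depend on the graph). The paper then derives the analogue of Lemma~\ref{lem:response} for $r=\infty$ (Lemma~\ref{lem:responseCMSO}) and applies the strategy-transfer machinery (Corollary~\ref{lem:strategy-transfer-infty}), which is precisely your invariant that each component of the announced flip of $\phi(G)$ sits inside a component of $G_i$.

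One wording slip to fix: you write that the flipped pairs are chosen so that ``the resulting $uv$-adjacency in the announced flip of $\phi(G)$ agrees with $\phi(u,v)\lor\phi(v,u)$'' for cross-component $u,v$. That is the opposite of what you want and of what you correctly assert two sentences later (``all potential cross-component edges in $\phi(G)$ are cancelled''). You should flip a pair $(A,B)$ of $\cal Q_i$-classes precisely when $\phi(u,v)\lor\phi(v,u)$ holds for some (equivalently, every) cross-component pair $(u,v)\in A\times B$; then cross-component edges vanish in the flip. With that corrected, your proof matches the paper's.
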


  Say that a class $\CC$ \emph{$\cmso$-transduces} a class $\DD$,
  or that $\DD$ is a \emph{$\cmso$-transduction} of $\CC$,
  if there is a $c$-coloring  $\wh\CC$ of $\CC$, for some $c\ge 1$, and some $\cmso$ formula $\phi(x,y)$ in the signature of $c$-colored graphs,
  such that every graph in $\DD$ is an induced subgraph of some graph in  $\phi(\wh\CC)$ (that is, $\DD$ is contained in the hereditary closure of $\phi(\wh\CC)$). 
  
    \begin{corollary}\label{cor:mnip-cmso}
      If $\CC$ is a class with $\fw_\infty(\CC)<\infty$ that $\cmso$-transduces a class $\DD$, then ${\fw_\infty(\DD)<\infty}$.
    \end{corollary}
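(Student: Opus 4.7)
The proof plan is to unfold the definition of a $\cmso$-transduction and directly invoke Theorem~\ref{thm:interpCMSO} together with the hereditary monotonicity of flip-width (Lemma~\ref{lem:hereditary}). Suppose $\CC$ has $\fw_\infty(\CC)<\infty$ and $\CC$ $\cmso$-transduces $\DD$. By definition, there exist $c\ge 1$, a $c$-coloring $\wh\CC$ of $\CC$, and a $\cmso$-formula $\phi(x,y)$ in the signature of $c$-colored graphs, such that $\DD$ is contained in the hereditary closure of $\phi(\wh\CC)$.

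The first step is to observe that $\wh\CC$ has bounded radius-$\infty$ flip-width. This is immediate from the definition of flip-width for colored graphs: $\fw_\infty$ of a $c$-colored graph equals that of the underlying uncolored graph, and every element of $\wh\CC$ lies over a graph in $\CC$, so $\fw_\infty(\wh\CC)=\fw_\infty(\CC)<\infty$.

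The second step is to apply Theorem~\ref{thm:interpCMSO} to the class $\wh\CC$ and the formula $\phi(x,y)$, which yields $\fw_\infty(\phi(\wh\CC))<\infty$. Finally, since every $H\in\DD$ is an induced subgraph of some $\phi(G)\in\phi(\wh\CC)$, Lemma~\ref{lem:hereditary} gives $\fw_\infty(H)\le \fw_\infty(\phi(G))\le\fw_\infty(\phi(\wh\CC))$, so $\fw_\infty(\DD)\le\fw_\infty(\phi(\wh\CC))<\infty$, as required.

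There is essentially no obstacle here: the entire content of the corollary lies in Theorem~\ref{thm:interpCMSO}, and the only remaining work is bookkeeping around the definition of a transduction (introducing colors and then passing to the hereditary closure). In particular, no new compositionality argument is needed at the corollary level, since it has already been encapsulated in the statement of Theorem~\ref{thm:interpCMSO}.
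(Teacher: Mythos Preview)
Your proof is correct and matches the paper's approach exactly: the corollary is stated without proof in the paper, being an immediate consequence of Theorem~\ref{thm:interpCMSO} together with the hereditary monotonicity of radius-$\infty$ flip-width (Lemma~\ref{lem:hereditary}), just as the analogous first-order Corollary~\ref{cor:transductions} follows from Theorem~\ref{thm:interpretations}.
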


    We now show that, together with a result of Courcelle and Oum, Corollary~\ref{cor:mnip-cmso} yields an alternative proof of the backwards implication in  Theorem~\ref{thm:cw},  that classes of bounded radius-$\infty$ flip-width have bounded clique-width.

Courcelle and Oum~\cite[Corollary 7.5]{courcelle-oum-vertex-minors-seese} proved the following result\footnote{In fact, they proved an equivalent statement, but with `all square grids' instead of `all graphs'. However, it is easy to see that the class of all square grids $\cmso$-transduces the class of all graphs, and transductions can be composed.}.
\begin{theorem}\label{thm:courcelle-oum}
  Every graph class of unbounded clique-width
   $\cmso$-transduces the class of all graphs.
\end{theorem}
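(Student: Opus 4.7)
The plan is to derive the theorem by composing two $\cmso$-transductions: first the Courcelle--Oum result cited in the footnote, which says that every graph class of unbounded clique-width $\cmso$-transduces the class of all $n \times n$ square grids; second, a direct transduction showing that the class of square grids itself $\cmso$-transduces the class of all finite graphs. Since $\cmso$-transductions are closed under composition (which is a routine consequence of substitution in $\cmso$ formulas), chaining the two yields the statement.

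To build the grid-to-all-graphs transduction, I would encode an arbitrary graph $G$ with vertex set $\{1,\dots,n\}$ into a suitably colored copy of the $n \times n$ grid $\mathrm{Grid}_n$, using a constant number of colors. Using four ``parity'' colors, color cell $(i,j)$ according to $(i \bmod 2, j \bmod 2)$. This makes horizontal edges (which always connect cells whose first-coordinate parities agree) $\cmso$-distinguishable from vertical edges (whose second-coordinate parities agree). Consequently, the relations ``same row'' and ``same column'' become $\cmso$-definable as connectivity within the horizontal (resp. vertical) subgraph, and connectivity is $\mso$-definable. Additionally, mark the top row with a color $V$, identifying cell $(1,i)$ with vertex $i$ of $G$, and for each edge $ij \in E(G)$ mark the cells $(i,j)$ and $(j,i)$ with a color $E$.

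The interpreting formula $\phi(x,y)$ then says: ``$x$ and $y$ have color $V$, and there exists a cell $z$ of color $E$ that lies in the same column as $x$ and in the same row as $y$.'' Restricting $\phi(\mathrm{Grid}_n)$ (with the coloring above) to the color-$V$ cells recovers $G$ as an induced subgraph. Letting $\wh\CC$ be the class of all such colorings of square grids verifies that the class of square grids $\cmso$-transduces the class of all finite graphs.

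The main technical point -- and the only real obstacle -- is verifying that horizontal and vertical edges of the grid are $\cmso$-definable from the parity coloring. Once that is in hand, ``same row'' and ``same column'' reduce to connectivity in definable subgraphs, and the rest of $\phi$ is first-order. Composing with the Courcelle--Oum result (after adjusting the number of colors, which simply increases the arity of the composite transduction) then yields the theorem.
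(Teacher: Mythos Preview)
Your approach is exactly what the paper does: the paper does not prove this theorem directly but cites Courcelle--Oum for the square-grid version and notes in a footnote that square grids $\cmso$-transduce all graphs and that transductions compose. You have simply spelled out that footnote.

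One glitch in your explicit encoding: with vertices placed in the top row as $x=(1,a)$ and $y=(1,b)$, the clause ``$z$ lies in the same column as $x$ and in the same row as $y$'' forces $z=(1,a)$, which does not locate the cell $(a,b)$ (or $(b,a)$) that records the edge $ab$. The standard fix is either to place the vertices on the diagonal, so that $x=(a,a)$, $y=(b,b)$ and ``column of $x$, row of $y$'' pins down $(b,a)$; or to keep the top-row placement but add a diagonal color and use it to pass from column $a$ to row $a$ before reading off the $E$-marker. With either correction the construction goes through, and the rest of your argument (parity coloring to separate horizontal and vertical edges, $\mso$-definable connectivity for ``same row/column'', composition of $\cmso$-transductions) is correct and matches the paper's intended reasoning.
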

In particular, as the class of all graphs has unbounded radius-one flip-width
by Corollary~\ref{cor:fw1-nontrivial}, it follows from Corollary~\ref{cor:mnip-cmso}
and Theorem~\ref{thm:courcelle-oum} that every class $\CC$ of unbounded clique-width
has $\fw_\infty(\CC)=\infty$, proving the backward implication in Theorem~\ref{thm:cw}.

    
\subsection{Transferring strategies}\label{sec:transfer}    
  \newcommand{\HHH}{H}
  \newcommand{\GGG}{G}
Before giving the details about the proof of Theorem~\ref{thm:interpretations}, we define a tool that will be also useful in other contexts.
The following notion
 allows to transfer a winning strategy of the flipper from a graph $\GGG$ to a graph $\HHH$. Let $\GGG$ and $\HHH$ be two graphs with $V(\HHH)\subset V(\GGG)$. Fix $r_G,r_H\in\N\cup\set{\infty}$ and $k,\ell\in\N$.
 Suppose furthermore that the following are given:
 \begin{itemize}
  \item a mapping $F$ that maps each $k$-flip $\GGG'$ of 
  $\GGG$ to an $\ell$-flip $\HHH'=F(\GGG')$ of $\HHH$,
\item a strategy of the flipper in the game on the graph $\GGG$ with radius 
$r_G$ and width~$k$.
 \end{itemize} 
 This induces the following strategy of the flipper in the flipper game of radius $r_H$ and width $\ell$ on the graph $\HHH$; call this game the \emph{$\HHH$-game}. Consider a play of this game the $\HHH$-game. Simultaneously, initiate the \emph{$\GGG$-game} on $\GGG$,
with radius $r_G$ and width $k$, in which we will copy runner's moves from the $\HHH$-game.
Whenever the flipper announces a $k$-flip $\GGG'$ of $\GGG$ in the $\GGG$-game,
then in the $\HHH$-game
the flipper announces the $\ell$-flip $\HHH'=F(\GGG')$ of $\HHH$. 
Whenever the runner moves to a vertex $v$ in the $\HHH$-game, 
we also move the runner to $v$ in the $\HHH$-game. 
Note that this move might not be a valid move in the $\HHH$-game.
In this case, the flipper continues the $\GGG$-game by playing arbitrarily (e.g. always announcing $\GGG$ as the next flip) until the end of the game. We say the resulting strategy of the flipper (in the $\GGG$-game) is  \emph{transferred} from the considered strategy in the $\HHH$-game, according to the mapping $F$.

The following lemma 
gives a condition which implies that  if the original strategy on $\GGG$ is winning,
then the transferred strategy on $\HHH$ is winning.

\begin{lemma}\label{lem:strategy-transfer}
  Fix  $r\in\N\cup\set{\infty}$ and $k,\ell,s\ge 1$.
Let $\HHH,\GGG$ be two graphs with $V(\HHH)\subset V(\GGG)$.
Suppose that for all $k\ge 1$ and every $k$-flip $\GGG'$ of $\GGG$ there is some $\ell$-flip $\HHH'=F(\GGG')$ of $\HHH$,
such that
\begin{align}\label{eq:transfer}
   \dist_{\GGG'}(u,v)\le s\qquad\text{  for all $uv\in E(\HHH')$.}
\end{align}
Then transferring a winning strategy of the flipper from the flipper game on $\GGG$ with radius $rs$ and width $k$, according to the mapping $F$, results in a winning strategy of the flipper in the flipper game on $\HHH$ with radius $r$ and width $\ell$.
In particular,
 \[\fw_{rs}(\GGG)\le k\qquad \text{implies} \qquad\fw_{r}(\HHH)\le \ell.\]
\end{lemma}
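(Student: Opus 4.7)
The plan is to run the $G$-game and the $H$-game in parallel, exactly following the recipe for a transferred strategy described just before the lemma. Using the assumed winning strategy for the flipper on the $G$-game (with radius $rs$ and width $k$), the flipper in the $H$-game announces $H_i := F(G_i)$ at round $i$, and the runner's positions are kept identical in the two games: whenever the runner picks $v_i\in V(H)\subseteq V(G)$ in the $H$-game, the same $v_i$ is declared to be the runner's position in the simulated $G$-game. Two things need to be verified: first, that every runner move in the $H$-game translates to a legal runner move in the $G$-game (so that the flipper on $G$ never has to fall back to the arbitrary behavior from the definition of transfer); and second, that whenever the flipper eventually traps the runner in the $G$-game, the runner is simultaneously trapped in the $H$-game.

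The core step, and the only place where the hypothesis \eqref{eq:transfer} is used, is the path-to-walk translation. Suppose in round $i$ the runner moves from $v_{i-1}$ to $v_i$ along a path $v_{i-1}=u_0,u_1,\ldots,u_t=v_i$ in the previous $\ell$-flip $H_{i-1}=F(G_{i-1})$, with $t\le r$. Applying \eqref{eq:transfer} with $G'=G_{i-1}$ to each edge $u_ju_{j+1}$ replaces it by a walk of length at most $s$ in $G_{i-1}$, and concatenation produces a walk from $v_{i-1}$ to $v_i$ of length at most $rs$ in $G_{i-1}$ — a legal move in the $G$-game with radius $rs$. Hence the simulation is legal in every round and the flipper's winning strategy on $G$ is really executed.

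The endgame follows from the same hypothesis in one line: if the $G$-game ends in some round $i$ with $v_i$ isolated in $G_i$, then $v_i$ is also isolated in $H_i=F(G_i)$, since any putative edge $uv_i$ of $H_i$ with $u\neq v_i$ would force $\dist_{G_i}(u,v_i)\le s<\infty$ by \eqref{eq:transfer}, contradicting isolation in $G_i$. So the flipper wins the $H$-game in the same round, proving that the transferred strategy is winning and hence establishing the displayed inequality $\fw_{rs}(G)\le k\Rightarrow \fw_r(H)\le \ell$. The only delicate bookkeeping I anticipate concerns round one, where the runner walks in the unflipped initial graphs $H_0=H$ and $G_0=G$; extending $F$ to the trivial $k$-flip of $G$ so that $F(G)=H$ makes the main argument apply uniformly to every round and removes this edge case.
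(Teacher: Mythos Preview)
Your proof is correct and follows the same approach as the paper's. The paper states the core step as the ball containment $B^r_{H'}(v)\subseteq B^{rs}_{G'}(v)$ for $H'=F(G')$, while you spell it out via explicit path-to-walk concatenation; these are the same observation. Both proofs then conclude that legal runner moves in the $H$-game copy to legal moves in the $G$-game, and that isolation of $v_i$ in $G_i$ forces isolation of $v_i$ in $H_i=F(G_i)$.

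One small remark on the first-round edge case you flag: your suggested fix of declaring $F(G)=H$ only works if $\dist_G(u,v)\le s$ for all $uv\in E(H)$, which the hypothesis does not assert---it only guarantees \emph{some} $\ell$-flip $F(G)$ of $H$ with that property, not $H$ itself. The paper's proof glosses over the same point with ``trivially satisfied in the first round.'' This is minor bookkeeping (one clean patch is to shift the simulated $G$-game by one round, initializing its runner at the $H$-game's $v_1$) and does not affect the substance of either argument.
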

In the case $r=\infty$, the conclusion reads 
{``$\fw_\infty(\GGG)\le k$ implies $\fw_\infty(\HHH)\le \ell$''}. Moreover we can then replace $\le s$ by $<\infty$ in \eqref{eq:transfer}, since for $s:=|V(\GGG)|$ we have that $\dist_{\GGG'}(u,v)\le s\iff{\dist_{\GGG'}(u,v)<\infty}$.
We state this explicitly below.
\begin{corollary}\label{lem:strategy-transfer-infty}
  Fix  $k,\ell\ge 1$.
Let $\HHH,\GGG$ be two graphs with $V(\HHH)\subset V(\GGG)$.
Suppose that for all $k\ge 1$ and every $k$-flip $\GGG'$ of $\GGG$ there is some $\ell$-flip $\HHH'=F(\GGG')$ of $\HHH$,
such that
\begin{align}\label{eq:transfer-infty}
   \dist_{\GGG'}(u,v)<\infty \qquad\text{  for all $uv\in E(\HHH')$.}
\end{align}
Then \[\fw_{\infty}(\GGG)\le k\qquad \text{implies} \qquad\fw_{\infty}(\HHH)\le \ell.\]
\end{corollary}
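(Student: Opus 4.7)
The plan is to deduce this corollary directly from Lemma~\ref{lem:strategy-transfer} by instantiating the parameters $r:=\infty$ and $s:=|V(\GGG)|$ (or any upper bound on the number of vertices of $\GGG$). The only thing to verify is that, under the hypothesis \eqref{eq:transfer-infty}, the strictly stronger looking hypothesis \eqref{eq:transfer} of Lemma~\ref{lem:strategy-transfer} is satisfied for this choice of $s$. This is immediate: the graph $\GGG$, and therefore every $k$-flip $\GGG'$ of $\GGG$, is a \emph{finite} graph on at most $s$ vertices, so two vertices $u,v$ lie in the same connected component of $\GGG'$ if and only if they are joined in $\GGG'$ by a path of length at most $s-1\le s$. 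Hence the condition ``$\dist_{\GGG'}(u,v)<\infty$ for all $uv\in E(\HHH')$'' coincides with the condition ``$\dist_{\GGG'}(u,v)\le s$ for all $uv\in E(\HHH')$'' required by Lemma~\ref{lem:strategy-transfer}.

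Having verified this, I would invoke Lemma~\ref{lem:strategy-transfer} with the chosen $r,s$ and the same mapping $F$. The lemma produces, from a winning flipper strategy on $\GGG$ with radius $r\cdot s$ and width $k$, a winning flipper strategy on $\HHH$ with radius $r$ and width $\ell$. Since $r=\infty$ satisfies $\infty \cdot s = \infty$ (the game is played with unbounded runner speed regardless of $s$), the hypothesis $\fw_\infty(\GGG)\le k$ supplies the required strategy on $\GGG$, and the conclusion $\fw_\infty(\HHH)\le\ell$ follows.

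There is no real obstacle here beyond this observation: the whole content of the corollary is that in the radius-$\infty$ setting the quantitative bound on distances in $\GGG'$ is irrelevant, so one only needs the \emph{qualitative} condition that endpoints of $\HHH'$-edges are connected in $\GGG'$. The transfer-of-strategies machinery of Lemma~\ref{lem:strategy-transfer} then provides the rest of the argument for free.
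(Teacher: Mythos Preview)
Your proposal is correct and matches the paper's argument exactly: the paper also sets $s:=|V(\GGG)|$, observes that $\dist_{\GGG'}(u,v)\le s\iff\dist_{\GGG'}(u,v)<\infty$, and invokes Lemma~\ref{lem:strategy-transfer} with $r=\infty$.
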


\begin{proof}[Proof of Lemma~\ref{lem:strategy-transfer}]



Consider a play in the flipper game on $\HHH$
of radius $r$ and width $\ell$, according to the strategy transferred from $\GGG$, as described above. 
Suppose that, in some round, $\GGG'$ is the announced flip in the $\GGG$-game 
and $\HHH'=F(\GGG')$ is the announced flip in the $\HHH$-game, 
and that $v$ is the new vertex chosen by the runner in the $\HHH$-game.
By \eqref{eq:transfer}, the following holds:
\[B^r_{\HHH'}(v)\subset B^{rs}_{\GGG'}(v).\]
In the case $r=\infty$, each side of the inclusion should be interpreted as the connected component of $v$ in the appropriate graph.

 In particular, in the next round, 
every valid move of runner in the $\HHH$-game will also be a valid move 
of runner in the $\GGG$-game (this is trivially satisfied in the first round),
and if runner is trapped in the $\GGG$-game, that is, $|B^{rs}_{\GGG'}(v)|=1$, then 
also $|B^r_{\HHH'}(v)|=1$, so
he is also trapped in the $\HHH$-game. Thus, this describes a winning strategy 
for the flipper in the flipper game on $\HHH$ with radius $r$ and width $\ell$.
In particular, $\fw_r(\HHH)\le \ell$.
\end{proof}

\subsection{Proof of Theorem~\ref{thm:interpretations}}
  We now sketch the proof of Theorem~\ref{thm:interpretations}.
The details are presented in \focs[\cite{flip-width-arxiv}]{Appendix~\ref{app:transductions}},
along with the proof of Theorem~\ref{thm:interpCMSO}.

  For simplicity, we assume the case $c=1$, that is, when the considered graphs $G$ have no colors. The general case proceeds analogously. 
  
  Our aim is to transfer a winning strategy 
  of the flipper from $G$ to $H=\phi(G)$ (with appropriate radii),
  by applying Lemma~\ref{lem:strategy-transfer}. 
  So for every flip $G'$ of $G$ we need to produce a flip $\phi(G)'$ of $\phi(G)$
  such that adjacent vertices in $\phi(G)'$ are not too far in $G'$.
  We first show how to achieve this in the case when $G'=G$,
  using locality, a well known tool from finite model theory, which we now recall.

  \paragraph{Locality}
Fix a number $s\in\N$.
  Say that a formula $\phi(x,y)$ is \emph{$s$-local},
  if for any graph $G$
  there is a labelling of the vertices of $G$ using a bounded number of labels
  (depending only on $\phi$, and not on $G$)
such that for any two vertices $u,v$ of $G$
with $\dist_G(u,v)>s$, whether or not $G\models \phi(u,v)$ depends only 
on the label of $u$ and the label of $v$.
It is well-known (and follows for instance from Gaifman's locality theorem) 
that every formula $\phi(x,y)$ of first-order logic is 
$s$-local for some radius $s$ depending only on the quantifier rank $q$ of $\phi$.
Namely, one can take $s:=2^q$.
The label assigned to a vertex $v$ of $G$ as above,
is essentially the set of formulas $\alpha(x)$ 
of quantifier rank at most $q$, such that $\alpha(v)$ holds in $G$.
The number of such formulas, up to equivalence, is finite, and is bounded by $T_q(1)$,
where $T_q$ is the function described above.\sz{check} 

\paragraph{Flipping $\phi(G)$}
We now show how to obtain a flip $\phi(G)'$ of $\phi(G)$ such that vertices that are adjacent in $\phi(G)'$ are not too far in $G$.
Let $\cal P$ be the partition of $V(G)=V(\phi(G))$
such that two vertices of $G$ are in the same part if they get the same label.
In particular, $|\cal P|\le T_q(1)$.
Now, in the graph $\phi(G)$ flip a pair of parts $A,B$ of $\cal P$
if and only if there is a pair of vertices $u\in A$ and $v\in B$,
such that $\dist_G(u,v)>s$ and $G\models \phi(u,v)$,
equivalently, $uv\in E(\phi(G))$.
The statement above implies that whether or not we flip $A$ and $B$,
does not depend on the choice of $u\in A$ and $v\in B$
such that $\dist_G(u,v)>s$. This yields a $\cal P$-flip of $\phi(G)$,
which we denote $\phi(G)'$.
Then the following holds for all $u,v\in V(G)$:
\begin{quote}
   $uv\in E(\phi(G)')$ \qquad implies\qquad $\dist_{G}(u,v)\le s$.  
\end{quote}


We now generalize the above reasoning to the case when $G'$ is a $k$-flip of $G$.
Again, the goal is to construct a flip $\phi(G)'$ of $\phi(G)$ such that vertices that are adjacent in $\phi(G)'$ are not too far in $G'$.

We treat $G'$ as a $k$-colored graph, by adding colors that mark parts of the partition that is used to produce the $k$-flip $G'$ of $G$.
The key observation is that we can write a formula $\phi'(x,y)$ that makes use of those colors, and such that 
\[G'\models \phi'(u,v) \qquad \Longleftrightarrow \qquad G\models \phi(u,v)\qquad \text{for all $u,v\in V(G)$.}\] 
This is because we can write a formula $\eps(x,y)$ such that $G'\models \eps(u,v)$ if and only if $G\models E(u,v)$
(the formula $\eps(x,y)$ checks the colors of $x$ and $y$, whether $x$ and $y$ are adjacent in $G'$, and inverts the flip). The formula $\phi'(u,v)$ is obtained by replacing each atomic formula $E(z,t)$ with $\eps(z,t)$. In particular, $\phi'$ has the same quantifier rank as $\phi$, and is therefore also $s$-local.
Applying the same argumentation as above, but this time to the formula $\phi'$ and the $k$-colored graph $G'$,
we obtain a $\cal P$-flip $\phi(G)'$ such that 
the following holds for all $u,v\in V(G)$:
\begin{quote}
   $uv\in E(\phi(G)')$ \qquad implies\qquad $\dist_{G'}(u,v)\le s$.  
\end{quote}
Moreover, the size of $\cal P$ can be bounded by a number $\ell$ depending on the formula $\phi$ and the number $k$.
Since this holds for every $k$-flip $G'$ of $G$,
we can now apply Lemma~\ref{lem:strategy-transfer} and conclude 
that $\fw_{r}(\phi(G))\le \ell$.

\section{Definable flip-width}\label{sec:definable}
Determining the flip-width of radius $r$ of a given graph $G$ 
seems computationally difficult. The space of all configurations in the flipper game of radius $r$ and width $k$ (consisting of a $k$-flip of $G$ and the runner's position) has size exponential in $|G|$,
and the naive algorithm for determining whether $\fw_r(G)\le k$,
which explores the space of all configurations, 
therefore runs in time exponential in $|G|$. We expect that for possible algorithmic applications, an algorithm which approximates $\fw_r(G)$, instead of computing it exactly, should suffice. This is what happens in the case of weak coloring numbers, and of twin-width, where such an approximation algorithm is not known in general, but is known in various special cases.

In this section, we introduce the \emph{definable flipper game} of radius $r$ and width $k$, in which the moves allowed for the flipper are parameterized by tuples of vertices of the graph, rather than by partitions of the vertex set.
Among other things, this reduces (comparing to the flipper game) the number of configurations, and the computational complexity of determining the definable flip-width of a given graph.
The main result of this section is Theorem~\ref{thm:dfw} that 
says that the definable flip-width at radius $r$ can be bounded in terms 
of the flip-width at radius $5r$.
This allows to obtain, in Theorem~\ref{thm:apx}, an algorithm 
for approximating flip-width, which runs in time $|G|^{O(k)}\cdot O_k(1)$,
where $k$ is the flip-width.
We use tools related to VC-dimension to accomplish this.

\paragraph{Atomic types and definable flips}
Let  $S$ be a set of vertices of a graph~$G$.
Consider the partition $\cal P$ of $V(G)$
such that two vertices $u,v\in V(G)$
are in the same part if $N(u)\cap S=N(v)\cap S$.
The equivalence classes of the partition $\cal P$ are called \emph{$S$-types}.
In particular, $|\cal P|\le 2^{|S|}$.

Say that a graph $G'$ is an \emph{$S$-definable flip of $G$}
if $G'$ is a $\cal P$-flip of $G$, where $\cal P$ is the partition of $V(G)$ into $S$-types.
Say that $G'$ is a \emph{$k$-definable flip} of $G$, if $G'$ 
is an $S$-definable flip of $G$ for some $S\subset V(G)$ with $|S|\le k$.

Note that a $k$-definable flip of $G$ is a $2^k$-flip of $G$.
However, there is no function $f$ such every $k$-flip of a graph $G$
is a $f(k)$-definable flip of $G$. For instance, 
the graph $G_n$ obtained from an $n$-clique $K_n$ by adding $n$ 
isolated vertices, is a $2$-flip of $K_{2n}$,
but is not an $(n-1)$-definable flip of $K_{2n}$.

\paragraph{Definable flipper game}
Fix $r\in\N\cup\set{\infty}$.
The \emph{definable flipper game of radius $r$ and width $k$}
is defined in the same way as the flipper game of radius $r$,
but now in each round the flipper is allowed to announce a $k$-definable flip $G'$ of $G$, rather than a $k$-flip of $G$.
\begin{definition}Fix $r\in\N\cup\set{\infty}$.
  The \emph{radius-$r$ definable flip-width} of a graph $G$,
  denoted $\dfw_r(G)$,
  is the smallest number $k$ such that the flipper has a winning strategy in the definable flipper game of radius $r$ and width $k$ on $G$.
\end{definition}

As every $k$-definable flip of $G$ is a $2^k$-flip of $G$,
it follows that for every $r\in\N\cup\set{\infty}$ and graph $G$ we have:
\begin{align}\label{eq:fw-dfw-trivial}
  \fw_r(G)\le 2^{\dfw_r(G)}.  
\end{align}


One advantage of the definable version of the flipper game 
is that it has far fewer configurations than the original flipper game.
As there are only $O(|G|^{k+1}\cdot 2^{4^k})$ configurations
in the definable flipper game of width $k$,
we get that it can be decided 
in time $|G|^{O(k)}\cdot 2^{O(2^k)}$ whether a given graph $G$ 
has $\fwd_r(G)\le k$ (see \focs[\cite{flip-width-arxiv}]{Appendix~\ref{app:dfw}} for a proof).
\begin{lemma}[\appmark]\label{lem:decide-dfw}
  There is an algorithm that, given a graph $G$ 
  and numbers $k\in\N$ and $r\in\N\cup\set{\infty}$, determines whether 
  $\fwd_r(G)\le k$ in time $n^{O(k)}\cdot 2^{O(2^k)}$.
\end{lemma}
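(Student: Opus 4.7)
The plan is to cast the problem as a finite two-player reachability game played on a graph of configurations, and then apply the standard polynomial-time attractor computation (backward induction) to determine the winner. The main work is combinatorial bookkeeping: enumerating and representing the $k$-definable flips efficiently, and verifying that the resulting game graph has the claimed size.

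First I would enumerate all $k$-definable flips of $G$. Each such flip is specified by a pair $(S, F)$ where $S \subset V(G)$ with $|S| \le k$ and $F$ is a set of (possibly equal) pairs of $S$-types. There are $\binom{n}{\le k} = O(n^k)$ choices of $S$, and since the partition of $V(G)$ into $S$-types has at most $2^k$ parts, there are at most $2^{O(2^{k})}$ relevant subsets $F$ (absorbing polynomial factors in the exponent). For each such flip $G'$ I would compute, by breadth-first search, the radius-$r$ balls $B_{G'}^r(v)$ for every $v \in V(G)$, and record which vertices are isolated in $G'$; this takes time polynomial in $|G|$ per flip.

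Next I would build the configuration graph. A configuration is a pair $(G', v)$ where $G'$ is a $k$-definable flip of $G$ and $v \in V(G)$; intuitively it records the flip in effect and the runner's current position at the start of a round. From $(G_{\mathrm{prev}}, v_{\mathrm{prev}})$, the flipper first picks a new $k$-definable flip $G_{\mathrm{new}}$, then the runner picks $v_{\mathrm{new}} \in B^r_{G_{\mathrm{prev}}}(v_{\mathrm{prev}})$ and moves to $(G_{\mathrm{new}}, v_{\mathrm{new}})$, losing immediately if $v_{\mathrm{new}}$ is isolated in $G_{\mathrm{new}}$. This is an infinite-duration reachability game (the flipper's winning condition being eventual isolation of the runner); by standard determinacy, the set $W$ of flipper-winning configurations equals the least fixpoint of the operator that adds every flipper-configuration having at least one successor in $W$ and every runner-configuration whose all successors lie in $W$. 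Computing $W$ by saturation takes time linear in the size of the game graph, which is $n^{O(k)} \cdot 2^{O(2^k)}$ up to polynomial overhead from the BFS precomputation. Finally the algorithm returns YES iff for every choice of initial vertex $v_0 \in V(G)$ by the runner, the configuration $(G, v_0)$ lies in $W$.

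The main obstacle is not conceptual but quantitative: packaging the flips and their induced balls so that each edge of the game graph is processed in amortized constant time, and checking that the count of $k$-definable flips fits within the $n^{O(k)} \cdot 2^{O(2^k)}$ budget (which reduces to observing that the number of $S$-types, hence the number of unordered flip-pairs, is bounded polynomially in $2^k$). Determinacy of the game and correctness of the attractor iteration are entirely standard.
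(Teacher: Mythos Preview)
Your approach is essentially identical to the paper's: enumerate all configurations $(S,F,v)$ where $S$ is a $k$-subset of vertices, $F$ specifies which pairs of $S$-types to flip, and $v$ is the runner's position, then solve the resulting finite reachability game by a fixpoint (attractor) computation.

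One quantitative slip: the partition into $S$-types has up to $2^k$ parts, so the number of unordered pairs of parts is $\binom{2^k+1}{2}=\Theta(4^k)$, and hence the number of choices for $F$ is $2^{\Theta(4^k)}$, not $2^{O(2^k)}$ as you write. The paper's own proof in fact counts $O(n^{k+1}\cdot 2^{4^k})$ configurations, which is mildly at odds with the bound $2^{O(2^k)}$ stated in the lemma; your write-up inherits this discrepancy. This does not affect the correctness of the algorithm or its use downstream (Theorem~\ref{thm:apx} only needs the running time to be $n^{O(k)}$ times a function of $k$), but you should either correct the count to $2^{O(4^k)}$ or supply an argument for why fewer flips suffice.
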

\szfuture{$2^{O(2^k)}$ can be reduced to $2^{k^{O(1)}}$ using Sauer-Shelah} 
The following lemma is an immediate consequence of the Sauer-Shelah-Perles lemma 
(Lemma~\ref{lem:sauer-shelah-perles}).
\begin{lemma}[\appmark]\label{lem:fw-dfw-vc}
  Fix $r\in\N\cup\set{\infty}$.
  For every graph $G$ we have:
  \begin{align}
    \fw_r(G)&\le O(\dfw_r(G)^{\VCdim(G)}).   \label{eq:fw-dfw-vc}
  \end{align}
\end{lemma}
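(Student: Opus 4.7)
The plan is to show that any $k$-definable flip is in fact an $O(k^d)$-flip, where $d=\VCdim(G)$, and then transfer a winning strategy from the definable flipper game to the ordinary flipper game verbatim. Setting $k:=\dfw_r(G)$ immediately yields the bound.

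First I would unfold the definition of a $k$-definable flip. Recall that such a flip $G'$ of $G$ is specified by some set $S\subset V(G)$ with $|S|\le k$, together with a choice of pairs of $S$-types to flip. The relevant partition $\cal P_S$ of $V(G)$ into $S$-types has size equal to the number of distinct traces $|\setof{N_G(v)\cap S}{v\in V(G)}|$. This is precisely where VC-dimension enters: the set system $(V(G),\setof{N_G(v)}{v\in V(G)})$ has VC-dimension $\VCdim(G)=d$, so by the Sauer-Shelah-Perles lemma (Lemma~\ref{lem:sauer-shelah-perles}) applied to this set system restricted to $S$, we get
\[|\cal P_S|\;=\;|\setof{N_G(v)\cap S}{v\in V(G)}|\;\le\; O(|S|^d)\;\le\; O(k^d).\]
Consequently every $k$-definable flip of $G$ is an $O(k^d)$-flip of $G$.

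Next I would transfer strategies. Fix $k:=\dfw_r(G)$ and let $\sigma$ be a winning strategy for the flipper in the radius-$r$ definable flipper game of width $k$ on $G$. We use $\sigma$ unchanged in the ordinary radius-$r$ flipper game of width $\ell:=O(k^d)$ on $G$: whenever $\sigma$ prescribes announcing a $k$-definable flip $G'$, the flipper announces the very same graph $G'$, which by the previous paragraph is a legal $\ell$-flip. The runner's allowed moves (paths of length at most $r$ in the previously announced flip) and the winning condition (isolation of the runner's position in the newly announced flip) depend only on the announced graphs, not on how they were produced. Hence every play consistent with the transferred strategy corresponds to a play consistent with $\sigma$, and the flipper wins. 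Therefore $\fw_r(G)\le\ell=O(\dfw_r(G)^{d})$, as required.

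There is no real obstacle here; the only point to be careful about is the direction in which Sauer-Shelah-Perles is applied, namely that the bound $O(|S|^d)$ on the number of traces holds \emph{uniformly} for every set $S$, not just for $S=V(G)$. This is the usual shatter-function formulation of the lemma and is immediate from Lemma~\ref{lem:sauer-shelah-perles} applied to the set system $(S,\setof{N_G(v)\cap S}{v\in V(G)})$, whose VC-dimension is at most $\VCdim(G)$.
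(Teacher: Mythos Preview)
Your proof is correct and matches the paper's approach essentially verbatim: bound the number of $S$-types by $O(k^d)$ via Sauer--Shelah--Perles applied to the restricted system $(S,\setof{N_G(v)\cap S}{v\in V(G)})$, conclude that every $k$-definable flip is an $O(k^d)$-flip, and transfer the winning strategy unchanged. The paper's proof is simply a terser version of what you wrote.
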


The following is the main result of Section~\ref{sec:definable}. It says that the definable flip-width 
can be bounded in terms of the flip-width, at the cost of increasing the radius. Below, $5\cdot\infty$ is interpreted as $\infty$.
\begin{theorem}\label{thm:dfw}
  Fix $r\in\N\cup\set{\infty}$.
  For every graph $G$ we have:
  \begin{align}
    \dfw_r(G)&\le O(\fw_{5r}(G)^3)\label{eq:dfw-fw}.
    \end{align}
\end{theorem}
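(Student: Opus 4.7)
The plan is to prove $\dfw_r(G) \le O(\fw_{5r}(G)^3)$ by transferring a winning strategy for the flipper from the (non-definable) flipper game of radius $5r$ and width $k := \fw_{5r}(G)$ to the definable flipper game of radius $r$ and width $K = O(k^3)$, using Lemma~\ref{lem:strategy-transfer} (with $H = G = G$, and the radius blow-up factor $s = 5$). Note that a $K$-definable flip is still a flip (of width at most $2^K$), so a strategy in the definable game is of the same type as a strategy in the original flipper game, and the strategy-transfer lemma applies verbatim.

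The key technical claim is the following: for every $k$-flip $G'$ of $G$, there exists a $K$-definable flip $G'' = F(G')$ of $G$, with $K = O(k^3)$, such that for every edge $uv \in E(G'')$ we have $\dist_{G'}(u,v) \le 5$. Once this claim is established, Lemma~\ref{lem:strategy-transfer} applied to the identity inclusion $V(G)\subseteq V(G)$ and the mapping $F$ yields that any winning strategy of the flipper in the radius-$5r$ width-$k$ flipper game gives a winning strategy in the radius-$r$ width-$K$ definable flipper game, hence $\dfw_r(G) \le K = O(k^3)$. For $r = \infty$ the distance condition is replaced by connectivity (Corollary~\ref{lem:strategy-transfer-infty}) and the argument is identical.

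To construct $G''$ from a $k$-flip $G'$ with underlying partition $\cal P=\{A_1,\dots,A_k\}$, the plan is to exploit bounded VC-dimension: since $\fw_1(G) \le \fw_{5r}(G) = k$, Theorem~\ref{thm:vcdim} yields $\VCdim(G) \le 8k$. Using the Sauer–Shelah–Perles lemma (Lemma~\ref{lem:sauer-shelah-perles}), for each ordered pair $(A_i,A_j)$ of parts and each distinct trace $N(v)\cap (A_i\cup A_j)$ there are polynomially many ``neighborhood types'', and one can select a small witness set $S_{ij}\subset V(G)$ of size $O(k)$ that refines the type structure between $A_i$ and $A_j$. Taking $S=\bigcup_{i,j}S_{ij}$ yields $|S|=O(k^3)$. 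The $S$-definable flip $G''$ is obtained by flipping, for each pair $(A_i,A_j)$ of parts flipped in producing $G'$ from $G$, the corresponding pairs of $S$-types that ``sit inside'' $A_i$ and $A_j$; vertices inside a single $S$-type that is contained in some $A_i$ are treated uniformly.

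The hard part is verifying the distance condition: whenever $uv \in E(G'')$, we must show $\dist_{G'}(u,v) \le 5$. The only way an edge of $G''$ can fail to coincide with the ``correct'' edge of $G'$ is at ``borderline'' vertices $u$, $v$ whose $S$-types collapse two $\cal P$-parts; such pairs are near-twins witnessed by the absence of distinguishing elements in $S$. Here the combinatorial content of having bounded $\fw_1$ (e.g.\ Lemma~\ref{lem:near-twins} and its bipartite variant Lemma~\ref{lem:near-twins-two-sets}) is used to show that any such $u,v$ either coincide with a vertex in $S$, or can be connected in $G'$ by a short path of length at most $5$, typically routed through a carefully chosen witness in $S$ of length $2+1+2 = 5$. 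Making this bookkeeping tight, so that the exponent on $k$ is $3$ rather than higher and the distance bound is exactly $5$ rather than larger, is the main delicate step of the argument; the overall shape of the proof is a VC-based refinement argument glued to the strategy-transfer machinery.
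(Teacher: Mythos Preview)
Your overall architecture matches the paper exactly: bound $\VCdim(G)\le O(k)$ via Theorem~\ref{thm:vcdim}, produce for each $k$-flip $G'$ an $O(k^3)$-definable flip $G''$ whose edges lie at $G'$-distance at most $5$, and then transfer the winning strategy using Lemma~\ref{lem:strategy-transfer} with blow-up $s=5$. This is precisely how the paper proceeds, and the key claim you isolate is the paper's Corollary~\ref{cor:incr}.

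The gap is in how you propose to prove that key claim. The paper derives it from Lemma~\ref{lem:incr}, a result of~\cite{boundedLocalCliquewidth} whose proof rests on the $(p,q)$-theorem of Alon--Kleitman--Matou\v{s}ek (equivalently, the VC-theorem combined with von Neumann's minimax duality; see Appendix~\ref{app:duality}). That duality is exactly what manufactures a small set $S$ such that adjacency in $G$ depends only on $S$-types for pairs at $G'$-distance greater than~$5$. Your proposal to replace this by Sauer--Shelah plus near-twins does not work as stated: Sauer--Shelah bounds the \emph{number} of neighborhood traces over a given set, but does not hand you a small witness set $S_{ij}$ of size $O(k)$ that ``refines the type structure between $A_i$ and $A_j$''; producing such a set is precisely the content of the duality theorem. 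Moreover, your construction ``flip the pairs of $S$-types that sit inside $A_i$ and $A_j$'' presumes that $S$-types refine the partition $\cal P$, which is false in general---an $S$-type is determined by adjacency to $S$, not by $\cal P$-membership. The near-twins lemmas (Lemma~\ref{lem:near-twins}, Lemma~\ref{lem:near-twins-two-sets}) concern the existence of near-twins in $G$, and do not give you a routing mechanism for short paths in an arbitrary $k$-flip $G'$; the sketched $2+1+2$ path is not justified.

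In short: you have the right skeleton, but the construction of the definable flip with the distance-$5$ guarantee needs the $(p,q)$-theorem (or an equivalent fractional-Helly-type argument), not just Sauer--Shelah and near-twins.
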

We prove Theorem~\ref{thm:dfw} below. We first observe some consequences.
The bounds~\eqref{eq:fw-dfw-trivial} and \eqref{eq:dfw-fw}
give the following.
\begin{corollary}
  The following conditions are equivalent for a graph class $\CC$:
\begin{enumerate}\item $\CC$ has bounded flip-width, that is, $\fw_r(\CC)<\infty$ for all $r\ge 1$,
  \item $\dfw_r(\CC)<\infty$ for all $r\ge 1$.
\end{enumerate}
\end{corollary}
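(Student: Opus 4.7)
The plan is to derive the corollary as an immediate consequence of the two inequalities displayed just before it, namely \eqref{eq:fw-dfw-trivial} giving $\fw_r(G)\le 2^{\dfw_r(G)}$ for every graph $G$, and \eqref{eq:dfw-fw} from Theorem~\ref{thm:dfw} giving $\dfw_r(G)\le O(\fw_{5r}(G)^3)$. Both implications then follow by applying one of these pointwise estimates to every $G\in\CC$ and taking suprema.

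For the implication $2\Rightarrow 1$, I would assume $\dfw_s(\CC)<\infty$ for every $s\ge 1$, fix an arbitrary $r\ge 1$, and use \eqref{eq:fw-dfw-trivial} to get
\[
\fw_r(G)\;\le\;2^{\dfw_r(G)}\;\le\;2^{\dfw_r(\CC)}\qquad\text{for every }G\in\CC.
\]
Taking the supremum over $G\in\CC$ yields $\fw_r(\CC)\le 2^{\dfw_r(\CC)}<\infty$, and since $r$ was arbitrary, $\CC$ has bounded flip-width.

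For the implication $1\Rightarrow 2$, I would assume $\fw_s(\CC)<\infty$ for every $s\ge 1$, fix an arbitrary $r\ge 1$, and apply \eqref{eq:dfw-fw} pointwise:
\[
\dfw_r(G)\;\le\;O(\fw_{5r}(G)^3)\;\le\;O(\fw_{5r}(\CC)^3)\qquad\text{for every }G\in\CC.
\]
Since $5r\ge 1$, condition~$1$ gives $\fw_{5r}(\CC)<\infty$, so taking the supremum over $G\in\CC$ yields $\dfw_r(\CC)<\infty$. Because $r$ was arbitrary, this shows condition~$2$.

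There is no real obstacle here: the entire content of the corollary is packaged into Theorem~\ref{thm:dfw} (which must be proved separately, and is the genuine nontrivial ingredient), together with the essentially trivial bound \eqref{eq:fw-dfw-trivial}. Once those two are in hand, the corollary reduces to taking suprema over the class and noting that $r$ ranging over $\N$ matches $5r$ ranging over the multiples of $5$, so each quantifier over radii is absorbed into the other.
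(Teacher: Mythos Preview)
Your proposal is correct and matches the paper's own justification exactly: the paper simply states that the corollary follows from the bounds~\eqref{eq:fw-dfw-trivial} and~\eqref{eq:dfw-fw}, and you have spelled out precisely how those two pointwise inequalities yield both implications upon taking suprema over $\CC$.
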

Similarly, for the case $r=\infty$, using Theorem~\ref{thm:cw} we get the following characterization 
of classes of bounded clique-width in terms of the definable flipper game with radius~$\infty$:
\begin{corollary}
  The following conditions are equivalent for a graph class $\CC$:
\begin{enumerate}\item $\CC$ has bounded clique-width,
  \item $\dfw_\infty(\CC)<\infty$.
\end{enumerate}
\end{corollary}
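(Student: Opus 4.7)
The statement is a direct corollary of two results already established in the paper: Theorem~\ref{thm:cw}, which gives the equivalence between bounded clique-width and $\fw_\infty(\CC)<\infty$, and Theorem~\ref{thm:dfw} combined with the trivial inequality~\eqref{eq:fw-dfw-trivial}, which together establish the functional equivalence of $\fw_r$ and $\dfw_r$. The plan is simply to chain these.

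For the forward implication, suppose $\CC$ has bounded clique-width. By Theorem~\ref{thm:cw} applied with $r=\infty$, we have $\fw_\infty(\CC)<\infty$. Interpreting $5\cdot\infty$ as $\infty$ in inequality~\eqref{eq:dfw-fw} of Theorem~\ref{thm:dfw}, we obtain
\[
\dfw_\infty(G)\le O(\fw_\infty(G)^3)
\]
for every graph $G$, and hence $\dfw_\infty(\CC)<\infty$.

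For the converse, suppose $\dfw_\infty(\CC)<\infty$. The trivial inequality~\eqref{eq:fw-dfw-trivial}, which follows from the fact that every $k$-definable flip is a $2^k$-flip, gives $\fw_\infty(G)\le 2^{\dfw_\infty(G)}$ for every $G$, and hence $\fw_\infty(\CC)<\infty$. Applying Theorem~\ref{thm:cw} once more, $\CC$ has bounded clique-width.

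No step should present any real difficulty, since the corollary is merely a packaging of Theorems~\ref{thm:cw} and~\ref{thm:dfw} in the limit case $r=\infty$; the only point worth remarking is the interpretation $5\cdot\infty=\infty$, which is stated explicitly in the excerpt just before Theorem~\ref{thm:dfw}.
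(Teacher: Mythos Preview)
Your proof is correct and follows exactly the same approach as the paper, which presents this corollary without explicit proof, simply noting that it follows ``similarly'' from Theorem~\ref{thm:cw} together with the bounds~\eqref{eq:fw-dfw-trivial} and~\eqref{eq:dfw-fw}. Your write-up just makes the chaining explicit.
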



Finally, we get an algorithm for approximating the 
 flip-width of a given graph.
The algorithm is an approximation algorithm: unlike 
the algorithm in Lemma~\ref{lem:decide-dfw},
it does not allow to exactly determine whether the radius-$r$ flip-width 
of a given graph $G$ is smaller than a given number $k$.
Rather, it recognises one of two, non-exclusive, cases: whether $\fw_r(G)$ is small comparing to $k$, and whether $\fw_{5r}(G)$ is large comparing to $k$. Note that there is a gap in the radii, $r$ and $5r$. 
The running time of the algorithm is $O_k(1)\cdot n^{O(k)}$, which is called an XP algorithm (parameterized by $k$) in the language of parameterized complexity.

\begin{theorem}\label{thm:apx}
  There is a constant $C>0$ and an algorithm that inputs a graph~$G$ and numbers $r,k\in\N$,
  runs in time $n^{O(k)}\cdot 2^{O(2^k)}$,
  and either concludes that $\fw_r(G)\le 2^k$, 
  or concludes that ${\fw_{5r}(G)\ge C \cdot k^{1/3}}$.
\end{theorem}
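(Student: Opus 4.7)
The plan is to simply combine the exact algorithm for definable flip-width from Lemma~\ref{lem:decide-dfw} with the two inequalities relating $\fw_r$ and $\fwd_r$, namely \eqref{eq:fw-dfw-trivial} and Theorem~\ref{thm:dfw}. More precisely, on input $G, r, k$, the algorithm will run the procedure of Lemma~\ref{lem:decide-dfw} to decide whether $\fwd_r(G)\le k$. This uses time $n^{O(k)}\cdot 2^{O(2^k)}$, matching the required budget exactly.

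If the procedure answers yes, i.e. $\fwd_r(G)\le k$, then by \eqref{eq:fw-dfw-trivial} we immediately obtain $\fw_r(G)\le 2^{\fwd_r(G)}\le 2^k$, and the algorithm outputs the first conclusion. If instead the procedure answers no, i.e. $\fwd_r(G)> k$, then by the contrapositive of Theorem~\ref{thm:dfw} we know $k<\fwd_r(G)\le C'\cdot \fw_{5r}(G)^3$ for some absolute constant $C'>0$, and so $\fw_{5r}(G)\ge (k/C')^{1/3}=C\cdot k^{1/3}$ for $C:=(C')^{-1/3}$. The algorithm outputs the second conclusion in this case.

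The two cases are exhaustive, so the algorithm always terminates with one of the two conclusions in time $n^{O(k)}\cdot 2^{O(2^k)}$, as claimed. There is essentially no obstacle in the proof itself: all of the work is done by Lemma~\ref{lem:decide-dfw} (which in turn exploits the polynomially-bounded configuration space of the definable flipper game) and by Theorem~\ref{thm:dfw} (which transfers from the definable to the ordinary flipper game at the cost of a constant-factor blow-up in radius and a cubic blow-up in width). The ``main content'' of the approximation result therefore lies upstream, in the proof of Theorem~\ref{thm:dfw}; Theorem~\ref{thm:apx} is a direct algorithmic corollary.
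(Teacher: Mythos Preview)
Your proof is correct and follows exactly the same approach as the paper: test whether $\dfw_r(G)\le k$ using Lemma~\ref{lem:decide-dfw}, then apply \eqref{eq:fw-dfw-trivial} in the positive case and Theorem~\ref{thm:dfw} (inequality~\eqref{eq:dfw-fw}) in the negative case.
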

\begin{proof}
  The algorithm 
   tests whether $\dfw_r(G)\le k$ in time $n^{O(k)}\cdot 2^{O(2^k)}$,
   using Lemma~\ref{lem:decide-dfw}. If $\dfw_r(G)\le k$
   it concludes that $\fw_r(G)\le 2^k$, by~\eqref{eq:fw-dfw-trivial}.
   If $\dfw_r(G)>k$, it concludes that $\fw_{5r}(G)\ge Ck^{1/3}$
   by~\eqref{eq:dfw-fw}, where $C>0$ is some fixed constant. 
\end{proof}

\paragraph{Proof of Theorem~\ref{thm:dfw}}
We now turn to the proof of Theorem~\ref{thm:dfw}. We use a result concerning graphs of small VC-dimension.
Recall that $\VCdim(G)\le O(\fw_1(G))$ by Theorem~\ref{thm:vcdim}.
The following
result from~\cite{boundedLocalCliquewidth},
 relies on the $(p,q)$-theorem of Alon-Kleitman-Matou\v sek~\cite{Matousek:p-q-theorem} (see \focs[\cite{flip-width-arxiv}]{Appendix~\ref{app:duality} and Appendix~\ref{app:inc}}).

\begin{lemma}[\appmark]\label{lem:incr}
  Fix $k,d\in\N$.
  Let $V$ be a set equipped with:
  \begin{itemize}  
    \item a binary relation $E\subset V\times V$ of VC-dimension at most $d$,
    \item a pseudometric $\dist\from V\times V\to \R_{\ge0}\cup\set{\infty}$
     (that is, a function satisfying the triangle inequality),
    \item 
    and a partition $\cal P$ of size at most $k$,
  \end{itemize}
  such that $E(u,v)$ depends only on the $\cal P$-class of $u$ and the $\cal P$-class of $v$ whenever $\dist(u,v)>1$.
  Then there is a set $S\subset V$ of size $\Oof(dk^2)$, such that $E(u,v)$ depends only on the $S$-types of $u$ and of $v$, whenever $\dist(u,v)>5$.
\end{lemma}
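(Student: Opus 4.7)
The plan is to construct $S$ as a disjoint union $S = \bigsqcup_{A \in \cal P} S_A$, with one set $S_A \subseteq A$ per class of size $O(dk)$, yielding $|S| = O(dk^2)$. The hypothesis of the lemma first yields a function $\beta \from \cal P \times \cal P \to \set{0,1}$ with $E(u,v) = \beta(A, B)$ whenever $u \in A$, $v \in B$, and $\dist(u,v) > 1$. Consequently, for $u, v$ with $\dist(u,v) > 5$ and respective classes $A, B$, we have $E(u,v) = \beta(A,B)$, and it suffices to arrange that the $S$-types of $u$ and $v$ determine $\beta(A,B)$.

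The key step is, for each class $A$, to construct $S_A \subseteq A$ so that for any vertex $w \in V$ admitting some $w' \in A$ with $\dist(w,w') > 5$, the restriction of the $S$-type of $w$ to $S_A$ unambiguously reveals $\beta(A, \text{class}(w))$. I plan to obtain such an $S_A$ via the $(p,q)$-theorem of Alon--Kleitman--Matou\v{s}ek applied to the family
\[
\cal F_A \;=\; \set{A \setminus B^{\le 1}(w) : w \in V,\ \exists w' \in A \text{ with } \dist(w,w') > 5}.
\]
Two ingredients enter. First, a VC-dimension bound on $\cal F_A$ in terms of $d$, obtained by noting that $s \in B^{\le 1}(w)$ is exactly the regime in which $E(s,w)$ is not forced by $\beta$, so the family of ``close balls intersected with $A$'' can be controlled in terms of $E$-neighborhoods, whose VC-dimension is at most $d$. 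Second, a $(p,q)$-property with $q = d+1$ and $p$ polynomial in $k$, following from a pigeonhole argument on the at most $k$ classes. The $(p,q)$-theorem then produces a transversal $S_A \subseteq A$ of size $O(dk)$.

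Granting such $S_A$, verification is immediate: for $u \in A$, $v \in B$ with $\dist(u,v) > 5$, the pair $(w, w') := (v, u)$ satisfies the precondition for the class $A$, so the $S$-type of $v$, restricted to the known subset $S_A \subseteq S$, decodes to $\beta(A, B) = E(u,v)$. The main obstacle I anticipate is cleanly bounding the VC-dimension of $\cal F_A$ in terms of $d$: the family of balls $\set{B^{\le 1}(w) : w \in V}$ carries no intrinsic VC-dimension control, so the bound must be extracted entirely from the $\dist$-versus-$E$ interaction guaranteed by the lemma. A secondary difficulty will be choosing the decoding rule (most naturally a majority vote of $E(s,w)$ over $s \in S_A$) so that it is robust against the bounded noise contributed by those $s \in S_A$ with $\dist(s,w) \le 1$; this may force me to strengthen the conclusion from a bare transversal to an $\varepsilon$-net.
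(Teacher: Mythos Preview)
Your instinct to reach for the $(p,q)$-theorem is correct, but the family $\cal F_A$ you apply it to is the wrong one: the balls $B^{\le 1}(w)$ carry no VC-dimension control, and neither do their complements within a class. Concretely, take $E=\emptyset$ (so $\VCdim(E)=0$) and $\cal P=\{V\}$; the hypothesis of the lemma is then vacuously satisfied for \emph{any} pseudometric, yet one can choose $\dist$ so that $\cal F_A$ shatters an arbitrarily large set. For instance, let $V$ consist of points $a_1,\dots,a_m$, one point $b_T$ for each $T\subseteq[m]$, and a single far point $c$; set $\dist(a_i,b_T)=1$ if $i\in T$ and $2$ otherwise, all remaining distances among the $a_i,b_T$ equal to $2$, and $\dist(c,\cdot)=10$. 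Each $b_T$ has $c$ at distance $>5$, and the sets $V\setminus B^{\le 1}(b_T)$ shatter $\{a_1,\dots,a_m\}$. So the VC bound you would need to invoke the $(p,q)$-theorem on $\cal F_A$ simply does not exist. The decoding worry you raise is also a genuine obstruction and is not repaired by passing to an $\varepsilon$-net or $\varepsilon$-approximation: the fraction of $A$ lying outside $B^{\le 1}(v)$ can be as small as $1/|A|$, so no sampling-based majority vote over $S_A$ can be trusted.

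The paper does not reprove the lemma from scratch. It quotes \cite[Thm.~3.5]{boundedLocalCliquewidth}, which establishes the identical conclusion under the hypothesis that every restriction $E\cap(A\times B)$, for $A,B\subseteq V$, admits a \emph{duality of order $d$}: either a set of at most $d$ points of $B$ that $E$-meets every $a\in A$, or a set of at most $d$ points of $A$ each $E$-missed by every $b\in B$. The paper's only step is to derive this duality hypothesis from $\VCdim(E)\le d$ via the Alon--Kleitman--Matou\v{s}ek result (Theorem~\ref{thm:duality}). The essential difference from your plan is that the $(p,q)$-machinery is applied directly to $E$, whose VC-dimension is bounded by assumption, and never to the pseudometric balls.
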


We reformulate Lemma~\ref{lem:incr} in terms of flips, as follows.

\begin{corollary}\label{cor:incr}
  Let $G$ be a graph and $d=\VCdim(G)$,
  and let $G'$ be a $k$-flip of $G$.
  Then there is a $O(dk^2)$-definable flip $H'$ of $G$
  such that 
\begin{align}\label{eq:dfw}
  \dist_{G'}(u,v)\le 5\qquad \text{for all $u,v\in E(H')$}
\end{align}
\end{corollary}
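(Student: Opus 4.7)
The strategy is to recognize the statement as a direct reformulation of Lemma~\ref{lem:incr} applied in the right setting, and then to use the set $S$ produced by the lemma to define the desired flip $H'$. Let $\mathcal{P}$ be the partition of $V(G)$ with $|\mathcal{P}|\le k$ such that $G'$ is a $\mathcal{P}$-flip of $G$, witnessed by a flip function $f\from \mathcal{P}\times \mathcal{P}\to \{0,1\}$; that is, $E_G(u,v)=E_{G'}(u,v)\oplus f(\mathcal{P}(u),\mathcal{P}(v))$ for all $u,v\in V(G)$.

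I will apply Lemma~\ref{lem:incr} to $V:=V(G)$ equipped with: the relation $E:=E(G)$ (which has VC-dimension at most $d=\VCdim(G)$), the pseudometric $\dist:=\dist_{G'}$ on $V(G)$, and the partition $\mathcal{P}$. The hypothesis is easily verified: if $\dist_{G'}(u,v)>1$, then $u\neq v$ and $uv\notin E(G')$, so $E_G(u,v)=f(\mathcal{P}(u),\mathcal{P}(v))$ depends only on the $\mathcal{P}$-classes of $u$ and $v$. The lemma then yields a set $S\subset V(G)$ with $|S|=O(dk^2)$ such that $E_G(u,v)$ depends only on the $S$-types of $u$ and of $v$ whenever $\dist_{G'}(u,v)>5$.

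Having $S$ in hand, let $\mathcal{Q}$ denote the partition of $V(G)$ into $S$-types. I define a flip function $g\from\mathcal{Q}\times\mathcal{Q}\to\{0,1\}$ as follows: for each pair of $S$-types $T_1,T_2$, set $g(T_1,T_2):=E_G(u,v)$ for any witnesses $u\in T_1$, $v\in T_2$ with $\dist_{G'}(u,v)>5$, and set $g(T_1,T_2):=0$ if no such witnesses exist. The conclusion of Lemma~\ref{lem:incr} guarantees that this is well-defined. Let $H'$ be the $\mathcal{Q}$-flip of $G$ determined by $g$; by construction $H'$ is an $S$-definable flip, so a $O(dk^2)$-definable flip of $G$.

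It remains to verify property \eqref{eq:dfw}: if $uv\in E(H')$, then $\dist_{G'}(u,v)\le 5$. Suppose for contradiction that $\dist_{G'}(u,v)>5$. Then by the defining property of $g$ we have $g(\mathcal{Q}(u),\mathcal{Q}(v))=E_G(u,v)$, so $E_{H'}(u,v)=E_G(u,v)\oplus g(\mathcal{Q}(u),\mathcal{Q}(v))=0$, contradicting $uv\in E(H')$. The only non-routine ingredient here is Lemma~\ref{lem:incr} itself; once that is invoked, the construction of $H'$ from $S$ and the verification are mechanical, and no additional combinatorial work is needed.
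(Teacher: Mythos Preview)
Your proof is correct and follows essentially the same approach as the paper: apply Lemma~\ref{lem:incr} with $E=E(G)$, $\dist=\dist_{G'}$, and the partition $\cal P$ witnessing the $k$-flip, then use the resulting set $S$ to build the $S$-definable flip $H'$ by flipping pairs of $S$-types that contain a far-apart adjacent pair. Your verification of the hypothesis and of \eqref{eq:dfw} is slightly more explicit than the paper's, but the argument is identical.
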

\begin{proof}
  Apply Lemma~\ref{lem:incr} to $V=V(G)$, 
      $E=E(G)$, $\dist\from V\times V\to \N\cup\set{\infty}$ denoting the shortest path metric in $G'$,
      and the partition $\cal P$ with $|\cal P|\le k$ such that $G'$ is a $\cal P$-flip of $G$.
      Note that for any two vertices $u,v\in V$,
      whether or not $uv\in E(G)$ holds, can be determined basing only on the $\cal P$-class of $u$, the $\cal P$-class of $v$, and on the information whether $uv\in E(G')$ holds.
      In particular, $uv\in E(G)$ depends only on 
      the $\cal P$-class of $u$ and the $\cal P$-class of $v$, for all $u,v\in V$ that are not adjacent in $G'$, equivalently, with $\dist(u,v)>1$.
    Hence, the assumption of Lemma~\ref{lem:incr} is satisfied.
    
    Let $S$ with $|S|\le O(dk^2)$ be as in the conclusion of the lemma, so that 
    whether or not $uv\in E(G)$, depends only on 
      the $S$-type of $u$ and the $S$-type of $v$, for all $u,v\in V$ with $\dist(u,v)>5$.
  
      Let $\cal P_S$ denote the partition of $V(G)$ into $S$-types.
      Let $H'$ be the $\cal P_S$-flip of $G$ 
      that flips between two $S$-types $A$ and $B$ 
      if and only if there are some $u\in A,v\in B$
      with $\dist(u,v)>5$ and $uv\in E(G)$.
      The conclusion follows.
\end{proof}

Theorem~\ref{thm:dfw} easily follows from Corollary~\ref{cor:incr}.

\begin{proof}[Proof of Theorem~\ref{thm:dfw}]
Let $G$ be a graph and let $k=\fw_{5r}(G)$. In particular, 
by Theorem~\ref{thm:vcdim}, we have that $\VCdim(G)\le O(k)$.
By Corollary~\ref{cor:incr} applied to $H:=G$,
the assumptions of Lemma~\ref{lem:strategy-transfer} 
are satisfied, where the $\ell$-flip $H'$ of $H=G$, for $\ell=2^{O(k^3)}$
is a $O(k^3)$-definable flip, as provided by Corollary~\ref{cor:incr}.
By transferring the winning strategy of the flipper in the flipper game 
of radius $5r$ and width $k$ on $G$, 
according to this mapping $G'\mapsto H'$, we get a strategy for the flipper in flipper game of radius $r$ on $G$, which uses only $O(dk^2)$-definable flips. 
By Lemma~\ref{lem:strategy-transfer}, this yields a 
winning strategy of the flipper that will use only $O(k^3)$-definable 
flips. Hence, $\dfw_r(G)\le O(k^3)$.
\end{proof}

\section{Almost bounded flip-width}
\label{sec:subpoly}

Recall from Fact~\ref{fact:nd-wcol} that a hereditary graph class $\CC$ is nowhere dense if and only if 
for every $r\ge 1$ and $G\in\CC$ we have $\wcol_r(G)=|G|^{o(1)}$.
  Inspired by this characterization, we extend the notion of bounded flip-width as follows.

\begin{definition}\label{def:subpoly}
  A graph class $\CC$ has \emph{almost bounded flip-width}
  if for every $r\ge 1$ and real $\eps>0$ we have $\fw_r(G)\le O_{\eps,r}(|G|^\eps)$ 
  for every graph $G$ in the hereditary closure of $\CC$.
\end{definition}

Note that we consider all graphs $G$ from the hereditary closure of $\CC$.
Otherwise, the class consisting of every graph $G$ with $2^{|G|}$ isolated vertices added to it, would have almost bounded flip-width, while according to the above definition, it does not. Indeed, we have the following lemma, which is an immediate consequence of Lemma~\ref{cor:large-vc-subgraph}.
\begin{lemma}\label{lem:abfw-vc}
  Every graph class with almost bounded flip-width has bounded VC-dimension.
\end{lemma}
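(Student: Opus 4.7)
The plan is to derive this directly from Corollary~\ref{cor:large-vc-subgraph}, which states that any graph $G$ with $\VCdim(G)\ge d$ contains an induced subgraph $H$ on $O(d)$ vertices with $\fw_1(H)\ge d/8$. The argument is essentially a contrapositive: large VC-dimension forces a small induced subgraph with flip-width linear in the VC-dimension, which rules out the subpolynomial bound $\fw_1(H)\le |H|^{o(1)}$ guaranteed by almost bounded flip-width.

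Concretely, I would argue by contradiction. Suppose $\CC$ has almost bounded flip-width but $\VCdim(\CC)=\infty$. Then for every $d\in\N$ there is some $G_d\in\CC$ with $\VCdim(G_d)\ge d$. Applying Corollary~\ref{cor:large-vc-subgraph} to $G_d$, we obtain an induced subgraph $H_d\subseteq G_d$ with $|H_d|\le c\cdot d$ (for some absolute constant $c$) and $\fw_1(H_d)\ge d/8$. Since $H_d$ is an induced subgraph of a graph in $\CC$, it lies in the hereditary closure of $\CC$, so the definition of almost bounded flip-width applies: for every $\eps>0$, $\fw_1(H_d)\le O_\eps(|H_d|^\eps)\le O_\eps((cd)^\eps)$.

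Combining the two bounds gives $d/8\le O_\eps((cd)^\eps)$ for every $d$, which is a contradiction as soon as we choose $\eps<1$ (say $\eps=1/2$) and let $d$ grow large enough. Hence $\VCdim(\CC)<\infty$, as required. There is no real obstacle here — the only thing to be careful about is to invoke almost bounded flip-width for the induced subgraphs $H_d$ rather than for $G_d$ itself, which is exactly why Definition~\ref{def:subpoly} quantifies over the hereditary closure. The proof is essentially a one-line observation, so I would write it as such.
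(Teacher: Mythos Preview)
Your proposal is correct and matches the paper's approach exactly: the paper states that Lemma~\ref{lem:abfw-vc} is an immediate consequence of Corollary~\ref{cor:large-vc-subgraph}, and your contrapositive argument via small induced subgraphs with linear $\fw_1$ is precisely the intended one-line derivation. Your remark that one must apply almost bounded flip-width to the induced subgraphs $H_d$ (hence the need to quantify over the hereditary closure in Definition~\ref{def:subpoly}) is also the point the paper emphasizes just before stating the lemma.
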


Clearly, every class with bounded flip-width 
 has almost bounded flip-width.  
As we conjecture (see Conjecture~\ref{conj:fw-mnip}), classes of almost bounded flip-width coincide with monadically dependent classes (see definition below), analogously to the characterization of nowhere dense classes in Fact~\ref{fact:nd-wcol}.

In this section, we provide some evidence towards this conjecture. In Theorem~\ref{thm:nd-are-subpoly}, we prove that a weakly sparse class has almost bounded flip-width if and only if it is nowhere dense, if and only if it is monadically dependent. In Theorem~\ref{thm:snd-are-subpoly} we prove that structurally nowhere dense classes have almost bounded flip-width.
In Theorem~\ref{thm:subpoly-mstab} we prove that 
edge-stable classes of almost bounded flip-width are monadically dependent.
In Theorem~\ref{thm:ordered-abfw} we prove that 
classes of ordered graphs of almost bounded flip-width coincide with classes of bounded twin-width,
and with classes of bounded flip-width.
We start with recalling the discussed notions.

\subsection{Monadic dependence and monadic stability}

The following notion, due to Shelah~\cite{Shelah1986}
(see also~\cite{Braunfeld2021CharacterizationsOM}), originates in model theory.
\begin{definition}
  A graph class $\CC$ is \emph{monadically dependent} (or monadically NIP) if and only if it does not transduce the class of all graphs.   
\end{definition}

Monadically dependent classes have recently attracted attention in areas of structural and algorithmic graph theory~\cite{AdlerA14,rankwidth-meets-stability,tww4,stable-tww-lics}, as it is conjectured
(see Conjecture~\ref{conj:mnip-mc})
that monadically dependent classes are precisely those for which model checking first-order logic is fixed-parameter tractable. 

Monadically dependent classes include all nowhere dense classes, and in fact, among weakly sparse classes, they provide an exact characterization:

\begin{fact}[Consequence of \cite{dvorakInducedSubdivisions}+\cite{AdlerA14}]\label{fact:wsparse-nd}
  Let $\CC$ be a weakly sparse graph class.
  Then $\CC$ is nowhere dense if and only if $\CC$ is monadically dependent.
\end{fact}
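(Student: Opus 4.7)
The forward implication, that every nowhere dense class is monadically dependent, is the easier direction and follows from Fact~\ref{fact:nd-nip} via a standard argument about colorings. Let $\CC$ be nowhere dense and let $\wh\CC$ be any $c$-coloring of $\CC$. The underlying uncolored graphs of $\wh\CC$ lie in $\CC$, so they form a nowhere dense class. The proof of Fact~\ref{fact:nd-nip} (Podewski--Ziegler, as recast in~\cite{AdlerA14}) goes through verbatim when one enriches the vocabulary with a bounded number of unary predicates, because unary predicates do not alter the Gaifman graph and the argument only uses sparsity of the underlying graph. Hence $\wh\CC$ is dependent. Since this holds for every coloring $\wh\CC$ and every first-order formula $\phi(\bar x;\bar y)$ in the enriched signature, $\CC$ cannot transduce the class of all graphs: such a transduction would yield, in some coloring, a formula with unbounded VC-dimension on $\wh\CC$, contradicting dependence. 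Therefore $\CC$ is monadically dependent.

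The reverse implication is the substantive one, and my strategy is to prove its contrapositive: if $\CC$ is weakly sparse but \emph{not} nowhere dense, then $\CC$ transduces the class of all graphs. First, I would invoke the induced-subdivision theorem of \Dvorak~\cite{dvorakInducedSubdivisions} (which in the present paper is used in the proof of Theorem~\ref{thm:wsparse}): for any weakly sparse class $\CC$ that is not nowhere dense, there exist $r\ge 1$ and $t\ge 1$ such that for every $n$, some graph in $\CC$ contains an induced exact $r$-subdivision of $K_n$. This is the content of Lemma~\ref{lem:dvorak} combined with the fact that weak sparsity rules out dense minors but not dense induced topological minors; by a standard pigeonhole argument one can pass from ${\le}r$-subdivisions to exact $r$-subdivisions of possibly smaller but still unbounded cliques.

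Second, I would use these induced subdivisions to construct a transduction from $\CC$ onto the class of all graphs. Given an arbitrary target graph $H$ with vertices $\{1,\ldots,n\}$, pick $G\in\CC$ containing an induced exact $r$-subdivision of $K_n$; let $v_1,\ldots,v_n$ be the principal vertices. Color $G$ with three unary predicates: a predicate $P$ marking the principal vertices; a predicate $S$ marking, for each pair $\{i,j\}$ with $ij\in E(H)$, all internal vertices of the subdivided path between $v_i$ and $v_j$; and a predicate $T$ marking those paths' internal vertices when $ij\notin E(H)$. Then the formula
\[
  \phi(x,y) \;:=\; P(x)\land P(y)\land \exists z_1\ldots z_r\Bigl[\textstyle\bigwedge_{i} S(z_i) \land \text{``}z_1,\ldots,z_r\text{ form a path from }x\text{ to }y\text{''}\Bigr]
\]
defines exactly the edges of $H$ on the set $\{v_1,\ldots,v_n\}$. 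Hence $H$ embeds as an induced subgraph into $\phi(\wh G)$ for this coloring $\wh G$ of $G\in\CC$. Since $H$ was arbitrary, the resulting transduction produces every finite graph, contradicting monadic dependence of $\CC$.

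The main obstacle is ensuring that the transduction uses only a \emph{bounded} number of colors independent of the target graph $H$. The scheme above uses only three colors ($P, S, T$), which suffices because the path length $r$ is a fixed constant and the formula $\phi(x,y)$ quantifies over exactly $r$ internal vertices. The cleanness of this step relies crucially on having induced subdivisions with a uniform subdivision radius $r$, which is exactly what Dvořák's theorem provides for weakly sparse classes, and which fails for general (non-weakly-sparse) monadically dependent classes -- illustrating why the equivalence is specific to the weakly sparse setting.
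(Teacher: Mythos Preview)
The paper does not give its own proof of this fact; it is cited as a known consequence of \cite{dvorakInducedSubdivisions} and \cite{AdlerA14}. Your plan follows exactly the route those citations suggest: Adler--Adler for the forward direction, Dvořák's induced-subdivision result for the backward one. Your forward direction is correct, and your transduction construction from induced exact $r$-subdivisions of $K_n$ is correct (since the subdivision is induced, the internal vertices in $S$ have degree two among the coloured vertices, so any witnessing walk must be the unique subdivided edge).

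There is, however, a genuine gap in how you extract the needed induced subdivisions. You attribute the statement ``weakly sparse and not nowhere dense implies induced exact $r$-subdivisions of all $K_n$ for some fixed $r$'' to Lemma~\ref{lem:dvorak}, but that lemma does not give this directly. First, Lemma~\ref{lem:dvorak} assumes $\tilde\nabla_0(G)\le d$, i.e.\ bounded degeneracy, which does \emph{not} follow from excluding $K_{t,t}$ (incidence graphs of projective planes are $K_{2,2}$-free with degeneracy $\Theta(\sqrt n)$). Second, even when it applies, the conclusion is that $\tilde\nabla_t^e(\CC)=\infty$ for some $t$, i.e.\ induced exact $t$-subdivisions of graphs with large \emph{average degree}; these need not be cliques, and a bipartite $H_m$ of high average degree does not contain every graph as a subgraph, so your encoding step would fail. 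Your aside that ``weak sparsity rules out dense minors'' is also simply false: $1$-subdivisions of cliques are $K_{2,2}$-free yet have cliques as minors.

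What is actually needed from \cite{dvorakInducedSubdivisions} is a stronger statement than the lemma quoted in this paper --- roughly, that for $K_{t,t}$-free graphs, containing a depth-$r$ topological minor of $K_n$ (as a subgraph) forces an induced ${\le}r'$-subdivision of $K_{n'}$ for some $r'=r'(r,t)$ and $n'$ unbounded in $n$. This does follow from Dvořák's work, but via his main theorem rather than the incremental Lemma~\ref{lem:dvorak}, and it is precisely the place where the two cited references need to be combined. Your plan treats this as a black box but points to the wrong box.
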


Monadically dependent classes also include all classes of bounded flip-width, by Corollary~\ref{cor:mnip}.

An 
important subfamily of monadically dependent classes consists of monadically stable classes.
A graph class is \emph{monadically stable} if it does not transduce the class of all half-graphs (see Fig.~\ref{fig:half-graph}). 
A graph class is \emph{edge-stable} if it excludes some half-graph as a semi-induced bipartite graph.
More precisely, there is $k\ge 1$ such that 
there do not exist $G\in\CC$ and vertices $a_1,\ldots,a_k,b_1,\ldots,b_k$ of $G$ such that $a_ib_j\in E(G)\iff i<j$ for all $i,j\in\set{1,\ldots,k}$.

The following result is proved in~\cite[Theorem 1.3]{rankwidth-meets-stability}, see also~\cite[Theorem 3.20]{braunfeld2022existential}.
\begin{fact}\label{fact:mstable}
  Let $\CC$ be an edge-stable graph class. 
  Then $\CC$ is monadically stable if and only if it is monadically dependent.
\end{fact}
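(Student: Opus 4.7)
The forward direction is immediate from the definitions: transductions are closed under taking subclasses of the output, so if $\CC$ transduces the class of all graphs, it also transduces its subclass of all half-graphs. Contrapositively, monadic stability implies monadic dependence, and edge-stability is not needed for this direction.

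For the reverse direction, the plan is to prove the contrapositive: assume $\CC$ is edge-stable and not monadically stable, and derive that $\CC$ is not monadically dependent. Unpacking the failure of monadic stability yields some $c\ge 1$, a $c$-coloring $\widehat\CC$ of $\CC$, a formula $\phi(x,y)$ in the signature of $c$-colored graphs, and, for every $k$, a graph $\widehat G \in\widehat\CC$ with vertices $a_1,\dots,a_k,b_1,\dots,b_k$ satisfying $\widehat G\models \phi(a_i,b_j)\iff i\le j$. The goal is to turn this into a transduction of $\CC$ producing the class of all graphs, contradicting monadic dependence. The strategy is twofold: first, a Ramsey-style extraction of indiscernibles from the sequences witnessing the arbitrarily long half-graphs, making the configuration highly homogeneous over a controlled parameter set; second, a coding lemma that, given such an indiscernible $\phi$-half-graph together with the freedom to recolor that transductions provide, allows one to define arbitrary bipartite graphs on the indiscernible sequence, and hence to transduce the class of all graphs.

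The crux, and the main obstacle, is the coding step, and this is where edge-stability plays its essential role. Without it the implication genuinely fails: the class of all half-graphs is itself monadically dependent but not monadically stable, so pure indiscernibility extraction cannot suffice. Edge-stability is used to rule out the degenerate case in which $\phi$ is essentially the edge relation in disguise; by edge-stability, no unstable formula on $\CC$ can be defined purely in the uncolored signature via a simple transformation of $E$, so the unstable $\phi$ must genuinely exploit the coloring. Combined with the indiscernible sequence, this coloring-dependence of $\phi$ is what permits the coding step to realize arbitrary new binary relations by varying unary expansions, yielding all graphs as transductions of $\CC$. Making this rigorous is the content of the indiscernible-coding arguments developed in the Braunfeld--Laskowski framework \cite{braunfeld2022existential} and adapted to the graph-theoretic setting in \cite{rankwidth-meets-stability}.
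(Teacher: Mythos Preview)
The paper does not prove this statement at all: it is stated as a \emph{Fact} and attributed to \cite[Theorem 1.3]{rankwidth-meets-stability} and \cite[Theorem 3.20]{braunfeld2022existential}, with no argument given. So there is no ``paper's own proof'' to compare your proposal against; the paper simply imports the result.

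Your proposal is therefore not so much an alternative as a sketch of what lies behind those citations, and you explicitly acknowledge this in your final sentence. The forward direction is correct and complete. For the reverse direction, your outline (extract an indiscernible witness to instability, then code arbitrary graphs using the freedom to recolor) captures the right shape of the argument, and your observation that edge-stability is essential --- with the class of half-graphs as the canonical counterexample --- is exactly right. However, the sentence ``by edge-stability, no unstable formula on $\CC$ can be defined purely in the uncolored signature via a simple transformation of $E$'' is not accurate as stated: edge-stability only says the edge relation itself is stable, not that every parameter-free formula is. The actual mechanism in \cite{rankwidth-meets-stability} is more delicate, going through an analysis that shows monadic dependence forces any instability to be witnessed already by the atomic edge relation (up to bounded perturbation), which then contradicts edge-stability. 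Your sketch gestures at this but does not pin it down; since you defer to the same references the paper cites, this is not a gap so much as an honest acknowledgment that the real work is elsewhere.
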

Monadically stable classes include all nowhere dense classes~\cite{AdlerA14}, as well as transductions of nowhere dense classes, called \emph{structurally nowhere dense} classes. It is not known whether all monadically stable classes are structurally nowhere dense
(this has been conjectured in \cite[Conjecture 6.1]{rankwidth-meets-stability}).
The class of half-graphs is clearly not monadically stable,
and has bounded (linear) clique-width. Hence, monadically stable classes 
are incomparable with classes of bounded clique-width.
They are also incomparable with classes of bounded flip-width,
as witnessed by the class of half-graphs on one side, and any nowhere dense class 
which does not have bounded expansion on the other side.
\smallskip

Monadic dependence can be defined not only for graph classes, but for arbitrary classes of structures, 
e.g. classes of  ordered graphs. A class of ordered graphs is monadically dependent if it does not transduce the class of all graphs, where 
now the transduction
may involve the edge relation symbol, as well as the total order $<$ (and the color predicates).
The following result is proved in \cite{tww4}.
\begin{fact}\label{fact:tww-mnip}
  Let $\CC$ be a class of ordered graphs.
  Then $\CC$ is monadically dependent if and only if $\CC$ has bounded twin-width.
\end{fact}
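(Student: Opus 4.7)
The plan is to prove the two implications separately, leveraging the flip-width machinery already developed. For the forward direction (bounded twin-width implies monadically dependent), I would chain together results already in the paper. Assume $\CC$ has bounded twin-width. By Theorem~\ref{thm:tww1}, treating $\CC$ as a class of binary structures (with the edge and order relations), $\CC$ has bounded flip-width. By the version of Theorem~\ref{thm:interpretations} for binary structures (noted in the paragraph after Corollary~\ref{cor:mnip}), bounded flip-width is preserved under transductions. Since the class of all graphs has unbounded flip-width, as $\fw_1$ is unbounded on graphs of girth at least $5$ and high minimum degree (Corollary~\ref{cor:fw1-nontrivial}), the class $\CC$ cannot transduce the class of all graphs. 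Hence $\CC$ is monadically dependent.

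For the backward direction I would argue by contrapositive: assume $\CC$ has unbounded twin-width, and show that $\CC$ transduces the class of all graphs. The key input is the duality theorem of \cite{tww4} (used already in Lemma~\ref{lem:rich}): if the twin-width of ordered graphs in $\CC$ is unbounded, then for every $k$ there exists $G_k \in \CC$ equipped with a $k$-rich division $(\mathcal L_k, \mathcal R_k)$. The plan is to use this abundance of rich structure to simulate arbitrary bipartite graphs, and hence arbitrary graphs, via a fixed first-order transduction independent of $k$.

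The concrete construction would be the following. To transduce an arbitrary bipartite graph $H$ with vertex bipartition $(U, V)$ of size at most $n$, fix some $G_k \in \CC$ with $k$ sufficiently large relative to $n$ (the quantitative dependence comes from the richness condition). Use a bounded number of unary colors to mark: (i) one vertex $a_i \in A_i$ in each of $n$ selected parts $A_1 < \dots < A_n$ of $\mathcal L_k$, to represent vertices of $U$; (ii) one vertex $b_j \in B_j$ in each of $n$ selected parts $B_1 < \dots < B_n$ of $\mathcal R_k$, to represent vertices of $V$; and (iii) additional colors used to ``rewire'' adjacencies inside each intersection pattern. The richness of the division guarantees that inside each $A_i$ there are enough vertices with pairwise distinct neighborhoods in $V(G_k) \setminus (B_{j_1} \cup \dots \cup B_{j_{n}})$ to realize any prescribed adjacency pattern between $a_i$ and the chosen $b_j$'s. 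Using the order, one can then write a fixed first-order formula $\varphi(x,y)$ that inspects the marked vertex $a_i$, the marked vertex $b_j$, and a bounded tuple of auxiliary marked ``selectors'', and returns exactly the adjacency of the corresponding $u_i \in U$ and $v_j \in V$ in $H$. Composing with the transduction encoding bipartite into general graphs, this gives a transduction of the class of all graphs, contradicting monadic dependence.

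The hard part, and the genuine content of \cite{tww4}, is to make this encoding scheme work with a \emph{bounded} number of colors and a \emph{single} formula $\varphi$, independent of $n$ and $k$. Naively the number of bit-choices per pair $(A_i, B_j)$ is unbounded, so we must carefully exploit the $k$-richness to find, inside each $A_i$, a small ``menu'' of distinct neighborhood patterns that simultaneously cover enough prescriptions to realize $H$. This is combinatorially delicate and is the step where the precise quantitative form of the rich division -- distinct neighborhoods outside \emph{any} choice of $k$ parts, not just a fixed one -- is essential. Once this is achieved, compactness (fixing $n$) and a standard Ramsey-type extraction (passing to an indiscernible-like subsequence inside the rich division) produce the required transduction uniformly for all $H$.
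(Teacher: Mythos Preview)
The paper does not prove this statement; it is cited as a known result from~\cite{tww4}. So there is no ``paper's own proof'' to compare against. That said, your forward direction is correct and, pleasingly, derivable entirely from the machinery developed in this paper (bounded twin-width $\Rightarrow$ bounded flip-width by Theorem~\ref{thm:btww} for binary structures, then monadic dependence via the transduction-closure remark following Corollary~\ref{cor:mnip} and Corollary~\ref{cor:fw1-nontrivial}).

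Your backward direction, however, has a genuine gap. A $k$-rich division guarantees that each part $A_i$ contains at least $k$ vertices with \emph{pairwise distinct} neighborhoods outside any $k$ chosen parts of the other partition---but ``many distinct neighborhoods'' is far weaker than ``all prescribed neighborhoods''. To encode an arbitrary bipartite graph $H$ on $n+n$ vertices you would need, for each $i$, a vertex in $A_i$ realizing a \emph{specific} subset of $\{b_1,\dots,b_n\}$, and richness gives no such guarantee: you get $k$ distinct patterns, not the particular one you want among $2^n$ possibilities. No amount of Ramsey extraction fixes this, because the obstacle is not uniformity but expressive completeness.

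The actual argument in~\cite{tww4} does not go from rich divisions to a transduction. It passes through the Marcus--Tardos theorem to extract, from unbounded twin-width, one of six specific matrix patterns of every order (this is exactly Fact~\ref{fact:tww-patterns} in the present paper). Each of these patterns---essentially a permutation matrix or a half-graph arranged in a grid---\emph{does} allow a fixed first-order transduction of all graphs, because the pattern itself encodes a bijection $[n]^2\to[n]^2$ that one can address with the order. Your sketch conflates the rich-division duality (which characterizes twin-width) with the pattern extraction (which yields the transduction); these are separate steps, and the second is where Marcus--Tardos is indispensable.
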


\medskip
In this section, we study classes of almost bounded flip-width in the three settings discussed above: of weakly sparse classes, of edge-stable classes, and of classes of ordered graphs.
We show that in the first and last settings, those classes coincide with monadically dependent classes, and that in the edge-stable case they also coincide, assuming all monadically stable classes are structurally nowhere dense.

We conjecture that the property  of having almost bounded flip-width coincides exactly with  monadic dependence.

\begin{conjecture}\label{conj:fw-mnip}
  A graph class has almost bounded flip-width if and only if it is monadically dependent.
\end{conjecture}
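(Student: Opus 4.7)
The plan is to split the conjecture into its two implications, with the forward direction (almost bounded flip-width $\Rightarrow$ monadically dependent) appearing substantially more tractable than the converse. For the forward direction, I would attempt to lift Theorem~\ref{thm:subpoly-mstab} by removing the edge-stability assumption. The proof of that theorem is driven by the characterization of obstructions to monadic stability from~\cite{flippers}: a combinatorial witness (essentially, arbitrarily large flipped half-graphs) whose presence in $\CC$ is equivalent to $\CC$ being non-monadically-stable, and which can be converted into a large $(r,k,d)$-hideout (Definition~\ref{def:hideout}) in some graph of $\CC$ with $k$ growing like a positive power of $|G|$. To handle the general case, I would: (i) isolate the right obstructions to monadic dependence, candidates being arbitrarily rich flipped bipartite patterns, or families of sets of unbounded VC-dimension that survive all $k$-flips; and (ii) promote such obstructions to polynomially-sized hideouts via Lemma~\ref{lem:hideouts}. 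The first step is essentially a model-theoretic question, and the natural entry point is to extend the analysis of~\cite{flippers} from stable to dependent theories.

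For the backward direction, I would follow the template of Theorem~\ref{thm:snd-are-subpoly}, which deduces almost bounded flip-width of structurally nowhere dense classes from the treelike decompositions of~\cite{bushes-lics}. The plan is to establish a structural decomposition theorem for every monadically dependent class $\CC$: each $G \in \CC$ should admit a recursive decomposition of depth $|G|^{o(1)}$ whose internal nodes correspond to $k$-flips of $G$ with $k \le |G|^{o(1)}$, and in which the children of each node correspond to the connected components, of radius $r$, of the flipped graph. Such a decomposition translates directly into a flipper strategy of subpolynomial width, via the transfer argument of Lemma~\ref{lem:strategy-transfer}. A plausible intermediate target is a definable-flip analogue (in the sense of Section~\ref{sec:definable}) of the quasi-wideness / low-tree-depth colorings characterizing nowhere dense classes; combined with the equivalence between definable and ordinary flip-width (Theorem~\ref{thm:dfw}), this would suffice.

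The principal obstacle is the backward direction: no combinatorial decomposition theorem for monadically dependent classes is currently known, and discovering one appears to be essentially as hard as resolving Conjecture~\ref{conj:mnip-mc}. In fact, a decomposition of the shape above, together with the XP approximation algorithm of Theorem~\ref{thm:apx} and the locality-based analysis underlying Theorem~\ref{thm:interpretations}, would plausibly yield an FPT model-checking algorithm by dynamic programming along the decomposition, thereby settling the model-checking frontier for hereditary classes. The forward direction is more approachable but still requires a genuinely new ingredient, since the proof in the edge-stable case leans on properties specific to half-graph obstructions; dispensing with edge-stability will likely demand a new obstruction theory for monadic dependence, possibly in the spirit of the Sauer–Shelah machinery and the flipping rank-type invariants introduced here.
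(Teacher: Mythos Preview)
This statement is a \emph{conjecture} in the paper, not a theorem; the paper explicitly states that ``currently, we are able to prove neither of the two implications in this conjecture'' and offers only partial evidence (Theorems~\ref{thm:nd-are-subpoly}, \ref{thm:snd-are-subpoly}, \ref{thm:subpoly-mstab}, \ref{thm:ordered-abfw}). There is therefore no proof in the paper to compare your proposal against.

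Your proposal is not a proof either, and you are transparent about this: it is a research outline, correctly identifying that the backward direction would require a combinatorial decomposition theorem for monadically dependent classes that is not known, and that the forward direction would require an obstruction theory for monadic dependence going beyond the edge-stable case handled in Theorem~\ref{thm:subpoly-mstab}. Your assessment that the backward direction is essentially as hard as Conjecture~\ref{conj:mnip-mc} matches the paper's own framing. The plan is sensible as a roadmap, but you should be clear that this is a conjecture whose resolution is left open, not a result with a missing or alternative proof.
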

Currently, we are able to prove neither of the two implications in this conjecture. However, 
in the rest of  Section~\ref{sec:subpoly}, 
we provide evidence towards this conjecture,
by confirming it in restricted settings.

Another conjecture that is supported by the evidence presented below,
predicts a collapse result:
that a bound $o(n^{1/2})$ on the flip-width parameters 
implies a bound $o(n^{\eps})$, for every fixed $\eps>0$.

\begin{conjecture}\label{conj:collapse}
The following conditions are equivalent for a hereditary graph class $\CC$:
\begin{enumerate}
  \item $\CC$ has almost bounded flip-width, that is, for every fixed $r\in\N$ and $\eps>0$ we have that  $\fw_r(G)= o(|G|^\eps)$ holds for all $G\in\CC$,
  \item for every fixed $r\in\N$ we have that $\fw_r(G)= o(|G|^{1/2})$ holds for all $G\in\CC$.
\end{enumerate}
\end{conjecture}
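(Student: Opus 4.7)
The easy direction (1)$\rightarrow$(2) is immediate by specializing $\eps := 1/2$ in~(1). The content lies in the converse, which I would prove by contraposition. Assume~(1) fails: there are $r_0 \in \N$, $\eps_0 > 0$, and graphs $G_n$ of growing size, lying in the hereditary closure of $\CC$, with $\fw_{r_0}(G_n) \ge |G_n|^{\eps_0}$. The target is to exhibit graphs $H_n$ in the same hereditary closure, with $|H_n| \to \infty$ and $\fw_{r_1}(H_n)/|H_n|^{1/2}$ bounded away from $0$, for some fixed $r_1$, thereby contradicting~(2).

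The main technical input I would aim for is a \emph{localization lemma}: if $\fw_r(G) \ge k$, then some induced subgraph $H$ of $G$ with $|H| \le k^{c}$ satisfies $\fw_{r'}(H) \ge k$, for constants $c \le 2$ and $r'$ depending only on $r$. Given this, setting $k := |G_n|^{\eps_0}$ and extracting $H_n$ from $G_n$ produces $|H_n|^{1/2} \le |G_n|^{c\eps_0/2}$ and $\fw_{r'}(H_n) \ge |G_n|^{\eps_0}$, so
\[
\fw_{r'}(H_n)/|H_n|^{1/2} \;\ge\; |G_n|^{\eps_0(1 - c/2)},
\]
which is unbounded for $c < 2$ and bounded below by a positive constant for $c = 2$, contradicting~(2). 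The natural candidate for such a localization lemma is via hideouts (Definition~\ref{def:hideout}): one would want to show that a graph with $\fw_r(G) \ge k$ admits an $(r',k',d)$-hideout of size polynomial in $k$ for some $r'$, $k'$, $d$ controlled by $r$ and $k$. This is a quantitative strengthening of Question~\ref{q:hideouts} and, in my view, is the main obstacle.

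In the restricted settings covered by the paper, the conjecture can be derived from the established characterizations without going through a general localization lemma: Theorem~\ref{thm:wsparse} combined with the classical collapse from $\wcol_r = o(n^{1/2})$ to $\wcol_r \le |G|^{o(1)}$ (Fact~\ref{fact:nd-wcol}) handles weakly sparse classes; Theorem~\ref{thm:ordered-abfw} handles classes of ordered graphs, where~(2) already forces bounded flip-width and hence~(1); Theorem~\ref{thm:subpoly-mstab} combined with the monadic stability versus structural nowhere denseness conjecture of \cite{rankwidth-meets-stability} handles the edge-stable setting. An alternative global strategy routes through Conjecture~\ref{conj:fw-mnip}: show that~(2) implies monadic dependence of $\CC$, then invoke the conjecture to conclude~(1). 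For the first step, one would exploit the fact that the class of all graphs contains $n$-vertex graphs with $\fw_1 = \Omega(\sqrt{n})$ (via Corollary~\ref{cor:fw1-nontrivial} and the Moore bound, which forces graphs of girth $>4$ and minimum degree $>k$ to have $\Omega(k^2)$ vertices), and then use a size-sensitive refinement of the transduction closure (Theorem~\ref{thm:interpretations}) to transfer this growth rate back into $\CC$ should $\CC$ transduce the class of all graphs. Both routes currently reduce Conjecture~\ref{conj:collapse} to central open questions in the theory, and I expect that any proof of the conjecture in full generality will necessarily advance our understanding of monadic dependence in the combinatorial language of flip-width.
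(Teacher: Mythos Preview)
The statement you are attempting to prove is Conjecture~\ref{conj:collapse}, which the paper explicitly poses as an \emph{open conjecture}; there is no proof in the paper to compare against. The paper only remarks that ``a similar collapse occurs in the sparse case'' and then provides partial evidence in restricted settings (Theorems~\ref{thm:nd-are-subpoly}, \ref{thm:subpoly-mstab}, \ref{thm:ordered-abfw}).

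Your proposal is honest about this: you correctly isolate the trivial direction (1)$\Rightarrow$(2), and for (2)$\Rightarrow$(1) you sketch two plausible strategies (a localization lemma via hideouts, and a route through Conjecture~\ref{conj:fw-mnip}), both of which you acknowledge reduce to open problems. That assessment matches the paper's own stance. Your survey of the restricted cases is also accurate and aligns with what the paper proves; note in particular that the weakly sparse case as you phrase it still requires the sparse dichotomy (somewhere dense versus nowhere dense) rather than just Theorem~\ref{thm:wsparse} and Fact~\ref{fact:nd-wcol}, since the polynomial bounds in Theorem~\ref{thm:adm-fw} do not by themselves push $o(|G|^{1/2})$ down to $|G|^{o(1)}$.

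In short: there is no gap to diagnose because you do not claim a proof, and there is no paper proof to compare with. Your discussion is a reasonable outline of where the difficulty lies.
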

A similar collapse occurs in the sparse case \cite{sparsity-book}.\sz{cite}

\subsection{Weakly sparse classes of almost bounded flip-width}
Analogously to Theorem~\ref{thm:wsparse},
which characterizes classes with bounded expansion as exactly the weakly sparse classes of bounded flip-width,
we get a characterization of nowhere dense classes
in terms of almost bounded flip-width.

\begin{theorem}\label{thm:nd-are-subpoly}Let $\CC$ be a weakly sparse  graph class. Then the following conditions are equivalent:
  \begin{enumerate}
    \item $\CC$ is nowhere dense,
    \item $\CC$ has almost bounded flip-width,
    \item $\CC$ is monadically dependent.
  \end{enumerate}
\end{theorem}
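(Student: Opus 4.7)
The equivalence $1\iff 3$ is already given by Fact~\ref{fact:wsparse-nd}, so the plan reduces to proving $1\iff 2$.

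For the direction $1\Rightarrow 2$, I would argue by composing results already available. Assume $\CC$ is weakly sparse (excluding $K_{t,t}$) and nowhere dense; its hereditary closure is also nowhere dense and $K_{t,t}$-free, so it suffices to bound $\fw_r(G)$ for $G$ in that closure. Fact~\ref{fact:nd-wcol} gives $\wcol_{2r}(G)\le |G|^{o(1)}$. Then Theorem~\ref{thm:adm-wcol} upgrades this to $\copw_r(G)\le \wcol_{2r}(G)+1\le |G|^{o(1)}$. Finally, Theorem~\ref{thm:copw-fw} (applied with the biclique bound $t$) yields $\fw_r(G)\le \copw_r(G)^{t}\le |G|^{o(1)}$, which is exactly the content of almost bounded flip-width.

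For the direction $2\Rightarrow 1$, the plan is to reverse the inequalities using the more delicate Theorem~\ref{thm:adm-fw}. Let $G$ lie in the hereditary closure of $\CC$, and set $N=|G|$. Since $\fw_1(G)\le N^{o(1)}$ and $G$ is $K_{t,t}$-free, Theorem~\ref{thm:fw-deg} gives $\deg(G)\le 2t^{2}\fw_1(G)\le N^{o(1)}$. Plugging into Theorem~\ref{thm:adm-fw} then yields
\[
\tilde\nabla_{r-1}(G)\le O\bigl(r\cdot\fw_r(G)\cdot\deg(G)\bigr)^{36}\le N^{o(1)}
\]
for every fixed $r$. Suppose for contradiction that $\CC$ is not nowhere dense. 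Then there exists $r$ and graphs $G_n\in\CC$ each containing a ${\le}r$-subdivision of $K_n$ as a subgraph. Let $H_n$ be the induced subgraph of $G_n$ on the vertex set of such a subdivision. Then $H_n$ still contains the subdivision as a subgraph, so $\tilde\nabla_r(H_n)\ge n-1$; but $H_n$ lies in the hereditary closure of $\CC$ and $|H_n|\le n+\binom{n}{2}r=O(rn^{2})$, so the bound above applied to $H_n$ gives $\tilde\nabla_r(H_n)\le |H_n|^{o(1)}=O_r(n^{2\eps})$ for every $\eps>0$, contradicting $\tilde\nabla_r(H_n)\ge n-1$ once $n$ is large enough.

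The main obstacle I anticipate is simply bookkeeping of the $o(1)$ exponents through the polynomial compositions: one must check that the constants in Theorem~\ref{thm:adm-fw} (the $36$th power) and in Theorem~\ref{thm:copw-fw} (the $t$th power, where $t$ is fixed for $\CC$) only multiply the exponent by a constant, so that $\eps$-type bounds propagate, and that the induced-subgraph reduction $G_n\rightsquigarrow H_n$ in the converse direction is legitimate because almost bounded flip-width is a property of the hereditary closure. Once these routine points are verified, the argument closes.
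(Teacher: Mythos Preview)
Your proposal is correct and follows essentially the same route as the paper. The only notable difference is in the endgame of $2\Rightarrow 1$: after obtaining $\tilde\nabla_{r-1}(G)\le |G|^{o(1)}$ via Theorem~\ref{thm:adm-fw}, the paper converts this back to a bound on $\wcol_r$ using Facts~\ref{fact:wcol-adm} and~\ref{fact:adm-nabla} and then invokes the backward implication of Fact~\ref{fact:nd-wcol}, whereas you argue directly from the definition of nowhere dense by extracting a small induced subgraph $H_n$ carrying a ${\le}r$-subdivision of $K_n$. Your route is a touch more elementary (it sidesteps the $\wcol_r$ detour); just be careful with the index shift---Theorem~\ref{thm:adm-fw} bounds $\tilde\nabla_{r-1}$ in terms of $\fw_r$, so to contradict $\tilde\nabla_r(H_n)\ge n-1$ you should instantiate the bound at radius $r+1$.
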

The equivalence of the first and last condition is by Fact~\ref{fact:wsparse-nd}, so we only prove the equivalence of the first two.
\begin{proof}We first show that every nowhere dense class has almost bounded flip-width.
Every nowhere dense class $\CC$ is weakly sparse, 
so excludes some $K_{t,t}$ as a subgraph.
By Theorem~\ref{thm:copw-fw}, Theorem~\ref{thm:adm-wcol},
and the forward implication in Fact~\ref{fact:nd-wcol},
we have that 
$\fw_r(G)\le O_{r,\eps}(n^{t\eps})$
for every $n$-vertex graph $G\in\CC$.
Since this holds for every $\eps>0$ and $t$ is fixed, the conclusion follows by rescaling $\eps$.

Conversely, suppose that $\CC$ has almost bounded flip-width, and excludes $K_{t,t}$ as a subgraph.
Without loss of generality, we may assume that $\CC$ is hereditary.
We have $\fw_1(G)\le O_\eps(n^\eps)$ 
for every $n$-vertex graph $G\in\CC$,
and by Theorem~\ref{thm:fw-deg},
$\deg(G)\le O_\eps(n^\eps\cdot 2t^2)\le O_\eps(n^\eps)$.
By Theorem~\ref{thm:adm-fw}, for every $r\ge 1$ and $\eps>0$ we have 
\begin{multline*}
\tilde\nabla_{r-1}(G)\le (r\cdot \fw_r(G)\cdot \deg(G))^{O(1)}\focs[\\]{}\le 
(r\cdot O_{r,\eps}(|G|^\eps)\cdot O_\eps(|G|^\eps))^{O(1)}\le 
O_{r,\eps}(|G|^{O(\eps)}),
\end{multline*}
for all $G\in\CC$.
By Facts~\ref{fact:wcol-adm} and \ref{fact:adm-nabla}
we have $\wcol_r(G)\le O_{r,\eps}(|G|^{O(\eps)})$ for every $r\ge 1$ and graph $G\in\CC$.
Therefore, $\CC$ is nowhere dense, by the backwards implication in Fact~\ref{fact:nd-wcol}.
\end{proof}
As there exist nowhere dense classes of unbounded degeneracy, we get the following.
\begin{corollary}
  There is a class that has almost bounded flip-width, but does not have bounded flip-width.
  \end{corollary}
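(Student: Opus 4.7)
The plan is to exhibit an explicit separating class by reducing to well-known facts from Sparsity theory together with Theorems~\ref{thm:nd-are-subpoly} and~\ref{thm:fw-deg}. Concretely, it suffices to produce a hereditary graph class $\CC$ that is nowhere dense but has unbounded degeneracy; such classes are classical in Sparsity theory (for instance, one can take appropriately sparse random graphs $G(n,p_n)$ with $p_n = n^{-1+o(1)}$, or the hereditary closure of a sequence of graphs with slowly growing minimum degree and high girth, which prevents short subdivisions of large cliques while forcing the degeneracy to grow).

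Given such a $\CC$, the argument proceeds in two steps. First, since $\CC$ is nowhere dense it is in particular weakly sparse, so by Theorem~\ref{thm:nd-are-subpoly} (implication 1$\Rightarrow$2) it has almost bounded flip-width. Second, if $\CC$ were to additionally have bounded flip-width, then in particular $\fw_1(\CC) < \infty$; being weakly sparse, Theorem~\ref{thm:fw-deg} would then force $\CC$ to have bounded degeneracy, contradicting the choice of $\CC$. Hence $\CC$ has almost bounded flip-width but does not have bounded flip-width.

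The only substantive step is producing a concrete nowhere dense class of unbounded degeneracy; everything else is a one-line application of results stated earlier in the paper. One clean way to avoid the probabilistic argument is to invoke the standard fact that the containment \emph{bounded expansion} $\subsetneq$ \emph{nowhere dense} is strict, together with the observation that bounded degeneracy is a necessary consequence of bounded expansion, so any nowhere dense class that is not bounded expansion must, after possibly passing to an induced subgraph sequence realizing the witness to unbounded $\tilde\nabla_r$ for some $r \ge 1$, yield a nowhere dense subclass of unbounded degeneracy via Theorem~\ref{thm:adm-fw} and Facts~\ref{fact:wcol-adm}, \ref{fact:adm-nabla}. The main (mild) obstacle is simply citing or constructing the separating example; no new combinatorial argument is needed beyond what has already been set up.
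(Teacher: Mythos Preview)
Your proposal is correct and matches the paper's argument exactly: the paper simply notes that nowhere dense classes of unbounded degeneracy exist, so by Theorem~\ref{thm:nd-are-subpoly} such a class has almost bounded flip-width, while unbounded degeneracy together with weak sparsity and Theorem~\ref{thm:fw-deg} (or equivalently Theorem~\ref{thm:wsparse}) rules out bounded flip-width. Your final paragraph's detour through Theorem~\ref{thm:adm-fw} is unnecessary and a bit muddled (a nowhere dense class failing bounded expansion need not itself have unbounded degeneracy), but the cleaner route you already gave---or simply citing Theorem~\ref{thm:wsparse} directly---suffices.
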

  By taking the substitution closure (see Section~\ref{sec:examples}) of the class
  from the corollary above, we obtain a class 
  which has almost bounded flip-width (by Lemma~\ref{lem:substitution}), but is not nowhere dense (not even edge-stable), and has unbounded flip-width.


\subsection{Structurally nowhere dense classes}
We are unable to determine whether classes of almost bounded flip-width are closed under transductions.
Observe that the bound $\fw_r(\phi(G))\le T_q(\fw_{r'}(G))$ in Theorem~\ref{thm:interpretations} is not polynomial in $\fw_{r'}(G)$. It is, however, linear in the case when $\phi$ is a quantifier-free formula $\phi(x,y)$, so we get the following.
\begin{corollary}\label{cor:qf-interp-subpoly}
  Let $\CC$ be a class of $k$-colored colored graphs of almost bounded flip-width, and let $\phi(x,y)$ be a quantifier-free formula. Then the class $\phi(\CC)$ has almost bounded flip-width.
\end{corollary}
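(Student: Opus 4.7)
The plan is to deduce the corollary directly from Theorem~\ref{thm:interpretations}, exploiting the crucial fact stated there that $T_0(k) = k$. Since $\phi(x,y)$ is quantifier-free, its quantifier rank is $q = 0$, and therefore $r' = 2^0 \cdot r = r$. Applying Theorem~\ref{thm:interpretations} to any $c$-colored graph $G$ and any radius $r \ge 1$ thus yields the linear bound
\[
\fw_r(\phi(G)) \;\le\; T_0(\fw_r(G) \cdot c) \;=\; c \cdot \fw_r(G).
\]
This is the key inequality: unlike the general tower bound $T_q$, a quantifier-free interpretation only inflates flip-width by the constant factor $c$, without increasing the radius.

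Next I would verify that the hereditary closure of $\phi(\CC)$ is well-behaved under the operation $G \mapsto \phi(G)$. Because $\phi(x,y)$ is quantifier-free, the truth of $\phi(u,v)$ in a $c$-colored graph $G$ depends only on atomic facts about $u$ and $v$, so for any induced subgraph $G[X]$ we have $\phi(G)[X] = \phi(G[X])$. Consequently, every graph $H$ in the hereditary closure of $\phi(\CC)$ is of the form $H = \phi(G')$ for some $G'$ in the hereditary closure of $\CC$, and moreover $|H| = |V(G')| = |G'|$.

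Now the conclusion is immediate. Fix $r \ge 1$ and $\eps > 0$. For any such $H = \phi(G')$, combining the two observations gives
\[
\fw_r(H) \;=\; \fw_r(\phi(G')) \;\le\; c \cdot \fw_r(G') \;\le\; c \cdot O_{\eps,r}(|G'|^{\eps}) \;=\; O_{\eps,r}(|H|^{\eps}),
\]
where the middle inequality uses the assumption that $\CC$ has almost bounded flip-width (applied to $G'$ in its hereditary closure) and the fact that the flip-width of a colored graph equals that of its underlying uncolored graph. Since this holds for every $r$ and $\eps$, the class $\phi(\CC)$ has almost bounded flip-width, as claimed.

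There is no real obstacle to overcome here; the whole argument is a two-line consequence of Theorem~\ref{thm:interpretations} once one observes that quantifier-freeness yields $q = 0$ and that quantifier-free interpretations commute with taking induced subgraphs. The only mild subtlety worth flagging is the bookkeeping about colored versus uncolored graphs and about hereditary closures, which is handled by the commutation $\phi(G)[X] = \phi(G[X])$.
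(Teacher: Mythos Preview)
Your proposal is correct and follows exactly the approach the paper intends: the paper simply remarks before the corollary that the bound in Theorem~\ref{thm:interpretations} is linear when $\phi$ is quantifier-free (since $T_0(k)=k$ and $r'=r$), and states the corollary as an immediate consequence. Your additional bookkeeping about hereditary closures via the commutation $\phi(G)[X]=\phi(G[X])$ is a valid and worthwhile elaboration of a detail the paper leaves implicit.
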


Even though we do not know whether classes of almost bounded flip-width are closed under transductions,
we confirm that structurally nowhere dense classes (transductions of nowhere dense classes) have almost bounded flip-width.
\begin{theorem}\label{thm:snd-are-subpoly}
  Every structurally nowhere dense class has almost bounded flip-width.
\end{theorem}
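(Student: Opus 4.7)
The plan is to use the recent structural description of structurally nowhere dense classes from \cite{bushes-lics} as a blueprint for a flipper strategy. Write $\CC = \phi(\DD)$ for a nowhere dense class $\DD$ and first-order formula $\phi(x,y)$ of quantifier rank $q$; by Gaifman's locality theorem, the truth of $\phi(u,v)$ in any $H \in \DD$ is determined by the isomorphism types of $s$-neighborhoods of $u$ and $v$, where $s \le 2^q$ is fixed. We need to prove that for every $r \in \N$ and $\eps > 0$, and for every $G \in \CC$ with $n := |G|$ large enough, we have $\fw_r(G) \le n^\eps$. Since $\fw_r$ is monotone under induced subgraphs (Lemma~\ref{lem:hereditary}), it suffices to treat $G = \phi(H)$ for $H \in \DD$ with $|H| = n$.

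I would then invoke \cite{bushes-lics}, which provides a treelike decomposition of $H$ where each internal node carries a small set of ``portal'' vertices, and where every root-to-leaf path touches only $n^{o(1)}$ portals in total; furthermore, the decomposition has the crucial property that for any leaf $L$, the truth of $\phi(u,v)$ with $u \in L$ and $v \notin L$ is determined by the $s$-neighborhoods of $u$ and $v$ within the portals along the root-to-leaf path of~$L$. Using this, I would describe the following strategy for the flipper on $G$: in each round, let $v$ denote the runner's current position, let $\pi$ be the root-to-leaf path through $v$, and let $S$ consist of the $n^{o(1)}$ portals along $\pi$. The flipper then performs an $S$-definable flip of $G$ that follows from lifting the partition of $V(H)$ by $s$-types over $S$. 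The portal-controlling property of the decomposition ensures that this flip separates the subtree rooted at the current node of $\pi$ into its children, forcing the runner to either descend to one child or become isolated. Since the decomposition has finite depth, the flipper eventually traps the runner.

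The main obstacle will be to bound the width of the flip above by $n^{\eps}$. The number of parts of the $S$-definable flip is at most the number of $S$-types, which in turn is bounded by the shatter function of the adjacency relation on $H$ restricted to $S$. Since every nowhere dense class is dependent (Fact~\ref{fact:nd-nip}), its adjacency relation has bounded VC-dimension $d$, and by Sauer--Shelah--Perles (Lemma~\ref{lem:sauer-shelah-perles}) this number is at most $O(|S|^d)$. Because $s$ is a fixed constant depending only on $\phi$, the number of $s$-types that appear is bounded by a fixed power of the number of $S$-types for an enlarged portal set of size still $n^{o(1)}$, hence at most $n^{o(1)} \le n^{\eps}$ for $n$ large enough. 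The resulting flip of $G$ therefore has width at most $n^{\eps}$, as required.

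Finally, one must verify that the runner's freedom to traverse a path of length $r$ in the previous flip of $G$ does not allow him to escape the portion of the decomposition the flipper is trying to collapse. This is handled by applying the same construction with radius $r' := 5r$ (or some fixed multiple), using the strategy-transfer machinery of Lemma~\ref{lem:strategy-transfer} to translate $G$-moves of length $r$ to $H$-moves of bounded length, so that the decomposition of $H$ from \cite{bushes-lics}, which controls $s$-neighborhoods, suffices to bound the runner's reach. Since the final bound depends only on $r$, $\eps$, $\phi$, and the nowhere-denseness parameters of $\DD$, this yields $\fw_r(G) \le n^{\eps}$ and hence almost bounded flip-width for $\CC$.
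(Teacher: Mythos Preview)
Your proposal has a genuine gap in the type-counting step. You claim that the width of the flip is bounded by ``the number of $s$-types'', and that this number is in turn bounded by a fixed power of the number of atomic $S$-types via Sauer--Shelah--Perles. But these are two very different objects: atomic $S$-types (the partition by $N(v)\cap S$) are indeed at most $O(|S|^d)$ in a class of VC-dimension $d$, whereas the types relevant to Gaifman locality are local \emph{first-order} types of quantifier rank $q$ around each vertex --- essentially, isomorphism types of $s$-neighbourhoods enriched with parameters from $S$. For a formula $\phi$ of positive quantifier rank, these latter types are not bounded by any polynomial in $|S|$ just from bounded VC-dimension; the general bound (as in Theorem~\ref{thm:interpretations}) is a tower function $T_q(|S|)$. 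Your sentence ``the number of $s$-types that appear is bounded by a fixed power of the number of $S$-types for an enlarged portal set'' is exactly the step that does not follow, and without it the whole width bound collapses.

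The paper avoids this trap by a different route: it invokes Theorem~\ref{thm:quasi-bushes-vc} (a sharpening of \cite{bushes-lics}) to represent every $G\in\CC$ as $\phi(B)$ where $\phi$ is \emph{quantifier-free} (over a signature with one unary function symbol), $B$ belongs to an almost nowhere dense class, $|B|\le O(|G|)$, and --- crucially --- the binary relations of $B$ have bounded VC-dimension. For a quantifier-free formula of bounded term-depth, the types one needs are genuinely atomic types over a bounded enlargement of $S$ (Lemma~\ref{lem:fw-qf-vc}), and now Sauer--Shelah--Perles legitimately applies. The flipper strategy is then obtained not by walking down a tree, but by transferring the cop strategy on the Gaifman graph of $B$ (which has cop-width $|B|^{o(1)}$ since $B$ is almost nowhere dense) through the quantifier-free interpretation. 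So the decisive idea you are missing is the reduction to a quantifier-free interpretation; your direct tree-descent plan, as written, cannot control the flip width.
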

To prove Theorem~\ref{thm:snd-are-subpoly},
we use the main result of~\cite{bushes-lics},
which essentially implies that 
for every structurally nowhere dense class $\CC$
there is an \emph{almost nowhere dense} class $\BB$
of structures equipped with functions,
and a quantifier-free formula $\phi(x,y)$ 
involving function symbols,
such that $\CC\subset \phi(\BB)$.
This is made precise below.

Say that a class $\CC$ of graphs 
is \emph{almost nowhere dense} if 
for every $r\ge 1$ and $\eps>0$ we have  $\wcol_r(G)\le O_{r,\eps}(|G|^\eps)$, for all $G\in\CC$. Crucially, $\CC$ does not need to be hereditary, otherwise 
this notion would coincide with nowhere denseness by Fact~\ref{fact:nd-wcol}.

Fix a signature $\Sigma$ consisting of unary relation symbols, binary relation symbols, and unary function symbols.
The VC-dimension of a $\Sigma$-structure $B$, denoted $\VCdim(B)$, 
is the maximum
of the VC-dimensions of the binary relations of $B$ (see Section~\ref{sec:vc}).
Here, the functions of $B$ are ignored.

The following result, apart from the `moreover' part, is a straightforward consequence of \cite[Theorem 3]{bushes-lics}.
It says that every structurally nowhere dense class $\CC$
interprets in an almost nowhere dense class $\BB$ of structures, 
via a quantifier-free interpretation using a unary function symbol.
Additionally, every $G\in \CC$ interprets in some $B\in \BB$ with $|B|\le O(|G|)$. Moreover, the binary relations of the binary structures in $\BB$ have bounded VC-dimension, which will be important in the next lemma, for controlling the bounds on the flip-width of $G$.

\begin{theorem}[\appmark]\label{thm:quasi-bushes-vc}
  Let $\CC$ be a structurally nowhere dense graph class.
  There is a signature $\Sigma$ consisting of unary and binary relation symbols and one function symbol,
   a class $\BB$ 
  of $\Sigma$-structures
  which is almost nowhere dense, and a quantifier-free symmetric formula $\phi(x,y)$ with the following property. For every graph $G\in\CC$ 
  there is some $B\in \BB$ with $|B|\le O(|G|)$, such that $G$ is an induced subgraph of $\phi(B)$. Moreover, $\VCdim(\BB)<\infty$.
\end{theorem}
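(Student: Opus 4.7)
The plan is to first invoke Theorem~3 of \cite{bushes-lics}, which provides exactly the bushes decomposition of structurally nowhere dense classes. That result yields the signature $\Sigma$ (unary predicates plus a handful of binary relation symbols plus a single unary function symbol encoding the parent pointer of a tree), the quantifier-free symmetric formula $\phi(x,y)$, and the class $\BB$ of $\Sigma$-structures whose Gaifman graphs form an almost nowhere dense class, together with the interpretation $G \hookrightarrow \phi(B)$ as an induced subgraph. The linear size bound $|B|\le O(|G|)$ is part of this construction: a bush representing an $n$-vertex graph has only $O(n)$ vertices (each original vertex together with a bounded number of auxiliary tree nodes on its root-to-leaf branch).

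The real work lies in the \emph{moreover} clause. I would prove that for every binary relation symbol $R\in\Sigma$, the VC-dimension of $R^B$ is bounded by a constant independent of $B\in\BB$. First I would deal with the graph of the function symbol separately: the set system $\{\vec{R}(a)\}$ is in that case a family of singletons, so its VC-dimension is trivially at most~$1$. For the remaining binary relations $R$, which in the bushes construction are either edges of the underlying nowhere dense graph or simple derived relations such as ``sibling'' or ``ancestor at a fixed level,'' I would argue by Fact~\ref{fact:nd-nip} (Podewski--Ziegler). Concretely, each such relation is definable by a quantifier-free formula in a structure whose Gaifman graph belongs to a nowhere dense class, and nowhere dense classes are dependent; so every first-order (in particular quantifier-free) formula of the form $\psi(x,y)$ defines a binary relation of bounded VC-dimension.

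The step I expect to be the main obstacle is making the previous paragraph precise, because Fact~\ref{fact:nd-nip} as stated concerns graph classes in the pure edge signature, whereas $\BB$ has unary predicates and a function symbol. The fix is routine but must be done carefully: unary predicates are absorbed into the definition of dependence without changing VC-dimension, and a single unary function is handled by replacing $f$-terms by equivalent existentially-quantified relational atoms and then using the well-known fact that the Gaifman graph of a nowhere-dense-plus-bounded-functions structure is still nowhere dense (the functional edges add bounded degree on one side). Once this is established, each $R\in\Sigma$ has its VC-dimension controlled by a $\psi$-formula applied to a nowhere dense structure, and hence bounded by some $d_\psi$; taking the maximum over the finitely many binary relation symbols in $\Sigma$ gives the uniform bound $\VCdim(\BB)\le d<\infty$.

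Finally I would assemble all the pieces: the data $(\Sigma,\phi,\BB)$ coming from \cite{bushes-lics} already witness every assertion of the theorem except for the VC-bound, and the argument above supplies the bound. This completes the plan; the only nontrivial mathematical content beyond citing \cite{bushes-lics} is the unified VC-dimension argument sketched in the middle paragraph.
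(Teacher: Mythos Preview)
There is a genuine gap in the ``moreover'' part, and it stems from two misapprehensions about what the quasi-bush construction actually produces.

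First, you misidentify the binary relations in $\Sigma$. They are \emph{not} ``edges of the underlying nowhere dense graph'' or tree relations like ``sibling'' or ``ancestor at a fixed level.'' In the quasi-bush $B$, the binary relations are the pointer relations $D_M$ (one for each label $M\subset\Lambda$), each consisting of pairs $(u,w)$ where $u$ is a leaf, $w$ is an inner node, and a certain separator-type condition holds between them in the original nowhere dense source graph. These are far from simple combinatorial tree relations, and there is no reason they should be quantifier-free definable anywhere.

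Second, and more seriously, your plan is to apply Fact~\ref{fact:nd-nip} to a structure ``whose Gaifman graph belongs to a nowhere dense class.'' But the class $\BB$ is only \emph{almost} nowhere dense, and this notion is explicitly non-hereditary in the paper; almost nowhere dense classes need not have bounded VC-dimension (pad any $n$-vertex graph with $2^n$ isolated vertices). So Fact~\ref{fact:nd-nip} does not apply to the Gaifman graphs of $\BB$. Your ``fix'' paragraph about absorbing unary predicates and bounded-degree function edges is beside the point: even after those reductions you are still left with the $D_M$ relations inside an almost-nowhere-dense structure.

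The paper's argument is substantially different. It goes back to the \emph{original} nowhere dense colored graph $H$ from which the quasi-bush was built (where $G=\phi(H)$ for some $H$ in a genuinely nowhere dense class), and establishes that each $D_M$ is definable in $H$ via a \emph{$d$-dimensional} first-order interpretation: there is a map $\beta\colon V(B)\to V(H)^s$ sending inner nodes to bounded-length tuples, and first-order formulas $\psi_M(x_0,x_1,\ldots,x_s)$ with $(u,w)\in D_M\iff H\models\psi_M(u,\beta(w))$. This requires opening up the construction from \cite{bushes-arxiv}: showing that the label $\lambda^D((u,w))$ depends only on a bounded-quantifier-rank type $\tp^q_H(u\,\alpha(w))$ for a bounded-size separator tuple $\alpha(w)$, and separately that the raw pointer set $D$ itself is first-order definable over such tuples. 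Only then does Fact~\ref{fact:nd-nip}, applied to $H$ and the formulas $\psi_M$, give the VC bound. None of this is captured by your sketch.
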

The `moreover' part is shown by analysing the construction 
from \cite{bushes-lics} (see \cite{bushes-arxiv} for the full version),
and observing that the constructed quasi-bushes interpret (via a $d$-dimensional interpretation, for some fixed $d$) in graphs from a nowhere dense classes,
and thus have bounded VC-dimension by the results of \cite{AdlerA14}.

The following lemma is extends the ideas used in the proof of Theorem~\ref{thm:interpretations}, specifically, of the special case of quantifier-free interpretations considered in Corollary~\ref{cor:qf-interp-subpoly}. The lemma considers quantifier-free interpretations that use function symbols, and says that under some technical conditions, such interpretations  map graphs of small cop-width to graphs of small flip-width. 

\begin{lemma}[\appmark]\label{lem:fw-qf-vc}
  Let $\Sigma$ be a signature consisting of unary and binary relation symbols, and unary function symbols. Fix $k,r\ge 0$,
  and a symmetric quantifier-free $\Sigma$-formula  $\phi(x,y)$. There are numbers $p\le O_\phi(k)$ and 
  $r'\le O_\phi(r)$ such that the following holds.
  Let $B$ be a $\Sigma$-structure with $\VCdim(B)\le k$ and $G_B$ be its Gaifman graph. Then 
   \[\fw_r(\phi(B))\le O(\copw_{r'}(G_B))^{p}.\]
\end{lemma}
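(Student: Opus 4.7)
The plan is to simulate a winning cop strategy on $G_B$ by a flipper strategy on $\phi(B)$, using the VC-dimension bound on $B$ to keep the flip width polynomial in the number of cops. Since $\phi$ is quantifier-free, $\phi(x,y)$ is a boolean combination of atomic formulas of the shape $U(t(z))$, $R(t(z),t'(z'))$, or $t(z)=t'(z')$, where $z,z'\in\{x,y\}$ and $t,t'$ are terms in the unary function symbols of $\Sigma$ of depth bounded by some $d=d_\phi$. For each $v\in V(B)$, the set $T(v):=\{t(v):t\text{ term of depth}\le d\}$ has size $O_\phi(1)$ and lies in the $G_B$-ball of radius $d$ about $v$. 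Consequently, any \emph{mixed} atomic subformula (involving both $x$ and $y$) that contributes non-trivially to $\phi(u,v)$ witnesses an adjacency or equality between some element of $T(u)$ and some element of $T(v)$, which forces $\dist_{G_B}(u,v)\le 2d+1$.

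Set $c:=\copw_{r'}(G_B)$ with $r':=(2d+1)r=O_\phi(r)$, and fix a winning cop strategy of width $c$ and radius $r'$. Given any cop position $S\subseteq V(B)$ with $|S|\le c$, construct a superset $\tilde S$ of size $O_\phi(c)$ by iterating the term-closure $\tilde S\mapsto\tilde S\cup\bigcup_{w\in\tilde S}T(w)$ a bounded (in $\phi$) number of times; each iteration multiplies the cardinality by $O_\phi(1)$. Partition $V(B)$ into $\cal P_S$ by the \emph{atomic $\tilde S$-type}: two vertices $u,v$ are equivalent iff they agree on (i) all pure atomic facts on $T(u)$ and $T(v)$, (ii) for every term $t$, the full pattern of memberships of the term path $u,f_j(u),\ldots,t(u)$ in $\tilde S$ together with the specific $\tilde S$-element hit at each entry, and (iii) for every term $t$ and binary relation $R$ of $B$, the trace $\{s\in\tilde S:R(t(\cdot),s)\}$ together with its symmetric version. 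Items (i)--(ii) contribute $O_\phi(|\tilde S|^{O_\phi(1)})$ classes; for (iii), the Sauer-Shelah-Perles lemma applied to each binary relation of $B$ (each of VC-dimension $\le k$) caps the number of possible traces at $O(|\tilde S|^{k})$ per fixed term and relation, and multiplying over the $O_\phi(1)$ such choices yields an extra $O(|\tilde S|^{O_\phi(k)})$ factor. Therefore $|\cal P_S|\le O(c^p)$ for $p=O_\phi(k)$.

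I build the flip $H'_S$ of $\phi(B)$ in two stages. Each $s\in\tilde S$ is made its own singleton class, and for every non-$\tilde S$ class $C$ the value $\phi(s,v)$ is constant on $v\in C$ (since $T(s)\subseteq\tilde S$ by closure and $C$ encodes every $\tilde S$-interaction of $T(v)$); we flip $\{s\}$ with $C$ iff this value is $1$, so every $s\in\tilde S$ becomes isolated in $H'_S$. For two non-$\tilde S$ classes $A,B$, set $\mathrm{val}(A,B):=\phi(u,v)$ for any $u\in A,v\in B$ lying in different components of $G_B-\tilde S$, and flip $(A,B)$ iff $\mathrm{val}(A,B)=1$. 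The value is well-defined because under the different-components assumption every mixed atomic subformula of $\phi(u,v)$ is either captured by $\cal P_S$ (via (ii) or (iii), when at least one of its terms lands in $\tilde S$) or vacuously false (two non-$\tilde S$ terms of $u,v$ sitting in different $G_B-\tilde S$-components cannot be $B$-adjacent or equal). Hence any edge $uv\in E(H'_S)$ with $u,v\notin\tilde S$ is forced by a mixed atomic subformula involving non-$\tilde S$ term values of $T(u),T(v)$, yielding a $G_B$-path $u\to t(u)\to t'(v)\to v$ of length $\le 2d+1$ that lies inside $G_B-\tilde S$ and in particular avoids $S$.

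The conclusion then follows from the strategy-transfer principle of Lemma~\ref{lem:strategy-transfer}, in its cops-to-flipper variant (the cop position $S$ is read as the isolation of $S$ in $G_B$): the flipper replays the cop strategy by announcing $H'_S$ each round; every runner move of length $\le r$ in $H'_S$ translates edge-by-edge, via the distance bound above, to a robber move of length $\le(2d+1)r=r'$ in $G_B-S$, which is legal at radius $r'$; when the cops trap the robber at some position $v$, either $v\in\tilde S$ (already isolated in $H'_S$) or the $G_B-\tilde S$-component of $v$ is reduced to $\{v\}$, hence $v$ is isolated in $H'_S$ by the distance bound. This yields $\fw_r(\phi(B))\le|\cal P_S|\le O(c^p)$, as required. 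The main obstacle is keeping the bookkeeping of item (ii) and the term-closure honest: for a general $\Sigma$-structure $B$ the iterated closure need not reach a fixed point in boundedly many steps, so one has to ensure that after an $O_\phi(1)$-bounded iteration every relevant term value $t(u)$ for $u\notin\tilde S$ is either a vertex in the same component of $u$ in $G_B-\tilde S$ or a specific element already pinned down by item (ii), which is precisely what keeps the ``different components'' dichotomy watertight while preserving $|\tilde S|=O_\phi(c)$ and $p=O_\phi(k)$.
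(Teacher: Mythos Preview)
Your overall strategy matches the paper's: partition $V(B)$ by an ``atomic type over the cop positions'', bound the number of classes via Sauer--Shelah--Perles, produce a flip in which every surviving edge of $\phi(B)$ forces a short path avoiding the cops, and invoke the strategy-transfer lemma with $r'=(2d+1)r$. So the architecture is right.

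The gap is exactly the one you flag at the end, and your proposed resolution does not close it. Your item (iii) only records relations of $t'(v)$ to elements of $\tilde S$. In the sub-case where the term path of $u$ enters $\tilde S$ at $t_0(u)=s$ but then leaves it (so $t(u)=t_1(s)\notin\tilde S$), the element $w:=t_1(s)$ is indeed pinned down by item (ii), but $R(w,t'(v))$ is \emph{not} recorded by $v$'s class, since $w\notin\tilde S$. Iterating the closure finitely many times cannot help: whatever your last layer is, terms applied to elements of that layer escape it, and full closure is unbounded. Concretely, for $u,u'\in A$ and $v,v'\in B$ in different components of $G_B-\tilde S$, the atomic $R(t_1(s),t'(v))$ versus $R(t_1(s),t'(v'))$ can differ (with $w=t_1(s)$ lying in $v$'s component but not $v'$'s), so $\mathrm{val}(A,B)$ is not well-defined and the edge/distance implication breaks.

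The paper avoids the closure entirely. It defines the type $\atp^d(v/S)$ as the set of \emph{atomic formulas $\alpha(x,y)$ of depth $\le d$} such that $\alpha(v,s)$ holds, for $s\in S$ (the original cop set, no closure). This records $R(t_1(s),t'(v))$ directly, for all terms $t_1,t'$ of depth $\le d$, regardless of whether $t_1(s)$ lies in any closure. The case split is then on whether some subterm $t_0(u)$ lands in $S$: if so, $R(t(u),t'(v))=R(t_1(s),t'(v))$ is already in $\atp^d(v/S)$; if not, the entire term path $u,\ldots,t(u)$ avoids $S$, giving $\dist_S(u,t(u))\le d$ and the distance argument goes through. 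The number-of-types bound still uses the one-step closure $S^{(d)}$ (to reduce depth-$d$ types over $S$ to depth-$0$ types over $S^{(d)}$), but only there.
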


The key insight is that a bound $k$ on the VC-dimension implies that 
if the cops in the Cops and Robber game on $G$ occupy 
a set $S$ of vertices of a graph $G$, then, by the Sauer-Shelah-Perles lemma (Lemma~\ref{lem:sauer-shelah-perles}),
the partition of $V(G)$ into $S$-types 
has size $O(|S|^k)$, and this partition is used by the flipper in the flipper game.

The two statements above are proved in \focs[\cite{flip-width-arxiv}]{Appendix~\ref{app:snd-subpoly}}.
Theorem~\ref{thm:snd-are-subpoly} follows, as we now show.

\begin{proof}[Proof of Theorem~\ref{thm:snd-are-subpoly}]
  Let $\CC$ be a structurally nowhere dense class.
  Without loss of generality, $\CC$ is hereditary.
  Let $\BB$ and $\phi(x,y)$ be as in Theorem~\ref{thm:quasi-bushes-vc}, and $k\in\N$ be such that $\VCdim(B)<k$ for $B\in\BB$.

Let $G\in\CC$ and $B\in \BB$ be such that 
$G$ is an induced subgraph of $\phi(B)$ and $|B|\le O(|G|)$, and let $G_B$ be the Gaifman graph of $B$. Let $p$ and $r'$ be as in Lemma~\ref{lem:fw-qf-vc}.
Fix $\eps>0$. Then we have:
\begin{multline*}
\fw_r(G)\le \fw_r(\phi(B))\le O(\copw_{r'}(G_B))^{p}\focs[\\]{}\le O_{r',\eps}(|B|^{\eps p})\le O_{r',\eps}(|G|^{\eps p}).
\end{multline*}
Since $r'$ depends only on $r$ and $\phi$, and $p$ is a constant, and $\eps>0$ is arbitrary, this proves that $\CC$ has almost bounded flip-width. 
\end{proof}

\subsection{Edge-stable classes of almost bounded flip-width}
Conjecture~\ref{conj:fw-mnip} predicts 
that a graph class has almost bounded flip-width if and only if it is
monadically dependent. 
Currently, we are able to prove neither of the two implications. However, we prove the forward implication under the assumption that $\CC$ is edge-stable 
(that is, excludes some half-graph as a semi-induced bipartite graph).

\begin{theorem}\label{thm:subpoly-mstab}
  Let $\CC$ be an edge-stable, hereditary graph class 
  such that for all $r\in\N$, \[\fw_r(G)= o(|G|^{1/2})\qquad\text{for $G\in\CC$}.\]
  Then $\CC$ is monadically stable.
  In particular, every edge-stable, hereditary graph class of almost bounded flip-width is monadically stable. 
\end{theorem}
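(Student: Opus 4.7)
The plan is to proceed by contrapositive: assume $\CC$ is edge-stable, hereditary, and \emph{not} monadically stable, and exhibit a fixed radius $r \in \N$ and arbitrarily large graphs $G \in \CC$ with $\fw_r(G) \ge \Omega(|G|^{1/2})$, contradicting the hypothesis. On the lower-bound side, the target object is a hideout in the sense of Definition~\ref{def:hideout}, to which Lemma~\ref{lem:hideouts} can then be applied. The ``in particular'' statement at the end of the theorem is free, since almost bounded flip-width means $\fw_r(G) = O_{r,\eps}(|G|^\eps)$ for every $\eps>0$, and taking $\eps<1/2$ yields $\fw_r(G) = o(|G|^{1/2})$.

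The key input is the characterisation of monadic stability from~\cite{flippers}, explicitly recalled in Related Work, which (roughly) says that a hereditary class $\CC$ is monadically stable iff for every radius $r$ there are numbers $k,\ell$ such that the \emph{confined} flipper game of radius $r$, width $k$ and duration $\ell$ is won by the flipper on every $G \in \CC$. Failure of monadic stability therefore produces, for some $r$ and any target width $k$, arbitrarily large graphs $G \in \CC$ on which the runner survives a long confined flipper game against $k$ flips. Analysing the winning runner strategy, I would extract a subset $U \subseteq V(G)$ with the property that, for every $k$-flip $G'$ of $G$, all but few vertices of $U$ have many other vertices of $U$ within radius $r$ in $G'$. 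By definition this is exactly an $(r,k,d)$-hideout for the appropriate~$d$, and Lemma~\ref{lem:hideouts} then yields $\fw_r(G)>k$.

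The main obstacle is the \emph{quantitative} calibration: from a graph $G$ of size $n$ witnessing the failure of the confined flipper game we must extract a hideout with $k \ge \Omega(n^{1/2})$, not merely $k\to\infty$. The edge-stability assumption is what makes this possible. In the edge-stable setting, the obstructions to monadic stability from~\cite{flippers} have a concrete combinatorial form built from definable half-graph-like patterns, whose sizes relate polynomially to the ambient graph; it is a matter of tracing the parameters through their construction. Additionally, since a class of almost bounded flip-width has bounded VC-dimension (Lemma~\ref{lem:abfw-vc}, via Theorem~\ref{thm:vcdim}), the Sauer--Shelah--Perles lemma (Lemma~\ref{lem:sauer-shelah-perles}) controls the number of distinguishable neighbourhood patterns in polynomial terms, which should let us thin the flip-broad witness $U$ into a hideout of size $\Omega(n^{1/2})$ that resists every $\Omega(n^{1/2})$-flip simultaneously. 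A Ramsey-type amplification of the flip-broad set --- reminiscent of how Lemma~\ref{lem:near-twins} boosts single near-twin pairs to $b$-fold sets of mutual near-twins --- is the combinatorial step I expect to be the hardest to pin down cleanly, and to occupy the bulk of the proof.
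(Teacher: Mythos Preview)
Your plan diverges from the paper's proof in a way that leaves the hardest step unaddressed. The paper does \emph{not} go through the confined flipper game characterisation of monadic stability, nor does it extract hideouts by analysing runner strategies. Instead it invokes a different result of~\cite{flippers}, recorded here as Fact~\ref{fact:mstab}: if $\CC$ is hereditary, edge-stable, and not monadically stable, then there exist $r,k$, a $k$-colouring $\wh\CC$ of $\CC$, and a \emph{quantifier-free} formula $\phi(x,y)$ such that $\phi(\wh\CC)$ contains the exact $r$-subdivision of \emph{every} graph. This structural statement does all the heavy lifting.

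With Fact~\ref{fact:mstab} in hand, the quantitative calibration you identify as the main obstacle disappears. Take $G_n\in\wh\CC$ with $\phi(G_n)\cong K_n^{(r)}$; then $|G_n|=|K_n^{(r)}|\le O(rn^2)$. Proposition~\ref{prop:subdivisions} gives $\fw_{r+1}(K_n^{(r)})\ge\Omega_r(n)$ directly (this is where the hideout lives, but it is already packaged in that proposition --- no separate extraction is needed). Finally, because $\phi$ is quantifier-free, the bound in Theorem~\ref{thm:interpretations} is \emph{linear}: $\fw_{r+1}(\phi(G_n))\le k\cdot\fw_{r+1}(G_n)$. Chaining these yields $\Omega_r(n)\le k\cdot\fw_{r+1}(G_n)$ while $|G_n|\le O(n^2)$, so $\fw_{r+1}(G_n)\ge\Omega(|G_n|^{1/2})$, contradicting the hypothesis.

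Your proposed route --- confined flipper game $\to$ hideout $\to$ $\Omega(n^{1/2})$ via VC-dimension and Ramsey amplification --- may be salvageable, but as written it is not a proof: you never specify which subset $U$ to take or why its size scales as $\Omega(n^{1/2})$ against $\Omega(n^{1/2})$-flips. The paper's argument sidesteps this entirely by using the quantifier-free definability of subdivided cliques, which makes both the size ($O(n^2)$) and the flip-width lower bound ($\Omega(n)$) explicit, and the linearity of $T_0$ in Theorem~\ref{thm:interpretations} is exactly what transfers that lower bound back to $G_n$ without loss.
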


As far as we know, every monadically stable class might be structurally nowhere dense. This is conjectured in~\cite[Conjecture 6.1]{rankwidth-meets-stability}. 
If this were true, then Theorem~\ref{thm:subpoly-mstab} and Theorem~\ref{thm:snd-are-subpoly} would imply that 
among edge-stable graph classes, almost bounded flip-width coincides with monadically stable class (and thus with monadically dependent classes, by Fact~\ref{fact:mstable}).
Moreover, this would imply a collapse result, confirming Conjecture~\ref{conj:collapse}.


To prove Theorem~\ref{thm:subpoly-mstab}, 
we use the following  result of~\cite[Thm. 1.4]{flippers}.

\begin{fact}[\cite{flippers}]\label{fact:mstab}
  Let $\CC$ be a hereditary, edge-stable class of graphs.
  If $\CC$ is not monadically stable then there are $r,k\ge 1$,
  a $k$-coloring $\wh\CC$ of $\CC$ and a quantifier-free formula $\phi(x,y)$
such that $\phi(\wh\CC)$ contains the exact $r$-subdivision of every graph.
\end{fact}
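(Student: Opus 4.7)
Proposed approach: I will prove the contrapositive. Assuming $\CC$ is edge-stable, hereditary, and \emph{not} monadically stable, I will produce a sequence of graphs $G'_n\in\CC$ with $|G'_n|\le n^2$ and $\fw_{r+1}(G'_n)\ge \Omega(n)$, thereby witnessing $\fw_{r+1}(G)\neq o(|G|^{1/2})$ on $\CC$. The strategy combines three ingredients: (i) Fact~\ref{fact:mstab}, which controls the shape of obstructions to monadic stability in edge-stable hereditary classes, namely that large subdivisions appear inside a fixed quantifier-free interpretation of a colored version of $\CC$; (ii) Proposition~\ref{prop:subdivisions}, which yields a lower bound on the flip-width of exact subdivisions of graphs of high minimum degree; and (iii) the quantifier-free case of Theorem~\ref{thm:interpretations}, where $T_0(k)=k$, which transfers flip-width bounds through the interpretation without blow-up.

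First I invoke Fact~\ref{fact:mstab} to obtain integers $r,k\ge 1$, a $k$-coloring $\wh\CC$ of $\CC$, and a quantifier-free $\phi(x,y)$ in the signature of $k$-colored graphs such that, for every graph $H$, the exact $r$-subdivision of $H$ is an induced subgraph of $\phi(\wh G)$ for some $\wh G\in\wh\CC$. Specializing to $H=K_n$, write $H_n$ for its exact $r$-subdivision and let $\wh G_n\in\wh\CC$ witness $H_n\hookrightarrow\phi(\wh G_n)$ on a vertex set $X_n\subset V(\wh G_n)$. Setting $\wh G'_n:=\wh G_n[X_n]$, and using that quantifier-free interpretations in a purely relational signature commute with induced substructures, one has $\phi(\wh G_n)[X_n]=\phi(\wh G'_n)\cong H_n$. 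The underlying uncolored graph $G'_n$ of $\wh G'_n$ is an induced subgraph of the uncolored reduct of $\wh G_n\in\CC$, so $G'_n\in\CC$ by heredity, and $|G'_n|=|V(H_n)|=n+r\binom{n}{2}\le n^2$.

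Next I apply Proposition~\ref{prop:subdivisions} at radius $r+1$: since $H_n$ is the exact $r$-subdivision of $K_n$, whose minimum degree is $n-1$, the proposition gives $\fw_{r+1}(H_n)>\lfloor (n-1)/(2(r+1))\rfloor=\Omega_r(n)$. Then Theorem~\ref{thm:interpretations} in the quantifier-free case yields $\fw_{r+1}(H_n)=\fw_{r+1}(\phi(\wh G'_n))\le k\cdot\fw_{r+1}(\wh G'_n)$, and since flip-width ignores colorings, $\fw_{r+1}(\wh G'_n)=\fw_{r+1}(G'_n)$. Combining gives $\fw_{r+1}(G'_n)\ge \Omega_{r,k}(n)\ge \Omega_{r,k}(|G'_n|^{1/2})$, contradicting the hypothesis $\fw_{r+1}(G)=o(|G|^{1/2})$. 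The \emph{in particular} clause is immediate: if $\CC$ has almost bounded flip-width, then for any fixed $\eps<1/2$, $\fw_r(G)\le O_{r,\eps}(|G|^\eps)=o(|G|^{1/2})$.

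The main obstacle is essentially bookkeeping rather than a genuine difficulty: one must be careful that the embedding $H_n\hookrightarrow\phi(\wh G_n)$ can be refined, by restriction to the embedded vertex set, to a witness $\phi(\wh G'_n)\cong H_n$ with $\wh G'_n$ small (of size $|V(H_n)|\le n^2$) and with $G'_n\in\CC$. Here the quantifier-free hypothesis on $\phi$ and the purely relational nature of the colored-graph signature are used critically, as they ensure interpretation commutes with taking induced substructures. Without this commutation, one would only obtain $\fw_{r+1}\ge\Omega(n)$ on graphs of potentially unbounded size, which would fall short of contradicting $o(|G|^{1/2})$.
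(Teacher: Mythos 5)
Your proposal does not prove the stated fact; it assumes it. The statement you were asked to establish is Fact~\ref{fact:mstab} itself --- that non-monadic-stability of a hereditary, edge-stable class $\CC$ can always be witnessed by a quantifier-free formula $\phi(x,y)$ over some $k$-coloring $\wh\CC$ whose image contains the exact $r$-subdivision of \emph{every} graph. Your very first step is ``I invoke Fact~\ref{fact:mstab} to obtain integers $r,k\ge 1$, a $k$-coloring $\wh\CC$ of $\CC$, and a quantifier-free $\phi(x,y)$ \dots'', so the argument is circular with respect to its target. What you have actually written out is, almost verbatim, the paper's proof of Theorem~\ref{thm:subpoly-mstab} (the application: edge-stable classes with $\fw_{r}(G)=o(|G|^{1/2})$ are monadically stable), which in the paper is derived \emph{from} Fact~\ref{fact:mstab} together with Proposition~\ref{prop:subdivisions} and the quantifier-free case of Theorem~\ref{thm:interpretations}. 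Your bookkeeping there (restricting to the embedded vertex set to keep $|G'_n|\le O(n^2)$, using that quantifier-free interpretations commute with induced substructures, and that flip-width ignores colors) is fine and matches what the paper does, but it is a proof of the wrong statement.

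The genuine content of Fact~\ref{fact:mstab} --- which is entirely absent from your proposal --- is a structural extraction result from \cite{flippers}: starting from an arbitrary witness that $\CC$ transduces the class of all half-graphs (i.e.\ an arbitrary first-order formula with parameters/colors, of unbounded quantifier rank, defining arbitrarily long linear orders), one must produce a \emph{single, fixed} quantifier-free formula and a bounded number of colors realizing exact $r$-subdivisions of all graphs inside $\CC$. This requires the Ramsey-type and locality machinery of that paper (the analysis of insulators/prepatterns, or equivalently the characterization of monadic stability via the confined flipper game) and is not something that follows from any of the tools developed in the present paper. Note also that the paper itself does not prove this statement; it records it as a cited Fact. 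If your task was to supply a proof, you would need to reconstruct the argument of \cite{flippers}, Theorem~1.4, rather than the downstream application in which it is used.
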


\begin{proof}[Proof of Theorem~\ref{thm:subpoly-mstab}]
  Let  $\CC$ be a hereditary, edge-stable graph class, such that 
  for every fixed $r\in\N$, we have that $\fw_r(G)= o(|G|^{1/2})$ holds for all $G\in\CC$. We prove that $\CC$ is monadically stable.
  Suppose otherwise.
  Let $r,k,\wh\CC$ and $\phi(x,y)$ be as in Fact~\ref{fact:mstab}.
  
  Pick a number $n\ge 1$.
  Let $K_n^{(r)}$ denote the exact $r$-subdivision of 
  the clique $K_n$.
  Then $\fw_{r+1}(K_n^{(r)})\ge \Omega_r(n)$ 
  by  Proposition~\ref{prop:subdivisions}.
  By Fact~\ref{fact:mstab}, there is some $G_n\in \wh\CC$ such that $\phi(G_n)$ 
  is isomorphic to $K_n^{(r)}$.
  In particular, $|V(G_n)|=|V(K_n^{(r)})|\le O(rn^2)$.
  

As $\phi$ is quantifier-free and involves $k$ colors, 
  by Theorem~\ref{thm:interpretations}, 
  we have $\fw_{r+1}(\phi(G_n))\le k\cdot \fw_{r+1}(G_n)$, and altogether:
  \begin{multline*}  
  \Omega_r(n)\le \fw_{r+1}(K_n^{(r)})\le \fw_{r+1}(\phi(G_n))\focs[\\]{}\le k\cdot \fw_{r+1}(G_n) = o((n^2)^{1/2})\le o(n),
  \end{multline*}
which is impossible. Hence, $\CC$ is monadically stable.
\end{proof}

\subsection{Classes of ordered graphs of almost bounded flip-width}
Recall that flip-width is defined 
for arbitrary binary relational structures (see \focs[\cite{flip-width-arxiv}]{Appendix~\ref{app:bin-struct}}),
and there is  a variant of flip-width
tailored to ordered graphs, defined in Section~\ref{sec:tww}. It follows from Lemma~\ref{lem:ordered flip-width},
that a class $\CC$ of ordered graphs 
has almost bounded flip-width if and only if 
for every $r$, we have $\fw_r^<(\CC)<\infty$.

In this section, we prove that for hereditary classes of ordered graphs, almost bounded flip-width
coincides with bounded twin-width,
as well as with bounded flip-width (by Theorem~\ref{thm:tww1}).
Moreover, in the setting of ordered graphs, a very strong form of the collapse predicted by Conjecture~\ref{conj:collapse}, holds: from $o(n^{1/2})$ all the way down to $O(1)$ (as opposed to just $n^{o(1)}$, as predicted by the conjecture).
This total collapse is, ultimately, a consequence of the Marcus-Tardos theorem/Stanley-Wilf conjecture.

\begin{theorem}\label{thm:ordered-abfw}
  The following conditions are equivalent for 
  a hereditary class $\CC$ of ordered graphs:
  \begin{enumerate}
    \item $\CC$ is monadically dependent,
    \item $\CC$ has bounded twin-width,
    \item $\CC$ has bounded flip-width,
    \item $\CC$ has almost bounded  
    flip-width,
    \item $\fw_1^<(G)= o(|G|^{1/2})$ for all $G\in\CC$.
  \end{enumerate}
\end{theorem}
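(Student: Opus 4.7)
The plan is to close the chain of implications by establishing $1 \Leftrightarrow 2$, $2 \Leftrightarrow 3$, $3 \Rightarrow 4$, $4 \Rightarrow 5$, and $5 \Rightarrow 2$. Of these, $1 \Leftrightarrow 2$ is Fact~\ref{fact:tww-mnip}, $2 \Leftrightarrow 3$ is Theorem~\ref{thm:tww1}, and $3 \Rightarrow 4$ is immediate from Definition~\ref{def:subpoly}. For $4 \Rightarrow 5$, I would apply the right-hand inequality of Lemma~\ref{lem:ordered flip-width} at $r=1$ to get $\fw_1^<(G) \le \fw_5(G)$, and then invoke almost bounded flip-width at radius $5$ with any exponent $\eps < 1/2$ (say $\eps = 1/3$) to conclude $\fw_1^<(G) \le O_\eps(|G|^{1/3}) = o(|G|^{1/2})$.

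The only nontrivial direction is $5 \Rightarrow 2$, and this is where the promised collapse happens. I would argue by contrapositive: assume $\CC$ is hereditary and has unbounded twin-width, and construct for each $k$ a graph $H_k \in \CC$ with $|H_k| = O(k^2)$ and $\fw_1^<(H_k) \ge k$. This will contradict condition 5 since it yields $\fw_1^<(H_k) = \Omega(|H_k|^{1/2})$. To build $H_k$, I would combine Lemma~\ref{lem:rich} (a $(k+1)$-rich division forces $\fw_1^<(G) \ge k+1$) with a quantitative rich-division result of Bonnet et al.~\cite{tww4}: a hereditary class of ordered graphs with unbounded twin-width contains, for every $k$, an induced subgraph of size $O(k^2)$ admitting a $k$-rich division. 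The combinatorial reason for the sharp size bound is that a $k$-rich division requires at least $k+1$ parts per side and at least $k$ witnessing vertices per part, so the minimum-size witness sits on $\Theta(k^2)$ vertices; the existence of witnesses at exactly this minimum order in every unbounded-twin-width class is what Marcus--Tardos (equivalently, the resolved Stanley--Wilf conjecture) ultimately provides, through the matrix-pattern-avoidance arguments underlying~\cite{tww4}.

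The main obstacle will be formalizing the small-witness extraction: starting from a $(k+1)$-rich division on a possibly huge $G_k \in \CC$, I must carefully coarsen each side of the division to about $k$ parts and select $O(k)$ witnessing vertices per part --- despite the richness condition quantifying over many $k$-subfamilies of the opposite partition --- so that the resulting induced subgraph of size $O(k^2)$ still carries a $k$-rich division. This is precisely the pigeonholing at the heart of~\cite{tww4}, and it is what drops the collapse threshold all the way from the $o(\sqrt n)$ rate of condition 5 down to a uniform bound, rather than merely to the $o(n^\eps)$ collapse predicted by Conjecture~\ref{conj:collapse} in the general (unordered) setting; the matching exponent $1/2$ in condition 5 and the $k^2$ vertex count of minimal rich-division witnesses are two faces of the same Marcus--Tardos phenomenon.
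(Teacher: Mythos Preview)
Your chain of implications matches the paper exactly, and your treatment of $4\Rightarrow 5$ via Lemma~\ref{lem:ordered flip-width} is fine (the paper just calls this implication ``immediate'').

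The substantive divergence is in $5\Rightarrow 2$. The result you attribute to~\cite{tww4} --- that a hereditary class of unbounded twin-width contains, for every $k$, an induced ordered subgraph on $O(k^2)$ vertices carrying a $k$-rich division --- is not stated there in that form, and your proposed derivation (coarsen a rich division on a huge graph down to $O(k)$ parts per side with $O(k)$ vertices each) is exactly the step you flag as the ``main obstacle,'' for good reason: richness quantifies over \emph{all} $k$-subfamilies of the opposite side, and distinct neighborhoods in $V(G)\setminus(B_1\cup\cdots\cup B_k)$ can collapse once you pass to a small induced subgraph, so there is no evident way to select $O(k)$ witnesses per part that survive every such restriction.

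The paper avoids this extraction problem entirely. What~\cite{tww4} actually supplies (recorded here as Fact~\ref{fact:tww-patterns}, their Lemma~40) is that an unbounded-twin-width hereditary class contains, for some fixed $s\in\{=,\neq,\le_l,\ge_l,\le_r,\ge_r\}$ and every $n$, an $s$-pattern of order~$n^2$: a highly structured bipartite configuration on two sets $A,B$ of size $n^2$ each. The paper then proves Lemma~\ref{lem:pat-division}, which reads off an $\tfrac n2$-rich division directly from the block structure of the $s$-pattern (partition $A$ and $B$ into $n$ contiguous blocks of $n$ elements; the explicit formula for the pattern shows the required witnesses exist in every block, for every choice of $n/2$ opposite blocks). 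Restricting to $A\cup B$ by heredity gives a graph on at most $2n^2$ vertices with $\fw_1^<\ge n/2$, hence $\fw_1^<(G_n)\ge\Omega(|G_n|^{1/2})$. So the small witness is not \emph{extracted} from a large rich division; it \emph{is} the $s$-pattern, and the rich division is built on it constructively.
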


The equivalence 1$\leftrightarrow$2 is proved in \cite{tww4}.
The equivalence  2$\leftrightarrow$3 is by Theorem~\ref{thm:tww1}. 
The implications 3$\rightarrow$4 and 4$\rightarrow$5 are immediate. 
We prove the implication 5$\rightarrow$2.
For this, 
we use the following consequence of the main result of \cite{tww4},
which itself is a manifestation of the Marcus-Tardos theorem/Stanley-Wilf conjecture~\cite{marcus-tardos}.

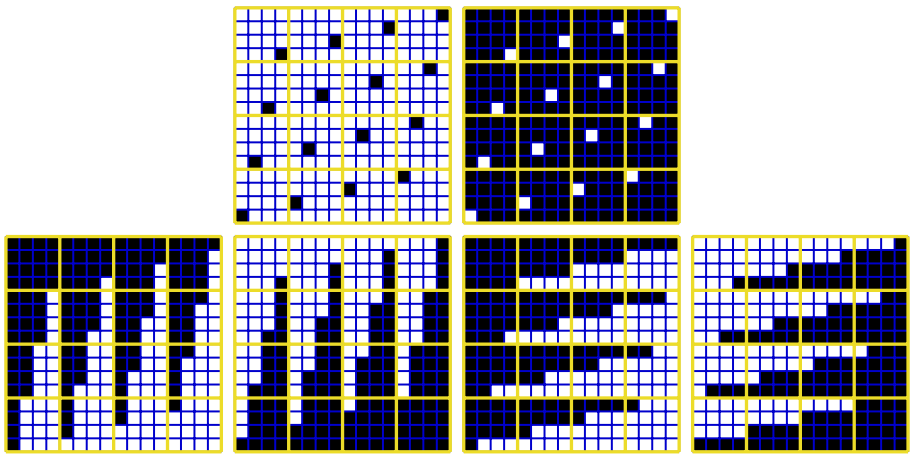
\begin{figure*}[h!]
  \centering
  \begin{tikzpicture}[scale=.179]

 \foreach \symb/\b/\xsh/\ysh in {{==}/0/0/-13,
 {>}/0/-10/-30,
 {<}/0/0/-30,
 {<}/1/10/-30,
 {>}/1/20/-30
 }{
 \begin{scope}[xshift=\xsh*1.7 cm,yshift=\ysh cm]     
  \foreach \i in {0,...,15}{
    \foreach \j in {0,...,15}{
      \pgfmathsetmacro{\ip}{\i+1}
      \pgfmathsetmacro{\jp}{\j+1}
      \pgfmathsetmacro{\col}{ifthenelse(\i == mod(\j,4)*4+floor(\j/4),"black",ifthenelse(\b==1,ifthenelse(\i \symb mod(\j,4)*4+floor(\j/4),"black","white"),ifthenelse(\j \symb mod(\i,4)*4+floor(\i/4),"black","white")))}
      \fill[\col] (\i,\j) -- (\i,\jp) -- (\ip,\jp) -- (\ip,\j) -- cycle;
    }
  }
  \draw[line width=0.75pt, scale=1, color=black!20!blue] (0, 0) grid (16, 16);
  \draw[line width=1.25pt, scale=4, color=black!10!yellow] (0, 0) grid (4, 4);
 \end{scope}
 }
 \foreach \symb/\b/\xsh/\ysh in {{!=}/0/10/-13}{
 \begin{scope}[xshift=\xsh*1.7 cm,yshift=\ysh cm]     
  \foreach \i in {0,...,15}{
    \foreach \j in {0,...,15}{
      \pgfmathsetmacro{\ip}{\i+1}
      \pgfmathsetmacro{\jp}{\j+1}
      \pgfmathsetmacro{\col}{ifthenelse(\i == mod(\j,4)*4+floor(\j/4),"white","black")}
      \fill[\col] (\i,\j) -- (\i,\jp) -- (\ip,\jp) -- (\ip,\j) -- cycle;
    }
  }
  \draw[line width=0.75pt, scale=1, color=black!20!blue] (0, 0) grid (16, 16);
  \draw[line width=1.25pt, scale=4, color=black!10!yellow] (0, 0) grid (4, 4);
 \end{scope}
 
}
  \end{tikzpicture}
  \caption{An $s$-pattern  $A,B$ of order $n^2$ for $n=4$ and $s\in\set{=,\neq,\le_l,\ge_l,\le_r,\ge_r}$,
  represented as incidence matrices of $G[A,B]$.
  The origin is at the lower-left corner and black entries denote incidence.}
  \label{fig:patterns}
\end{figure*}\newcommand{\lex}{\textup{lex}}
Denote $[n]:=\set{1,\ldots,n}$.
Fix a symbol $s\in \set{=,\neq,\le_l,\ge_l,\le_r,\ge_r}$
and let $G=(V,E,<)$ be an ordered graph and $A,B\subset V(G)$.
Say that $A$ and $B$ form a \emph{$s$-pattern} 
of order $n^2$ if 
 $|A|=|B|=n^2$, 
 and the following condition holds (see Fig.~\ref{fig:patterns}).
Let $\alpha\from [n]^2\to A$ 
and $\beta\from [n]^2\to B$ be order-preserving bijections, where $[n]^2$ is ordered by the lexicographic order $<_{\lex}$, while $A$ and $B$ are ordered by $<$.
Then
 for $(i,j),(i',j')\in [n]^2$, we have that 
  $\alpha(i,j)\beta(i',j')\in E$ 
  if and only if the following condition holds:
\begin{itemize}
 \item $(i,j)=(j',i')$, if $s$ is $=$,
\item $(i,j)\neq (j',i')$, if $s$ is $\neq $,
 \item $(j,i)\le_\lex (i',j')$, if $s$ is $\le_l$,
 \item $(j,i)\ge_\lex (i',j')$, if $s$ is $\ge_l$,
  \item $(i,j)\le_\lex (j',i')$, if $s$ is $\le_r$,
  \item $(i,j)\ge_\lex (j',i')$, if $s$ is $\ge_r$.
\end{itemize}

\begin{fact}[\cite{tww4}]\label{fact:tww-patterns}
  Let $\CC$ be a class of ordered graphs of unbounded twin-width. 
  Then there is $s\in \set{=,\neq,\le_l,\ge_l,\le_r,\ge_r}$ such that for every $n\ge 1$,
  there is some graph $G\in \CC$ 
and sets $A$ and $B$ that form an $s$-pattern of order $n^2$.
\end{fact}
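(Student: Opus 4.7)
The plan is to establish the fact via the chain: \emph{unbounded twin-width} $\Rightarrow$ \emph{arbitrarily rich divisions} $\Rightarrow$ \emph{large canonical pattern by Ramsey} $\Rightarrow$ \emph{one of the six listed patterns}. The first implication is the duality theorem of \cite{tww4} (the quantitative form used in Lemma~\ref{lem:rich}): if $\tww(G) > 2^{O(k^2)}$ then $G$ admits a $(k+1)$-rich division $\cal L, \cal R$ into intervals. The remaining and substantive part is a Ramsey-plus-classification argument that turns a very rich division into one of the six specific order-invariant patterns on $[n]^2 \times [n]^2$.

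For every $n$, choose $G_n \in \CC$ with $\tww(G_n)$ so large that $G_n$ admits a $K$-rich division $\cal L_n, \cal R_n$ for some $K = K(n)$ tending to infinity (this is immediate from the unbounded twin-width assumption via the quantitative version of Lemma~\ref{lem:rich}). Pick $m = m(n)$ intervals $L_1 < \cdots < L_m$ from $\cal L_n$ and $R_1 < \cdots < R_m$ from $\cal R_n$, with $m$ chosen large enough (depending on $n$) that the Ramsey argument below succeeds. Inside each block $L_i$, richness guarantees the existence of many vertices whose neighborhoods in $R_1 \cup \cdots \cup R_m$ are pairwise distinct; symmetrically for each $R_j$. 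Select inside every $L_i$ an ordered tuple of $m$ such representatives, indexed as $a_{i,1} < \cdots < a_{i,m}$, and similarly $b_{j,1} < \cdots < b_{j,m}$ in $R_j$. This produces candidate sets $A = \set{a_{i,j}} \subset V(G_n)$ and $B = \set{b_{i',j'}} \subset V(G_n)$ of size $m^2$, indexed by $[m]^2$ in lexicographic order compatible with $<$.

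Next, apply iterated (hypergraph) Ramsey to the $8$-tuples $(i,j,i',j')$ to extract an $n^2$-sized sub-grid on which the edge relation between $\alpha(i,j)$ and $\beta(i',j')$ is determined only by the \emph{order type} of $(i,j,i',j')$ under $<_{\lex}$ and under the coordinate-swap maps $(i,j) \mapsto (j,i)$ and $(i',j') \mapsto (j',i')$. The number of order types is bounded independently of $n$, so starting from $m$ sufficiently large in $n$ and the Ramsey numbers, we obtain $A', B'$ of size $n^2$ on which adjacency is determined by a fixed order-invariant boolean formula $\psi$ in the two index pairs.

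The final and most delicate step is classifying which $\psi$ can actually arise. The richness of both $\cal L_n$ and $\cal R_n$ forces $\psi$ to be \emph{non-degenerate} in both arguments: within each fixed outer-index $i$ the rows $\set{\alpha(i,j) : j \in [n]}$ must have pairwise distinct neighborhoods among $B'$, and symmetrically for columns. This non-degeneracy rules out all order types that ignore one coordinate or depend only on $i$ vs $i'$ (respectively $j$ vs $j'$) in a uniform way. Combined with the fact that $\psi$ depends only on comparisons of lexicographic-ordered pairs from $\set{(i,j),(j,i),(i',j'),(j',i')}$, an explicit case analysis (the crux of the proof and the main obstacle) leaves precisely the six canonical patterns $=, \neq, \le_l, \ge_l, \le_r, \ge_r$. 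Finally, since we are free to pass to a subclass where the \emph{same} symbol $s$ is produced infinitely often (by pigeonhole on the six choices), we obtain a single $s$ witnessing the fact for every $n$.
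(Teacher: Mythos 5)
The paper does not actually prove this statement: it is imported wholesale from \cite[Lemma 40]{tww4-arxiv}, specialized by taking the permutation $\sigma$ of $[n]^2$ to be the coordinate swap $(i,j)\mapsto(j,i)$. You are therefore attempting something much more ambitious than the paper, namely a self-contained reproof of one of the core structural results of the twin-width series, and your sketch has a genuine gap precisely at the step you yourself flag as ``the crux of the proof and the main obstacle.''

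Concretely, three things go wrong. First, your Ramsey homogenization is incompatible with preserving non-degeneracy: after you pass to an $n^2$-sized sub-grid on which adjacency depends only on the order type of $(i,j,i',j')$, the representatives you selected for having pairwise distinct neighborhoods need not retain distinct neighborhoods \emph{restricted to the surviving $B$-side}, so the homogeneous outcome may well be the complete or empty bipartite pattern --- which is not among the six symbols and witnesses nothing. Asserting that ``richness forces $\psi$ to be non-degenerate'' does not close this, because richness is a property of the original division and is not inherited by a Ramsey-extracted sub-grid; this is exactly why \cite{tww4} routes the argument through Marcus--Tardos-style counting over divisions rather than plain Ramsey. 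Second, the classification of the surviving order-invariant patterns into exactly $\set{=,\neq,\le_l,\ge_l,\le_r,\ge_r}$ is asserted, not performed; this case analysis is the substantive content of the cited lemma and cannot be left as a remark. Third, a smaller but real misreading: a $k$-rich division gives, for each $A\in\cal L$ and each choice of $k$ intervals $B_1,\ldots,B_k\in\cal R$, vertices of $A$ with distinct neighborhoods in $V(G)\setminus(B_1\cup\cdots\cup B_k)$ --- the \emph{complement} of the chosen intervals --- not in their union, and distinctness on a superset does not restrict to distinctness on the sub-union $R_1\cup\cdots\cup R_m$ you work with. The correct move here is simply to cite \cite[Lemma 40]{tww4-arxiv} with the swap permutation, as the paper does.
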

\begin{proof}
  Follows from \cite[Lemma 40]{tww4-arxiv}, by considering the $(s,\sigma)$-matching (in their notation), where 
  $\sigma$ 
  is the permutation of $[n]^2$ (ordered lexicographically)
  such that 
  $\sigma((i,j))=(j,i)$ for $(i,j)\in [n]^2$.
\end{proof}

Recall the definition of an $n$-rich division in an ordered graph $G$, and that 
the existence of such a division implies $\fw_1^<(G)\ge n$ 
(see Lemma~\ref{lem:rich} and definition before it).
\begin{lemma}\label{lem:pat-division}Let $n$ be even, and $G$ be an ordered graph containing an $s$-pattern of order $n^2$.
  Then $G$ 
  has an $\frac n 2$-rich division. In particular, $\fw_1^<(G)\ge n/2$.
\end{lemma}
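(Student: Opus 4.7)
The plan is to directly read off an $(n/2)$-rich division from the $s$-pattern $(A,B)$, using the ``row decomposition'' provided by the two bijections $\alpha,\beta\from[n]^2\to A,B$. The bound on $\fw_1^<(G)$ then follows immediately from Lemma~\ref{lem:rich}.

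For $i\in[n]$, let $A_i:=\alpha(\{i\}\times[n])$ and $B_i:=\beta(\{i\}\times[n])$. Since $\alpha$ and $\beta$ are order-preserving and $[n]^2$ is ordered lexicographically, the blocks $A_1<A_2<\cdots<A_n$ are consecutive in the $<$-order of $A$, and similarly for $B$. Thus, for each $i\in[n]$, there exist intervals $L_i,R_i$ of $V(G)$ with $A_i\subset L_i$ and $B_i\subset R_i$, such that $\mathcal L=\{L_1,\ldots,L_n\}$ and $\mathcal R=\{R_1,\ldots,R_n\}$ are partitions of $V(G)$ into intervals (one can, for instance, take the ``cut points'' between consecutive $L_i$'s at the $<$-successor of the last element of $A_i$). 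This gives two candidate partitions for a rich division.

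To verify the richness condition, fix $i\in[n]$ and any set of indices $I\subset[n]$ with $|I|=n/2$; set $J:=[n]\setminus I$, so $|J|=n/2$. It suffices to show that the $n/2$ vertices $\alpha(i,j)$ for $j\in J$ lie in $L_i$ and have pairwise distinct neighborhoods in the set $W:=V(G)\setminus\bigcup_{i'\in I}R_{i'}$. Since $B_{j'}\subset R_{j'}\subset W$ for every $j'\in J$, it is enough to exhibit, for every two $j_1<j_2$ in $J$, some $j'\in J$ and some $\beta(j',\ast)\in B_{j'}$ that witnesses a difference between $N_G(\alpha(i,j_1))$ and $N_G(\alpha(i,j_2))$. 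The core case analysis is over $s\in\{=,\neq,\le_l,\ge_l,\le_r,\ge_r\}$. For $s\in\{=,\neq\}$ the bipartite graph $G[A,B]$ is a (complement of a) ``transposed'' matching, so $\alpha(i,j_1)$ sees only $\beta(j_1,i)\in B_{j_1}\subset W$ (or misses only it), while $\alpha(i,j_2)$ does not (or does), yielding the difference. For the four ``half-graph-like'' cases, e.g.\ $s={\le_l}$, a short direct calculation shows that at $j':=j_1\in J$ the two vertices differ on $\beta(j_1,j^\ast)$ for a suitable $j^\ast\in[n]$ (concretely, $\alpha(i,j_1)$ is adjacent to $\beta(j_1,j')$ iff $j'\ge i$, whereas $\alpha(i,j_2)$ is adjacent to none of $B_{j_1}$); the remaining cases are entirely symmetric, handled by an essentially identical calculation, occasionally using $j'=j_2$ instead of $j'=j_1$ to dodge corner cases when $i\in\{1,n\}$.

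The symmetric condition on $\mathcal R$ (distinguishing $n/2$ vertices of a row $B_{i'}$ using $n/2$ surviving rows of $A$) follows from the same analysis, since each of the six patterns is invariant under swapping the roles of $A$ and $B$ (up to simultaneously replacing $s$ by its order-theoretic dual). Hence $(\mathcal L,\mathcal R)$ is an $(n/2)$-rich division of $G$, and Lemma~\ref{lem:rich} gives $\fw_1^<(G)\ge n/2$.

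The only mildly delicate step is the case analysis: one must carefully track the ``swap'' in the second coordinate (the pattern uses $(i,j)$ versus $(j',i')$, not $(i',j')$), and be mindful of boundary values of $i$ and $j'$ when extracting the witnessing vertex. I expect this bookkeeping, rather than any conceptual difficulty, to be the main obstacle in turning this sketch into a polished proof.
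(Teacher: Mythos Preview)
Your construction of $\mathcal L,\mathcal R$ and choice of the $n/2$ distinguished vertices $\alpha(i,j)$ for $j\in J$ is exactly the paper's, and your case analysis does go through. The one place the paper is cleaner is in the verification of richness: instead of handling each $s$ separately with row $j_1$ or $j_2$ and worrying about corner cases at $i\in\{1,n\}$, the paper uses a single uniform witness set $\{\beta(j_q,i):q\in[n/2]\}$, i.e.\ always fixes the \emph{second} coordinate of $\beta$ at the row index $i$. Plugging $(i',j')=(j_q,i)$ into the pattern definition makes the $i$ cancel, and the adjacency $\alpha(i,j_p)\beta(j_q,i)$ reduces to $p=q$, $p\neq q$, $p\le q$, or $p\ge q$ uniformly across all six values of $s$, with no boundary cases. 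So the two proofs are essentially the same; the paper just exploits the coordinate swap you flagged to collapse your case analysis into one line.
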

\begin{proof}
Let $A$ and $B$ form an $s$-pattern of order $n^2$.
Let $\cal L=\set{I_1,\ldots,I_{n}}$ be the partition of $V(G)$ into $n$ intervals, with respect to the order of $G$ restricted to $A$, each containing exactly $n$ elements of $A$. Similarly, let $\cal R=\set{J_1,\ldots,J_{n}}$ be the partition of $V(G)$ into $n$ intervals, each containing exactly $n$ elements of $B$. We show that $\cal L$ and $\cal R$ form an $\frac n 2$-rich division of $G$. 

Let  $I_i$ be an interval of $\cal L$ and pick a family $\cal R'\subset \cal R$ of $n/2$ intervals of $\cal R$. Let $\set{j_1,\ldots,j_{n/2}}$
be the indices of the $n/2$ intervals $J_j$ in $\cal R'$ and let $J$ be their union.
We show that there are $n/2$ vertices in $A\cap I_i$ 
with pairwise distinct neighborhoods in $B\cap J$.
This (by symmetry) implies that $\cal L$ and $\cal R$ form and $\frac n 2$-rich division of $G$.

Let $\alpha$ and $\beta$ be as in the definition of an $s$-pattern of order $n^2$.
Then, for $p,q\in [n/2]$ we have that  $\alpha(i,j_p)\in A\cap I_i$ and 
$\beta(j_q,i)\in B\cap J$. Furthermore, 
$\alpha(i,j_p)\beta(j_q,i)\in E(G)$ 
 if and only if:
\begin{itemize}
  \item $p=q$, if $s$ is $=$,
  \item $p\neq q$, if $s$ is $\neq$,
  \item $p\le q$, if $s$ is $\le_l$ or $\le_r$,
  \item $p\ge q$, if $s$ is $\ge_l$ or $\ge_r$.
\end{itemize}

In any case, the vertices
$\alpha(i,j_1),\ldots,\alpha(i,j_{n/2})\in A\cap I_i$
have distinct neighborhoods in $B\cap J$.
This (and a symmetric argument, exchanging the roles of $\cal L$ and $\cal R$) demonstrates that $\cal L$ and $\cal R$ form an $\frac n 2$-rich division.
By Lemma~\ref{lem:rich}, we have that $\fw_1^<(G)\ge n/2$.
\end{proof}


\begin{proof}[Proof of Theorem~\ref{thm:ordered-abfw}]
  As argued, it remains to prove the implication 5$\rightarrow$2.
  Let $\CC$ be a hereditary class of ordered graphs with unbounded twin-width.
  We prove that there are infinitely many ordered graphs $G\in \CC$ 
  with $\fw_1^<(G)\ge \Omega(|G|)^{1/2}$,
  which contradicts condition~5.
  
  Let $s\in \set{=,\neq,\le_l,\ge_l,\le_r,\ge_r}$ be as in Fact~\ref{fact:tww-patterns}.
 
Fix an even $n\ge 2$. By Fact~\ref{fact:tww-patterns}
there is an ordered graph $G_n\in \CC$ 
and sets $A,B\subset V(G_n)$ of size $n^2$,
that form an $s$-pattern of order $n^2$ in $G$.
As $\CC$ is hereditary, we may assume that $V(G_n)=A\cup B$, so in particular $|G_n|\le 2n^2$.
By Lemma~\ref{lem:pat-division},   
$\fw_1^<(G_n)\ge n/2$.
Altogether,
we have that 
$$\fw_1^<(G_{n})\ge \frac 1 2(|G_n|/2)^{1/2}=\Omega(|G_n|^{1/2}).$$
Since $G_n\in\CC$ for all even $n\ge 2$,
it cannot be that 
$\fw_1^<(G)\le o(|G|)^{1/2}$ holds for all $G\in\CC$.
\end{proof}

\paragraph{Summary}
In Section~\ref{sec:subpoly} we have defined graph classes of almost bounded flip-width.
We conjecture that they coincide with monadically dependent graph classes.
We provide the following evidence towards this conjecture.
We have shown that, when restricted to weakly sparse graph classes, almost bounded flip-width coincides with nowhere denseness, and  with monadic dependence.
And when restricted to edge-stable graph classes,
almost bounded flip-width generalizes 
structurally nowhere denseness, and is generalized by monadic stability. As it is conjectured that structurally nowhere dense classes coincide with monadically stable classes, this would imply that 
for edge-stable graph classes,
almost bounded flip-width coincides with monadic stability, and  with monadic dependence.
We have shown that for classes of ordered graphs,
almost bounded flip-width coincides with bounded flip-width, and with bounded twin-width, and therefore, with monadic dependence.

In all the special cases studied above -- of weakly sparse graph classes, of edge-stable graph classes, and of classes of ordered graphs -- the model-checking problem is known to be fixed-parameter tractable \cite{GroheKS17,dreierMS,tww4} (in the edge-stable case, this assumes that monadic stability coincides with structural nowhere denseness).
We therefore obtain the following corollary.

\begin{corollary}\label{cor:abf-mc}
  Let $\CC$ be a class of almost bounded flip-width that is 
  either weakly sparse, or is a class of ordered graphs, 
  or is structurally nowhere dense. Then the model checking problem for first-order logic is fpt on $\CC$.
\end{corollary}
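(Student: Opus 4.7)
The proof is a straightforward case analysis, reducing each of the three cases to a known fixed-parameter tractability result via the characterisations established earlier in this section.

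First I would handle the weakly sparse case. By Theorem~\ref{thm:nd-are-subpoly}, a weakly sparse class with almost bounded flip-width is nowhere dense. Hence fixed-parameter tractability of first-order model checking follows from the theorem of Grohe, Kreutzer and Siebertz~\cite{GroheKS17}. Note that Theorem~\ref{thm:nd-are-subpoly} was stated for classes $\CC$ as given, and that both nowhere denseness and the model checking result apply to $\CC$ directly (or to its hereditary closure, which is also nowhere dense).

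Next, for a class $\CC$ of ordered graphs of almost bounded flip-width, I would apply Theorem~\ref{thm:ordered-abfw} (whose hypotheses concern a hereditary class; if $\CC$ is not already hereditary, pass to its hereditary closure, which still has almost bounded flip-width by Definition~\ref{def:subpoly}). The theorem yields that the hereditary closure of $\CC$ has bounded twin-width. The model checking problem for first-order logic is fixed-parameter tractable on classes of ordered graphs of bounded twin-width by the result of Bonnet, Giocanti, Ossona de Mendez, Simon, Thomassé and Toru\'nczyk~\cite{tww4}, so we conclude the case.

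Finally, for a structurally nowhere dense class $\CC$, fixed-parameter tractability of first-order model checking is immediate from the result of Dreier, M\"ahlmann and Siebertz~\cite{dreierMS}; here the almost bounded flip-width hypothesis is not even needed, since it holds automatically by Theorem~\ref{thm:snd-are-subpoly}. Combining these three cases completes the proof. There is essentially no obstacle: each case has already been reduced, by the corresponding theorem proved in this section, to a setting in which fpt model checking is an existing theorem; the corollary simply packages these reductions together.
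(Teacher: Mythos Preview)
Your proposal is correct and matches the paper's own reasoning exactly: the paper does not give a separate proof block for this corollary, but the preceding summary paragraph lays out precisely the three reductions you describe---to \cite{GroheKS17} via Theorem~\ref{thm:nd-are-subpoly}, to \cite{tww4} via Theorem~\ref{thm:ordered-abfw}, and to \cite{dreierMS} directly. Your observation that the almost-bounded-flip-width hypothesis is redundant in the structurally nowhere dense case (by Theorem~\ref{thm:snd-are-subpoly}) is also in line with how the paper presents it.
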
 
We remark that, up to best of our knowledge, all hereditary graph classes for 
which the model-checking problem is currently known to be fixed-parameter tractable -- apart from one family of examples --
have almost bounded flip-width. 
Those includes the listed cases above, as well as some classes 
of bounded twin-width \cite{tww8,tww-tournaments}.
However, we do not know whether classes of \emph{structurally bounded local clique-width} \cite{boundedLocalCliquewidth} have almost bounded flip-width (they are monadically dependent, and have tractable model-checking).

\subsection{Almost bounded twin-width}

Say that a graph  class $\CC$ has \emph{almost bounded twin-width}
if for every $\varepsilon>0$ we have \[\tww(G)\le O_{\CC,\varepsilon}(n^{\varepsilon})\qquad\text{for every $n$-vertex graph $G\in\CC$.}\]
We prove the following.
\begin{theorem}\label{thm:abtww-to-abfw}
  Every hereditary class of almost bounded twin-width has alomost bounded flip-width.
\end{theorem}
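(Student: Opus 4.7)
The plan is to apply Theorem~\ref{thm:btww} recursively, using at each level a single balanced partition from the contraction sequence of the current subgraph to confine the runner. A direct application of Theorem~\ref{thm:btww} yields $\fw_r(G)\le 2^{\tww(G)}\cdot\tww(G)^{O(r)}$, which is super-polynomial in $n$ when $\tww(G)\sim n^\varepsilon$; the recursion will amortize the $2^{\tww(G)}$ factor by working on sub-problems whose sizes decrease geometrically.

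Fix $r\ge 1$ and $\varepsilon>0$, let $G\in\CC$ have $n$ vertices, and set $d:=\tww(G)$. The first step is to extract from a contraction sequence of $G$ a \emph{balanced} partition $\cal P$ of $V(G)$ with $k:=\lceil n^\varepsilon\rceil$ parts, each of size $O(n^{1-\varepsilon})$, whose ``red graph'' (pairs of parts that are non-homogeneous in $G$) has maximum degree $O(d)$. Such a partition should follow from a balanced binary contraction tree of depth $O(\log n)$, which exists for bounded-twin-width graphs at the cost of a constant overhead in the red degree.

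The flipper's strategy is recursive. In round~$1$ it announces the flip of $G$ that inverts all homogeneously-complete pairs of parts of $\cal P$: in the flipped graph, cross-part edges survive only along red-graph edges, so the runner, following a path of length $\le r$, is confined to a ball of red-graph radius~$r$, comprising at most $d^{O(r)}$ parts, i.e., to a set $R\subseteq V(G)$ with $|R|\le O(n^{1-\varepsilon}\cdot d^{O(r)})$. In subsequent rounds the flipper recursively plays the same strategy on $G[R]\in\CC$ (which has twin-width $\le d$ by monotonicity under induced subgraphs), extending each recursive partition by the parts of $\cal P$ lying outside $R$ to prevent re-entry. Using $d\le C_\delta n^\delta$ from almost bounded twin-width, the sub-problem sizes at level $i$ satisfy $|R_i|\le n^{(1-\varepsilon+O(r\delta))^i}$; the corresponding partition sizes $|R_i|^\varepsilon$ form a geometric series summing to $O(n^{\varepsilon+o(1)})$.

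Choosing $\delta$ small enough that $O(r\delta)<\varepsilon/2$ — possible since $\CC$ has almost bounded twin-width — the total width remains $O_{\CC,r,\varepsilon}(n^\varepsilon)$ throughout the game, establishing almost bounded flip-width for $\CC$. The main obstacle is the construction of the balanced contraction tree with $O(d)$ red degree; this should follow from standard balancing arguments in twin-width theory (iteratively splitting the largest part, while tracking red edges), but the precise statement and the overhead in $d$ require verification. A secondary technical point is the careful bookkeeping of outer partitions across recursion levels, ensuring both that the runner is confined to $R_i$ at each step and that the cumulative partition size does not exceed $O(n^\varepsilon)$.
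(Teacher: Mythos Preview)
Your approach misses the key observation and takes an unnecessarily complicated route with a real gap.

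The issue is in your opening sentence: you say a direct application of Theorem~\ref{thm:btww} gives $\fw_r(G)\le 2^{\tww(G)}\cdot\tww(G)^{O(r)}$, but that is the \emph{second} inequality of Theorem~\ref{thm:btww}. The first inequality is $\fw_r(G)\le \pi_G(\tww(G)^{O(r)})$, where $\pi_G$ is the shatter function of $G$. The $2^d$ factor appears only when one bounds $\pi_G$ crudely. The paper's proof shows that a hereditary class of almost bounded twin-width has bounded VC-dimension (indeed bounded $2$-VC-dimension, via Lemma~\ref{lem:2-vc-dim-tww} and Corollary~\ref{cor:tww-2vc}), so by Sauer--Shelah--Perles $\pi_G(m)\le m^c$ for a fixed constant $c$. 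Then
\[
\fw_r(G)\le \pi_G(\tww(G)^{O(r)})\le \tww(G)^{O(rc)}\le O_{\CC,\varepsilon,r}(n^{O(\varepsilon)}),
\]
and rescaling $\varepsilon$ finishes the proof in a few lines.

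Your recursive scheme, by contrast, rests on extracting from a contraction sequence a balanced partition into $\lceil n^\varepsilon\rceil$ parts of size $O(n^{1-\varepsilon})$ with red degree $O(d)$. This is not a known property of contraction sequences: the partitions $\cal P_i$ in an uncontraction sequence can be arbitrarily unbalanced (e.g., one large part and many singletons), and ``balancing'' while keeping the red degree within a constant factor is not standard and, as you yourself note, would require verification. Even if such a balancing lemma could be proved, the bookkeeping of outer partitions across levels and the control of the cumulative width are only sketched. So the proposal has a genuine gap at its core step, whereas the intended argument avoids all of this by exploiting the sharper form of Theorem~\ref{thm:btww} together with bounded VC-dimension.
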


We first show that classes of 
 almost bounded twin-width have bounded VC-dimension, and in fact, bounded 2-VC-dimension.

\begin{lemma}\label{lem:2-vc-dim-tww}
  Every graph of twin-width $d$ has 2-VC-dimension at most  $O(d^2).$
\end{lemma}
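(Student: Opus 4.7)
My plan is to reduce to counting the neighborhoods induced on the 2-VC-dimension witness, and then invoke the neighborhood-complexity of bounded-twin-width graphs.

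First I would observe that if $\TVCdim(G) \ge k$ is witnessed by $X = \{x_1, \ldots, x_k\}$ together with vertices $c_{ij}$ satisfying $N_G(c_{ij}) \cap X = \{x_i, x_j\}$ for every $1\le i<j\le k$, then the $\binom{k}{2}$ distinct two-element subsets $\{x_i, x_j\}$ all appear as intersections $N_G(c_{ij}) \cap X$. In shatter-function terms, the $c_{ij}$ witness $\pi_G(k) \ge \binom{k}{2}$.

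Next I would invoke the neighborhood-complexity bound for bounded-twin-width graphs -- the same result cited in the proof of Theorem~\ref{thm:btww} -- in its polynomial form: $\pi_G(n) \le g(d) \cdot n$ for a polynomial $g(d)$, for graphs of twin-width $d$. Comparing the two estimates yields $\binom{k}{2} \le g(d) \cdot k$, and hence $k \le 2 g(d) + 1$. With $g(d) = O(d^2)$ this gives the claimed bound $k = O(d^2)$.

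The main obstacle is pinning down the correct polynomial bound on $g(d)$. If only the exponential bound $g(d) = 2^{O(d)}$ is invoked (as in the quoted form of the neighborhood-complexity inequality used in Theorem~\ref{thm:btww}), this route delivers only $k = 2^{O(d)}$. In that case I would argue directly from a contraction sequence of width $d$. Consider the partition $\mathcal{P}$ at the moment just before the first merger in the sequence that joins two parts each containing some $x_\ell$; at this moment each $x_\ell$ sits alone in a part $X_\ell$, and each $c_{ij}$ lies in some part $Y_{ij}$ whose type in $\{\mathrm C,\mathrm A,\mathrm R\}^k$ relative to $(X_1,\ldots,X_k)$ is forced to be in $\{\mathrm C,\mathrm R\}$ at positions $i,j$ and in $\{\mathrm A,\mathrm R\}$ at all other positions, and has at most $d$ red entries by the red-degree bound on $Y_{ij}$. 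Reading off $\{i,j\}$ from such a type determines the pair up to the choice of two elements in the complete-plus-red positions, so each type accounts for at most $\binom{d}{2} = O(d^2)$ pairs. A double-counting, using the red-degree bound on the $X_\ell$'s (each $X_\ell$ is red-adjacent to at most $d$ other parts), is then meant to show that only $O(k)$ distinct such types actually arise, yielding $\binom{k}{2} \le O(d^2 \cdot k)$, and hence $k = O(d^2)$.

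I expect the hard step to be this last double-counting: naively the number of feasible types is exponential in $k$, and it is not obvious how to extract from the red-degree constraint a linear-in-$k$ bound on the number of distinct types realized as $\tau_{ij}$. Charging each pair $(i,j)$ either to a red edge incident to $\{X_1,\ldots,X_k\}$ (of which there are at most $dk/2$) or to a "clean" $Y_{ij}$ completely joined to $X_i, X_j$ and anti-complete elsewhere seems the natural route.
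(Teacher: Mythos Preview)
Your first route via the shatter function is logically sound, but it does not deliver $O(d^2)$: the neighbourhood-complexity bound used in the paper is $\pi_G(n)\le 2^{d+O(\log d)}n$, so you only get $m\le 2^{O(d)}$. No polynomial bound $g(d)=O(d^2)$ on $\pi_G(n)/n$ is available, and you correctly flag this.

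Your second route is close in spirit to the argument the paper cites (Prop.~6.2 of \cite{tww2}), but you have made it harder than necessary by tracking \emph{all} $c_{ij}$'s and their full type vectors before the merge; this is exactly where your double-counting stalls. The actual argument is simpler and avoids that step entirely. Look \emph{after} the first merge that puts two witnesses, say $x_1,x_2$, into a common part $P$. For every $j\ge 3$ the vertex $c_{1j}$ is adjacent to $x_1\in P$ and non-adjacent to $x_2\in P$, so its part is either $P$ or red-adjacent to $P$; hence the $m-2$ vertices $c_{13},\dots,c_{1m}$ lie in at most $d+1$ parts. Now fix one such part $Q$ and count how many $c_{1j}$'s it contains: if $c_{1j},c_{1j'}\in Q$ with $j\neq j'$ then $Q$ is inhomogeneous toward $X_j$ (it contains $c_{1j}$ adjacent to $x_j$ and $c_{1j'}$ non-adjacent to $x_j$), so $Q$ has a red edge to each such $X_j$, giving at most $d+1$ indices $j$. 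Thus $m-2\le(d+1)^2$.

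The missing idea is to fix one index to a merged vertex and count only the $c_{1j}$'s; this collapses your ``number of realised types'' problem to a direct $(d+1)\times(d+1)$ product.
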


\begin{proof}
This follows by inspecting the proof of 
~\cite[Prop.6.2]{tww2-journal} (in the case $k=1$) which implies that if the $1$-subdivision of $K_m$ has twin-width at most $d$, then $m-1\le (d+1)^2$. Exactly the same proof shows that in fact, if $G$ has 2-VC-dimension $m$ and twin-width at most $d$, then $m-1\le (d+1)^2$, which yields the conclusion.
\end{proof}
\begin{corollary}\label{cor:tww-2vc}
  Let $\CC$ be a hereditary class of almost bounded twin-width.
  Then $\CC$ has bounded 2-VC-dimension.
\end{corollary}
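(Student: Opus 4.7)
The plan is a proof by contradiction that rests on Lemma~\ref{lem:2-vc-dim-tww}. Suppose $\CC$ has unbounded 2-VC-dimension; I will produce graphs $H_m \in \CC$ with $\TVCdim(H_m) \ge m$ yet $|H_m| \le m^2$, and then apply the lemma to derive a lower bound on $\tww(H_m)$ polynomial in $|H_m|$ that contradicts the almost bounded twin-width assumption.

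For each $m \ge 2$, pick a graph $G_m \in \CC$ with $\TVCdim(G_m) \ge m$, witnessed by a set $X_m \subset V(G_m)$ of size $m$. For each unordered pair $\{a,b\} \in \binom{X_m}{2}$, fix a vertex $c_{\{a,b\}} \in V(G_m)$ with $N_{G_m}(c_{\{a,b\}}) \cap X_m = \{a,b\}$. Let
\[ H_m \;:=\; G_m\bigl[\,X_m \;\cup\; \{c_{\{a,b\}} : \{a,b\} \in \tbinom{X_m}{2}\}\,\bigr]. \]
Then $|H_m| \le m + \binom{m}{2} \le m^2$, and because every witnessing vertex $c_{\{a,b\}}$ lies in $V(H_m)$, the set $X_m$ still witnesses that $\TVCdim(H_m) \ge m$. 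Since $\CC$ is hereditary, $H_m \in \CC$.

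Next, Lemma~\ref{lem:2-vc-dim-tww} applied to $H_m$ gives an absolute constant $C_1 > 0$ with $m \le \TVCdim(H_m) \le C_1 \cdot \tww(H_m)^2$, and hence
\[ \tww(H_m) \;\ge\; \sqrt{m/C_1} \;\ge\; \Omega\bigl(|H_m|^{1/4}\bigr). \]
On the other hand, almost bounded twin-width applied with, say, $\eps = 1/8$ yields a constant $C_2 = C_{\CC,1/8}$ such that $\tww(H_m) \le C_2 \cdot |H_m|^{1/8}$ for all $m$. Since $|H_m| \ge m \to \infty$, these two bounds contradict each other for $m$ sufficiently large; we conclude that $\TVCdim$ must be bounded on $\CC$.

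The argument just couples the definition of 2-VC-dimension (which automatically yields induced subgraphs of quadratic size certifying large 2-VC-dimension) with the quantitative polynomial bound from Lemma~\ref{lem:2-vc-dim-tww}; the only subtlety is using hereditarity of $\CC$ to ensure the small witnessing subgraph $H_m$ lies in the class, so that the almost bounded twin-width hypothesis can be invoked on it. I do not foresee any serious obstacle in the execution.
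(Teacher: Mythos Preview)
Your proof is correct and follows essentially the same approach as the paper's: both argue by contradiction, use hereditarity to pass to a small induced subgraph of size $O(m^2)$ witnessing 2-VC-dimension at least $m$, and then invoke Lemma~\ref{lem:2-vc-dim-tww} to obtain $\tww \ge \Omega(|V|^{1/4})$, contradicting almost bounded twin-width. Your version is simply more explicit about the construction of the witnessing subgraph and about the final comparison of exponents.
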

\begin{proof}
  Suppose $\CC$ has unbounded 2-VC-dimension.
  As $\CC$ is hereditary, it follows that for every $m\in\N$ there is a graph $G_m\in\CC$
  such that:
  \begin{itemize}
    \item $G_m$ has 2-VC-dimension (at least) $m$,
    \item $G_m$ has at most $m(m+1)/2$ vertices.
  \end{itemize}
  By Lemma~\ref{lem:2-vc-dim-tww},
  $$\tww(G_m)\ge \Omega(\sqrt m)\ge \Omega(\sqrt[4]{|V(G_m)|}).$$
   It follows that $\CC$ does not have almost bounded twin-width.
\end{proof}
\begin{corollary}\label{cor:abtww-ss}
  Let $\CC$ be a hereditary class of almost bounded twin-width. Then 
  there is a constant $c$ such that 
  for every $G\in\CC$ and $n\in \N$
  we have $\pi_G(n)\le n^c$.
\end{corollary}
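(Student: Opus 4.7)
The plan is short: Corollary~\ref{cor:abtww-ss} is an immediate consequence of Corollary~\ref{cor:tww-2vc} combined with the Sauer–Shelah–Perles lemma (Lemma~\ref{lem:sauer-shelah-perles}).

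First I would invoke Corollary~\ref{cor:tww-2vc} to deduce that $\CC$ has bounded $2$VC-dimension. Since $\VCdim(G) \le \TVCdim(G)$ for every graph $G$, this immediately gives a uniform bound $d := \VCdim(\CC) < \infty$, i.e., $\VCdim(G) \le d$ for every $G \in \CC$.

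Next, fix $G \in \CC$ and a set $X \subset V(G)$ with $|X| \le n$. Consider the induced set system $(X, \mathcal F_X)$ where $\mathcal F_X := \{N_G(v) \cap X : v \in V(G)\}$. Its VC-dimension is bounded by $\VCdim(G) \le d$, because any subset of $X$ shattered by $\mathcal F_X$ is also shattered by the neighborhood set system of $G$. Applying the Sauer–Shelah–Perles lemma (Lemma~\ref{lem:sauer-shelah-perles}) to $(X, \mathcal F_X)$ yields $|\mathcal F_X| \le O(|X|^d) \le O(n^d)$. Taking the maximum over all $X \subset V(G)$ with $|X| \le n$ gives $\pi_G(n) \le O(n^d)$, with the hidden constant depending only on $d$, hence only on $\CC$.

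Finally, choosing any constant $c > d$ large enough to absorb the multiplicative constant from Sauer–Shelah–Perles (e.g., $c := d+1$ suffices for $n \ge$ some threshold, and the finitely many small $n$ can be handled by further enlarging $c$), we obtain $\pi_G(n) \le n^c$ uniformly for all $G \in \CC$ and $n \in \N$. There is no real obstacle here: the proof is a two-line deduction, with the content already packaged in Corollary~\ref{cor:tww-2vc} and the Sauer–Shelah–Perles lemma.
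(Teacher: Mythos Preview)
Your proposal is correct and matches the paper's proof essentially line for line: invoke Corollary~\ref{cor:tww-2vc} to get bounded $2$VC-dimension, pass to bounded VC-dimension via $\VCdim(G)\le\TVCdim(G)$, and apply the Sauer--Shelah--Perles lemma. Your added remark about absorbing the $O(\cdot)$ constant into the exponent is a detail the paper leaves implicit.
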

\begin{proof}
  As the 2-VC-dimension of a graph $G$ is at least as large as the VC-dimension,
  from Corollary~\ref{cor:tww-2vc} it follows that $\CC$ has bounded VC-dimension.
  The conclusion follows from the Sauer-Shelah-Perles Lemma (Lem.~\ref{lem:sauer-shelah-perles}).
\end{proof}

We are ready to prove Theorem~\ref{thm:abtww-to-abfw}.
\begin{proof}[Proof of Thm.~\ref{thm:abtww-to-abfw}]
  Let $\CC$ be a hereditary class of almost bounded twin-width.
Let $c$ be the constant from Corollary~\ref{cor:abtww-ss},
so that $\pi_G(n)\le n^c$ for every $G\in\CC$ and $n\in\N$.
Fix $\eps>0$ and $r\in\N$.
As $\CC$ has almost bounded twin-width,
for very $n$-vertex graph $G\in\CC$
we have that: 
$$\tww(G)\le O_{\CC,\eps,r}(n^{\eps/(rc)}).$$
By Theorem~\ref{thm:btww} we have that:
$$
\fw_r(G)\le \pi_G(\tww(G)^{O(r)})\le O(\tww(G))^{O(rc)}\le O_{\CC,\eps,r}(n^ {O(\eps)}).$$
The conclusion follows by rescaling $\eps$ proportionally to the constant in the $O(\eps)$ notation (which is a universal constant  originating from the exponent $O(r)$ from Theorem~\ref{thm:btww}).
\end{proof}

\section{Discussion}\label{sec:discussion}
In this section, we discuss some conjectured relationships between various 
notions defined in this paper, and other known notions.
We speculate on possible routes towards proving some conjectures.

\subsection{Obstructions to small flip-width}
The results of this paper 
strongly suggest that the flip-width parameters are the sought dense analogues 
of generalized coloring numbers, that classes of bounded flip-width are the sought dense analogues of classes with bounded expansion,
and almost bounded flip-width is a dense counterpart of nowhere dense classes.
What is currently missing to complete this picture, is an analogue 
of the key result of Sparsity theory, which is a min-max theorem that relates, on one hand,
 explicit descriptions of winning strategies of the robber,  
 and on the other hand, explicit descriptions of winning strategies 
for the cops in the Cops and Robber game (see Section~\ref{sec:copwidth}).
In the sparse case, the former are
 obstructions to having small $r$-admissibility numbers,
 or (by Fact~\ref{fact:adm-nabla}),  ${\le}r$-subdivisions of graphs with large minimum degree.
 Finding an analogous notion in the dense case seems to be a major challenge. It seems plausible that finding such notions is related to the question of efficiently approximating 
 the flip-width parameters. This goal can be formalized as follows.
 \begin{goal}\label{goal:apx}
   Obtain an fpt approximation algorithm for radius-$r$ flip-width:
   an algorithm running in time $f(r,k)\cdot n^c$, 
   for some constant $c$
   and function $f\from\N\times\N\to\N$, which given an $n$-vertex graph $G$ and numbers $r,k$, 
    either concludes that $\fw_r(G)>k$,
   or that $\fw_r(G)< f(r,k)$.
 \end{goal}
 A first step in this direction is achieved by Theorem~\ref{thm:apx}, which achieves an XP approximation algorithm, rather than an fpt algorithm.

 A related, but less concrete goal is the following.
\begin{goal}\label{goal:weak obstructions}
  Describe explicit \emph{forbidden weak obstructions at depth $r$ and density $\ell$}, such that 
  there are functions $f,g\from \N\times \N\to\N$ such that for all $r,\ell\ge 1$,
every graph $G$ with $\fw_r(G)\ge f(r,\ell)$ contains 
such an obstruction of density $\ell$ as an induced subgraph,
and conversely, no graph $G$ which contains such an obstruction of density $g(r,\ell)$ as an induced subgraph satisfies $\fw_r(G)\ge \ell$.
\end{goal}

One attempt at formalizing what we mean by ``explicit'',
is by requiring that the following holds:
for every fixed $r$ there is a formula $\phi_r(x,y)$ (possibly involving colors) and a function $f_r\from\N\to\N$ 
such that if $G$ is is a forbidden weak obstruction at depth $r$ 
and density at least $f_r(\ell)$ 
then there is some $k$-coloring of $G$ such that $\phi_r(G)$ 
contains an $r$-subdivision of a graph with minimum degree at 
least $\ell$ as an induced subgraph.

The following question suggests a path towards achieving Goal~\ref{goal:weak obstructions}.

\begin{question}\label{q:hideouts}
  Fix $r\ge1$. Is it the case that $\fw_r(\CC)<\infty$ if and only if 
there is a $k\in\N$ such that no graph $G\in\CC$ 
contains a $(r,k,k)$-hideout (see Definition~\ref{def:hideout})?
\end{question}
Hideouts are not induced subgraphs, as posited in Goal~\ref{goal:weak obstructions},
but this could serve as a starting point.

\medskip
The following notions are defined in~\cite{stabletww-arxiv}.
A \emph{transduction ideal} is a property of graph classes that is preserved under first-order transductions:
if a class $\CC$ has the considered property, and $\DD$ transduces in $\CC$, then $\DD$ has the property, too.
The \emph{dense analogue of bounded expansion}
is the largest transduction ideal such that all weakly 
sparse classes that belong to it have bounded expansion.

By Corollary~\ref{cor:transductions}, classes of bounded flip-width form a transduction ideal, and by Theorem~\ref{thm:wsparse},
they are contained in the dense analogue of bounded expansion.
We conjecture that bounded flip-width is exactly the dense analogue of bounded expansion, according to the above definition.
This is equivalently stated as follows.

\begin{conjecture}\label{conj:dense analogue}
  Let $\CC$ be a class that does not have bounded flip-width. Then $\CC$ transduces a weakly sparse class $\DD$ which does not have bounded expansion.
\end{conjecture}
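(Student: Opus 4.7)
The easy direction is immediate from results already in the paper: if $\CC$ transduces a weakly sparse class $\DD$ of unbounded expansion, then by Theorem~\ref{thm:wsparse} $\DD$ has unbounded flip-width, and since bounded flip-width is a transduction ideal (Corollary~\ref{cor:transductions}), $\CC$ inherits unbounded flip-width. So my plan is to attack the converse: from $\fw_r(\CC)=\infty$ for some fixed $r\ge 1$, produce a coloring $\wh\CC$ of $\CC$ and a first-order formula $\phi(x,y)$ such that the hereditary closure of $\phi(\wh\CC)$ is weakly sparse and has unbounded expansion.

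The guiding idea is that the natural weakly sparse witnesses of unbounded expansion are families of exact $r'$-subdivisions of graphs of unbounded minimum degree, for any fixed $r'\ge 1$: such subdivisions are $K_{2,2}$-free (hence weakly sparse) and by Fact~\ref{fact:adm-nabla} have unbounded $r'$-admissibility, so form a weakly sparse class of unbounded expansion. Proposition~\ref{prop:subdivisions} already tells us that induced subdivisions of dense graphs force large $\fw_{r'}$; the task is the reverse implication, which is precisely the content of Question~\ref{q:hideouts} and Goal~\ref{goal:weak obstructions}. My plan is to route through hideouts in two stages. First, I would try to establish the converse of Lemma~\ref{lem:hideouts}: $\fw_r(G)>k$ forces the existence of an $(r,k,k)$-hideout in $G$ (up to a mild loss in parameters). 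This is a min-max / strategy-stealing statement; the goal is to extract from the failure of every bounded-width flipper strategy a non-losing positional strategy for the runner, which by definition singles out a persistent set behaving as a hideout. Second, I would mine a hideout to extract a subdivision: within a large $(r,k,k)$-hideout $U$, use bounded VC-dimension (Theorem~\ref{thm:vcdim}, Sauer--Shelah) to control neighborhood complexity, then apply a Ramsey-type argument with a bounded number of auxiliary colors and a formula $\phi(x,y)$ of bounded quantifier rank (expressing the existence of a path of length $\le r'$ of the right kind) to locate an induced $\le r'$-subdivision of a graph with minimum degree growing in $k$.

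The main obstacle is the first stage — establishing hideouts as explicit duals to flipper strategies. Such a result would be a Seymour--Thomas-style min-max theorem for the flipper game, and the absence of a simple one is already visible in the fact that Theorem~\ref{thm:apx} only \emph{approximates} flip-width (with a gap of $5$ in the radius and a polynomial loss). The configuration space is exponential and standard positional-determinacy arguments do not yield hideouts directly. A promising bypass is to carry out the duality first for \emph{definable} flip-width $\dfw_r$, where by Lemma~\ref{lem:decide-dfw} the game has polynomially many configurations and a cleaner min-max can plausibly be obtained; then use Theorem~\ref{thm:dfw} to transport any obstruction at $\dfw_r$ back to an obstruction for $\fw_{5r}$, at the cost of constants and radius.

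A complementary line of attack, useful as a sanity check, is the monadically dependent setting: if $\CC$ is not monadically dependent then it transduces the class of all graphs, from which any weakly sparse unbounded-expansion class (e.g.\ the subdivided cliques) can be extracted trivially; so the conjecture reduces to monadically dependent classes of unbounded flip-width, where Theorem~\ref{thm:nd-are-subpoly} and Theorem~\ref{thm:subpoly-mstab} restrict the possible shape of $\CC$ and may allow one to identify nowhere-dense-but-not-bounded-expansion witnesses inside transductions of $\CC$. I would expect this reduction to interact tightly with Conjecture~\ref{conj:fw-mnip}, so that progress on either conjecture is likely to feed into the other.
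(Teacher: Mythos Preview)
The statement is labeled \emph{Conjecture} in the paper and is left open: the paper offers no proof, only a brief discussion after Conjecture~\ref{conj:dense analogue} saying that ``a road towards proving Conjecture~\ref{conj:dense analogue} is through Goal~\ref{goal:weak obstructions}'', and spelling out that explicit obstructions at depth $r$ would transduce to $r$-subdivisions of dense graphs. So there is no proof in the paper to compare against.

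Your proposal is not a proof either, and you are candid about this. Your plan---dualize the flipper game to extract hideouts (Question~\ref{q:hideouts}), then mine hideouts for induced subdivisions via a colored formula---is precisely the route the paper sketches through Goal~\ref{goal:weak obstructions} and the discussion following it. You correctly identify the genuine gap: Question~\ref{q:hideouts} is open, and no min-max theorem for the flipper game is known; the paper flags this explicitly as a ``major challenge''. Your idea to route the duality through definable flip-width first (where the configuration space is polynomial) and then transport via Theorem~\ref{thm:dfw} is reasonable but speculative, and the paper does not suggest it would work. Your reduction to the monadically dependent case is correct and standard, but as you note it then entangles the conjecture with Conjecture~\ref{conj:fw-mnip}, which is also open.

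In short: your plan matches the paper's own outline of an approach, and both stop at the same unresolved step.
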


A road towards proving Conjecture~\ref{conj:dense analogue}
is through Goal~\ref{goal:weak obstructions}.
Indeed, if $\CC$ contains obstructions at a fixed depth $r\ge 1$ and of arbitrarily large density $\ell$,
then according to the requirement given below Goal~\ref{goal:weak obstructions}, $\CC$ transduces a class $\DD$ which contains $r$-subdivisions of arbitrarily dense graphs, and therefore $\DD$ does not have bounded expansion.

\subsection{Model checking}%
Recall that Conjecture~\ref{conj:mnip-mc}
predicts that for a hereditary class $\CC$,
first-order model-checking is fixed-parameter tractable (fpt) on $\CC$
if and only if $\CC$ is monadically dependent.
Both implications are open.
We now discuss approaching the backward implication,
giving the upper bound: that model-checking is fpt on every monadically dependent graph class.

The question whether the model checking problem is fpt on every class $\CC$ of graphs that has bounded twin-width, remains open.
(The result of~\cite{tww1} says that this is the case, if the input graph $G$ is given together with its contraction sequence. 
In some special cases \cite{tww4,tww8,tww-tournaments}, such a contraction sequence can be efficiently computed.)
As bounded twin-width implies bounded flip-width, we do not know whether the model checking problem  is fpt on every class $\CC$ of graphs that has bounded flip-width. However, we believe that approximating the radius-$r$ flip-width of a given graph $G$, might be easier than approximating its twin-width.
This is indicated for example by Theorem~\ref{thm:apx}, which gives an XP approximation algorithm for flip-width, while no such algorithm for twin-width is known.
Still, an analogue of the model-checking result of \cite{tww1} is missing, and is posed as the following goal.

\begin{goal}\label{goal:mc}
  Devise an efficient representation of winning strategies for the flipper such that for a fixed formula $\phi$ there is a number $r$, such that 
  given an efficient representation 
of a winning strategy for the flipper in the radius-$r$ flipper game on a given graph $G$, one can efficiently check whether $\phi$ holds in $G$.
\end{goal}

 In particular, Goal~\ref{goal:mc} combined with Goal~\ref{goal:apx} could allow to solve the model-checking problem on classes of bounded flip-width
 (and also on classes of bounded twin-width, overcoming  
the lack of an fpt approximation algorithm for twin-width).

An extension of Goal~\ref{goal:mc} to monadically dependent classes,
with the hope of confirming Conjecture~\ref{conj:mnip-mc},
could lead 
through Conjecture~\ref{conj:fw-mnip},
which characterizes the hereditary monadically dependent classes as those with almost bounded flip-width.
This, however, remains very speculative, and most likely requiring further  insights, on top of the ones needed to achieve Goal~\ref{goal:mc} and to solve Conjecture~\ref{conj:fw-mnip}.


\subsection{Stable classes of bounded flip-width}

It is conjectured \cite[Conjecture 6]{stabletww-arxiv} that the edge-stable classes in the dense analogue of bounded expansion have structurally bounded expansion, i.e., are transductions of classes with bounded expansion. 
As classes of bounded flip-width are contained in the dense analogue of bounded expansion, this would imply the following.

\begin{conjecture}\label{conj:stable}
  Every edge-stable class with bounded flip-width
  has structurally bounded expansion.
\end{conjecture}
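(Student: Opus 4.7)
The plan is to combine Theorem~\ref{thm:subpoly-mstab} with a structural description of monadically stable classes. Since every class of bounded flip-width has almost bounded flip-width, Theorem~\ref{thm:subpoly-mstab} immediately implies that an edge-stable class $\CC$ with bounded flip-width is monadically stable. Assuming the conjecture of \cite[Conjecture 6.1]{rankwidth-meets-stability} -- that every monadically stable class is structurally nowhere dense -- one obtains a nowhere dense class $\DD$, a $c$-coloring $\wh\DD$, and a first-order formula $\phi(x,y)$ such that the hereditary closure of $\phi(\wh\DD)$ contains $\CC$. What remains is to refine this transduction so that $\DD$ actually has \emph{bounded} expansion, rather than merely being nowhere dense.

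For this refinement, the intent is to exploit the quantitative gap between bounded flip-width and almost bounded flip-width. One concrete route goes through the quasi-bush decomposition of \cite{bushes-lics} recalled in Theorem~\ref{thm:quasi-bushes-vc}: the combinatorial object witnessing the transduction of $\CC$ into a nowhere dense $\DD$ is built from bounded-depth tree-like structures and unary functions capturing model-theoretic types. In the proof of Theorem~\ref{thm:snd-are-subpoly}, this construction, via Lemma~\ref{lem:fw-qf-vc}, converts a subpolynomial bound $\copw_{r'}(G_B) \le |B|^{o(1)}$ into a subpolynomial bound $\fw_r(\phi(B)) \le |G|^{o(1)}$. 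The plan is to run this conversion in reverse on the quasi-bushes $B$ produced above, and show that a \emph{constant} bound $\fw_r(\CC) \le c_r$ forces a \emph{constant} bound $\copw_{r'}(G_B) \le c'_r$ on the Gaifman graphs of the decomposition -- which, by Corollary~\ref{cor:copw}, precisely means that the class of such $G_B$'s has bounded expansion, yielding structurally bounded expansion for $\CC$.

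The main obstacle lies exactly in this reverse direction. Lemma~\ref{lem:fw-qf-vc} proceeds from small cop-width of the base to small flip-width of the interpretation; the converse demands a way to extract combinatorial obstructions inside the base quasi-bush $B$ from obstructions to small flip-width in $\CC$. Concretely, given a sequence of graphs in $\CC$ of growing $\fw_r$, one would need to locate, inside the corresponding quasi-bushes, subgraphs of unbounded $r$-admissibility (equivalently, shallow topological minors of unbounded density); edge-stability should be the tool that prevents the obstructions in $\CC$ from being ``absorbed'' by the quasi-bush's coloring and functional structure, forcing them to manifest in the Gaifman graph. Establishing such a Ramsey-type transfer, perhaps using the flipper-game characterization of obstructions from Section~\ref{sec:hideouts} together with the asymmetry between edge-stable and genuinely NIP patterns, is where the argument will be delicate and will likely require new ideas.

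An alternative, perhaps more direct, plan bypasses \cite[Conjecture 6.1]{rankwidth-meets-stability} and instead tries to prove Conjecture~\ref{conj:stable} by mimicking the proof of Theorem~\ref{thm:wsparse} in the edge-stable setting. Theorem~\ref{thm:wsparse} shows that excluding a biclique ``sparsifies'' bounded flip-width into bounded expansion; in the edge-stable case one would need a mechanism that uses the exclusion of a half-graph to transduce the class into a weakly sparse one, at which point Theorem~\ref{thm:wsparse} would take over. The recent flipper-game characterization of monadic stability from \cite{flippers}, used already in Theorem~\ref{thm:subpoly-mstab}, is the most plausible starting point for such a sparsification argument, but again the hard step is turning a bound on flip-width into a concrete transduction into a class of bounded expansion rather than into a merely nowhere dense one.
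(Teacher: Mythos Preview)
The statement is a \emph{conjecture} in the paper (Conjecture~\ref{conj:stable}, in the Discussion section), not a theorem; the paper offers no proof. It is posed as an open problem generalizing the twin-width result of \cite{stable-tww-lics,stabletww-arxiv}, and is used only to motivate Conjectures~\ref{conj:duality-stable} and~\ref{conj:positional}. So there is no paper proof to compare your attempt against.

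Your proposal is, accordingly, a research plan with explicitly acknowledged gaps rather than a proof, and you correctly identify where the difficulty lies. One remark on the main route: even granting the open conjecture \cite[Conjecture 6.1]{rankwidth-meets-stability}, the step of ``running Lemma~\ref{lem:fw-qf-vc} in reverse'' on the quasi-bushes of Theorem~\ref{thm:quasi-bushes-vc} has no content as stated. The quasi-bush for $G\in\CC$ is built from weak coloring numbers of the source nowhere dense class, not from any parameter of $G$; nothing in that construction ties the expansion of the Gaifman graph $G_B$ to $\fw_r(G)$. The ``reverse direction'' you flag as delicate is not a technical refinement of an existing argument --- it is the entire substance of the conjecture. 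Your alternative route, a direct sparsification in the spirit of Theorem~\ref{thm:wsparse}, is closer to how the twin-width case is handled in \cite{stable-tww-lics,stabletww-arxiv}, but that proof exploits contraction sequences in an essential way, and no analogue for general bounded flip-width is known --- which is precisely why the paper leaves Conjecture~\ref{conj:stable} open.
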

Conjecture~\ref{conj:stable} would generalize the main result of \cite{stable-tww-lics,stabletww-arxiv},
which states that every edge-stable class of bounded twin-width has structurally bounded expansion.
The conjunction of Conjecture~\ref{conj:dense analogue} 
and Conjecture~\ref{conj:stable} implies the following duality statement for 
edge-stable graph classes.
\begin{conjecture}\label{conj:duality-stable}
  For every edge-stable graph class $\CC$, 
either $\CC$ is a transduction of a class with bounded expansion,
or $\CC$ transduces a weakly sparse class with unbounded expansion.
\end{conjecture}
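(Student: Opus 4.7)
The approach is a direct dichotomy on the flip-width of $\CC$, invoking Conjecture~\ref{conj:stable} in one case and Conjecture~\ref{conj:dense analogue} in the other; indeed this is exactly how the excerpt advertises the implication.

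First I would split into two cases according to whether $\CC$ has bounded flip-width. In the positive case, since $\CC$ is edge-stable by hypothesis and has bounded flip-width, Conjecture~\ref{conj:stable} applies verbatim and yields that $\CC$ has structurally bounded expansion, i.e.\ is a transduction of a class with bounded expansion. This is exactly the first alternative of Conjecture~\ref{conj:duality-stable}. In the negative case, there exists some $r\in\N$ with $\fw_r(\CC)=\infty$, and Conjecture~\ref{conj:dense analogue} produces a weakly sparse class $\DD$ that $\CC$ transduces and that does not have bounded expansion. This is exactly the second alternative. Note that edge-stability is used only in the first case; the second case works for arbitrary classes of unbounded flip-width. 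The two cases together cover every edge-stable $\CC$, so the duality follows.

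The main obstacle is of course that both Conjecture~\ref{conj:dense analogue} and Conjecture~\ref{conj:stable} are themselves open, and unconditional proof of Conjecture~\ref{conj:duality-stable} would require proving (at least) both. The natural route toward Conjecture~\ref{conj:dense analogue} goes through Goal~\ref{goal:weak obstructions}: one would identify an explicit family of forbidden induced substructures witnessing large radius-$r$ flip-width, analogous to $r$-subdivisions of graphs of large minimum degree witnessing large $\adm_r$ (cf.\ Proposition~\ref{prop:subdivisions} and Fact~\ref{fact:adm-nabla}), and show that some $k$-coloring together with a first-order formula $\phi_r(x,y)$ transduces such a subdivision out of any such obstruction. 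A positive answer to Question~\ref{q:hideouts} would essentially suffice, as hideouts are already the tool we use to certify large flip-width throughout the paper.

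For Conjecture~\ref{conj:stable}, the plan would be to extend the arguments of~\cite{stable-tww-lics,stabletww-arxiv} from bounded twin-width to bounded flip-width, using edge-stability to exclude half-graph-like patterns (thereby ruling out configurations that prevent sparsification) and then using the closure properties of Section~\ref{sec:transductions}, together with Theorem~\ref{thm:wsparse} in the weakly sparse case, to extract a bounded-expansion preimage under a transduction. I expect this second step to be the principal technical difficulty, since the twin-width arguments rely heavily on contraction sequences, which have no direct counterpart for flip-width beyond the flipper's strategy itself; translating strategy trees into structural decompositions usable for desparsification is, to my mind, the deepest missing piece.
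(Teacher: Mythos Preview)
Your derivation is correct and essentially matches the paper's: both obtain Conjecture~\ref{conj:duality-stable} from Conjectures~\ref{conj:dense analogue} and~\ref{conj:stable} by splitting on whether $\CC$ has bounded flip-width. The paper inserts an additional preliminary case---if $\CC$ is not monadically stable, then by Fact~\ref{fact:mstable} it is not monadically dependent and hence transduces every class, so the second alternative holds unconditionally---but this case is absorbed by your unbounded-flip-width branch (via Corollary~\ref{cor:mnip}, an edge-stable class that is not monadically stable has unbounded flip-width), so your two-case argument is a valid and slightly cleaner simplification.
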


 Conjecture~\ref{conj:dense analogue} 
and Conjecture~\ref{conj:stable} imply Conjecture~\ref{conj:duality-stable}:
  If $\CC$ is not monadically stable, then $\CC$ is not monadically dependent (by Fact~\ref{fact:mstable}), and hence transduces every weakly sparse class of graphs.
  If $\CC$ is monadically stable and has unbounded flip-width, then $\CC$ transduces a weakly sparse class that does not have bounded expansion by Conjecture~\ref{conj:dense analogue}.
  Finally, if $\CC$ is monadically stable and has bounded flip-width, then $\CC$ transduces in a class of bounded expansion, by Conjecture~\ref{conj:stable}.

\subsection{Restrictions of the flipper game}
Restricted classes of bounded flip-width 
can be defined by imposing additional constraints on the
flipper game. We consider the following restrictions \anonym{\footnote{I thank Rose McCarty and Pierre Ohlmann for suggesting the positional and bounded depth variants.}}
on the strategy of the flipper in the flipper game.
We say that the strategy of the flipper is:
\begin{description}
  \item[blind] if their move does not depend on the current position of  the runner 
  (but may depend on the graph and on the round number);
  \item[positional] if their move
  is only based on the current position of the runner (and not on the past moves, nor the round number); 
  \item [of bounded depth] if 
  there is a bound $\ell$ such that the flipper wins within $\ell$ rounds;
  \item [branching-blind] if 
  in each round move, the flipper proposes a partition $\cal P$ of the vertex set,
  and bases his move only on the part of $\cal P$ which is occupied by the runner, and furthermore, ensure that the runner will remain in that part until the end of the game.
\end{description}
Each of those variants of the game 
defines a variant of the flip-width parameter, and 
the related classes. Thus,
a class $\CC$ has bounded $X$ flip-width,
for $X\in\set{\textit{blind, branching blind, positional}}$,
if for every radius $r$, there is a $k$ 
such that the flipper 
wins the flipper game of radius $r$ and width $k$ on every graph $G\in\CC$, using a strategy with property $X$. Similarly, $\CC$ has \emph{bounded flip-depth} 
if for every $r\in\N$ there are $k,\ell$ 
 such that for every $G\in\CC$, the flipper has a  strategy
 in the flipper game of radius $r$ and width $k$, that wins in at most $\ell$ rounds.

 We may also consider the limit case of the above games, for $r=\infty$, thus 
 defining classes with \emph{blind $\infty$-flip-width},
 \emph{positional $\infty$-flip-width}, \emph{$\infty$-flip-depth},
 \emph{branching-blind $\infty$-flip-width}, analogously as above.

 It turns out that these notions relate to notions and conjectures
 that have been studied earlier, as we discuss below.
 We start with the following observation.

 \begin{proposition}\label{prop:strategies-transductions}
  All the above properties are preserved under first-order transductions,
  and under $\cmso$ transductions for the $\infty$ variants.
 \end{proposition}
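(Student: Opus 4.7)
The plan is to revisit the strategy-transfer mechanism of Lemma~\ref{lem:strategy-transfer}, which underpins the proofs of Theorem~\ref{thm:interpretations} (for first-order transductions) and Theorem~\ref{thm:interpCMSO} (for $\cmso$ transductions, used in the $r=\infty$ variants thanks to compositionality taking the place of locality), and to check that the resulting flipper strategy on $H=\phi(G)$ inherits each of the four structural restrictions from the strategy it simulates on $G$. Any transduction factors as $c$-coloring, interpretation, and induced-subgraph restriction; coloring is inert on all of the considered games, and the passage from $G$ to an induced subgraph $H\subseteq G$ preserves all four properties trivially, since intersecting each announced partition with $V(H)$ turns a blind, positional, bounded-depth, or branching-blind $G$-strategy into an $H$-strategy of the same type. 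Hence the real work concerns a single interpretation step.

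For that step, the flipper on $H$ simulates a $G$-game in parallel, and in round $i$ announces $H_i:=F(G_i)$, where $G_i$ is the flip prescribed by the $G$-strategy. If the $G$-strategy is \emph{blind}, then $G_i$ depends only on $i$, hence so does $H_i$. If it is \emph{positional}, then $G_i$ depends only on the runner's current position $v_{i-1}\in V(G)$, which is the same vertex on both sides, so $H_i$ too depends only on $v_{i-1}$. If the game on $G$ has \emph{bounded depth} $\ell$, then by Lemma~\ref{lem:strategy-transfer} the runner is trapped on $H$ no later than on $G$, so the $H$-game also ends within $\ell$ rounds. In each case the width inflates from $k$ to the $T_q(k)$-type quantity appearing in Theorem~\ref{thm:interpretations} (or its $\cmso$ counterpart), but the structural property of the strategy is retained.

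The subtlest case, and the one I expect to be the main obstacle, is \emph{branching-blindness}. The key observation is that the $\ell$-flip $F(G')$ of $H$ is a $\cal Q$-flip, where $\cal Q$ partitions $V(H)=V(G)$ by the $\phi'$-local types of vertices in the colored flip $G'$; since the $\cal P$-class of a vertex is one coordinate of its local type, the partition $\cal Q$ always refines $\cal P$. A branching-blind $G$-strategy can thus be viewed as a rooted tree of nested partitions along which the runner is confined, and $F$ lifts this tree level by level to a rooted tree of refined nested partitions on $H$, whose leaves are still singletons. The delicate point is the confinement condition: after the first round the runner on $H$ is confined to some refined class $B\subseteq A$ (with $A\in\cal P_1$, $B\in\cal Q_1$), and by induction the continuation of the $G$-strategy on $A$ must transfer to a branching-blind continuation on $H$ confined to $B$. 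Formalizing this recursion -- in particular, arguing that any valid $H$-move which seemingly leaves $B$ while staying within $A$ is consistent with an induction step on the refined tree, with the updated confinement class being a further refinement of the old $B$ -- is where the proof will need the most care.
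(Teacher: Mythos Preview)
Your first two paragraphs are essentially the paper's proof, just unpacked: the paper simply observes that the transferred strategy $G'\mapsto F(G')$ from Lemma~\ref{lem:strategy-transfer} preserves each of the four restrictions, and leaves the verification implicit. Your treatment of blind, positional, and bounded-depth is correct and matches this.

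For the branching-blind case you over-complicate matters and introduce a confusion. You write that the $\ell$-flip $F(G')$ is a $\cal Q$-flip where $\cal Q$ refines the partition $\cal P$ underlying the flip $G'$. But $\cal P$ here is the \emph{flip partition} (the one with $|\cal P|\le k$ used to define $G'$), not the \emph{branching partition} that the flipper proposes in order to coarsely locate the runner. These are two distinct objects in the definition of a branching-blind strategy, and the refinement you observe (even if true) is irrelevant to the branching-blind condition.

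The correct argument is much simpler. Since $V(H)=V(G)$, the $H$-flipper can reuse the very same branching partition $\cal P_i$ that the $G$-flipper proposes in round $i$. The $H$-flipper's flip $F(G_i)$ then depends only on the part $A_i\in\cal P_i$ containing the runner, because $G_i$ does. Confinement is immediate from the transfer inclusion $B^r_{H'}(v)\subseteq B^{rs}_{G'}(v)$: any valid $H$-move is a valid $G$-move, and the $G$-strategy already confines the runner to $A_i$ for all future rounds. No refined partition, no recursion on a tree of nested classes, is needed.
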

 Proposition~\ref{prop:strategies-transductions} follows by observing that in the proof of Theorem~\ref{thm:interpretations} and Theorem~\ref{thm:interpCMSO}, the strategy on $\phi(G)$ is obtained by transferring a winning strategy from $G$ (see Section~\ref{sec:transfer}). And transferring a strategy with one of the listed properties, results in a strategy with the same property.

 \paragraph{Infinite radius}
 In the limit case of radius $\infty$, the picture 
 is quite well-understood, thanks to the following.
 \begin{theorem}\label{thm:infty-classes}
  Let $\CC$ be a class of graphs. Then:
  \begin{enumerate}
    \item $\CC$ has branching-blind $\infty$-flip-width if and only if 
    $\CC$ has bounded clique-width;
    \item $\CC$ has blind $\infty$-flip-width if and only if 
    $\CC$ has bounded linear clique-width;
    \item  $\CC$ has bounded positional $\infty$-flip-width, if and only if 
    $\CC$ has bounded $\infty$-flip-depth, 
    if and only if $\CC$ has bounded shrubdepth.
  \end{enumerate}
 \end{theorem}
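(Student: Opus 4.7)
The plan is to align each restriction on the flipper's strategy in the radius-$\infty$ game with a known tree-like decomposition, following the template used in the proof sketch of Theorem~\ref{thm:cw}.

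For~(1), the forward direction refines the argument given for Theorem~\ref{thm:cw}: starting from a subcubic rank-decomposition tree $T$ of $G$ of width $k$, the flipper maintains, after round $i$, a subtree $T_i$ of $T$ containing (a leaf corresponding to) the runner's current position. In round $i+1$ he picks a central inner node $v$ of $T_i$, partitions $V(G)$ by which of the at most three components of $T_i - v$ each leaf belongs to, and announces the $O(2^k)$-flip provided by the cut-rank bound that makes these three components mutually anticomplete; he then commits to the component containing the runner. This is branching-blind by construction, and iterating halves $T_i$ until only one leaf remains. For the converse, a branching-blind winning strategy unfolds into a rooted partition tree whose leaves are the singletons of $V(G)$, and the bound $k$ on the sizes of the announced flips controls the cut-rank across every edge of this tree, yielding $\rw(G) = O(k)$.

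For~(2), blindness forces the flipper's sequence of partitions to be independent of the runner, so the partition tree from Part~1 degenerates into a caterpillar. This is precisely the defining shape of linear rank-width and linear clique-width: the $i$-th round encodes the finalization of the $i$-th vertex of a linear clique-width expression together with its connections to the previously introduced vertices. The two translations are specializations of the constructions in Part~1 to caterpillar decompositions, with analogous quantitative bounds.

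For~(3), I would prove the triple equivalence using shrubdepth as a pivot. Bounded shrubdepth yields both a positional and a bounded-depth winning strategy simultaneously: given a tree-model $T$ of $G$ of depth $d$ with $m$ labels, the flipper at round $i$ partitions $V(G)$ by the ancestor at depth $i$ in $T$ (at most $m^{d}$ parts) and flips between parts according to the lca-label function of $T$. Each move depends only on the depth-$i$ ancestor of the current position (hence is positional), and the runner is trapped after at most $d$ rounds. Conversely, bounded $\infty$-flip-depth implies bounded shrubdepth by extracting a tree-model of depth $\ell$ from the bounded-depth game tree, with labels on inner nodes read off from the flipper's partitions and their flips, so that adjacency in $G$ becomes a function of lca-labels as required by the tree-model definition.

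The hard part will be the remaining implication, \emph{bounded positional $\infty$-flip-width $\Rightarrow$ bounded shrubdepth}. Against a positional (hence memoryless and deterministic) flipper, the runner's trajectory must be injective, for otherwise a loop would persist forever and contradict winning; this bounds any play by $|V(G)|$ rounds, but the bound is not uniform in the class. My plan for a uniform bound is to pass through the definable flipper game of Section~\ref{sec:definable}: by Theorem~\ref{thm:dfw}, a bound on $\fw_\infty$ implies a bound on $\dfw_\infty$, and one can ``definabilise'' a positional strategy so that the configurations reachable under play are parametrised by a bounded number of definable types governed by a vertex subset of size $O(k^{O(1)})$. A Ramsey/pigeonhole argument on the runner's trajectory through this bounded state space then collapses any play of length exceeding some function $f(k)$ into a shorter canonical one with the same outcome, forcing a uniform round bound depending only on $k$, from which a tree-model of bounded depth is extracted in the same manner as the $\infty$-flip-depth $\Rightarrow$ shrubdepth direction, closing the cycle.
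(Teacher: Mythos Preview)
Your approach is fundamentally different from the paper's. The paper does not attempt direct constructions for the backward implications; instead it proves a single meta-result (Proposition~\ref{prop:strategies-transductions}): each restricted strategy type (blind, branching-blind, positional, bounded depth) is preserved under $\cmso$ transductions, because the strategy-transfer mechanism of Lemma~\ref{lem:strategy-transfer} preserves all these properties. The backward directions then follow from known obstruction theorems: a class of unbounded linear clique-width $\cmso$-transduces the class of all trees, and a class of unbounded shrubdepth $\cmso$-transduces the class of all paths. So it suffices to show, respectively, that trees do not have bounded blind $\infty$-flip-width, and that paths have neither bounded positional $\infty$-flip-width nor bounded $\infty$-flip-depth. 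These are concrete statements about single classes, far easier than your general structural arguments.

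Your direct approach is plausible for Part~(1), and the forward directions of (2) and (3) are reasonable (though the paper handles these via transductions too, using half-graphs and bounded-depth trees as sources). But your backward arguments for (2) and (3) have genuine gaps. For~(2), a blind strategy is simply a fixed sequence $G_1, G_2, \ldots$ of $k$-flips that defeats every runner; there is no evident ``caterpillar'' structure, because the connected components of consecutive $G_i$ need not refine one another, and the flipper never commits to a part. The analogy with Part~(1) breaks down precisely because blindness removes the branching mechanism rather than linearising it. For~(3), your injectivity observation is correct but gives only a $|V(G)|$-bound on play length; the proposed ``definabilisation plus Ramsey'' route to a uniform bound is speculative --- passing to the definable game via Theorem~\ref{thm:dfw} still leaves $n^{O(k)}$ configurations, not $O_k(1)$, and no pigeonhole on trajectories through this space yields a constant depth bound. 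The paper sidesteps both difficulties by reducing to the single class of paths.
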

A class $\CC$ has \emph{bounded shrubdepth}~\cite{shrubdepth-mfcs}
if and only if it transduces in a class 
of trees of depth bounded by a constant.


   
\begin{proof}[Sketch of proof]
  (1).
  For the forward implication,  observe that the strategy presented in the proof of Theorem~\ref{thm:cw} (see \focs[\cite{flip-width-arxiv}]{Appendix~\ref{app:cw}}) is branching blind. 
  For the backward implication, note that if $\CC$ has unbounded clique-width then 
  by Theorem~\ref{thm:cw} it has unbounded flip-width,
  so in particular, it has unbounded branching-blind flip-width.

(2).
For the forward implication, observe that the strategy presented in Example~\ref{ex:half-graphs},
for half-graphs,
is blind. Therefore, the class of half-graphs has bounded blind flip-width.
It is well-known that every class $\CC$ of bounded linear clique-width is a $\cmso$ transduction of the class of half-graphs (or of the class of finite total orders). By Proposition~\ref{prop:strategies-transductions}, $\CC$ has bounded blind flip-width.

Conversely, if a class $\CC$  has unbounded linear clique-width, then it $\cmso$ transduces the class of all trees (this follows from 
\cite{pathwidth2Conn,
pathwidth2ConnDT} and from \cite{courcelle-oum-vertex-minors-seese}).
One can prove\sz{do it} that 
the class of trees does not have bounded blind $\infty$-flip-width.
Again by Proposition~\ref{prop:strategies-transductions}, this implies that $\CC$ has unbounded blind $\infty$-flip-width.

(3). First observe that in a tree of depth at most $d$, the flipper has a positional winning strategy in the radius-$\infty$ flipper game, that wins in at most $d$ rounds (this is essentially the same as the strategy in Example~\ref{ex:trees}).
Hence, every class $\DD$ of trees of depth bounded by a constant 
has bounded positional $\infty$-flip-width, 
and bounded $\infty$-flip-depth.
By Proposition~\ref{prop:strategies-transductions},
the same holds for every class that transduces in $\DD$, hence for all classes of bounded shrubdepth.

Conversely, if a class $\CC$ has unbounded shrubdepth, then it $\cmso$-transduces the class 
of all paths, by \cite{shrubdepth-cmso-paths}.
One can prove\sz{do it} that the class of all paths does not have bounded positional $\infty$-flip-width, and does not have bounded $\infty$-flip-depth.  Proposition~\ref{prop:strategies-transductions} implies that 
$\CC$ has unbounded positional $\infty$-flip-width, and 
unbounded $\infty$-flip-depth.
\end{proof}

\paragraph{Blind case}We now move the case of classes of bounded blind flip-depth. We start with some examples.
 \begin{example}
  Classes of bounded blind flip-width include 
  classes of bounded pathwidth, and more generally, classes of bounded linear rank-width (or linear clique-width), by Theorem~\ref{thm:infty-classes}.
 \end{example}

 \begin{example}
  On the other hand, the strategies described in Example~\ref{ex:trees} and in Example~\ref{ex:comp-trees}, for trees and for comparability graphs of trees, respectively, are not blind. One can show that 
 those classes do not have bounded blind flip-width.
 A more general statement is given below.
 \end{example}

 \begin{conjecture}\label{conj:blind}
  A class $\CC$ has bounded blind flip-width if and only if $\CC$ has bounded linear clique-width.
\end{conjecture}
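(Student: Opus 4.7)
\emph{Backward direction.} Suppose $\CC$ has bounded linear clique-width. By Theorem~\ref{thm:infty-classes}(2), $\CC$ then has bounded blind $\infty$-flip-width: there is a single constant $k$ such that on every $G\in\CC$ the flipper admits a blind winning strategy of width $k$ in the radius-$\infty$ game. Such a strategy is a fortiori winning in the radius-$r$ game for every finite $r$, since the runner's legal moves in the radius-$r$ game form a subset of those available in the radius-$\infty$ game, and the winning condition (the runner's vertex being isolated in the announced flip) is the same. Hence $\CC$ has bounded blind $r$-flip-width for every $r\in\N$. This direction is immediate from already established material.

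\emph{Forward direction — the hard part.} I argue the contrapositive: assuming $\CC$ has unbounded linear clique-width, I aim to produce a finite radius $r$ witnessing unbounded blind $r$-flip-width. The plan has two ingredients. \textbf{(i) Universality.} Isolate a concrete canonical obstruction family $\mathcal T$ to bounded linear clique-width — a natural candidate being the class of complete binary trees, or equivalently the class of comparability graphs of complete binary trees — and show that every class $\CC$ of unbounded linear clique-width first-order transduces $\mathcal T$ (after the usual addition of a bounded number of unary colors). Courcelle–Oum-type results yield such a characterization through $\cmso$ transductions, but Proposition~\ref{prop:strategies-transductions} only transfers blind \emph{finite-radius} flip-width along first-order transductions; hence one must either upgrade the classical characterization to a first-order one, or enlarge $\mathcal T$ by adjoining suitable definable unary predicates so that the $\cmso$ formulas witnessing the transduction become first-order over the enlarged structure. \textbf{(ii) Lower bound on $\mathcal T$.} Prove that $\mathcal T$ has unbounded blind $r$-flip-width at some fixed small radius (target $r=2$). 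Fix $k$ and consider a complete binary tree $T_h$ of height $h\gg 1$. In a blind strategy, the sequence of $k$-flips $G_1,G_2,\ldots$ is committed to in advance, so the runner can plan an adaptive descent. At round $i$, inspect the partition $\cal P_i$ of $V(T_h)$ underlying $G_i$; by pigeonhole, one of the two children of the current vertex $v_{i-1}$ has a subtree that meets the $\cal P_i$-cell of $v_{i-1}$ in a "large" way, and in particular contains a non-isolated neighbor of $v_{i-1}$ in $G_{i-1}$. Maintaining the invariant that the runner sits at the root of a subtree of height at least $h-i\cdot g(k)$, for a fixed function $g$, gives an evading strategy that lasts $\Omega(h/g(k))$ rounds. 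Combining (i) and (ii) via Proposition~\ref{prop:strategies-transductions} yields that $\CC$ has unbounded blind $r$-flip-width.

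\emph{Main obstacle.} Step (i) is the crux: the required first-order (as opposed to $\cmso$) transduction characterization of unbounded linear clique-width is not in the literature, and sits next to several open questions concerning FO vs.\ $\cmso$ definability at the level of linear rank-width decompositions. A potentially more tractable workaround is to bypass transductions entirely and construct explicit combinatorial obstructions inside every graph of sufficiently large linear clique-width — a "linear" analogue of the hideouts of Section~\ref{sec:hideouts}, tailored to \emph{blind} play — along the line of linear-rank-width brambles. Such a direct obstruction would simultaneously settle Question~\ref{q:hideouts} in the blind, linear-width setting, and yield a clean min-max formulation of the conjecture. Step (ii), by contrast, I expect to be a routine (if non-trivial) adversarial argument once a suitable invariant is set up; it is the transduction-theoretic step (i) that I consider the principal obstacle.
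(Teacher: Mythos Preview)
The statement you are attempting to prove is \emph{Conjecture}~\ref{conj:blind}: the paper explicitly poses it as open and offers no proof. There is therefore no ``paper's own proof'' to compare your proposal against.

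Your backward direction is correct and is exactly what the paper implicitly records: by Theorem~\ref{thm:infty-classes}(2), bounded linear clique-width gives a blind winning strategy at radius $\infty$, which is a fortiori a blind winning strategy at every finite radius. This direction is not in doubt.

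For the forward direction, your assessment of the situation is accurate, and your candor about the obstacle is appropriate. The paper's own treatment of the $r=\infty$ case (sketch of Theorem~\ref{thm:infty-classes}(2)) uses that classes of unbounded linear clique-width $\cmso$-transduce the class of all trees, together with the (asserted but unproved) fact that trees have unbounded blind $\infty$-flip-width. You correctly observe that Proposition~\ref{prop:strategies-transductions} only transfers the \emph{finite-radius} blind parameters along \emph{first-order} transductions, so to replicate that argument at finite radius one would need an FO-transduction analogue of the Courcelle--Oum-type result --- precisely the gap you identify, and one the paper does not close. Your step~(ii) is essentially the content of the unproved remark just before the conjecture (``One can show that those classes do not have bounded blind flip-width''), so the paper agrees that this part is believable, though neither you nor the paper supplies a full argument.

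In short: your proposal is not a proof, but it is an honest and well-targeted outline of why the conjecture is plausible and where the difficulty lies; this matches the paper's own stance.
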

As bounded linear clique-width is equivalent to bounded blind $\infty$-flip-width by Theorem~\ref{thm:infty-classes},
Conjecture~\ref{conj:blind} states that bounded blind flip-width collapses to bounded blind $\infty$-flip-width.
 
\paragraph{Branching-blind case}
We now discuss the branching-blind case.
By Theorem~\ref{thm:infty-classes},
every class of bounded clique-width has bounded branching-blind flip-width.


 \begin{conjecture}\label{conj:branching-blind}
  A class $\CC$ has bounded branching-blind flip-width if and only if $\CC$ has bounded clique-width.
\end{conjecture}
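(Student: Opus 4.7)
The backward implication ($\Leftarrow$) follows directly from Theorem~\ref{thm:infty-classes}(1). If $\CC$ has bounded clique-width, that theorem provides a uniform $k$ such that on every $G\in\CC$ the flipper has a branching-blind winning strategy of width $k$ in the radius-$\infty$ flipper game. A runner restricted to paths of length at most $r$ has strictly fewer choices than one allowed arbitrary paths, so replaying the same flips and commitments constitutes a branching-blind winning strategy of width $k$ at every finite radius $r$ as well. Thus $\CC$ has bounded branching-blind flip-width, in fact with a radius-independent bound $k_r = k$.

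For the forward direction ($\Rightarrow$), the plan is to extract from a branching-blind winning strategy a rank decomposition of $G$ of bounded width, and then invoke the functional equivalence of rank-width and clique-width together with Theorem~\ref{thm:cw}. Unrolling a branching-blind winning strategy of width $k$ at radius $r$ on $G$ yields a rooted tree $T$ whose internal nodes $u$ are labeled by a committed subset $A_u \subseteq V(G)$ and a $k$-flip $G'_u$ of $G$; the children of $u$ correspond to the at most $k$ parts of a partition of $A_u$, and the leaves are singletons. Branching-blindness says that once the runner enters $A_u$, the flip $G'_u$ prevents them from leaving $A_u$ along any $r$-step path. We aim to read off $T$ as a rank decomposition of $G$ whose cuts are $A_u \mid V(G)\setminus A_u$, with cut-ranks bounded by a function of $k$.

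The key technical step is to bound $\rk_G(A_u, V(G)\setminus A_u)$ by a function of $k$. At radius $r = \infty$, the commitment forces $A_u$ to be a union of connected components of $G'_u$, hence the cut has rank zero in $G'_u$ and rank at most $O(k^2)$ in $G$, since a $k$-flip modifies adjacencies by at most $\binom{k+1}{2}$ rank-one rectangles. This is precisely the mechanism behind the upper bound in Theorem~\ref{thm:cw}. At finite radius $r$, however, the commitment only forbids $r$-paths crossing the cut in $G'_u$, which does not on its own constrain the cut-rank.

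The main obstacle is therefore to exploit the hypothesis that branching-blind winning strategies exist for \emph{every} radius $r$, and not just one, so as to either upgrade them to an $\infty$-radius branching-blind strategy of bounded width or to directly produce a rank decomposition of bounded width. A promising route combines a pigeonhole argument with the VC-dimension bounds of Theorem~\ref{thm:vcdim} (bounded $\fw_1$, which is a consequence of bounded branching-blind flip-width, implies bounded VC-dimension) in the spirit of Theorem~\ref{thm:dfw}: the effective number of flips the flipper can meaningfully consider is polynomially bounded via Sauer-Shelah, so one may hope to ``rigidify'' the strategy tree. A complementary contrapositive route is via Fact~\ref{fact:well-linked}: unbounded clique-width produces arbitrarily large well-linked sets, and one would show that a well-linked set of size $n$ obstructs every branching-blind winning strategy of width $\ll n$ at some bounded radius $r$, generalizing Lemma~\ref{lem:well-linked-hideout} from $r = \infty$ to finite radii in the branching-blind setting. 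We expect the most delicate technical step to be precisely this conversion of radius-$r$ containment into a cut-rank bound, since at finite radius the branching-blindness condition is genuinely weaker than disconnection.
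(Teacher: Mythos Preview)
The statement you are addressing is a \emph{conjecture} in the paper, not a theorem: the paper does not prove it, and explicitly presents it as open. So there is no ``paper's own proof'' to compare against.

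Your backward implication is correct and is exactly what the paper observes: by Theorem~\ref{thm:infty-classes}(1), bounded clique-width gives bounded branching-blind $\infty$-flip-width, and the same strategy works at every finite radius. The paper phrases Conjecture~\ref{conj:branching-blind} precisely as the collapse of bounded branching-blind flip-width down to bounded branching-blind $\infty$-flip-width, so this direction is already considered established.

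For the forward direction, you have not given a proof but rather a sketch of possible approaches, and you correctly identify the central obstacle: branching-blindness at finite radius $r$ only constrains $r$-step paths across the cut $A_u \mid V(G)\setminus A_u$, which does not bound the cut-rank. Your two suggested routes (upgrading strategies via VC/Sauer--Shelah bounds, or showing well-linked sets obstruct branching-blind strategies at finite radii) are reasonable directions to explore, but neither is carried out, and the paper gives no indication that either is known to work. In short, your proposal does not resolve the conjecture; it recovers the easy direction and honestly flags the hard direction as open, which matches the paper's own position.
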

As bounded clique-width is equivalent to bounded branching-blind $\infty$-flip-width by Theorem~\ref{thm:infty-classes},
 Conjecture~\ref{conj:branching-blind} states that bounded branching-blind flip-width collapses to bounded branching-blind $\infty$-flip-width.

\paragraph{Positional case}
 Let us now look at some examples related to bounded positional flip-width.
 \begin{example}
  Every class with bounded expansion has bounded positional flip-width. This follows from the proof of the upper bound in  Theorem~\ref{thm:adm-wcol}, as the cops' strategy in the radius-$r$ Cops and Robber game 
  is to occupy (or isolate) the vertices that are $2r$-weakly reachable from the vertex that is currently occupied by the robber. Clearly, this is a positional strategy.
 \end{example}
By Proposition~\ref{prop:strategies-transductions}
we get the following.

\begin{corollary}
  Every class with structurally bounded expansion has bounded positional flip-width.
\end{corollary}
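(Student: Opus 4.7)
The plan is to derive this corollary as a direct consequence of the two facts that immediately precede it in the text. Let $\DD$ be a class of structurally bounded expansion. By definition, there is a class $\CC$ of bounded expansion, a $c$-coloring $\wh\CC$ of $\CC$, and a first-order formula $\phi(x,y)$ such that $\DD$ is contained in the hereditary closure of $\phi(\wh\CC)$.

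First I would invoke the preceding example, which states that every class with bounded expansion has bounded positional flip-width; in particular, $\wh\CC$ does. The positional strategy is the one extracted from the proof of the upper bound in Theorem~\ref{thm:adm-wcol}: fix a total order on $V(G)$ witnessing a near-optimal value of $\wcol_{2r}(G)$; when the runner sits on a vertex $v$, the flipper isolates (via a flip of bounded size, as in Theorem~\ref{thm:copw-fw}) the set $\{v\}\cup\{w<v : w\text{ is }2r\text{-weakly reachable from }v\}$. This move depends only on $v$, hence is positional.

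Next I would apply Proposition~\ref{prop:strategies-transductions}, which asserts that the property of having bounded positional flip-width is preserved under first-order transductions. Combined with the previous step, this yields that $\phi(\wh\CC)$ has bounded positional flip-width. Since $\DD$ is contained in the hereditary closure of $\phi(\wh\CC)$ and the positional flip-width of an induced subgraph is bounded by that of the ambient graph (with the same positional strategy restricted to valid positions, analogous to Lemma~\ref{lem:hereditary}), it follows that $\DD$ has bounded positional flip-width.

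The only non-routine point in this plan is that Proposition~\ref{prop:strategies-transductions} itself must genuinely preserve the \emph{positional} refinement, not just boundedness of flip-width. This should be read off from Section~\ref{sec:transfer}: in the strategy-transfer construction, the flipper on $\HHH=\phi(G)$ plays the flip $F(G_i)$ whenever the flipper on $\GGG$ plays $G_i$, and the runner's position is copied verbatim between the two games. Thus if the $\GGG$-strategy chooses $G_i$ as a function only of the current runner position, the same holds for the $\HHH$-strategy after composition with the fixed mapping $F$. This is the one bookkeeping check that needs to be recorded explicitly; once done, the corollary follows without further work.
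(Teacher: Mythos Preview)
Your proposal is correct and follows exactly the paper's approach: the paper derives the corollary directly from Proposition~\ref{prop:strategies-transductions} combined with the preceding example (bounded expansion implies bounded positional flip-width), and you have simply filled in the routine details, including the observation that the strategy-transfer mapping preserves positionality.
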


On the other hand, we have:
 \begin{example}
 Half-graphs have unbounded positional flip-width.
 \end{example}
It follows that every class $\CC$ with bounded positional flip-width
is edge-stable. Since $\CC$ is also monadically dependent by Corollary~\ref{cor:mnip}, and every monadically dependent, edge-stable class is monadically stable by Fact~\ref{fact:mstable}, we get the following.
\begin{corollary}
  Every class with bounded positional flip-width is monadically stable.
\end{corollary}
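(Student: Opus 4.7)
The corollary is essentially a three-line syllogism whose ingredients have all been assembled in the preceding paragraph; the task is simply to arrange them cleanly, filling in the only non-obvious step (edge-stability). My plan is as follows.

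First, I would verify the unstated claim that if $\CC$ has bounded positional flip-width then $\CC$ is edge-stable. Suppose otherwise. Then for every $n$ the class $\CC$ contains some graph $G_n$ in which one can find vertices $a_1,\dots,a_n,b_1,\dots,b_n$ with $a_ib_j\in E(G_n)\iff i<j$, i.e. the half-graph $H_n$ appears as a semi-induced bipartite subgraph. Coloring $\set{a_i}$ and $\set{b_j}$ by two fresh unary predicates and writing the quantifier-free formula $\phi(x,y):=C_A(x)\land C_B(y)\land E(x,y)$ (symmetrized) produces $H_n$ as an induced subgraph of $\phi(G_n)$ with these colors. Hence $\CC$ transduces the class of all half-graphs.

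Second, invoke Proposition~\ref{prop:strategies-transductions}, which states that the property of having bounded positional flip-width is preserved under first-order transductions. Combined with the example recalled just above (that the class of half-graphs has unbounded positional flip-width), this yields a contradiction. Therefore $\CC$ is edge-stable.

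Third, I combine the two facts the excerpt explicitly cites. By Corollary~\ref{cor:mnip}, classes of bounded flip-width are monadically dependent; since positional flip-width bounds ordinary flip-width, $\CC$ is monadically dependent. Together with edge-stability established in the previous step, Fact~\ref{fact:mstable} (Braunfeld–Laskowski / \cite{rankwidth-meets-stability}) upgrades this to monadic stability, completing the proof. No step poses a real obstacle; the only mildly delicate point is arguing that failure of edge-stability -- a semi-induced condition -- gives a genuine transduction to the class of half-graphs, which is handled by the two-color encoding above.
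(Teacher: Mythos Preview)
Your proposal is correct and follows essentially the same approach as the paper. The paper's argument (given in the paragraph immediately preceding the corollary) is exactly your three-step syllogism: edge-stability from the half-graph example, monadic dependence from Corollary~\ref{cor:mnip}, and then Fact~\ref{fact:mstable}; you have simply made the first step explicit by invoking Proposition~\ref{prop:strategies-transductions} to pass from the semi-induced half-graphs to a transduction, which is the natural way to fill in the paper's terse ``It follows that\ldots''.
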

Conjecture~\ref{conj:stable} would therefore imply the following
characterization of classes with bounded positional flip-width.
\begin{conjecture}\label{conj:positional}
  A class $\CC$ has bounded positional flip-width if and only if $\CC$ has structurally bounded expansion.
\end{conjecture}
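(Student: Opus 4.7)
The plan splits naturally into the two implications of the claimed equivalence.

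For the forward direction, that structurally bounded expansion implies bounded positional flip-width, the observation already recorded above the conjecture essentially suffices. First one verifies that every class of bounded expansion has bounded positional flip-width: the cops' strategy implicit in the upper bound of Theorem~\ref{thm:adm-wcol}, namely occupying (equivalently, isolating) the at most $\wcol_{2r}(G)$ vertices that are weakly $2r$-reachable from the runner's current location, is a positional strategy depending only on the runner's current vertex. Then Proposition~\ref{prop:strategies-transductions} lifts boundedness of positional flip-width through first-order transductions, so every class of structurally bounded expansion inherits it.

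The backward direction is the real content. My proposed route factors through \textbf{Conjecture~\ref{conj:stable}}: every edge-stable class with bounded flip-width has structurally bounded expansion. Given this, three easy steps suffice. First, bounded positional flip-width trivially implies bounded flip-width, since a positional winning strategy is a winning strategy. Second, bounded positional flip-width implies edge-stability and monadic stability: positions on a half-graph $H_n$ are essentially an interval order, and a straightforward argument (the runner walks monotonically along the half-graph using the fact that any $k$-flip leaves a large ``unaffected'' interval) shows $\fw_r^{\mathrm{pos}}(H_n) \to \infty$, so the class of half-graphs is excluded as a transduction by Proposition~\ref{prop:strategies-transductions} and hence by Fact~\ref{fact:mstable} and Corollary~\ref{cor:mnip} the class is monadically stable. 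Third, apply Conjecture~\ref{conj:stable} to conclude structurally bounded expansion.

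The main obstacle is of course Conjecture~\ref{conj:stable}, which is itself open; anything one proves here is conditional on it or on resolving it in the special case at hand. For an unconditional attack, the natural approach is to extract structural information directly from the positional strategy. A positional winning strategy of width $k$ at radius $r$ is a function $\sigma$ assigning to each vertex $v$ a $k$-flip $\sigma(v)$ of $G$, such that iteratively following $\sigma$ traps the runner after finitely many rounds regardless of the starting vertex. One would like to read off $\sigma$ a quasi-order on $V(G)$ analogous to the weak-reachability order witnessing bounded expansion, together with a first-order interpretation computing $\sigma$ from this order; this would realize $G$ as an interpretation of a structure whose Gaifman graph has bounded expansion. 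Making this precise seems to require a finiteness-of-types argument exploiting monadic stability, combined with a careful analysis of how $\sigma$ decreases along an orbit --- essentially a stability-theoretic replacement for the weak-coloring order. I expect this to be the most delicate step, and it is morally the same difficulty as Conjecture~\ref{conj:stable} itself.
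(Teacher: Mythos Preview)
The statement in question is a \emph{conjecture}, and the paper does not prove it. What the paper does provide is exactly the conditional argument you outline: the forward implication (structurally bounded expansion $\Rightarrow$ bounded positional flip-width) is established via the weak-reachability strategy and Proposition~\ref{prop:strategies-transductions}, while for the backward implication the paper observes that bounded positional flip-width implies both bounded flip-width and monadic stability (via the half-graph example, Corollary~\ref{cor:mnip}, and Fact~\ref{fact:mstable}), so that Conjecture~\ref{conj:stable} would finish the job. Your proposal reproduces this reasoning faithfully, and you correctly identify that the genuine obstacle is Conjecture~\ref{conj:stable} itself.

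Your additional ``unconditional attack'' paragraph is speculation beyond anything the paper attempts; it is a reasonable sketch of what a direct proof might look like, but the paper offers no such approach. In short: there is no proof in the paper to compare against, and your conditional derivation matches the paper's own justification for stating the conjecture.
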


\paragraph{Bounded flip-depth}
Finally, we look at classes of bounded flip-depth.
\begin{example}
  Classes of bounded degree have bounded flip-depth, as seen in Example~\ref{ex:bd-deg}. Also, classes of bounded treedepth, or more generally, classes of bounded shrubdepth, have bounded flip-depth.
\end{example}
A class of examples combining the above is provided by the following notion.
Fix $d,h\ge 1$.
A \emph{hybrid tree} of depth $h$ and degree $d$
is a graph $G$ that can be obtained from a rooted tree $T$ of depth  $h$ by:
 \begin{itemize}
  \item first adding some edges connecting siblings in $T$, in such a way that every vertex is adjacent in $G$ to at most $d$ of its siblings in $T$,
 \item afterwards, taking a subgraph of the resulting graph.
 \end{itemize}

\begin{proposition}Fix $d,h\ge 1$.
  The class $\cal H_{d,h}$ of hybrid trees of depth $h$ and degree $d$
  has bounded flip-depth. 
\end{proposition}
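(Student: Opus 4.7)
The plan is to describe an explicit winning strategy for the flipper that terminates in $h+1$ rounds, using width bounded by a function of $r,d,h$ (independently of the graph). Fix $G \in \cal H_{d,h}$ with underlying rooted tree $T$ of depth $h$ and root $r_0$, and, for a vertex $v$ of depth $\ge j$ in $T$, denote by $\alpha_j(v)$ the depth-$j$ ancestor of $v$ in $T$. The strategy is as follows: in round $i \in \{1,\ldots,h+1\}$, the flipper announces a flip $G_i$ that isolates the set
\[ S_i \;:=\; \{r_0\} \,\cup\, \bigcup_{j=1}^{i-1} R_j, \]
where $R_j$ is the ball of radius $r$ around $\alpha_j(v_j)$ in the sibling-edge graph at level $j$. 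Since each level's sibling-edge graph has maximum degree at most $d$, we have $|R_j|\le d^r$, and hence $|S_i|\le 1+(i-1)d^r\le 1+hd^r$.

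First I would verify the key invariant that, after round $i$, either the game has ended or $v_i$ lies at depth $\ge i$ in $T$. The central combinatorial observation is that a sibling edge at level $k$ joins two vertices with the same parent, so traversing it does not change the depth-$j$ ancestor for any $j\ne k$. Inductively, if the runner's walk in $G_i$ from $v_i$ to $v_{i+1}$ has length $\le r$, then for every $j\le i$ the ancestor $\alpha_j(v_{i+1})$ is reachable from $\alpha_j(v_i)\in R_j$ by at most $r$ moves in the level-$j$ sibling graph, hence $\alpha_j(v_{i+1})\in R_j$. Consequently, if $v_{i+1}$ had depth $j\le i$ then $v_{i+1}=\alpha_j(v_{i+1})$ would belong to $S_{i+1}$, making $v_{i+1}$ isolated in $G_{i+1}$ and ending the game. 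Otherwise $v_{i+1}$ has depth $\ge i+1$. Iterating, after round $h+1$ the runner would need depth exceeding $h$, which is impossible, so the game has ended within $h+1$ rounds.

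Next I would bound the width. A set $S\subseteq V(G)$ of size $s$ can be isolated by a single flip, using the partition whose classes are $\{\{x\}:x\in S\}$ together with the $S$-types of $V(G)\setminus S$ (sets of vertices outside $S$ having the same neighborhood in $S$); this uses at most $s+2^s$ classes, and the flip simply toggles each pair $(\{x\},C)$ where $x\in S$ is complete to the $S$-type $C$, plus intra-$S$ pairs joined by an edge. Applied to $|S_i|\le 1+hd^r$ this yields width $k(r,d,h):=2^{O(hd^r)}$, independent of $|G|$. Since subgraphs of a hybrid tree behave only better (isolated vertices are trivially trapped, and walks only shrink), the same strategy, applied to the ambient hybrid tree and translated to $G$, works on any subgraph as well.

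The hard part will be formalising the "ancestor-tracking" lemma cleanly in the presence of isolated vertices inside the current flip $G_i$, and ensuring that the walk by which the runner realises $v_{i+1}$ can indeed be analysed step-by-step at each individual level's sibling graph. In particular one has to argue that a walk in $G_i$ from $v_i$ to $v_{i+1}$ decomposes into tree-moves and sibling-moves whose net effect on each $\alpha_j$ is determined only by the sibling-at-level-$j$ steps -- so the depth-$j$ ancestor of $v_{i+1}$ stays inside the corresponding $r$-ball $R_j$ that was isolated in $G_{i+1}$. Once this is in place, the rest of the argument is a routine induction on $i$.
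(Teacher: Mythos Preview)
The paper states this proposition in the discussion section without proof, so there is no paper argument to compare against. Your approach is correct and would yield a complete proof; the strategy of isolating the root together with radius-$r$ sibling-balls at each already-visited level, and arguing that this forces the runner strictly deeper each round, is exactly the right idea.

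Two points to tighten. First, your stated invariant (``$v_i$ has depth $\ge i$'') is not strong enough on its own; you implicitly rely on the stronger invariant ``$\alpha_j(v_i)\in R_j$ for all $j\le i-1$''. This is what makes the confinement work: the first time the walk in $G_i$ would drop to depth $i-1$ it must arrive at $\alpha_{i-1}(v_i)\in R_{i-1}\subseteq S_i$, an isolated vertex, so it cannot. Second, and relatedly, your sentence ``$\alpha_j(v_{i+1})$ is reachable from $\alpha_j(v_i)\in R_j$ by at most $r$ moves, hence $\alpha_j(v_{i+1})\in R_j$'' is not valid as written for $j<i$: the ball $R_j$ is centred at $\alpha_j(v_j)$, not at $\alpha_j(v_i)$, so a genuine drift of $r$ per round would escape $R_j$. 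What actually happens is stronger: once $R_j$ is isolated (i.e.\ from round $j+1$ onward) the walk is confined to depth $>j$, so $\alpha_j$ does not move \emph{at all}. Thus $\alpha_j(v_{i+1})=\alpha_j(v_i)=\alpha_j(v_{j+1})\in R_j$. Only for $j=i$ can $\alpha_i$ change, by at most $r$ steps in the level-$i$ sibling graph, and since $R_i$ is centred at $\alpha_i(v_i)$ this gives $\alpha_i(v_{i+1})\in R_i$ as needed. (The bound $|R_j|\le d^r$ should be $1+d+\cdots+d^r$, but this is immaterial.)

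Your handling of the subgraph step is fine: isolating $S_i$ in $G$ yields a subgraph of the result of isolating $S_i$ in the ambient hybrid tree, so the runner has fewer options and is trapped no later.
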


The following is a consequence of Simon's factorisation theorem \cite{SIMON199065}. We omit the details.

\begin{theorem}
  Every class $\CC$ of bounded pathwidth, and more generally,
  every stable class of bounded linear clique-width, 
  transduces in $\cal H_{2,h}$, for some fixed $h$.
\end{theorem}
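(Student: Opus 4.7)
The plan is to combine Simon's factorization theorem, applied to the word-structure of a linear clique-width expression, with the stability hypothesis used to tame the idempotent blocks. For a class $\CC$ of linear clique-width at most $k$, each graph $G \in \CC$ is encoded as a word $w_G$ over a finite alphabet $\Sigma_k$ of $k$-expressions (introduce a labelled vertex, insert all edges between two labels, relabel). The ``behavior'' of a prefix, i.e.\ the isomorphism type modulo future extensions, is an element of a finite monoid $M_k$, yielding a morphism $\tau \from \Sigma_k^* \to M_k$. For the special case of bounded pathwidth, bounded linear clique-width and stability both hold automatically (in fact, such classes are nowhere dense), so that case will be subsumed.

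First I would apply Simon's factorization theorem to $\tau$, obtaining for every $w_G$ a factorization tree of height $h := O(|M_k|)$ whose internal nodes are either \emph{binary} (two ordered children concatenated) or \emph{idempotent} (many ordered children, all of the same idempotent $M_k$-type $e$). Restricting to the leaves that introduce vertices gives a rooted tree on $V(G)$ of depth at most $h$. Ordering the children at each internal node by their positions in $w_G$, and adding sibling edges between \emph{consecutive} children, produces a graph in $\mathcal{H}_{2,h}$: each vertex has at most two sibling-neighbours, namely its predecessor and successor in the factorization order. Finally, I would colour each vertex by its introduction label, by the $M_k$-types of the factors corresponding to its ancestors, and by whether each such ancestor is binary or idempotent; all this uses a palette of size bounded only in terms of $k$.

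The edges of $G$ then have to be recovered from this coloured hybrid tree. For a pair of vertices $u,v$ whose lowest common ancestor in the tree is a binary node, adjacency is determined by the $M_k$-types recorded on the colours, so a quantifier-free formula suffices. The crux is the idempotent case: if $u \in F_i$ and $v \in F_j$ lie in distinct children of an idempotent node of type $e$ with $i < j$, then the contribution of the middle block $F_{i+1}\cdots F_{j-1}$ has type $e^{j-i-1} = e$, so whether $uv$ is an edge is controlled by a function depending only on $e$ and on the label-types of $u$ and $v$ — not on the indices $i,j$ themselves. This is where \emph{stability} is essential: any genuine dependence on the order of $(i,j)$ within an idempotent block would, by iterating the block, produce semi-induced half-graphs in $G$ of unbounded order, contradicting edge-stability of $\CC$. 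Hence the idempotent edge pattern is uniform across the ordered sibling sequence and can be decided from the colours together with the ancestor and next-sibling relations, both of which are first-order definable from the hybrid-tree structure. Combining the two cases gives the desired transduction from $\mathcal{H}_{2,h}$ to $\CC$.

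The main obstacle is precisely this stability-to-uniformity step: one has to extract an unbounded half-graph from any idempotent block with a non-uniform dependence on $(i,j)$, and to do so uniformly across all idempotents $e \in M_k$ and label-type pairs. This is analogous to the idempotent-block analysis underlying the stability-based structure theorems of \cite{flippers} and \cite{rankwidth-meets-stability}, and it is what forces the sibling structure in the hybrid tree to be merely a linear order (giving degree $2$) rather than something more complex. Once this step is in place, assembling the transduction is routine: the finite palette of colours records all the type data, and ancestor and next-sibling in the coloured hybrid tree are first-order definable.
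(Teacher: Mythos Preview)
Your proposal is correct and follows exactly the route the paper indicates: the paper's entire proof is the sentence ``The following is a consequence of Simon's factorisation theorem. We omit the details,'' and your sketch---factorising the linear clique-width word via Simon, taking consecutive-sibling edges to land in $\mathcal H_{2,h}$, and using edge-stability to kill the order-dependence at idempotent nodes---is precisely that consequence spelled out. One small caution: your phrasing ``produce semi-induced half-graphs in $G$ of unbounded order'' should read ``in $\CC$'', and stability does not force \emph{every} triple $(e,\sigma,\tau)$ to be order-uniform abstractly, only that the non-uniform ones occur in boundedly many children per block (else a long half-graph appears in that particular $G$); the boundedly many exceptional positions are then absorbed into the colouring, which is the routine step you allude to via the references.
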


\begin{corollary}
  Every class of bounded pathwidth, and more generally, every stable class of bounded linear clique-width has bounded flip-depth.
\end{corollary}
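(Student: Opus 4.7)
The plan is to deduce this corollary directly by chaining three earlier statements: (i) the immediately preceding theorem asserting that every stable class of bounded linear clique-width transduces in $\cal H_{2,h}$ for some fixed $h$; (ii) the immediately preceding proposition stating that $\cal H_{d,h}$ has bounded flip-depth; and (iii) Proposition~\ref{prop:strategies-transductions}, which states that bounded flip-depth is preserved under first-order transductions. Given a stable class $\CC$ of bounded linear clique-width, apply (i) to obtain a fixed integer $h$ and a first-order transduction $\tau$ with $\CC\subseteq \tau(\cal H_{2,h})$. By (ii), $\cal H_{2,h}$ has bounded flip-depth, and by (iii) this property is inherited by $\tau(\cal H_{2,h})$, hence by $\CC$.

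To recover the first (less general) statement, I would verify that every class of bounded pathwidth is covered by the second. Classes of bounded pathwidth are sparse (graphs of pathwidth $k$ have $O(kn)$ edges) and in fact have bounded treewidth, so they are nowhere dense and, in particular, monadically stable (hence stable) by Fact~\ref{fact:nd-nip} together with the weak-sparsity argument. Moreover, it is standard that bounded pathwidth implies bounded linear clique-width (the linear clique-width is bounded by a function of the pathwidth). Consequently every class of bounded pathwidth is a stable class of bounded linear clique-width, and the general statement applies.

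Since the argument is a pure combination of previously established results, there is no real obstacle; the only point requiring care is citing Proposition~\ref{prop:strategies-transductions} in the correct direction (closure under first-order transductions, not $\cmso$-transductions, which is what we need here since $\cal H_{2,h}$ already has bounded flip-depth witnessed by a finite-radius flipper game). No numerical bounds need to be optimized, so the two-line proof will essentially consist of the sentence ``apply the theorem above, then the proposition above, then Proposition~\ref{prop:strategies-transductions}.''
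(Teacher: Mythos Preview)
Your proposal is correct and matches the paper's (implicit) argument: the corollary is stated without proof because it follows immediately from combining the preceding theorem (stable classes of bounded linear clique-width transduce in $\cal H_{2,h}$), the preceding proposition ($\cal H_{d,h}$ has bounded flip-depth), and Proposition~\ref{prop:strategies-transductions}. Your extra paragraph verifying that bounded-pathwidth classes are stable with bounded linear clique-width is unnecessary, since the preceding theorem already covers the pathwidth case directly.
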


We now pose two conjectures characterizing classes of bounded flip-depth in complementary ways.

Define the \emph{tree-rank} of a graph class $\CC$ as the largest number $d\in\N$ such that 
 $\CC$ transduces the class of all forests of depth $d$, or $\infty$ if the largest such number does not exist.
Note that classes of rank $d\in\N$ are in particular monadically dependent, in fact, monadically stable, since the class of half-graphs 
transduces every class of forests of bounded depth.





Next, the following conjecture relates the above notion to bounded flip-depth.

 \begin{conjecture}\label{conj:flip-depth}
  A class $\CC$ has bounded flip-depth if and only if $\CC$ 
  has rank ${<}d$, for some $d\in\N$.
 \end{conjecture}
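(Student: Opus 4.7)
The plan is to tackle the two implications separately, since only one of them appears within reach using tools developed in this paper.

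\emph{Forward direction (bounded flip-depth $\Rightarrow$ tree-rank ${<}d$ for some $d$).} I would argue by contraposition: assume $\CC$ has unbounded tree-rank, so for every $d$ the class $\CC$ transduces the class $\cal F_d$ of all depth-$d$ forests. By Proposition~\ref{prop:strategies-transductions}, bounded flip-depth is preserved by first-order transductions, so it suffices to prove that $\bigcup_d \cal F_d$ has unbounded flip-depth for some fixed finite radius $r$ (say $r=1$, or $r=2$ with an exact subdivision, echoing the use of subdivisions in Proposition~\ref{prop:subdivisions}). The natural witness would be a runner strategy on a balanced $b$-ary forest of depth~$d$, based on a hideout (Definition~\ref{def:hideout}) formed by interior nodes together with their deep subtrees: in each round the runner descends into a child whose subtree has been ``barely affected'' by the current $k$-flip, which must exist by pigeonhole once $b$ is large compared to $k$ and to the number of flip-classes accumulated so far. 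The bookkeeping---that in $\ell$ rounds at most $\Oof(\ell\cdot 2^k)$ flip-patterns can touch the runner's region, so for $b$ and $d$ large the descent survives more than $\ell$ rounds---is routine; the delicate point is making sure that the distance budget of radius~$r$ is enough for the descent, which is why I would first try a subdivided $b$-ary tree.

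\emph{Backward direction (tree-rank ${<}d$ for some $d$ $\Rightarrow$ bounded flip-depth).} This is where I expect the main difficulty. Tree-rank is defined purely through transductions, so the first task is to extract a combinatorial consequence from having bounded tree-rank. The structural theorem I would aim for has the following shape: every class $\CC$ of tree-rank ${<}d$ is a first-order transduction of the class $\cal H_{D,h}$ of hybrid trees of depth $h$ and degree $D$, for some $h,D$ depending on $d$. Granted such a decomposition, bounded flip-depth would follow by combining the (already stated) fact that $\cal H_{D,h}$ has bounded flip-depth with Proposition~\ref{prop:strategies-transductions}. A complementary, more game-theoretic approach would be to convert, directly, a runner strategy that lasts $\ell$ rounds in some $G\in\CC$ into an FO-definable depth-$\ell$ forest on $V(G)$, so that unbounded flip-depth forces tree-rank $=\infty$; this is in the spirit of how Courcelle--Oum-style minor arguments produce obstructions in the proof of Theorem~\ref{thm:infty-classes}(3). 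Either route seems to require a genuinely new dense analogue of a Simon-factorisation / Ramsey-on-types theorem, and this is the step I do not see how to do with currently available techniques.

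As a sanity check, any proposed proof should specialise cleanly. It ought to reduce, for $r=\infty$, to the known equivalence between bounded $\infty$-flip-depth and bounded shrubdepth (Theorem~\ref{thm:infty-classes}(3)), since bounded shrubdepth sits strictly inside bounded tree-rank. It should also agree with the listed examples---paths, bounded-degree classes, and the hybrid trees $\cal H_{D,h}$ sit on the ``bounded'' side of both properties---and it should be internally consistent with the proof strategies envisaged for Conjectures~\ref{conj:dense analogue} and~\ref{conj:stable}. For these reasons I expect the backward direction of Conjecture~\ref{conj:flip-depth} to be tightly entangled with the broader project of giving a combinatorial description of monadically dependent classes, and thus unlikely to yield to a direct argument without new structural insight.
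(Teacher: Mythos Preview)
This is a \emph{conjecture} in the paper, not a theorem; the paper offers no proof, so there is nothing to compare your attempt against. You are right that the backward direction looks out of reach with current tools. But your forward direction also contains a real gap.

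The inference ``by Proposition~\ref{prop:strategies-transductions}, it suffices to prove that $\bigcup_d \cal F_d$ has unbounded flip-depth'' is not valid. Having tree-rank $\infty$ only says that for each $d$ there is \emph{some} transduction $\tau_d$ --- with its own quantifier rank $q_d$ and colour count $c_d$ --- from $\CC$ to $\cal F_d$. Closure of bounded flip-depth under transductions then only gives that each $\cal F_d$ separately has bounded flip-depth, which is trivially true already (each $\cal F_d$ has bounded shrubdepth). To derive a contradiction you would need either a \emph{single} transduction from $\CC$ onto the class of all forests, or a uniform bound on the $q_d$, and neither is part of the hypothesis. Concretely: if you try to pull a long-surviving runner strategy on some $F\in\cal F_d$ back to $\CC$ via the transfer in Lemma~\ref{lem:strategy-transfer}, the number of rounds is indeed preserved, but the radius on the $\CC$-side becomes $2^{q_d}\cdot r$, which is unbounded as $d$ (hence possibly $q_d$) grows. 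So you cannot pin down a single radius at which $\CC$ fails the flip-depth condition. Your pigeonhole/descent argument on $b$-ary trees may well show that $\bigcup_d\cal F_d$ has unbounded flip-depth --- that is a reasonable sub-claim --- but it does not close the implication. A genuine proof of the forward direction would seem to need either a compactness-type argument that infinite tree-rank forces a uniform encoding of all bounded-depth forests, or a direct obstruction inside $\CC$ that bypasses transductions entirely; both look comparable in difficulty to the backward direction.
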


\medskip
We finish with the following conjecture, relating  classes of bounded flip-width with the complexity of the model-checking problem for first-order logic.

Say that a graph class $\CC$ has  \emph{non-uniform elementary-fpt} first-order model-checking 
if there are numbers $h,c\ge 1$ such that for every first-order formula $\phi$ of length $k$ there is an algorithm
which determines if a given $n$-vertex graph $G\in\CC$ satisfies $\phi$ 
in time at most 
\[\underbrace{2^{2^{.^{.{^{.^{2^k}}}}}}}_{{\rm height\,} h}\cdot n^c.\]
In the \emph{uniform} variant, there is a single algorithm,
which inputs $\phi$ and $G\in\CC$, 
and has the above running time.

It is known~\cite{model-theory-makes-formulas-large} that every class of bounded degree has uniform elementary-fpt model-checking.
Furthermore, the class of trees does not have uniform elementary-fpt 
model-checking, unless AW[$*$]=FPT, by a result of Frick and Grohe~\cite{FRICK20043} (see also~\cite{model-theory-makes-formulas-large}),
and the class of colored linear orders does not have uniform elementary-fpt model-checking unless P=NP, also by Frick and Grohe.

\anonym[We pose the following.]{Together with Micha{\l} Pilipczuk, we pose the following.}
\begin{conjecture}
  A class $\CC$ of graphs has non-uniform elementary-fpt first-order model-checking if and only if $\CC$ does not transduce the class of forests of depth $d$, for some $d\ge 1$.
\end{conjecture}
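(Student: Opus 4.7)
The plan is to attack the two directions separately, with the forward (necessary) direction resting on established lower bounds and the backward (sufficient) direction resting on the preceding conjectures about bounded flip-depth.

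For the forward direction, I would argue the contrapositive: if $\CC$ transduces forests of depth $d$ for every $d\ge 1$, then $\CC$ does not have non-uniform elementary-fpt model-checking (under a standard complexity assumption such as $\mathrm{AW}[*]\ne \mathrm{FPT}$). The starting point is the Frick--Grohe lower bound, which says the class of all trees does not admit elementary-fpt model-checking. I would refine this to a non-elementary lower bound as a function of $d$ alone: for each $h$, find a first-order sentence $\phi_h$ of length $O(h)$ whose evaluation on forests of depth $d(h)$ requires time at least a tower of height $h$ on infinitely many instances, where $d(h)$ grows with $h$. Such bounded-depth formula/instance pairs are essentially present in Frick--Grohe's argument, which encodes iterated exponentials into the types of shallow trees. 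Then, for any transduction $\tau$ from $\CC$ onto these forests, one can pull back $\phi_h$ to a first-order sentence $\phi'_h$ of length $O(h) + O_\tau(1)$, evaluating over colorings of graphs in $\CC$. A non-uniform elementary-fpt model-checking algorithm for $\CC$ would, after guessing the coloring used by $\tau$, yield an elementary-fpt algorithm for forests of unbounded depth, contradicting Frick--Grohe.

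For the backward direction, the natural path goes through bounded flip-depth. Assuming $\CC$ has tree-rank less than some $d$, invoke Conjecture~\ref{conj:flip-depth} to conclude that $\CC$ has bounded flip-depth: there are bounds $k(r), \ell(r)$ such that on every $G\in\CC$ the flipper wins the radius-$r$, width-$k(r)$ game in at most $\ell(r)$ rounds. Given a first-order sentence $\phi$ of quantifier rank $q$, apply Gaifman locality to reduce evaluation of $\phi$ on $G$ to computing, for each vertex $v$, its local $(r,q)$-type in $G$, where $r=2^q$. The winning flipper strategy, together with a definable version of the flipper game (Section~\ref{sec:definable}, Theorem~\ref{thm:dfw}), produces in elementary time in $\phi$ a bounded-depth decomposition that isolates each $r$-ball inside a single leaf of the strategy tree, after at most $\ell(r)$ flips. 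One then computes $(r,q)$-types bottom-up along this strategy tree by a Feferman--Vaught composition, analogously to type computation on bounded-shrubdepth graphs, where the key point is that the tree has constant depth $\ell(r)$, so the type-space blowup only contributes a tower of constant height in $|\phi|$, giving an elementary dependence on $\phi$.

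The main obstacle is twofold. First, the backward direction depends on Conjecture~\ref{conj:flip-depth} itself, which is unproven; unconditionally, one can only hope to handle the already-known cases (bounded shrubdepth, classes of bounded linear clique-width that are stable, hybrid trees). Second, even granting Conjecture~\ref{conj:flip-depth}, turning a flipper strategy into an algorithm that computes local types in elementary time in $\phi$ requires an efficient approximation algorithm for flip-width (Goal~\ref{goal:apx}) and a constructive version of bounded flip-depth, neither of which is currently available; Theorem~\ref{thm:apx} only gives XP approximation, and its bounds on $\fw_r$ blow up under iteration. A robust compositional handling of flips --- in the style of Goal~\ref{goal:mc} --- seems indispensable. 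The forward direction, by contrast, should be provable unconditionally once one carefully tracks the dependence on $d$ in Frick--Grohe's lower bound construction and combines it with the standard fact that first-order transductions induce only linear-size formula pullbacks.
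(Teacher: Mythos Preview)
The statement you are attempting to prove is a \emph{conjecture} in the paper, not a theorem: it is explicitly posed as an open problem (jointly with Micha{\l} Pilipczuk), and the paper offers no proof or proof sketch for either direction. There is therefore nothing to compare your proposal against.

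That said, your outline is a reasonable articulation of how one might approach the two directions, and you correctly identify the main obstacles yourself. A few remarks. For the forward direction, your plan via Frick--Grohe is broadly the intended intuition behind posing the conjecture (the paper cites exactly these lower bounds just before stating it), but note a subtlety: the conjecture speaks of \emph{non-uniform} elementary-fpt, so a lower bound obtained by pulling back a single family $(\phi_h)_h$ through a single transduction is not immediately enough --- you must rule out, for each fixed $h$, any algorithm whatsoever of the required running time, not just one derived from a putative uniform algorithm. Also, ``guessing the coloring used by $\tau$'' is not free: the transduction may use colorings of unbounded size, so this step needs care.

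For the backward direction, you are right that your argument is conditional on Conjecture~\ref{conj:flip-depth} and on an algorithmic/compositional handling of flips (essentially Goals~\ref{goal:apx} and~\ref{goal:mc}). This is precisely why the paper leaves the statement as a conjecture rather than a theorem; indeed, the sentence immediately following the conjecture in the paper says that \emph{together with} Conjecture~\ref{conj:flip-depth} it would characterize elementary-fpt classes as those of bounded flip-depth. So your dependency chain matches the paper's stated expectations, but none of those dependencies are currently resolved.
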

Together with Conjecture~\ref{conj:flip-depth},
this would characterize classes with non-uniform elementary-fpt first-order model-checking precisely as those with bounded flip-depth.

\clearpage
\begin{appendices}
\section{Variants of the cop-width parameter}

\subsection{A variant without announced moves}
\label{sec:copwidth'}
Fix parameters $k,r\in\N$,
and consider the variant of the Cops and Robber game in which 
there are $k$ cops, and a robber 
with speed $r$. In each round of this game, first the cops pick a set $A$ of  $k$ chosen vertices of the graph,
and then the robber may either stay in his last position $v$ -- but only if $v\not\in A$ -- or moves to any vertex $u$ via a path $v=v_0,\ldots,v_i=u$ of length $1\le i\le r$ such that $v_1,\ldots,v_i\notin A$. If they cannot do so, the cops win the game, and if the robber can evade the cops forever, then the robber wins.
Denote the smallest number $k$ for which the cops have a winning strategy on a graph $G$ by $\copw_r'(G)$. 

Those parameters essentially appear in the work~\cite{richerby-thilikos-lazy-fugitive, otherthilikos}. 
The paper~\cite{richerby-thilikos-lazy-fugitive} considers a variant of the game in which the robber is \emph{lazy}, that is, does not move unless
a cop is placed at his location, whereas the~\cite{otherthilikos} considers a variant where the cops occupy edges instead of vertices, and the robber never remains put. Analogues of the next notion and lemma also appear in those papers.

Call a set $U$ of vertices of a graph $G$ 
a $(k,r)$-\emph{hideout}\sz{hideouts}
if for every $v\in U$ and set $A\subset V(G)\setminus\set{v}$ 
with $|A|<k$, there is some path from $v$ to $U\setminus \set v$ 
of length at most $r$ in $G\setminus A$.

\begin{lemma}\label{lem:copw'}
  Fix numbers $k,r\in\N$ and a graph $G$.
  The following conditions are equivalent:
  \begin{enumerate}
    \item $\copw_r'(G)\le k$,
    \item $G$ has no $(k,r)$-hideout,
    \item there is a total order on $V(G)$ such that 
    for every $v\in V$ there is some set $A\subset V(G)\setminus \set{v}$
with  $|A|< k$ such that there is no path of length ${\le}r$ from $v$ to any vertex $w<v$
in $G\setminus A$.
  \end{enumerate}
\end{lemma}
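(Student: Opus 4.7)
The plan is to prove the three implications
$(3)\Rightarrow(1)\Rightarrow(2)\Rightarrow(3)$ cyclically,
viewing the total order in~(3) as a compressed monotone winning strategy for the cops,
and viewing a hideout as a haven for the robber.
The only minor bookkeeping throughout will be the passage between the bounds $|A|\le k$ in the game (the cops choose up to $k$ vertices) and the bound $|A|<k$ in the hideout/order definitions (which leaves room to also forbid the robber's current vertex).

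For $(3)\Rightarrow(1)$, I will describe the cops' strategy as follows. When the robber stands on $v$, the cops announce $A_v\cup\{v\}$, which has size at most $k$ since $|A_v|<k$. By definition of the game, the robber must now traverse a path $v=v_0,v_1,\ldots,v_i$ of length $1\le i\le r$ with $v_1,\ldots,v_i\notin A_v\cup\{v\}$, ending at some vertex $u=v_i\ne v$. In particular, $v_1,\ldots,v_i\notin A_v$, so this is a path of length $\le r$ from $v$ in $G\setminus A_v$; by property~(3) its endpoint $u$ cannot satisfy $u<v$, hence $u>v$. Thus the robber is forced strictly upwards in the order, so the game ends within $|V(G)|$ rounds (at the $<$-maximum vertex no valid move exists).

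For $(1)\Rightarrow(2)$, equivalently by contrapositive, I show that if $U$ is a $(k,r)$-hideout then the robber evades $k$ cops forever. The robber maintains the invariant that his current position $v_t$ lies in $U$. If the cops pick a set $A$ with $|A|\le k$ and $v_t\notin A$, the robber simply stays; otherwise $v_t\in A$, and setting $A':=A\setminus\{v_t\}$ (so $|A'|< k$ and $v_t\notin A'$), the hideout property applied to $v_t\in U$ and $A'$ yields a path of length $\le r$ in $G\setminus A'$ from $v_t$ to some $w\in U\setminus\{v_t\}$. Taking a simple such path, all internal vertices and $w$ itself differ from $v_t$ and lie outside $A'$, hence outside $A$, giving a valid move to $w\in U$. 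The invariant is preserved and the robber is never captured, so $\copw'_r(G)>k$.

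For $(2)\Rightarrow(3)$, I will construct the order $<$ greedily from top to bottom. The key observation is that the negation of ``$U$ is a $(k,r)$-hideout'' gives, for any nonempty $U\subseteq V(G)$, some $v\in U$ and some $A\subseteq V(G)\setminus\{v\}$ with $|A|<k$ such that no path of length $\le r$ from $v$ to $U\setminus\{v\}$ exists in $G\setminus A$. Iterating: starting with $U:=V(G)$, in each step use~(2) to pick such a $v$ and $A_v$, declare $v$ the current $<$-maximum of the remaining set, remove $v$ from $U$, and recurse. After $|V(G)|$ steps this yields a total order on $V(G)$, and by construction each chosen vertex $v$ comes equipped with a witness $A_v$ of size $<k$ that blocks $v$ from reaching (within radius $r$) any of the vertices still in $U$ at the time of its removal, which are exactly the vertices $w<v$ in the final order. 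This is precisely property~(3).

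\paragraph{Main obstacle.}
None of the three implications is technically deep; the whole content is a clean duality in the spirit of Seymour--Thomas. The only thing requiring attention is the off-by-one between $|A|\le k$ in the game and $|A|<k$ in the other two formulations, handled uniformly by always reserving one slot for the robber's current vertex. I expect the write-up to be short and direct, with no serious calculation.
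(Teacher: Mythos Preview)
Your proposal is correct and follows essentially the same cyclic scheme as the paper's proof: the paper also argues $(1)\Rightarrow(2)$ by letting the robber hide in $U$ (staying put if uncovered, else applying the hideout property to $A\setminus\{v\}$), builds the order for $(2)\Rightarrow(3)$ by the identical greedy peel-off of a vertex witnessing that the current $U$ is not a hideout, and for $(3)\Rightarrow(1)$ has the cops play $A_v\cup\{v\}$ to force the robber strictly upward in the order. Your explicit handling of the $|A|\le k$ versus $|A|<k$ bookkeeping and the remark that one may take the hideout path to be simple are exactly the small points the paper leaves implicit.
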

\begin{proof}
  (1$\rightarrow$2) We show that if $U$ is a $(k,r)$-hideout in $G$,
  then there is a winning strategy for the robber 
in the game of radius $r$ and width $k$ corresponding to the $\copw_r'$ parameter.
As a first move, the robber picks an arbitrary vertex $v\in U$,
and we show that they may always remain in the set $U$.
In each round, when the cops place the cops on a set $A$ of at most $k$ vertices, 
then robber stays put if his current position $v$ is not in $A$,
and otherwise, as $|A\setminus \set v|< k$,  the robber moves to any vertex $u\in U\setminus A$ that is connected 
by a path of length at most $r$ from $v$ in $G\setminus A$.

(2$\rightarrow$3) Suppose that $G$ has no $(k,r)$-hideout.
Start with $U=V(G)$ and $\tup w$ being the empty sequence.
As long as $U$ is nonempty, 
pick any vertex  $u\in U$ such that there is some set $A\subset V(G)\setminus\set{v}$ with $|A|< k$ such that there is no path of length ${\le}r$ from $v$
to any vertex $w \in U$ in $G\setminus A$. Such a vertex exists, since $U$ 
is not a $(k,s)$-hideout.
Remove $u$ from $U$ and prepend it to $\tup w$, and repeat.
Once $U$ becomes empty, the sequence $\tup w$ gives a total order on $V(G)$ satisfying the required condition.

(3$\rightarrow$1) We show how to turn a total order as in condition (3)
into a winning strategy for the cops in the Cops and Robber game of radius $r$ and width $w$.
When the robber is occupying a vertex $v$,
the cops pick any set $A\subset V(G)\setminus\set v$ with $|A|< k$ such that there is no path of lenght ${\le}r$ from $v$ to any vertex $w<v$,
and place the cops on all the vertices of $A\cup\set{v}$.
Then the robber needs to move right in the order, so eventually they will lose.
\end{proof}

We now relate the parameter $\copw'_r$ to generalized coloring numbers.
The \emph{$r$-strong coloring number} of a graph $G$, denoted $\scol_r(G)$ is the smallest number $k$ with the following property. 
There is a total order $<$ on $V(G)$ such that every for vertex $v$,
there are at most $k$ vertices $w<v$ that can be reached from $v$ by a path $\pi$ of length at most $r$,
such that $u>w$ for all internal vertices $u$ of $\pi$.
Clearly, $\adm_r(G)\le\scol_r(G)\le \wcol_r(G)$.

\begin{lemma}\label{lem:adm-copw'-scol}
  For every $r\in\N$ and graph $G$
  the following inequalities hold:
  \[\adm_r(G)+1 \le \copw_r'(G)\le \scol_r(G)+1.\]
\end{lemma}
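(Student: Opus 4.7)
The plan is to prove both inequalities by invoking the order-based characterization from Lemma~\ref{lem:copw'}(3), so that the problem reduces to comparing properties of total orders on $V(G)$.

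For the lower bound $\adm_r(G)+1\le\copw_r'(G)$, I would set $k:=\copw_r'(G)$ and take a total order $<$ on $V(G)$ witnessing condition (3) of Lemma~\ref{lem:copw'}: for every $v$, there is some $A_v\subset V(G)\setminus\set v$ with $|A_v|<k$ such that no path of length ${\le}r$ exists from $v$ to any $w<v$ in $G\setminus A_v$. I would then verify that the same order witnesses $\adm_r(G)\le k-1$. Suppose towards a contradiction that some $v$ admits $k$ internally vertex-disjoint paths $P_1,\ldots,P_k$ of length at most $r$, each starting at $v$ and ending at some $w_i<v$. Because the paths share only $v$, the sets $V(P_i)\setminus\set v$ are pairwise disjoint; since $|A_v|<k$, some $P_j$ has $V(P_j)\setminus\set v$ disjoint from $A_v$. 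As $v\notin A_v$, the entire path $P_j$ lies in $G\setminus A_v$ and ends at $w_j<v$, contradicting the choice of $A_v$.

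For the upper bound $\copw_r'(G)\le\scol_r(G)+1$, I would set $k:=\scol_r(G)$ and pick a witnessing order $<$. For each $v$, define $A_v$ to be the set of vertices $w<v$ that are strongly $r$-reachable from $v$, i.e., those $w$ such that some path of length ${\le}r$ from $v$ to $w$ has all internal vertices greater than $w$. By the definition of $\scol_r$, we have $|A_v|\le k$. The key step is to check that $A_v$ blocks all short paths to smaller vertices. Suppose some path $\pi$ of length $\le r$ from $v$ to a vertex $w'<v$ avoids $A_v$, and let $w^*$ be the $<$-minimum vertex of $\pi$. Then $w^*\le w'<v$, and the sub-path of $\pi$ from $v$ to $w^*$ has all internal vertices greater than $w^*$, so $w^*\in A_v$ by definition; but $w^*\in V(\pi)\subset V(G)\setminus A_v$, a contradiction. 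Applying Lemma~\ref{lem:copw'}(3) then yields $\copw_r'(G)\le k+1$.

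Neither direction presents a serious obstacle once Lemma~\ref{lem:copw'} is in hand: the lower bound is a clean pigeonhole argument on the disjoint internal vertex sets of the admissibility-witnessing paths, and the upper bound is the standard ``take the minimum vertex on the path'' trick familiar from Sparsity theory. The only subtlety I anticipate is being careful about whether the endpoint $w_i$ of each admissibility path could coincide with an element of $A_v$; this is resolved by noting that internal vertex-disjointness forces the $w_i$ to be distinct and included among the non-$v$ vertices of their respective paths, so the pigeonhole argument applies to the full sets $V(P_i)\setminus\set v$ rather than only to the internal vertices.
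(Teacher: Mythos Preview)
Your proof is correct. The only difference from the paper is in the lower bound: the paper argues via the hideout characterization (condition~(2) of Lemma~\ref{lem:copw'}), taking a set $U$ witnessing large admissibility and observing that $U$ is a $(k,r)$-hideout, whereas you argue via the order characterization (condition~(3)), showing directly that the order from Lemma~\ref{lem:copw'} witnesses small admissibility. These are the two dual ways to use the same min-max equivalence and are equally short; your route has the minor advantage of not needing to invoke the min-max characterization of admissibility separately. For the upper bound, both you and the paper use condition~(3) with the strong-coloring order; you simply spell out the ``minimum vertex on the path'' argument that the paper leaves implicit.
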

\begin{proof}  
  We prove the first inequality, by 
  showing that  $\adm_r(G)\ge k-1$ implies $\copw_r'(G)>k$. Indeed, by $\adm_r(G)\ge k-1$ there is a set $U\subset V(G)$
  such that for every $v\in U$ there are $k$ paths of length at most $r$ from $v$ to $U\setminus v$,
  which are vertex-disjoint apart from $v$.
  In particular, no set $A\subset V(G)\setminus \set v$ with $|A|<k$ 
  hits all of those $k$ paths. Hence, $U$ is a $(k,r)$-hideout,
  and $\copw_r'(G)>k$ by Lemma~\ref{lem:copw'}.

  For the second inequality, suppose $\scol_r(G)\le k$ and let $<$ be a total order witnessing it. Then condition 3 of Lemma~\ref{lem:copw'} holds.
\end{proof}

Also note that $\copw_r'(G)\le \copw_r(G)$, since a winning strategy in the game corresponding to $\copw_r$ is also a winning strategy in the game corresponding to $\copw_r'$.

By Lemma~\ref{lem:adm-copw'-scol}, we have the following.
\begin{corollary}\label{cor:oh,thilikos}
  The following conditions are equivalent for a graph class $\CC$:
  \begin{enumerate}
    \item $\CC$ has bounded expansion,
    \item $\copw_r(\CC)<\infty$, for every $r\in\N$,
    \item $\copw_r'(\CC)<\infty$, for every $r\in\N$.
  \end{enumerate}
\end{corollary}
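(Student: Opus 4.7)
The plan is to observe that the equivalence (1)$\iff$(2) is already established as Corollary~\ref{cor:copw}, so the only new content is the equivalence (1)$\iff$(3), which I would derive from the sandwich inequality in Lemma~\ref{lem:adm-copw'-scol} combined with known facts about generalized coloring numbers.

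First I would recall the chain of inequalities from Lemma~\ref{lem:adm-copw'-scol}, namely $\adm_r(G)+1 \le \copw_r'(G) \le \scol_r(G)+1$, and note that $\scol_r(G) \le \wcol_r(G)$ by definition. This yields, for every graph $G$ and radius $r \ge 1$:
\[
\adm_r(G)+1 \;\le\; \copw_r'(G) \;\le\; \wcol_r(G)+1.
\]
So $\copw_r'$ is sandwiched between two functionally equivalent parameters (by Fact~\ref{fact:wcol-adm}, $\wcol_r$ and $\adm_r$ are functionally equivalent), which means $\copw_r'(\CC)$ is finite on a class $\CC$ if and only if $\wcol_r(\CC)$ is finite on $\CC$.

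For the forward direction (1)$\implies$(3): if $\CC$ has bounded expansion, then by Fact~\ref{fact:be-wcol}, $\wcol_r(\CC) < \infty$ for every $r$; the right inequality in the sandwich then gives $\copw_r'(\CC) \le \wcol_r(\CC)+1 < \infty$. For the converse (3)$\implies$(1): if $\copw_r'(\CC) < \infty$ for every $r$, the left inequality gives $\adm_r(\CC) \le \copw_r'(\CC) - 1 < \infty$ for every $r$; by Fact~\ref{fact:wcol-adm}, this upgrades to $\wcol_r(\CC) < \infty$ for every $r$, and again by Fact~\ref{fact:be-wcol}, $\CC$ has bounded expansion.

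There is no real obstacle here: all the substantive work has been carried out in Lemma~\ref{lem:adm-copw'-scol} (which itself is a short argument relying only on the min-max description of $\adm_r$ and the definition of $\scol_r$), together with the classical Facts~\ref{fact:be-wcol} and~\ref{fact:wcol-adm} from Sparsity theory. The corollary is then a two-line deduction assembling these pieces, and it makes explicit that the parameters of~\cite{richerby-thilikos-lazy-fugitive,otherthilikos} characterize bounded expansion in the same way that the $\copw_r$ parameters do.
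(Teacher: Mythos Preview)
Your proposal is correct and follows essentially the same approach as the paper. The paper's proof is literally the single sentence ``By Lemma~\ref{lem:adm-copw'-scol}, we have the following,'' relying implicitly on the sandwich $\adm_r(G)+1\le\copw_r'(G)\le\scol_r(G)+1$, the inequality $\scol_r\le\wcol_r$, and Fact~\ref{fact:be-wcol}; you have simply spelled out these steps explicitly and added the reference to Corollary~\ref{cor:copw} for the equivalence (1)$\iff$(2).
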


\subsection{Isolation game}
Consider the following variant of the flipper game.
The \emph{isolation-width} game with radius $r\in\N\cup\set{\infty}$ 
and width $k\in\N$, $k\ge 1$, is played on a graph $G$.
In round $i$ of the game we have a set $S_i\subset V(G)$
with $|S_i|\le k$, which are the new positions of the cops declared by the cops, and the current position $v_i\in V(G)$ of the robber. Initially, $S_0=\emptyset$ and $v_0$ is a vertex of $G$ chosen by the robber. In round $i>0$,
the cops announce a set $S_i\subset V(G)$ 
of next positions of the cops
with $|S_i|\le k$, that will be put into effect momentarily.
The robber, knowing $S_i$, moves to a new vertex $v_i$ by following a path of length at most $r$ from $v_{i-1}$ to $v_i$ that avoids 
the \emph{previous} cop positions $S_{i-1}$. The game terminates when $v_i\in S_i$.
Write $\iw_r(G)$ for the smallest number $k$ 
such that the cops have a winning strategy in the isolation game with radius $r$ and width $k$.

\begin{lemma}\label{lem:cw-iw}
  \[\iw_r(G)\le \copw_r(G)\le 2\iw_r(G).\]
\end{lemma}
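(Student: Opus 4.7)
The plan is to treat both inequalities as a direct translation between the two games, exploiting that they differ only in what the robber must avoid during their movement in round $i$: the isolation game forbids the full set $S_{i-1}$ of previous cop positions, whereas the Cops and Robber game only forbids $S_{i-1}\cap S_i$, the cops who remained on the ground (those currently taking off in helicopters may be flown through). The losing condition, $v_i\in S_i$, is identical in both games. Because $S_{i-1}\cap S_i\subseteq S_{i-1}$, the isolation game's constraint on the robber is strictly more restrictive, which makes the game more favorable to the cops.

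For the first inequality $\iw_r(G)\le\copw_r(G)$, I would take any winning cops' strategy in the Cops and Robber game with $k=\copw_r(G)$ cops and use it verbatim in the isolation game. Given any play $v_0,v_1,\ldots$ of the robber in the isolation game against this strategy, each robber move from $v_{i-1}$ to $v_i$ follows a path of length $\le r$ avoiding $S_{i-1}$, and hence avoiding $S_{i-1}\cap S_i$. This means the same sequence of robber positions is a valid play in the Cops and Robber game, so the cops' winning strategy eventually forces $v_i\in S_i$, ending the isolation game in their favor.

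For the second inequality $\copw_r(G)\le 2\iw_r(G)$, set $k=\iw_r(G)$ and let $(S_i^{\mathrm{iso}})$ be a winning cops' strategy in the isolation game. In the Cops and Robber game I would use $2k$ cops to play $S_i:=S_i^{\mathrm{iso}}\cup S_{i-1}^{\mathrm{iso}}$ in round $i\ge 1$, with $S_0:=\emptyset$ and $S_0^{\mathrm{iso}}:=\emptyset$. Then the cops remaining on the ground between rounds $i-1$ and $i$ satisfy
\[
S_{i-1}\cap S_i=(S_{i-1}^{\mathrm{iso}}\cup S_{i-2}^{\mathrm{iso}})\cap(S_i^{\mathrm{iso}}\cup S_{i-1}^{\mathrm{iso}})\supseteq S_{i-1}^{\mathrm{iso}},
\]
so any robber path valid in the Cops and Robber game automatically avoids $S_{i-1}^{\mathrm{iso}}$. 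Consequently, any play of the robber in this Cops and Robber game is a valid play in the isolation game against $(S_i^{\mathrm{iso}})$, which by assumption the cops win: some round $i$ yields $v_i\in S_i^{\mathrm{iso}}\subseteq S_i$, trapping the robber.

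There is essentially no hard step; the only care required is the bookkeeping of initial rounds and the observation that shadowing the previous isolation-strategy positions with a second copy of $k$ cops converts the forbidden-vertex semantics of the isolation game into the remain-on-ground semantics of the Cops and Robber game. The factor $2$ in the second inequality comes precisely from this shadowing, and the first inequality is lossless because the robber's constraint is tightened for free.
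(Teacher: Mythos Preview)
Your proof is correct and matches the paper's argument essentially verbatim: translate the strategy directly for the first inequality, and for the second inequality play the union of the current and previous isolation-game sets so that the intersection $S_{i-1}\cap S_i$ contains $S_{i-1}^{\mathrm{iso}}$, forcing the robber's Cops and Robber move to be valid in the isolation game.
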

\begin{proof}[Proof sketch]
  A winning strategy in the Cops and Robber game of radius $r$ and width $k$ can be translated into a winning strategy in the isolation game of radius $r$ and width~$k$: when 
  in the Cops and Robber game the cops are directed to their new set of  positions $X$, in the isolation game the cops define $X$ as their next positions.
  In particular, $|X|\le k$, and a response of the robber in the isolation game is a valid response in the Cops and Robber game.
  
  Conversely, a winning strategy in the isolation game of radius $r$ and width $k$ can be translated into a winning strategy in the Cops and Robber game of radius $r$ and width~$2k$: when in the isolation game
  the cops declare the new set $X$ of cops positions and $Y$ is the previous set of positions, in the Cops and Robber game the cops define $X\cup Y$ as the next positions of the cops.
  In particular, $|X\cup Y|\le 2k$, and a response of the robber in the Cops and Robber game is a valid response in the isolation game.
\end{proof}

\section{Flip-width}\label{app:variants}

\subsection{Bipartite variants}\label{app:bfw}
Let $G$ be a bipartite graph.
We define the parameter $\bfw_r(G)$ analogously to $\fw_r(G)$,
but the flips played by the flipper are now \emph{bipartite} flips:
flips between two subsets of opposite parts of $G$.
We only consider partitions of $V(G)$ that refine the bipartition of $V(G)$, and measure its size by the maximum,
over the two parts of $G$, of the number of parts of $\cal P$
that are contained in it.
By definition, we have the following.
\begin{lemma}\label{lem:fw-bfw}
  Let $G$ be a bipartite graph and $r\in\N\cup\infty$.
  Then $\fw_r(G)\le 2\bfw_r(G)$.
\end{lemma}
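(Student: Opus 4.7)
The plan is to show that a winning strategy of the flipper in the bipartite flipper game of radius $r$ and width $k:=\bfw_r(G)$ on $G$ directly yields a winning strategy in the (ordinary) flipper game of radius $r$ and width $2k$ on $G$. Let $V(G)=A\uplus B$ be the bipartition.

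First, observe that every move available to the bipartite flipper is also available, in a legitimate way, to the ordinary flipper at width $2k$. Indeed, by assumption the bipartite flipper uses a partition $\mathcal P$ of $V(G)$ that refines $A\uplus B$, with at most $k$ parts contained in $A$ and at most $k$ parts contained in $B$, so $|\mathcal P|\le 2k$. The bipartite flip announced by the bipartite flipper is a $\mathcal P$-flip of $G$ (it only flips pairs $(X,Y)$ of parts with $X\subset A,Y\subset B$), and hence is, in particular, a $2k$-flip of $G$ in the sense of Section~\ref{sec:flipwidth}. So the ordinary flipper can mimic the bipartite flipper's move verbatim.

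Second, the runner's rules coincide in the two games: the runner traverses a path of length at most $r$ in the previously announced graph, and loses when the newly announced graph isolates the current position. Hence, copying the bipartite flipper's strategy move-by-move in the ordinary game on $G$ of width $2k$ produces a valid play; whenever the bipartite game terminates with the runner trapped, so does the simulated ordinary game. This gives the inequality $\fw_r(G)\le 2\bfw_r(G)$.

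The statement is essentially definitional — no combinatorial or game-theoretic obstacle arises. The only point requiring a moment of care is the measurement convention for the bipartite parameter (the $\max$ over the two sides rather than the total number of parts), which is precisely what produces the factor of $2$.
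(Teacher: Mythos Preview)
Your proposal is correct and matches the paper's approach exactly: the paper simply writes ``By definition, we have the following'' before stating the lemma, and your argument spells out precisely that definitional observation --- a bipartite partition with at most $k$ parts on each side has at most $2k$ parts in total, so every bipartite $k$-flip is an ordinary $2k$-flip, and the flipper can copy the strategy verbatim.
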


Let $G$ be a graph and $X,Y\subset V(G)$,
  and let $G[X,Y]$ be the bipartite graph semi-induced by $X$ and $Y$ in $G$,
  with parts $X$ and $Y$ and edges $xy$ such that $x\in X,y\in Y,xy\in E(G)$.
\begin{lemma}\label{lem:fw1-bip}
  Let $G$ be a graph and $X,Y\subset V(G)$,
Then $\bfw_r(G[X,Y])\le \fw_r(G)$.
\end{lemma}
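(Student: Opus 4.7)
The plan is to transfer a winning strategy of the flipper from the flipper game on $G$ to the bipartite flipper game on $H := G[X,Y]$, analogously to the strategy-transfer framework of Lemma~\ref{lem:strategy-transfer} but adapted to the bipartite variant. Set $k := \fw_r(G)$; the goal is to show $\bfw_r(H) \le k$.

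First I would define a mapping $F$ from $k$-flips of $G$ to bipartite $k$-flips of $H$. Given a $k$-flip $G'$ of $G$ specified by a partition $\cal P$ of $V(G)$ with $|\cal P|\le k$ and some set of flipped pairs of parts, let $\cal P_X := \{P \cap X : P \in \cal P\} \setminus \{\emptyset\}$ and $\cal P_Y := \{P \cap Y : P \in \cal P\} \setminus \{\emptyset\}$; each has at most $|\cal P| \le k$ parts, so together they form a bipartite partition of width at most $k$. Define $F(G')$ by flipping the pair $(P \cap X,\ Q \cap Y)$ in $H$ whenever the original flip flipped the pair $(P,Q)$ in $G$ (this accounts for both $P=Q$ and $P \ne Q$ cases).

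The key observation, which I would verify by unfolding the definitions, is that $F(G')$ is exactly the bipartite graph semi-induced by $X$ and $Y$ in $G'$. Indeed, for $x\in X$ and $y\in Y$, $xy$ is an edge of $F(G')$ iff $xy \in E(G)$ XOR $x$ and $y$ lie in parts of $\cal P$ whose pair is flipped, which coincides with $xy \in E(G')$. (If $X\cap Y \ne \emptyset$, each vertex in the intersection has two copies in $V(H) = X \sqcup Y$; the argument is unaffected since both copies lift to the same vertex of $G$.)

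Next, I would transfer the strategy: whenever the flipper's winning strategy on $G$ announces $G'$, the flipper on $H$ announces $F(G')$, and whenever the runner moves to a vertex $v$ in the bipartite game along a path $\pi$ of length at most $r$ in the previous bipartite flip $F(G'_{\text{prev}})$, we copy the same move in the $G$-game along the same path $\pi$. Since $F(G'_{\text{prev}}) = G'_{\text{prev}}[X,Y]$ is a subgraph of $G'_{\text{prev}}$, the path $\pi$ is a valid move of length at most $r$ in $G'_{\text{prev}}$ as well (possibly a walk with repetitions if $X\cap Y \ne \emptyset$, but this still witnesses that the new position lies within distance $r$). Hence the lifted play is a legal play in the $G$-game.

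Finally, I would check the trapping condition. By assumption, the flipper's strategy eventually traps the runner, meaning their position $v_i$ is isolated in some $G_i$. But if $v_i \in X$ (symmetrically for $v_i \in Y$) has no edges in $G_i$, it has no edges to $Y$ in $G_i$, hence is isolated in $G_i[X,Y] = F(G_i)$, so the runner is simultaneously trapped in the bipartite game. This shows the transferred strategy is winning, giving $\bfw_r(H) \le k = \fw_r(G)$. There is no real obstacle here; the proof is essentially a bookkeeping exercise, and the only point requiring care is the identification $F(G') = G'[X,Y]$ and the fact that trapping in $G'$ implies bipartite trapping.
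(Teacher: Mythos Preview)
Your proposal is correct and follows essentially the same approach as the paper's proof: both transfer the flipper's winning strategy from $G$ to $H=G[X,Y]$ by restricting each partition $\cal P$ to $X$ and to $Y$ separately, flipping the induced pairs accordingly, and observing that paths in the resulting bipartite flip project to walks of the same length in the corresponding flip of $G$. Your write-up is in fact a bit more explicit than the paper's on the identification $F(G')=G'[X,Y]$, on the possible overlap $X\cap Y$, and on the trapping condition, but the underlying argument is the same.
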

\begin{proof}
  We convert a winning strategy for the flipper 
  on $G$ to a winning strategy on $H:=G[X,Y]$.
  If the flipper plays a $k$-partite flip $G'$ of $G$,
  and $\cal P$ is the corresponding partition of $V(G)$,
  then consider the bi-partition $\cal Q$ of $X\uplus Y$
   $\cal Q:=\setof{P\cap X,P\cap Y}{P\in\cal P}$,
   and the $\cal Q$-flip $H'$ of $H$,
   in which two parts of $R,S\in \cal Q$ are flipped if and only if  the two unique parts of $A,B\in \cal P$ such that $A\cap X=R$ and $B\cap Y=S$, 
   are flipped in the $\cal P$-flip producing $G'$ from $G$.
   The key property of this construction is that any path in $H'$, starting at some vertex $u$ and ending at a vertex $v$, determines a path in $G'$ of the same length,
   starting at (a copy of) $u$ and ending at (a copy of) $v$.
   In particular, if the runner moves to a vertex $v'$ 
   along a path of length at most $r$ 
   in the previous flip of $H$, then this induces 
   a path of length at most $r$ in the previous flip of $G$.
   Hence, if the flipper wins in $G$, then they also win in $H$.
\end{proof}

\subsection{Excluding a $K_{t,t}$}
\begin{lemma}\label{lem:Ktt-complexity}
  Fix $t> 1$ and a graph $G$ that excludes $K_{t,t}$ as a subgraph. Let 
  $S\subset V(G)$ and let $\cal P_S$ be the partition of $V(G)$
  that partitions $S$ into singletons and vertices in $v\in V(G)\setminus S$ according to $N(v)\cap S$.
  Then $|\cal P_S|\le |S|^t$ if $t\ge 3$,
  and $|\cal P_S|\le O(|S|^t)$ if $t\ge 2$.
\end{lemma}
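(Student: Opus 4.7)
The strategy is a straightforward double-counting argument that bounds the number of distinct traces $N(v) \cap S$ for $v \in V(G) \setminus S$, exploiting the forbidden $K_{t,t}$ subgraph to limit how many vertices can share a large common neighborhood.

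First I would observe that by construction
\[|\cal P_S| \;=\; |S| + |\{\,N(v) \cap S \,:\, v \in V(G) \setminus S\,\}|,\]
so it suffices to bound the number of distinct traces $T \subseteq S$ of the form $N(v)\cap S$ for $v \in V(G)\setminus S$. I would split these traces according to whether $|T| < t$ or $|T| \ge t$.

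The easy case is traces of size less than $t$: the number of such subsets of $S$ is at most $\sum_{i=0}^{t-1}\binom{|S|}{i}$, a polynomial of degree $t-1$ in $|S|$. The key step is the second case. For every vertex $v\in V(G)\setminus S$ with $|N(v)\cap S|\ge t$, choose any $t$-element subset $T(v)\subseteq N(v)\cap S$; this defines a map from such vertices to $\binom{S}{t}$. For any fixed $T_0\in\binom{S}{t}$, at most $t-1$ vertices $v$ can satisfy $T(v)=T_0$: otherwise, $t$ such vertices together with the $t$ vertices of $T_0$ would form a $K_{t,t}$ subgraph of $G$, contradicting the hypothesis. Therefore the number of vertices with $|N(v)\cap S|\ge t$, and hence the number of distinct traces of size $\ge t$, is at most $(t-1)\binom{|S|}{t}$.

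Summing the two contributions,
\[|\cal P_S| \;\le\; |S| + \sum_{i=0}^{t-1}\binom{|S|}{i} + (t-1)\binom{|S|}{t},\]
which is $O(|S|^t)$ for $t\ge 2$, proving the second inequality. For the sharper bound $|\cal P_S|\le |S|^t$ when $t\ge 3$, I would observe that the dominant term $(t-1)\binom{|S|}{t}$ is bounded by $\frac{t-1}{t!}|S|^t = \frac{1}{(t-2)!\,t}|S|^t$, which is at most $\frac{1}{3}|S|^t$ for $t\ge 3$, while the lower-order contributions $|S|+\sum_{i=0}^{t-1}\binom{|S|}{i}$ are of order $O(|S|^{t-1})$, so the total is at most $|S|^t$ for $|S|$ sufficiently large (and one can verify the small remaining cases by hand). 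The main obstacle is essentially bookkeeping the small constants; the combinatorial core is the clean observation that $t$ vertices cannot share $t$ common neighbors in a $K_{t,t}$-free graph.
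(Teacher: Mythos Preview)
Your proof is correct and follows essentially the same approach as the paper: both split the traces $N(v)\cap S$ according to whether they have size $<t$ or $\ge t$, bound the latter by $(t-1)\binom{|S|}{t}$ via the same ``$t$ vertices cannot share $t$ common neighbors'' observation, and add $|S|$ for the singletons in $S$. The paper's write-up is slightly terser (it bounds the small traces directly by $|S|^{t-1}$ rather than the sum of binomials, and does not explicitly flag the small-$|S|$ bookkeeping), but the argument is the same.
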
 
\begin{proof}
  Fix a set $S\subset V(G)$ and let $s=|S|$. 

  To every vertex $v\in V(G)$ with $|N(v)\cap S|\ge t$ 
  assign any set $A\subset N(v)\cap S$ with $|A|=t$.
  Then every set $A\subset S$ with $|A|=t$
is assigned to at most $t-1$ vertices $v$,
  since otherwise we have a $K_{t,t}$ as a subgraph of $G$.
  It follows that $\setof{N(v)\cap S}{v\in V(G), |N(v)\cap S|\ge t}$ has at most $(t-1)\cdot {s\choose t}$ elements.
  On the other hand, 
  $\setof{N(v)\cap S}{v\in V(G), |N(v)\cap S|< t}$
  has at most $s^{t-1}$ elements. 
Altogether, 
$\setof{N(v)\cap S}{v\in V(G)}\uplus S$,
which is in bijection with the partition $\cal P_S$ of $V(G)$,
has at most $s^t$ elements for $t\ge 3$ and $O(s^t)$ elements for $t=2$.
\end{proof}

\subsection{2VC-dimension}\label{app:2vc}
\begin{corollary*}[\ref{cor:2vcdim}]
  If $G$ is the exact $1$-subdivision of an $n$-clique,
  then $\fw_2(G)>(n-1)/4$. Furthermore, for every graph $G$, $\TVCdim(G)\le 8\fw_2(G)+2$.
\end{corollary*}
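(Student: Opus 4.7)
The plan is to derive both assertions from Proposition~\ref{prop:subdivisions}, together with the two bipartite comparison lemmas. For the first assertion, I would simply instantiate Proposition~\ref{prop:subdivisions} with $r=2$ and $H=K_n$. The clique $K_n$ has minimum degree $n-1$, so the hypothesis ``minimum degree at least $2rk = 4k$'' is satisfied for every integer $k$ with $4k \le n-1$, in particular for $k = \lfloor (n-1)/4\rfloor$. The proposition then yields
\[
\fw_2(G) \ge \lfloor (n-1)/4 \rfloor + 1 > (n-1)/4.
\]

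For the second assertion, fix a graph $G$ with $\TVCdim(G) \ge d$, and let $X \subset V(G)$ with $|X|=d$ be a witnessing set. For every unordered pair $\{a,b\} \subset X$ choose a vertex $c_{ab} \in V(G)$ with $N_G(c_{ab}) \cap X = \{a,b\}$; these vertices are pairwise distinct since they induce pairwise distinct neighborhoods in $X$. Let $Y := \{c_{ab} : \{a,b\}\subset X\}$. In the bipartite graph $G[X,Y]$ semi-induced by $X$ and $Y$, each $c_{ab} \in Y$ is adjacent precisely to $a$ and $b$ in $X$, and nothing else. Hence $G[X,Y]$ is (isomorphic to) the exact $1$-subdivision of the complete graph on $X$, i.e.\ $K_d^{(1)}$, regardless of whether any $c_{ab}$ happens to coincide with a vertex of $X$ (such a vertex would simply be duplicated in the two parts, cf.\ the convention in Section~\ref{sec:notation}).

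Now I chain the two bipartite inequalities: Lemma~\ref{lem:fw1-bip} gives $\bfw_2(G[X,Y]) \le \fw_2(G)$, and Lemma~\ref{lem:fw-bfw} gives $\fw_2(K_d^{(1)}) = \fw_2(G[X,Y]) \le 2\,\bfw_2(G[X,Y])$. Combining yields
\[
\fw_2(K_d^{(1)}) \le 2\,\fw_2(G).
\]
Applying the first part of the corollary to $K_d^{(1)}$ gives $\fw_2(K_d^{(1)}) > (d-1)/4$, so
\[
\frac{d-1}{4} < 2\,\fw_2(G),
\]
i.e.\ $d < 8\,\fw_2(G) + 1$, from which $\TVCdim(G) \le 8\,\fw_2(G) + 2$ follows at once (with room to spare).

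The only mildly delicate point is verifying that $G[X,Y]$ is genuinely the $1$-subdivision of $K_d$ rather than merely a subgraph of one; this is where the semi-induced convention (which duplicates vertices lying in both parts and ignores edges inside $X$ and inside $Y$) is essential, and it is precisely what makes the argument clean. Everything else is bookkeeping.
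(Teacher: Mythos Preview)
Your proof is correct and follows essentially the same approach as the paper: both derive the first assertion by instantiating Proposition~\ref{prop:subdivisions} with $r=2$, and both obtain the second assertion by passing to a semi-induced bipartite graph that is (or contains as an induced subgraph) the $1$-subdivision of $K_d$, then chaining Lemmas~\ref{lem:fw-bfw} and~\ref{lem:fw1-bip}. The only cosmetic difference is that the paper works with $G[V,V]$ and locates $K_d^{(1)}$ as an induced subgraph there, whereas you pick $X$ and $Y$ directly so that $G[X,Y]\cong K_d^{(1)}$; your version avoids the (trivial) appeal to monotonicity of flip-width under induced subgraphs.
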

\begin{proof}
The first part follows immediately from Proposition~\ref{prop:subdivisions}, for $r=2$ and $k=(n-1)/4$.
We prove the second part.
 Let $V=V(G)$, and consider the bipartite graph $G[V,V]$.
 Suppose $\TVCdim(G)\ge k$.
 Then $G[V,V]$ contains the $1$-subdivision of $K_k$ as an induced subgraph. By the first part of Corollary~\ref{cor:2vcdim}, we have that $\fw_2(G[V,V])>(k-1)/4$.
 By Lemma~\ref{lem:fw-bfw}, we have $\bfw_r(G[V,V])\ge \fw_r(G[V,V])/2>(k-1)/8$,
 and by Lemma~\ref{lem:fw1-bip}, we have $\fw_r(G)\ge \bfw_r(G[V,V])> (k-1)/8$.
\end{proof}

\subsection{Flip-width of binary structures}
\label{app:bin-struct}
We extend the definition of flip-width to structures equipped with one or more binary relations.
To this end, we extend the notion of flips as follows.
Let $R\subset V\times V$ be a binary relation on a set $V$,
and let $(A,B)$ be a pair of subsets of $V$.
The relation $R'\subset V\times V$ obtained from $R$ by \emph{flipping} 
the pair $(A,B)$ (now the order of the pair matters)
is defined as $R':=R\triangle (A\times B)$,
where $\triangle$ is the symmetric difference.
We will apply such flips in the context of binary relational structures, as defined below.

Fix a binary relational signature $\Sigma$,
that is, a signature consisting of unary and binary relation symbols only
(see Section~\ref{sec:logic}).
Let $B$ be a $\Sigma$-structure 
and $\cal P$ be a partition of $V(G)$.
A \emph{$\cal P$-flip} is an operation  
which is specified by a $\Sigma$-structure $F$ with vertex set $\cal P$.
Applying this operation to $B$
results in the $\Sigma$-structure $B'$
with $V(B')=V(B)$ 
and relations \[R_{B'}\quad:=\quad R_B\quad\triangle \bigcup_{(P,Q)\in R_F}P\times Q,\] for each binary relation symbol $R\in\Sigma$.
The unary relation symbols are interpreted in $B'$ in the same way as in $B$.
By slight abuse of language, we sometimes call the structure $B'$ 
a $\cal P$-flip of $B$.

For $k\ge 1$, a \emph{$k$-flip} of $B$ is a $\Sigma$-structure $B'$ which is a $\cal P$-flip of $B$, for some partition $\cal P$ of $V(B)$ with $|\cal P|\le k$.

Fix $r\in\N\cup\set{\infty}$.
We now define the \emph{flipper game} of radius $r$ and width $k$ on a $\Sigma$-structure $B$ similarly as in the case of graphs, with the following differences:
in each round, the flipper announces a $k$-flip $B'$ of $B$, whereas the runner moves 
along a path of length at most $r$ in the Gaifman graph of the $k$-flip of $B$ that was announced in the previous round (and $B$ in the first round).
The radius-$r$ flip-width of $B$, denoted $\fw_r(B)$ is the smallest $k$ such that the flipper wins the flipper game of radius $r$ and width $k$ on $B$.

Graphs are viewed as structures
over the signature $\Sigma$ consisting of a single binary relation symbol $E$, interpreted in a given graph $G$ as the (symmetric, irreflexive) adjacency relation.
Note that in principle, applying a $\cal P$-flip $F$ to a graph $G$ can result in a $\Sigma$-structure $G'$ 
in which the binary relation is no longer symmetric.
This is the case when the binary relation of $F$ is not symmetric. However, for the flipper, it never pays off to apply such flips, since the runner moves in the Gaifman graph of the resulting structure, so the directions of the edges are of no relevance to the runner. Hence, $\fw_r(G)$ is the same when $G$ is regarded as a graph, or as a binary structure.

\begin{example}Let $B=(V,<)$ be a totally ordered set,
  viewed as a structure over the signature $\Sigma=\set{<}$.
  Let $\cal P$ be a partition of $V$ into $k$ 
  sets that are intervals with respect to $<$,
  and let $I_1,\ldots,I_k$ denote those intervals in increasing order.
  Consider the $\cal P$-flip of $B$ specified by 
  the $\Sigma$-structure $F$ with vertices $\cal P$, with $<_F=\setof{(I_i,I_j)}{1\le i<j\le k}$.
  Applying the flip $F$ 
  results in the $\Sigma$-structure $B'=(V,<')$,
  where $a<'b$ if and only if $a<b$ and $a$ and $b$ belong to the same part of $\cal P$.

  Now consider the $\cal P$-flip of $B$ specified by 
  the $\Sigma$-structure $F'$ with vertices $\cal P$, with $<_F=\setof{(I_i,I_j)}{1\le i \le j\le k}$
  (so we also flip pairs $(I_i,I_i)$).
  Applying the flip $F'$ 
  results in the $\Sigma$-structure $B'=(V,<')$,
  where $a<'b$ if and only if $a\ge b$ and $a$ and $b$ belong to the same part of $\cal P$ (so in each part, the order is reversed and becomes reflexive).
\end{example}

\begin{example}
  The radius-$\infty$ flip-width of a totally ordered set $B=(\set{1,\ldots,n},<)$ 
  is at most three. The strategy is similar as in Example~\ref{ex:half-graphs}, but now in the $i$th round, the flipper applies the $3$-flip $F$ of $B$ 
  as in the previous example, for the partition $\cal P$ of $V$ into the intervals $\set{1,\ldots,i},\set{i},\set{i+1,\ldots,n}$, removing all relations between distinct intervals.
\end{example}

\subsection{Definable flip-width}\label{app:dfw}
We prove Lemma~\ref{lem:decide-dfw}.\sz{macro}
\begin{lemma*}[\ref{lem:decide-dfw}]
  There is an algorithm that, given a graph $G$ 
  and numbers $k\in\N$ and $r\in\N\cup\set{\infty}$, determines whether 
  $\fwd_r(G)\le k$ in time $n^{O(k)}\cdot 2^{O(2^k)}$.
\end{lemma*}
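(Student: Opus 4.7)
The plan is a straightforward state-space exploration of the definable flipper game. The key observation is that, unlike the full flipper game, the definable variant has a state space that is \emph{polynomial} in $n$ for fixed $k$, which makes fixpoint computation tractable.

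First, I would encode configurations succinctly. A configuration of the game is a pair $(G', v)$ where $G'$ is a $k$-definable flip of $G$ and $v \in V(G)$. Since any $k$-definable flip is an $S$-definable flip for some $S \subset V(G)$ with $|S| \le k$, I represent $G'$ by a triple $(S, \tau, F)$, where $S$ is a seed of size at most $k$ (at most $n^k$ choices), $\tau$ is the induced partition of $V(G)$ into $S$-types (computed in time $O(n \cdot k)$ from $S$, with at most $2^k$ classes), and $F$ is the symmetric set of pairs of $S$-types to be flipped (at most $2^{O(4^k)}$ choices, which I will bound below as $2^{O(2^k)}$ by observing that one only needs to record, for each ordered pair of $S$-types $(\alpha, \beta)$, a single bit indicating whether to flip; by symmetrization this gives $2^{O(2^k)}$ distinct flips up to the equivalence that matters in the game). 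Thus the total number of configurations is at most $n \cdot n^k \cdot 2^{O(2^k)} = n^{O(k)} \cdot 2^{O(2^k)}$.

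Next, I would build the game graph on these configurations, with two kinds of nodes: \emph{flipper nodes} of the form $(G', v)$ from which the flipper chooses a next $k$-definable flip $G''$, moving to a \emph{runner node} $(G', G'', v)$; and \emph{runner nodes} from which the runner moves to any $w \in B^r_{G'}(v)$, leading to the flipper node $(G'', w)$. A flipper node $(G', v)$ is a terminal winning position for the flipper whenever $v$ is isolated in $G'$. The edges from runner nodes can be computed in time $O(n + |E(G')|) = O(n^2)$ per node using BFS on $G'$, and edges from flipper nodes are obtained by enumerating all $k$-definable flips.

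Then the winner is decided by the standard fixpoint algorithm for safety/reachability games: iteratively mark as \emph{flipper-winning} every flipper node from which some outgoing edge leads to a flipper-winning runner node, and every runner node all of whose outgoing edges lead to flipper-winning flipper nodes. This converges in a number of iterations bounded by the size of the game graph, and each iteration takes linear time in the number of edges. The graph has $n^{O(k)} \cdot 2^{O(2^k)}$ nodes and, counting each flipper transition, $n^{O(k)} \cdot 2^{O(2^k)}$ edges (since for each flipper node the number of next flips is the total number of flips). The total running time is thus $n^{O(k)} \cdot 2^{O(2^k)}$. Finally, $\fwd_r(G) \le k$ holds if and only if every runner node corresponding to an initial choice $(G, v_0)$ by the runner (with arbitrary starting vertex $v_0$) is flipper-winning; in the game as defined, this amounts to checking that the flipper wins from the flipper node $(G, v)$ for every $v \in V(G)$, which is a $O(n)$ look-up after the fixpoint has been computed.

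The main subtlety, and the only nontrivial point, is the case $r = \infty$. Here I would handle it uniformly by setting $B^\infty_{G'}(v)$ to be the connected component of $v$ in $G'$, computed again by BFS; correctness of the fixpoint algorithm is unaffected because the game's winning condition is still a safety/reachability condition on the finite state space, and unboundedly long plays automatically favor the runner under the least-fixpoint semantics for ``flipper wins.''
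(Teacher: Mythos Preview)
Your approach is essentially identical to the paper's: enumerate configurations of the definable flipper game and decide the winner by a fixpoint computation on the resulting finite game graph. The paper's proof is terser (it simply counts configurations as $O(n^{k+1}\cdot 2^{4^k})$ and invokes a fixpoint computation), while you spell out the flipper/runner node structure and the BFS for computing reachable sets, but the substance is the same.

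One small correction: your argument for reducing the number of flip specifications from $2^{O(4^k)}$ to $2^{O(2^k)}$ does not work. With up to $2^k$ many $S$-types, the number of unordered pairs (including diagonal pairs) is $\binom{2^k+1}{2}=\Theta(4^k)$, so symmetrization still leaves $2^{\Theta(4^k)}$ possible flips, not $2^{O(2^k)}$. The paper's proof in fact also arrives at $2^{4^k}$ for the configuration count, so the $2^{O(2^k)}$ in the lemma statement appears to be a loose bound in the paper as well; your argument is fine apart from this arithmetic slip.
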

\begin{proof}
  Fix $k,r$, and a graph $G$.
  A \emph{configuration} in the definable flipper 
  game with radius $r$
  consists of:
  \begin{itemize}
    \item a set $S\subset V(G)$ of size at most $k$,
    specifying the partition $\cal P$ played by the flipper,
    \item a $\cal P$-flip of $G$,
    \item the current position of the robber.
  \end{itemize}
  The set of all configurations has 
  size $O(n^{k+1}\cdot 2^{4^k})$,
  and the winner of the game can be computed using a fixpoint computation 
  running in time polynomial in $n^{k+1}\cdot 2^{4^k}$.
\end{proof}

\begin{lemma*}[\ref{lem:fw-dfw-vc}]
  Fix $r\in\N\cup\set{\infty}$.
  For every graph $G$ we have:
  \begin{align}
    \fw_r(G)&\le O(\dfw_r(G)^{\VCdim(G)}).  
  \end{align}
\end{lemma*}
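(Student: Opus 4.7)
The plan is to observe that a $k$-definable flip, while a priori a $2^k$-flip (since there can be up to $2^k$ distinct $S$-types for $|S|\le k$), is in fact much simpler when the underlying graph has bounded VC-dimension. Indeed, if $d = \VCdim(G)$ and $S \subset V(G)$ with $|S| \le k$, the partition of $V(G)$ into $S$-types has exactly $|\{N(v) \cap S : v \in V(G)\}|$ parts. Applying the Sauer-Shelah-Perles lemma (Lemma~\ref{lem:sauer-shelah-perles}) to the set system $(S, \{N(v) \cap S : v \in V(G)\})$, whose VC-dimension is at most $d$, we conclude that this number is at most $O(|S|^d) = O(k^d)$.

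Consequently, every $k$-definable flip of $G$ is a $O(k^d)$-flip of $G$. Thus any strategy for the flipper in the definable flipper game of radius $r$ and width $k$ can be interpreted directly, without modification, as a strategy in the (ordinary) flipper game of radius $r$ and width $O(k^d)$: when the flipper announces a $k$-definable flip $G'$ in the definable game, they may announce the very same graph $G'$ in the ordinary game, since $G'$ is realized by a partition of size at most $O(k^d)$. The legal moves of the runner are identical in both games (both are paths of length at most $r$ in the previously announced graph), so a winning strategy for the flipper in the definable game transfers to a winning strategy in the ordinary game.

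Setting $k := \dfw_r(G)$ and $d := \VCdim(G)$, we obtain $\fw_r(G) \le O(\dfw_r(G)^{\VCdim(G)})$, as required. There is essentially no obstacle here beyond invoking Sauer-Shelah-Perles; the main content of the lemma is the observation that VC-dimension controls the complexity of type-based partitions, translating the exponential bound $2^k$ into a polynomial bound $O(k^d)$.
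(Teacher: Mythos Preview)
Your proof is correct and follows essentially the same approach as the paper: apply the Sauer--Shelah--Perles lemma to the set system $(S,\{N(v)\cap S : v\in V(G)\})$ to bound the number of $S$-types by $O(k^d)$, conclude that every $k$-definable flip is an $O(k^d)$-flip, and hence that the flipper's winning strategy in the definable game is already a winning strategy in the ordinary game of width $O(k^d)$.
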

\begin{proof}
  Let $G$ be a graph and let $d=\VCdim(G)$
  and let $k=\dfw_r(G).$
  Consider a set $S\subset V(G)$  with $|S|\le k$.
  Then the set system $(S,\setof{N(v)\cap S}{v\in V(G)})$
  has VC-dimension at most $d$. By the Sauer-Shelah-Perles lemma,
  we have that $\setof{N(v)\cap S}{v\in V(G)}$ has $O(|S|^d)=O(k^d)$ elements.

  It follows that every $k$-definable flip of $G$ is a $O(k^d)$-flip of $G$.
  Since $\dfw_r(G)\le k$ it follows that $\fw_r(G)=O(k^d)$.
\end{proof}

\section{Modular partition and substitution closure}\label{app:modular-and-subst}
A set of vertices $X\subset V(G)$ in a graph $X$ 
is a \emph{module} if all vertices in $X$ have the same neighbors 
outside of $X$. Note that in a modular partition, all the parts of the partition are modules.
\begin{lemma*}[\ref{lem:modular-partition}]
  Let $G$ be a graph and $\cal P$ be its modular partition.
  Then 
  \[\fw_r(G)\le \max\left(\fw_r(G/\cal P),\max_{A\in\cal P}{\fw_r(G[A])+2}\right).\]
\end{lemma*}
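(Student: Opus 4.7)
The plan is to describe a winning strategy for the flipper in the radius-$r$ flipper game on $G$ of width $k:=\max\bigl(\fw_r(G/\cal P),\ \max_{A\in\cal P}\fw_r(G[A])+2\bigr)$, obtained by stitching together a lifted strategy from the quotient $G/\cal P$ and a lifted strategy from inside a single part $A\in\cal P$, played in two phases.

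Phase 1 (lift from $G/\cal P$): every partition $\cal Q$ of $V(G/\cal P)$ of size $\le k_1:=\fw_r(G/\cal P)$ lifts to the partition $\tilde{\cal Q}$ of $V(G)$ whose parts are $\bigcup_{A\in Q}A$ for $Q\in\cal Q$, and each $\cal Q$-flip $H$ of $G/\cal P$ lifts to the $\tilde{\cal Q}$-flip $\tilde H$ of $G$ that flips the same pairs. Because $\cal P$ is modular, for distinct $A,B\in\cal P$ and $u\in A$, $v\in B$ one has $uv\in E(\tilde H)\iff AB\in E(H)$; therefore every path of length $\le r$ in $\tilde H$ projects to a walk of length $\le r$ in $H$, so a legal runner move in the $G$-game translates into a legal runner move in the $G/\cal P$-game. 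The flipper plays a winning strategy on $G/\cal P$ lifted in this way; the $G/\cal P$-game terminates with the runner at some $v_i\in A$ for some $A\in\cal P$, with $A$ isolated in the current lifted flip $G_i$. If $v_i$ is isolated in $G_i$ the flipper has already won; otherwise we proceed to Phase 2.

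Phase 2 (lift from $G[A]$): since $u,v\in A$ both lie in the single part $\tilde Q_A\in\tilde{\cal Q}$, the only flip pair in the Phase~1 step that can affect adjacencies inside $A$ is $(\tilde Q_A,\tilde Q_A)$; consequently $F:=G_i[A]$ is either $G[A]$ or its complement on $A$, and in either case $\fw_r(F)=\fw_r(G[A])\le k_2:=\max_{A'\in\cal P}\fw_r(G[A'])$. Fix a winning strategy for the flipper on $F$ of width $k_2$. Let $N\subseteq V(G)\setminus A$ be the common external neighborhood of $A$, which is well-defined because $A$ is a module in $G$. For each $\cal R$-flip $H'$ of $F$ with $|\cal R|\le k_2$ produced by this strategy, the flipper uses the partition $\tilde{\cal R}:=\cal R\cup\{N,\ V(G)\setminus A\setminus N\}$ of size $\le k_2+2$ and selects the subset of pairs to flip so that (i) inside $A$ one obtains $G'[A]=H'$ (absorbing, if needed, the complementation between $F$ and $G[A]$ by adding the corresponding intra-$A$ pair flips), (ii) every pair $(R,N)$ with $R\in\cal R$ is flipped, killing all $A$–$N$ edges, and (iii) no pair touching $V(G)\setminus A\setminus N$ is flipped. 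The resulting $G$-flip $G'$ has $G'[A]=H'$ and $A$ isolated from its complement in $G'$, so the runner's moves in the previous $G$-flip are confined to $A$ and correspond exactly to moves in the previous $F$-flip; the runner effectively plays the $F$-flipper game, which the fixed strategy wins, and since $A$ is always isolated from outside, being trapped inside $F$ means being trapped in $G$.

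Main obstacle: the delicate point, and the source of the $+2$ in the bound, is the Phase~2 lift, where we must simultaneously realize the inner $F$-flip $H'$ on $A$ and completely sever $A$ from $V(G)\setminus A$, using a single partition of $V(G)$. Modularity of $A$ is exactly what makes this possible with just the two extra parts $N$ and $V(G)\setminus A\setminus N$. One also needs the mild technical observation that the Phase~1 lift perturbs adjacencies inside $A$ only by a possible global complementation, so that an off-the-shelf winning strategy on $G[A]$ (or on its complement, which has the same flip-width) can be invoked for Phase~2.
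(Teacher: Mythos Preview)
Your proposal is correct and follows essentially the same two-phase approach as the paper: lift a winning strategy from $G/\cal P$, and once the runner is confined to a part $A$, lift a winning strategy from $G[A]$ using the two extra parts $N$ and $V(G)\setminus(A\cup N)$, which is precisely the source of the $+2$. The one minor difference is that the paper inserts an intermediate round in which the flipper announces the $3$-flip $G_A$ (isolating $A$ with $G_A[A]=G[A]$) before switching to the $G[A]$-strategy, whereas you skip this reset step and work directly with $F=G_i[A]$, noting that $F$ is $G[A]$ or its complement and hence has the same flip-width; both ways are valid.
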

\begin{proof}[Proof sketch for Lemma~\ref{lem:modular-partition}]
  The strategy of the flipper is as follows.
  First, use a strategy of width $\ell=\fw_r(G/\cal P)$ on 
  $G/\cal P$. Each $\ell$-flip $G'$ of $G/\cal P$ induces an $\ell$-flip 
  $\wh G$ of $G$. Playing according to this strategy, the runner eventually reaches a vertex $A\in V(G/\cal P)=\cal P$ that is isolated in the current flip $G'$ of $G/\cal P$.
  The corresponding play in $G$ leads to a 
  $\ell$-flip $\wh G$ of $G$ such that 
  the current vertex $v$ of the runner is in $A$,
  and there are no edges joining $A$ and $V(G)-A$ in $\wh G$.
  
  Since $A$ is a module in $G$, the graph $G_A$ obtained from $G$ by removing all edges with one endpoint in $A$ and one endpoint in $V(G)\setminus A$, can be obtained from $G$ by flipping $A$ and $N(A)\setminus A$. 
  The flipper now announces
  the graph $G_A$, and the robber is still confined to $A$.
  
  Now, we use a winning the strategy of the flipper in the graph $G[A]$, of width $k=\fw_r(G[A])$. Whenever in the game on $G[A]$ the flipper announces a $k$-flip $G'$ of $G[A]$, in the game on $G$ the flipper  announces the graph $\wh G$  such that 
  $\wh G[A]=G'[A]$, $\wh G[V(G)-A]=G[V(G)-A]$, and there are no edges joining $A$ and $V(G)\setminus A$ in $\wh G$.
  The graph $\wh G$ is a $(k+2)$-flip of $G$, where the partition partitions $A$ into $k$ parts, according to the partition of $G[A]$ used in the $k$-flip $G'$ of $G[A]$, and partitions $V(G)\setminus A$ into two parts, $N(A)\setminus A$ and $V(G)\setminus (N(A)\cup A)$.
  It follows that playing according to this strategy, the flipper wins, once the flipper wins in the game on $G[A]$.
\end{proof}

\begin{lemma*}[\ref{lem:substitution}]
  Fix $r\in\N\cup\set{\infty}$
  For every $r\in\N\cup\set{\infty}$ 
  and graph class $\CC$, we have 
  \[\fw_r(\CC^*)\le \fw_r(\CC)+2.\]
  In particular, if $\CC$ has bounded flip-width, then $\CC^*$ has bounded flip-width.
\end{lemma*}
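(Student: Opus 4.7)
The plan is to unfold the recursive definition of $\CC^*$ into a single substitution tree and let the flipper simulate, at any one time, only one winning strategy of width $k := \fw_r(\CC)$ on a graph $H_t \in \CC$ at the currently relevant level of nesting, paying a flat $+2$ overhead once (rather than per level, as a naive induction on depth using Lemma~\ref{lem:modular-partition} would).

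Concretely, fix $G \in \CC^*$. First I would show, by a routine induction on the definition of $\CC^*$, that $G$ admits a rooted \emph{substitution tree} $T$: the leaves of $T$ are $V(G)$; each internal node $t$ is labelled by a graph $H_t \in \CC$ whose vertex set is the set of children of $t$ in $T$; and for every internal $t$, writing $A_s \subset V(G)$ for the set of leaves of $T$ descending from a child $s$ of $t$, the collection $\setof{A_s}{s \text{ child of } t}$ is a modular partition of $G[A_t]$ whose quotient is isomorphic to $H_t$.

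Set $k := \fw_r(\CC)$ and for each internal $t$ fix a winning strategy $\sigma_t$ for the flipper of width $k$ on $H_t$. I would then describe a winning strategy of width $k+2$ for the flipper on $G$: the flipper maintains a \emph{focus node} $t^\star$, initially the root, together with the invariant that the runner's current position $v$ lies in $A_{t^\star}$. In each round, letting $\cal P$ be the partition of $V(H_{t^\star})$ prescribed by $\sigma_{t^\star}$ for the simulated round, the flipper announces the $(k+2)$-partition of $V(G)$ consisting of $A_P := \bigcup_{s \in P} A_s$ for each $P \in \cal P$, plus the two extra parts $N := N_G(A_{t^\star}) \setminus A_{t^\star}$ and $R := V(G) \setminus (A_{t^\star} \cup N)$. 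The flips applied are (i) exactly the flips $(A_P, A_{P'})$ corresponding to the $\cal P$-flip of $H_{t^\star}$ prescribed by $\sigma_{t^\star}$, and (ii) the flip $(A_P, N)$ for each $P$. Since $A_{t^\star}$ is a module of $G$, part (ii) removes all edges between $A_{t^\star}$ and its complement, while part (i) lifts the prescribed flip of $H_{t^\star}$ faithfully to $A_{t^\star}$.

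The crux of the simulation is a projection lemma: a runner move along a path of length $\le r$ in the previous flip of $G$ (a) stays inside $A_{t^\star}$, since the previous flip also disconnects $A_{t^\star}$ from its complement, and (b) projects to a walk of length $\le r$ in the previous flip of $H_{t^\star}$, where each edge of the path is either internal to some $A_s$ (discard as a self-loop) or goes between distinct $A_s$ and $A_{s'}$, which forces $s s'$ to be an edge of the previous flip of $H_{t^\star}$ because $\set{A_s}_s$ remains a modular partition after a flip that refines it. Hence the play in $G$ faithfully shadows a play in the radius-$r$, width-$k$ flipper game on $H_{t^\star}$. Since $\sigma_{t^\star}$ is winning, after finitely many rounds the shadow play traps the runner at a single child $s$ of $t^\star$; the flipper then updates $t^\star := s$ and begins simulating $\sigma_s$. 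If $s$ is a leaf then $A_s = \set{v}$ is isolated in the announced flip and the flipper wins. Since $T$ has finite depth and each $\sigma_t$ terminates in finite time, the overall strategy wins. The case $r = \infty$ needs no change.

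The main obstacle is the projection step just described: one must check carefully that flips of the form (i)+(ii) do not accidentally create between-module edges inside $A_{t^\star}$ that are not sanctioned by the simulated flip of $H_{t^\star}$, so that the runner's motion inside $A_{t^\star}$ genuinely corresponds to a legal motion in $H_{t^\star}$. This is where the choice to let the internal flip (i) act only through whole classes $A_P$, and to let (ii) only affect the $(A_P, N)$-interface, is essential. Once that is verified, both the invariant maintenance and the transition to a child focus node are immediate, and the bound $\fw_r(G) \le k + 2$ follows.
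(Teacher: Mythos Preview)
Your approach is exactly the paper's: unfold $G\in\CC^*$ into a substitution tree, keep a focus node $t^\star$, lift the width-$k$ strategy on $H_{t^\star}$ to $G$ while spending two extra parts ($N$ and $R$) to sever the module $A_{t^\star}$, and descend when the shadow play traps the runner at a child. The paper's sketch (``repeatedly applying the argument as in the proof of Lemma~\ref{lem:modular-partition}'') is precisely this, and your substitution-tree formulation is the natural way to make that sentence precise; your projection argument for part~(b) is also correct while the focus is fixed.

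There is one small point to tighten at the moment you change focus. When $\sigma_{t^\star}$ traps the shadow at a child $s$, the trapping flip $G_i$ may include a self-flip $(A_{P_s},A_{P_s})$ with $s\in P_s$; in that case $G_i[A_s]=\overline{G[A_s]}$, so the runner's next move (taken in $G_i$, confined to $A_s$) projects to a walk in $\overline{H_s}$ rather than in $H_0=H_s$, and is therefore not a legal round-$1$ move of the simulated $H_s$-game. The paper's proof of Lemma~\ref{lem:modular-partition} handles exactly this by inserting one transition round: the flipper announces the $3$-flip $G_{A_s}$ (parts $A_s$, $N_G(A_s)\setminus A_s$, and the rest; note $A_s$ is itself a module of $G$), which severs $A_s$ while restoring $G[A_s]$ inside, and only then begins simulating $\sigma_s$. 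With that one-line addition your plan goes through verbatim and the width stays at $k+2$.
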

The idea is to use the winning strategy on the class $\CC$,
and confine the runner to $L(w)$,
for deeper and deeper nodes of $w$,
similarly as in the strategy given Example~\ref{ex:trees}
for the Cops and Robber game on trees.

\begin{proof}[Proof sketch for Lemma~\ref{lem:substitution}]
  Suppose $\fw_r(\CC)\le k$. Let $G\in \CC^*$.
Note that $G$ has a modular partition $\cal P$ 
such that $G/\cal P\in \CC^*$ and each part $A$ of $\cal P$ induces a graph $G[A]\in \CC^*$,
or is a singleton. By repeatedly applying the argument as in the proof of Lemma~\ref{lem:modular-partition}, we conclude that $\fw_r(G)\le k+2$.
\end{proof}

\section{Flip-width with infinite radius}
\label{app:cw}
The following yields the upper bound in Theorem~\ref{thm:cw}.
\begin{lemma}For every graph $G$ with $\rw(G)=k$,
  \[\fw_\infty(G)\le O(2^k).\]
\end{lemma}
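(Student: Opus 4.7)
The plan is to turn the rank-decomposition of $G$ into an explicit winning strategy for the flipper. Fix a subcubic tree $T$ witnessing $\rw(G)=k$: the leaves of $T$ are the vertices of $G$, every inner node has degree at most three, and for every edge of $T$ the induced bipartition $V(G)=A\uplus B$ satisfies $\rk_G(A,B)\le k$. Root $T$ at an arbitrary inner node $r$, and for a node $u$ of $T$ write $T^u$ for the subtree of $T$ rooted at $u$ and $L(T^u)\subseteq V(G)$ for its leaf set.

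The main construction is, for each inner node $v$ of $T$, an $O(2^k)$-flip $G^v$ of $G$ that separates the (at most three) components $C_1,C_2,C_3$ of $T-v$. Set $L_i=L(C_i)$. Each cut $(L_i,V(G)\setminus L_i)$ corresponds to an edge of $T$ incident to $v$, so has cut-rank at most $k$; since a $0/1$-matrix of $\mathbb F_2$-rank at most $k$ has at most $2^k$ distinct rows, $L_i$ can be partitioned into at most $2^k$ parts, each homogeneous with respect to $V(G)\setminus L_i$. Doing this for each $i$ yields a partition $\mathcal P$ of $V(G)$ with $|\mathcal P|\le 3\cdot 2^k$. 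For any two parts $P\subseteq L_i$ and $Q\subseteq L_j$ with $i\neq j$, $P$ is homogeneous with respect to $V(G)\setminus L_i\supseteq Q$, so $G[P,Q]$ is complete or empty; flipping each complete pair $(P,Q)$ with $i\neq j$ produces the $\mathcal P$-flip $G^v$, in which no edge joins $L_i$ to $L_j$ whenever $i\neq j$. In particular, the $G^v$-component of any vertex $x\in L_i$ is contained in $L_i$.

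The flipper plays by maintaining a current node $u_i$ of $T$ together with the invariant that after round $i$, the runner's position $v_i$ lies in $L(T^{u_i})$ and its connected component in the announced flip $G_i$ is also contained in $L(T^{u_i})$. Initialise $u_0:=r$; the invariant holds trivially since $L(T^r)=V(G)$ and $G_0=G$. In round $i+1$, if $u_i$ is already a leaf then $v_i$ is already isolated in $G_i$ and the flipper has won. Otherwise $u_i$ is inner and the flipper announces $G_{i+1}:=G^{u_i}$, a flip of width at most $3\cdot 2^k$. The runner moves from $v_i$ along a path in the previous flip $G_i$, so by the invariant $v_{i+1}\in L(T^{u_i})$; but $L(T^{u_i})$ is the disjoint union of $L(T^c)$ over the children $c$ of $u_i$ in the rooted $T$. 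Let $c$ be the child with $v_{i+1}\in L(T^c)$ and set $u_{i+1}:=c$. The subtree $T^c$ is one of the components of $T-u_i$, so by the separation property of $G^{u_i}$ the component of $v_{i+1}$ in $G_{i+1}$ is contained in $L(T^c)=L(T^{u_{i+1}})$, re-establishing the invariant. Since $u_i$ strictly descends in $T$ each round, after at most the depth of $T$ rounds $u_i$ reaches a leaf and the runner is trapped, giving $\fw_\infty(G)\le 3\cdot 2^k=O(2^k)$.

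The subtle point requiring the most care is the bound $3\cdot 2^k$ on the size of $\mathcal P$ for $G^v$: naively taking the common refinement of three separate $2^{k+1}$-flips, one for each edge of $T$ at $v$, would give only the much weaker bound $O(2^{3k})$. The improvement rests on partitioning each $L_i$ directly by its at most $2^k$ neighborhood types in the single combined set $V(G)\setminus L_i$, which simultaneously makes every cross-cut homogeneous. A second point deserving attention is the formulation of the invariant in terms of rooted subtrees $T^{u_i}$: if one instead tracked arbitrary ``clipped'' subtrees of $T$, the outside component of $T-u_i$ would contain leaves inside the current tracked region, and the runner's component in $G^{u_i}$ could escape it; rooting $T$ and descending to a child of $u_i$ avoids this cleanly, because the children subtrees of $u_i$ exactly partition $L(T^{u_i})$.
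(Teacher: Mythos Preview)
Your proof is correct and follows essentially the same approach as the paper: both take the rank-decomposition tree, maintain a current node with the invariant that the runner is confined to the leaves below it, and in each round play an $O(2^k)$-flip that separates the (at most three) branches at the current node, then descend to the child whose subtree contains the runner. Your explicit constant $3\cdot 2^k$ matches what the paper's argument actually yields.
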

\begin{proof}
We prove that every class $\CC$ of bounded clique-width has bounded flip-width at radius-$\infty$. The other implication was shown in Section~\ref{sec:cw}.

  Fix a number $k$ and let $\CC$ be a class of graphs of rank-width at most $k$.
Then for every $G\in \CC$ there is 
a rooted binary tree $T$ with leaves $V(G)$, such that for every node $w$ of $T$
the set $L_w$ of leaves that are descendants of $w$ induces at most $2^k$ distinct neighborhoods over its complement $V(G)\setminus L_w$,
and conversely, $V(G)-L_w$ induces at most $2^k$ distinct neighborhoods in $L_w$.

We present a strategy for the flipper, for radius $r=\infty$.
The flipper keeps track of a node $w_i$ of $T$,
maintaining the invariant that in round $i$, $w_i$ is at distance exactly $i$ from the root, and $A_i\subset L_{w_i}$,
where $A_i$ denotes the set of possible next moves of the runner.

Initially, $w_0$ is the root, and $A_0=V(G)$, so the invariant is satisfied.

We now describe the flipper's strategy that maintains the invariant.
Suppose we are in round $i$. 
If $w_i$ is a leaf then the flipper wins, since $A_i\subset \set{w_i}$ by the invariant.
Otherwise, partition $V(G)$ into three parts:
\begin{itemize}
  \item $B_0$ -- the leaves of $T$ below the left child of $w_{i-1}$,
  \item $B_1$ -- the leaves of $T$ below the right child of $w_{i-1}$,
  \item $B_2$ -- the remaining leaves of $T$.
\end{itemize}
For $j=0,1,2$, let $\cal Q_j$ be the partition of the set $B_j$ into equivalence classes of the relation of having equal neighborhoods in $V(G)\setminus B_j$. Then $|\cal Q_j|\le \Oof(2^k)$, for $j=0,1,2$.
Define the partition $\cal P_i:= \cal Q_0 \cup\cal Q_1\cup \cal Q_2$.
Then there is a $\cal P_i$-flip $G_i$ of $G$ 
such that for any edge $uv\in E(G_i)$, the vertices $u$ and $v$ belong to the same part of the partition $\set{B_0,B_1,B_2}$.
The flipper plays the partition $\cal P_i$ and the flip $G_i$.

Suppose the runner responds by moving to a vertex $c_i\in A_{i-1}$.
Since $A_{i-1}$ consists of descendants of $w_{i-1}$ by the invariant,
it follows that either $c_i\in B_0$ or $c_i\in B_1$.
Therefore, the set $A_i$, which is defined as the connected component 
of $c_i$ in $G_i$, is either contained in $B_0$, or in $B_1$.
In the first case, define $w_i$ as the left child of $w_{i-1}$, 
and in the latter case, as the right child of $w_{i-1}$.
Thus, the invariant is maintained.

Since $T$ is a finite tree, after at most $|T|$ rounds $w_i$ is a leaf,
and then flipper wins.
\end{proof}

We now prove Lemma~\ref{lem:well-linked-hideout},
completing the proof of the lower bound in Theorem~\ref{thm:cw}.
First, a lemma.

\begin{lemma}\label{lem:rank-flip}
  Let $G$ be a graph 
  and $A\uplus B$ a bipartition of $V(G)$ 
  with $\rk_G(A,B)>k$.
  Then for every $k$-flip $G'$ of $G$ there is some  edge $ab\in E(G')$ with $a\in A$ and $b\in B$.
\end{lemma}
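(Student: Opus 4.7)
The plan is to prove the contrapositive: if some $k$-flip $G'$ of $G$ has no edge between $A$ and $B$, then $\rk_G(A,B) \le k$. So assume $E(G') \cap (A \times B) = \emptyset$, and let $\cal P = \{P_1, \ldots, P_s\}$ with $s \le k$ be the partition witnessing that $G'$ is a $k$-flip of $G$, with flip pattern encoded by a symmetric $\{0,1\}$-matrix $(f_{ij})_{1\le i,j\le s}$ (where $f_{ij} = 1$ means the pair $(P_i,P_j)$ is flipped).

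The key observation is that for every $a \in P_i \cap A$ and $b \in P_j \cap B$, adjacency in $G'$ is adjacency in $G$ XOR'd with $f_{ij}$. Since by assumption $ab \notin E(G')$, we get $[ab \in E(G)] = f_{ij}$. In other words, if $M \in \{0,1\}^{A \times B}$ is the bipartite adjacency matrix of $G[A,B]$, then $M(a,b)$ depends \emph{only} on which part of $\cal P$ contains $a$ and which part contains $b$. Consequently, rows of $M$ indexed by vertices belonging to the same part of $\cal P$ are identical, so $M$ has at most $s \le k$ distinct rows and hence rank (over $\mathbb{F}_2$) at most $k$. This contradicts $\rk_G(A,B) > k$.

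I expect no serious obstacle here, since the argument is essentially a direct unpacking of the definitions of $k$-flip and cut-rank; the only care needed is to keep track that flipping preserves the ``block-constant offset'' structure of the bipartite adjacency matrix between $A$ and $B$.
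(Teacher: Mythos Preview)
Your proof is correct. Both you and the paper prove the contrapositive, and both exploit the same underlying fact: the flip alters the $A\times B$ adjacency matrix by a block-constant matrix determined by the partition $\cal P$, hence of rank at most $k$. The packaging differs slightly: the paper introduces the symmetric-difference graph $H$ with $E(H)=E(G)\triangle E(G')$, notes that $\rk_H(A,B)\le k$ since $H$ is a $k$-flip of the edgeless graph, and then invokes subadditivity of rank over $\mathbb F_2$ to get $\rk_G(A,B)\le \rk_{G'}(A,B)+\rk_H(A,B)\le 0+k$. Your argument is a touch more direct, bypassing rank subadditivity by observing that rows of the bipartite adjacency matrix indexed by the same part of $\cal P$ are identical, so there are at most $k$ distinct rows. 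Both routes are short and equivalent in spirit; yours is marginally more elementary, while the paper's phrasing makes the additive structure of flips on cut-rank explicit.
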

\begin{proof}Let $G'$ be a $k$-flip of $G$.
  Denote by $H$ the graph with vertices $V(G)$ and edges $E(G)\triangle E(G')$.
  Since $G'$ is a $k$-flip of $G$,
  it follows that $H$ is a $k$-flip of the edgeless graph,
  and thus $\rk_H(A,B)\le k$.
  Since $E(G)=E(G')\triangle E(H)$, we have that
  \begin{align}\label{eq:ranks-sum}
    \rk_G(A,B)\le \rk_{G'}(A,B)+\rk_{H}(A,B),
  \end{align}
  as the rank of the sum of two matrices 
  is at most the sum of the ranks.
  If there are no edges $ab\in E(G')$ with $a\in A,b\in B$, then
  we have $\rk_{G'}(A,B)=0$, and therefore $\rk_G(A,B)\le k$ by  \eqref{eq:ranks-sum}.
\end{proof}

\begin{lemma*}[\ref{lem:well-linked-hideout}]
  Fix a graph $G$ and number $k\in\N$.
  Every well-linked set $U$ with $|U|>3k$ is a $(\infty,k,k)$-hideout. 
\end{lemma*}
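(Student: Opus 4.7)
The plan is to show directly that for every $k$-flip $G'$ of $G$, the set
\[X \;:=\; \bigl\{v\in U : |B^\infty_{G'}(v)\cap U|\le k\bigr\}\]
has size at most $k$; since $|U|>3k>k$, this will make $U$ an $(\infty,k,k)$-hideout. Note that $B^\infty_{G'}(v)$ is simply the connected component of $v$ in $G'$, so $X$ is the union of $U\cap V(C)$ over all components $C$ of $G'$ with $|U\cap V(C)|\le k$.

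First I would enumerate the components $C_1,\ldots,C_m$ of $G'$ in order of decreasing intersection size $|U_i|:=|U\cap V(C_i)|$, and let $f(j):=|U_1|+\cdots+|U_j|$. Let $j^*$ be the smallest index with $f(j^*)>k$; such an index exists since $f(m)=|U|>k$. Now I split into two cases according to whether $|U|-f(j^*)>k$ or not. In the ``balanced'' case $|U|-f(j^*)>k$, setting $A:=\bigcup_{i\le j^*}V(C_i)$ and $B:=V(G)\setminus A$ gives a bipartition with $|A\cap U|>k$ and $|B\cap U|>k$. Well-linkedness then yields $\rk_G(A,B)>k$, and Lemma~\ref{lem:rank-flip} produces an edge of $G'$ crossing the bipartition, which is absurd since $A$ and $B$ are both unions of connected components of $G'$. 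Thus this case cannot occur.

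In the remaining ``unbalanced'' case $|U|-f(j^*)\le k$, the minimality of $j^*$ gives $f(j^*-1)\le k$, so $|U_{j^*}|=f(j^*)-f(j^*-1)\ge |U|-2k>k$. Since the $|U_i|$'s are non-increasing, this forces $|U_i|>k$ for every $i\le j^*$; hence every vertex of $U$ lying in $C_1\cup\cdots\cup C_{j^*}$ is outside $X$. Consequently $X\subseteq U\setminus\bigcup_{i\le j^*}V(C_i)$, whose cardinality is $|U|-f(j^*)\le k$.

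The only real obstacle is seeing that the hypothesis $|U|>3k$ is tuned exactly to make the greedy splitting argument succeed: the bound $3k$ forces that whenever the greedy prefix of components $C_1,\ldots,C_{j^*}$ contains more than $|U|-k$ of the $U$-vertices, the last component $C_{j^*}$ must itself have $|U_{j^*}|>k$, which together with the monotone ordering automatically makes $X$ small. Everything else is a routine combination of the well-linkedness hypothesis with Lemma~\ref{lem:rank-flip}.
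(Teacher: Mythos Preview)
Your proof is correct and follows essentially the same approach as the paper: both arguments greedily collect connected components of $G'$ until more than $k$ elements of $U$ have been covered, and then invoke well-linkedness together with Lemma~\ref{lem:rank-flip} to rule out a balanced split. The only cosmetic difference is that the paper argues by contradiction from $k+1$ hypothetical bad vertices (each with $|U(v_i)|\le k$), which caps the greedy union at $2k$, whereas you sort components by decreasing $|U_i|$ and argue directly; your monotonicity trick replaces the paper's explicit upper bound on the prefix, but the underlying mechanism is identical.
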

\begin{proof}
  Let $G'$ be a $k$-flip of $G$.
    For a vertex $v\in U$,
  let $U(v)\subset U$ denote the set of vertices 
  $w\in U$ that are reachable from $v$ by a path in $G'$
  (including $v$).
We show that there are at most $k$ vertices $v\in U$ 
with $|U(v)|\le k$, proving that $U$ is a $(\infty,k,k)$-hideout.

Suppose otherwise: that there are  $k+1$ such distinct vertices $v_1,\ldots,v_{k+1}\in U$.
Let $i$ be the smallest number
such that $|U(v_1)\cup\cdots\cup U(v_i)|> k$. Then $2\le i\le k+1$.
Let $U'=U(v_1)\cup\cdots\cup U(v_i)$.
Then $k< |U'|\le 2k$, where the second inequality holds because 
$|U(v_i)|\le k$ and $|U(v_1)\cup\cdots\cup U(v_{i-1})|\le k$ by minimality of $i$.
Since $|U|>3k$, we have that $|U-U'|>k$ and $|U'|>k$.
Let $A\subset V(G)$ be the union of the connected components of $G'$ that intersect $U'$,
and let $B=V(G)-A$. Then there is no edge in $G'$ 
with one endpoint in $A$ and one endpoint in $B$.
As  $A\cap U=U'$ and $B\cap U=U-U'$ have both size greater than $k$, we have that $\rk_G(A,B)>k$ as $U$ is well-linked.
By Lemma~\ref{lem:rank-flip}, there is an edge of $G'$ with one endpoint in $A$ and one endpoint in $B$, a contradiction.
\end{proof}

\section{VC-dimension}
\subsection{Bounding VC-dimension in terms of radius-one flip-width}
\label{app:fw-and-VC}

By a similar argument as in Lemma~\ref{lem:near-twins}, we prove the following (see Appendix~\ref{app:bfw} for the definition of  bipartite flip-width, $\bfw$).
  \begin{lemma*}[\ref{lem:near-twins-two-sets}]
    Let $b,k\in \N$ and let $G$ be a bipartite graph with $\bfw_1(G)\le k$.
    Then $G$ contains more than $b$ mutual $2 b k$-near-twins
    contained in a single part of $G$.
  \end{lemma*}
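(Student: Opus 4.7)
The plan is to mirror the proof of Lemma~\ref{lem:near-twins}, adapting both the pigeonhole step and the hideout-style strategy to the bipartite setting. Assume for contradiction that neither part of $G = (X \uplus Y, E)$ contains $b+1$ mutually $2bk$-near-twin vertices; we shall exhibit a winning strategy for the runner in the bipartite flipper game of radius $1$ and width $k$, contradicting $\bfw_1(G) \le k$. Throughout we use the implicit hypothesis $|X|, |Y| > bk$ inherited from the main-text formulation.

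The key combinatorial step is the following. Fix any bipartite $k$-flip $G'$ of $G$, specified by a bipartition-refining partition $\cal P = \cal P_X \cup \cal P_Y$ with $|\cal P_X|, |\cal P_Y| \le k$. Observe that if $u, v$ lie in the same class $A \in \cal P_X$, the flip acts identically on them, so $N_G(u)\triangle N_G(v) = N_{G'}(u)\triangle N_{G'}(v)$ as subsets of $Y$. Set $\mathrm{Bad}_X := \{v \in X : \deg_{G'}(v) \le bk\}$ and, analogously, $\mathrm{Bad}_Y$. If $|\mathrm{Bad}_X| > bk$, then by pigeonhole over the at most $k$ classes of $\cal P_X$ some class contains at least $b+1$ vertices of $\mathrm{Bad}_X$; any two such vertices $u,v$ satisfy $|N_G(u)\triangle N_G(v)| = |N_{G'}(u)\triangle N_{G'}(v)| \le \deg_{G'}(u) + \deg_{G'}(v) \le 2bk$, producing $b+1$ mutual $2bk$-near-twins inside $X$ and contradicting our assumption. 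Hence $|\mathrm{Bad}_X| \le bk$, and symmetrically $|\mathrm{Bad}_Y| \le bk$.

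We now describe a runner strategy maintaining the invariant that the current position $v_i$ satisfies $\deg_{G_i}(v_i) > bk$, where we set $G_0 := G$. Since $|X| > bk \ge |\mathrm{Bad}_X(G)|$, pick any initial $v_0 \in X$ with $\deg_G(v_0) > bk$. In round $i \ge 1$, when the flipper announces $G_i$, the previous-flip neighbourhood $N_{G_{i-1}}(v_{i-1})$ contains more than $bk$ vertices, all lying in the part of $G$ opposite to $v_{i-1}$; since that opposite part contains at most $bk$ bad vertices with respect to $G_i$, at least one neighbour $v_i \in N_{G_{i-1}}(v_{i-1})$ satisfies $\deg_{G_i}(v_i) > bk$, and in particular $v_i$ is not isolated in $G_i$. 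The invariant is preserved, so the runner survives indefinitely. The main delicate point is the interaction between the bipartite structure and the pigeonhole constants: because the runner is forced to switch sides each round, one needs symmetric control on bad vertices in \emph{both} parts, and it is exactly because of this side-switching that the slightly worse parameter $2bk$ (rather than $bk$) suffices to match the target of $2bk$-near-twins.
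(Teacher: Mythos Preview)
Your proof is correct and follows essentially the same approach as the paper: the paper indicates that the argument is ``similar to Lemma~\ref{lem:near-twins}'', and you have faithfully adapted that argument---the pigeonhole step on low-degree vertices within a single part of the flip partition, followed by the runner strategy (which is just the hideout argument of Lemma~\ref{lem:hideouts} unfolded). The only difference is cosmetic: the paper would phrase the endgame as ``$X\cup Y$ is a (bipartite) $(1,k,bk)$-hideout'', whereas you write out the alternating-sides runner strategy explicitly; both are the same content. Your observation that the size hypothesis $|X|,|Y|>bk$ is needed (and is present in the main-text formulation but silently dropped in the appendix restatement) is also well taken.
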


\newcommand{\Z}{\mathbb Z}
To prove Theorem~\ref{thm:vcdim}, we show that every bipartite graph of sufficiently large VC-dimension contains an induced subgraph with no $k$-near-twins. First we construct such bipartite graphs.
For a number $m\in\N$ let $\Z_2^m$ denote the $m$-dimensional vector 
space over the two-element field $\Z_2$,
and for $v,w\in\Z_2^m$ denote $v\cdot w=\sum_{i=1}^mv_i\cdot w_i \mod 2$.

\begin{lemma}\label{lem:form}
  Fix a number $m\in\N$
  and consider the bipartite graph $H$ whose parts are two 
  copies of $\Z_2^m$, 
  with edges $vw$ such that $v\cdot w\neq 0$.
  Then $H$ has no $(2^{m-1}-1)$-near-twins in either part.
\end{lemma}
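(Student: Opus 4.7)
The plan is a short direct calculation exploiting the linearity of the dot product over $\Z_2$. By the symmetry of the construction (the bilinear form $v \cdot w$ is symmetric), it suffices to bound the symmetric difference of neighborhoods for any two distinct vertices $v, v'$ in the left copy of $\Z_2^m$.

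First, I would unfold the definitions: for $v$ in the left part, its neighborhood $N(v)$ in the right part is exactly $\{w \in \Z_2^m : v \cdot w = 1\}$. Consequently, for distinct $v, v' \in \Z_2^m$, a right-vertex $w$ lies in $N(v) \triangle N(v')$ if and only if $v \cdot w \neq v' \cdot w$, which by linearity over $\Z_2$ is equivalent to $(v+v') \cdot w = 1$.

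The key step is then to observe that $u := v+v'$ is a nonzero vector of $\Z_2^m$, so the map $w \mapsto u \cdot w$ is a nontrivial $\Z_2$-linear functional on $\Z_2^m$. Its kernel therefore has dimension $m-1$ and so size $2^{m-1}$, and the preimage of $1$ is the complementary coset, also of size $2^{m-1}$. Hence
\[ |N(v) \triangle N(v')| \;=\; |\{w \in \Z_2^m : u \cdot w = 1\}| \;=\; 2^{m-1}, \]
which strictly exceeds $2^{m-1}-1$, so $v$ and $v'$ are not $(2^{m-1}-1)$-near-twins in $H$. The same argument, applied on the other side (using the symmetry of the form), shows the statement for the right part. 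There is no real obstacle here — the only thing to be careful about is the switch from ``$\neq 0$'' to ``$=1$'' when working over $\Z_2$, which is what makes the count come out exactly to $2^{m-1}$.
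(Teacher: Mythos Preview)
Your proposal is correct and follows essentially the same argument as the paper: both compute $|N(v)\triangle N(v')|$ by rewriting the condition $v\cdot w\neq v'\cdot w$ as $(v+v')\cdot w\neq 0$ and counting the support of a nontrivial linear functional on $\Z_2^m$, then invoke symmetry for the other side.
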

\begin{proof}Denote the two copies of $\Z_2^m$ with $V$ and $V^*$,
  and write $v^*$ for the copy of a vector $v\in V$
  in $V^*$.
  We claim that for all $u,v\in V$ we have $|N_H(u)\triangle N_H(v)|=2^{m-1}$.
  Indeed, 
  \begin{multline*}
  N_H(u)\triangle N_H(v)=\setof{w^*}{w\in V,w\cdot u\neq w\cdot v}\\=\setof{w^*}{w\in V,w\cdot (u-v)\neq 0}.
  \end{multline*}
  Since $\setof{w\in V}{w \cdot (u-v)\neq 0}$ has $2^{m-1}$ elements, so does $N_H(u)\triangle N_H(v)$.
  
  By a dual argument, $|N_H(u^*)\triangle N_H(v^*)|=2^{m-1}$ 
  for all $u^*,v^*\in Y$.
  Therefore, $H_m$ has no pair of $(2^{m-1}-1)$-near-twins in either part. 
\end{proof}

\begin{lemma*}[\ref{lem:vector-space-twins}]
  Let $G$ be a graph with $\VCdim(G)\ge 2^m$, for some $m$.  Then 
  there are two sets $X,Y$ such that the bipartite graph $G[X,Y]$ 
  contains no pair of $(2^{m-1}-1)$-near-twins in either of the parts $X, Y$.
\end{lemma*}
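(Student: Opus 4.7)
}
The plan is to realize the bipartite graph $H$ from Lemma~\ref{lem:form} as an induced semi-bipartite subgraph $G[X,Y]$ of $G$, using the shattered set provided by the hypothesis $\VCdim(G)\ge 2^m$. More precisely, the definition of VC-dimension yields a set $S\subseteq V(G)$ with $|S|=2^m$ such that $\{N_G(v)\cap S : v\in V(G)\}=2^S$. Fixing any bijection $\alpha\colon S\to \mathbb Z_2^m$, the idea is to let $X=S$ and to construct $Y$ so that the bipartite adjacency pattern between $X$ and $Y$ is exactly the dot-product pattern used in Lemma~\ref{lem:form}.

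First I would define $Y$ as follows. For each vector $v\in\mathbb Z_2^m$, consider the subset
\[
T_v \;:=\; \{x\in S : \alpha(x)\cdot v\neq 0\}\;\subseteq\; S.
\]
By the shattering property, there exists a vertex $y_v\in V(G)$ with $N_G(y_v)\cap S=T_v$. Since the sets $T_v$ are pairwise distinct as $v$ ranges over $\mathbb Z_2^m$, the chosen vertices $y_v$ are pairwise distinct. Set $Y:=\{y_v : v\in\mathbb Z_2^m\}$. Then for $x\in X$ and $y_v\in Y$, the edge $xy_v$ lies in $G[X,Y]$ iff $x\in T_v$, iff $\alpha(x)\cdot v\neq 0$. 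Thus the map $x\mapsto \alpha(x)$ on $X$ and $y_v\mapsto v^*$ on $Y$ is an isomorphism from $G[X,Y]$ onto the bipartite graph $H$ of Lemma~\ref{lem:form}.

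Finally, I would invoke Lemma~\ref{lem:form}: $H$ has no $(2^{m-1}-1)$-near-twins in either part, so neither does $G[X,Y]$. The only place that really needs checking is that the isomorphism above preserves neighborhoods in the semi-induced bipartite graph; this is immediate from the construction, since neighborhoods in $G[X,Y]$ of vertices in $X$ (resp.\ $Y$) are computed purely from edges between $X$ and $Y$. I do not expect a real obstacle here — the main content is already packaged into Lemma~\ref{lem:form}, and this lemma is simply the observation that shattering lets us transplant the vector-space construction into any graph of large VC-dimension.
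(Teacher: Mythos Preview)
Your proposal is correct and follows essentially the same route as the paper: take a shattered set of size $2^m$, identify it with $\mathbb Z_2^m$, and use the shattering property to pick, for each vector $v$, a vertex whose neighborhood in the shattered set is prescribed by the dot-product pattern, so that $G[X,Y]$ realizes the bipartite graph of Lemma~\ref{lem:form}. The only cosmetic difference is that the paper selects $y_v$ with $N(y_v)\cap X=\{x:\alpha(x)\cdot v=0\}$ rather than your $T_v=\{x:\alpha(x)\cdot v\neq 0\}$; this yields the bipartite complement of $H$, which has the same symmetric differences of neighborhoods and hence the same near-twin conclusion.
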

\begin{proof}
  Let $X$ be a subset of $V(G)$ of size $2^m$ that is shattered by $\setof{N(v)}{v\in V(G)}$. 

  Arbitrarily identify the elements of $X$ with the elements of $\Z_2^m$. 
  Denote $v^\perp:=\setof{u\in X}{u\cdot v=0}\subset X$.
  Since $X$ is shattered in $G$,
  for every $v\in X$ there is a vertex $v^*\in V(G)$
  such that $N(v^*)\cap X=v^\perp$. 
  Denote $Y:=\setof{v^*}{v\in X}$.
  The function $v\mapsto v^*$
  is then a bijection between $X$ and $Y$,
  and for $v,w\in X$, $v$ is adjacent to $w^*$ in $G$
  if and only if $v\cdot w\neq 0$. Therefore, $G[X,Y]$ is isomorphic to 
  the graph from Lemma~\ref{lem:form}, and the conclusion follows.
\end{proof}

\begin{proof}[Proof of Theorem~\ref{thm:vcdim}]
  We show that if $\VCdim(G)\ge 2^m$, then $\fw_1(G)> 2^{m-2}$.
  Assume $\VCdim(G)\ge 2^m$, and let $G[X,Y]$ be as in Lemma~\ref{lem:vector-space-twins}.
  By Lemma~\ref{lem:near-twins-two-sets} (setting $b=1$), 
  this implies that $\bfw_1(H)>2^{m-2}$.
  By Lemma~\ref{lem:fw1-bip}, $\fw_1(G)>2^{m-2}$.
\end{proof}

\subsection{Duality result}\label{app:duality}
We reprove a duality result which is a corollary 
of the $(p,q)$-theorem of Alon-Kleitman-Matou\v{s}ek \cite[Theorem 4]{Matousek:p-q-theorem}.
We present a self-contained proof, to analyze the bounds.
Our presentation is based on a proof by 
Simon \cite[Cor 6.3; Sec. 6.3]{simon_book}.

Say that a binary relation $E\subset A\times B$ 
has a \emph{duality of order $k$}, where $k\in\N$,
if at least one of two cases holds:
\begin{itemize}
  \item there is a set $B'\subset B$, $|B'|\le k$,
  such that for all $a\in A$ there is some $b\in B'$ with $(a,b)\in E$, or 
  \item there is a set $A'\subset A$, $|A'|\le k$,
  such that for all $b\in B$ there is some $a\in A'$ with $(a,b)\notin E$.
\end{itemize}

Recall that the VC-dimension of a binary relation $E$ is the maximum of the VC-dimension of two set systems:
\[(B,\setof{\vec E(a)}{a\in A})\qquad\text{and}\qquad
(A,\setof{\cev E(b)}{b\in B}).\]

Here's the duality result.
\begin{theorem}\label{thm:duality}
  Let $E\subset A\times B$ be a binary relation,
  with $A,B$ finite.
  Then $E$ has a duality of order $O(d)$,
  where $d=\VCdim(E)$.
\end{theorem}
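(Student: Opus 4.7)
The plan is to reformulate the duality as a statement about the set system $\mathcal{F} = \{\vec E(a) : a \in A\} \subseteq 2^B$, which inherits VC-dimension at most $d$ from $E$. Under this reformulation, case 1 asks for a transversal (hitting set) $B' \subseteq B$ of $\mathcal{F}$ of size $\le k$, while case 2 asks for a subfamily of $\mathcal{F}$ of size $\le k$ with empty common intersection. The theorem therefore becomes the following piercing-vs-Helly dichotomy: for every set system $\mathcal{F}$ of VC-dim $\le d$, either $\mathcal F$ has a transversal of size $O(d)$, or some $O(d)$ sets in $\mathcal F$ already have empty intersection. This is a quantitative form of the $(p,q)$-theorem of Alon--Kleitman--Matou\v{s}ek, tailored so that the same parameter $k=O(d)$ plays both roles, and I would reprove it in this form following Simon's approach.

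Concretely, I would prove the contrapositive: assume every $k$-element subfamily of $\mathcal F$ has nonempty common intersection, for a suitable $k=Cd$, and produce a transversal of size $O(d)$. The proof has two steps. The first is a \emph{fractional Helly} step: assuming the Helly-type property, a constant fraction of the sets in $\mathcal F$ share a common point. This is proved by a double counting against the Sauer--Shelah--Perles lemma (Lemma~\ref{lem:sauer-shelah-perles}): among all intersecting $k$-tuples of sets in $\mathcal F$, each tuple has a ``witness'' point in $B$, but the set $\{\vec E(a)\cap W: a\in A\}$ for any finite $W\subseteq B$ has only $O(|W|^d)$ distinct traces, so by pigeonhole some single point of $B$ witnesses the common intersection of $\Omega(1)\cdot\binom{|A|}{k}$ different tuples, and hence lies in $\Omega(1)\cdot|A|$ sets of $\mathcal F$. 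The second step is iterative: add such a popular point $b_1$ to the transversal $B'$, discard from $\mathcal F$ all sets hit by $b_1$, and recurse on the residual subfamily, which still has VC-dim $\le d$ and trivially retains the Helly-type property (being a subfamily of $\mathcal F$).

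The main obstacle is terminating this iteration in $O(d)$ rounds rather than the $O(\log|A|)$ rounds that a naive ``remove a constant fraction each time'' argument would suggest. To achieve the stated linear-in-$d$ bound, one exploits the global VC-dimension via a weak $\epsilon$-net theorem for VC classes: once fractional Helly bounds the fractional transversal number $\tau^*(\mathcal F)$ by a constant independent of the family size, the $\epsilon$-net bound for VC-dim $d$ systems yields an integer transversal of size $O(d\cdot \tau^*)=O(d)$. Carefully threading the quantitative bounds through both the fractional Helly constant (so that it is truly independent of $d$ and $|A|$) and the $\epsilon$-net size (so that its $d$-dependence is linear) is the delicate point; Simon's contribution, which I would follow closely, is to package these two classical VC-theoretic tools together so that the composite bound is $O(d)$.
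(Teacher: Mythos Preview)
Your reformulation (case 1 is a small transversal of $\mathcal F = \{\vec E(a):a\in A\}$; case 2 is a small subfamily of $\mathcal F$ with empty intersection) is correct, but the route you propose differs from the paper's, and you mis-identify what Simon's argument actually is.

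The paper's proof---which \emph{is} Simon's---does not use fractional Helly. It applies von Neumann's minimax theorem to the zero-sum game with payoff matrix $E$: either some probability distribution $\nu$ on $B$ satisfies $\nu(\vec E(a))\ge\tfrac12$ for every $a$, or some distribution $\mu$ on $A$ satisfies $\mu(\cev E(b))<\tfrac12$ for every $b$. In the first case, any $\tfrac13$-approximation of $\nu$ (of size $O(d)$ by the VC-theorem) hits every $\vec E(a)$; in the second, any $\tfrac13$-approximation of $\mu$ misses every $\cev E(b)$. So LP duality manufactures the fractional witness in one line, and a single application of $\epsilon$-approximation rounds it---no fractional Helly, no iteration, and the $O(d)$ bound drops out immediately.

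Your route (fractional Helly plus $\epsilon$-nets) is the Alon--Kleitman--Matou\v{s}ek $(p,q)$-theorem machinery. It can be completed, but two issues remain. First, your double-counting sketch for the fractional Helly step is not correct as written: the Sauer--Shelah bound on the number of traces $\{\vec E(a)\cap W:a\in A\}$ does not by itself pigeonhole a single popular point of $W$, and Matou\v{s}ek's actual proof of fractional Helly for bounded-VC systems is more delicate. Second, the fractional Helly constant one extracts this way generally depends on $d$, so reaching $\tau^*=O(1)$ independent of $d$---which you need for the final $O(d\cdot\tau^*)=O(d)$ step---requires precisely the kind of LP-duality argument that the paper's minimax step supplies directly and for free.
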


The result follows from a combination of two classic
results: the Vapnik-Chervonenkis theorem, 
and von Neumann's minimax theorem.
We first state a corollary of von Neumann's minimax theorem (also of Farkas' lemma, the Hahn-Banach theorem, or of the strong duality theorem for linear programming).

\begin{theorem}[Minimax Theorem]\label{thm:minimax}
  Let $E\subset A\times B$ be a binary relation with $A,B$ finite, and let $\alpha\in \R$.
  Then exactly one of the following two cases holds:
  \begin{enumerate}
    \item there is some probability distribution $\nu$ on $B$ such that
    $\nu(\vec E(a))\ge \alpha$
     holds for all $a\in A$,  
    \item there is some probability distribution $\mu$ on $A$ such that
    $\mu(\cev E(b))< \alpha$
    holds for all $b\in B$.
    \end{enumerate}
\end{theorem}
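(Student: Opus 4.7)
The plan is to derive Theorem~\ref{thm:minimax} directly from von Neumann's minimax theorem applied to the two-player zero-sum game in which Player~1 picks $a \in A$, Player~2 picks $b \in B$, and the payoff to Player~1 is the indicator $\mathbf{1}_E(a,b)$. Allowing mixed strategies $\mu$ on $A$ and $\nu$ on $B$, finiteness of $A$ and $B$ lets us invoke the minimax theorem to obtain
\[
v \;:=\; \max_\nu \min_{a \in A} \nu(\vec E(a)) \;=\; \min_\mu \max_{b \in B} \mu(\cev E(b)),
\]
with both extrema attained by compactness of the simplex.

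First I would verify that the two cases are mutually exclusive. Suppose, for contradiction, that distributions $\nu$ and $\mu$ as in cases 1 and 2 both exist. Computing the joint expectation $S := \sum_{a,b} \mu(a)\,\nu(b)\,\mathbf{1}_E(a,b)$ by summing in the two possible orders yields
\[
S \;=\; \sum_{a \in A} \mu(a)\, \nu(\vec E(a)) \;\ge\; \alpha \qquad \text{and} \qquad S \;=\; \sum_{b \in B} \nu(b)\, \mu(\cev E(b)) \;<\; \alpha,
\]
a contradiction. Hence at most one of the two cases can hold.

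To show that one of them always holds, split on the value of $v$. If $v \ge \alpha$, then a maximizer $\nu^*$ satisfies $\nu^*(\vec E(a)) \ge v \ge \alpha$ for every $a \in A$, placing us in case~1. If $v < \alpha$, then a minimizer $\mu^*$ satisfies $\mu^*(\cev E(b)) \le v < \alpha$ for every $b \in B$, placing us in case~2.

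No real obstacle is anticipated: the invocation of von Neumann's minimax theorem — itself a standard consequence of LP duality, Farkas' lemma, or Hahn--Banach separation — is the entire substantive content, and the remainder is a direct bookkeeping argument using the two orders of summation for $S$. A fully self-contained alternative would apply Farkas' lemma to the system of linear inequalities asserting case~1; the certificate of infeasibility it produces is precisely a distribution $\mu$ witnessing case~2.
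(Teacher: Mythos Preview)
Your proposal is correct and matches the paper's approach: the paper does not prove this statement at all but simply presents it as a known corollary of von Neumann's minimax theorem (or equivalently Farkas' lemma, Hahn--Banach, or LP duality), which is exactly what you invoke. You have just filled in the routine details of how the minimax equality for the bilinear form $\sum_{a,b}\mu(a)\nu(b)\mathbf{1}_E(a,b)$ yields the dichotomy; the only minor cosmetic issue is that your game-theoretic narration (payoff to Player~1) has the roles of maximizer and minimizer swapped relative to the displayed identity, but the identity itself is a valid application of the minimax theorem and the argument goes through.
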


To formulate the VC-theorem, 
we introduce the following notions.

For a multiset $S$ of elements of a set $V$
and a set $F\subset V$,
denote by 
$\textup{Av}_{S}(F)$
the proportion of elements of $S$ (counted with multiplicities) that belong to $F$,
that is 
\[\textup{Av}_{S}(F)=\frac{|S\cap F|}{|S|}.\]

Let $(V,\cal F)$ be a set system with $V$ finite,
$\mu$ a probability distribution on $V$,
and $\eps>0$ a real.
We say that a set $S$ is an \emph{$\eps$-approximation} of $\mu$ on $\cal F$,
if for every $F\in\cal F$ 
we have 
\[|\textup{Av}_{S}(F)-\mu(F)|\le \eps.\]

\begin{theorem}[VC-theorem]
  Let $(V,\cal F)$ be a set system with $V$ finite,
  and let $d$ be its VC-dimension.
For every $\eps>0$ and
 probability distribution $\mu$ on $V$,
there exists an $\eps$-approximation $S$ of $\mu$ on $\cal F$ with \[|S|\le O(d)\cdot \frac{1}{\eps^2} \log\left(\frac {1}\eps\right).\]
\end{theorem}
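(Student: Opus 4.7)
The plan is to prove this by the classical probabilistic argument, combining a double-sampling trick, a symmetrization step, and the Sauer--Shelah--Perles lemma (Lemma~\ref{lem:sauer-shelah-perles}) that was just used to bound the growth of $\mathcal{F}$. Concretely, I would draw $S$ as an i.i.d.\ multisample of size $m$ from $\mu$ (where $m$ is to be fixed later) and show that with positive probability $S$ is an $\eps$-approximation; existence in $V$ then follows because $\mu$ has finite support. Let $\mathbf{B}$ be the bad event that some $F\in\mathcal{F}$ has $|\textup{Av}_S(F)-\mu(F)|>\eps$.

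The first step is the ``ghost sample'' reduction. Draw an independent second multisample $S'$ of size $m$ and consider the event $\mathbf{B}'$ that some $F\in\mathcal{F}$ has $|\textup{Av}_S(F)-\textup{Av}_{S'}(F)|>\eps/2$. A direct Chebyshev estimate, using $\mathrm{Var}(\textup{Av}_{S'}(F))\le 1/(4m)$, shows that conditional on the bad event $\mathbf{B}$ being witnessed by some $F$, the sample $S'$ will have $|\textup{Av}_{S'}(F)-\mu(F)|\le\eps/2$ with probability at least $1/2$ once $m\ge O(1/\eps^2)$. Hence $\Pr[\mathbf{B}']\ge \tfrac{1}{2}\Pr[\mathbf{B}]$, and it suffices to bound $\Pr[\mathbf{B}']$.

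The second step is symmetrization and Sauer--Shelah. The joint law of $(S,S')$ is invariant under independent swaps of the $i$-th element of $S$ with the $i$-th element of $S'$. Fixing the combined multiset $T=S\cup S'$ of size $2m$ and then integrating over the swaps, the quantity $\textup{Av}_S(F)-\textup{Av}_{S'}(F)$ becomes a Rademacher sum, and Hoeffding's inequality gives $\Pr[|\textup{Av}_S(F)-\textup{Av}_{S'}(F)|>\eps/2\mid T]\le 2\exp(-\eps^2 m/8)$ for each fixed $F$. The crucial observation is that, for the purpose of witnessing $\mathbf{B}'$, only the trace $F\cap T$ matters; so by Lemma~\ref{lem:sauer-shelah-perles} the effective union bound is over at most $O((2m)^d)$ sets. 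Thus $\Pr[\mathbf{B}']\le O((2m)^d)\cdot 2\exp(-\eps^2 m/8)$.

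Finally I would choose $m=C\, d\,\eps^{-2}\log(1/\eps)$ for a sufficiently large absolute constant $C$, so that the exponential decay beats the polynomial growth and $\Pr[\mathbf{B}']<1/2$, yielding $\Pr[\mathbf{B}]<1$ and hence an $\eps$-approximation of the stated size. The main obstacle is getting a $\log(1/\eps)$ factor rather than a $\log(dm)$ factor inside the bound: the logarithmic term produced by solving $(2m)^d\exp(-\eps^2 m/8)\le 1/4$ is morally $\log(d/\eps)$, and collapsing this to $\log(1/\eps)$ with an $O(d)$ prefactor requires the standard trick of separating the regimes $d\le 1/\eps$ and $d>1/\eps$ (in the latter regime the claimed bound is weaker than trivially using all of $V$, or one appeals to a chaining refinement of Talagrand). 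Everything else is routine.
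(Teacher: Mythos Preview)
The paper does not prove this statement: the VC-theorem is stated as a classical black-box result (alongside the Minimax Theorem) and is immediately followed by the proof of Theorem~\ref{thm:duality}, which merely invokes it. There is therefore no ``paper's own proof'' to compare against.

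Your proposal is the standard Vapnik--Chervonenkis double-sampling argument and is correct as a proof sketch of this classical result. Your closing caveat about the logarithmic factor is well-placed: the naive symmetrization plus union bound over the $O((2m)^d)$ traces yields a bound of the shape $O(d/\eps^2)\log(d/\eps)$, and sharpening this to the stated $O(d/\eps^2)\log(1/\eps)$ does require additional work (chaining, or a separate treatment of the regime where $d$ dominates $1/\eps$). For the paper's purposes this distinction is immaterial, since the theorem is only applied with $\eps=\tfrac{1}{3}$ in the proof of Theorem~\ref{thm:duality}, where the bound collapses to $|S|\le O(d)$ regardless.
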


\begin{proof}[Proof of Theorem~\ref{thm:duality}]
  Set $\alpha:=\tfrac 1 2$ and $\eps:=\tfrac 1 3$;
  the point is that $0<\alpha\pm\eps<1$.
  Fix $d\in\N$, and let $k$ be the number from 
  the VC-theorem, with $k\le 
  O(d)\frac{1}{\eps^2}\log(\frac {1}\eps)=O(d)$.
  By the Minimax Theorem applied to $\alpha=\tfrac 1 2$,
  one of the two cases below holds.

  \textbf{Case 1:} \emph{There 
  is some probability distribution $\nu$ on $B$ 
  such that $\nu(\vec E(a))\ge \tfrac 1 2$ holds for all $a\in A$.}

  \noindent By the VC-theorem applied to $\varepsilon=\tfrac 1 3$, there is a multiset 
  $B'\subset B$ with $|B'|\le k$
  such that 
  \[|\textup{Av}_{B'}(\vec E(a))-\nu(\vec E(a))|\le\tfrac 1 3\qquad\text{for all }a\in A.\]
  Pick $a\in A$.
Since $\nu(\vec E(a))\ge \tfrac 1 2$,
we have that $\textup{Av}_{B'}(\vec E(a))>0$,
so $B'\cap \vec E(a)$ is nonempty.
Therefore, the first condition in the definition of a duality is satisfied.

\textbf{Case 2:} \emph{There is some probability distribution 
$\mu$ on $A$ such that $\mu(\cev E(b))<\tfrac 1 2$ holds for all $b\in B$. }

\noindent  By the VC-theorem again, there is a multiset $A'\subset A$ with $|A'|\le k$ such that 
\[|\textup{Av}_{A'}(\cev E(b))-\mu(\cev E(b))|\le \tfrac 1 3\qquad\text{for all }b\in B.\]  
Pick $b\in B$.
  Since $\mu(\cev E(b))<\tfrac 1 2$,
  we have that $\textup{Av}_{A'}(\cev E(b))<1$, 
  so $A'$ is not contained in $\cev E(b)$. Therefore, the second condition in the definition of a duality is satisfied.
\end{proof}

\subsection{Definability result}\label{app:inc}
The following result is proved 
in \cite[Thm. 3.5]{boundedLocalCliquewidth}, up to a slight variation of the assumptions (see proof below).
\begin{lemma*}[\ref{lem:incr}]
  Fix $k,d\in\N$.
  Let $V$ be a set equipped with:
  \begin{itemize}  
    \item a binary relation $E\subset V\times V$ of VC-dimension at most $d$,
    \item a pseudometric $\dist\from V\times V\to \R_{\ge0}\cup\set{\infty}$
     (that is, a function satisfying the triangle inequality),
    \item 
    and a partition $\cal P$ of size at most $k$,
  \end{itemize}
  such that $E(u,v)$ depends only on the $\cal P$-class of $u$ and the $\cal P$-class of $v$ whenever $\dist(u,v)>1$.
  Then there is a set $S\subset V$ of size $\Oof(dk^2)$, such that $E(u,v)$ depends only on the $S$-types of $u$ and of $v$, whenever $\dist(u,v)>5$.
\end{lemma*}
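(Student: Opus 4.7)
The plan is to build $S$ as $S = \bigcup_{(P,Q)\in \cal P\times \cal P} S_{P,Q}$, choosing each $S_{P,Q}$ of size $O(d)$, so that $|S|\le O(dk^2)$.

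\emph{Setup.} For each pair $(P,Q)\in \cal P\times\cal P$, denote by $\epsilon_{P,Q}\in\set{0,1}$ the ``default'' value supplied by the hypothesis, namely $E(u,v)=\epsilon_{P,Q}$ for all $u\in P$, $v\in Q$ with $\dist(u,v)>1$. Equivalently, every ``exceptional'' pair $(u,v)\in P\times Q$ with $E(u,v)\neq\epsilon_{P,Q}$ satisfies $\dist(u,v)\le 1$.

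\emph{Construction.} For each pair $(P,Q)$ I plan to feed the bipartite relation $E\cap(P\times Q)$ (or its negation, depending on $\epsilon_{P,Q}$) into Theorem~\ref{thm:duality}, the VC-duality result obtained in Appendix~\ref{app:duality} from the $(p,q)$-theorem of Alon--Kleitman--Matou\v sek. Since this relation has VC-dimension at most $d$, duality furnishes a witness set $S_{P,Q}$ of size $O(d)$, consisting of $O(d)$ representatives in $P$ together with $O(d)$ representatives in $Q$. Classes $P$ with $|P|\le O(d)$ can instead be absorbed into $S$ wholesale; the total bound $|S|\le O(dk^2)$ is unaffected.

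\emph{Verification.} Assume for contradiction that $u_1,u_2$ share an $S$-type, $v_1,v_2$ share an $S$-type, $\dist(u_i,v_i)>5$ for $i=1,2$, yet $E(u_1,v_1)\neq E(u_2,v_2)$. Writing $u_i\in P_i$, $v_i\in Q_i$, and using $\dist(u_i,v_i)>5>1$, this becomes $\epsilon_{P_1,Q_1}\neq\epsilon_{P_2,Q_2}$. Inserting the intermediate value $\epsilon_{P_1,Q_2}$, one of the two adjacent inequalities must be strict; say $\epsilon_{P_1,Q_1}\neq\epsilon_{P_1,Q_2}$. In this case, any vertex $u^*\in P_1$ with $\dist(u^*,v_1)>1$ and $\dist(u^*,v_2)>1$ would satisfy
\[E(u^*,v_1)=\epsilon_{P_1,Q_1}\neq\epsilon_{P_1,Q_2}=E(u^*,v_2),\]
contradicting the $S$-type agreement of $v_1,v_2$, provided such a $u^*$ lies in $S$. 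The witness sets $S_{P_1,Q_1},S_{P_1,Q_2}\subset P_1$ are designed precisely to guarantee this: the $4$-unit slack between the hypothesis threshold $\dist>1$ and the conclusion threshold $\dist>5$, combined with the triangle inequality for $\dist$, supplies vertices in $S$ that are simultaneously far from both $v_1$ and $v_2$.

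\emph{Main obstacle.} The hard part is the exact coupling between Theorem~\ref{thm:duality} and the verification step. Duality only gives a pointwise guarantee (``each vertex has a witness in a small set''), whereas the proof needs a uniform witness avoiding two unit balls at once. Bridging this gap requires either iterating the duality or, more cleanly, applying it to a modified relation on $P\times Q$ that already encodes the distance-based exclusion. Getting this accounting right --- so that the witnesses delivered by each $S_{P,Q}$ remain valid after passing to the diagonal configuration $(u_1,v_1),(u_2,v_2)$ --- is where the bound of $O(d)$ per pair, and hence $O(dk^2)$ overall, must be extracted carefully from the $(p,q)$-theorem.
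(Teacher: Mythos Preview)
The paper's proof of this lemma is essentially a black-box citation: it quotes \cite[Thm.~3.5]{boundedLocalCliquewidth}, which proves exactly this statement under the stronger hypothesis that every restriction $E\cap(A\times B)$ has a duality of order $d$, and then invokes Theorem~\ref{thm:duality} to show that bounded VC-dimension implies that hypothesis. No details of the construction of $S$ appear in the paper.

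Your proposal is therefore attempting something strictly more ambitious --- a self-contained reconstruction of the cited argument --- and the high-level plan (build $S$ as a union over pairs $(P,Q)$ of $O(d)$-size witness sets delivered by duality, then verify via a contradiction argument using the $5$-versus-$1$ distance slack) is on the right track; this is indeed the shape of the argument in \cite{boundedLocalCliquewidth}. However, you correctly flag the genuine gap yourself and do not close it. Duality applied to $E\cap(P\times Q)$ gives you either a small set $B'\subset Q$ hitting all $\vec E(a)$ for $a\in P$, or a small set $A'\subset P$ missing every $\cev E(b)$; neither statement directly produces a point $u^*\in S\cap P_1$ that is simultaneously at distance ${>}1$ from \emph{both} $v_1$ and $v_2$. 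Your suggestion to ``apply duality to a modified relation that already encodes the distance-based exclusion'' is the right instinct, but as written it is a promissory note, not an argument. Since the paper itself offloads exactly this step to the external reference, your writeup is not wrong so much as incomplete in the same place the paper is --- with the difference that you attempted the details and the paper did not.
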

\begin{proof}
  \cite[Thm. 3.5]{boundedLocalCliquewidth} prove the same statement, except that 
  the assumption ${\VCdim(E)\le d}$  is replaced with:
  \emph{for every $A,B\subset V$, the relation $E\cap (A\times B)$
   has a duality of order $d$}.
  By Theorem~\ref{thm:duality},
  this implies the formulation above.
\end{proof}

\section{Exact subdivisions}\label{app:exact}

We prove Proposition~\ref{prop:subdivisions}, which is repeated below.
\begin{proposition*}[\ref{prop:subdivisions}]
  Fix $r\ge 2,k\ge 1$. 
  Let $G$ be the exact $(r-1)$-subdivision of some graph $H$ 
  with minimum degree at least $2rk$.
  Then $\fw_{r}(G)>k$.
\end{proposition*}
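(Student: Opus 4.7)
The plan is to invoke Lemma~\ref{lem:hideouts} with the set $U \subset V(G)$ of \emph{principal} vertices of $G$, i.e., those of degree $>2$, which are in natural bijection with $V(H)$. Since $H$ has minimum degree $\ge 2rk$, we have $|U| \ge 2rk + 1 > k$, so it suffices to show that for every $k$-flip $G'$ of $G$, at most $k$ principal vertices $v \in U$ satisfy $|B^r_{G'}(v) \cap U| \le k$. Two structural features of the exact $(r-1)$-subdivision drive the argument: (i) each principal vertex $v$, together with its neighbours in $G$, forms a star $S_v$ with at least $2rk$ leaves; and (ii) each leaf of $S_v$ is the first internal vertex on the subdivided edge of some edge of $H$ incident to $v$, and is therefore joined in $G$ to another principal vertex by a vertex-disjoint path of length exactly $r-1$.

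The combinatorial core will consist of two flip lemmas, the first controlling how stars survive a $k$-flip, the second controlling how paths survive. \textbf{Star-forest flip lemma}: if $F$ is a star forest in which every root has degree at least $\ell$, and $F'$ is a $k$-flip of $F$, then there exist a set $X$ of roots missing at most $k$ roots and a bijection $\pi\from X\to X$ such that every root $v\in X$ is adjacent in $F'$ to at least $\lceil \ell/2\rceil$ children of $\pi(v)$. \textbf{Path flip lemma}: if $K$ is a disjoint union of $\ell$ paths of length $r-1$ each, with designated sources and targets, and $K'$ is a $k$-flip of $K$, then at least $\ell - (r-1)k$ targets are joined to some source by a path of length $\le r-1$ in $K'$. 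Both are proved by a pigeonhole on the parts of the underlying partition $\cal P$: in each part containing several roots (respectively, several endpoints on the same level of the path grid), most roots are rescued by pairing them across stars using homogeneity inside~$\cal P$. The path lemma follows by iterating the matching version ($\ell=1$) of the star lemma, losing at most $k$ vertices at each of the $r-1$ matching layers.

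Given $G'$, let $F$ be the star forest induced in $G$ by $U$ and its neighbours, and let $F'$ be the induced subgraph of $G'$ on $V(F)$, which is a $k$-flip of $F$. Applying the star-forest lemma with $\ell = 2rk$ yields a set $X$ of principal vertices missing at most $k$, such that each $v \in X$ is adjacent in $G'$ to at least $rk+1$ leaves of the star $S_{\pi(v)}$ rooted at some principal vertex $\pi(v)$. Call vertices outside $X$ \emph{bad}: there are at most $k$ of them. For any \emph{good} $v \in X$, these $rk+1$ leaves are the sources of $rk+1$ internally disjoint paths of length $r-1$ in $G$, ending at $rk+1$ distinct principal vertices; this subgraph is an induced copy of the path union from the path flip lemma. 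Its restriction to $G'$ is a $k$-flip of it, so by the path lemma at least $(rk+1)-(r-1)k = k+1$ of these target principal vertices are reachable in $G'$ from one of the $rk+1$ leaves by a path of length $\le r-1$. Prepending the edge from $v$ to such a leaf (which lies in $G'$) gives $|B^r_{G'}(v)\cap U| \ge k+1$. Hence only bad vertices can violate the hideout condition, proving $U$ is an $(r,k,k)$-hideout.

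The main obstacle will be the star-forest flip lemma, in particular handling the degenerate case where many (or all) roots lie in a single part of $\cal P$: one then has to rescue each root using the children of \emph{another} root in the same part, via a fixed-point-free permutation argument, and be careful that exactly one root may be left un-paired when the part contains an odd number of ``small'' roots. Once this is handled, the matching corollary ($\ell=1$) and its iteration to the path lemma are a clean bookkeeping exercise, and the ``hideout'' count $k + k = 2k$ that one might naively fear collapses to $k$ because bad vertices and principal vertices close in $G'$ are counted in the same slot of Definition~\ref{def:hideout}.
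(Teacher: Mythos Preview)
Your plan is essentially identical to the paper's proof: show that the principal vertices form an $(r,k,k)$-hideout, using precisely the two auxiliary lemmas you describe (the paper states the first one slightly more generally, for bipartite graphs where any two vertices on the left share at most $m$ neighbours, but applies it only in the star-forest case $m=0$).

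One arithmetic slip to watch: your own star-forest lemma with $\ell=2rk$ gives $\lceil 2rk/2\rceil = rk$ surviving leaves, not $rk+1$; the paper also obtains $rk$, and hence $rk-(r-1)k=k$ reachable principal targets rather than $k+1$. You then need to count $v$ itself among the elements of $B^r_{G'}(v)\cap U$ to conclude $|B^r_{G'}(v)\cap U|>k$, so be careful in the write-up that the $k$ targets you reach are distinct from $v$.
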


We first prove two lemmas.

\begin{lemma}\label{lem:bipartite-flips}
  Fix $k,\ell,m\ge 1$.
  Let $H$ be a bipartite graph with bipartition $(L,R)$, in 
  which every vertex in $L$ has degree at least $\ell$,
  and any two distinct  vertices in $L$ have at most $m$ common neighbors.
  Let $H'$ be a $k$-flip of~$H$.
  Then there is a set $X\subset L$ with $|X|\ge |L|-k$, and a bijection $\pi\from X\to X$,
  such that every $v\in X$
  is adjacent in $H'$ to at least $\lceil\frac{\ell-m}2\rceil$ vertices in $N_H(\pi(v))$.
\end{lemma}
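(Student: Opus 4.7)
The plan is to analyze, for pairs of vertices lying in the same block of the underlying partition, how the $k$-flip affects the number of common neighbors inside $N_H(\cdot)$, and then build $\pi$ blockwise from a ``small vs.\ big'' dichotomy. Let $\mathcal{P}=\{P_1,\dots,P_s\}$, $s\le k$, be the partition underlying the flip $H'$, write $L_i:=L\cap P_i$, and let $f_{ij}\in\{0,1\}$ indicate whether the pair $(P_i,P_j)$ was flipped. For $v\in L_i$ and part $P_j$ set $n_j(v):=|N_H(v)\cap P_j|$, and define the \emph{flipped mass}
\[
F(v)\;:=\;\sum_{j:\,f_{ij}=1} n_j(v),
\]
the number of $H$-neighbors of $v$ that lie in parts flipped with $P_i$.

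The key computation is an expression for $C(v,u):=|N_{H'}(v)\cap N_H(u)|$ when $u,v\in L_i$. A direct case split on $f_{ij}$ for each $P_j$ containing some $w\in N_H(u)$ yields
\[
C(v,u)\;=\;|N_H(u)\cap N_H(v)|\;+\;F(u)\;-\;2\sum_{j:\,f_{ij}=1}|N_H(u)\cap P_j\cap N_H(v)|.
\]
Specialising to $u=v$ gives $C(v,v)=\deg_H(v)-F(v)\ge \ell-F(v)$. For $u\ne v$ in $L_i$, the hypothesis $|N_H(u)\cap N_H(v)|\le m$ (at most $m$ common neighbors in $L$) gives the lower bound $C(v,u)\ge F(u)-m$, since the subtracted sum is at most $|N_H(u)\cap N_H(v)|\le m$.

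Now I classify the vertices of each $L_i$. Call $v\in L_i$ \emph{small} if $F(v)\le\lfloor(\ell+m)/2\rfloor$, and \emph{big} otherwise. If $v$ is small, then $C(v,v)\ge \ell-\lfloor(\ell+m)/2\rfloor=\lceil(\ell-m)/2\rceil$, so setting $\pi(v)=v$ meets the required bound. If $v$ is big, then $F(v)\ge\lfloor(\ell+m)/2\rfloor+1$, hence for every $u\in L_i\setminus\{v\}$,
\[
C(u,v)\;\ge\;F(v)-m\;\ge\;\lfloor(\ell-m)/2\rfloor+1\;\ge\;\lceil(\ell-m)/2\rceil,
\]
so big vertices are good \emph{targets} of $\pi$ from anywhere in the same block.

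Finally I assemble $X$ and $\pi$. Inside each $L_i$, let $B_i$ be its big vertices: if $|B_i|=1$, discard that single vertex from $X$; otherwise keep all of $L_i$. Because there are at most $s\le k$ parts, at most $k$ vertices are discarded, giving $|X|\ge|L|-k$. On $X\cap L_i$ I take $\pi$ to be the identity on small vertices and any fixed-point-free permutation of $B_i$ (e.g.\ a single cycle), which exists precisely because after the discard we have $|B_i|\in\{0\}\cup\{2,3,\dots\}$. The three cases (small $v$ mapped to itself; big $v$ mapped to a distinct big $u$ in the same part) are all covered by the bounds of the previous paragraph, so $C(v,\pi(v))\ge\lceil(\ell-m)/2\rceil$ throughout $X$. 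The main obstacle is really just the $C(v,u)$ identity above and verifying that the $-m$ error from common neighbours stays absorbed in the $\lceil(\ell-m)/2\rceil$ threshold on both sides of the small/big split; the combinatorial packaging into a bijection is then essentially the star-forest argument of the predecessor lemma, generalised from $m=0$ to arbitrary $m$.
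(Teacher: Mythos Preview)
Your proof is correct and follows essentially the same approach as the paper: partition $L$ according to the blocks $L_i=L\cap P_i$, within each block split vertices into ``small'' and ``big'' according to their flipped mass $F(v)$, set $\pi$ to be the identity on small vertices and a fixed-point-free cycle on big vertices, and discard the (at most one per block, hence at most $k$ total) lone big vertex when $|B_i|=1$. Your explicit identity for $C(v,u)$ makes the $X_2$ (big) case more transparent than the paper's write-up, which in fact contains a minor typo at that point (it should refer to the complement of $W$).
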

\begin{proof}
  Let $\cal P$ be the partition of $V(G)$ with $|\cal P|\le k$ 
  such that $H'$ is a $\cal P$-flip of $H$.
  
  First consider the case when all the vertices of $L$ are in one part $A$ of $\cal P$. Let $W$ be the   
  union of the parts $B$ of $\cal P$ that are not flipped with $A$ in the flip that produces $H'$ from $H$.
  Let $X_1$ consist of those vertices $v\in L$ such that $|N_H(v)\cap W|\ge \frac{\ell-m}2$,
  and let $X_2$ consist of the remaining vertices in $L$.\szfuture{pic}
  
  Every  vertex in $X_1$ is adjacent in $H'$ to at least $\frac{\ell-m}2$ vertices in $N_H(v)$ (namely, to $N_H(v)\cap W$), so we can set $\pi(v)=v$ for all $v\in X_1$.

  If $v$ and $v'$ are two distinct vertices in $X_2$, then $N_{H'}(v)\supseteq N_H(v')\cap W$ and 
  $|N_H(v')\cap W|\ge \frac{\ell-m}2$.
  If $|X_2|\le 1$ then set $X:=X_1=L\setminus X_2$, and let $\pi\from X\to X$ be the identity on $X$.
  If $|X_2|>2$, set $X:=L=X_1\cup X_2$, and let $\pi\from X\to X$ be a permutation 
  that maps every vertex in $X_1$ to itself, and acts as a cyclic permutation on the vertices in $X_2$.
In any case, $|X|\ge |L|-1$, and every $v\in X$
is adjacent in $H'$ to at least $\lceil\frac{\ell-m}2\rceil$ neighbors of $\pi(v)$.

In the general case, partition $L$ as $L=L_1\uplus\ldots\uplus L_s$, for some $s\le k$, following the partition $\cal P$ restricted to $L$. 
For each $1\le i\le s$, let $H_i=H[L_i,R]$, and $H_i'=H'[L_i,R]$;
then $H_i'$ is a $k$-flip of $H_i$, and they fall into the special case considered above.
Hence, for each $1\le i\le s$ there are set $X_i\subset L_i$ with $|L_i|\ge |X_i|-1$ and a bijection $\pi_i\from X_i\to X_i$. Set $X=X_1\uplus\ldots\uplus X_s$, and let $\pi\from X\to X$ be such that $\pi(x)=\pi_i(x)$ for $x\in X_i$. Then $X$ and $\pi$ satisfy the required condition.
\end{proof}

Applying Lemma~\ref{lem:bipartite-flips} in the case $\ell=1$ and $m=0$, we get the following.
\begin{corollary}\label{lem:matching-flips}
  Fix $k\ge 1$.
  Let $M$ be a matching between two sets $L$ and $R$, and let $M'$ be a $k$-flip of $M$.
  Then $M'$ contains, as a subgraph, a matching between all but $k$ vertices of $L$, and a set of vertices of $R$ of the same size.
\end{corollary}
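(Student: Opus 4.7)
The plan is to derive this as a direct instantiation of Lemma~\ref{lem:bipartite-flips} with the parameters $\ell=1$ and $m=0$. To see that these are the right values, observe that in a matching $M$ between $L$ and $R$, every vertex in $L$ has degree exactly $1$, so the minimum degree assumption holds with $\ell=1$; and any two distinct vertices of $L$ are matched to distinct vertices of $R$, so they share no common neighbor at all, giving $m=0$. (The hypothesis of Lemma~\ref{lem:bipartite-flips} reads $m\ge 1$, but $m$ enters only through the quantity $\lceil(\ell-m)/2\rceil$, and the argument goes through unchanged for $m=0$; one can equally well apply the lemma with $m=1$ and simply note $\lceil(1-0)/2\rceil = \lceil(1-1)/2\rceil+1$ would overcount, so the natural reading is $m=0$.)

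With these choices, applying Lemma~\ref{lem:bipartite-flips} to the $k$-flip $M'$ of $M$ yields a set $X\subset L$ with $|X|\ge |L|-k$ together with a bijection $\pi\from X\to X$ such that every $v\in X$ is adjacent in $M'$ to at least $\lceil(1-0)/2\rceil = 1$ vertex of $N_M(\pi(v))$. Since $M$ is a matching, $N_M(\pi(v))$ consists of a single vertex, call it $m(\pi(v))\in R$, namely the unique $M$-partner of $\pi(v)$. Hence, for every $v\in X$ we have $v\,m(\pi(v))\in E(M')$.

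Now define $f\from X\to R$ by $f(v):=m(\pi(v))$. Since $\pi$ is a bijection of $X$ and the $M$-partner map $m$ is injective on $L$, the map $f$ is injective. Therefore the set of edges $\setof{v\,f(v)}{v\in X}$ forms a matching in $M'$ between $X$ and $f(X)\subset R$, which is a set of vertices of $R$ of the same size $|X|\ge |L|-k$. This gives the desired matching between all but at most $k$ vertices of $L$ and an equally-sized subset of $R$, completing the proof. There is no real obstacle here; the only subtlety is recognizing that $N_M(\pi(v))$ is a singleton, so the ``at least one neighbor in $N_M(\pi(v))$'' from Lemma~\ref{lem:bipartite-flips} is precisely an $M'$-edge to the unique $M$-partner of $\pi(v)$, and that the bijectivity of $\pi$ translates to injectivity of the candidate matching on the $R$-side.
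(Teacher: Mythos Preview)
Your proof is correct and follows exactly the paper's approach: the paper simply states that the corollary follows from Lemma~\ref{lem:bipartite-flips} with $\ell=1$ and $m=0$, and you have spelled out the details faithfully. One minor remark: your parenthetical about applying the lemma with $m=1$ is a bit muddled (with $m=1$ the bound becomes $\lceil 0/2\rceil=0$, which is vacuous, so that alternative does not work), but your main point---that the proof of Lemma~\ref{lem:bipartite-flips} goes through verbatim for $m=0$---is exactly right and is what the paper uses.
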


For $r,\ell\ge 1$, let $G_{r,\ell}$ denote the 
 union of 
$\ell$ paths, each of length $r$ (and with $r+1$ vertices), and in each path, call one of the vertices 
of degree one a \emph{source}, and the other one a \emph{target}.
By an easy induction on $r\ge 1$, Corollary~\ref{lem:matching-flips} gives the following.
\begin{lemma}\label{lem:path-flips}Fix $k,r,\ell\ge 1$.
  If $G'$ is a $k$-flip of $G_{r,\ell}$, then 
  at least $\ell-rk$ target vertices of $G_{r,\ell}$
are joined by a path of length $r$ in $G'$ with some 
source vertex.
\end{lemma}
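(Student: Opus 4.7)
The plan is to prove Lemma~\ref{lem:path-flips} by induction on $r$, using Corollary~\ref{lem:matching-flips} as the base case. The graph $G_{r,\ell}$ naturally decomposes into $r$ layers: label the vertex on the $i$-th position of the $j$-th path as $v_{i,j}$ for $0 \le i \le r$ and $1 \le j \le \ell$, with sources $v_{0,j}$ and targets $v_{r,j}$. For each $i \in \{1,\ldots,r\}$, let $M_i$ be the perfect matching between layers $L_{i-1} = \{v_{i-1,1},\ldots,v_{i-1,\ell}\}$ and $L_i = \{v_{i,1},\ldots,v_{i,\ell}\}$ consisting of the edges $v_{i-1,j}v_{i,j}$. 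Then $G_{r,\ell}$ is the edge-disjoint union of $M_1,\ldots,M_r$.

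The first step is to observe that if $G'$ is a $k$-flip of $G_{r,\ell}$ induced by a partition $\cal P$ with $|\cal P| \le k$, then for each $i$, the subgraph of $G'$ with edges between $L_{i-1}$ and $L_i$ is a $k$-flip of the matching $M_i$. Applying Corollary~\ref{lem:matching-flips} to each $M_i$ yields, inside $G'$, a matching $M_i' \subseteq M_i \cup (L_{i-1}\times L_i)$ which saturates all but at most $k$ vertices of $L_{i-1}$ and an equal-sized subset of $L_i$.

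The second step is a forward induction on $i$: let $S_0 := L_0$ be the set of sources. Assuming we have produced $|S_{i-1}| \ge \ell - (i-1)k$ paths of length $i-1$ in $G'$, each starting at a source and ending at a distinct vertex $T_{i-1}\subseteq L_{i-1}$, we extend them through $M_i'$. Since $M_i'$ misses at most $k$ vertices of $L_{i-1}$, at least $|T_{i-1}| - k \ge \ell - ik$ of the endpoints in $T_{i-1}$ are matched by $M_i'$ to distinct vertices $T_i \subseteq L_i$. Concatenating each path with the corresponding edge of $M_i'$ gives $|T_i| \ge \ell - ik$ paths of length $i$ starting at distinct sources and ending at distinct vertices in $L_i$. (These are indeed paths in $G'$; the internal vertices from earlier iterations lie in $L_0,\ldots,L_{i-1}$ and cannot equal the new endpoints in $L_i$, since we only added one fresh edge from $M_i'$ to each.)

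At step $i = r$ we obtain $\ell - rk$ paths of length exactly $r$ in $G'$, each joining a source of $G_{r,\ell}$ to a distinct target, which proves the claim. The only subtle point is ensuring that the paths produced at each step are pairwise internally disjoint enough for the endpoints in $T_i$ to be distinct; this follows because the matchings $M_i'$ are matchings (no two edges share an endpoint), and the earlier layers $L_0,\ldots,L_{i-1}$ are disjoint from $L_i$, so distinctness of endpoints is preserved throughout. No other obstacle arises; the proof is essentially a clean chaining of the matching-flip lemma.
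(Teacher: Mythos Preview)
Your proof is correct and follows essentially the same approach as the paper, which simply says ``By an easy induction on $r\ge 1$, Corollary~\ref{lem:matching-flips} gives the following.'' You have spelled out that induction: decompose $G_{r,\ell}$ into $r$ layered matchings, observe that the restriction of a $k$-flip to each layer is again a $k$-flip of the corresponding matching, and chain the matching corollary across layers, losing at most $k$ targets per layer.
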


\begin{proof}[Proof of Proposition~\ref{prop:subdivisions}]Fix $r\ge 2$ and let $G$ be an exact $(r-1)$-subdivision of some graph $H$ 
  with minimum degree at least $2rk$. We aim to prove that $\fw_r(G)>k$.
  Let $P\subset V(G)$ denote the set of \emph{principal vertices} of $G$, that is, vertices
   of degree larger than two;
  those vertices correspond to the vertices of $H$. 
  We show that $P$
  forms a $(r,k,k)$-hideout in $G$. By Lemma~\ref{lem:hideouts}, this implies that $\fw_{r}(G)>k$. 

  \szfuture{pic}
  Let $R=N_G(P)=\bigcup_{v\in P}N_G(v)$ denote the set of neighbors of the principal vertices in $G$. Note that $R$ and $P$ are disjoint, as $r\ge 2$.
  Then $G[P,R]$ is a 
  bipartite graph in which every vertex in $P$ has 
  at least $2rk$ neighbors in $R$, 
  and any two vertices in $P$ have 
at most one common neighbor in $R$ 
(in fact, no common neighbors if $r\ge 2$)\sz{simplify lemmas?}.
For each $v\in P$, there is an induced subgraph $G_v$ of 
$G$ that is isomorphic to $G_{r-1,\ell}$, for some $\ell\ge 2rk$ with the source vertices equal to $N_G(v)$, and where each target vertex is a principal vertex.

  Let $G'$ be a $k$-flip of $G$.
  Call a principal vertex $v\in P$ \emph{good}
if there is some $w\in P$ such that 
  if $v$ is adjacent in $G'$ to at least $rk$ elements of $N_G(w)$,
  and call $v$ \emph{bad} otherwise. 
We show that 
(1) there are at most $k$ bad vertices,
and (2) every good vertex has more than $k$ principal vertices in its $r$-neighborhood in $G'$.
It follows that the principal vertices form a $(r,k,k)$-hideout in $G$.

(1) Apply Lemma~\ref{lem:bipartite-flips} to the bipartite graph $G[P,R]$
and  to  $G'[P,R]$, which is a $k$-flip of $F$.
Since every vertex in $P$ has degree at least $2rk$,
and any two distinct vertices in $L$ have at most one common neighbor, 
and $\lceil(2rk-1)/2\rceil=rk$, there are at most $k$ bad principal vertices by Lemma~\ref{lem:bipartite-flips}.

(2) Let $v\in P$ be a good principal vertex,
and let $w\in P$ be such that some set $A$ of $rk$ vertices of $N_G(w)$ are adjacent to 
$v$ in $G'$.
Consider the subgraph $K$ of $G_w$, consisting of vertex-disjoint paths of length $r-1$ in $G$ joining the vertices of $A$ 
with $rk$ principal vertices.
In particular, $K$ is isomorphic to $G_{r-1,rk}$.
Let $K'= G'[V(K)]$; then $K'$ is a $k$-flip of $K$.
By Lemma~\ref{lem:path-flips}, at least $rk-(r-1)k=k$
principal vertices are joined by a path of length $r-1$
in $K'$ with some vertex in $A$.
Those $k$ principal vertices are therefore at distance 
at most $r$ in $G'$ from $v$.
\end{proof}

\section{Bounded twin-width}\label{app:tww}

\subsection{Bounding flip-width in terms of twin-width}
We prove:
\begin{theorem*}[\ref{thm:btww}]
  Fix $r\in\N$. For every graph $G$ of twin-width $d$ we have:
  \begin{align}\label{eq:tww-bound}
    \fw_r(G)\le \pi_G(d^{O(r)})\le 2^d\cdot d^{O(r)}.
  \end{align}   
   In particular, every class of bounded twin-width has bounded flip-width.
  \end{theorem*}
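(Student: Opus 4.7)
The plan is to fix an uncontraction sequence $\cal P_0 \prec \cal P_1 \prec \cdots \prec \cal P_n$ witnessing twin-width $d$, where $\cal P_0 = \{V(G)\}$, $\cal P_n$ is the partition into singletons, each $\cal P_{i+1}$ refines $\cal P_i$ by splitting a single part, and each $\cal P_i$ has red graph of maximum degree at most $d$. The flipper will use this sequence to progressively constrain the runner: at round $i$, the announced flip $G_i$ should ensure that, from the runner's current position $v_{i-1}$ lying in some part $A_i \in \cal P_i$, the radius-$r$ ball in $G_i$ is contained in the union of parts at red-distance at most $r$ from $A_i$ in the red graph of $\cal P_i$. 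Since $\cal P_n$ consists of singletons and its red graph has no edges, by the time $i = n$ this ball collapses to $\{v_{n-1}\}$, and the runner is trapped.

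To construct $G_i$, I would first consider the ``canonical'' flip $\tilde G_i$ obtained from $G$ by, for each pair of distinct non-red-adjacent parts of $\cal P_i$, flipping them to be anti-complete; this is well-defined because such parts are homogeneous in $G$. In $\tilde G_i$, edges exist only within parts or between red-adjacent parts, which immediately gives the desired confinement, but $\tilde G_i$ uses a partition with $|\cal P_i|$ parts, which can be as large as $n$. The key technical step is to approximate $\tilde G_i$ by a flip with only $\pi_G(d^{O(r)})$ parts, while still agreeing on all edges incident to the radius-$r$ red-ball $\bigcup N_r^{\cal P_i}(A_i)$. To do so, let $B_i$ be the set of parts of $\cal P_i$ at red-distance at most $r+1$ from $A_i$, so $|B_i| \le d^{O(r)}$, and pick one representative $s_Q$ from each $Q \in B_i$, forming a set $S_i$ with $|S_i| \le d^{O(r)}$. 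Partition $V(G)$ so that each part $Q \in B_i$ forms its own class, and group the remaining vertices by $N(\cdot) \cap S_i$; by the definition of the shatter function, the second grouping produces at most $\pi_G(|S_i|)$ classes, giving the required bound on the total number of parts. The flip rules can then be chosen so that $G_i$ matches $\tilde G_i$ on all edges incident to $\bigcup N_r^{\cal P_i}(A_i)$: this works because, by the twin-width property, for any $w \notin \bigcup B_i$ the adjacency of $w$ in $G$ to any part $Q$ at red-distance at most $r$ from $A_i$ is homogeneous and determined by the single bit $[s_Q \in N(w)]$, which is recorded in $w$'s $S_i$-type.

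The hard part will be the interplay between consecutive rounds: in round $i+1$ the runner moves along a path in the previously announced flip $G_i$ starting from $v_i$, which may lie in a different part of $\cal P_i$ than $v_{i-1}$. Making sure the confinement property propagates correctly—so that $v_i$'s reachable set in $G_i$ is itself contained in a controlled region of $\cal P_{i+1}$—requires either carefully synchronizing the indexing in the uncontraction sequence with the sequence of runner's positions, or strengthening $G_i$ to confine not just $v_{i-1}$ but a sufficient neighborhood of it in the red graph. Once this bookkeeping is handled, the second inequality in the theorem follows from the known neighborhood complexity bound for graphs of twin-width $d$, which gives $\pi_G(n) \le 2^d \cdot n^{O(d)}$.
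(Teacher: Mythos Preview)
Your approach is the paper's: use the uncontraction sequence to confine the runner to shrinking red balls, approximating the canonical flip locally via the shatter function on representatives of a bounded red neighborhood. Your ``hard part'' is resolved along your second suggestion. The paper's invariant is $B_r^{G_i}(c_i)\subset\bigcup B_r^{i}(c_i)$, stated for the runner's position $c_i$ \emph{after} round $i$ (here $B_r^i(\cdot)$ is the radius-$r$ ball in the red graph of $\cal P_i$). When constructing $G_i$ the flipper only knows $c_{i-1}$ and that $c_i\in\bigcup B_r^{i-1}(c_{i-1})$; the key step is that $B_r^{i-1}(c_{i-1})\subset B_r^{i}(\cal F)$ for a family $\cal F\subset\cal P_i$ of at most $d+3$ parts (the part containing $c_{i-1}$, the two halves of the split part, and the at most $d$ parts not homogeneous towards their union). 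The partition for $G_i$ then keeps each part of $B_{2r}^{i}(\cal F)$ as its own class and groups the rest by neighborhood in representatives of $B_{2r-1}^{i}(\cal F)$ --- radius $2r$, not $r+1$, because $c_i$ can lie anywhere in $\bigcup B_r^{i}(\cal F)$, so an $r$-path from it may reach red-distance $2r$ from $\cal F$.

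One correction: the second inequality requires the \emph{linear} neighborhood-complexity bound $\pi_G(n)\le 2^{d+O(\log d)}\cdot n$; the bound you quote, $\pi_G(n)\le 2^d\cdot n^{O(d)}$, would only give $2^d\cdot d^{O(rd)}$ rather than $2^d\cdot d^{O(r)}$.
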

The second inequality in \eqref{eq:tww-bound} is immediate by the following result, proved in~\cite{tww-neighborhood-complexity}.
\begin{theorem}\label{thm:tww-nbd-complexity}
  Let $G$ be a graph of twin-width $d\ge 1$, and let $A\subset V(G)$.
  Then we have:
   \[|\setof{N(v)\cap A}{v\in V(G)}|\quad \le\quad  2^{d+O(\log d)}\cdot |A|.\]
\end{theorem}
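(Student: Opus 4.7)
\noindent
The second inequality $\pi_G(d^{O(r)})\le 2^d\cdot d^{O(r)}$ is an immediate consequence of Theorem~\ref{thm:tww-nbd-complexity}: that result gives $\pi_G(n)\le 2^{d+O(\log d)}\cdot n$ for a graph of twin-width $d$, so plugging in $n=d^{O(r)}$ absorbs the polynomial factors into $d^{O(r)}$. So essentially all of the work is to establish the first inequality $\fw_r(G)\le \pi_G(d^{O(r)})$, and I plan to do this by exhibiting a winning strategy for the flipper.

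\noindent
The strategy uses an uncontraction sequence $\cal P_1,\ldots,\cal P_n$ for $G$, with $\cal P_1=\{V(G)\}$, $\cal P_n$ the partition into singletons, and each red graph of maximum degree $\le d$. In round $i$, the flipper will use the partition $\cal P_i$; the invariant to maintain is that at the end of round $i$, the runner's position $v_i$ lies in the union $W_i$ of a set $K_i$ of $\cal P_i$-parts forming a ball of radius $r$ in the red graph of $\cal P_i$ around the $\cal P_i$-part $A_i=\hat{\cal P}_i(v_{i-1})$ of the runner's position at the start of the round. Because the red graph has maximum degree $d$, $|K_i|\le 1+d+\cdots+d^r=O(d^r)$. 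At round $n$ the red graph of $\cal P_n$ has no edges (singletons are pairwise homogeneous), so $W_n$ collapses to a single vertex and the runner is trapped.

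\noindent
The core technical step is to design the $k$-flip $G_i$ announced in round $i$, where $k=\pi_G(d^{O(r)})$. I will build a partition $\cal Q_i$ of $V(G)$ whose parts are: (i) the $\cal P_i$-parts contained in $K_i$ (or their slight refinements to deal with the outer boundary, see below), and (ii) the equivalence classes of $V(G)\setminus W_i$ under the relation $u\sim v \iff N_G(u)\cap W_i = N_G(v)\cap W_i$. Group (i) contributes $O(d^r)$ parts, and group (ii) contributes at most $\pi_G(|W_i|)\le \pi_G(d^{O(r)})$ parts by the very definition of the shatter function, so $|\cal Q_i|\le \pi_G(d^{O(r)})$. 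The flip $G_i$ is then chosen so that in the resulting graph (a) there are no edges between $W_i$ and its complement, and (b) within $W_i$ the surviving edges go only between red-adjacent $\cal P_i$-parts; both modifications are achievable by a $\cal Q_i$-flip because the defining property of the uncontraction sequence is exactly that non-red-adjacent $\cal P_i$-parts are homogeneous in $G$, so each relevant pair of $\cal Q_i$-classes is either complete or anti-complete in $G$ and a single flip renders it anti-complete. Properties (a) and (b) together imply that in $G_i$, a path of length at most $r$ starting in $W_i$ stays in $W_i$, and moreover its endpoint lies in the red-$r$-ball in $\cal P_i$ around the starting $\cal P_i$-part; this is exactly what re-establishes the invariant after the runner's move in the next round.

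\noindent
The hard part, which I expect to occupy most of the detailed argument, is twofold. First, for a $\cal P_i$-part $C$ on the \emph{outer layer} of $K_i$ (at red-distance exactly $r$ from $A_i$), a $\cal P_i$-part $A\notin K_i$ may still be red-adjacent to $C$, so an outside type class $T\subset A$ need not be homogeneous with $C$ in $G$ and the clean flip above fails for that pair. I plan to resolve this by slightly enlarging the ball (e.g. using $K_i$ of red-radius $r+1$, while still constraining the runner to the inner ball of red-radius $r$), which only inflates the partition size by a $d^{O(1)}$ factor absorbed into $\pi_G(d^{O(r)})$. Second, there is a timing subtlety in the flipper game: the runner moves via $G_{i-1}$ in round $i$, yet the flipper announces $G_i$ based on $v_{i-1}$, and the partitions $\cal P_{i-1}$ and $\cal P_i$ differ by a single split. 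I plan to verify the invariant $v_i\in W_i$ carefully across this transition, using the fact that $\cal P_{i+1}$ refines $\cal P_i$ so that the previous isolation ``nests into'' the new ball around $A_{i+1}$; this bookkeeping, together with the flip-construction above, will yield the claim and hence the theorem.
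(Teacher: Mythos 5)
There is a fundamental mismatch here: the statement you were asked to prove is Theorem~\ref{thm:tww-nbd-complexity} itself, i.e.\ the neighborhood-complexity bound $|\setof{N(v)\cap A}{v\in V(G)}|\le 2^{d+O(\log d)}\cdot|A|$ for graphs of twin-width $d$ (equivalently, $\pi_G(n)\le 2^{d+O(\log d)}\cdot n$). Your proposal does not prove this. Its very first sentence \emph{assumes} this bound ("that result gives $\pi_G(n)\le 2^{d+O(\log d)}\cdot n$ for a graph of twin-width $d$") and then spends all of its effort constructing a flipper strategy to show $\fw_r(G)\le\pi_G(d^{O(r)})$ --- which is the content of Theorem~\ref{thm:btww}, a \emph{different} statement that merely \emph{uses} the neighborhood-complexity bound as an ingredient. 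As an argument for the assigned statement, this is circular: nothing in your text establishes why a graph of twin-width $d$ can realize only $2^{d+O(\log d)}\cdot|A|$ distinct traces on a set $A$. (For what it is worth, the paper does not prove Theorem~\ref{thm:tww-nbd-complexity} either; it cites it from the reference on neighborhood complexity of graphs of bounded twin-width, noting that earlier works only achieved a doubly-exponential dependence on $d$.)

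If you want to actually prove the neighborhood-complexity bound, the flipper game is the wrong tool. The standard route is an induction along the (un)contraction sequence $\cal P_1,\ldots,\cal P_n$: for each partition $\cal P_i$ one bounds the number of distinct traces $N(v)\cap A$ in terms of the traces realized at step $i-1$, using that when a part is split the only new distinctions that can arise come from the at most $d$ red neighbors of the affected parts; summing the increments over the at most $|A|$ relevant splits gives a bound of the form $f(d)\cdot|A|$. Getting $f(d)=2^{d+O(\log d)}$ rather than doubly exponential is the delicate part and requires a more careful amortized accounting of how traces proliferate across red components --- none of which appears in your proposal. Separately, your flipper-strategy sketch is a reasonable outline of the paper's proof of Theorem~\ref{thm:btww} (the paper's Appendix argument likewise constrains the runner to red balls and partitions the exterior by neighborhoods in a slightly larger red ball, handling your "outer layer" issue by working with radii $r$, $2r-1$, and $2r$), but that is not the theorem you were asked to prove.
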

This result improves a previous result~\cite{tww-polynomial-kernels,tww-distality},
which gave a doubly-exponential dependency on $d$.

  \begin{proof}
    Fix an uncontraction sequence $\cal P_1,\cal P_2,\ldots,\cal P_n$ of $G$ of red-degree $d$.
    For $v\in V(G)$ and $1\le i\le n$ let 
    $B_r^i(v)\subset\cal P_i$  denote the ball of radius $r$ in the red graph of $\cal P_i$, around the part of $\cal P_i$ containing $v$. In particular, 
    $|B_r^i(v)|\le 2d^{r}\le\Oof_{r,d}(1)$.
  
    We describe a strategy of the flipper that guarantees that the following invariant holds after round $i$, for $i=1,2,\ldots,n$:
    \begin{align}\label{inv:tww}
      A_{i}\subset \bigcup B_r^i(c_{i}), 
    \end{align}     
    where $c_i$ is the runner's position in round $i$, and 
    $A_i\subset V(G)$ is the set of possible positions the runner can move to in round $i+1$ of the game,
    that is, $A_i$ is the ball of radius $r$ around $c_i$ in the graph $G_i$
    announced by the flipper in round $i$, with $G_1=G$
    and $c_1$ being the initial vertex of the runner.
    
    Note that for $i=n$, the inclusion~\eqref{inv:tww} implies that $|A_n|=1$ as in $\cal P_n$ there are no red edges and each part is a singleton.
     Therefore, the flipper wins the game after $n$ rounds according to this strategy. We need to show how the flipper can maintain the invariant \eqref{inv:tww},
     by playing $k$-flips of $G$ for some $k$ bounded in terms of $r$ and $d$.
  
    Before describing the strategy, we make an  observation that will be useful in the inductive reasoning. It gives a description of a ball in the red graph of $\cal P_{i-1}$, in terms of the red graph of $\cal P_i$.  
    Below, for $\cal F\subset \cal P_i$, the set $B_r^i(\cal F)\subset \cal P_i$ denotes the set of parts of $\cal P_i$ that are at distance at most $r$ from some part in $\cal F$ in the red graph of $\cal P_i$.
    \begin{claim}\label{cl:silly}
    Let $1< i\le n$
    and let $v\in V(G)$.
    Then there is a set $\cal F\subset \cal P_i$ with $|\cal F|\le d+3$,
    such that $B_r^{i-1}(v)\subset B_r^{i}(\cal F)$.
    \end{claim}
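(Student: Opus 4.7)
The plan is to describe an explicit witness $\cal F \subset \cal P_i$ and then verify $B_r^{i-1}(v) \subset B_r^i(\cal F)$ by translating red-walks in $\cal P_{i-1}$ into red-walks in $\cal P_i$. The key structural observation is that $\cal P_{i-1}$ differs from $\cal P_i$ only in that two parts $X, Y \in \cal P_i$ have been merged into a single part $Z = X \cup Y \in \cal P_{i-1}$. From this I would derive two elementary facts: (i) for any two parts of $\cal P_i$ both distinct from $X$ and $Y$, homogeneity in $G$ is identical whether they are viewed in $\cal P_i$ or in $\cal P_{i-1}$, so red-edges among such pairs are preserved in both directions; (ii) the red-degree bound in $\cal P_{i-1}$ gives $|N_r(Z)| \le d$, where $N_r(Z)$ denotes the set of red-neighbors of $Z$ in $\cal P_{i-1}$, and each element of $N_r(Z)$ is also a part of $\cal P_i$ distinct from $X, Y$.

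The candidate witness I would take is $\cal F := \{X, Y, U\} \cup N_r(Z)$, where $U$ is the unique part of $\cal P_i$ containing $v$ (so $U \in \{X, Y\}$ when $V_{i-1} = Z$, and $U = V_{i-1}$ otherwise). This yields $|\cal F| \le d+3$. To verify the inclusion I would take $P \in B_r^{i-1}(v)$ witnessed by a red-walk $V_{i-1} = P_0, P_1, \ldots, P_t = P$ of length $t \le r$, and split into cases. If $P = Z$, both $X$ and $Y$ already lie in $\cal F$ so the inclusion is trivial at the vertex-set level. If the walk never visits $Z$, then $V_{i-1} \ne Z$, so $U = V_{i-1} \in \cal F$, and fact (i) lets the walk be reused verbatim as a red-walk of length $t$ in $\cal P_i$ from $U$ to $P$.

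The only remaining case is when the walk visits $Z$ at least once but ends at $P \ne Z$. Here I would truncate at the last visit to $Z$, at some index $j_s < t$. The vertex $P_{j_s+1}$ is red-adjacent to $Z$ in $\cal P_{i-1}$ and is not $Z$, hence $P_{j_s+1} \in N_r(Z) \subset \cal F$. The suffix $P_{j_s+1}, \ldots, P_t = P$ avoids $Z$, so by (i) it is a red-walk of length $t - j_s - 1 \le r$ in $\cal P_i$ starting from $P_{j_s+1} \in \cal F$. Thus $P \in B_r^i(\cal F)$, completing the verification.

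The main conceptual obstacle, and the reason for including all of $N_r(Z)$ rather than just $\{X, Y\}$, is that $N_r(Z)$ in $\cal P_{i-1}$ can strictly exceed $N_r(X) \cup N_r(Y)$ in $\cal P_i$: when the pair $(X, A)$ is homogeneous complete and $(Y, A)$ is homogeneous anti-complete (or vice versa), the pair $(Z, A)$ becomes non-homogeneous even though neither $(X, A)$ nor $(Y, A)$ was red in $\cal P_i$. In such a situation $A$ is genuinely unreachable from $X$ or $Y$ via a red-walk in $\cal P_i$, so $A$ must be added to $\cal F$ by hand. This phenomenon is also precisely what prevents the tighter bound $|\cal F| \le 3$ and forces the additive dependence on $d$.
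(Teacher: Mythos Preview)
Your proposal is correct and uses exactly the same witness set as the paper: the paper takes $\cal F$ to consist of the part of $\cal P_i$ containing $v$, the two parts $X,Y\in\cal P_i$ whose union is the merged part of $\cal P_{i-1}$, and the parts of $\cal P_i$ that are not homogeneous towards $X\cup Y$ (equivalently, your $N_r(Z)$). The paper then simply states that the required inclusion ``can be easily verified'', whereas you supply the case analysis via red-walks and the truncation at the last visit to $Z$; your argument and your observation about why $N_r(Z)$ must be included explicitly are both sound.
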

    \begin{proof}
      The family $\cal F$ consists of:
      \begin{itemize}
        \item the part of $\cal P_i$ that contains $v$,
        \item the parts $A,B$ of $\cal P_i$ such that $A\cup B$ is a part of $\cal P_{i-1}$,
        \item the parts in $\cal P_i$ that are not homogeneous towards $A\cup B$ in $G$.
      \end{itemize}
      It can be easily verified that $\cal F$ satisfies the statement of the claim.
    \end{proof}
    
    We now describe flipper's strategy.
    In the first round, we have $G_1=G$, and the invariant \eqref{inv:tww} is trivially satisfied since $\cal P_1$ 
    has just one part, and that part contains $A_1=B^r_{G}(c_1)$, regardless of the runner's choice of $c_1$.
  
    Suppose that the invariant~\eqref{inv:tww} is satisfied after round $i-1$, for some $1<i\le n$, so that $A_{i-1}\subset \bigcup B^{i-1}_r(c_{i-1})$.
  We describe how the flipper should play to maintain invariant \eqref{inv:tww} after round $i$.
  Apply Claim~\ref{cl:silly} to $v=c_{i-1}$, obtaining a family $\cal F\subset \cal P_i$ with $|\cal F|\le d+3$ 
  such that $B_r^{i-1}(c_{i-1})\subset B_r^i(\cal F)$. In particular, $A_{i-1}\subset \bigcup B_r^i(\cal F)$.
  
  Let $R=V(G)\setminus \bigcup B_{2r}^{i}(\cal F)$,
  and let $\cal R$ be the partition of $R$ 
  according to the equivalence relation of having the same neighborhood in 
  the set $\bigcup B_{2r-1}^{i}(\cal F)$. 
  Note that we are simultaneously considering balls around $\cal F$ with radii $r,2r-1$, and $2r$.

  \begin{claim}\label{cl:tww-types}
     We have $|\cal R|\le \pi_G(2(d+3)d^{2r-1})$,
     and every part $P\in \cal R$ is homogeneous towards all parts in $B_{2r-1}^i(\cal F)$.
  \end{claim}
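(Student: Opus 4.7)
The plan is to prove part~(2) first and use it as the main lever for part~(1). The key geometric observation is this: if $v\in R$, then the part $P_v\in\cal P_i$ containing $v$ satisfies $P_v\notin B_{2r}^i(\cal F)$, so $P_v$ lies at red-graph distance strictly greater than $2r$ from every part in $\cal F$. For any $Q\in B_{2r-1}^i(\cal F)$, there is a part in $\cal F$ at red-distance at most $2r-1$ from $Q$, so by the triangle inequality $P_v$ is at red-distance at least $(2r+1)-(2r-1)=2$ from $Q$. In particular $P_v\neq Q$ and there is no red edge between them, which by the definition of the red graph means $P_v$ and $Q$ are homogeneous in $G$. Hence $v$ is either adjacent to every vertex of $Q$ or to none, proving part~(2).

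For part~(1), let $X\subset V(G)$ be obtained by picking one representative vertex from each part in $B_{2r-1}^i(\cal F)$; then $|X|=|B_{2r-1}^i(\cal F)|$. By part~(2), for every $v\in R$ the set $N(v)\cap\bigcup B_{2r-1}^i(\cal F)$ is entirely determined by the subset of parts $Q\in B_{2r-1}^i(\cal F)$ to which $v$ is complete, which in turn is determined by $N(v)\cap X$. Thus two vertices $v,v'\in R$ are in the same class of $\cal R$ if and only if $N(v)\cap X=N(v')\cap X$. Therefore
\[
|\cal R|\;\le\;|\setof{N(v)\cap X}{v\in V(G)}|\;\le\;\pi_G(|X|).
\]

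It remains to bound $|X|=|B_{2r-1}^i(\cal F)|$. Since the red graph of $\cal P_i$ has maximum degree at most $d$, starting from $\cal F$ we reach at most $|\cal F|\cdot(1+d+d^2+\cdots+d^{2r-1})$ parts in $2r-1$ steps, which for $d\ge 2$ is at most $2|\cal F|\,d^{2r-1}\le 2(d+3)d^{2r-1}$ using $|\cal F|\le d+3$. Combining with the previous display gives $|\cal R|\le\pi_G(2(d+3)d^{2r-1})$, as claimed. The only step that requires any real care is making sure the factor of $2$ in $2(d+3)d^{2r-1}$ genuinely absorbs the geometric-series sum, which is immediate from $\sum_{k=0}^{2r-1}d^k\le 2d^{2r-1}$ for $d\ge 2$; the small-$d$ regime only affects the value of the $O(\cdot)$ constant hidden in the $d^{O(r)}$ bound that this claim eventually feeds into.
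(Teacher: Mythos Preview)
Your proof is correct and follows essentially the same approach as the paper's: establish homogeneity of parts in $\cal P_i\setminus B_{2r}^i(\cal F)$ towards parts in $B_{2r-1}^i(\cal F)$, pick one representative per part of $B_{2r-1}^i(\cal F)$, and use the definition of $\pi_G$ together with the max-degree bound on the red graph to bound $|X|$. Your triangle-inequality justification for the homogeneity is slightly more explicit than the paper's (which simply asserts the fact), but the argument is otherwise identical in structure and detail.
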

  \begin{proof}
    The ball of radius $2r-1$
    in the red graph of $\cal P_i$ around 
    a part $P$ consists of at most 
    $$1+d+\ldots+d^{2r-1}\le 2d^{2r-1}$$
    parts.
    As $B_{2r-1}^{i}(\cal F)$ 
    is a union of at most $|\cal F|\le d+3$ many such balls, therefore,
    $$|B_{2r-1}^{i}(\cal F)|\le 2(d+3)d^{2r-1}.$$ 

  Pick a set $X$ which contains one representative of each part in $B_{2r-1}^{i}(\cal F)$. Then $|X|\le 2(d+3)d^{2r-1}$.
  Note that every part $P\in \cal P_i\setminus B_{2r}^{i}(\cal F)$ is homogeneous 
  towards every part $Q\in \cal B_{2r-1}^i(\cal F)$.
  Therefore, the neighborhood of a vertex $v\in R$ in $\bigcup B_{2r-1}^i(\cal F)$
  is completely determined by $N(v)\cap X$.
  By definition of $\pi_G$, there are at most 
  $\pi_G(2(d+3)d^{2r-1})$ different neighborhoods of vertices $v\in R$ in $X$, so $|\cal R|\le \pi_G(2(d+3)d^{2r-1})$.
  \end{proof}
  
  Let $\cal P_i'=B_{2r}^i(\cal F)\cup \cal R$.
  Then $\cal P_i'$ is a partition of $V(G)$.
We have $|B_{2r}^i(\cal F)|\le 2(d+3)\cdot  d^{2r}$,
  and it follows from Claim~\ref{cl:tww-types} that 
  \begin{align}\label{eq:flipper-bound}
    |\cal P_i'|\le |B_{2r}^i(\cal F)|+|\cal R|\le 2(d+3)\cdot  d^{2r}+ \pi_G(2(d+3)d^{2r-1})\le O(d)^{2r+1}+ \pi_G(O(d)^{2r})\\\le \pi_G(d^{O(r)}).
  \end{align}

  The flipper plays the $\cal P_i'$-flip $G_i$ of $G$
  obtained by flipping any pair $P,Q$ of distinct parts of $\cal P_i'$ such that
  the pair $P,Q$ is complete in $G$.
  
  Now, the runner makes his move, and picks a vertex $c_i\in A_{i-1}$,
  and we set $A_i:=B_r^{G_i}(c_i)$.
  We now prove that the invariant \eqref{inv:tww}
  holds.
  
  \begin{claim}\label{claim:tww-inv}
  $B_r^{G_i}(c_i)\subset \bigcup B_{r}^{i}(c_i)$.
  \end{claim}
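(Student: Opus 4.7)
The goal is to show $B_r^{G_i}(c_i)\subset \bigcup B_r^i(c_i)$, where $P_{c_i}$ denotes the part of $\mathcal P_i$ containing $c_i$. First observe that $c_i\in A_{i-1}\subset \bigcup B_r^i(\mathcal F)$, so $P_{c_i}\in B_r^i(\mathcal F)$. Hence any red-ball $B_j^i(P_{c_i})$ is contained in $B_{j+r}^i(\mathcal F)$; in particular, as long as $j\le r-1$, we have $B_j^i(P_{c_i})\subset B_{2r-1}^i(\mathcal F)$. The proof proceeds by tracking, along a path of length $\le r$ in $G_i$ starting from $c_i$, the part of $\mathcal P_i$ occupied by each successive vertex, and showing that it moves by at most one step in the red graph of $\mathcal P_i$ at each transition.

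\textbf{Key Lemma.} \emph{If $uv\in E(G_i)$ and the part $P_u$ of $\mathcal P_i$ containing $u$ lies in $B_{2r-1}^i(\mathcal F)$, then the part $P_v$ containing $v$ is either equal or red-adjacent to $P_u$ in $\mathcal P_i$.} Since $P_u\in B_{2r}^i(\mathcal F)$, $P_u$ is itself a part of the coarser partition $\mathcal P_i'$. If $v\in P_u$ we are done. Otherwise, $v$ lies in some other part $Q$ of $\mathcal P_i'$, and since $uv\in E(G_i)$, the pair $(P_u,Q)$ must be non-homogeneous in $G$ — otherwise, whether $(P_u,Q)$ was complete (hence flipped) or anticomplete (hence unchanged) in $G$, there would be no edge between $P_u$ and $Q$ in $G_i$. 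Now split: if $Q\in B_{2r}^i(\mathcal F)$, then $Q$ is a part of $\mathcal P_i$ and non-homogeneity in $G$ is precisely a red edge between $P_u$ and $Q$ in $\mathcal P_i$, so $P_v=Q$ is red-adjacent to $P_u$. If $Q\in\mathcal R$, then Claim~\ref{cl:tww-types} says $Q$ is homogeneous towards every part in $B_{2r-1}^i(\mathcal F)$, in particular towards $P_u$, which contradicts non-homogeneity. This subcase is precisely why the ``buffer'' in the definition of $\mathcal P_i'$ uses radius $2r$ (respectively $2r-1$) around $\mathcal F$.

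\textbf{Induction.} Given a path $c_i=v_0,v_1,\dots,v_\ell=w$ in $G_i$ with $\ell\le r$, I will prove by induction on $j$ that $P_{v_j}\in B_j^i(c_i)$. The base $j=0$ is trivial. For the step, assuming $P_{v_j}\in B_j^i(c_i)\subset B_{j+r}^i(\mathcal F)$ with $j\le \ell-1\le r-1$, we get $P_{v_j}\in B_{2r-1}^i(\mathcal F)$, and the Key Lemma applied to the edge $v_jv_{j+1}\in E(G_i)$ yields $P_{v_{j+1}}\in B_1^i(P_{v_j})\subset B_{j+1}^i(c_i)$. Taking $j=\ell$ gives $w\in\bigcup B_\ell^i(c_i)\subset \bigcup B_r^i(c_i)$, which proves Claim~\ref{claim:tww-inv} and, together with the bound~\eqref{eq:flipper-bound}, maintains the invariant~\eqref{inv:tww} for a $k$-flip with $k\le \pi_G(d^{O(r)})$.

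\textbf{Expected obstacle.} The only real subtlety is choosing the right radii in the definition of $\mathcal P_i'$: the Key Lemma needs every class of $\mathcal R$ to be homogeneous towards the part of $\mathcal P_i$ currently occupied by the runner, which at any time during the $\le r$ steps of the path can be as far as red-distance $2r-1$ from $\mathcal F$. This is exactly why $\mathcal R$ is defined via homogeneity towards $\bigcup B_{2r-1}^i(\mathcal F)$ while the singletons of $\mathcal P_i'$ are taken from $B_{2r}^i(\mathcal F)$ — the one-unit gap is what makes the case $Q\in\mathcal R$ impossible throughout the induction, and is the crux of the whole argument.
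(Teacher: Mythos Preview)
Your proof is correct and follows essentially the same approach as the paper's own argument: both proceed by induction along a path of length at most $r$ in $G_i$ from $c_i$, showing that each step moves the $\mathcal P_i$-part by at most one red edge, with the crux being that a $G_i$-edge between a part in $B_{2r-1}^i(\mathcal F)$ and a part of $\mathcal P_i'$ forces non-homogeneity, which rules out the $\mathcal R$ case via Claim~\ref{cl:tww-types}. Your explicit formulation of the Key Lemma and the remark on the $2r$ versus $2r-1$ buffer are in fact slightly cleaner than the paper's inline version, which conflates parts of $\mathcal P_i$ and $\mathcal P_i'$ when writing ``$A\in\mathcal R$''.
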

  \begin{proof}
  Note that $c_i\in A_{i-1}\subset \bigcup B_{r}^i(\cal F)$.\szfuture{figure}
  
    Let $v\in B_r^{G_i}(c_i)$ be a vertex at distance $k$ from $c_i$ in $G_i$,
    for some $0\le k\le r$. We show by induction on $k$ that if $A$ 
    is the part of $\cal P_i$ that contains $v$, then $A\in B_k^i(c_i)$.
    For $k=r$, this immediately yields the claim.
  
  For $k=0$ the statement holds trivially, so suppose that $k>0$ and the statement holds for $k-1$.
  Let $w$ be a vertex that is a neighbor of $v$ and is at distance $k-1$ from $c_i$ in $G_i$,
  and let $B$ be the part of $\cal P_i$ that contains $w$.
  By inductive assumption, $B\in B_{k-1}^i(c_i)$, and since $c_i\in \bigcup B_r^i(\cal F)$,
   it follows that $B\in B_{r+k}^i(\cal F)$. As $k<r$, we have that $B\in B_{2r-1}^i(\cal F)$.
  
   Since $A$ and $B$ are connected by an edge in $G_i$, 
   it must be the case that $A$ and $B$ are not homogeneously connected in $G$.
   As $B\in B_{2r-1}^i(\cal F)$, it cannot be that $A\in \cal R$ 
   (since all parts in $\cal R$ are homogeneous towards all parts in $B_{2r-1}^i(\cal F)$
   by Claim~\ref{cl:tww-types}), so $A\in B_{2r}^i(\cal F)$.
   As $A$ and $B$ are not homogeneously connected in $G$,
   $A$ is a neighbor of $B$ in the red graph of $\cal P_i$.
   As $B\in B_{k-1}^i(c_i)$, it follows that $A\in B_k^i(c_i)$, finishing the inductive step.
  \end{proof}
  
  The invariant \eqref{inv:tww} now follows from Claim~\ref{claim:tww-inv}.
  In particular, if the flipper continues playing this way,
  at the end of round $n$ we have that $|A_n|=1$, so flipper wins.

  As in every step, the flipper plays a $\cal P_i'$-flip,
  the 
  first inequality in \eqref{eq:tww-bound} follows by \eqref{eq:flipper-bound}.
  By Theorem~\ref{thm:tww-nbd-complexity}, 
  we have 
  $$\pi_G(d^{O(r)})\le 2^{d+O(\log d)}\cdot d^{O(r)}\le 2^d\cdot d^{O(r)},$$
  proving the second inequality in \eqref{eq:tww-bound}.
  \end{proof}

\subsection{Ordered flip-width}\label{app:ordered flip-width}
We prove Lemma~\ref{lem:ordered flip-width}, repeated below.
\begin{lemma*}[\ref{lem:ordered flip-width}]
  Fix $r\in \N\cup\set{\infty}$ and an ordered graph $G=(V,E,<)$.
  Then \[\sqrt{\fw_r(G)+1}\quad\le\quad fw_{r}^<(G)+1\quad\le\quad \fw_{3r+2}(G)+1.\]
 \end{lemma*}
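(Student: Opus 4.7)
The plan is to prove the two inequalities separately by simulating a winning strategy of the flipper between the two game variants, translating each $k$-flip (respectively $k$-cut-flip) in one game into a flip of the appropriate type in the other, so that runner moves on one side are contained in those on the other and the trapping condition transfers.

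For the right inequality $\fw_r^<(G)+1 \le \fw_{3r+2}(G)+1$, given a winning strategy for the binary structure flipper at width $k$ and radius $3r+2$, I would construct a strategy for the ordered flipper at width $k$ and radius $r$ as follows. When the binary flipper announces a $k$-flip $H$ of $(V,E,<)$ with partition $\cal P = \{P_1,\ldots,P_k\}$, the ordered flipper responds with the cut-flip $(V, E^H, S)$ where $S := \{\min P_1, \ldots, \min P_k\}$, so $|S| \le k$. The central claim is that for any two vertices $u, v$ in a common $\sim_S$-class, $\dist_{\mathrm{Gaifman}(H)}(u,v) \le 2$. Indeed, each part of $\cal P$ is a clique in $\mathrm{Gaifman}(H)$ (the within-part order is either preserved or entirely reversed by the flip, in either case yielding a Gaifman edge between any two vertices of the part), so the case $P_u = P_v$ is trivial. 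When $P_u \ne P_v$ and $u < v$, write $a := ((P_u,P_v)\in {<^F})$ and $b := ((P_v,P_u)\in {<^F})$; the Gaifman-edge condition $\neg a \vee b$ for $\{u,v\}$ fails only in the configuration $a = \mathrm{true}$, $b = \mathrm{false}$. In that configuration, $w := \min P_v$ lies strictly below $u$ (since $w \in S$ and $w \notin [u,v]$ by $u \sim_S v$), so the Gaifman-edge condition for $\{w,u\}$ becomes $\neg b \vee a = \mathrm{true}$, and the edge $\{w,v\}$ is present by the within-part clique, exhibiting $w$ as a common Gaifman neighbor of $u$ and $v$. With this diameter bound, any ordered weighted path of total weight $\le r$ (at most $r$ weight-one $E^H$-edges bridging at most $r+1$ $\sim_S$-classes, plus at most one within-class move per class) lifts to a Gaifman path of length at most $r + 2(r+1) = 3r+2$ in $H$. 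Finally, a binary-trapped vertex $v$ must have $\{v\}$ as its part in $\cal P$ (otherwise the within-part clique supplies a Gaifman neighbor), which forces $v \in S$; then $\{v\}$ is a singleton $\sim_S$-class and the runner is isolated in the cut-flip as well.

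For the left inequality $\sqrt{\fw_r(G)+1} \le \fw_r^<(G)+1$, given a winning strategy for the ordered flipper at width $k$ and radius $r$, I would construct a strategy for the binary flipper at width $O(k^2)$ (bounded by $(k+1)^2-1$ with careful accounting) and the same radius. Each cut-flip $(V, E', S)$ is translated to the binary flip $H^*$ defined by $E^{H^*} := E'$ and by flipping $<$ so that every order edge between distinct $\sim_S$-classes is removed while within-class order is preserved, implemented via the common refinement of the edge-partition $\cal P_E$ (size $\le k$) and the $\sim_S$-class partition (size $\le 2k+1$). By construction, $\mathrm{Gaifman}(H^*)$ coincides as an unweighted graph with the weighted graph of the cut-flip, and because $\sim_S$-edges carry weight zero, weighted distance is pointwise at most Gaifman distance; hence every binary runner move of Gaifman length $\le r$ in $H^*$ is a valid ordered runner move of weighted weight $\le r$, and isolation in $H^*$ implies isolation in the weighted graph, so the ordered flipper's trapping strategy transfers directly to the binary flipper.

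The main obstacle is the within-class Gaifman-diameter bound in the right inequality: unlike the interval-based cut-points of an ordered cut-flip, the partition underlying a general binary $k$-flip need not consist of intervals in $<$, so choosing $S$ of size at most $k$ and bounding the resulting diameter requires a careful analysis of how the asymmetric flip bits $a$ and $b$ interact with the original order. The key insight is that the Gaifman-edge conditions for the pairs $\{u,v\}$ and $\{u, \min P_v\}$ (with $\min P_v < u < v$) differ precisely by a swap of $a$ and $b$, so whenever the direct edge is absent the detour through $\min P_v$ is guaranteed to exist; this is what makes a single representative per part of $\cal P$ suffice and yields the factor $3r+2$ in the radius blowup.
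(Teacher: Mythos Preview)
Your proof is correct and takes essentially the same approach as the paper. For the right inequality, both you and the paper choose one representative per part of the partition $\cal P$ (you take $\min$, the paper takes $\max$) and prove that any two $\sim_S$-equivalent vertices are at Gaifman distance at most $2$; your case analysis via the flip bits $a,b$ is a bit more explicit than the paper's, but equivalent. For the left inequality, both constructions use the common refinement of the edge-flip partition and the $\sim_S$-interval partition to kill all cross-interval order edges, arriving at the same $(k+1)^2-1$ bound.
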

 We first study the effects of applying flips 
 to a set equipped with a total order.
\begin{lemma}\label{lem:order-flips}
  Let $L=(V,<)$ be a total order and 
  let $L'$ be a $k$-flip of $L$ (as a binary structure).
  Then there is a set  $S\subset V$ with $|S|\le k$
  such that any two vertices of $V\setminus S$ 
  with no vertex of $S$ between them
  are at distance at most $2$ in the Gaifman graph of $L'$.
\end{lemma}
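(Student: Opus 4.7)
I would prove the lemma by a direct construction: let $S$ consist of the $<$-minimum of each part of the partition used to define the flip. So first I would unpack what an $L$-flip looks like. By definition, $L'$ is an $L$-flip given by a partition $\cal P=\{P_1,\dots,P_m\}$ of $V$ with $m\le k$ and a binary relation $<_F\subseteq \cal P\times\cal P$, namely $<_{L'}=<_{L}\triangle\bigcup_{(P_i,P_j)\in <_F}P_i\times P_j$. Writing $a_{ij}\in\{0,1\}$ for the indicator of $(P_i,P_j)\in <_F$, a short calculation gives an explicit edge criterion for the Gaifman graph of $L'$: for $u\in P_i$, $v\in P_j$ with $u<v$, $uv$ is an edge iff $i=j$, or $a_{ij}=0$, or $a_{ji}=1$. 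Equivalently, $uv$ is not an edge iff $i\ne j$, $a_{ij}=1$, and $a_{ji}=0$. The same-part case uses the observation that $<_{L'}$ restricted to $P_i$ equals either $<$ or its reverse, so in particular remains a total order on $P_i$, which forces the Gaifman graph to be complete on each part.

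Next I would set $S:=\{\min_{<}P_i\,:\,1\le i\le m,\ P_i\ne\emptyset\}$, so that $|S|\le m\le k$. Given $u<v$ in $V\setminus S$ with no element of $S$ strictly between them, put $u\in P_i$ and $v\in P_j$. If $i=j$, or if the edge criterion above places $uv$ in $L'$, then $u$ and $v$ are already at distance $1$. The only remaining ``bad'' case is $i\ne j$, $a_{ij}=1$, $a_{ji}=0$.

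In the bad case I would claim that $m_j:=\min_{<}P_j\in S$ is a common neighbour of $u$ and $v$. Since $v\in P_j\setminus S$, we have $m_j\ne v$ and hence $m_j<v$. Since $m_j\in S$ and $m_j\notin(u,v)$ by hypothesis, combined with $m_j<v$ this forces $m_j\le u$, and $m_j\ne u$ because $m_j\in P_j$ while $u\in P_i$; thus $m_j<u$. Now the edge $m_j v$ is within $P_j$, hence present, and the edge $m_j u$ (with $m_j\in P_j$, $u\in P_i$, $m_j<u$) is present by the criterion iff it is not the case that $a_{ji}=1$ and $a_{ij}=0$, which holds since $a_{ji}=0$. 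So $\dist_{L'}(u,v)\le 2$, completing the argument.

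The main obstacle is not really an obstacle: it is making the edge criterion above precise and identifying the correct choice for $S$. The only content in the choice is that the witness $m_j$ must be drawn from $P_j$ (not $P_i$), since the asymmetry $a_{ij}=1$, $a_{ji}=0$ only permits $P_j$-to-$P_i$ crossings that go upwards in $<$; taking part-maxima instead of part-minima would work symmetrically with the roles of $i$ and $j$ exchanged.
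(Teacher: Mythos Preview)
Your proof is correct and follows essentially the same approach as the paper: take $S$ to be a set of extremal elements of the parts of $\cal P$, observe that each part is a clique in the Gaifman graph of $L'$, and in the cross-part non-edge case use an extremum of one part as a common neighbour. The only cosmetic difference is that the paper uses the $<$-maxima of the parts (and in the bad case $a<b$, $a\in A$, $b\in B$ with $(A,B)$ flipped, takes $m_a=\max A$ as the common neighbour), whereas you use the minima and $m_j=\min P_j$; as you already note, these are symmetric choices.
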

\begin{proof}
  Let $\cal P$ be a partition of $V$ with $|\cal P|\le k$
  such that $L'$ is a $\cal P$-flip of $L$.
  Note that the parts of $\cal P$ need not be convex in the order $<$; this is the main challenge here.
  Let $S=\setof{\max(A)}{A\in\cal P}$ be the set of $<$-maximal elements 
  of each part of $\cal P$.
  We claim that $S$ satisfies the required condition.

  Observe first that each part $A\in\cal P$ forms a clique 
  in the Gaifman graph of $L'$. 
  Indeed, the relation $<'$ of $L'$ restricted to $A$ 
   either coincides with $<$, if $(A,A)$ is not flipped
   in the $\cal P$-flip producing $L'$ from $L$,
or coincides with $\ge$, if $(A,A)$ is flipped. In any case, $<'$ is a total  relation on $A$.

Let $a,b\in V\setminus S$ be vertices belonging to different parts $A,B$ of $\cal P$, with no element of $S$ between $a$ and $b$.
We show that $a$ and $b$ are either adjacent, or have a common neighbor in the Gaifman graph of $L'$.
By symmetry, suppose that $a<b$.
If the pair $(A,B)$ is not flipped 
in the $\cal P$-flip producing $L'$ from $L$,
then $a<'b$ in $L'$, so $a$ and $b$ are adjacent in the Gaifman graph of $L'$.

Suppose that the pair $(A,B)$ is flipped. Let $m_a:=\max(A)$. Then $a<m_a$ since $m_a\in S$ and $a\notin S$.  As $m_a$ is not between $a$ and $b$,
we have that $b<m_a$. 
Since $(A,B)$ is flipped, we have that 
$m_a<'b$ in $L'$. Since $a$ and $m_a$ are in the same part of $A$,
they are adjacent in the Gaifman graph of $L'$.
 Therefore, $m_a$ is a common neighbor of $a$ and of $b$
in the Gaifman graph of $L'$.
\end{proof}
\begin{corollary}\label{cor:order-flip}
  Let $G=(V,E,<)$ be an ordered graph, and let $G'=(V,E',<')$ be a $k$-flip of $G$,
  in the sense of binary structures. 
  Then there is a $k$-cut-flip $G''=(V,E',S)$ of $G$,
  such that for all $u,v\in V$, if $u$ and $v$
   are connected by a path $\pi$ of total weight at most $r$ in $G''$,
   then $u$ and $v$ are within distance at most $3r+2$ 
   in the Gaifman graph of $G'$.
\end{corollary}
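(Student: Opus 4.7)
The plan is to obtain the cut-flip $G''$ by applying Lemma~\ref{lem:order-flips} to the order reduct of $G'$, and then translate weight into Gaifman-distance edge by edge. Concretely, let $\cal P$ be a partition with $|\cal P|\le k$ such that $G'$ is a $\cal P$-flip of $G$. The restriction to the order symbol makes $L'=(V,<')$ a $\cal P$-flip of the total order $L=(V,<)$, so Lemma~\ref{lem:order-flips} yields a set $S\subset V$ with $|S|\le k$ such that any two vertices of $V\setminus S$ that are not separated by an element of $S$ are within distance $2$ in the Gaifman graph of $L'$. I would define the $k$-cut-flip $G'':=(V,E',S)$; this is valid since $E'$ is the edge relation of a $k$-flip of $(V,E)$ (again using the partition~$\cal P$), and $|S|\le k$.

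The key translation is the following. Consider a path $v_0,v_1,\ldots,v_m$ of total weight at most $r$ in the weighted graph of $G''$. Each weight-$1$ edge $v_iv_{i+1}$ satisfies $v_iv_{i+1}\in E'$, so it is an edge of the Gaifman graph of $G'$. Each weight-$0$ edge $v_iv_{i+1}$ corresponds to $v_i\sim_S v_{i+1}$. Since elements of $S$ are singletons for $\sim_S$, any non-trivial $\sim_S$-equivalence involves only vertices of $V\setminus S$, so both endpoints of such an edge lie in $V\setminus S$ with no element of $S$ between them. By Lemma~\ref{lem:order-flips} applied to this pair, they are connected by a path of length at most $2$ in the Gaifman graph of $L'$, which is a subgraph of the Gaifman graph of $G'$.

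To finish, I would group consecutive weight-$0$ edges into maximal ``runs''. A path of weight at most $r$ contains at most $r$ weight-$1$ edges and hence at most $r+1$ maximal weight-$0$ runs (one before the first weight-$1$ edge, one between each consecutive pair, and one after the last). Because $\sim_S$ is an equivalence relation, each such run connects two vertices in the same $\sim_S$-class; by the previous paragraph this contributes at most $2$ to the distance in the Gaifman graph of $G'$. Summing gives a total distance of at most $r\cdot 1+(r+1)\cdot 2=3r+2$, which is the desired bound.

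I do not foresee a major obstacle: Lemma~\ref{lem:order-flips} does all the hard work. The only point deserving care is the bookkeeping around $\sim_S$, namely that non-trivial $\sim_S$-edges avoid $S$ and that maximal weight-$0$ runs are bounded by $r+1$; both are straightforward once the decomposition of the weighted path into weight-$0$ runs and weight-$1$ edges is made explicit.
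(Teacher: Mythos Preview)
Your proposal is correct and follows essentially the same approach as the paper: apply Lemma~\ref{lem:order-flips} to the order reduct $L'=(V,<')$ to obtain $S$, set $G'':=(V,E',S)$, and then decompose a weight-$r$ path into at most $r$ weight-$1$ edges and at most $r+1$ maximal weight-$0$ runs, each of which contributes distance at most $2$ in the Gaifman graph of $G'$. Your write-up is in fact more explicit than the paper's about why the endpoints of a weight-$0$ run lie in $V\setminus S$ and why the run count is at most $r+1$.
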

\begin{proof}Let $L=(V,<)$ be the total order underlying $G$, and 
  let $G'=(V,E',<')$ be a $k$-flip of $G$.
   Then $L':=(V,<')$ is a $k$-flip of $L$.
   Apply Lemma~\ref{lem:order-flips}, yielding a set $S$.
    Let $G'':=(V,E',S)$.
   Then $G''$ is a $k$-cut-flip of $G$.
  We check that it satisfies the condition.

  Every path of total weight $0$ 
  can be replaced by a path of length $\le 2$ in the Gaifman graph of $G'$, 
  by Lemma~\ref{lem:order-flips}.
A path of total weight $r$ decomposes into 
at most $r+1$ paths of total weight $0$ and at most $r$ edges of weight $1$.
By replacing the paths of total weight $0$ as above, we get a path of length 
at most $3r+2$.
\end{proof}

\begin{proof}[Proof of Lemma~\ref{lem:ordered flip-width}]
  Fix $G=(V,E,<)$ and $r$.

  To prove the second inequality, 
we show that if $\fw_{3r+2}(G)\le k$ (in the sense of binary structures),
then $\fw_{r}^<(G)\le k$ (in the sense of ordered graphs).
This is done by transferring the strategy (cf. Section~\ref{sec:transfer}),
using Corollary~\ref{cor:order-flip}. 

More precisely, suppose $\fw_{3r+2}(G)\le k$, so the runner wins the  flipper game on $G$,
as a binary structure, of radius $3r+2$ and width $k$.
The flipper copy their winning strategy when playing the radius-$r$ ordered flipper game on $G$, as follows:
Whenever the flipper announces a $k$-flip $G'$ of $G$
in the flipper game, 
then in the ordered flipper game,  the flipper announces the $k$-cut-flip
$G''=(V,E',S)$ of $G$, as given by Corollary~\ref{cor:order-flip}.
By an argument analogous to Lemma~\ref{lem:strategy-transfer},
Corollary~\ref{cor:order-flip} shows that this way, the flipper wins the ordered flipper game on $G$ as an ordered graph, so $\fw_r^<(G)\le k$.

\medskip
  For the first inequality,
  we show that if $(V,E',S)$ is a $k$-cut-flip of $G$,
  then there is a $(k^2+2k)$-flip $G'=(V,E',<')$ of $G$
  such for all distinct $u,v\in V$, if 
  there is no $s\in S$ with $u\le s\le v$ then $u<'v$ or $v<'u$.
  Namely, consider the partition $\cal P$ of $V$ with $|\cal P|\le k$
  such that $(V,E')$ is a $\cal P$-flip of $(V,E)$,
  let $\cal Q$ be the partition of $V$ 
  that partitions $S$ into singletons 
  and $V-S$ into maximal $<$-intervals that are disjoint with $S$,  
  and let $\cal R$ be the common refinement of $\cal Q$ and $\cal R$.
  Then $|\cal Q|\le |\cal P|\cdot (|S|+1)+|S|\le k(k+1)+k=k^2+2k$.
  Define $<'$ so that $u<'v$ if and only if $u$ and $v$ are in the same part of $\cal Q$ and $u<v$.
Now, $G'=(V,E',<')$ is a $\cal Q$-flip of $G$, and has the desired properties.

It follows that a winning strategy for the flipper in the ordered flipper game on $G$ with radius $r$ and width $k$ can be transferred into a winning strategy for the flipper in the flipper game on $G$ with radius $r$ and width $k^2+2k=(k+1)^2-1$, 
by replacing each $k$-cut-flip played by the flipper by the $(k^2+2k)$-flip $G'$ as above.
Hence, $\fw_r(G)\le (\fw_r^<(G)+1)^2-1$, proving the first inequality.
\end{proof}

  \section{Closure under transductions}
\label{app:transductions}
In this section, we show that bounded flip-width is preserved by first-order transductions,
and that bounded $\infty$-flip-width is preserved by $\cmso$ transductions.

We work with $c$-colored graphs $G$.
We consider formulas $\phi(x,y)$ of first-order logic, or $\cmso$.
The same arguments work to other logics with suitable locality properties.

\begin{theorem*}[\ref{thm:interpretations}]
  There is a computable function $T_q\from\N\to\N$ with the following property.
  Fix a radius $r\ge 1$ and a first-order formula $\phi(x,y)$ of quantifier rank $q$ in the signature of $c$-colored graphs, for some $c\ge 0$.
  Set $r':=2^q\cdot r$.
  Then 
for every $c$-colored graph $G$ we have 
\begin{align*}
  \fw_r(\phi(G))\le T_q(\fw_{r'}(G)\cdot c).  
\end{align*}
  In particular, if $\CC$ has bounded flip-width, then $\phi(\CC)$ has bounded flip-width.
\end{theorem*}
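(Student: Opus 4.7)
The plan is to transfer winning strategies for the flipper from $G$ to $H := \phi(G)$ via Lemma~\ref{lem:strategy-transfer}. Setting $s := 2^q$ and $r' := rs = 2^q r$, it suffices to exhibit, for each $k$-flip $G'$ of $G$ (where $k := \fw_{r'}(G)$), some $\ell$-flip $H'$ of $H$ with $\ell$ depending only on $q, c, k$, such that every edge $uv \in E(H')$ satisfies $\dist_{G'}(u,v) \le s$. Lemma~\ref{lem:strategy-transfer} then yields $\fw_r(H) \le \ell$.

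The key ingredient is Gaifman's locality theorem in the form: any first-order formula $\phi(x,y)$ of quantifier rank $q$ is $s$-local with $s = 2^q$, meaning that over any structure $B$ in a given signature one can label every vertex by its quantifier-rank-$q$ $1$-type (a label from a finite set whose size is bounded by a computable function $T_q$ of the number of unary predicates), and whenever $\dist_B(u,v) > s$, the value of $\phi(u,v)$ in $B$ depends only on the labels of $u$ and $v$. I will apply this not to $G$ directly but to the $k$-flip $G'$: enrich the signature by $k$ new unary predicates marking a partition $\mathcal Q$ of $V(G)$ witnessing that $G'$ is a $\mathcal Q$-flip of $G$, and view $G'$ together with these marks as a $(c+k)$-colored graph $G^*$ whose Gaifman graph is exactly $G'$. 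I then rewrite $\phi$ as an equivalent formula $\phi^*$ of the same quantifier rank $q$ over the enriched signature, by replacing each atom $E(z,t)$ with a boolean combination of $E_{G^*}(z,t)$ and the new color predicates that inverts the flip pattern; this gives $G^* \models \phi^*(u,v) \iff G \models \phi(u,v)$ for all $u,v$.

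Applying Gaifman to $\phi^*$ on $G^*$ yields a labeling $\lambda$ of $V(G)$ into at most $T_q(c+k)$ labels such that for all $u,v$ with $\dist_{G'}(u,v) > s$, whether $G \models \phi(u,v)$ is determined by $\lambda(u), \lambda(v)$. Let $\mathcal P$ be the partition of $V(H)$ into the fibers of $\lambda$, and define the $\mathcal P$-flip $H'$ of $H$ by flipping an (unordered) pair of parts $A,B$ exactly when some (equivalently, by locality, every) pair $u \in A, v \in B$ with $\dist_{G'}(u,v) > s$ satisfies $uv \in E(H) = E(\phi(G))$. By construction, every edge $uv \in E(H')$ satisfies $\dist_{G'}(u,v) \le s$, as required. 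Absorbing constants gives $\fw_r(\phi(G)) \le T_q(\fw_{r'}(G) \cdot c)$.

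The two points I expect to require care are, first, verifying the well-definedness of the flip $H'$, which comes down to checking that $\phi$ and its symmetric companion $\phi(y,x)$ are both $s$-local of the same rank and that flipping is performed symmetrically in $A,B$; this is routine once the symmetrization $\phi(u,v) \lor \phi(v,u)$ is handled from the start. The second, more delicate point is the quantifier-free case $q = 0$ needed for the ``moreover'' clause $T_0(k) = k$: here no Gaifman reduction is invoked, and the partition $\mathcal P$ is directly the $(c+k)$-coloring from the enriched signature, whose size equals $c+k$ rather than the tower $T_q(c+k)$. The truly hard combinatorial work has already been isolated into Lemma~\ref{lem:strategy-transfer}; the main remaining obstacle is book-keeping the constants so that the dependence on $r$ stays linear in $2^q$ and the dependence on $c$ stays inside the argument of $T_q$.
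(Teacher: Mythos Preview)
Your proposal is correct and follows essentially the same route as the paper: for each $k$-flip $G'$ of $G$, you mark the partition with new unary predicates, rewrite $\phi$ as a same-rank formula $\phi^*$ over the enriched signature by inverting the flip inside each edge atom, apply Gaifman locality to $\phi^*$ on $G^*$ to obtain a bounded type-partition, flip $\phi(G)$ along that partition, and then invoke Lemma~\ref{lem:strategy-transfer}. This is exactly the argument carried out in the paper's Lemma~\ref{lem:response}; the only cosmetic difference is that the paper encodes the enriched structure with $k\cdot c$ colors (pairs of flip-part and original color) rather than $c+k$ predicates, which is why the bound in the statement reads $T_q(k\cdot c)$ rather than $T_q(c+k)$.
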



Say that a formula $\phi(x,y)$ is \emph{$r$-local}, where $r\in\N$,
if the following condition holds:
there is a finite set $T_\phi$ of \emph{local types} such that for every colored graph $G$
each vertex $v$ of $G$ can be labelled by an element $\ltp(v)\in T_\phi$
in such a way that 
for any pair of vertices $(a,b)$ with distance larger than $r$ in $G$,
whether or not $\phi(a,b)$ holds in $G$ depends only on $\ltp(a)$ and  $\ltp(b)$. More precisely, there is
a binary relation $\Phi\subset T_\phi\times T_\phi$ (which may depend on $G$) such that
for all vertices $a,b$ with $\dist(a,b)>r$,
\[G\models \phi(a,b)\iff (\ltp(a),\ltp(b))\in \Phi.\]

We say that $\phi(x,y)$ is $\infty$-local if the above condition holds,
where instead of $\dist(a,b)>r$ we require that $a$ and $b$ are in different connected components of the graph.

\begin{fact}\label{fact:fo-locality}\szfuture{cite}
  Fix $c\ge 0$ and consider the signature of $c$-colored graphs.
  Every formula $\phi(x,y)$ of first-order logic
  is $2^q$-local, where $q$ is the quantifier rank of $\phi$.
  The number $|T_\phi|$ of local types  is at most
  \[T_q(k):=\underbrace{2^{2^{.^{.{^{.^{2^m}}}}}}}_{{\rm height\,} q}\]
  where $m$ is the number of $c$-colored graphs with vertex set $\set{1,\ldots,q+1}$.
\end{fact}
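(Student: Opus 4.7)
The plan is to establish Fact~\ref{fact:fo-locality} as a standard instance of Hanf/Gaifman locality for first-order logic, proved via Ehrenfeucht-Fra\"iss\'e (EF) games with all constants tracked. For a $c$-colored graph $G$ and a vertex $v$, I would define the local type $\ltp(v) \in T_\phi$ to be the $\equiv_q$-equivalence class of the pointed structure $(B_G^{2^{q-1}}(v), v)$, where $\equiv_q$ denotes indistinguishability by first-order sentences of quantifier rank at most $q$. Since $\equiv_q$ has finite index on pointed $c$-colored graphs, $T_\phi$ is finite.

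The key lemma, by induction on $q$, is: if two pairs $(a, b)$ and $(a', b')$ of vertices of $G$ satisfy $\dist_G(a,b) > 2^q$, $\dist_G(a', b') > 2^q$, $\ltp(a) = \ltp(a')$, and $\ltp(b) = \ltp(b')$, then $(G,a,b) \equiv_q (G,a',b')$. Duplicator's strategy in the $q$-round EF game is to respond locally: when Spoiler plays a vertex in round $i$ that lies within distance $2^{q-i}$ of the currently ``active'' pebble near $a$ (or near $b$), Duplicator responds using the witnessed $\equiv_{q-i}$-isomorphism of the local pointed structure; a Spoiler move far from both local regions can be mirrored symmetrically, since the $\equiv_{q-i}$-type of the ``far'' part matches by a similar local-type argument. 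The disjointness of the $2^{q-1}$-balls around $a$ and $b$, guaranteed by the hypothesis $\dist(a,b) > 2^q$, lets the two local regions be handled independently, and the halving of the locality radius per round matches the decreasing quantifier budget. Since $\phi$ has quantifier rank $q$, whether $G \models \phi(a,b)$ depends only on the $\equiv_q$-type of $(G,a,b)$, hence for pairs at distance~$>2^q$ only on $(\ltp(a), \ltp(b))$; this gives the relation $\Phi \subseteq T_\phi \times T_\phi$ required by the definition of $2^q$-locality.

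For the quantitative bound $|T_\phi| \le T_q(m)$, the standard Hintikka-normal-form construction expresses $\equiv_q$-classes of pointed structures via Hintikka formulas, whose count is bounded inductively by taking the power set $q$ times, starting from the quantifier-free~$1$-types. Over $c$-colored graphs the atomic information is generously absorbed by the number $m$ of $c$-colored graphs on $\set{1,\ldots,q+1}$ (which dominates all quantifier-free data accessible through $q$ quantifier alternations), yielding the tower $T_q(m)$ of height~$q$ with base~$m$. The main technical obstacle throughout is tracking the locality radius against the round counter in the EF induction -- ensuring that after $i$ rounds the remaining $(q-i)$-round game can be confined to balls of radius $2^{q-i-1}$ with matching local types -- but given the definition of $\ltp$ the induction hypothesis essentially writes itself, and the rest is the routine bookkeeping standard in treatments of Hanf locality.
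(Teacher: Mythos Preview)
The paper does not prove this fact; it is stated as a known result (flagged with a ``cite'' todo) and, in the main text of Section~\ref{sec:transductions}, is simply attributed to Gaifman's locality theorem. So there is no detailed paper-side argument to compare against.

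Your outline is the standard Hanf/EF route and is correct in spirit. One point of difference worth noting: the paper's informal description takes $\ltp(v)$ to be the full rank-$q$ type $\tp^q_G(v)$ of $v$ in $G$ (``the set of formulas $\alpha(x)$ of quantifier rank at most $q$ such that $\alpha(v)$ holds in $G$''), rather than the $\equiv_q$-class of the pointed ball $(B^{2^{q-1}}(v),v)$ as you do. Both choices yield a label set of size at most the stated tower, but they lead to different EF arguments.

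Your sketch has a genuine gap at the step ``a Spoiler move far from both local regions can be mirrored symmetrically''. The game is played on two copies of the \emph{same} graph $G$, with distinguished pairs $(a,b)$ and $(a',b')$; a vertex far from $a$ and $b$ may nonetheless lie inside $B^{2^{q-1}}(a')$ or $B^{2^{q-1}}(b')$, so the identity response need not preserve the partial isomorphism, and the complements of the two pairs of balls are not the same induced substructure. The phrase ``by a similar local-type argument'' does not resolve this. This is the usual delicate point in Hanf-type proofs and is fixable --- for instance by passing through Gaifman normal form and then applying a Feferman--Vaught composition to the disjoint union $B^r(a)\sqcup B^r(b)$ when $\dist(a,b)>2r$ --- but as written your Duplicator strategy is incomplete.
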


\begin{fact}\label{fact:cmso-locality}\szfuture{cite}
  In the setting of the previous fact, 
  every formula $\phi(x,y)$ of $\cmso$ 
  is $\infty$-local.
  The number $|T_\phi|$ is again bounded by a number $T_q'(k)$
  that is non-elementary in $q$.
\end{fact}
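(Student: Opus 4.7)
The plan is to derive Fact~\ref{fact:cmso-locality} from the classical Feferman--Vaught--Mostowski composition theorem for Counting Monadic Second-Order logic. The key idea is that $\infty$-locality is really a compositionality statement: if $a$ and $b$ lie in different connected components of $G$, then the graph $G$, as a $\cmso$-structure with two marked elements $a,b$, is (up to isomorphism of what matters for $\cmso$) a disjoint union of three pointed $c$-colored graphs, namely the connected component of $a$ with $a$ marked, the connected component of $b$ with $b$ marked, and the rest. Compositionality then pins down the truth value of $\phi(a,b)$ in $G$ from the $\cmso$-theories of these three pieces up to some bounded quantifier rank.

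First, I would fix the signature of $c$-colored graphs and, for each $q$, enumerate the $\cmso$-types of quantifier rank at most $q$ for \emph{pointed} $c$-colored graphs, i.e., $c$-colored graphs carrying one distinguished element marked by a fresh unary predicate $P$. A routine induction on the $\cmso$-formula structure shows that there are only finitely many inequivalent such formulas (the divisibility predicates $\mathrm{div}_k(X)$ that raise the quantifier rank by one contribute only finitely many mutually inequivalent instances per rank, after absorbing $k$ into the tuple of base predicates visible at rank $q$). This gives a finite set $T_\phi$ of admissible pointed $\cmso$-types and a bound $|T_\phi|\le T_q'(k)$ by the usual counting of $\cmso$-formulas up to a given quantifier rank.

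Next I would invoke the compositionality theorem: the $\cmso$-theory of quantifier rank $q$ of a disjoint union $H_1 \sqcup H_2$ of $c$-colored graphs is computable from the $\cmso$-theories of quantifier rank $q$ of $H_1$ and of $H_2$. For plain $\mathrm{MSO}$ this is Feferman--Vaught and is classical; the extension to $\cmso$ is due to Courcelle and requires only that divisibility of $|X|$ in a disjoint union is determined by divisibility of $|X\cap V(H_i)|$ in each summand, which one verifies by an easy induction. Given this, I define $\ltp(v)$ to be the quantifier-rank-$q$ $\cmso$-type of the pointed $c$-colored graph obtained by restricting $G$ to the connected component of $v$ and marking $v$ by the fresh predicate $P$. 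I also fix, as part of the data depending on $G$, the quantifier-rank-$q$ $\cmso$-type of the ``background'' graph consisting of all connected components containing neither $a$ nor $b$; only this single type is needed because compositionality absorbs everything outside the components of $a$ and $b$ into one further summand.

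Finally, I would check that for any $a,b$ lying in different connected components of $G$, compositionality gives a function $\Phi$ depending only on $\ltp(a)$, $\ltp(b)$, and the background type, such that $G\models \phi(a,b) \iff (\ltp(a),\ltp(b))\in \Phi$. Since the background type is fixed once $G$ is fixed, this is exactly the $\infty$-locality statement in the definition, with $|T_\phi|\le T_q'(k)$. The main obstacle I anticipate is bookkeeping rather than conceptual: care is needed so that (i) the formula $\phi(x,y)$ of quantifier rank $q$ is absorbed into a sentence of bounded quantifier rank evaluated on a pointed union, and (ii) the $\cmso$ composition lemma is applied with the correct tracking of free set variables that have been eliminated by Skolemizing $x\mapsto a$, $y\mapsto b$ as new unary predicates. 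Both are standard moves in the composition method, so the proof reduces to putting together well-known pieces in the right order.
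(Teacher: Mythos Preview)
Your proposal is correct and matches the paper's intent. The paper does not actually prove this fact---it is stated as a known result with a ``cite'' note left by the author---but in the surrounding discussion the paper explicitly identifies compositionality (``a Feferman--Vaught--Mostowski type result'') as the $\cmso$ analogue of first-order locality for radius $r=\infty$, which is exactly the tool you invoke.
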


Theorem~\ref{thm:interpretations} follows easily from the next lemma,
using Lemma~\ref{lem:strategy-transfer}.

\begin{lemma}\label{lem:response}
  Fix $k,c\ge 1$, a first-order formula $\phi(x,y)$ of quantifier rank $q$,   and let $s=2^q$ and $\ell=T_q(k\cdot c)$,
  where $T_q$ is the function from Fact~\ref{fact:fo-locality}.
  Let $G$ be a $c$-colored graph.
  For every $k$-flip $G'$ of the uncolored graph underlying $G$
  there is an $\ell$-flip $\phi(G)'$ of $\phi(G)$
  such that:
  \begin{align}\label{eq:interp}
    \dist_{G'}(u,v)\le s\qquad{\text {for all $uv\in E(\phi(G)')$}.}    
  \end{align}  
\end{lemma}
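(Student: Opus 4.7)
\smallskip

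The plan is to turn the informal argument sketched in Section~7.3 into a clean derivation using only first-order locality (Fact~\ref{fact:fo-locality}). Fix a $c$-colored graph $G$ and a $k$-flip $G'$ of its underlying uncolored graph, witnessed by a partition $\cal P$ of $V(G)$ with $|\cal P|\le k$. First, I would encode $\cal P$ as an extra $k$-coloring of $V(G)$, so that $G'$ may be viewed as a $(kc)$-colored graph (retaining the original $c$ colors and adding $k$ ones marking the classes of $\cal P$). In this enlarged signature I would rewrite $\phi(x,y)$ into a formula $\phi'(x,y)$ of the same quantifier rank $q$, by replacing each atomic subformula $E(z,t)$ by the quantifier-free formula $\varepsilon(z,t)$ which reads off the $\cal P$-classes of $z$ and $t$ from their new colors, checks whether that pair of classes was flipped in producing $G'$ from $G$, and correspondingly outputs $E(z,t)$ or $\neg E(z,t)$. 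By construction, for all $u,v\in V(G)$,
\[
G\models \phi(u,v)\quad\Longleftrightarrow\quad G'\models \phi'(u,v).
\]

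Next, I would apply Fact~\ref{fact:fo-locality} to $\phi'$, viewed as a formula in the signature of $(kc)$-colored graphs. Since $\phi'$ has quantifier rank $q$, it is $s$-local with $s=2^q$, and the corresponding set of local types has size at most $T_q(kc)=\ell$. Let $\ltp\colon V(G)\to T_{\phi'}$ be the induced labelling on $G'$ and let $\Phi\subseteq T_{\phi'}\times T_{\phi'}$ be the binary relation such that $G'\models \phi'(u,v)\iff (\ltp(u),\ltp(v))\in\Phi$ whenever $\dist_{G'}(u,v)>s$. Let $\cal Q$ be the partition of $V(G)$ into $\ltp$-classes; then $|\cal Q|\le \ell$.

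Now I would define the $\ell$-flip $\phi(G)'$ of $\phi(G)$ as the $\cal Q$-flip which flips a pair $(A,B)\in\cal Q\times\cal Q$ exactly when there exist some $u\in A$, $v\in B$ with $\dist_{G'}(u,v)>s$ and $uv\in E(\phi(G))$. Locality of $\phi'$ guarantees that this condition does not depend on the specific choice of $u,v$: it is determined by the pair $(A,B)$ itself, because membership of $(\ltp(u),\ltp(v))$ in $\Phi$ is the same for every such pair. Consequently, for any $u,v$ with $\dist_{G'}(u,v)>s$, the ``flip decision'' on $(\ltp(u),\ltp(v))$ matches the value of $[uv\in E(\phi(G))]$, and therefore $uv\notin E(\phi(G)')$. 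Contrapositively, every edge $uv\in E(\phi(G)')$ satisfies $\dist_{G'}(u,v)\le s$, establishing \eqref{eq:interp}.

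I do not expect a serious obstacle: the main technical point is verifying that the rewrite $\phi\mapsto\phi'$ preserves quantifier rank (immediate, since $\varepsilon$ is quantifier-free and replaces atoms), and that the number of local types is correctly bounded by $T_q(kc)$ rather than $T_q(c)$ (because of the additional $k$ colors encoding $\cal P$). The only care needed is in the definition of the flip $\phi(G)'$ on pairs $(A,B)$ for which \emph{no} $u\in A$, $v\in B$ with $\dist_{G'}(u,v)>s$ exists; in that case I simply choose not to flip, which is consistent since every edge of $\phi(G)$ between such $A$ and $B$ already satisfies $\dist_{G'}(u,v)\le s$ and so does not need to be removed by a flip.
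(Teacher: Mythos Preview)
Your proposal is correct and follows essentially the same approach as the paper's own proof: encode the flip partition as extra colors on $G'$, rewrite $\phi$ into an equivalent formula $\phi'$ of the same quantifier rank over the enlarged $(kc)$-colored signature, apply Fact~\ref{fact:fo-locality} to $\phi'$ on $G'$, and use the resulting local-type partition to define the $\ell$-flip of $\phi(G)$. The only cosmetic difference is that the paper flips a pair of local-type classes directly according to the relation $\Phi$ (symmetrized), whereas you phrase the flip rule as ``flip $(A,B)$ if some far-apart pair $u\in A,v\in B$ is adjacent in $\phi(G)$''; by locality these two rules agree on all pairs that contain a far-apart $(u,v)$, and on the remaining pairs either choice is harmless since all their vertex pairs are already within distance $s$ in $G'$.
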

\begin{proof}
  Let $\cal P=\set{A_1,\ldots,A_s}$ be the partition of $V(G)$ with $s\le k$,
such that $G'$ is a $\cal P$-flip of $G$.
Color the vertices of $G'$ using $k\cdot c$ colors $[k]\times [c]$ so that 
a vertex $v$ has color $(i,j)$ if and only if $v\in A_i$
and $v$ has color $j$ in $G$.
 Below, we treat $G'$ as a relational structure 
 equipped with the edge relation of $G'$, 
 and $k\cdot c$ unary predicates marking the colors of $G'$.
 In particular, for $i=1,\ldots,s$ we can write a quantifier-free formula 
 $A_i(x)$ such that $A_i(v)$ holds in $G'$ if and only if 
 $v\in A_i$.

We now write a formula $\psi(x,y)$ such that for all $a,b\in V(G)$ we have
\begin{align}\label{eq:iff}
  G'\models \psi(a,b)\iff G\models\phi(a,b).  
\end{align}
The formula $\psi(x,y)$ is obtained from $\phi(x,y)$
by replacing each atom $E(z,t)$ by the quantifier-free
formula 
\begin{multline*}
\eps(z,t):=E(z,t)\triangle\alpha(z,t):=(E(z,t)\land\neg\alpha(z,t))\\\lor(\alpha(z,t)\land\neg(E(z,t))),
\end{multline*}
where $\alpha(z,t)$ is the disjunction of formulas 
$A_i(z)\land A_j(t)$, for all pairs $i,j\in[k]$
such that the parts $A_i$ and $A_j$ are flipped in the $\cal P$-flip $G'$ of $G$. In particular, $\psi(x,y)$ is a formula of quantifier rank $q$
over the signature of graphs colored with $k\cdot c$ colors.

Hence, by Fact~\ref{fact:fo-locality}, there is a labelling $\ltp\from V(G)\to T_\psi$, for some set of local types $T_\psi$ with $|T_\psi|\le T_q(k\cdot c)=\ell$, and a binary relation $\Phi\subset T_\psi\times T_\psi$,
such that 
\[G'\models \psi(a,b) \quad \iff\quad  (\ltp(a),\ltp(b))\in \Phi\]
for all pairs $a,b\in V(G)$ with distance larger than $s=2^q$ in 
$G'$.
With~\eqref{eq:iff}
this implies that 
\begin{align}\label{eq:flip}
  G\models \phi(a,b) \quad \iff\quad  (\ltp(a),\ltp(b))\in \Phi,  
\end{align}
for all pairs $a,b\in V(G)$ with distance larger than $s$ in $G'$.

Let $\cal Q$ be the partition of $V(G)$ defined by $\ltp$, 
with $\cal Q=\setof{\ltp^{-1}(p)}{p\in T_\psi}\setminus\set{\emptyset}$.
In particular, $|\cal Q|\le \ell$. 
Construct the $\cal Q$-flip $\phi(G)'$ of $\phi(G)$
by flipping two parts $\ltp^{-1}(p),\ltp^{-1}(q)$ 
of $\cal Q$ whenever $(p,q)\in \Phi$ or $(q,p)\in \Phi$.
In particular, if $a$ and $b$ are 
adjacent in $\phi(G)'$, then it must be the case that $\dist_{G'}(a,b)\le s$ by~\eqref{eq:flip}.
\end{proof}



\begin{proof}[Proof of Theorem~\ref{thm:interpretations}]
  Fix a formula $\phi(x,y)$ of quantifier rank $q$,
  and a $c$-colored graph $H$. Let $k=\fw_{sr}(G_0)$,
  where $H_0$ is the uncolored graph underlying $H$.
  Set $s:=2^q$
  and $\ell:=T_q(k\cdot c)$. 
  Denote $G:=\phi(H)$.
  Then Lemma~\ref{lem:response} says that the assumptions of Lemma~\ref{lem:strategy-transfer} hold. 
  Therefore, $\fw_{r}(\phi(H))\le \ell=T_q(k\cdot c)$, as required.
\end{proof}

We now consider the case of $\cmso$-transductions.
\begin{theorem*}[\ref{thm:interpCMSO}]
  Let $\CC$ be a class of bounded $\infty$-flip-width and let $\phi(x,y)$ be a formula of $\cmso$.
  Then $\phi(\CC)$ has bounded $\infty$-flip-width.
\end{theorem*}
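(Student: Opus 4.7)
The plan is to mimic the proof of Theorem~\ref{thm:interpretations} verbatim, substituting the $\infty$-locality of $\cmso$ formulas (Fact~\ref{fact:cmso-locality}) for the $2^q$-locality of first-order formulas (Fact~\ref{fact:fo-locality}), and replacing Lemma~\ref{lem:strategy-transfer} by its $r=\infty$ variant Corollary~\ref{lem:strategy-transfer-infty}. The latter is precisely what makes the argument go through for the infinite-radius case: we only need the $\ell$-flip $\phi(G)'$ of $\phi(G)$ to put adjacent vertices in the same connected component of the $k$-flip $G'$ of $G$, rather than at a bounded distance.

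Concretely, I would first establish the $\cmso$ analogue of Lemma~\ref{lem:response}: fixing $k,c\ge 1$ and a $\cmso$ formula $\phi(x,y)$, setting $\ell:=T'_q(k\cdot c)$ where $T'_q$ comes from Fact~\ref{fact:cmso-locality}, then for every $c$-colored graph $G$ and every $k$-flip $G'$ of the underlying uncolored graph, there is an $\ell$-flip $\phi(G)'$ of $\phi(G)$ such that
\[
\dist_{G'}(u,v)<\infty \qquad\text{for all $uv\in E(\phi(G)')$.}
\]
The construction is identical to the proof of Lemma~\ref{lem:response}: let $\cal P=\{A_1,\ldots,A_s\}$ be a partition with $s\le k$ such that $G'$ is a $\cal P$-flip of $G$; view $G'$ as a $kc$-colored graph whose unary predicates record both the $\cal P$-class and the original color; form the $\cmso$ formula $\psi(x,y)$ from $\phi$ by replacing every atom $E(z,t)$ with the quantifier-free formula $E(z,t)\triangle \alpha(z,t)$, where $\alpha(z,t)$ expresses, using the new color predicates, that the pair $(z,t)$ lies in a pair of $\cal P$-classes that was flipped. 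Then $G'\models \psi(a,b)\iff G\models \phi(a,b)$ for all $a,b\in V(G)$, and $\psi$ has the same quantifier rank as $\phi$ over the enlarged signature.

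By Fact~\ref{fact:cmso-locality}, there is a labelling $\ltp\from V(G)\to T_\psi$ with $|T_\psi|\le\ell$ and a binary relation $\Phi\subseteq T_\psi\times T_\psi$ such that for all $a,b$ in \emph{different} connected components of $G'$,
\[
G\models\phi(a,b)\iff G'\models\psi(a,b)\iff (\ltp(a),\ltp(b))\in\Phi.
\]
Let $\cal Q:=\{\ltp^{-1}(p):p\in T_\psi\}\setminus\{\emptyset\}$, so $|\cal Q|\le\ell$, and form the $\cal Q$-flip $\phi(G)'$ of $\phi(G)$ by flipping $\ltp^{-1}(p)$ with $\ltp^{-1}(q)$ whenever $(p,q)\in\Phi$ or $(q,p)\in\Phi$. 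If $u$ and $v$ lie in different components of $G'$, the adjacency between them in $\phi(G)'$ agrees with the adjacency in $\phi(G)$ corrected by exactly the pair-flip defined by $\Phi$, hence cancels out; so any edge $uv$ of $\phi(G)'$ forces $u$ and $v$ to share a connected component of $G'$, giving $\dist_{G'}(u,v)<\infty$.

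Given this, Corollary~\ref{lem:strategy-transfer-infty} (applied with $H=\phi(G)$, $G=G'$, and the map $F\from G'\mapsto \phi(G)'$) transfers any winning flipper strategy of width $k$ on $G$ in the radius-$\infty$ game into a winning strategy of width $\ell$ on $\phi(G)$ in the radius-$\infty$ game, yielding $\fw_\infty(\phi(G))\le T'_q(c\cdot\fw_\infty(G))$ and hence preservation under $\cmso$ transductions. The only step requiring care, and the main conceptual obstacle, is verifying that the syntactic rewriting $\phi\mapsto\psi$ produces a bona fide $\cmso$ formula of controlled complexity (in particular, that it does not interact badly with the set quantifiers of $\cmso$); this however is immediate because the substitution is purely quantifier-free and takes place at the level of atomic subformulas, so the set quantifiers of $\phi$ remain intact in $\psi$ and only the new unary color predicates are introduced.
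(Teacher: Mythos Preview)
Your proposal is correct and follows essentially the same approach as the paper: the paper's proof simply says to repeat the proof of Theorem~\ref{thm:interpretations}, replacing Lemma~\ref{lem:response} by its $\cmso$ analogue (Lemma~\ref{lem:responseCMSO}), which in turn is proved exactly like Lemma~\ref{lem:response} with Fact~\ref{fact:cmso-locality} substituted for Fact~\ref{fact:fo-locality}. Your write-up is in fact more detailed than the paper's, which leaves the verification implicit.
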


The proof of Theorem~\ref{thm:interpCMSO} is the same as the proof of Theorem~\ref{thm:interpretations}, with the difference that the use of Lemma~\ref{lem:response} is replaced by Lemma~\ref{lem:responseCMSO} below.
\begin{lemma}\label{lem:responseCMSO}
  Fix $k\ge 1$, a $\cmso$ formula $\phi(x,y)$ of quantifier rank $q$, and let $\ell=T_q'(k)$, where $T_q'(k)$ is the function from Fact~\ref{fact:cmso-locality}. 
  For every $k$-flip $G'$ of a graph $G$
  there is an $\ell$-flip $\phi(G)'$ of $\phi(G)$
  such that 
  \begin{align}
    \dist_{G'}(u,v)<\infty\qquad\text{for $uv\in E(\phi(G)')$}.
    \label{eq:interpCMSO}
  \end{align}  
  \end{lemma}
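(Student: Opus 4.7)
The plan is to mimic the proof of Lemma~\ref{lem:response} almost verbatim, replacing the use of Fact~\ref{fact:fo-locality} ($2^q$-locality of first-order formulas) by Fact~\ref{fact:cmso-locality} ($\infty$-locality of $\cmso$ formulas). The key point is that the syntactic rewriting of $\phi$ into a formula $\psi$ that speaks about $G'$ instead of $G$ does not use any quantifiers, so it preserves quantifier rank and, crucially, preserves the logic: rewriting a $\cmso$ formula yields a $\cmso$ formula of the same quantifier rank.

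More precisely, first I would fix a partition $\cal P = \{A_1,\ldots,A_s\}$ with $s \le k$ such that $G'$ is a $\cal P$-flip of $G$, and view $G'$ as a $k$-colored structure, with the colors marking the parts $A_i$. Exactly as in the proof of Lemma~\ref{lem:response}, I would build the formula $\psi(x,y)$ from $\phi(x,y)$ by replacing every atom $E(z,t)$ with the quantifier-free formula $\eps(z,t) := E(z,t) \triangle \alpha(z,t)$, where $\alpha(z,t)$ is a disjunction of atoms $A_i(z) \land A_j(t)$ ranging over flipped pairs. This rewriting introduces no new quantifiers, so $\psi$ is a $\cmso$ formula of the same quantifier rank $q$ as $\phi$, and moreover
\[
G' \models \psi(a,b) \iff G \models \phi(a,b) \qquad \text{for all } a,b \in V(G).
\]

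Next, I would apply Fact~\ref{fact:cmso-locality} to $\psi$: there is a labelling $\ltp\from V(G) \to T_\psi$ with $|T_\psi| \le T'_q(k) = \ell$ and a binary relation $\Phi \subset T_\psi \times T_\psi$ (depending on $G'$) such that for all $a,b$ lying in distinct connected components of $G'$,
\[
G' \models \psi(a,b) \iff (\ltp(a), \ltp(b)) \in \Phi.
\]
Combining with the equivalence above, the same holds with $G \models \phi(a,b)$ on the left-hand side, and hence with $ab \in E(\phi(G))$ as soon as $a \ne b$.

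Finally, let $\cal Q := \{\ltp^{-1}(p) : p \in T_\psi\} \setminus \{\emptyset\}$, a partition of $V(G)$ with $|\cal Q| \le \ell$. I would form the $\cal Q$-flip $\phi(G)'$ of $\phi(G)$ by flipping the pair of classes $\ltp^{-1}(p), \ltp^{-1}(q)$ whenever $(p,q) \in \Phi$ or $(q,p) \in \Phi$. By construction, any two vertices $a,b$ in distinct connected components of $G'$ have the same adjacency in $\phi(G)'$, since two such flips cancel the adjacency forced by $\Phi$. Consequently, if $ab \in E(\phi(G)')$ then $a$ and $b$ must be in the same connected component of $G'$, i.e.\ $\dist_{G'}(a,b) < \infty$, establishing \eqref{eq:interpCMSO}. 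I do not anticipate any real obstacle: the only subtle point is checking that the syntactic substitution $E \mapsto \eps$ produces a bona fide $\cmso$ formula of the same quantifier rank, which is straightforward because $\eps$ is quantifier-free and the substitution commutes with all $\cmso$ constructors (first-order and set quantifiers, boolean connectives, $\in$, and divisibility predicates, none of which interact with the edge atoms).
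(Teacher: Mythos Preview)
Your proposal is correct and follows exactly the approach the paper takes: the paper's proof of this lemma is literally ``the same as the proof of Lemma~\ref{lem:response}, with the difference that the use of Fact~\ref{fact:fo-locality} is replaced by the use of Fact~\ref{fact:cmso-locality}.'' You have spelled out those details faithfully, including the key observation that the quantifier-free rewriting $E\mapsto\eps$ preserves both the logic and the quantifier rank.
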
In turn, the proof of Lemma~\ref{lem:responseCMSO}
  is the same as the proof of Lemma~\ref{lem:response},
  with the difference that the use of Fact~\ref{fact:fo-locality}
  is replaced by the use of Fact~\ref{fact:cmso-locality}.

\section{Structurally nowhere dense classes have almost bounded flip-width}\label{app:snd-subpoly}
In this Appendix, we prove Theorem~\ref{thm:quasi-bushes-vc}
and Lemma~\ref{lem:fw-qf-vc}.
Theorem~\ref{thm:quasi-bushes-vc} is proved in Section~\ref{app:quasi-bushes-vc}.
Lemma~\ref{lem:fw-qf-vc} is proved in Section~\ref{sec:qff}.
Theorem~\ref{thm:snd-are-subpoly} is an immediate consequence of those two results, as argued in Section~\ref{sec:subpoly}.

\subsection{Proof of Theorem~\ref{thm:quasi-bushes-vc}}
\label{app:quasi-bushes-vc}
In Section~\ref{app:quasi-bushes-vc} we prove Theorem~\ref{thm:quasi-bushes-vc}, restated below.
\begin{theorem*}[\ref{thm:quasi-bushes-vc}]
  Let $\CC$ be a structurally nowhere dense graph class.
  There is a signature $\Sigma$ consisting of unary and binary relation symbols and a unary function symbol,
   a class $\BB$ of $\Sigma$-structures
  such that the class of Gaifman graphs of the structures in $\BB$ is almost nowhere dense, and a symmetric quantifier-free formula $\phi(x,y)$, such that every $G\in\CC$  is an induced subgraph of $\phi(B)$ for some $B\in \BB$ with $|B|=O(|G|)$. Moreover,  $\VCdim(B)<\infty$.
\end{theorem*}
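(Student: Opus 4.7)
The bulk of the theorem, excluding the bound on VC-dimension, follows essentially verbatim from Theorem~3 of~\cite{bushes-lics}, which associates to each structurally nowhere dense class $\CC$ a quasi-bush decomposition. The first step of the plan is to invoke this result to obtain the signature $\Sigma$, the class $\BB$, and the symmetric quantifier-free formula $\phi(x,y)$ such that each $G\in\CC$ embeds as an induced subgraph into $\phi(B)$ for some $B\in\BB$ of size $O(|G|)$, with the Gaifman graphs of structures in $\BB$ forming an almost nowhere dense class. Concretely, one starts from a nowhere dense class $\DD$ and first-order formula $\psi$ witnessing that $\CC$ lies in the hereditary closure of $\psi(\DD)$, and builds $\BB$ from decomposition trees enriched with auxiliary unary labels and parent pointers --- the unary function symbol of $\Sigma$ being interpreted as the parent function in these trees.

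The new content is the moreover clause $\VCdim(\BB)<\infty$. The plan is to show that the construction of~\cite{bushes-lics,bushes-arxiv} is \emph{definable}, in the following sense: every binary relation $R$ of $\Sigma$ is obtained from a fixed first-order formula $\psi_R(\bar x;\bar y)$, with tuples $\bar x,\bar y$ of a bounded arity $d$ independent of the instance, evaluated on a structure in $\DD$. More precisely, each vertex of $B\in\BB$ is encoded by a $d$-tuple of vertices in some graph $H\in\DD$ (together with a bounded amount of unary coloring data), and whether $(u,v)\in R$ holds in $B$ is determined by evaluating $\psi_R$ on the corresponding pair of tuples in $H$. This gives a $d$-dimensional first-order interpretation of the binary reduct of $B$ in the graph $H\in\DD$.

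Given this definability, the VC-dimension bound is a short derivation from classical dependence (NIP) of nowhere dense classes (Fact~\ref{fact:nd-nip}, due to Podewski and Ziegler, together with the formulation in \cite{AdlerA14} for $d$-dimensional parameter sets). For each fixed formula $\psi_R(\bar x;\bar y)$, the relation $R_H^{\psi_R}\subseteq V(H)^{\bar x}\times V(H)^{\bar y}$ has VC-dimension bounded uniformly in $H\in\DD$; since the VC-dimension of $R$ in $B$ is at most the VC-dimension of $R_H^{\psi_R}$ (restriction cannot increase VC-dimension), this uniform bound transfers to $B\in\BB$. Taking the maximum over the finitely many binary symbols in $\Sigma$ yields the required uniform bound on $\VCdim(B)$.

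The main obstacle will be verifying the definability claim: the construction of quasi-bushes in \cite{bushes-lics,bushes-arxiv} must be inspected to confirm that the selection of centers, the assignment of parents, and the labelling can all be realised by fixed first-order formulas of bounded arity over the underlying nowhere dense graph, possibly after enriching $H\in\DD$ with a bounded amount of extra structure (such as a total order or a small coloring) that preserves nowhere denseness and dependence. Once this is in hand, the remaining steps of the argument are quite mechanical.
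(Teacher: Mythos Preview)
Your proposal is correct and follows essentially the same strategy as the paper: invoke the quasi-bush result of~\cite{bushes-lics} for everything except the VC-bound, and for the moreover clause argue that each binary relation of the quasi-bush is obtained by evaluating a fixed first-order formula on bounded-arity tuples over a graph from the underlying nowhere dense class, so that Fact~\ref{fact:nd-nip} yields a uniform VC-dimension bound. The paper carries out exactly the inspection you flag as the main obstacle, decomposing the definability claim into two pieces (one for the pointer set $D$ and one for the labelling $\lambda^D$) and checking each against the explicit construction in~\cite{bushes-arxiv}; no enrichment by a total order is needed, only the colorings already present.
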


The proof of Theorem~\ref{thm:quasi-bushes} is based on the proof of \cite[Theorem 3]{bushes-lics} (see also \cite{bushes-arxiv}),  stated below.

A \emph{quasi-bush} $B$ is a rooted tree $T$ 
equipped with:
\begin{itemize}\item 
  a set $D$ of directed edges from the leaves of $T$ to inner nodes of $T$, called \emph{pointers};  every leaf has a pointer to the root of $T$, 
\item a labelling function $\lambda\from \leaves(T)\to \Lambda$, where $\Lambda$ is a finite set of labels,
\item a labelling function $\lambda^D\from D\to 2^{\Lambda}$.
\end{itemize}
A quasi-bush $B$ defines a directed graph $G(B)$
whose vertices are the leaves of $T$ and directed edges 
$(u,v)$ such that $u,v$ are distinct leaves of $T$,
and the closest ancestor $w$ of $u$ such that $(v,w)\in D$ satisfies $\lambda(u)\in \lambda^D((v,w))$. In particular, for $G(B)$ to be equal to an undirected graph, we require that the directed edge relation of $G(B)$ is symmetric.

Say that a class $\BB$ of quasi-bushes is almost nowhere dense if the class of underlying graphs 
(where we keep the edges of the tree $T$ and turn the  pointers in $D$ into undirected edges) form an almost nowhere dense graph class.

\begin{theorem}\textup{\cite[Theorem 3]{bushes-lics}}\label{thm:quasi-bushes}
  Let $\DD$ be a structurally nowhere dense graph class.
  Then there are $d,\ell\in\N$, and an almost nowhere dense class $\BB$ of quasi-bushes, each of depth at most $d$ and using at most $\ell$ labels, such that for every $G\in\DD$ there is some quasi-bush $B\in\BB$ with $G(B)=G$.
\end{theorem}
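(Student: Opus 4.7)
Since $\DD$ is structurally nowhere dense, pick a nowhere dense class $\wh\CC$ of $c$-colored graphs and a symmetric first-order formula $\phi(x,y)$ of quantifier rank $q$ with $\DD\subseteq\phi(\wh\CC)$. By Gaifman locality (Fact~\ref{fact:fo-locality}), setting $s:=2^q$, one can attach to each vertex $v$ of any $H\in\wh\CC$ a local type $\ltp(v)$ in a finite set $\Lambda$ so that whenever $\dist_H(u,v)>2s$, whether $\phi(u,v)$ holds in $H$ depends only on $(\ltp(u),\ltp(v))$. The plan is to build, for each $H\in\wh\CC$, a quasi-bush $B=B(H)$ whose bounded-depth tree $T$ captures the ``close'' pairs (distance ${\le}2s$ in $H$) and whose leaf-labels and pointer-labels, taking values in a suitably enriched version of $\Lambda$, encode the full adjacency of $\phi(H)$.

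\paragraph{Constructing $T$.} To obtain constant depth, invoke the splitter-game characterization of nowhere dense classes due to Grohe--Kreutzer--Siebertz: for the fixed radius $r:=2s$ there is a constant $\ell=\ell(r,\wh\CC)$ such that the splitter wins the $\ell$-round $r$-local splitter game on every $H\in\wh\CC$, each round removing a vertex set of size $|H|^{o(1)}$ and restricting the fugitive to a connected $r$-ball of what remains (consistent with Fact~\ref{fact:nd-wcol}). Build $T$ by simulating this strategy: the root represents $V(H)$; the children of a node represent the connected $r$-components left after removing the splitter-set chosen at that node; and leaves correspond to individual vertices, each $v\in V(H)$ being placed at the node reached by the play that ends at the ball containing~$v$. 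This yields $T$ of depth at most $d:=\ell+1$.

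\paragraph{Labels, pointers, and Gaifman sparsity.} Set $\lambda(v):=\ltp(v)$. For each leaf $v$, the pointer $(v,\textup{root})\in D$ handles far pairs: $\lambda^D(v,\textup{root})$ consists of those $\mu\in\Lambda$ such that every vertex $u$ with $\ltp(u)=\mu$ and $\dist_H(u,v)>2s$ is $G$-adjacent to $v$; by locality, this is independent of the particular such $u$. For a proper inner node $w$, the pointer $(v,w)\in D$ is used to adjudicate close pairs $(u,v)$ whose lowest common $T$-ancestor is $w$: the splitter vertices removed along the path from $v$ to $w$ form a bounded-size context that, together with $\ltp(u)$ and $\ltp(v)$, determines whether $uv\in E(G)$, because any $H$-path of length ${\le}2s$ between $u$ and $v$ must either lie inside the current ball or cross a previously removed splitter whose local type has already been recorded. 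One therefore enriches $\Lambda$ by tuples of splitter-types of length ${\le}\ell$; this remains finite, and $\lambda^D(v,w)$ is defined accordingly. Taking the Gaifman graph of $B$, each vertex has $O(1)$ non-tree pointer-neighbors beyond the at most $d$ ancestors, while the splitter-sets at each of the $\ell$ levels have size $|H|^{o(1)}$, so the weak coloring numbers inherit the nowhere-dense bound from $H$; hence the class $\setof{B(H)}{H\in\wh\CC}$ is almost nowhere dense.

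\paragraph{Main obstacle.} The real work is to show that the enriched label set and the pointer-labelling rule above correctly reconstruct $G=\phi(H)$ under the quasi-bush semantics ``$uv\in E(G(B))$ iff the closest $T$-ancestor $w$ of $u$ with $(v,w)\in D$ satisfies $\lambda(u)\in\lambda^D(v,w)$.'' In particular, for each ordered pair $(u,v)$ exactly one pointer must ``fire,'' which forces a careful matching between the splitter-game levels and the pointer-levels, and requires the label-enrichment to be chosen so that every close-pair adjacency is detected at the correct ancestor and every far-pair adjacency at the root pointer. Setting up this correspondence, rather than any individual locality or weak-coloring computation, is the technical heart of the argument; once it is in place, both $G(B)=G$ and the almost-nowhere-denseness of the Gaifman graphs follow by direct bookkeeping.
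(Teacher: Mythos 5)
First, note that the paper does not prove Theorem~\ref{thm:quasi-bushes} at all: it is imported verbatim from \cite{bushes-lics}, and the only place the construction is touched is Appendix~\ref{app:quasi-bushes-vc}, where the proof from \cite{bushes-arxiv} is inspected to extract the extra VC-dimension bound. That inspection reveals that the actual construction goes through \emph{$r$-separator quasi-bushes}: inner nodes $w$ carry bounded-size vertex tuples $\alpha(w)$ (the sets $M^k_r[v]$, $S^k_r[v]$) that $r$-separate the relevant pairs, and the pointer label $\lambda^D((u,w))$ is determined by the quantifier-rank-$q$ type $\tp^q_G(u\,\alpha(w))$ of $u$ \emph{together with the separator tuple}. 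Your proposal replaces this with a splitter-game tree and local types alone, and this is where it breaks down.

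The genuine gap is exactly the step you label the ``main obstacle,'' and it is not mere bookkeeping. Gaifman locality only tells you that $\phi(u,v)$ is determined by $\ltp(u)$ and $\ltp(v)$ when $u$ and $v$ are \emph{far apart}; for close pairs you need a composition argument across a separator, and for that the label attached to the pointer $(v,w)$ must encode the type of $v$ over a \emph{bounded-size} separating tuple, not merely ``which splitter vertices lie on short $u$--$v$ paths'' together with unary local types. Your construction cannot supply such a tuple consistently: you first say each splitter round removes a set of size $|H|^{o(1)}$ (which is what you need for the game to terminate in constantly many rounds on a nowhere dense class, and for the Gaifman graphs of the quasi-bushes to stay almost nowhere dense), but then you claim the removed vertices along a root-to-node path ``form a bounded-size context'' that can be folded into a finite label alphabet $\Lambda$. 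These two requirements are in direct tension, and resolving it is precisely the content of the bounded-size local-separator lemmas (Lemmas 33 and 36 of \cite{bushes-arxiv}) that your argument omits. A secondary unaddressed point: the splitter game is interactive and a vertex lies in many radius-$r$ balls, so turning a winning strategy into a static tree whose leaves are exactly $V(H)$, while guaranteeing that every pair at distance $\le 2s$ is adjudicated at a common ancestor below the root and that each pair fires exactly one pointer, requires sparse neighborhood covers and a canonical home-cluster assignment; as written, the tree $T$ and the ``exactly one pointer fires'' property are not established. Until the bounded-size separator mechanism and the type-composition over it are supplied, the claim $G(B)=G$ does not follow.
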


We now prove Theorem~\ref{thm:quasi-bushes-vc}.
\begin{proof}[Proof of Theorem~\ref{thm:quasi-bushes-vc}]
  Fix $d,\ell$ as in Theorem~\ref{thm:quasi-bushes}.
  Every quasi-bush $B\in\cal B$ may be viewed as a $\Sigma$-structure,
  over a fixed signature $\Sigma$ consisting of:
  \begin{itemize}
    \item a unary function, interpreted in $B$ as the parent function of the tree, and mapping the root to itself,
    \item $\ell$ unary relation symbols, interpreted in $B$ as the labels of the leaves of $T$ according to the function $\lambda\from \leaves(T)\to \Lambda$,
     where $|\Lambda|\le \ell$,
    \item $2^\ell$ binary relation symbols $D_M$, for $M\subset \Lambda$, where $D_M(u,v)$ holds for a leaf $u$ and inner node $v$ and if and only if $\lambda^D((u,v))=M$.
  \end{itemize}

  It is straightforward to construct a quantifier-free formula 
 $\gamma_0(x,y)$
 such that for every $B\in\BB$ and leaves $u,v$ of $B$ we have
$B\models \gamma_0(u,v)$ if and only if 
the lowest ancestor $w$ of $u$ such that $(v,w)\in D$ 
satisfies $\lambda(u)\in\lambda^D((v,w))$.
Let $\gamma(x,y)$ be the symmetric formula $\gamma_0(x,y)\lor\gamma_0(y,x)$.
Then for every quasi-bush $B\in\BB$ such that $G(B)$ is
 an undirected graph $G$,
  we have that $G$ is the subgraph of $\gamma(B)$ induced by the leaves of $B$.

By Theorem~\ref{thm:quasi-bushes}, for every $G\in\DD$ there 
is a quasi-bush $B\in\BB$ such that $G(B)=G$, and hence, $G$ is the subgraph of $\gamma(B)$ induced by the leaves of $B$. Since there is a tree $T$ with leaves $V(G)$, depth at most $d$, and $V(T)=V(B)$, it follows that $|B|\le d\cdot |G|=O(|G|)$. The class $\BB$ is almost nowhere dense. This proves the statement of Theorem~\ref{thm:quasi-bushes-vc}, apart from the `moreover' part.

\medskip
It remains to argue that the class $\BB$, viewed as $\Sigma$-structures as described above, has bounded VC-dimension. Therefore, we need to show that each of the binary relations $D_M$, for $M\subset \Lambda$, has VC-dimension bounded by a constant independent of $B\in\BB$ and of $M\subset \Lambda$.
To do this, we inspect how the set of pointers $D$ and labeling function $\lambda^D$ are defined in the construction in \cite{bushes-arxiv}.
The key property of the construction, from which the bounds on the VC-dimension follows,
is encapsulated in the claim below.

Let $\BB$ be as constructed in the proof of \cite[Theorem 3]{bushes-arxiv}.
\begin{claim}\label{claim:tuples}  
  There is a nowhere dense graph class $\CC$
  and numbers $s,q\ge 1$ with the following property.
  For every quasi-bush $B\in\BB$  
  there is a graph $G\in\CC$ with $\leaves(B)=V(G)$,
   a 
  function $\beta\from V(B)\to V(G)^s$,
  and for each  $M\in 2^{\Lambda}$
  a formula 
  $\psi_M(x_0,x_1,\ldots,x_s)$ of quantifier rank at most $q$, 
  such that 
  \begin{align}\label{eq:beta}
    (u,w)\in D_M\quad\iff\quad G\models \psi_M(u,\beta(w))\quad\text{for all $u,w\in V(B)$.}
  \end{align}  
\end{claim}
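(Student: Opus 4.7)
My plan is to establish the claim by revisiting the construction of the quasi-bush class $\BB$ in \cite{bushes-arxiv} (the extended version of \cite{bushes-lics}) and extracting from it the extra data the statement asks for: the underlying nowhere dense class $\CC$, the arity $s$, the quantifier rank $q$, the tuple-valued labelling $\beta$, and the formulas $\psi_M$. Recall that in that construction each quasi-bush $B$ is obtained from an auxiliary graph $G$ in some fixed nowhere dense class $\CC$ by a recursive decomposition of depth $d$, with a fixed label alphabet of size $\ell$; and the inner nodes produced at each level of the recursion are equivalence classes of \emph{pivot tuples} consisting of a bounded number of vertices of $G$. Choosing once and for all one representative pivot tuple per inner node of $B$ yields the function $\beta\from V(B)\to V(G)^s$, where $s$ depends only on $d$ and $\ell$. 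Leaves are mapped to themselves (padded arbitrarily).

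Next, I would verify that the pointer labelling $\lambda^D$ is first-order definable in the base graph $G$, uniformly in $B\in\BB$. More precisely, the construction defines, for each recursion level and each potential label $M\subset \Lambda$, a first-order formula over $G$, whose parameters are the current pivot tuple and the leaf $u$, and whose satisfaction determines whether $(u,w)\in D_M$. Since the recursion has fixed depth, these formulas have quantifier rank bounded by some $q$ depending only on $\DD$, and combining them over all levels yields the desired formulas $\psi_M(x_0,x_1,\ldots,x_s)$ satisfying \eqref{eq:beta}.

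To complete the proof of the ``moreover'' part of Theorem~\ref{thm:quasi-bushes-vc}, I would then combine Claim~\ref{claim:tuples} with Fact~\ref{fact:nd-nip}: since $\CC$ is nowhere dense, it is dependent, so for each $M$ the relation on $V(G)\times V(G)^s$ defined by $\psi_M$ has VC-dimension bounded by a constant $d_M$ depending only on $\psi_M$, hence only on $\DD$. Pulling this bound back through $\beta$ gives the same bound on the VC-dimension of $D_M\subset V(B)\times V(B)$ (a shattered set of size $k$ on the $w$-side of $D_M$ would yield, via the images $\beta(w_1),\ldots,\beta(w_k)$, a shattered set of size $k$ for the $\psi_M$-relation in $G$, and symmetrically on the $u$-side using $\beta$ on leaves). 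Taking the maximum over the finitely many $M\in 2^{\Lambda}$ bounds $\VCdim(B)$ uniformly over $B\in\BB$.

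The main obstacle lies in inspecting \cite{bushes-arxiv} carefully enough to confirm that inner nodes of $B$ are indeed encoded by tuples of bounded arity $s$ in $G$, and that the pointer labelling is truly first-order over $G$ rather than over an auxiliary coloured structure built on top of $G$. A subtle point is that the construction may introduce auxiliary unary predicates along the way; these will need to be absorbed either by enlarging $\beta(w)$ to include witnesses for the relevant predicates, or by observing that the additional colours come from a quantifier-free expansion of $G$ that preserves dependence and bounded VC-dimension (since adding a bounded number of unary predicates does not change these). Once this bookkeeping is done, the rest of the argument is routine.
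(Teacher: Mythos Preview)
Your proposal is correct in outline and follows essentially the same route as the paper: both argue by inspecting the construction in \cite{bushes-arxiv} and extracting first-order definability of the pointer relations $D_M$ over the underlying nowhere dense graph $G$, via bounded-arity tuples attached to inner nodes.

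The paper's execution is more concrete in one structural respect worth noting. Rather than a single ``pivot tuple'' per inner node, the paper splits the task into two sub-claims: (i) membership in $D$ itself is first-order definable via a tuple $\delta(w)=(v,S^{k-1}_r[v])$ and a formula $\psi_D$ expressing ``$\bar z$ does not $r$-separate $x$ and $y$'' (this uses that the pointers $(u,w)\in D$ are defined precisely by failure of $r$-separation); and (ii) given $(u,w)\in D$, the label $\lambda^D((u,w))$ depends only on the type $\tp^q_G(u\,\alpha(w))$, where $\alpha(w)$ is the bounded separator attached to $w$. The final tuple $\beta(w)$ is then the \emph{concatenation} $\alpha(w)\,\delta(w)$, and $\psi_M=\psi_D\wedge\psi_M^0$. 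Your single ``representative pivot tuple'' would need to carry both pieces of data, which is not quite how you phrased it, but the idea is the same. Your remark about auxiliary colours is apt: the paper handles this by working with a coloured nowhere dense class from the outset.
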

We first show how
Claim~\ref{claim:tuples} implies 
the bound on the VC-dimension of $\BB$.

Let $\Delta$ be the set of all formulas $\psi(x_0,\ldots,x_s)$ of quantifier rank $q$,
where $q$ and $s$ are as in Claim~\ref{claim:tuples}.
Since $\Delta$ is finite (up to equivalence), 
it follows from Fact~\ref{fact:nd-nip} that 
there is a bound $k$ depending only on $q,s$ and the class $\CC$
such that for all $G\in\CC$ and every formula $\psi\in\Delta$,
the binary relation $R_{G}^{\psi}\subset V(G)\times V(G)^s$ 
(as considered in Fact~\ref{fact:nd-nip})
has VC-dimension at most $k$. In particular, $R_G^{\psi_M}$ has 
VC-dimension at most $k$, for all $G\in\CC$ and $\psi_M$ as in Claim~\ref{claim:tuples}. 
Then \eqref{eq:beta} is restated as follows:
\[(u,w)\in D_M\quad\iff\quad (u,\beta(w))\in R_G^{\psi_M}\text{\quad for all $u,w\in V(B)$.}\]
It follows \szfuture{argue?} that $D_M$ has VC-dimension bounded 
by the VC-dimension of the binary relation $R_{G}^{\psi_M}$, so at most $k=O_{q,s}(1)$.
Since $q$ and $s$ are independent of $B\in\BB$, the `moreover' part of Theorem~\ref{thm:quasi-bushes-vc} follows.

\medskip
It therefore remains to 
analyse the construction from \cite{bushes-arxiv}, and 
argue that Claim~\ref{claim:tuples} holds.

First, Theorem~\ref{thm:quasi-bushes} is proved (in \cite{bushes-arxiv})
in the special case when $\DD=\phi(\CC)$ for some nowhere dense class $\CC$
of colored graphs and first-order formula $\phi(x,y)$ involving color predicates.
This is done in \cite[Theorem 28]{bushes-arxiv}.
In general, in Theorem~\ref{thm:quasi-bushes},
 $\DD$ is contained in the hereditary closure 
of $\phi(\CC)$, rather than in $\phi(\CC)$ itself.

The general case is reduced to the special 
case at the end of Section 5 in~\cite{bushes-arxiv},
as follows.
It is shown that there is a coloring $\wh\CC$ 
of $\CC$ (that is, a class of $k$-colored graphs for some $k\ge 1$,
where each underlying graph belongs to $\CC$), a formula $\wh\phi(x,y)$, and constants $c,d>0$,
such that for every $G\in \DD$ there is 
some colored graph $\wh H\in \wh \CC$ 
such that $\wh\phi(\wh H)$ 
contains $G$ as a subgraph induced by some set $A\subset V(\wh H)$, and moreover 
$|\wh H|\le c|G|^d$.
A quasi-bush $B$ is constructed for the graph $\wh\phi(\wh H)$
using the special case of Theorem~\ref{thm:quasi-bushes}
applied to the class $\wh\phi(\wh\CC)$.
To get a quasi-bush $B'$ for $G=\wh\phi(\wh H)[A]$, the nodes of $B$ that have no descendants in $A\subset \leaves(B)$ are removed. It is then argued that the class of quasi-bushes $B'$
obtained in this way, for each $G\in\DD$, satisfies the conditions of Theorem~\ref{thm:quasi-bushes}. For us here, it only matters that 
$B'$ is a quasi-bush obtained from restricting the quasi-bush $B$ as obtained in the special case of Theorem~\ref{thm:quasi-bushes}. It is immediate that 
the VC-dimension of $B'$ is bounded by the VC-dimension of $B$,
so it is enough to argue that the quasi-bushes constructed in the special case of Theorem~\ref{thm:quasi-bushes} have bounded VC-dimension.

\medskip
We may therefore focus on analyzing the proof of the special case of Theorem~\ref{thm:quasi-bushes} (stated as Theorem 28 in \cite{bushes-arxiv}) where it is assumed 
that $\DD=\phi(\CC)$ for some nowhere dense class $\CC$ of colored graphs and first-order formula $\phi(x,y)$. We argue that Claim~\ref{claim:tuples}
holds in this case.

\paragraph{Types}
Before analysing the construction, 
we recall the following notion. Fix the signature $\Sigma$ 
consisting of the edge relation symbol $E$ and unary predicates corresponding to the colors of the graphs in $\CC$.
Fix $q,m\ge 0$.
For a colored graph $H$ and $m$-tuple $\tup v\in V(H)^m$, 
the \emph{quantifier rank $q$ type} of $\tup v$,
denoted $\tp^q_H(\tup v)$,
is the of all first-order formulas $\phi(x_1,\ldots,x_m)$ 
over the signature $\Sigma$
of quantifier rank $q$ such that $H\models \phi(\tup v)$.
Let $\Gamma_q^m$ denote the set of all possible quantifier rank $q$ types of $m$ tuples:
\[\Gamma_q^m:=\setof{\tp^q_H(\tup v)}{H\text{ -- colored graph}, \tup v\in V(H)^m}.\]

The following fact is well known, and follows from the observation that up to equivalence, there are finitely many formulas $\phi(x_1,\ldots,x_m)$ of quantifier rank $q$.
\begin{fact}\label{fact:types}
  The set $\Gamma_q^m$ is finite. 
  For every type $\tau\in\Gamma_q^m$ there is a first-order formula, denoted 
  $\tau(x_1,\ldots,x_m)$, such that for every colored graph $H$ and tuple 
  $\tup v\in V(H)^m$, we have:
\[\tp^q_H(\tup v)=\tau\quad\iff\quad H\models\tau(\tup v).\]
\end{fact}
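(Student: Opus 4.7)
My plan is to prove Fact~\ref{fact:types} by a standard induction on the quantifier rank $q$, exploiting the fact that the signature $\Sigma$ is finite (one binary relation $E$ plus finitely many color predicates), so the number of atomic formulas over the variable set $\{x_1,\ldots,x_m\}$ is finite. The finiteness of $\Gamma_q^m$ will then follow by showing that, up to logical equivalence, there are only finitely many first-order formulas over $\Sigma$ of quantifier rank at most $q$ with free variables among $\{x_1,\ldots,x_m\}$. Call this number $N(q,m)$.

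For the base case $q=0$, every formula is a Boolean combination of the finitely many atomic formulas involving $x_1,\ldots,x_m$, so up to equivalence $N(0,m)$ is finite (bounded by a double exponential in the number of atomic formulas). For the inductive step, observe that any formula of quantifier rank $q+1$ with free variables in $\{x_1,\ldots,x_m\}$ is equivalent to a Boolean combination of atomic formulas and formulas of the form $\exists x_{m+1}.\psi(x_1,\ldots,x_{m+1})$ where $\psi$ has quantifier rank $q$ and free variables in $\{x_1,\ldots,x_{m+1}\}$. By the inductive hypothesis there are only $N(q,m+1)$ such $\psi$ up to equivalence, hence only finitely many distinct existential formulas, and therefore only finitely many Boolean combinations. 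This gives the bound $N(q{+}1,m) \le 2^{2^{N(q,m+1)+m^2+cm}}$ or similar; the precise constant is irrelevant, only finiteness matters.

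Given finiteness, $\Gamma_q^m$ is finite because each type $\tau\in\Gamma_q^m$ is determined by which of the finitely many equivalence classes of formulas it contains. For the second assertion, pick representatives $\phi_1,\ldots,\phi_{N(q,m)}$ of the equivalence classes of formulas of quantifier rank at most $q$ with free variables in $\{x_1,\ldots,x_m\}$. For a given type $\tau$, set
\[
  \tau(x_1,\ldots,x_m) \;:=\; \bigwedge_{i : \phi_i \in \tau} \phi_i(x_1,\ldots,x_m) \;\wedge\; \bigwedge_{i : \phi_i \notin \tau} \neg\phi_i(x_1,\ldots,x_m).
\]
This is a finite conjunction, and by construction $H\models\tau(\tup v)$ if and only if the set of rank-$\le q$ formulas satisfied by $\tup v$ in $H$ is exactly $\tau$, i.e.\ $\tp^q_H(\tup v)=\tau$. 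Moreover $\tau(x_1,\ldots,x_m)$ itself has quantifier rank at most $q$, so it belongs to $\tau$ as required.

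The only mildly subtle point is to make sure the inductive step handles free variables correctly: one must close under renaming of bound variables so that $\psi(x_1,\ldots,x_{m+1})$ can quantify over a fresh $x_{m+1}$ without colliding with the $x_i$'s. This is a routine alpha-renaming argument. I do not expect any genuine obstacle; the result is a textbook application of the Ehrenfeucht–Fra\"\i ss\'e/quantifier-rank machinery, and the statement here is purely syntactic (no model-theoretic input beyond finiteness of the signature is needed).
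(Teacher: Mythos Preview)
Your proposal is correct and follows exactly the approach the paper indicates: the paper does not give a detailed proof but simply states that the fact ``is well known, and follows from the observation that up to equivalence, there are finitely many formulas $\phi(x_1,\ldots,x_m)$ of quantifier rank $q$.'' Your argument is precisely the standard inductive justification of this observation, together with the usual construction of the type-defining formula as a finite conjunction over representatives.
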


\paragraph{Analysis of the proof}
We now go through the proof of 
Theorem 28 in \cite{bushes-arxiv},
that is, 
Theorem~\ref{thm:quasi-bushes} 
in the case where $\DD=\phi(\CC)$ for some class $\CC$ of colored graphs 
and first-order formula $\phi(x,y)$. We argue that Claim~\ref{claim:tuples} holds.

For each graph $G\in\CC$, 
a quasi-bush $B$ with $G(B)=\phi(G)$
is constructed as follows.
First, an \emph{$r$-separator quasi-bush $T$} for $G$
is constructed, for some number $r$ depending on the quantifier rank of $\phi(x,y)$. An $r$-separator quasi-bush is a tree $T$ with leaves $V(G)$,
equipped with:
\begin{itemize}
  \item  a set $D\subset V(T)\times V(T)$ of \emph{pointers}, where each pointer $(u,w)$
   points from some leaf $u$ of $T$ to some inner node $w$ of $T$ (and each leaf points to the root), and
  \item a function $\alpha$ mapping 
    each inner node $v$ of $T$ to a set $\alpha(v)\subset V(G)$ with the following property.
    For every two leaves $u,v$ of $T$, and node $w$ 
which is the lowest ancestor of $v$ such that $(v,w)\in D$,
the set $\alpha(w)$ is an $r$-separator between $u$ and $v$ in $G$,
that is, every path from $u$ to $v$ of length at most $r$
in $G$ passes through $\alpha(w)$.
\end{itemize}
Crucially (see Lemma 36 and second item in Lemma 33 in \cite{bushes-arxiv}), the size of the set $\alpha(w)$ is bounded by a constant $m$ (depending only on $\CC$ and $\phi$).
 Below, the set $\alpha(w)$ is treated as a tuple of length at most $m$,
by enumerating its elements according to any fixed order on $V(G)$.

Next, an $r$-separator quasi-bush $T$ is converted into a quasi-bush $B$,
by assigning a label $\lambda(v)\in\Lambda$ 
(where $\Lambda$ is some finite set) to each leaf $v$ of $T$, 
and a label $\lambda^D((u,w))\in 2^\Lambda$  to each pointer $(u,w)\in D$.
For $M\in 2^\Lambda$, let $D_M$ 
denote the set of pointers $(u,w)\in D$ with $\lambda^D((u,w))=M$. 

The following statement is immediate from the construction (Proof of Theorem 28 in \cite{bushes-arxiv}):
There is a number $q$ (depending only on $\phi$ and $\CC$), such that 
the label $\lambda^D((u,w))$ of a pointer $(u,w)\in D$
depends only on $\tp^q_G(u\alpha(w))$, where $\alpha(w)$ is viewed as a tuple.
Hence, for every label $M\in 2^\Lambda$,
whether or not a pointer $(u,w)\in D$ 
belongs to $D_M$, depends only on $\tp^q_G(u\alpha(w))$.
That means that for each $M\in 2^\Lambda$ there is a set $\Phi_M\subset \Gamma^{m+1}_q$ 
such that for all $(u,w)\in D$ we have 
\[(u,w)\in D_M \quad\iff \quad \tp^q_G(u\alpha(w))\in \Phi_M.\]
Let $\psi_M^0(x,\tup y)$ denote the disjunction 
of all formulas $\tau(x,\tup y)$ representing the types $\tau\in \Phi_M$ (as described in Fact~\ref{fact:types}).
We conclude that the following claim holds.

\begin{claim}\label{claim:dependson}
  There is a number $q$ depending only on $\CC$ and $\phi$ such that the following holds.
  For every $M\in 2^\Lambda$
  there is a formula $\psi_M^0(x,\tup y)$ of quantifier rank $q$ such that 
for every pointer $(u,w)\in D$
we have 
\[(u,w)\in D_M\quad\iff\quad G\models \psi_M^0(u,\alpha(w)).\]
\end{claim}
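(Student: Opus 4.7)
The plan is to directly formalize the observation recorded in the paragraph just before the claim: the labelling of each pointer is determined by a fixed-quantifier-rank type, and any set of such types is axiomatized by a single first-order formula of the same rank. First I would fix the two structural constants supplied by the construction. Let $m$ be the uniform bound on $|\alpha(w)|$ across all inner nodes $w$, guaranteed by \cite[Lemma~33 and Lemma~36]{bushes-arxiv}, and let $q$ be the quantifier-rank threshold produced by the labelling procedure in the proof of Theorem~28 of \cite{bushes-arxiv}. Both constants depend only on $\CC$ and $\phi$, which gives the ``$q$ depending only on $\CC$ and $\phi$'' part of the statement.

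The crucial ingredient is then extracted by inspecting the construction of $\lambda^D$: for every pointer $(u,w)\in D$, the label $\lambda^D((u,w))\in 2^{\Lambda}$ is a function of $\tp^q_G(u\,\alpha(w))$ alone, where $\alpha(w)$ is viewed as an ordered tuple using a fixed ordering of $V(G)$ (padding with a fixed distinguished element, if necessary, so that every such tuple has length exactly $m$). Granting this, for each label $M\in 2^{\Lambda}$ there is a set of types $\Phi_M\subseteq \Gamma^{m+1}_q$ such that
\[
  (u,w)\in D_M \quad\iff\quad \tp^q_G(u\,\alpha(w))\in \Phi_M \qquad\text{for every pointer $(u,w)\in D$.}
\]

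At this point I would invoke Fact~\ref{fact:types}: the set $\Gamma^{m+1}_q$ is finite, and each type $\tau\in \Gamma^{m+1}_q$ is axiomatized by a single first-order formula $\tau(x_1,\ldots,x_{m+1})$ of quantifier rank $q$. Setting
\[
  \psi_M^0(x,\tup y) \;:=\; \bigvee_{\tau\in \Phi_M} \tau(x,\tup y),
\]
which is a finite disjunction, yields a well-defined first-order formula of quantifier rank $q$ satisfying the required equivalence for every pointer $(u,w)\in D$.

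The main obstacle is not conceptual but bookkeeping: one has to handle the fact that the tuples $\alpha(w)$ may genuinely have varying lengths up to $m$. This is resolved either by the padding convention described above, or alternatively by taking a disjunction over the finitely many possible lengths and assembling a formula in each arity separately. Neither modification increases the quantifier rank beyond $q$, so the claim goes through exactly as stated.
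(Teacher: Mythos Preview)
Your proposal is correct and follows essentially the same argument as the paper: both extract from the construction in \cite{bushes-arxiv} that $\lambda^D((u,w))$ depends only on $\tp^q_G(u\,\alpha(w))$, collect the relevant types into a set $\Phi_M$, and define $\psi_M^0$ as the disjunction of the type-defining formulas from Fact~\ref{fact:types}. Your additional remarks on padding $\alpha(w)$ to a fixed length are a reasonable piece of bookkeeping that the paper leaves implicit.
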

We also need to argue that the set $D$ can be defined by a first-order formula, as made precise below.
\begin{claim}\label{claim:D}  
There is a number $t$ depending only on $\phi$ and $\CC$,
a function $\delta\from V(T)\to V(G)^t$
and a first-order formula $\psi_D(x_0,\ldots,x_t)$, such that the following holds 
for all $u,w\in V(B)$:
\begin{align}\label{eq:psiD}
  (u,w)\in D\quad\iff\quad G\models \psi_D(u,\delta(w)).
\end{align}
\end{claim}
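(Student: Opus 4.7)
The plan is to exploit the two uniform bounds on the quasi-bush $B$ given by Theorem~\ref{thm:quasi-bushes}: the depth of $T$ is at most $d$, and every separator $\alpha(w)$ has size at most $m$, where both $d,m$ depend only on $\CC$ and $\phi$. Setting $t:=d\cdot m$, I would define $\delta(w)\in V(G)^t$ to be the concatenation of the separator tuples $\alpha(w_0),\alpha(w_1),\ldots,\alpha(w_k)$ along the unique root-to-$w$ path $w_0=\mathrm{root},w_1,\ldots,w_k=w$, where each $\alpha(w_i)$ is listed as an $m$-tuple according to any fixed linear order on $V(G)$ and padded with a designated dummy vertex whenever $|\alpha(w_i)|<m$ or $k<d$. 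Because the entire construction of $T$ in \cite{bushes-arxiv} is a deterministic function of $G$, the tuple $\delta(w)$ reconstructs the path to $w$, and hence the node $w$ itself, uniquely.

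Next, I would produce the formula $\psi_D$ by turning each step of the recursive construction of an $r$-separator quasi-bush and its subsequent pointer decoration into a first-order predicate. Inside the construction, each inner node's region is split into subregions using $r$-bounded reachability in $G$ with the separator removed. Since $r$ is a fixed constant, the predicate ``$u$ is $r$-reachable from $v$ in $G\setminus S$'' is first-order definable in $u,v$ and the elements of the fixed-size set $S$. By iterating this through the $d$ levels of the path encoded by $\tup y$, one can write a first-order formula checking that $\tup y$ encodes a valid root-to-$w$ path of $T$ and that $u$ belongs to the region of $w$. The rule assigning pointers in the passage from an $r$-separator tree to a quasi-bush (cf.\ the proof of \cite[Theorem~28]{bushes-arxiv}) is again determined by $r$-local information in $G$ together with the at most $dm$ separator vertices on the root-to-$w$ path, so it too can be captured by a first-order formula. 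Taking the conjunction yields $\psi_D(x_0,x_1,\ldots,x_t)$ of quantifier rank bounded in terms of $r$, $d$, and $m$, as required by~\eqref{eq:psiD}.

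The main obstacle is to extract from \cite{bushes-arxiv} the precise deterministic rule by which pointers in $D$ are chosen, and to verify that at every stage the rule uses only first-order-expressible information on $G$: the separator at each level is selected from a bounded family of candidates definable in $G$, the region decomposition uses $r$-bounded reachability, and the pointer-activation criterion is formulated in terms of the $r$-neighborhood of the leaf $u$ together with the separators along its root-to-leaf path. Once this bookkeeping is completed, Claim~\ref{claim:D} follows, and together with Claim~\ref{claim:dependson} it gives Claim~\ref{claim:tuples} by setting
\[\psi_M(x,\tup y)\;:=\;\psi_D(x,\tup y)\,\land\,\psi_M^0(x,\tup y'),\]
where $\tup y'$ denotes the last $m$ coordinates of $\tup y$; by construction this subtuple equals $\alpha(w)$ when $\tup y=\delta(w)$, completing the argument and thereby the proof of Theorem~\ref{thm:quasi-bushes-vc}.
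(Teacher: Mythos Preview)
Your proposal has the right instinct---that the pointer relation should be first-order definable because it is determined by $r$-local information and a bounded number of parameter vertices---but it takes a detour that is both unnecessary and not quite correct, and it leaves the actual work undone.

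The paper's argument is far more direct. Inspecting the construction in \cite{bushes-arxiv}, one finds that every inner node $w$ of $T$ is of the form $M^k_r[v]$ for some $v\in V(G)$ and some $k\le d$, and the pointer relation is \emph{defined} by the single clause: $(u,M^k_r[v])\in D$ if and only if the bounded-size set $S^{k-1}_r[v]$ does \emph{not} $r$-separate $u$ from $v$ in $G$. Hence it suffices to set $\delta(w):=(v,S^{k-1}_r[v])$ (padded to fixed length) and to let $\psi_D(x,y,\tup z)$ express ``$\tup z$ does not $r$-separate $x$ and $y$,'' which is plainly first-order for fixed $r$. No information about the root-to-$w$ path is needed, and $t$ is roughly $d+1$ rather than $d\cdot m$.

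Your encoding of the entire root-to-$w$ path, and the plan to reconstruct $w$ and its ``region'' inside a formula, both overshoot and misidentify what $D$ actually records: $D$ is not the ancestor relation, and membership of $u$ in the ``region of $w$'' is not what is being tested. You also place the creation of pointers in the passage from separator tree to quasi-bush, whereas the pointers are already part of the $r$-separator quasi-bush; only the labels $\lambda,\lambda^D$ are added afterwards. Finally, your combination step sets $\beta(w)=\delta(w)$ and extracts $\alpha(w)$ as its last $m$ coordinates; in the paper $\beta(w)$ is the concatenation of $\alpha(w)$ and $\delta(w)$, two separate tuples, so your splice would not yield the formula $\psi_M$ used to finish Claim~\ref{claim:tuples}. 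The essential missing ingredient in your write-up is precisely the one-line definition of $D$ from \cite{bushes-arxiv}; once you have it, the claim is immediate and the path-encoding machinery can be discarded.
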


First, we show how Claim~\ref{claim:dependson} and Claim~\ref{claim:D}
imply Claim~\ref{claim:tuples}.
From the two claims it follows 
that for each $M\in 2^\Lambda$ and pair $(u,w)\in V(T)\times V(T)$
we have that 
\[(u,w)\in D_M \quad\iff\quad G\models \psi_D(u,\delta(w))\land \psi_M^0(u,\alpha(w)).\]
For a node $w$ of $T$, let $\beta(w)$ be the concatenation 
of the tuples $\alpha(w)$ and $\delta(w)$.
For $M\in 2^\Lambda$ define 
\[\psi_M(x,\tup y,\tup z):= \psi_D(x,\tup z)\land \psi_M^0(x,\tup y).\]
Then for each pair $(u,w)\in V(T)\times V(T)$
we have that 
\[(u,w)\in D_M \quad\iff\quad G\models \psi_M(u,\beta(w)).\]
This proves Claim~\ref{claim:tuples}, assuming 
Claim~\ref{claim:dependson} and Claim~\ref{claim:D}.

\medskip
Claim~\ref{claim:D} is argued below, by analysing the construction the  $r$-separator quasi-bush for $G$.
Definition 32 of \cite{bushes-arxiv} associates to each vertex $v$ of $G$
and number $k\ge 0$ two sets of vertices of $G$, denoted $M_r^k[v]$ and $S_r^k[v]$.
Those sets are treated as tuples according to some fixed enumeration of $V(G)$.
It is shown (see Lemma 36 of \cite{bushes-arxiv}) that those sets have size 
bounded by some constant $d$.
According to Definition 37 of \cite{bushes-arxiv},
the nodes $w$ of the $r$-separator quasi-bush $T$ are sets of the form $M^k_r[v]$, for all $v\in V(G)$ and all $k\le d$.
And the pointers $D$ of $T$ 
are defined so that $(u,M^k_r[v])\in D$,
for $v\in V(G)$ and $k\ge 1$,
if and only if $S^{k-1}_r[v]$ does not $r$-separate $u$ and $v$ in $G$.

For a node $w=M^k_r[v]$ of $T$,
define $\delta(w)$ as the concatentation of the following tuples:
\begin{itemize}
  \item $v$ (where $v$ is arbitrarily chosen so that $M^k_r[v]=w$),
  \item $S^{k-1}_r[v]$, padded to a tuple of length $d$.
\end{itemize}
Let $\psi_D(x,y,\tup z)$ with $|\tup z|=d$ 
be a first-order formula expressing 
``$\tup z$ does not $r$-separate $x$ and $y$.''
Then, by definition of $D$, we have that 
for every pair $u,w\in V(T)$ 
\eqref{eq:psiD} holds.
This proves Claim~\ref{claim:D}.
\end{proof}

\subsection{Quantifier-free interpretations with function symbols}
\label{sec:qff}
In this section, we prove Lemma~\ref{lem:fw-qf-vc}, which is repeated below.
\begin{lemma*}[\ref{lem:fw-qf-vc}]
  Let $\Sigma$ be a signature consisting of unary and binary relation symbols, and unary function symbols. Fix $k,r\ge 0$,
  and a symmetric quantifier-free $\Sigma$-formula  $\phi(x,y)$. There are numbers $p=O_\phi(k)$ and 
  $r'=O_\phi(r)$ such that the following holds.
  Let $B$ be a $\Sigma$-structure of VC-dimension at most $k$ and $G_B$ be its Gaifman graph. Then 
   \[\fw_r(\phi(B))\le O(\copw_{r'}(G_B))^{p}.\]
\end{lemma*}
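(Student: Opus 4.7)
The plan is to transfer a winning Cops and Robber strategy on $G_B$ at radius $r'$ with $\kappa := \copw_{r'}(G_B)$ cops into a winning flipper strategy on $\phi(B)$ at radius $r$ and width $O(\kappa)^p$, by direct simulation. Let $d$ be the maximum function-nesting depth of terms appearing in $\phi$, let $s$ be the number of distinct such terms $t_1,\ldots,t_s$, and set $r' := (2d+1)r$ and $p := O_\phi(k)$. By construction of the Gaifman graph, each term value $t_i(u)$ lies within $G_B$-distance $d$ of $u$. The simulation maintains a shadow cop game on $G_B$: in round $i$ the cops announce $S_i$ of size at most $\kappa$, the flipper on $\phi(B)$ responds with a flip $\phi(B)^{(S_i)}$ derived from $S_i$ as below, and the robber's and runner's positions coincide throughout. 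When the cops eventually catch the robber with $v_i \in S_i$, the runner finds itself in a singleton class of $\phi(B)^{(S_i)}$, hence isolated.

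The partition $\cal P_S$ defining $\phi(B)^{(S)}$ is built from extended $S^+$-types, where $S^+ := S \cup \{t(s) : s \in S, t \text{ a sub-term of some } t_i\}$ enlarges $S$ to absorb the values of function terms whose evaluation from $u$ would otherwise route through $S$; note $|S^+| = O_\phi(|S|)$. The extended $S^+$-type of $u \in V(B)$ records: \emph{(i)} for every sub-term $t$ of every $t_i$, whether $t(u) \in S^+$ and, if so, its identity; \emph{(ii)} for every $t_i(u) \notin S^+$, its $S^+$-neighbourhood with respect to each relation of $\Sigma$; and \emph{(iii)} the quantifier-free $\Sigma$-type of $\tau(u) := (t_1(u),\ldots,t_s(u))$ in $B$. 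By the Sauer-Shelah-Perles lemma applied to $\VCdim(B) \le k$, each individual $S^+$-neighbourhood has $O(|S^+|^k)$ possibilities, and combined with the $O_\phi(|S^+|^{s(d+1)})$ possibilities for item (i) and the constantly many choices for item (iii), one gets $|\cal P_S| \le O(\kappa)^p$. Finally, the flip $\phi(B)^{(S)}$ isolates every $v \in S^+$ (each forming its own singleton class) and, for every pair of remaining classes $(A, A')$, flips $(A, A')$ iff $\phi(u,v)$ holds in $B$ for some (equivalently, every) $u \in A$, $v \in A'$ with $\dist_{G_B \setminus S}(u, v) > 2d+1$.

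The crucial step is to verify that every edge $uv$ of $\phi(B)^{(S)}$ with $u, v \notin S^+$ satisfies $\dist_{G_B \setminus S}(u, v) \le 2d+1$. When $u, v \notin S^+$, no sub-term of any $t_i$ evaluated at $u$ or $v$ lies in $S^+$, so the $G_B$-path $u \to t_1^{(1)}(u) \to \cdots \to t_i(u)$ avoids $S$, giving $\dist_{G_B\setminus S}(u, t_i(u)) \le d$, and symmetrically for $v$. If moreover $\dist_{G_B\setminus S}(u, v) > 2d+1$, then $\dist_{G_B\setminus S}(t_i(u), t_j(v)) > 1$ for all $i, j$, which rules out every cross-atom $R(t_i(u), t_j(v))$ and every cross equality $t_i(u) = t_j(v)$; consequently $\phi(u,v)$ depends only on the pair $(A,A')$ of classes and is cancelled by the flip rule. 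In turn, a radius-$r$ move of the runner in $\phi(B)^{(S_{i-1})}$ lifts to a path of length at most $(2d+1)r = r'$ in $G_B$ avoiding $S_{i-1}$, precisely the robber's allotted movement in the shadow game. Since the shadow cop strategy is winning, so is the resulting flipper strategy, giving $\fw_r(\phi(B)) \le O(\kappa)^p = O(\copw_{r'}(G_B))^p$. The principal technical obstacle is exactly the bookkeeping of function terms through $S$: without enlarging to $S^+$ and tracking which sub-terms hit $S$, a cross-relation $R(t_i(u), t_j(v))$ could be realised via a $G_B$-path passing through $S$, creating an edge of $\phi(B)^{(S)}$ with no robber-legal counterpart in $G_B \setminus S$ and breaking the simulation; the $S \mapsto S^+$ inflation repairs this at the cost of only a polynomial-in-$|S|$ factor, preserving $p = O_\phi(k)$.
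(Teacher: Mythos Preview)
Your overall plan---transfer a cop strategy on $G_B$ of radius $(2d+1)r$ into a flipper strategy on $\phi(B)$ by converting each cop placement $S$ into a partition by ``types over $S$''---is exactly the paper's approach. However, your execution of the crucial step has a genuine gap.

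The sentence ``When $u, v \notin S^+$, no sub-term of any $t_i$ evaluated at $u$ or $v$ lies in $S^+$'' is simply false: $u\notin S^+$ says nothing about whether $f(u)$ or $g(f(u))$ lies in $S^+$. Consequently your argument only covers the case in which the term-evaluation path $u\to f_1(u)\to\cdots\to t_i(u)$ avoids $S$ entirely. You must also handle the case where some intermediate value $t_0(u)=s\in S$; in that case the cross-atom $R(t_i(u),t_j(v))$ becomes $R(t_1(s),t_j(v))$ with $t_1$ the \emph{outer} composite satisfying $t_i=t_1\circ t_0$. Here your type is too thin: item~(ii) records the $S^+$-neighbourhood of $t_j(v)$, so you could read off $R(t_1(s),t_j(v))$ only if $t_1(s)\in S^+$. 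But your $S^+$ is built from \emph{sub-terms} of the $t_i$'s applied to $S$, and $t_1$ is a suffix composite, not a sub-term; e.g.\ for $t_i(x)=f(g(h(x)))$ with $h(u)=s\in S$, you need $f(g(s))$, which need not lie in your $S^+$. So neither the case analysis nor the well-definedness of your flip rule (``some, equivalently every, pair at large distance'') goes through as written.

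The paper repairs this in one of two equivalent ways. Either enlarge the parameter set to $S^{(d)}=\{t(s):s\in S,\ t\text{ any term of depth }\le d\}$, so that every such $t_1(s)$ is automatically in the set; or, equivalently, record in the type of $v$ the truth value of \emph{every} depth-$\le d$ atomic formula $\alpha(v,s)$ for every $s\in S$, which directly stores $R(t_1(s),t_j(v))$. Either fix still gives $|S^+|=O_\phi(|S|)$ and $|\cal P_S|\le O(|S|)^{O_d(k)}$ via Sauer--Shelah--Perles, so your bounds survive; only the case split and the definition of $S^+$ need correcting.
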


In Section~\ref{sec:qff}, 
fix a signature $\Sigma$ consisting of unary relation symbols, binary relation symbols, and unary function symbols. All considered formulas are over this signature, and are quantifier-free.

The \emph{depth} of a term $t(x)$ is the nesting of function symbols occurring in $t$,
where the term $x$ has depth $0$, $f(x)$ has depth $1$, etc. 
The depth of a quantifier-free formula 
is the maximal depth of a term occurring in it.
Note that there are $O_d(1)$ terms and atomic formulas of depth~$d$.

We first prove the following lemma.
    For a set $S\subset V(G)$ and two vertices $u,v\in V(B)$,
    let $\dist_S(u,v)$ denote the distance between $u$ 
    and $v$ in the subgraph of the Gaifman graph of $B$
    obtained by isolating $S$, that is, removing the edges incident to vertices in $S$.

    \begin{lemma}\label{lem:local-types-over-S}
      Fix $k,d\ge 0$
      and a quantifier-free $\Sigma$-formula $\phi(x,y)$ of depth at most~$d$.
      Then there is a number $m=O_d(k)$ such that 
      for every $\Sigma$-structure $B$ 
      of VC-dimension at most $k$ and set $S\subset V(B)$
      there is a set $T$ of labels with $|T|\le O_\phi(|S|^m)$,
       a binary relation $\Phi\subset T\times T$,
and a function $\lambda\from V(B)\to T$,
  such that for all vertices $u,v\in V(G)$ 
  with $\dist_{S}(u,v)>2d+1$ we have
  \[B\models \phi(u,v) \quad\iff\quad (\lambda(u),\lambda(v))\in \Phi.\]
  \end{lemma}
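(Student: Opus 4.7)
The plan is to define the label $\lambda(u)$ to encode all local information about $u$ that is needed to determine $\phi(u,v)$ whenever $u$ and $v$ are far apart in the Gaifman graph with $S$ isolated. Since $\phi$ is quantifier-free of depth $\le d$, it is a Boolean combination of atomic formulas involving terms of depth at most $d$ applied to $x$ or $y$, and there are only $O_d(1)$ such terms.

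First, I would set up the geometric picture. For each term $t$ of depth $\le d$ and vertex $u$, the \emph{trajectory} $u = u_0, u_1, \ldots, u_d = t(u)$ is a walk in the Gaifman graph obtained by successively applying the function symbols in $t$. I classify $t$ as \emph{Case A} (with respect to $u$) if none of $u_0, \ldots, u_d$ lies in $S$, and as \emph{Case B} otherwise, recording in the latter case the smallest index $i$ with $u_i \in S$ and the element $s := u_i$. In Case A one has $\dist_S(u, t(u)) \le d$ by construction, while in Case B one has $t(u) = g(s)$ where $g$ is the term formed by the suffix of $t$ after position $i$; thus $t(u)$ is fully determined by the recorded pair $(i,s)$. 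Next I would form $S' := S \cup \{g(s) : s \in S,\ g \text{ a term of depth} \le d\}$, which has size $O_d(|S|)$, and note that every Case B value lies in $S'$.

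The label $\lambda(u)$ would then record, for each term $t$ of depth $\le d$: (a) its Case A/B classification, together with the pair $(i,s)$ in Case B; (b) if $t$ is Case A, the atomic unary type of $t(u)$, the atomic binary relations and equalities between $t(u)$ and $t'(u)$ for each other Case A term $t'$, and the \emph{$S'$-profile} of $t(u)$, recording for every binary relation symbol $R \in \Sigma$ and every $w \in S'$ whether $R(t(u), w)$, $R(w, t(u))$, and $t(u) = w$ hold. The Sauer-Shelah-Perles lemma (Lemma~\ref{lem:sauer-shelah-perles}), applied to each of the $O_\phi(1)$ binary relations of $B$ --- which have VC-dimension $\le k$ by hypothesis --- bounds the number of distinct $S'$-profiles of vertices of $B$ by $O_\phi(|S'|^{O_\phi(k)}) = O_\phi(|S|^{O_d(k)})$. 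Combined with the $O_d(|S|)$ choices for the Case B coordinates across the $O_d(1)$ terms, this yields $|T| \le O_\phi(|S|^m)$ with $m = O_d(k)$.

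Finally, I would define $\Phi$ as the set of label pairs on which the natural evaluation of $\phi$ (using $B$ as additional fixed data) returns true, and verify correctness by a case analysis over atomic subformulas. The delicate case is a mixed binary atom $R(t(x), t'(y))$ with both $t,t'$ in Case A: by the triangle inequality for $\dist_S$ one has $\dist_S(t(u), t'(v)) \ge \dist_S(u,v) - 2d > 1$; since moreover $t(u), t'(v) \notin S$, any Gaifman edge between them would survive after isolating $S$, but none exists, forcing $t(u) \ne t'(v)$ and $R(t(u), t'(v))$ to be false. All other atoms are either read directly from a single label (single-variable atoms, and same-variable binary atoms in Case A--A), from an $S'$-profile (mixed Case A against Case B, since the Case B value lies in $S'$), or are fixed facts about $B$ computable from the recorded $(i,s)$ data (Case B against Case B). The main obstacle I anticipate is the book-keeping: ensuring that $S'$ is closed enough that every Case B value at every vertex falls in it, and that the various applications of Sauer-Shelah --- including the trivial VC-dimension-one equality relation --- combine to give the stated exponent $O_d(k)$.
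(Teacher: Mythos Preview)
Your proposal is correct and follows essentially the same strategy as the paper: the paper defines the label of $v$ as its atomic $S$-type of depth $d$, namely the set of pairs $(\alpha(x,y),s)$ with $s\in S$ and $\alpha(v,s)$ true, and bounds the number of such types by reducing to depth $0$ over the expanded set $S^{(d)}=\{t(s):s\in S,\ t\text{ a term of depth}\le d\}$ via Sauer--Shelah, then proves the dependence claim by the same subterm-hits-$S$ case split and triangle-inequality argument you give for the Case~A--A situation. Your version unpacks the type into an explicit record (Case~A/B flags, $(i,s)$ data, $S'$-profiles), but the information content, the key geometric observation, and the counting are identical.
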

  \begin{proof}
    
    Fix $d\ge 0$.
For a vertex $v\in V(B)$
and set of vertices $S\subset V(G)$,
define 
  the \emph{atomic $S$-type of depth $d$} of $v$,
  denoted $\atp^d(v/S)$,
as
the set of all pairs consisting of an atomic formula
$\alpha(x,y)$ of depth at most $d$ and 
an element $s\in S$, such that $\alpha(v,s)$ holds in $B$. 
For a set $S\subset V(G)$, define
\[T^d(S):=\setof{\atp^d(v/S)}{v\in V(G)}.\]


\begin{claim}\label{claim:ntypes-qf}
  Fix $d,k\ge 0$. There is a number $m=O_d(k)$ such that for every structure $B$ with  $k=\VCdim(B)$ and set $S\subset V(B)$ we have 
  \[|T^d(S)|=O(|S|)^m.\] 
\end{claim}
\begin{proof}
We first prove the claim in the case $d=0$.
We have that $\atp^0(v/S)$ is determined by the following data:
\begin{itemize}
  \item the set of 
  atomic formulas $\alpha(x)$ of depth $0$ 
  such that $\alpha(v)$ holds in $G$, 
  
  \item the sets $R(v;S)$ and 
  and $R(S;v)$, for each binary relation symbol $R\in \Sigma$,
  \item the set of elements  $s\in S$ such that $s=v$; this set is either empty, or a singleton.
\end{itemize}
There are $O(1)$ formulas of depth $0$,
and for each binary relation symbol $R\in\Sigma$,
we have 
\[|\setof{R(v;S)}{v\in V(B)}|\le O(|S|^k)\]
and 
\[|\setof{R(S;v)}{v\in V(B)}|\le O(|S|^k)\]
by the Sauer-Shelah-Perles lemma and the assumption that 
$\VCdim(B)\le k$.
We get that $|T^d(S)|\le O(|S|^{2k|\Sigma|+1})=|S|^{O(k)}$,
since we consider $\Sigma$ as fixed.

We now consider the case $d>0$.
  Let $S^{(d)}$ 
  denote the set of vertices that can be obtained in $B$ by applying 
   a term  $t(x)$ of depth at most $d$ to a vertex $s\in S$:
   \[S^{(d)}:=\setof{t(s)}{s\in S, \text{$t(x)$ is a term of depth ${\le}d$}}.\]
   Then $|S^{(d)}|\le O_d(|S|)$, as there are $O_d(1)$ terms of depth at most $d$.

   For a vertex $v\in V(B)$,
$\atp^d(v/S)$ is uniquely determined by the tuple
\[(\atp^0(t(v)/S^{(d)}))_{t(x)}\]
where $t(x)$ ranges over all terms of depth at most $d$.
As there are $O_d(1)$ such terms $t(v)$,
and $|S^{(d)}|\le O_d(|S|)$
the conclusion follows from the case $d=0$ considered earlier.
\end{proof}

\begin{claim}\label{claim:depends-qf}Fix $d\ge 0$, and 
  let $\phi(x,y)$ be a quantifier-free formula of depth at most~$d$.
  Fix a $\Sigma$-structure $B$ and a set $S\subset V(B)$.
  For all vertices $u,v\in V(B)$ with 
       $\dist_{S}(u,v)>2d+1$, whether or not $\phi(u,v)$ holds in $B$, depends only on $\atp^d(u/S)$ and $\atp^d(v/S)$. More precisely, there is a binary relation $\Phi\subset T^d(S)\times T^d(S)$ 
       such that for all vertices $u,v\in V(B)$ with 
       $\dist_{S}(u,v)>2d+1$
       we have 
       \[B\models\phi(u,v)\iff (\atp^d(u/S),\atp^d(v/S))\in \Phi.\]
    \end{claim}
  \begin{proof} 
    It is enough to consider the case when $\phi(x,y)$ is an atomic formula, since if the statement holds for two formulas $\phi(x,y)$ and $\psi(x,y)$ of nesting depth at most $d$,
    then it also holds for $\neg\phi(x,y)$ and for $\phi(x,y)\lor\psi(x,y)$.

    Thus assume that $\phi(x,y)$ is of the form 
    \[\phi(x,y)\equiv R(t(x),t'(y)),\]
    where $R$ is either a binary relation symbol occurring in $\Sigma$, or is the equality relation, and $t(x)$ and $t'(y)$ are two terms of depth at most $d$.

    Fix two vertices $u,v\in V(B)$ with $\dist_B(u,v)>2d+1$.
    We show how to determine whether $\phi(u,v)$ holds in $B$,
  from the information contained in $\atp^d(u/S)$ and $\atp^d(v/S)$.
    
  Suppose first that there is a subterm $t_0(x)$
  of $t(x)$ such that $t_0(u)\in S$. 
  Note that whether this is the case can be determined 
from $\tp^d(u/S)$.

  Let $s=t_0(u)\in S$, and 
  let $t_1(z)$ be a term such that $t_1(t_0(x))=t(x)$. In particular, 
  $t(u)=t_1(s)$.
Then 
\[B\models \phi(u,v)\iff  B\models R(t_1(s),t'(v)).\]
Since $R(t_1(x),t'(y))$ is an atomic formula of depth at most $d$,
whether or not $R(t_1(s),t'(v))$  holds in $B$ 
is determined  by $\atp^d(v/S)$.
Hence, in this case, whether or not $B\models \phi(u,v)$, is determined by $\atp^d(v/S)$.

Similarly, if there is a subterm $t_0'(y)$ of $t'(y)$ 
such that $t_0'(u)\in S$, then whether or not $B\models \phi(u,v)$, is determined by $\atp^d(u/S)$.
Moreover, whether this case holds can be determined from $\tp^d(u/S)$.

We show that if neither of the two cases holds,
then $B\models\neg \phi(u,v)$.
First, note that $\dist_S(u,t(u))\le d$,
as witnessed by the path formed by $u,f_1(u), f_2(f_1(u)),\ldots,t(u)$, where $t(x)=f_d(\ldots(f_1(x))\ldots)$.
Similarly, $\dist_S(v,t'(v))\le d$.
Since $\dist_S(u,v)>2d+1$, by the triangle inequality we have
$\dist_S(t(u),t'(v))>1$. As  $t(u),t'(v)\notin S$,
it follows that $t(u)$ and $t'(v)$ are non-adjacent in the Gaifman graph of $G$. We conclude that $B\models \neg R(t(u),t'(v))$,
equivalently, $B\models \neg \phi(u,v)$.

The claim follows.
  \end{proof}
Lemma~\ref{lem:local-types-over-S}
follows immediately from Claim~\ref{claim:ntypes-qf} and Claim~\ref{claim:depends-qf}, by taking $T=T^d(S)$ and $\lambda(v)=\atp^d(v/S)$.
  \end{proof}

  From Lemma~\ref{lem:local-types-over-S} we get the following.
  \begin{corollary}\label{cor:flips-over-S}
    Fix $d,k\ge 0$
      and a symmetric quantifier-free $\Sigma$-formula $\phi(x,y)$ of depth at most $d$.
      There is a number $m=O_d(k)$ with the following property.
      For every $\Sigma$-structure $B$ 
      of VC-dimension at most $k$ and set $S\subset V(B)$
      there is a 
      $ O_\phi(|S|^{m})$-flip $\phi(B)'$ of $\phi(B)$       
  such that for every vertex $v\in V(B)$ 
  we have 
  \begin{align}\label{eq:flips-over-S}
    \dist_S(u,v)\le 2d+1\qquad\text{for $u,v\in E(\phi(B)')$,}
  \end{align}
where $\dist_S(\cdot,\cdot)$ denotes the distance 
in the Gaifman graph of $B$ with the vertices in $S$ isolated.
  \end{corollary}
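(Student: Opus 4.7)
The plan is to apply Lemma~\ref{lem:local-types-over-S} and then realize the resulting predicate $\Phi$ as the specification of a flip. The hard work has already been done in the lemma; the remaining task is essentially a rewriting.

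More concretely, apply Lemma~\ref{lem:local-types-over-S} to $B$, $S$, and the formula $\phi(x,y)$. This yields a label set $T$ with $|T|\le O_\phi(|S|^{m})$ (for $m=O_d(k)$), a labelling $\lambda\from V(B)\to T$, and a binary relation $\Phi\subset T\times T$ such that for all $u,v\in V(B)$ with $\dist_S(u,v)>2d+1$ we have $B\models\phi(u,v)\iff(\lambda(u),\lambda(v))\in\Phi$. Let $\cal P:=\setof{\lambda^{-1}(t)}{t\in T,\lambda^{-1}(t)\neq\emptyset}$; this is a partition of $V(B)$ of size at most $|T|\le O_\phi(|S|^{m})$. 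Since $\phi$ is symmetric, we may assume $\Phi$ is symmetric as well (replace $\Phi$ with $\Phi\cup\Phi^{-1}$; this only affects pairs of types where both orderings are witnessed, and the lemma's equivalence is unaffected on pairs $(u,v)$ with $\dist_S(u,v)>2d+1$).

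Define $\phi(B)'$ as the $\cal P$-flip of $\phi(B)$ obtained by flipping the (unordered) pair of classes $\{\lambda^{-1}(t_1),\lambda^{-1}(t_2)\}$ precisely when $(t_1,t_2)\in\Phi$. Since $|\cal P|\le O_\phi(|S|^{m})$, this is an $O_\phi(|S|^{m})$-flip, as required. Now fix any edge $uv\in E(\phi(B)')$, and suppose for contradiction that $\dist_S(u,v)>2d+1$. By construction, $uv\in E(\phi(B)')$ iff exactly one of the following holds: (i) $uv\in E(\phi(B))$, equivalently $B\models\phi(u,v)$ (using symmetry of $\phi$ and $u\neq v$, which follows from $\dist_S(u,v)>0$), or (ii) $(\lambda(u),\lambda(v))\in\Phi$. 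But under the assumption $\dist_S(u,v)>2d+1$, Lemma~\ref{lem:local-types-over-S} tells us that (i) and (ii) are equivalent, so their XOR is false, contradicting $uv\in E(\phi(B)')$. Hence $\dist_S(u,v)\le 2d+1$, which is~\eqref{eq:flips-over-S}.

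There is essentially no combinatorial obstacle — the entire difficulty was absorbed into Lemma~\ref{lem:local-types-over-S}, in particular the polynomial bound $|T|\le O_\phi(|S|^m)$ with $m=O_d(k)$, which itself rested on the Sauer--Shelah--Perles lemma applied to the binary relations of $B$. The only points that need care in the write-up are (a) verifying that the symmetrisation of $\Phi$ is harmless (which it is, by symmetry of $\phi$), and (b) confirming that allowing $t_1=t_2$ in the flipped pairs poses no issue: flipping $(\lambda^{-1}(t),\lambda^{-1}(t))$ is a legitimate flip within a single class and produces exactly the correct adjacency changes needed for pairs of equal $\lambda$-type.
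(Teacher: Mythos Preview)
Your proof is correct and follows essentially the same route as the paper's: apply Lemma~\ref{lem:local-types-over-S}, partition $V(B)$ by the resulting labels, and flip according to the relation on types. The only cosmetic difference is that the paper defines the flip by ``flip $P,Q$ iff some pair $u\in P,v\in Q$ with $\dist_S(u,v)>2d+1$ satisfies $\phi$'' rather than directly via $\Phi$; both definitions yield flips satisfying~\eqref{eq:flips-over-S}, and your handling of the symmetrisation of $\Phi$ is fine.
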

  \begin{proof}
    Let $\lambda\from V(B)\to T$ be the labelling from Lemma~\ref{lem:local-types-over-S}. Let $\cal P$ be the partition of $V(B)$
    into parts $\lambda^{-1}(a)$, for $a\in T$. Then $|\cal P|\le |T|=O_\phi(|S|^m)$ for some $m=O_d(k)$.
    Define $\phi(B)'$ as the $\cal P$-flip of $\phi(B)$,
    obtained by flipping two parts $P,Q$ of $\cal P$ if and only if there are $u\in P$, $v\in Q$ such that $\dist_S(u,v)>2d+1$ and $B\models \phi(u,v)$. By construction, if $u$ and $v$ are adjacent in $\phi(B)'$, then $\dist_S(u,v)\le 2d+1$.
    The conclusion follows.
  \end{proof}

  Lemma~\ref{lem:fw-qf-vc} now follows along the same lines as Theorem~\ref{thm:interpretations}.
\begin{proof}[Proof of Lemma~\ref{lem:fw-qf-vc}]
  Let $m$ be as in Corollary~\ref{cor:flips-over-S}. 
  We fix a winning strategy 
  of the cops in the Cops and Robber game of radius $r':=r(2d+1)$ and width $\ell:=\copw_{r'}(G)$,
  and transfer this strategy to the flipper game of radius $r$ and width $O(\ell^m)$ on $\phi(B)$,
  so that whenever the cops  announce a new set $S\subset V(B)$ of vertices in the Cops and Robber game, 
  then in the flipper game the flipper announces
  the $O(|S|^m)$-flip $\phi(B)'$ of $\phi(B)$, as obtained by Corollary~\ref{cor:flips-over-S}. 
  It follows from~\eqref{eq:flips-over-S} and Lemma~\ref{lem:strategy-transfer}
  that this yields a winning strategy in the flipper game. Hence, $\fw_r(\phi(B))=O(\ell^m)$.
\end{proof}

\end{appendices}

\addcontentsline{toc}{section}{References}
\bibliographystyle{alphaurl}
\bibliography{bib}
\end{document}